\newtheorem{theorem}{Theorem}[chapter]
\newtheorem{lemma}[theorem]{Lemma}
\newtheorem{proposition}[theorem]{Proposition}
\newtheorem{corollary}[theorem]{Corollary}
\theoremstyle{definition}
\newtheorem{definition}[theorem]{Definition}
\theoremstyle{remark}
\newtheorem{remark}[theorem]{Remark}
\newcommand{\D}{\mathbb{D}}
\newcommand{\N}{\mathbb{N}}
\newcommand{\Q}{\mathbb{Q}}
\newcommand{\R}{\mathbb{R}}
\newcommand{\Z}{\mathbb{Z}}
\newcommand{\mm}{{\mbox{\boldmath$m$}}}
\newcommand{\nn}{{\mbox{\boldmath$n$}}}
\newcommand{\aalpha}{{\mbox{\boldmath$\alpha$}}}
\newcommand{\ggamma}{{\mbox{\boldmath$\gamma$}}}
\newcommand{\eeta}{{\mbox{\boldmath$\eta$}}}
\newcommand{\ppi}{{\mbox{\boldmath$\pi$}}}
\newcommand{\sggamma}{{\mbox{\scriptsize\boldmath$\gamma$}}}
\newcommand{\sppi}{{\mbox{\scriptsize\boldmath$\pi$}}}
\newcommand{\sfd}{{\sf d}}
\newcommand{\sfh}{{\sf h}}
\newcommand{\Id}{{\rm Id}}                          
\newcommand{\Kliminf}{K\kern-3pt-\kern-2pt\mathop{\rm lim\,inf}\limits}  
\newcommand{\supp}{\mathop{\rm supp}\nolimits}   
\newcommand{\Lip}{\mathop{\rm Lip}\nolimits}          
\renewcommand{\d}{{\mathrm d}}
\newcommand{\restr}[1]{\lower3pt\hbox{$|_{#1}$}}
\newcommand{\la}{\left<}                  
\newcommand{\ra}{\right>}
\newcommand{\eps}{\varepsilon}  
\newcommand{\nchi}{{\raise.3ex\hbox{$\chi$}}}
\newcommand{\weakto}{\rightharpoonup}
\newcommand{\limi}{\varliminf}
\newcommand{\lims}{\varlimsup}
\newcommand{\fr}{\penalty-20\null\hfill$\blacksquare$}                      
\newcommand{\gopt}{{\rm{OptGeo}}}                   
\newcommand{\prob}[1]{\mathscr P(#1)}                   
\newcommand{\probt}[1]{\mathscr P_2(#1)}                   
\newcommand{\e}{{\rm{e}}}                           
\newcommand{\geo}{{\rm Geo}}                      
\renewcommand{\mm}{\mathfrak m}                                
\renewcommand{\nn}{\mathfrak n}                                
\newcommand{\weakgrad}[1]{|\nabla #1|_w} 
\newcommand{\bd}{{\mathbf\Delta}}
\newcommand{\s}{{\rm S}}
\renewcommand{\b}{{\rm b}}
\newcommand{\X}{{\rm F}}
\newcommand{\h}{{\sfh}}
\newcommand{\mau}{{\sf T}}
\newcommand{\mad}{{\sf S}}
\renewcommand{\ae}{{\textrm{\rm{-a.e.}}}}
\newcommand{\vol}{{\rm Vol}}
\newcommand{\dom}{{\rm DH}}
\newcommand{\lip}{{\rm lip}}
\newcommand{\CD}{{\sf CD}}
\newcommand{\RCD}{{\sf RCD}}
\renewcommand{\u}{\mathcal U}
\renewcommand{\weakgrad}[1]{|D #1|}
\numberwithin{section}{chapter}
\numberwithin{equation}{chapter}
\begin{document}

\frontmatter

\title{The splitting theorem in non-smooth context}


\author{Nicola Gigli}
\address{Universit\'e de Nice}
\curraddr{SISSA, via Bonomea 265, Trieste, 34136, Italy}
\email{ngigli@sissa.it}
\thanks{}

\date{}

\subjclass[2020]{Primary 51Fxx, 58-XX, 53Cxx, 46-XX}

\keywords{Splitting, Ricci curvature, non-smooth geometry}


\begin{abstract}
We prove that an infinitesimally Hilbertian $CD(0,N)$ space containing a line splits as the product of $\R$ and an infinitesimally Hilbertian $CD(0,N-1)$ space. By `infinitesimally Hilbertian' we mean that the Sobolev space $W^{1,2}(X,\sfd,\mm)$, which in general is a Banach space, is an Hilbert space. When coupled with a curvature-dimension bound,  this condition is known to be stable with respect to measured Gromov-Hausdorff convergence.\footnote{This never-before-published article was first posted as arxiv:1302.5555 on Feb 22, 2013 and an earlier version was posted as cvgmt.sns.it/paper/2079/ on Jan 28, 2013. Here we keep the paper true to the original arxiv post with no revisions other than reformatting for the journal.

Appendix C was mostly extracted from the previous unpublished preprint of the author: cvgmt.sns.it/paper/1801/ that was posted online a year earlier on March 13, 2012. The scope of such preprint was to point out how it was more natural to expect a link between the Bochner inequality and the `starred version' of the CD condition rather than the original `unstarred' CD(K,N).
}

\end{abstract}

\maketitle

\tableofcontents


\chapter*{Prologue\except{toc}{\\} {\small by Luigi Ambrosio}}

\markboth{\MakeUppercase{Prologue}}{\MakeUppercase{Prologue}}


Gigli's splitting theorem is by now a classical tool used in many papers dealing with the theory of $\RCD(K,N)$ metric measure spaces.  This prologue to his paper restates his results using modern terminology and contains a brief survey of some of these many applications.

Recall that the Riemannian splitting theorem as proven by Cheeger-Gromoll in \cite{Cheeger-Gromoll-splitting} states that a smooth Riemannian manifold, $(M,g)$, with nonnegative Ricci curvature that contains a line,
\begin{equation*}
\gamma: {\mathbb R} \to M \text{ with }d\big(\gamma(t),\gamma(s)\big)=|t-s| \quad \forall s,t \in {\mathbb R},
\end{equation*}
must split isometrically.  That is, there exists a Riemannian manifold, $(M',g')$, with
nonnegative Ricci curvature
such that
\begin{equation*}
M=M'\times {\mathbb R} \text{ and } d\big((x',t),(y',s)\big)=\sqrt{d'(x',y')^2+|t-s|^2\,}.
\end{equation*}
This result was extended to measured Gromov-Hausdorff limits of sequences of manifolds with almost nonnegative Ricci curvature by Cheeger-Colding in \cite{Cheeger-Colding96} and was a key tool in their subsequent analysis of measured Gromov-Hausdorff limits of Riemannian manifolds with uniform lower bounds on their Ricci curvature and uniform upper bounds on their dimension.   As Gigli's paper provides a thorough history of splitting theorems, we will limit ourselves to just this one paragraph.

The main theorem in Gigli's 2013 paper appearing in this volume of {\em Memoirs} is an extension of the splitting theorem to the class of {infinitesimally Hilbertian} $\CD(0,N)$ metric measure spaces.   Recall that a metric measure space, $(X,d, m)$, satisfying the curvature dimension condition, $\CD(K,N)$, has a synthetic lower Ricci curvature bound, $K$, and upper dimension bound, $N$, as defined by Lott-Villani \cite{Lott-Villani09} and Sturm \cite{Sturm06I}-\cite{Sturm06II}.
However, a metric measure space which only satisfies the $\CD(0,N)$ condition need not satisfy the splitting theorem.  Cordero-Erausquin, Villani, and Sturm discovered that, in fact, ${\mathbb R}^N$ endowed with any norm and the Lebesgue measure is a $\CD(0,N)$ metric measure space (cf. the last theorem in \cite{Villani09}).
For example, the metric measure space, $(X,d,m)$, defined~by
\begin{equation*}
X={\mathbb R}^2 \text{ with } d\left((x,s),(y,t)\right)=|x-y|+|s-t| \text{ and }  m={\mathcal L}\times {\mathcal L}
\end{equation*}
is a $\CD(0,N)$ space
which contains a line, but fails the Pythagorian equality.

In 2011, Ambrosio-Gigli-Savar\'e had introduced the notion of a metric measure space satisfying the Riemannian curvature dimension condition $\RCD(K,\infty)$
as a $\CD(K,\infty)$ metric measure space satisfying the Riemannian like {infinitesimally Hilbertian} hypothesis that the Sobolev space, $W^{1,2}(X,d, m)$, is a Hilbert space \cite{AmbrosioGigliSavare11-2}.   This Sobolev space
is defined as the domain of the Cheeger energy in $L^2(X,m)$.
Note that ${\mathbb R}^2$ endowed with the max norm and Lebesgue measure  fails to satisfy this hypothesis because its Sobolev space is only a Banach space without an inner product structure.  This Ambrosio-Gigli-Savar\'e paper was built in part upon work on the heat flow on
$CD(0,N)$ spaces by Gigli in \cite{Gigli10}, on Alexandrov spaces by Gigli-Kuwada-Ohta's \cite{Gigli-Kuwada-Ohta10} and their previous collaboration in \cite{AmbrosioGigliSavare11}.
Then in 2012, Gigli completed a paper on the {infinitesimally Hilbertian} hypothesis on more general metric measure spaces proving, in particular, the Laplace comparison theorem in that setting \cite{Gigli12} .   Gigli's 2013 paper within this volume of {\em Memoirs} provides a detailed review of this background material.

In 2013, the notion of $\RCD(K,N)$ metric measure spaces was not yet fully estabilished for finite values of $N$.
For those interested in the later development of the definitions of $\RCD(K,N)$ and $\RCD^*(K,N)$ spaces, please see the ICM survey article by Ambrosio \cite{AmbrosioICM}.  The book by Gigli-Pasqualetto is a useful resource for students
\cite{GP19}.

Let us restate the 2013 splitting theorem proven by Gigli using modern terminology.   If $(X,d,m)$ is an
$\RCD(0,N)$ metric measure space and if there is a line $\gamma:{\mathbb R}\to X$,
then $(X,d, m)$ splits:
\begin{equation*}
X=X'\times {\mathbb R} \text{ and } d\big((x',t),(y',s)\big)=\sqrt{d'(x',y')^2+|t-s|^2}
\text{ and } m=m'\times {\mathcal L}^1.
\end{equation*}
where the quotient space, $(X',d',m')$, is an $\RCD(0,N-1)$ metric measure space
when $N\ge 2$ and $X'$ is a single point if $N\in [1,2)$.
Gigli also reformulated this statement as an almost splitting theorem in the style of Cheeger-Colding using Sturm's ${\mathbb D}$ distance.   The factorization not only of the distance, but also of the measure, as well as the fact that the quotient space is $\RCD(0,N-1)$,
were new even in the context of limit Ricci spaces and have been crucial tools in the subsequent development of
$\RCD(K,N)$ metric measure spaces.

One of the key techniques developed in Gigli's splitting article is the use of Sobolev calculus to obtain geometric rigidity for infinitesimally {H}ilbertian metric measure spaces with synthetic {R}icci curvature bounded below.     These tools are crucial in Gigli-Mondino-Rajala's proof that {infinitesimally Hilbertian} $\CD^*(0,N)$-spaces have weak tangent spaces almost everywhere that are isometric to
Euclidean space \cite{GMR15}.   They are applied in De Philippis and Gigli's extension of Cheeger-Colding's volume to metric cone rigidity theorem in \cite{DPG16}, and
Gigli-Rigoni's extension of Colding's torus rigidity theorem in \cite{GR17}.
In \cite{Mondino-Naber14}, Mondino and Naber built a
structure theory of metric measure spaces with lower Ricci
curvature bounds using these tools.
In \cite{ABS19}, Ambrosio, Bru\`{e}, and Semola
apply them to study sets of finite perimeter in {${\RCD}$} spaces.
Connell, Dai, N\'{u}\~{n}ez-Zimbr\'{o}n, Perales, Su\'{a}rez-Serrato, Pablo and Wei prove
maximal volume entropy rigidity for {${\RCD}^*(-(N-1),N)$} spaces in
\cite{CDPSW21}.
 Antonelli, Pasqualetto, and Pozzetta apply Gigli's Sobolev calculus to prove new results about
Isoperimetric sets even in smooth Riemannian manifolds with boundary
that have lower bounds on {R}icci curvature \cite{APP22}.   See also the works of Antonelli-Fogagnolo-Pozzetta \cite{AFP21} and Han \cite{han2023}.

The maximal diameter theorem for $\RCD$-spaces proven by Ketterer in \cite{Ketterer13} is crucially based on the Gigli splitting theorem (as creditted in his abstract).
There are many papers which apply this maximal diameter theorem to achieve geometric rigidity results
and naturally some cite only \cite{Ketterer13} rather than also including the Gigli splitting article.
Cavaletti-Mondino use the Ketterer's maximal diameter theorem for $\RCD$ spaces
to deduce their rigidity and almost rigidity results \cite[Theorem 1.4, Theorem 1.5, Corollary 1.6]{CavMon15} although their other theorems
are based on a set of tools different from the ones used by Gigli and their isoperimetric inequality is mostly independent of it and obtained on a class of spaces larger than the $\RCD$ one.
In \cite{K15}, Ketterer proves Obata's rigidity theorem for $\RCD$-spaces, using the maximal diameter theorem.   Erbar-Sturm apply it to prove their sphere rigidity theorem in \cite{ES21}.
It is applied by Deruelle, Schulze, and Simon to study the stability
of the Ricci flow for Ricci-pinched manifolds \cite{DSS22}.

Many of these applications of Gigli's splitting theorem produce new results about Riemannian manifolds and their metric measure limit spaces as well as RCD spaces.
See, for example, Sections 4.10 and 7.3 of Cheeger-Jiang-Naber in
\cite{ChJiNa21} where, in the so-called non collapsed scenario, sharp rectifiability results of the singular sets are obtained.

Every paper dealing with Geometric Measure Theory on $\RCD$ spaces applies
Gigli's splitting for $\RCD$ spaces as a prerequisite to perform the analysis done there.
Any time a Euclidean tangent appears or a dimension reduction argument comes into place by factoring out lines, the splitting is called into play as one of the key tools.
See, for example,
Bru\'e and Semola's proof that $\RCD(K,N)$ spaces have constant dimension \cite{BS18},
Antonelli, Bru\'e, and Semola 's study of
the singular sets of $\RCD(K,N)$ spaces \cite{AntBrSe19}, and
Bru\'e, Pasqualetto, and Semola work on the rectifiability of $\RCD(K,N)$ spaces
\cite{BPS21}.
See Kitabeppu and Lakzian's
characterization of low dimensional {${\RCD}^*(K,N)$} spaces in  \cite{Kit16}.
See the conjectures of Honda in \cite{Honda16} and Honda's characteriation of
non-collapsed $\RCD(K,N)$ spaces using a geometric flow \cite{H19}.
See Deng's proof of the H\"older continuity of tangent cones in {${\RCD}(K,N)$} spaces \cite{Deng20} .
See also work of Antonelli-Fogagnolo-Pozzetta \cite{AFP21}, Han \cite{Han21},
Caputo-Gigli-Pasqualetto \cite{CGP21},
Bru\'e-Naber-Semola \cite{BNS22}, Huang-Huang \cite{HH23}, and
Brena-Gigli-Honda-Zhu \cite{BGHZ23}.

Gigli's techniques are also applied in work of Gigli, Ketterer, Kuwada, and Ohta \cite{GKKO20}
and Ketterer-Kitabeppu-Lakzian \cite{KKL23} to study spaces achieving spectral gaps.
See also the bounds on the Laplacian proven by Mondino-Semola \cite{MS21}.
Gigli-Violo proved a monotonicity formula for harmonic functions and electrostatic potentials in  \cite{GV21}.
 Z Huang \cite{Huang23} applied Gigli's analysis techniques to prove new theorems
 about harmonic functions of polynomial growth on Riemannian manifolds.
Chan-Zhang-Zhu has applied them to one phase free boundary problems in \cite{CZZ22}
and Honda-Sire applied them to harmonic maps in \cite{HS23}.

Naturally Gigli's splitting theorem can be applied to study isometry groups, fundamental groups and
topological properties of $\RCD$ and $\RCD^*$ spaces.    Mondino and Wei proved the existence of universal covers for $\RCD^*(K,N)$ spaces \cite{MW19} and Wang proved they are
simply connected in  \cite{Wang23}.
Colding's torus rigidity and almost rigidity theorems are extended by
Gigli-Rigoni in \cite{GR17}, Mondello-Mondino-Perales in \cite{MMP22}, and Ye in \cite{Ye23}.
Kapovitch-Mondino have work on the topology and the boundary of {$N$}-dimensional {${\RCD}(K,N)$} spaces in \cite{KM19}.
Kapovitch proved a mixed curvature analogue of Gromov's almost flat manifolds in \cite{Kap21}.
Guijarro, Santos-Rodr{\'\i}guez, Zamora-Barrera, and Wang studied isometry groups
in \cite{GSR19} \cite{SR20} \cite{SRZB23}.
For results on spaces with linear volume
or small diameter growth see work of Huang \cite{Huang18}\cite{Huang20b}
and Qian \cite{Qian2022} extending results of Sormani.
Mondino-Navarro have results on moduli spaces of $\RCD(0,N)$ structures \cite{MN22},
Honda-Peng \cite{HP23} constructed homeomorphisms between $\RCD(K,N)$ spaces and
Riemannian manifolds that are close in the Gromov-Hausdorff sense.

The techniques in Gigli's paper have also been applied to study Alexandrov spaces.
In \cite{LS18}, Lytchak-Stadler proved Villani's conjecture that if a metric measure space, $(X,d, H^2)$,
whose measure is the 2-dimensional Hausdorff measure, $H^2$, on $X$. is an
$\RCD(K,2)$ space then $(X,d)$ is an Alexandrov space of curvature at least $K$.     While the backbone of the proof in Chen's work on maximal volume entropy in Alexandrov spaces
\cite{Chen22} is closer in spirit to the approach of Cheeger-Colding rather then to the one used by Gigli, several technical tools are gathered from Gigli's splitting paper.
For other applications to Alexandrov spaces
see work of Kapovitch-Ketterer \cite{KK18}, Deng-Galaz-Garc\'{\i}a-Guijarro-Munn \cite{DGGGM18},
Jiang \cite{Jiang19}, Chen \cite{Chen22b}, and Kapovitch-Zhu \cite{KZ23}.

It is impossible to provide a complete survey of all the work that was directly or indirectly
influenced by the ideas and theorems in Gigli's ``The splitting theorem in non-smooth context'' published for the first time in this volume of {\em Memoirs of the American Mathematical Society}.
We leave the readers to explore further in their libraries and hope they enjoy exploring the article itself within this volume.

\mainmatter

\chapter{Introduction} 
\section{Historical remarks and statement of the result}

The splitting theorem is a rigidity result in Riemannian geometry. It was proved at first for surfaces with non-negative curvature by Cohn-Vossen (\cite{CohnVossen36}), generalized by Toponogov (\cite{Toponogov59}) in arbitrary dimension for manifolds with non-negative sectional  curvature - see also the contribution of  Milka \cite{Milka67} for the case of Alexandrov spaces - and then by Cheeger and Gromoll in \cite{Cheeger-Gromoll-splitting} for manifolds with non-negative Ricci curvature, their statement being:
\begin{theorem}[Splitting]\label{thm:splitting}
Let $M$ be a Riemannian manifold with non-negative Ricci curvature and containing a line.

Then $M$ is isometric to the product $N\times\R$ for some Riemannian manifold $N$ of non-negative Ricci curvature.
\end{theorem}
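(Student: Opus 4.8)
The plan is to follow the classical Busemann-function argument of Cheeger--Gromoll. Fix a line $\gamma:\R\to M$ and split it into the two rays $\gamma^+=\gamma|_{[0,\infty)}$ and $\gamma^-=\gamma|_{(-\infty,0]}$. To each ray associate its Busemann function
\[
b^\pm(x)=\lim_{t\to\infty}\big(t-\sfd(x,\gamma^\pm(t))\big),
\]
the limit existing because $t\mapsto t-\sfd(x,\gamma^\pm(t))$ is nondecreasing and bounded above by $\sfd(x,\gamma^\pm(0))$; each $b^\pm$ is $1$-Lipschitz. The first step is to show that $b^\pm$ is superharmonic in the barrier sense. Here one uses the Laplacian comparison theorem, which is the analytic manifestation of $\mathrm{Ric}\ge 0$: every distance function $\sfd(\cdot,p)$ satisfies $\Delta\,\sfd(\cdot,p)\le (n-1)/\sfd(\cdot,p)$ away from $p$ and its cut locus, and by building asymptotic rays to $\gamma^\pm$ through a given point (together with Calabi's trick to move off the cut locus) one produces, at each point and each $\eps>0$, a smooth lower support function for $b^\pm$ whose Laplacian is $\le\eps$. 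Hence $\Delta b^\pm\le 0$ weakly.

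Next I would exploit the triangle inequality, which gives $b^++b^-\ge 0$ everywhere on $M$, while $b^++b^-=0$ along $\gamma$. Since $b^++b^-$ is superharmonic and attains its minimum value $0$ at interior points, the strong maximum principle forces $b^++b^-\equiv 0$. Therefore $b:=b^+=-b^-$ is simultaneously superharmonic and subharmonic, hence harmonic, and classical elliptic regularity makes $b$ smooth.

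Then I would invoke the Bochner identity. Harmonicity of $b$ yields
\[
0=\tfrac12\Delta|\nabla b|^2=|\mathrm{Hess}\,b|^2+\mathrm{Ric}(\nabla b,\nabla b)\ \ge\ |\mathrm{Hess}\,b|^2\ \ge\ 0,
\]
so $\mathrm{Hess}\,b\equiv 0$ and $\nabla b$ is a parallel vector field; in particular $|\nabla b|$ is constant, and since $b$ increases at unit speed along $\gamma$ one gets $|\nabla b|\equiv 1$. A parallel unit vector field integrates, by a de Rham-type argument, to a Riemannian splitting: the flow $\Phi_t$ of $\nabla b$ acts by isometries, the level set $N:=b^{-1}(0)$ is a smooth totally geodesic hypersurface, and $(x,t)\mapsto\Phi_t(x)$ is an isometry $N\times\R\to M$. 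Finally, since the Ricci tensor of a Riemannian product splits as the direct sum of the factors' Ricci tensors and the $\R$-factor is flat, $\mathrm{Ric}_N\ge 0$.

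The main obstacle is the first step: constructing the lower barriers for the Busemann functions and verifying $\Delta b^\pm\le 0$ in a sense strong enough to feed the maximum principle. This is where the asymptotic-ray construction and the Calabi perturbation are needed, and it is the only place where the curvature hypothesis is genuinely used; everything downstream of harmonicity — Bochner plus de Rham — is comparatively soft. In the non-smooth setting that is the focus of this paper, it is precisely this step (Laplacian comparison, regularity of $b$, and the Bochner inequality) that must be reconstructed from the $CD(0,N)$ condition and infinitesimal Hilbertianity.
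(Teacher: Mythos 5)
Your proof is the classical Cheeger--Gromoll/Eschenburg--Heintze argument and it is correct: Laplacian comparison and Calabi barriers to get $\Delta b^\pm\le 0$, strong maximum principle for $b^++b^-$, harmonicity and smoothness of $b$, the Bochner \emph{identity} to force $\mathrm{Hess}\,b\equiv 0$, and then the de Rham decomposition for the parallel vector field $\nabla b$.

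The paper, however, does not prove Theorem~\ref{thm:splitting} this way; it cites \cite{Cheeger-Gromoll-splitting} and instead, in Section~\ref{se:nohess}, sketches a deliberately \emph{Hessian-free} route starting from the point where you and it agree (namely, $\Delta b\equiv 0$ and $|\nabla b|\equiv 1$). The divergence is exactly at the Bochner step. You feed harmonicity into the Bochner identity $0=\frac12\Delta|\nabla b|^2=|\mathrm{Hess}\,b|^2+\mathrm{Ric}(\nabla b,\nabla b)$ and read off $\mathrm{Hess}\,b=0$, then invoke de Rham. The paper refuses to use either the Hessian or the Bochner identity: it uses only the dimension-free Bochner \emph{inequality} $\Delta\frac{|\nabla f|^2}{2}\ge\nabla\Delta f\cdot\nabla f$, applies it to $b+\eps f$ and passes to the linearization to get the Euler equation $\Delta(\nabla b\cdot\nabla f)=\nabla\Delta f\cdot\nabla b$ for all test $f$. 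From this it shows that the gradient flow $\X_t$ of $b$ preserves the Dirichlet energy, $\int|\nabla(f\circ\X_t)|^2=\int|\nabla f|^2$, localizes this by polarization to the pointwise identity $|\nabla(f\circ\X_t)|^2=|\nabla f|^2\circ\X_t$ (using $(\X_t)_\sharp\mathrm{vol}=\mathrm{vol}$ from $\Delta b=0$), and concludes that each $\X_t$ is an isometry. Finally it proves that $b^{-1}(0)$ is totally geodesic by a purely first-order variational argument on $t\mapsto\frac12\sfd^2(x,\X_t(y))$ rather than by appealing to $\mathrm{Hess}\,b=0$.

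What each approach buys: yours is shorter and is the one any differential geometer would write, but it requires the Bochner \emph{identity}, hence an honest second-order object $\mathrm{Hess}\,b$, and the de Rham holonomy argument; none of these are available on an $RCD$ space. The paper's version is longer but every step survives the passage to the non-smooth setting — indeed Sections~\ref{se:meas}--\ref{se:quot} are precisely the metric-measure translation of this Hessian-free sketch, with the Euler equation becoming Corollary~\ref{cor:derfiga}/formula~\eqref{eq:key}, the energy-preservation step becoming Proposition~\ref{prop:crucial}, the isometry step Theorem~\ref{thm:gfpresdist}, and the totally-geodesic-level-set step Theorem~\ref{thm:embed}. So your proof is right, and your closing remark correctly identifies the Laplacian comparison as the hard curvature input; but you locate the generalization problem in the wrong place for this paper's strategy — after harmonicity, the paper's obstacle is not reconstructing the Bochner \emph{inequality} (which it has) but doing without the Hessian, the Bochner \emph{identity}, and de Rham, and that is what the Euler-equation/gradient-flow machinery is for.
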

Here and in the following by Riemannian manifold we intend a complete, connected and smooth Riemannian manifold without boundary.

A number of variants/generalizations have been obtained since then, in particular:
\begin{itemize}
\item A simplified proof provided by Eschenburg and Heintze in \cite{EschenburgHeintze84}.
\item A version for Lorentzian manifolds, conjectured by Yau, given by Newman in \cite{Newman90} (see also the  earlier contributions by Galloway \cite{Galloway84} and \cite{Galloway89}, Beem,  Ehrlich, Markvorsen, Steen, Galloway \cite{BEMSG84} and \cite{BEMSG85} and Eschenburg \cite{Eschenburg85}).
\item A version for orbifolds by Borzellino and Zhu in \cite{BorzellinoZhu94}.
\item A topological splitting for Alexandrov spaces with curvature bounded from below (possibly by a negative number) endowed with a measure satisfying the $MCP(0,N)$ condition, by Kuwae and Shioya in \cite{KuwaeShioya11}. Their result also provide an isometric splitting for some singular manifolds.
\item An isometric splitting for Alexandrov spaces with curvature bounded from below (possibly by a negative number) with non-negative Ricci curvature, by Zhang and Zhu in \cite{ZhangZhu10}. Here `non-negative Ricci curvature' is in a sense defined by the authors.
\item A diffeomorphic and measure preserving splitting theorem for Finsler manifolds which produces  a 1-parameter group of isometries on Berwald spaces, given by Ohta in \cite{Ohta12}
\end{itemize}
Among others, a  crucial generalization of the splitting theorem has been obtained by Cheeger and Colding in \cite{Cheeger-Colding96}: they  proved the following quantitative version of the rigidity property stated in Theorem \ref{thm:splitting}.
\begin{theorem}[Almost splitting]\label{thm:almostsplit}
There exists a non-negative function $(\delta,L,\eps,n,R)\mapsto\Psi(\delta,L,\eps|n,R)$ such that for given $n,R$ it holds $\lim_{\delta,\eps,L^{-1}\to0}\Psi(\delta,L,\eps|n,R)=0$ for which the following is true. 

Let $M$ be  an $n$-dimensional  Riemannian manifold  with ${\rm Ric}\geq -(n-1)\delta$ and $x,y_1,y_2\in M$ such that  $\min\{\sfd(x,y_0),\sfd(x,y_1)\}\geq L$ and $\sfd(x,y_0)+\sfd(x,y_1)-\sfd(y_0,y_1)\leq \eps$, $\sfd$ being the distance on $M$ induced by the metric tensor.

Then there exists a geodesic space $(X',\sfd')$ such that for some $x'\in X'$ the ball $B_R(x',0)\subset X'\times\R$ satisfies 
\[
\sfd_{\rm GH}\Big(B_R(x),B_R(x',0)\Big)\leq \Psi(\delta,L,\eps|n,R),
\]
where $\sfd_{\rm GH}$ is the Gromov-Hausdorff distance and the product space $X'\times\R$ is endowed with the distance $\sfd'\times\sfd_{\rm Eucl}$ defined by
\begin{equation}
\label{eq:intropit}
\sfd'\times\sfd_{\rm Eucl}\big((x',t),(y',s)\big):=\sqrt{\sfd'(x',y')^2+|t-s|^2}.
\end{equation}
\end{theorem}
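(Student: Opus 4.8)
The plan is to follow the strategy of Cheeger and Colding, whose engine is a quantitative analysis of the approximate Busemann functions produced by the almost-line through $x$. For $i=0,1$ set
\[
b_i(z):=\sfd(z,y_i)-\sfd(x,y_i),
\]
so that $b_i(x)=0$ and $|\nabla b_i|=1$ $\mm$-a.e.; the Laplacian comparison theorem under ${\rm Ric}\geq-(n-1)\delta$ yields $\Delta b_i\leq\Psi$ in the distributional sense on $M\setminus{\rm Cut}(y_i)$, where from now on $\Psi$ denotes a quantity of the form $\Psi(\delta,L,\eps|n,R)$ as in the statement, whose precise value may change from line to line. The excess $e:=b_0+b_1$ satisfies $e\geq0$ by the triangle inequality, $e(x)=\sfd(x,y_0)+\sfd(x,y_1)-\sfd(y_0,y_1)\leq\eps$, and $\Delta e\leq\Psi$; the Abresch--Gromoll inequality --- itself a consequence of the Laplacian comparison and the maximum principle --- then upgrades the pointwise smallness of $e$ at $x$ to a uniform bound $e\leq\Psi$ on $B_R(x)$. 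In particular $b_0$ and $-b_1$ agree up to $\Psi$ on $B_R(x)$, and a standard argument combining the one-sided Laplacian bounds with this excess estimate shows that $b:=b_0$ is $W^{1,2}$-close, on a ball $B_{R'}(x)$ with $R'$ slightly larger than $R$, to the harmonic function $\mathbf h$ with the same boundary values, with moreover $\big||\nabla\mathbf h|-1\big|$ small in average.

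The next step is to show that $\mathbf h$ is an \emph{almost splitting function}, that is, the average of $|{\rm Hess}\,\mathbf h|^2$ over $B_R(x)$ is $\leq\Psi$. Using the Cheeger--Colding good cutoff functions one fixes $\phi$ with $\phi\equiv1$ on $B_R(x)$, $\supp\phi\subset B_{R'}(x)$ and $|\nabla\phi|+|\Delta\phi|\leq C(n,R)$. Integrating against $\phi$ the Bochner identity
\[
\tfrac12\,\Delta|\nabla\mathbf h|^2=|{\rm Hess}\,\mathbf h|^2+\langle\nabla\mathbf h,\nabla\Delta\mathbf h\rangle+{\rm Ric}(\nabla\mathbf h,\nabla\mathbf h)
\]
and transferring the Laplacian onto $\phi$, the term $\langle\nabla\mathbf h,\nabla\Delta\mathbf h\rangle$ vanishes because $\mathbf h$ is harmonic, the left-hand contribution becomes $\tfrac12\int(|\nabla\mathbf h|^2-1)\,\Delta\phi$ and is $\Psi$-small since $|\nabla\mathbf h|^2$ is within $\Psi$ of the constant $1$ in the relevant averaged sense and $\int\Delta\phi=0$, while ${\rm Ric}(\nabla\mathbf h,\nabla\mathbf h)\geq-(n-1)\delta|\nabla\mathbf h|^2\geq-\Psi$; hence the average of $|{\rm Hess}\,\mathbf h|^2$ over $B_R(x)$ is $\leq\Psi$. (Some care near ${\rm Cut}(y_0)\cup{\rm Cut}(y_1)$ is needed in the first step; this is the usual point, handled by approximating the distance functions from outside and using that the cut locus is $\mm$-negligible.)

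The decisive step is to convert this $L^2$ smallness of ${\rm Hess}\,\mathbf h$ into the stated Gromov--Hausdorff estimate. The tool is the Cheeger--Colding \emph{segment inequality}, which bounds the average over pairs $(p,q)\in B_R(x)^2$ of $\int_0^{\sfd(p,q)}g\big(\gamma_{p,q}(s)\big)\,\d s$, with $\gamma_{p,q}$ a minimizing geodesic from $p$ to $q$, by a dimensional constant times $R\int_{B_{cR}(x)}g$, for every $g\geq0$. Applied to $g=|{\rm Hess}\,\mathbf h|^2$ and to $g=\big||\nabla\mathbf h|-1\big|$, it gives that along \emph{most} minimizing geodesics inside $B_R(x)$ the function $\mathbf h$ changes at almost unit speed and $\nabla\mathbf h$ is almost parallel. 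Integrating these infinitesimal facts, the gradient flow $\Phi_t$ of $\mathbf h$ is an almost-isometric local $\R$-action, defined for $|t|$ up to order $R$, whose orbits are almost geodesics orthogonal to the level sets of $\mathbf h$, and the ``horizontal'' distances between orbits are almost preserved. Taking $X'$ to be a connected component of the level set $\{\mathbf h=0\}$ with the induced length distance $\sfd'$, and $x'$ the (almost-)foot of $x$, one then checks --- using the above estimates together with the Pythagorean identity \eqref{eq:intropit} --- that $z\mapsto\big(\Phi_{-\mathbf h(z)}(z),\,\mathbf h(z)\big)\in X'\times\R$ is a $\Psi$-Gromov--Hausdorff approximation of $B_R(x)$ onto $B_R(x',0)$.

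The main obstacle is precisely this last passage from integral to metric information. One must single out and discard the exceptional set of ``bad'' pairs of points, namely those joined by a geodesic along which the integral of $|{\rm Hess}\,\mathbf h|^2$ (or of $\big||\nabla\mathbf h|-1\big|$) fails to be small, control the measure of this set through the segment inequality, and then run the previous estimates only over ``good'' pairs while arguing that the bad ones are metrically negligible. This measure-theoretic bookkeeping, and the segment inequality that makes it possible, are the genuinely new ingredients; by contrast Steps~1--2 are comparatively soft consequences of comparison geometry and the Bochner formula.
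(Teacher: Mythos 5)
The paper does not prove Theorem~\ref{thm:almostsplit}: it is quoted as a known result of Cheeger and Colding, with the citation \cite{Cheeger-Colding96}, and plays only a motivational role for the rest of the work (the non-smooth splitting is proved by entirely different, synthetic means). There is therefore no ``paper's own proof'' to compare against; what one can assess is whether your sketch faithfully reconstructs the Cheeger--Colding argument, and it does. The chain -- Laplacian comparison for $b_i$, Abresch--Gromoll excess estimate, harmonic replacement $\mathbf h$, Bochner identity against a good cutoff to make $\mathop{\rm avg}_{B_R(x)}\lvert{\rm Hess}\,\mathbf h\rvert^2\leq\Psi$, and then the segment inequality to promote these integral estimates to pointwise control along most geodesics -- is exactly their strategy, and you correctly single out the segment inequality as the genuinely new ingredient.

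Two places deserve more care. First, ``a standard argument combining the one-sided Laplacian bounds with this excess estimate shows that $b_0$ is $W^{1,2}$-close to $\mathbf h$'' hides the actual mechanism: one cannot infer $\lvert\Delta b_0\rvert\leq\Psi$ from $\lvert b_0+b_1\rvert\leq\Psi$ by ``differentiating'' an $L^\infty$ bound. The correct route is a two-sided maximum-principle comparison -- $\Delta b_0\leq\Psi$ and $b_0=\mathbf h$ on the boundary give $b_0-\mathbf h\geq-\Psi$, while $\Delta(-b_1)\geq-\Psi$ together with $\lVert b_0+b_1\rVert_\infty\leq\Psi$ give $\mathbf h-b_0\geq-\Psi$ -- followed by the integration by parts $\int\lvert\nabla(\mathbf h-b_0)\rvert^2=\int(\mathbf h-b_0)\,\Delta b_0$ to get the $W^{1,2}$ closeness and, from $\lvert\nabla b_0\rvert=1$, the smallness of $\int\bigl\lvert\,\lvert\nabla\mathbf h\rvert^2-1\,\bigr\rvert$. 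Second, Cheeger--Colding do not literally run the gradient flow of $\mathbf h$; they prove an integrated Pythagorean inequality by estimating distances in ``good'' geodesic quadrilaterals, discarding the bad set using the segment inequality. Your gradient-flow packaging is a legitimate reformulation, but it would need the bookkeeping you only allude to -- quantifying the bad set and showing it is Gromov--Hausdorff negligible -- spelled out, since that bookkeeping \emph{is} the theorem.
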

A simple limiting argument shows that the almost splitting theorem yields the splitting for limit spaces:
\begin{theorem}[Splitting for limit spaces]\label{thm:splitlimit}
Let $(M_i)$ be a sequence of Riemannian manifolds with dimension uniformly bounded from above and such that ${\rm Ric}_{M_i}\geq -\delta_i$, where $\delta_i\downarrow0$ as $i\to\infty$. 

Let $\sfd_i$ be the Riemannian distance on $M_i$ and assume that for some sequence of points $x_i\in M_i$ the sequence $i\mapsto (M_i,\sfd_i,x_i)$ converges to a pointed metric space $(X,\sfd,x)$ in the pointed Gromov-Hausdorff sense. Assume also that $(X,\sfd)$ contains a line.

Then $(X,\sfd)$ is isometric to the product $X'\times\R$, where $(X',\sfd')$ is a length space and the product distance on $X'\times\R$ is defined as in \eqref{eq:intropit}.  
\end{theorem}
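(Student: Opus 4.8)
The plan is to run the almost splitting theorem (Theorem~\ref{thm:almostsplit}) on balls of a fixed radius, push the three parameters $\delta$, the excess $\eps$, and $L^{-1}$ to zero along a carefully chosen subsequence, and then invoke the fact that a pointed Gromov-Hausdorff limit of metric products (with the distance \eqref{eq:intropit}) is again such a product.

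First I would dispose of two harmless points. The dimensions $\dim M_i$ take finitely many values, so after passing to a subsequence I may assume $\dim M_i\equiv n$; the case $n=1$ is elementary, since then the $M_i$ are lines or circles and a pointed limit of these can contain a line only if it equals $\R=\{pt\}\times\R$, so I take $n\ge2$ and rewrite the hypothesis as ${\rm Ric}_{M_i}\ge-(n-1)\delta_i'$ with $\delta_i':=\delta_i/(n-1)\downarrow0$. Writing $\gamma\colon\R\to X$ for the line and reparametrizing so that $o:=\gamma(0)$, one has $\sfd(o,\gamma(\pm L))=L$ and $\sfd(\gamma(-L),\gamma(L))=2L$ for every $L>0$; choosing $o_i\in M_i$ in the image of $o$ under Gromov-Hausdorff approximations gives $(M_i,\sfd_i,o_i)\to(X,\sfd,o)$ pointed. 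By Bishop-Gromov the $M_i$ are uniformly doubling, hence so is $X$ and indeed every metric space appearing below, so all these spaces are proper and their pointed Gromov-Hausdorff limits are unique up to pointed isometry — a fact I invoke at the end.

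The core step: fix $R>0$. For $L\ge2R$ and $i$ large, pointed convergence lets me choose $y_{0,i},y_{1,i}\in M_i$ near $\gamma(-L)$ and $\gamma(L)$ (slightly beyond, if necessary) so that $\min\{\sfd_i(o_i,y_{0,i}),\sfd_i(o_i,y_{1,i})\}\ge L$ and $\sfd_i(o_i,y_{0,i})+\sfd_i(o_i,y_{1,i})-\sfd_i(y_{0,i},y_{1,i})\le\eps_i$ with $\eps_i\to0$, and simultaneously $\sfd_{\rm GH}(\bar B_R(o_i),\bar B_R(o))\to0$. Theorem~\ref{thm:almostsplit} then produces a geodesic space $(X'_i,\sfd'_i)$ and a point $x'_i\in X'_i$ for which $\bar B_R((x'_i,0))\subset X'_i\times\R$ is $\Psi(\delta_i',L,\eps_i|n,R)$-close to $\bar B_R(o_i)\subset M_i$. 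Chaining with $\bar B_R(o_i)\to\bar B_R(o)$ and letting first $i\to\infty$ and then $L\to\infty$ — so that $\delta_i',\eps_i,L^{-1}\to0$ while $R$ stays fixed, whence $\Psi\to0$ — a diagonal selection of indices yields, for this $R$, geodesic spaces $(Y_k,\sfd_{Y_k})$ with points $p_k$ such that $\bar B_R((p_k,0))\subset Y_k\times\R$ converges to $\bar B_R(o)\subset X$. Doing this for $R=1,2,3,\dots$ and diagonalizing a second time — pointed Gromov-Hausdorff convergence being detected on balls of each fixed radius — I obtain one sequence of geodesic spaces with
\[
\big(Y_k\times\R,\ \sfd_{Y_k}\times\sfd_{\rm Eucl},\ (p_k,0)\big)\ \longrightarrow\ (X,\sfd,o)\quad\text{in the pointed Gromov-Hausdorff sense.}
\]

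It remains to conclude that such a limit is a product. A radius-$R$ ball around $(p_k,0)$ in $Y_k\times\R$ contains an isometric copy of the radius-$R$ ball around $p_k$ in $Y_k$ (the slice $Y_k\times\{0\}$), so uniform total boundedness along the convergent sequence forces $(Y_k,\sfd_{Y_k},p_k)$ to be pointed Gromov-Hausdorff precompact; passing to a subsequence with limit $(X',\sfd',x')$ — a length space, being a limit of geodesic ones — and using that products of pointed-convergent sequences converge to the product, one gets $Y_k\times\R\to X'\times\R$ with the distance \eqref{eq:intropit}. Uniqueness of pointed limits then gives $(X,\sfd)\cong(X'\times\R,\sfd'\times\sfd_{\rm Eucl})$, which is the claim. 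I expect the main obstacle to be precisely this last input — that a Gromov-Hausdorff limit of products along \eqref{eq:intropit} is again such a product — together with the bookkeeping imposed by $\Psi$ vanishing only for fixed $R$, which is what forces the double diagonal argument (over $L$, then over $R$); everything else is routine handling of Gromov-Hausdorff approximations.
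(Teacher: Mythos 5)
The paper does not write out a proof of Theorem~\ref{thm:splitlimit}; it only records that the result follows from the almost splitting theorem by ``a simple limiting argument'', and your proposal is precisely that argument carried out, so the approaches coincide. The one place worth streamlining is the double diagonal: rather than building a two-parameter family $Y_k(R)$ and diagonalizing again over $R$, you can pick $R_i\uparrow\infty$ slowly enough (and $L_i\geq 2R_i$) along the original sequence $M_i$, using only that for each $R$ there is a threshold on $\max\{\delta,\eps,L^{-1}\}$ below which $\Psi(\,\cdot\,|n,R)<1/R$, so that $\Psi(\delta_i,L_i,\eps_i\,|\,n,R_i)\to 0$ and a single sequence $X_i'$ with $\sfd_{\rm GH}\bigl(B_{R_i}((x_i',0)),B_{R_i}(o)\bigr)\to 0$ results directly, whence properness gives $(X_i'\times\R,(x_i',0))\to(X,o)$ in the pointed Gromov--Hausdorff sense and the rest of your argument applies unchanged.
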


In \cite{Lott-Villani09} and \cite{Sturm06I},\cite{Sturm06II} Lott-Villani on one side and Sturm on the other independently proposed a definition of `having Ricci curvature bounded from below by $K$ and dimension bounded above by $N$' for metric measure spaces, these being called $CD(K,N)$ spaces (in \cite{Lott-Villani09} only the cases $K=0$ or $N=\infty$ were considered). Here $K$ is a real number and $N$ a real number $\geq 1$, the value $N=\infty$ being also allowed. 

The crucial properties of their definition are the compatibility with the smooth Riemannian case and the stability w.r.t. measured Gromov-Hausdorff convergence. Broadly speaking, a central question about the study of $CD(K,N)$ spaces is:  which of the properties valid for Riemannian manifolds with  ${\rm Ric}\geq K$ and ${\rm dim}\leq N$ are also true for $CD(K,N)$ spaces?

In particular, given the mentioned stability property and Theorem \ref{thm:splitlimit}, the following question arises naturally:
\begin{quote}
Let $(X,\sfd,\mm)$ be a $CD(0,N)$ space containing a line. Is it true that there exists another space $(X',\sfd',\mm')$ such that $(X,\sfd,\mm)$ is isomorphic to the product of $X'$ and $\R$, where we are endow $X'\times\R$ with the product measure $\mm'\times\mathcal L^1$ and the product distance $\sfd'\times\sfd_{\rm Eucl}$ is defined  as in \eqref{eq:intropit}?
\end{quote}
And also:
\begin{quote}
If the above is true, what can we say about the quotient space $(X',\sfd',\mm')$? In particular, is it a $CD(0,N-1)$ space?
\end{quote}
Soon after the definitions in \cite{Lott-Villani09} and \cite{Sturm06II}  have been proposed, it has been understood that the answer to the first question is `no'  in the class $CD(0,N)$. Indeed, as shown by Cordero-Erasquin, Sturm and Villani (see the last theorem in \cite{Villani09}), the metric measure space $(\R^d,\sfd_{\|\cdot\|},\mathcal L^d)$, where $\mathcal L^d$ is the Lebesgue measure and $\sfd_{\|\cdot\|}$ is the distance induced by the norm $\|\cdot\|$, is always a $CD(0,d)$ space, regardless of the choice of the norm (see also \cite{Ohta09} for the curved Finsler case). In particular, if we take $d=2$ and consider a norm not coming from a scalar product, then we see that the splitting cannot hold, because ``Pythagoras' theorem" stated in formula \eqref{eq:intropit} fails. 

\medskip

It is therefore natural to look for a stricter  synthetic notion of Ricci curvature bound which, while retaining the stability property w.r.t. mGH-convergence, still ensures a `Riemannian-like' behavior of the spaces, possibly enforcing geometric rigidity results. A proposal in this direction has been made in \cite{AmbrosioGigliSavare11-2} specifically for  the case $N=\infty$:  according to the slightly finer analysis done in \cite{AmbrosioGigliMondinoRajala12}, one says that $(X,\sfd,\mm)$ has \emph{Riemannian} Ricci curvature bounded from below by $K$ (is an $RCD(K,\infty)$ space, in short), provided it is a $CD(K,\infty)$ space and the Sobolev space $W^{1,2}(X,\sfd,\mm)$ of real valued functions defined on $X$ is Hilbert. 

Some comments about this definition are:
\begin{itemize}
\item  In abstract metric measure spaces $W^{1,2}$ is always a Banach space, and in the smooth situation a Finsler manifold is Riemannian if and only if $W^{1,2}$ is Hilbert. In this sense the additional requirement that such space is Hilbert can be seen as the  non-smooth analogous of `the norm comes from a scalar product' which distinguishes Riemannian manifolds among Finsler ones.
\item  Simple examples show that the condition `$W^{1,2}$ is Hilbert' is not stable w.r.t. mGH-convergence. This is certainly not surprising, because the former is a first order condition on the space, while the latter is a zeroth order convergence. However,  when coupling it with the curvature condition $CD(K,\infty)$, the resulting notion turns out to be stable, which was indeed one of the motivation for the introduction of $RCD(K,\infty)$ spaces. Heuristically, we can interpret this fact as the stability of a first order notion w.r.t. a zeroth order convergence under a uniform second order bound. In practice, the proof of the stability  is based on the following two properties of  the heat flow on $CD(K,\infty)$ spaces:
\begin{itemize}
\item  In accordance with the smooth case (\cite{JKO98}), the gradient flow in $L^2(X,\mm)$ of the - generically non-quadratic - natural Dirichlet energy on $(X,\sfd,\mm)$ and the gradient flow in $(\probt X,W_2)$ of the relative entropy functional coincide (\cite{Gigli10},  \cite{Gigli-Kuwada-Ohta10},  \cite{AmbrosioGigliSavare11}). 
\item If a sequence of $CD(K,\infty)$ metric measure spaces converges w.r.t. the mGH-convergence to a limit space, then the gradient flows of the relative entropies along the approximating sequence converge to that in the limit space (\cite{Gigli10}, \cite{AmbrosioGigliSavare11-2}, \cite{GMS15}).
\end{itemize}
Then one notice  that $W^{1,2}$ is Hilbert if and only if the gradient flow of the `Dirichlet energy' is linear and thus if and only if the gradient flow of the relative entropy is linear. Given that the latter is stable, the desired  stability follows.
\item In the original paper \cite{AmbrosioGigliSavare11-2}, the focus was on the properties of heat flow and connections with the theory of Dirichlet forms (which also provides a way to define curvature-dimension bounds   by means of $\Gamma_2$-calculus - see \cite{BakryEmery85}). The resulting theory works reasonably well in the infinite dimensional case $N=\infty$, but as of today it is not clear whether the heat flow can be successfully used to characterize curvature-dimension bounds also in the case $N<\infty$ (it is possible to provide a stable curvature dimension notion based on the property of the heat flow and $\Gamma_2$-calculus, but the relation with the $CD(K,N)$ condition is unclear, see \cite{AmbrosioGigliSavare12} and the end of the section), nor whether its role can be taken by the porous media flow (which in the smooth case is the gradient flow of the R\'enyi entropy used to define $CD(K,N)$ spaces with $N<\infty$ - see \cite{Otto01} and \cite{AmbrosioGigliSavare08}). 

Instead, the assumption `$W^{1,2}$ is Hilbert' makes sense regardless of curvature-dimension bounds, and we have seen on one side that in the smooth case it singles out Riemannian manifolds, and on the other, according to the above discussion and recalling the stability of the $CD(K,N)$ condition,  that the property  `being a $CD(K,N)$ space such that $W^{1,2}$ is Hilbert' is also stable. 

Therefore one can consider this latter class as substitute of the original $CD(K,N)$ one and try to understand which sort of consequences he gets from the further assumption  `$W^{1,2}$ is Hilbert'. This is the point of view adopted in this paper.
\item The requirement `$W^{1,2}$ is Hilbert' is analytic in nature, not geometric. This means that we cannot reasonably expect to derive immediate  geometric consequences out of it. The plan is instead to first develop appropriate Sobolev differential calculus - and in doing so the hypothesis `$W^{1,2}$ is Hilbert' should have evident effects - and then to use such calculus to deduce the expected geometric properties, by mimicking, whenever possible, the arguments valid in the smooth world. Some steps in this direction have already been done. In \cite{Gigli12} the duality relations between differentials and gradients of Sobolev functions has been investigated, both in general and in connection with the assumption  `$W^{1,2}$ is Hilbert'. Then in \cite{Gigli-Mosconi12}, closely following the original argument, it has been shown that the Abresch-Gromoll inequality holds on $CD(K,N)$ spaces with $W^{1,2}$ Hilbert in the same form as in the smooth case (recall that on general $CD(K,N)$ spaces this inequality may fail).
\end{itemize}
According to the terminology introduced in \cite{Gigli12}, a space $(X,\sfd,\mm)$ such that $W^{1,2}(X,\sfd,\mm)$ is Hilbert will be called infinitesimally Hilbertian.  Our main result is:
\begin{theorem}[Splitting in non-smooth context]\label{thm:main}
Let $(X,\sfd,\mm)$ be an infinitesimally \linebreak Hilbertian $CD(0,N)$ space, $N<\infty$, and assume that $\supp(\mm)$ contains a line. 

Then there exists a metric measure space $(X',\sfd',\mm')$ such that  $(X,\sfd,\mm)$  is isomorphic to the product $X'\times\R$, where  $X'\times\R$ is endowed with the product measure $\mm'\times\mathcal L^1$ and the product distance $\sfd'\times\sfd_{\rm Eucl}$ defined by 
\[
\sfd'\times\sfd_{\rm Eucl}\big((x',t),(y',s)\big):=\sqrt{\sfd'(x',y')^2+|t-s|^2}.
\]
Furthermore:
\begin{itemize} 
\item If $N\geq 2$,  then $(X',\sfd',\mm')$ is  an infinitesimally Hilbertian $CD(0,N-1)$ space.
\item if $N\in[1,2)$, then $X'$ is  just a point.
\end{itemize}
\end{theorem}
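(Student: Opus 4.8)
The plan is to run the Cheeger--Gromoll argument, the real work being to manufacture the first- and second-order calculus that makes each classical step meaningful on an infinitesimally Hilbertian $CD(0,N)$ space. Fix a line $\gamma\colon\R\to\supp(\mm)$, write $\gamma^{+},\gamma^{-}$ for its two halves, and introduce the two Busemann functions $b^{\pm}(x):=\lim_{t\to+\infty}\big(\sfd(x,\gamma_{\pm t})-t\big)$; these limits exist, $b^{\pm}$ are $1$-Lipschitz, and since $\gamma$ is a line the triangle inequality yields $b^{+}+b^{-}\ge 0$ everywhere with equality on $\gamma$. The first analytic input I would prove is a \emph{Laplacian comparison} in this setting, $\bd\sfd_{p}\le\frac{N-1}{\sfd_{p}}\,\mm$ on $\{x\neq p\}$ in the sense of the measure-valued Laplacian; writing $b^{+}$ as a locally uniform limit of the functions $x\mapsto\sfd(x,\gamma_{t})-t$ and sending $t\to\infty$ kills the right-hand side, so $b^{+}$ (and symmetrically $b^{-}$) is superharmonic: $\bd b^{\pm}\le 0$.

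Since an infinitesimally Hilbertian $CD(0,N)$ space is locally compact, geodesic, doubling and supports a local Poincar\'e inequality, its canonical Dirichlet form is strongly local and admits a strong maximum principle. Applied to the superharmonic function $b^{+}+b^{-}\ge 0$, which vanishes at the interior points $\gamma_{s}$, it forces $b^{+}+b^{-}\equiv 0$; hence $b:=b^{+}=-b^{-}$ satisfies simultaneously $\bd b\le 0$ and $\bd b\ge 0$, i.e. $\bd b=0$, so $b$ is locally harmonic, in particular $b\in W^{1,2}_{\rm loc}$ with $\Delta b=0$. Next I would show $|\nabla b|\equiv 1$ $\mm$-a.e.: the bound $\le 1$ is the Lipschitz estimate, and for the reverse one uses that through every point there is a co-ray asymptotic to $\gamma^{+}$ along which $b$ decreases at unit speed, so the local Lipschitz constant of $b$ is $\ge 1$ everywhere, and on a PI space it coincides $\mm$-a.e. with $|\nabla b|$.

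The rigidity core is to upgrade ``$\bd b=0$ and $|\nabla b|\equiv 1$'' to ``$\nabla b$ is a parallel unit vector field'', $\mathrm{Hess}\,b=0$. This is where I would have to build the missing objects --- a Hessian for Sobolev functions, a measure-valued $\Gamma_{2}$ --- and establish a Bochner-type inequality on infinitesimally Hilbertian $CD(0,N)$ spaces; inserting $b$ into the formal identity $0=\tfrac12\bd|\nabla b|^{2}=|\mathrm{Hess}\,b|^{2}+\mathrm{Ric}(\nabla b,\nabla b)+\langle\nabla b,\nabla\Delta b\rangle$ then annihilates $\mathrm{Hess}\,b$. Equivalently one analyzes the gradient flow $(F_{t})_{t\in\R}$ of $b$ directly: $\mathrm{div}(\nabla b)=\Delta b=0$ makes each $F_{t}$ $\mm$-preserving, while parallelism makes each $F_{t}$ an isometry and $t\mapsto F_{t}$ a one-parameter group. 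The subtle point --- and what I expect to be the main obstacle --- is that all of this must be carried out without a smooth background, so one is forced to genuinely invoke the $CD(0,N)$ hypothesis (not merely the metric-measure structure) through the behaviour of the R\'enyi entropy along transports in the $\nabla b$ direction; this is exactly the ingredient that rules out the normed-space counterexamples recalled in the introduction.

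Granting $F$, I would set $X':=b^{-1}(0)$ with the restricted distance $\sfd'$ and the disintegration $\mm'$ of $\mm$ along the fibres of $b$, and verify that $\Phi\colon X'\times\R\to X$, $\Phi(x',t):=F_{t}(x')$, is an isomorphism of metric measure spaces under which the distance on $X'\times\R$ is the Pythagorean product $\sfd'\times\sfd_{\rm Eucl}$; here one needs that the level sets of $b$ are totally geodesic and that geodesics of $X$ split into a horizontal and a vertical component, again a consequence of the rigidity already obtained. For the last claims: infinitesimal Hilbertianity of $X'$ follows from that of $X'\times\R$ because $W^{1,2}$ of a product tensorizes, while ``$X'\times\R$ is $CD(0,N)$ $\Rightarrow$ $X'$ is $CD(0,N-1)$'' is the converse of the tensorization of the curvature-dimension condition, proved by one-dimensional localization of optimal plans along the $\R$-factor. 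Finally, if $N\in[1,2)$ then such an $X'$ would be a $CD(0,N-1)$ space with $N-1<1$, which is impossible unless $X'$ is a single point, giving the dichotomy in the statement.
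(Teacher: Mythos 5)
Your outline correctly isolates the crux --- upgrading $\bd b=0$, $\weakgrad b=1$ to the statement that the gradient flow of $b$ is a one-parameter group of isometries --- but the route you propose through that crux is precisely the one that is not available. You would ``build the missing objects --- a Hessian for Sobolev functions, a measure-valued $\Gamma_2$'' and then read off $\mathrm{Hess}\,b=0$ from the Bochner \emph{identity}. That identity (the one carrying the $|\mathrm{Hess}\,f|_{\rm HS}^2$ term) is exactly what is unknown in this generality, and the paper's central technical point is showing how to do entirely without it. Only the dimension-free Bochner \emph{inequality} is available; applying it to $b+\varepsilon f$ and linearizing in $\varepsilon$ yields the Euler equation $\Delta\big(\la\nabla b,\nabla f\ra\big)=\la\nabla\Delta f,\nabla b\ra$, which carries the same information as $\mathrm{Hess}\,b=0$ without ever invoking a Hessian. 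From this one shows that right composition with the flow is a $W^{1,2}$-isometry, and a Sobolev-to-Lipschitz duality (Proposition~\ref{prop:isom}) then turns that into a genuine metric isometry. Likewise, the step ``$\Delta b=0$ makes each $F_t$ measure-preserving'' is a triviality for smooth $f$ but not for $b$ on a metric measure space --- uniqueness of the gradient-flow trajectory is itself unclear here. The paper establishes measure preservation through optimal transport: multiples of $b$ are $c$-concave Kantorovich potentials (Theorem~\ref{thm:basemetric}), so the flow pushes measures along $W_2$-geodesics, and convexity of the R\'enyi entropy together with $\bd b=0$ forces $\mathcal U_N$ to be constant along the flow, hence $(\X_t)_\sharp\mm=\mm$ (Theorem~\ref{thm:gfpresmes}).

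Two further gaps. First, defining $X':=b^{-1}(0)$ with the restricted distance already presupposes the Pythagorean splitting you are trying to prove; the paper defines $X'$ as a quotient with the quotient distance, proves that the section $\iota\colon X'\hookrightarrow X$ is an isometric embedding via a $C^1$-differentiation argument for $t\mapsto W_2^2(\mu,(\bar\X_t)_\sharp\nu)$ (Theorem~\ref{thm:embed}), and then establishes the product formula for $\sfd$ by a second $W^{1,2}$-duality (Theorem~\ref{thm:pitagora}) rather than by arguing that level sets are totally geodesic. Second, your final $N\in[1,2)$ dichotomy is circular: the dimension-reduction step ($X'\times\R$ is $CD(0,N)$ $\Rightarrow$ $X'$ is $CD(0,N-1)$) is proved only under the hypothesis $N\ge 2$, so it cannot be invoked to derive a contradiction when $N<2$. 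The paper handles that case separately via the Hausdorff-dimension bound: if $X'$ had more than one point, $X'\times\R$ would contain a copy of $I\times\R$ and hence have Hausdorff dimension at least $2$, contradicting the $CD(0,N)$ bound with $N<2$.
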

By `isomorphic' we mean  that there exists an isometry $\mau:(\supp(\mm')\times\R,\sfd'\times\sfd_{\rm Eucl})\to (\supp(\mm),\sfd)$ such that $\mau_\sharp(\mm'\times\mathcal L^1)=\mm$. In particular every space $(X,\sfd,\mm)$ is isomorphic to $(\supp(\mm),\sfd,\mm)$; this explains why the line  is required to take values in $\supp(\mm)$ rather than on $X$.

Notice that the assumption $N<\infty$ is necessary. Consider indeed $\R$ endowed with the Euclidean distance $\sfd_{\rm Eucl}$ and the standard Gaussian measure $\ggamma$. This is a $CD(1,\infty)$ space, in particular $CD(0,\infty)$, and contains a line. Yet, it does not split, the problem being at the level of measures. Inspecting the proof of the splitting theorem, we see that the step which fails is the proof that the Busemann function is harmonic (recall that the intrinsic Laplacian $\Delta$ in such space is given by $\Delta f=\partial_{xx}f-x\partial_xf$). This is not surprising, because as we will see in the next section once one knows that the Busemann function is harmonic the proof can be completed using only the infinite dimensional Bochner inequality, which holds on $(\R,\sfd_{\rm Eucl},\ggamma)$.

With a standard compactness argument based on the fact that $CD(K,N)$ spaces are uniformly doubling, Theorem \ref{thm:main} can be reformulated as follows:
\begin{theorem}[Splitting in non-smooth context - equivalent formulation]\label{thm:mainequiv}
\ \linebreak There exists a non negative function $(\delta,L,\eps,N,R)\mapsto\Psi(\delta,L,\eps|N,R)$ such that for given $N,R$ it holds $\lim_{\delta,\eps,L^{-1}\to0}\Psi(\delta,L,\eps|N,R)=0$ for which the following is true. 

Let $(X,\sfd,\mm)$ be an infinitesimally Hilbertian $CD(-\delta(N-1),N)$ space and $x,y_1,y_2\in \supp(\mm)$ such that  $\min\{\sfd(x,y_0),\sfd(x,y_1)\}\geq L$ and $\sfd(x,y_0)+\sfd(x,y_1)-\sfd(y_0,y_1)\leq \eps$.

Then there exists a metric measure space $(X',\sfd',\mm')$ and  $x'\in \supp(\mm')$ such that  the ball $B_R(x',0)\subset X'\times\R$ satisfies 
\[
\D\Big(\big(B_R(x),\sfd,\tilde\mm_{B_R(x)}\big),\big(B_R(x',0),\sfd'\times\sfd_{\rm Eucl},\nn_{B_R(x',0)}\big)\Big)\leq \Psi(\delta,L,\eps|N,R),
\]
where $\D$ is the Sturm(-Gromov-Hausdorff) distance, the distance $\sfd'\times\sfd_{\rm Eucl}$ is defined by \eqref{eq:intropit} and the  measures $\tilde\mm_{B_R(x)},\nn_{B_R(x',0)}$ are the normalized restrictions of $\mm$ and $\mm'\times\mathcal L^1$ to the balls $B_R(x),B_R(x',0)$.

Furthermore:
\begin{itemize}
\item If $N\geq 2$, then $(X',\sfd',\mm')$ can be chosen to be   an infinitesimally Hilbertian $CD(0,N-1)$ space.
\item If $N\in[1,2)$, then $X'$ can be chosen to be  just a point.
\end{itemize}
\end{theorem}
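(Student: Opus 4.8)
The plan is to follow the classical Cheeger--Gromoll strategy, but at every step replace smooth tools by their $W^{1,2}$-Hilbertian analogues developed in \cite{Gigli12} and \cite{Gigli-Mosconi12}. Starting from the line $\gamma:\R\to\supp(\mm)$, one introduces the two Busemann functions $b^+(x):=\lim_{t\to+\infty}(t-\sfd(x,\gamma(t)))$ and $b^-(x):=\lim_{t\to-\infty}(-t-\sfd(x,\gamma(t)))$. As in the smooth case, each $b^\pm$ is $1$-Lipschitz, hence in $W^{1,2}_{\mathrm{loc}}$ with $\weakgrad{b^\pm}\le 1$, and the triangle inequality along the line gives $b^++b^-\le 0$ with equality on $\gamma$. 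The first substantial analytic step is to prove that $b^+$ (equivalently $b^-$) is \emph{harmonic}, i.e.\ $\Delta b^+=0$ in the appropriate weak sense attached to the Dirichlet form of $(X,\sfd,\mm)$. In the smooth proof this is obtained via the Laplacian comparison $\Delta \sfd(\cdot,\gamma(t))\le (N-1)/\sfd(\cdot,\gamma(t))$ applied along approximating rays, together with the strong maximum principle: one shows $b^+$ is superharmonic, $b^-$ is superharmonic, $b^++b^-$ is subharmonic and $\le 0$ with an interior zero, forcing $b^++b^-\equiv 0$ and then each $b^\pm$ harmonic. Transcribing this requires a Laplacian comparison estimate for the distance function on $CD(0,N)$ spaces (available through the localization/needle or the Hessian-comparison heuristics, but here more naturally through an integrated comparison in the sense of distributions) plus a weak maximum principle for the Dirichlet form; this is where I expect the genuinely $CD(0,N)$-specific input to enter, and it is the main obstacle, since the finite-dimensional bound is used in an essential way (as the $(\R,\sfd_{\mathrm{Eucl}},\ggamma)$ counterexample in the introduction shows).

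Once harmonicity of $b:=b^+$ is established, the next step is a rigidity analysis of $b$ via the Bochner inequality. The Hilbertianity assumption guarantees that the Dirichlet form is quadratic, so one has a well-defined ``carr\'e du champ'' $\nabla b\cdot\nabla b$ and, from the $CD(0,N)$ condition, a weak $N$-Bochner inequality of the form $\tfrac12\Delta|\nabla b|^2\ge \frac{(\Delta b)^2}{N}+\nabla b\cdot\nabla\Delta b$ (in integrated/distributional form against nonnegative test functions). Plugging in $\Delta b=0$ and $|\nabla b|\le 1$, integrating against suitable cutoffs and using that $|\nabla b|=1$ on the line, one concludes $|\nabla b|\equiv 1$ $\mm$-a.e.\ and that the inequality is saturated; saturation of Bochner at a harmonic function with unit gradient is exactly the infinitesimal statement that $b$ generates a parallel gradient vector field, i.e.\ $\mathrm{Hess}\,b=0$ in the weak sense and the ``Ricci in the direction $\nabla b$'' vanishes. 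This is the step that should feel most like the smooth argument once the correct non-smooth second-order calculus (from \cite{Gigli12}) is in place.

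With $\nabla b$ a parallel unit gradient field, one builds the splitting map: let $X':=b^{-1}(0)$ with $\sfd'$ the induced length metric and $\mm'$ a suitable conditional/disintegrated measure, and define $\mau:X'\times\R\to X$ by flowing along the gradient flow of $b$ for time $t$. The parallelism $\mathrm{Hess}\,b=0$ should yield that this flow is a one-parameter group of \emph{measure-preserving isometries} of $X$ transverse to the level sets, and that $b$ is (up to constant) the signed distance to $X'$; these together give that $\mau$ is an isometry in the product metric $\sfd'\times\sfd_{\mathrm{Eucl}}$ of \eqref{eq:intropit} with $\mau_\sharp(\mm'\times\mathcal L^1)=\mm$, which is the required isomorphism. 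Finally, to identify $(X',\sfd',\mm')$ as infinitesimally Hilbertian $CD(0,N-1)$ when $N\ge 2$: the $CD(0,N-1)$ bound follows from the tensorization behaviour of the curvature-dimension condition (a $CD(0,N)$ product $X'\times\R$ forces the factor to be $CD(0,N-1)$, one direction of the tensorization theorem), and infinitesimal Hilbertianity of $X'$ follows because $W^{1,2}(X'\times\R)$ being Hilbert is inherited by the factor $W^{1,2}(X')$ (a tensor product of Banach spaces is Hilbert only if each factor is). The case $N\in[1,2)$ is handled by noting that a nontrivial $X'$ would make the product have dimension $>N$, contradicting $CD(0,N)$; hence $X'$ is a point and $X\cong\R$ with a multiple of $\mathcal L^1$. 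The equivalent formulation Theorem~\ref{thm:mainequiv} then follows from Theorem~\ref{thm:main} by the standard compactness-and-contradiction argument, using that $CD(K,N)$ spaces with uniformly bounded $K,N$ and bounded diameter are uniformly doubling hence precompact for the Sturm distance $\D$, so a failure of the quantitative statement would produce, in the limit, an infinitesimally Hilbertian $CD(0,N)$ space with a line not splitting (infinitesimal Hilbertianity passing to the limit precisely because $CD+$Hilbertian is stable, as recalled in the introduction), contradicting Theorem~\ref{thm:main}.
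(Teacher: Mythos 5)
Your final paragraph---the reduction of Theorem~\ref{thm:mainequiv} to Theorem~\ref{thm:main} by Gromov precompactness of $CD(K,N)$ spaces and stability of ``infinitesimal Hilbertianity $+$ $CD$'' under measured convergence---is exactly what the paper asserts and deliberately leaves undetailed. The issue is the rest of your write-up, which re-derives Theorem~\ref{thm:main} along a route the paper explicitly declares unavailable. You invoke a dimensional Bochner inequality $\Delta\frac{|\nabla b|^2}{2}\ge\frac{(\Delta b)^2}{N}+\nabla b\cdot\nabla\Delta b$ as a consequence of $CD(0,N)$ plus infinitesimal Hilbertianity, and then conclude $\mathrm{Hess}\,b=0$ by saturating it. Both inputs are flagged as missing in the paper: it states that ``Nothing is known for the case $N<\infty$'' about the equivalence of $CD$ with the $N$-Bochner inequality, and that in the non-smooth world one does not have at disposal---nor will one build here---a Hessian. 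Section~\ref{se:nohess} exists precisely to show how the argument has to be completed using only the dimension-free Bochner inequality. After $\bd\b=0$ is established, the paper stays at first order: the Euler equation $\h_t(\la\nabla\b,\nabla f\ra)=\la\nabla\b,\nabla\h_t f\ra$ is obtained by perturbing $\b$ inside the $\infty$-Bochner; Proposition~\ref{prop:crucial} then shows that right-composition with the gradient flow $\X_t$ is a $W^{1,2}$-isometry; the Sobolev-to-Lipschitz duality of Proposition~\ref{prop:isom} upgrades this to a one-parameter group of metric isometries; and the totally-geodesic/signed-distance information you want to extract from $\mathrm{Hess}\,b=0$ is instead replaced by the Wasserstein-lifted $C^1$ calculus of Propositions~\ref{prop:intf} and~\ref{prop:intdist} leading to Theorem~\ref{thm:embed}.

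Two smaller points. The dimension reduction ``$X'\times\R$ is $CD(0,N)$ implies $X'$ is $CD(0,N-1)$'' is not a corollary of tensorization---no such converse holds in general---and the paper proves it in Section~\ref{se:dim} by localizing the $CD$ inequality along product geodesics via the Rajala--Sturm optimal maps and the elementary optimization of Proposition~\ref{prop:dim}. Also, the Laplacian comparison gives $\bd\b^\pm\ge 0$, so the Busemann functions are subharmonic, not superharmonic as you wrote; your subsequent sentence ``$b^++b^-$ is subharmonic and $\le 0$'' is consistent with the correct sign, so this is only a slip.
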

See \cite{Sturm06I} for the definition of the distance $\D$ and notice that in fact any distance metrizing a convergence of pointed metric-measure spaces for which `infinitesimal Hilbertianity + $CD(0,N)$' is closed can be used in the statement. We will not discuss these stability questions here and refer to \cite{Villani09}, \cite{Sturm06I}, \cite{GMS15} and references therein for possible variants. In particular, we will not explicitly prove Theorem \ref{thm:mainequiv} but just focus on Theorem \ref{thm:main}.

Comparing Theorems \ref{thm:main}, \ref{thm:mainequiv} with Theorems  \ref{thm:splitlimit},  \ref{thm:almostsplit} and beside the fact that the former are stated in the non-smooth context while the latter refer to the smooth one, we see the following relations:
\begin{itemize}
\item Theorems \ref{thm:main}, \ref{thm:mainequiv} give informations about the measures while Theorems  \ref{thm:splitlimit},  \ref{thm:almostsplit} do not. This is fact not a difference but rather a choice of exposition. As the proofs of Theorems  \ref{thm:splitlimit},  \ref{thm:almostsplit} show, similar informations about the reference measures can directly be obtained.
\item The proof of  Theorem \ref{thm:almostsplit} produces an explicit expression for the function $\Psi$, while Theorem \ref{thm:mainequiv} only grants its existence via a compactness argument, in line with the overall program about gathering information on the smooth world by investigating the non-smooth one. Although the explicit form of $\Psi$ seems not important in applications, strictly speaking Theorem \ref{thm:mainequiv} is not a generalization of Theorem \ref{thm:almostsplit}.
\item  Theorems \ref{thm:main}, \ref{thm:mainequiv} give a structural information about the  quotient space $X'$ which is not present in Theorems  \ref{thm:splitlimit},  \ref{thm:almostsplit}. This has little to do with the strategy and tools that we use here, but is rather due to the stability of `curvature-dimension bound plus infinitesimal Hilbertianity'. With these at disposal it is quite easy to see that the quotient space in Theorem \ref{thm:splitlimit} must be infinitesimal Hilbertian and $CD(0,N)$, see the proofs of Theorems \ref{thm:pitagora} and \ref{thm:dimm1} (the only non trivial tool is the existence of optimal transport maps)
\end{itemize}

Given the rigidity result expressed by the splitting theorem in non-smooth setting and following the terminology proposed in \cite{AmbrosioGigliSavare11-2}, it might be tempting to define the class  $RCD(K,N)$ of spaces with Riemannian Ricci curvature bounded from below by $K$ and dimension bounded above by $N$ as:
\begin{equation}
\label{eq:rcdkn}
\begin{split}
RCD(K,N)&:=CD(K,N)+\textrm{infinitesimal Hilbertianity}\\
&\phantom{:}=CD(K,N)\cap RCD(K,\infty).
\end{split}
\end{equation}
Yet, we believe it is too early to close this concept in a definition, because before doing so we should get a clearer picture of the interaction within the (Riemannian) curvature-dimension bounds and Bochner inequality. Recall indeed that the inequality
\begin{equation}
\label{eq:introboc}
\Delta\frac{|\nabla f|^2}2\geq\frac{(\Delta f)^2}{N}+\nabla\Delta f\cdot\nabla f+K|\nabla f|^2,\qquad\forall f\in C^\infty(M),
\end{equation}
characterizes Riemannian manifolds with ${\rm Ric}\geq K$ and ${\rm dim}\leq N$ and that via the means of the $\Gamma_2$-calculus introduced by Bakry-\'Emery in \cite{BakryEmery85}, it can be used to define a curvature-dimension condition on a non-smooth structure. One of the main results of \cite{AmbrosioGigliSavare12} is that requiring that \eqref{eq:introboc}, when properly written, holds on a metric measure space, yields a notion stable w.r.t. mGH-convergence. Thus a way alternative to \eqref{eq:rcdkn} to speak about `Riemannian' curvature dimension bounds is to ask for such proper formulation of \eqref{eq:introboc} to hold. The question is then: do the two approaches coincide? The results in \cite{AmbrosioGigliSavare11-2} and \cite{AmbrosioGigliSavare12} give a quite complete answer for the case $N=\infty$, the answer being yes. Nothing is known for the case $N<\infty$. We collect some informal comments about the relations between Bochner inequality and curvature-dimension bounds in Appendix \ref{app:bochner}.

\section{Splitting without the Hessian}\label{se:nohess}

The proof of the splitting theorem in the non-smooth context begins, as in the smooth case, with the study of the Busemann function $\b$ associated to the given line. One crucial  technical difference with the classical case is the fact that in the non-smooth world we currently don't have at disposal - nor we will build - an Hessian. In particular, this means that we don't have at disposal the Bochner identity which is typically used in conjunction with the Bochner inequality to deduce from $\Delta\b\equiv 0$ that $\nabla^2\b\equiv 0$ as well.  The only form of Bochner inequality that is currently available in the non-smooth world is the dimension-free one, which in the smooth case reads as:
\[
\Delta\frac{|\nabla f|^2}{2}\geq \nabla\Delta f\cdot\nabla f+K|\nabla f|^2,\qquad\forall f\in C^{\infty}(M),
\]
valid on manifolds with Ricci curvature $\geq K$. In the case ${\rm Ric}\geq 0$ - the one of interest for the splitting theorem - it reduces to
\begin{equation}
\label{eq:inftyBE}
\Delta\frac{|\nabla f|^2}{2}\geq \nabla\Delta f\cdot\nabla f,\qquad\forall f\in C^{\infty}(M).
\end{equation}
What we want to show here is that once one proves that  the Busemann function $\b$ is harmonic and smooth, inequality \eqref{eq:inftyBE} is sufficient to conclude. The proof in the non-smooth context will be based on these arguments.

Thus assume that $\Delta\b\equiv 0$, recall that  it holds $|\nabla\b|\equiv1$   and then write \eqref{eq:inftyBE} for the function $\b+\eps f$, where $f\in C^\infty(M)$ is arbitrary, to get
\[
\eps\,\Delta(\nabla\b\cdot\nabla f)+\frac{\eps^2}2\Delta|\nabla f|^2\geq \eps\,\nabla\Delta f\cdot\nabla \b+{\eps^2}\,\nabla\Delta f\cdot\nabla f.
\]
Divide by $\eps>0$, let $\eps\downarrow0$ and then substitute $f$ with $-f$ to obtain
\begin{equation}
\label{eq:eulerintro}
\Delta(\nabla\b\cdot\nabla f)=\nabla\Delta f\cdot\nabla \b,\qquad\forall f\in C^\infty(M).
\end{equation}
This equation, which can be thought as the Euler equation for $\b$, given that $\b$  is a pointwise minimizer for $f\mapsto\Delta\frac{|\nabla f|^2}{2}- \nabla\Delta f\cdot\nabla f$, encodes all the informations about the Busemann function one typically deduces from the Bochner identity. Indeed, expanding the left-hand side we see that \eqref{eq:eulerintro} is equivalent to
\begin{equation}
\label{eq:eulerintro2}
\la\nabla^2\b,\nabla^2 f\ra_{\rm HS}+{\rm Ric}(\nabla\b,\nabla f)\equiv0,\qquad\forall f\in C^\infty(M),
\end{equation}
which easily implies $\nabla^2\b\equiv 0$ and ${\rm Ric}(\nabla\b,\cdot)\equiv 0$ given that at any fixed point  $x\in M$ the gradient and the Hessian of $f$ can be chosen independently.

If we don't have at disposal the notion of Hessian we cannot write \eqref{eq:eulerintro2}, but we can still conclude arguing as follows. Let  $M\times\R\ni (x,t)\mapsto \X_t(x)\in M$ be the gradient flow of $\b$, so that $\X_0(x)\equiv x$ and $\frac\d{\d t}\X_t(x)=-\nabla\b(\X_t(x))$, pick $f\in C^\infty_c(M)$, put $f_t:=f\circ\X_t$ and notice that 
\[
\frac{\d}{\d t}\frac12\int|\nabla f_t|^2\,\d\vol=\int\nabla f_t\cdot\nabla(\frac\d{\d t}f_t)\,\d\vol =-\int\nabla f_t\cdot\nabla(\nabla f_t\cdot\nabla \b)\,\d\vol.
\]
Now observe that
\[
\begin{split}
-\int\nabla f_t\cdot\nabla(\nabla f_t\cdot\nabla \b)\,\d\vol&=\int f_t\Delta(\nabla f_t\cdot\nabla \b)\,\d\vol\stackrel{\eqref{eq:eulerintro}}=\int f_t\nabla\Delta f_t\cdot\nabla \b\,\d\vol\\
&=-\int\Delta f_t\nabla\cdot(f_t\nabla\b)\,\d\vol=-\int \Delta f_t\nabla f_t\cdot\nabla\b\,\d\vol\\
&=\int\nabla f_t\cdot\nabla(\nabla f_t\cdot\nabla \b)\,\d\vol,
\end{split}
\]
and therefore $\frac{\d}{\d t}\frac12\int|\nabla f_t|^2\,\d \vol=0$, which gives 
\begin{equation}
\label{eq:quasiiso}
\int|\nabla (f\circ\X_t)|^2\,\d\vol=\int |\nabla f|^2\,\d\vol,\qquad\forall t\in\R,\ f\in C^\infty_c(M).
\end{equation}
The interesting fact about \eqref{eq:quasiiso} is that it can be easily localized. Indeed, notice that by polarization from \eqref{eq:quasiiso} we get 
\begin{equation}
\label{eq:quasiiso2}
\int\nabla (f\circ\X_t)\cdot\nabla(g\circ \X_t)\,\d\vol=\int \nabla f\cdot\nabla g\,\d\vol,\qquad\forall t\in\R,\ f,g\in C^\infty_c(M),
\end{equation}
then fix  $f,g\in C^\infty_c(M)$, put $f_t:=f\circ\X_t$, $g_t:=g\circ\X_t$ and observe that
\begin{equation}
\label{eq:localintro}
\begin{split}
\int g_t|\nabla f_t|^2\,\d\vol&\stackrel{\phantom{\eqref{eq:quasiiso2}}}=\int\nabla (f_tg_t)\cdot\nabla f_t-\tfrac12\nabla g_t\cdot\nabla(f_t^2)\,\d\vol\\
&\stackrel{\eqref{eq:quasiiso2}}=\int\nabla (fg)\cdot\nabla f-\tfrac12\nabla g\cdot\nabla(f^2)\,\d\vol\ =\int g|\nabla f|^2\,\d\vol.
\end{split}
\end{equation}
Recalling now that $\Delta \b\equiv 0$ gives $(\X_t)_\sharp\vol=\vol$ for every $t\in \R$, we know that $\int g|\nabla f|^2\,\d\vol=\int g_t|\nabla f|^2\circ\X_t\,\d\vol$. Thus \eqref{eq:localintro} and the arbitrariness of $g$ gives 
\[
|\nabla (f\circ\X_t)|^2=|\nabla f|^2\circ\X_t,\qquad\forall t\in\R,\ f\in C^\infty_c(M),
\]
i.e.:  $\nabla\X_t(x)$ is an isometry from $T_xM$ into $T_{\X_t(x)}M$ for any $x\in M$ and $t\in\R$, which is the same as saying that $\X_t:(M,\sfd)\to (M,\sfd)$ is an isometry for every $t\in\R$, $\sfd$ being the distance coming from the metric tensor on $M$.

The proof is now almost done. To conclude we need only to   show that $N:=\b^{-1}(0)$ is a totally geodesic submanifold of $M$. To prove this is the same as proving that for $x,y\in N$ the function $t\mapsto \frac12\sfd^2(x,\X_t(y))$ attains a unique minimum at $t=0$. This can achieved without calling into play second order derivatives arguing as follows. Let $t_0$ be a minimum of $t\mapsto \frac12\sfd^2(x,\X_t(y))$ and observe that its Euler equation is 
\begin{equation}
\label{eq:eulminintro}
\la\nabla f,\nabla \b\ra (\X_{t_0}(y))=0,
\end{equation}
where $f(z):=\frac12\sfd^2(x,z)$ (strictly speaking this is not correct unless we prove in advance that $f$ is differentiable at $\X_{t_0}(y)$, but let's not focus on this point here). Now let $[0,1]\ni s\mapsto x_s$ be a constant speed minimizing geodesic from $x$ to $\X_{t_0}(y)$ and observe that a simple application of the triangle inequality and the fact that $\X_t$ is an isometry ensure that a minimum of $t\mapsto \frac12\sfd^2(x,\X_t(x_s))$ is attained at $t=0$, hence according to the above formula we deduce
\[
\la\nabla f,\nabla \b\ra(x_s)=0,\qquad\forall s\in[0,1].
\]
On the other hand, we have $x_s'=\frac1s\nabla f(x_s)$ and therefore 
\[
\b(x_1)-\b(x_0)=\int_0^1\frac{\d}{\d s}\b(x_s)\,\d s=\int_0^1\la x_s',\nabla\b(x_s)\ra\,\d s=\int_0^1\frac1s\la \nabla f(x_s),\nabla\b(x_s)\ra\,\d s=0,
\]
and the proof is complete.

\section{Structure of the paper and further technical comments}

The paper is organized in such a way that every chapter contains exactly one crucial step of the proof of the splitting theorem. At the beginning of each chapter we recall those preliminary results which are necessary to carry on the corresponding argument. This choice is certainly unusual w.r.t. the standard procedure of collecting all the preliminary notions at the beginning of the paper, but we  preferred to do so for the following two reasons. The first is that this work is heavily based on several recents papers whose results, to grant readability,  need to be recalled. Doing so at the beginning would postpone too much the moment in which the new content appears. The second and most important reason, is that proceeding in this way we can better clarify the structure of the proof and the role of the infinitesimal Hilbertianity assumption. In particular, we will see that the gradient flow of the Busemann function preserves the measure even if the space is not infinitesimally Hilbertian, in line with what is known in the Finsler case (\cite{Ohta12}). Infinitesimal Hilbertianity will instead be crucial in deriving all the metric properties.

We also remark that although our proof is synthetic in nature, given that we work on non-smooth spaces, there is quite a bit of (Sobolev) differential calculus involved. In this direction, most of the tools are taken from  \cite{Gigli12} together with some refinement that we do  along the paper, in particular in Chapters \ref{se:dist}, \ref{se:quot}.

We now turn to the a detailed description of the various chapters.

\bigskip

\noindent\underline{Multiples of $\b$ are  Kantorovich potentials}. Chapter \ref{se:metric} contains a simple, yet crucial, result which is valid in a general metric space: let $(X,\sfd)$ be a given proper geodesic  space containing a line $\bar\gamma$, \emph{assume} that
\begin{equation}
\label{eq:maxintro}
\lim_{t\to+\infty}2t-\sfd(x,\bar\gamma_t)-\sfd(x,\bar\gamma_{-t})=0,\qquad\forall x\in X,
\end{equation}
and put $\b(x):=\lim_{t\to+\infty}t-\sfd(x,\bar\gamma_t)$. Then for every real number $a\in\R$ the function $a\b$ is $c$-concave, its $c$-transform being given by $(a\b)^c=-a\b-\frac{a^2}2$. Notice that this is per se a quite strong rigidity result: on $\R^d$ the only functions with this property are affine ones.

The interest of this simple statement is that it provides a link between the gradient flow of $\b$ and optimal transport, which in turn allows to make use of the $CD(0,N)$ condition. Indeed, a simple consequence of the result stated is that if $X\times\R\ni (x,t)\mapsto\X_t(x)\in X$ is a (Borel) gradient flow of $\b$, then the curve $\R\ni t\mapsto (\X_t)_\sharp\mu\in \probt X$ is a constant speed $W_2$-geodesic for any $\mu\in\probt X$.

\medskip

\noindent\underline{The gradient flow of $\b$ preserves the measure}. In Chapter \ref{se:meas} we prove that the gradient flow of the Busemann function $\b$ preserves the measure on quite general $CD(0,N)$ spaces, provided we \emph{assume} that \eqref{eq:maxintro} holds. This assumption will not be needed once we assume infinitesimal Hilbertianity, but in more general situations, due to the lack of an appropriate strong maximum principle, it is not a priori clear whether \eqref{eq:maxintro} holds or not.

At the beginning of the chapter we recall   the definition of Sobolev space $W^{1,2}(X,\sfd,\mm)$ and the fact that for any $f\in W^{1,2}(X,\sfd,\mm)$ there exists a non-negative function $\weakgrad f\in L^2(X,\mm)$ playing the role of the modulus of the distributional differential of $f$. By construction $\weakgrad f$ is a convex function of $f$, and in particular for $f,g$ Sobolev the limits
\[
\lim_{\eps\downarrow0}\frac{\weakgrad{(g+\eps f)}^2-\weakgrad g^2}{2\eps},\qquad\qquad\lim_{\eps\uparrow0}\frac{\weakgrad{(g+\eps f)}^2-\weakgrad g^2}{2\eps},
\]
always exists, both in the $\mm$-a.e. sense and in $L^1(X,\mm)$. According to \cite{Gigli12}, we call `infinitesimally strictly convex' a space for which the two limits above coincide $\mm$-a.e. for any $f,g$ and denote the common value  by $Df(\nabla g)$. On $\R^d$ equipped with a norm and the Lebesgue measure,  such requirement is equivalent in asking that  the norm is strictly convex and in this case   $Df(\nabla g)$ is nothing but the value of the differential of $f$ applied to the gradient of $g$, whence the notation. It turns out that even in the non-smooth case the object $Df(\nabla g)$ can be interpreted as the `value of the differential of $f$ applied to the gradient of $g$' even if we do not provide a definition of what differentials and gradients are. By this we mean  that the standard first order calculus rules valid in a Finsler world are also valid on infinitesimally strictly convex spaces.

Having defined the value of $Df(\nabla g)$, we can integrate by parts and thus give a meaning to the notion of distributional Laplacian. More precisely, we can  give a rigorous meaning to the expression $\bd g=\mu$ for a Sobolev function  $g$ and a Radon measure  $\mu$ by requiring that
\[
-\int Df(\nabla g)\,\d\mm=\int f\,\d\mu,
\]
holds for any Lipschitz compactly supported $f$. Here and in the following we will keep the bold notation when dealing with such measure valued Laplacian. 

According to \cite{Gigli12}, on an infinitesimally strictly convex $CD(0,N)$ space such that\linebreak $W^{1,2}(\Omega,\sfd,\mm)$ is uniformly convex for any $\Omega\subset X$ open, for the Busemann function $\b$ associated to an half-line it holds $\bd\b=\mu$ for some $\mu\geq 0$. In particular,  for the Busemann function associated to a line satisfying \eqref{eq:maxintro} we have $\bd\b=0$.

\medskip

On a smooth setting and for smooth functions $f$ it is a triviality that $\Delta f=0$ implies that the gradient flow of $f$ preserves the measure. We don't know if the same holds in the non-smooth world, part of the problem being that it is not so clear what the gradient flow of $f$ is (for some class of functions containing the Lipschitz ones, the theory in \cite{AmbrosioGigliSavare08} well  covers existence, but the available uniqueness statements  require hypotheses which a priori are not fulfilled in the current setting). 

Yet, for the very special case of the Busemann function $\b$ associated to a line, we can use the fact that both $\b$ and $-\b$ are Kantorovich potentials, granted by  the previous chapter, to argue as follows. Recall that the functional $\u_N$ is defined on $\probt X$ as
\[
\u_N(\mu):=-\int\rho^{1-\frac1N}\,\d\mm,\qquad\mu=\rho\mm+\mu^s,\quad\mu^s\perp\mm,
\]
and that the $CD(0,N)$ assumption means that $\u_N$ is geodesically convex on $(\probt X,W_2)$.

Thus, formally, if $t\mapsto\X_t$ is a gradient flow of $\b$, by what we learned in the previous chapter we know that for any $\mu_0\in\probt X$ the curve $t\mapsto\mu_t:=(\X_t)_\sharp\mu$ is a $W_2$-geodesic. Hence the geodesic convexity of $\u_N$ yields
\[
\lims_{t\downarrow0}\frac{\u_N(\mu_t)-\u_N(\mu_0)}{t}\leq\u_N(\mu_1)-\u_N(\mu_0).
\]
Pretending we can work as in the smooth case, we know that $\mu_t=(\exp(-t\nabla\b))_\sharp\mu_0$ and therefore by direct computation we get
\begin{equation}
\label{eq:dagiustificare}
\lim_{t\downarrow0}\frac{\u_N(\mu_t)-\u_N(\mu_0)}{t}=-\frac1N\int D(\rho^{1-\frac1N})(\nabla\b)\,\d\mm=\frac1N\int\rho^{1-\frac1N}\,\d\bd\b=0,
\end{equation}
$\rho$ being the density of $\mu_0$, which together with the above estimate gives $\u_N(\mu_1)\geq \u_N(\mu_0)$. Reversing the time and noticing that $t\mapsto\X_{-t}$ is a gradient flow of $-\b$ we get the other inequality and thus that $\u_N(\mu)=\u_N((\X_1)_\sharp\mu)$ for every $\mu\in\probt X$. It is easy to see that this forces  $(\X_1)_\sharp\mm=\mm$ and by analogous arguments we get  $(\X_t)_\sharp\mm=\mm$ for every $t\in\R$.

It turns out that it is possible to make this sort of procedure rigorous   in the non-smooth case.  A key role is played by a first order differentiation formula (introduced in \cite{AmbrosioGigliSavare11-2} and generalized in \cite{Gigli12}) which allows to justify \eqref{eq:dagiustificare} on $CD(0,N)$ spaces. Assuming that \eqref{eq:maxintro} holds, that  the space is infinitesimally strictly convex and that $W^{1,2}(\Omega,\sfd,\mm)$ is uniformly convex for every $\Omega\subset X$ open, we are able to obtain that:
\begin{quote}
There exists a Borel gradient flow $\X:\supp(\mm)\times\R\to\supp(\mm)$ of $\b$ such that $(\X_t)_\sharp\mm=\mm$ holds for every $t\in\R$ and $\X_t\circ\X_s=\X_{t+s}$ holds $\mm$-a.e. for every $t,s\in\R$.

This gradient flow is unique in the class of Borel gradient flows $\tilde\X$ of $\b$ such that $(\tilde\X_t)_\sharp\mm\ll\mm$ for every $t\in\R$.
\end{quote}
See Theorem \ref{thm:gfpresmes} for the precise statement.

\medskip

\noindent\underline{The gradient flow of $\b$ preserves the distance}. In Chapter \ref{se:dist} we introduce the infinitesimal Hilbertianity assumption which we shall keep from that moment on. In particular, according to the results in \cite{Gigli12}, \cite{Bjorn-Bjorn07} and \cite{Gigli-Mondino12}, this means that $\bd(\b^++\b^-)\geq 0$, where $\b^+,\b^-$ are the Busmann functions associated to a line, and that the strong maximum principle holds. Therefore in accordance with the strategy used in the smooth setting, \eqref{eq:maxintro} can be proved so that we don't have to assume it a priori as in the previous chapters.

From the algebraic point of view, the crucial effect of infinitesimal Hilbertianity is that $Df(\nabla g)=Dg(\nabla f)$ $\mm$-a.e. for any Sobolev $f,g$. This identity can be interpreted as the abstract analogous of the fact that on Riemannian manifolds we can identify differentials and gradients via the Riesz theorem. To emphasize the symmetry of this object we will denote it by $\la\nabla f,\nabla g\ra$ and to mimic the standard notation used in the Riemannian context we will also write $|\nabla f|$ in place of $\weakgrad f$.

Following the same sort of computations done in the previous section, we will be able to prove that for $f\in W^{1,2}(X,\sfd,\mm)$ we have $f\circ\X_t\in W^{1,2}(X,\sfd,\mm)$ for any $t\in\R$ and
\begin{equation}
\label{eq:introloc}
|\nabla (f\circ\X_t)|=|\nabla f|\circ\X_t,\qquad\mm\ae,\qquad\forall f\in W^{1,2}(X,\sfd,\mm).
\end{equation}
To turn this Sobolev information into a metric one we use the following property of $CD(K,N)$ spaces, which links the Sobolev quantity $|\nabla f|$ to the metric one  $\Lip(f)$:
\begin{equation}
\label{eq:introsobtolip}
\textrm{Let $f\in W^{1,2}(X,\sfd,\mm)$ be with $|\nabla f|\leq 1$ $\mm$-a.e., then $f$ admits a 1-Lipschitz representative.}
\end{equation}
The fact that  \eqref{eq:introsobtolip} holds on $CD(K,N)$ spaces is a consequence of a result by Rajala \cite{Rajala12} concerning existence of $W_2$-geodesics with uniformly bounded densities. The same property also holds on $RCD(K,\infty)$ spaces \cite{AmbrosioGigliSavare11-2} as a consequence of  the regularizing properties of the heat flow, see also Section  \ref{se:sobtolip}.

The idea is then to consider a dense set $\{x_n\}_{n\in\N}\subset\supp(\mm)$ and the functions\linebreak $f_{k,n}:=\max\{0,\min\{\sfd(\cdot,x_n),k-\sfd(\cdot,x_n)\}\}$. These are 1-Lipschitz  with bounded support and thus belong to $W^{1,2}(X,\sfd,\mm)$ with $|\nabla f_{k,n}|\leq 1$ $\mm$-a.e.. Hence by \eqref{eq:introloc} we deduce that for any  $t\in\R$ we have $f_{k,n}\circ\X_t\in W^{1,2}(X,\sfd,\mm)$ with $|\nabla(f_{k,n}\circ\X_t)|\leq 1$ $\mm$-a.e.. Using the property \eqref{eq:introsobtolip} we get that outside a negligible set $\mathcal N_{k,n,t}$ the map $f_{k,n}\circ\X_t$ is 1-Lipschitz and therefore
\[
\sfd(x,y)\geq\sup_{k,n}\big|f_{k,n}(\X_t(x))-f_{k,n}(\X_t(y))\big|=\sfd(\X_t(x),\X_t(y)),\qquad\forall x,y\in X\setminus\cup_{k,n}\mathcal N_{n,t}.
\]
In other words, $\X_t:(\supp(\mm),\sfd)\to(\supp(\mm),\sfd)$ has a 1-Lipschitz representative. Reversing times we are then able to obtain that:
\begin{quote}
There exists a unique continuous map $\bar\X:\supp(\mm)\times\R\to\supp(\mm)$ which coincides $\mm\times\mathcal L^1$-a.e. with $\X$, and this map satisfies
\[
\begin{split}
\sfd(\bar\X_t(x),\bar\X_t(y))&=\sfd(x,y),\qquad\,\,\,\forall x,y\in\supp(\mm),\ t\in\R,\\
\bar\X_t(\bar\X_s(x))&=\bar\X_{t+s}(x),\qquad\forall x\in\supp(\mm), \ t,s\in\R.
\end{split}
\]
\end{quote}
It is worth to underline that the duality principle that allows to deduce from \eqref{eq:introloc} that $\X_t$ has a representative which is an isometry, has little to do with lower Ricci bounds and infinitesimal Hilbertianity: the same arguments can be carried out on spaces having the \emph{Sobolev-to-Lipschitz} property, these being defined as:
\begin{quote}
$(X,\sfd,\mm)$ has the Sobolev-to-Lipschitz property provided any $f\in W^{1,2}(X,\sfd,\mm)$ with $\weakgrad f\leq 1$ $\mm$-a.e. has a 1-Lipschitz representative,
\end{quote}
see Section \ref{se:sobtolip} for the precise definition. Then with the same arguments as before one can prove the following general dualism between metric measure theoretic structures and Sobolev norms:
\begin{quote}
Let $(X_1,\sfd_1,\mm_1)$ and $(X_2,\sfd_2,\mm_2)$ be two spaces with the Sobolev-to-Lipschitz property with $\mm_1,\mm_2$ giving finite mass to bounded sets and   $T:X_1\to X_2$ an invertible Borel map. 

Then $T$ is - up to modification on a negligible set - an isomorphism of metric measure spaces if and only if $\|f\circ T\|_{W^{1,2}(X_1,\sfd_1,\mm_1)}=\|f\|_{W^{1,2}(X_2,\sfd_2,\mm_2)}$ for every Borel function $f:X_2\to\R$.
\end{quote}
See Proposition \ref{prop:isom} (here we adopt the standard convention according to which the Sobolev norm is $+\infty$ if the function is not Sobolev). This duality result should be compared to the simple statement valid in metric spaces:
\begin{quote}
Let $(X_1,\sfd_1)$ and $(X_2,\sfd_2)$ be two metric spaces and $T:X_1\to X_2$ an invertible map.

Then $T$ is an isometry if and only if $\Lip_{X_1}(f\circ T)=\Lip_{X_2}(f)$ for any $f:X_2\to \R$.
\end{quote}
The latter is trivial to prove.  The metric-measure counterpart is slightly more delicate due to the fact that in general Sobolev functions do not carry any information about the geometry of the space: to see why just consider a totally disconnected space $(X,\sfd,\mm)$ and recall that in this case every function $f\in L^2(X,\mm)$ is Sobolev with $\weakgrad f\equiv 0$. Thus if we wish Sobolev functions to be the `dual' object of the metric-measure theoretic structure in the same way as Lipschitz functions are the `dual' object of a metric we need to impose some a priori condition: the   Sobolev-to-Lipschitz property does the job.

As a side remark, we point out that the this property is strongly reminiscent of  the construction of the intrinsic distance $\sfd_{\mathcal E}$ associated to a Dirichlet form $\mathcal E$:
\[
\sfd_{\mathcal E}(x,y):=\Big\{\sup |f(x)-f(y)|\ :\ f\in D(\mathcal E)\cap C(X),\ \textrm{such that}\ \Gamma(f,f)\leq 1,\ \mm\ae\Big\},
\]
the Sobolev-to-Lipschitz property being replaced by the assumption that the functions $f$ considered are continuous. 
We refer to \cite{AmbrosioGigliSavare11-2}, \cite{AmbrosioGigliSavare12} and \cite{Koskela-Zhou12} for some recent advances about the links between the theory of Dirichlet forms and the metric of the underlying space in connection with Ricci curvature lower bounds.

\medskip

\noindent\underline{The quotient space isometrically embeds into the original one}. Once we have a well defined gradient flow of $\b$ on the whole $\supp(\mm)$, we can define the quotient metric space $(X',\sfd')$ as: $X':=\supp(\mm)/\sim$ where $x\sim y$ provided $\bar\X_t(x)=y$ for some $t\in \R$ and
\[
\sfd'(\pi(x),\pi(y)):=\inf_{t\in\R}\sfd(x,\bar\X_t(y)),\qquad\forall x,y\in\supp(\mm),
\]
$\pi:\supp(\mm)\to X'$ being the natural projection. It is also easy to guess what is the correct measure $\mm'$ on $X'$: just put
\[
\mm'(E):=\mm\big(\pi^{-1}(E)\cap \b^{-1}([0,1])\big),\qquad\forall \textrm{ Borel } E\subset X'.
\]
Notice that the map $\pi$ has a natural right inverse $\iota:X'\to X$ defined by
\[
\iota(x'):=x,\qquad\textrm{ provided $\pi(x)=x'$ and $\b(x)=0$},
\]
and that a posteriori, i.e. once the splitting will be proved, we will know that $\iota$ is an isometric embedding. In practice, it seems technically preferable to prove such property in advance and only later deduce the splitting out of it. The reason for this is that it seems hard to get a direct proof of the splitting of the distance according to formula \eqref{eq:intropit}: it will instead be easier to compare the Sobolev spaces $W^{1,2}(X'\times\R,\sfd'\times\sfd_{\rm Eucl},\mm'\times\mathcal L^1)$ and $W^{1,2}(X,\sfd,\mm)$ and then use the same duality principle used in the previous chapter. Yet, in order to do this we need an a priori good knowledge of the Sobolev space $W^{1,2}(X',\sfd',\mm')$ and of its relation with $W^{1,2}(X,\sfd,\mm)$. Also, as a side advantage, knowing that $\iota$ is an isometry will quickly lead to the proof that $(X',\sfd',\mm')$ is an infinitesimally Hilbertian $CD(0,N)$ space, so that we will be allowed to use all the known results about these spaces in what will come next  (but to get that it is a $CD(0,N-1)$ space will not be trivial until we prove that the distance splits). 

\medskip

The geometric idea to get that $\iota$ is an isometry is the same as the one presented at the end of the previous section. The main problem in following that argument in the non-smooth world is in deriving/giving a meaning to the Euler equation \eqref{eq:eulminintro}: the issue is - clearly - that the map $t\mapsto \frac12\sfd^2(x,\bar\X_t(y))$ is not known to be $C^1$, but just Lipschitz. The idea to overcome this problem is to lift the analysis from points to probability measures with bounded density: this has the effect `averaging out' the unsmoothness of the space and leads to the desired $C^1$ regularity. This principle works  both in passing from the study of $s\mapsto \b(x_s)$ to that of $s\mapsto\int \b\,\d\mu_s$  and in passing from $t\mapsto\frac12\sfd^2(x,\bar\X_t(y))$ to $t\mapsto\frac12W_2^2(\mu,(\bar\X_t)_\sharp\nu)$, where $(x_s)$ and $(\mu_s)$ are geodesics in $X$ and $\probt X$ respectively.

Concretely, what we prove is:
\begin{quote}
Let $\mu_0,\mu_1\in\probt X$ with bounded support such that $\mu_0,\mu_1\leq C\mm$ for some $C>0$ and  $(\mu_t)$ the geodesic connecting them.

Then the map $t\mapsto\int\b\,\d\mu_t$ is $C^1$ and its derivative is given by
\[
\frac\d{\d t}\int\b\,\d\mu_t=\frac1t\int\la\nabla\b,\nabla\varphi_t\ra\,\d\mu_t,\qquad\forall t\in(0,1],
\]
where $\varphi_t$ is any Lipschitz Kantorovich potential from $\mu_t$ to $\mu_0$.
\end{quote}
See Proposition \ref{prop:intf}. And similarly:
\begin{quote} 
Let $\mu,\nu\in \probt X$ be with bounded support such that  $\nu\leq C\mm$ for some $C$. 

Then the map $t\mapsto \frac12 W_2^2\big(\mu,(\bar\X_t)_\sharp\nu\big)$ is $C^1$ and its derivative is given by 
\[
\frac\d{\d t}\frac12W_2^2(\mu,(\bar\X_t)_\sharp\nu)=\frac1t\int\la\nabla\phi_t,\nabla\varphi_t\ra\,\d\mu_t,\qquad\forall t\in(0,1],
\]
where $\varphi_t,\phi_t$ are any choice of Lipschitz Kantorovich potentials from $(\bar\X_t)_\sharp\nu$ to $\nu$ and from $(\bar\X_t)_\sharp\nu$ to $\mu$ respectively.
\end{quote}
See Proposition \ref{prop:intdist}. Notice the analogy with the differentiation formulas valid in the smooth setting (see e.g. Chapter 7 of \cite{AmbrosioGigliSavare08}). Notice also that in the first of the differentiation formulas above we mentioned `the' geodesic connecting $\mu_0$ to $\mu_1$ rather than `a' geodesic. Indeed, a recent result by Rajala and Sturm (\cite{RajalaSturm12}) grants that with the above assumptions there exists a unique optimal transport plan from $\mu_0$ to $\mu_1$, that this plan is induced by a map $T$ and that for $\mu_0$-a.e. $x$ the geodesic connecting $x$ to $T(x)$ is unique. This is a genuine  metric-measure-theoretic version of the celebrated Brenier-McCann theorem,  and yields in particular the uniqueness of the $W_2$-geodesic and useful regularity results for the interpolated densities.

With these formulas at disposal we can now proceed by approximation as follows.  Pick $x,y\in \supp(\mm)$, $\eps>0$, define $\mu:=\mm(B_\eps(x))^{-1}\mm\restr{B_\eps(x)}$, $\nu:=\mm(B_\eps(y))^{-1}\mm\restr{B_\eps(y)}$ and look for the minimum of $t\mapsto\frac12W_2^2(\nu,(\bar\X_t)_\sharp\mu)$. With the same computations done at the end of Section \ref{se:nohess} - which are now justified at the level of probability measures - we obtain that the minimum is achieved at the only $t_0$ such that $\int\b\,\d\nu=\int\b\,\d(\bar\X_{t_0})_\sharp\mu$. Since clearly we have $|\int\b\,\d\mu-\b(x)|\leq\eps$ and $|\int\b\,\d\nu-\b(y)|\leq\eps$, by letting $\eps\downarrow0$ we can conclude that the only minimum of $t\mapsto\frac12\sfd^2(y,\bar\X_t(x))$ is achieved for the $t_0$ such that $\b(y)=\b(\bar\X_{t_0}(x))$, which is equivalent to the fact that $\iota:(X',\sfd')\to(X,\sfd)$ is an isometry, as desired.

\medskip

\noindent\underline{``Pythagoras' theorem'' holds}. At this stage of the proof we know that
\begin{equation}
\label{eq:linee}
\begin{array}{rll}
\sfd(x,y)\!\!\!\!&=\sfd'(\pi(x),\pi(y)),\qquad &\textrm{ if $x,y\in\supp(\mm)$ are such that }\b(x)=\b(y),\\
\sfd(x,y)\!\!\!\!&=|\b(x)-\b(y)|,\qquad &\textrm{ if $x,y\in\supp(\mm)$ are such that }\pi(x)=\pi(y),
\end{array}
\end{equation}
but we still need to prove that $\sfd$ splits according to formula \eqref{eq:intropit}, i.e. that it holds
\[
\sfd(x,y)^2=\sfd'(\pi(x),\pi(y))^2+|\b(x)-\b(y)|^2,\qquad\forall x,y\in\supp(\mm).
\]
As said, a direct proof of this formula seems hardly achievable and we will instead proceed by duality with Sobolev functions, as in Chapter \ref{se:dist}.

We will therefore introduce the map  $\mad:\supp(\mm)\to X'\times\R$  by $\mad(x):=(\pi(x),\b(x))$ and prove that the right composition with $\mad$ provides an isometry of $W^{1,2}(X'\times\R)$ in $W^{1,2}(X)$. Given that known results (\cite{AmbrosioGigliSavare11-2}, \cite{AmbrosioGigliSavare12}) grant that $X'\times\R$ has the Sobolev-to-Lipschitz property, this will be sufficient to conclude.

By definition of $\mm'$ and the fact that $(\bar\X_t)_\sharp\mm=\mm$ for any $t\in\R$, it is obvious that $\mad_\sharp\mm=\mm'\times\mathcal L^1$, thus the problem reduces to prove that
\begin{equation}
\label{eq:introprod}
\begin{split}
&f\in W^{1,2}_{\rm loc}(X'\times\R)\quad\Leftrightarrow \quad f\circ\mad\in W^{1,2}_{\rm loc}(X) \\
&\textrm{ and in this case the identity }|\nabla f|_{X'\times\R}\circ\mad=|\nabla (f\circ\mad)|_X \textrm{ holds }\mm'\times\mathcal L^1\ae.
\end{split}
\end{equation}
From the identities \eqref{eq:linee} and the fact that $\mad_\sharp\mm=\mm'\times\mathcal L^1$ it easily follows that the above is true if $f$ depends only on one coordinate, i.e. if either  $f(x',t)=g(x')$ for some $g\in W^{1,2}_{\rm loc}(X')$ or $f(x',t)=h(t)$ for some $h\in W^{1,2}_{\rm loc}(\R)$.

Now we know by assumption that $(X,\sfd,\mm)$ is infinitesimally Hilbertian and from the structural characterization of product spaces given in \cite{AmbrosioGigliSavare11-2},  \cite{AmbrosioGigliSavare12} that $(X'\times\R,\sfd'\times\sfd_{\rm Eucl},\mm'\times\mathcal L^1)$ is infinitesimally Hilbertian as well. Using these informations we can deduce that for $g,h$ as above we have the natural orthogonality relations
\[
\begin{split}
\la \nabla(g\circ\pi_{X'}),\nabla(h\circ\pi_\R)\ra_{X'\times\R}&=0,\qquad\mm'\times\mathcal L^1\ae,\\
\la\nabla(g\circ\pi),\nabla(h\circ\b)\ra_X&=0,\qquad\mm\ae.
\end{split}
\]
From these and basic algebraic manipulation we can prove that \eqref{eq:introprod} holds if $f$ belongs to the algebra generated by functions depending on just one coordinate. The general case will then follow via an approximation argument.

\medskip

\noindent\underline{The quotient space has dimension $N-1$}. With the result of the previous chapter we know that $(X,\sfd,\mm)$ splits as the product of an infinitesimall Hilbertian $CD(0,N)$ space $(X',\sfd',\mm')$ and the Euclidean line $(\R,\sfd_{\rm Eucl},\mathcal L^1)$. What remains to prove is the dimension reduction property, namely that if $N\geq 2$ then $(X',\sfd',\mm')$ is a $CD(0,N-1)$ space and that if $N\in[1,2)$ then $X'$ is just a point.

The case $N\in [1,2)$ can be handled by simply looking at the Hausdorff dimension of $(\supp(\mm),\sfd)$ and recalling (\cite{Lott-Villani09},\cite{Sturm06II}) that it must be bounded from above by $N$.

For the case $N\geq 2$ the argument is very similar to those presented in \cite{Cavalletti-Sturm12} and \cite{Cavalletti12}, where also some sort of dimension reduction appeared. Actually, our situation is simpler than that of these papers because we have at disposal a product structure which is absent in \cite{Cavalletti-Sturm12}, \cite{Cavalletti12}. The idea is to fix a geodesic $t\mapsto\mu_t:=\rho_t\mm'$ on $\probt{X'}$,  choose arbitrary $\alpha,\beta>0$ and consider the geodesic $t\mapsto\nu_t:=\frac1{(1-t)\alpha+t\beta}\mathcal L^1\restr{[0,(1-t)\alpha+t\beta]}$ on $\probt \R$. Given that $X'$ is an infinitesimally Hilbertian $CD(0,N)$ space, using the aforementioned existence and uniqueness of optimal maps we know that there exists a unique optimal geodesic plan $\ppi\in\probt{C([0,1],X')}$ from $\mu_0$ to $\mu_1$.

Simple metric arguments ensure that $t\mapsto \mu_t\times\nu_t$ is a geodesic on $\probt{X'\times\R}$. Since we know that $X'\times\R$ is isomorphic to $X$ and that the latter is a $CD(0,N)$ space, using again the   existence and uniqueness of optimal maps to localize the $CD(0,N)$ condition we can deduce
\[
\left(\frac{\rho_t(\gamma_t)}{(1-t)\alpha+t\beta}\right)^{-\frac1N}\geq(1-t)\left(\frac{\rho_0(\gamma_0)}{\alpha}\right)^{-\frac1N}+t\left(\frac{\rho_1(\gamma_1)}{\beta}\right)^{-\frac1N},\qquad\ppi\ae \ \gamma.
\]
Then a simple  optimization in  $\alpha$ and $\beta$ gives the conclusion. 

\bigskip

\noindent\emph{I wish to warmly thank  K.-T. Sturm for several stimulating conversations I had with him while working on this project}

\mainmatter

\chapter{Multiples of $\b$ are Kantorovich potentials}\label{se:metric}
\section{Preliminary notions}

\subsection{Metric spaces}

The  metric spaces $(X,\sfd)$ that we shall consider will always be complete and separable. In most cases, we shall actually deal with proper spaces, i.e. spaces such that bounded closed sets are compact.

By support of a Borel measure $\mu$ on $X$ we intend the intersection of all closed sets where $\mu$ is concentrated, we will denote it by $\supp(\mu)$. Similarly for functions.

For $a,b\in\R$, $a<b$, a curve $\gamma:[a,b]\to X$ is said absolutely continuous provided there exists a function $f\in L^1(a,b)$ such that
\begin{equation}
\label{eq:ac}
\sfd(\gamma_t,\gamma_s)\leq\int_t^s f(r)\,\d r,\qquad\forall t<s\in[a,b].
\end{equation}
It turns out that if $\gamma$ is absolutely continuous then the limit
\begin{equation}
\label{eq:ms}
\lim_{h\to 0}\frac{\sfd(\gamma_{t+h},\gamma_t)}{|h|},
\end{equation}
exists for $\mathcal L^1$-a.e. $t\in[a,b]$, where here and in the following we denote by $\mathcal L^1$ the Lebesgue measure on $\R$. The limit in \eqref{eq:ms}  is called metric speed of the curve, denoted by $|\dot\gamma|$, it belongs to $L^1(a,b)$ and is the minimal $L^1$ function -  in the $\mathcal  L^1$-a.e. sense - that can be chosen as $f$ in \eqref{eq:ac}.

A curve $\gamma:[0,1]\to X$ is a minimizing constant speed geodesic, or simply geodesic, if 
\[
\sfd(\gamma_t,\gamma_s)\leq |s-t| \sfd(\gamma_0,\gamma_1),\qquad\forall t,s\in[0,1].
\]
The space $(X,\sfd)$ is said geodesic provided for any $x,y\in X$ there exists a geodesic connecting them.

A curve $\gamma:\R^+\to X$ is a \emph{half line} provided
\[
\sfd(\gamma_t,\gamma_s)=|t-s|,\qquad\forall t,s\geq 0.
\]
To an half line it is associate the Busemann function $\b:X\to\R$ defined by
\begin{equation}
\label{eq:halfbus}
\b(x):=\lim_{t\to+\infty}t-\sfd(x,\gamma_t),
\end{equation}
a simple application of the triangle inequality shows that the limit exists and is real valued for any $x\in X$, and thus $\b$ is a well defined 1-Lipschitz function. A curve $\gamma:\R\to X$ is a \emph{line}  provided 
\[
\sfd(\gamma_t,\gamma_s)=|t-s|,\qquad\forall t,s\in\R.
\]
To a line we can associate two Busemann functions $\b^+,\b^-$, one for each of the two naturally induced half-lines:
\begin{equation}
\label{eq:busemann}
\b^+(x):=\lim_{t\to+\infty}t-\sfd(x,\gamma_t),\qquad\qquad\b^-(x):=\lim_{t\to+\infty}t-\sfd(x,\gamma_{-t}).
\end{equation}
Notice that the triangle inequality ensures that
\begin{equation}
\label{eq:basebus}
\b^++\b^-\leq 0.
\end{equation}

We shall denote by $C([0,1],X)$ the space of continuous curves on $[0,1]$ with values in $X$ endowed with the $\sup$ norm. It is a complete and separable.  For $t\in[0,1]$ the evaluation map $\e_t:C([0,1],X)\to X$ is defined by
\[
\e_t(\gamma):=\gamma_t,\qquad\forall \gamma\in C([0,1],X).
\]
The space $\geo(X)\subset C([0,1],X)$ is the set of all geodesics; it is complete and separable.

Given a function $f:X\to\R$ we shall denote by $\lip(f):X\to[0,\infty]$ its local Lipschitz constant defined by
\[
\lip(f)(x):=\lims_{y\to x}\frac{|f(y)-f(x)|}{\sfd(x,y)},
\]
if $x$ is not isolated and $\lip(f)(x)=0$ otherwise. The global Lipschitz constant, or simply Lipschitz constant, $\Lip(f)$ is instead defined by
\[
\Lip(f):=\sup_{x,y\in X}\frac{|f(y)-f(x)|}{\sfd(x,y)}.
\]

Let $f:X\to\R$ be a Lipschitz function and $\gamma:I\to X$ an absolutely continuous curve, $I\subset \R$ being a non-trivial interval. Then the map $t\mapsto f(\gamma_t)$ is absolutely continuous and by direct computation one sees that
\[
\frac\d{\d t}f(\gamma_t)\leq |\dot\gamma_t|\,\lip(f)(\gamma_t),\qquad\mathcal L^1\ae \ t.
\]
In particular we deduce that for $t<s\in I$ it holds
\begin{equation}
\label{eq:tuttecurve}
f(\gamma_t)\leq f(\gamma_s)+\frac12\int_t^s|\dot\gamma_r|^2\,\d r+\frac12\int_t^s\lip(f)^2(\gamma_r)\,\d r.
\end{equation}
If the ambient space $X$ is $\R^d$, equality holds in \eqref{eq:tuttecurve} if and only if $\gamma_t'=-\nabla f(\gamma_t)$ for $\mathcal L^1$-a.e. $t$, i.e. if and only if $\gamma$ is a gradient flow trajectory for $f$. Asking for equality in \eqref{eq:tuttecurve} makes sense also in arbitrary metric spaces, thus we are lead to the following definition:
\begin{definition}[Gradient flows]\label{def:gf}
Let $(X,\sfd)$ be a metric space, $f:X\to\R$ a Lipschitz map, $I\subset\R$ a non trivial interval and $\gamma:I\to X$ a curve.

We say that $\gamma$ is a gradient flow trajectory of $f$ provided
\[
f(\gamma_t)= f(\gamma_s)+\frac12\int_t^s|\dot\gamma_r|^2\,\d r+\frac12\int_t^s\lip(f)^2(\gamma_r)\,\d r,
\]
holds for any $t<s$, $t,s\in I$.
\end{definition}
Notice that this is not really the appropriate  definition of gradient flow in a metric setting, as in general the \emph{descending slope} should be used in place of the local Lipschitz constant. Yet, given that we will need this concept only for the Busemann function and that for it the two approaches coincide, we preferred  to proceed with the more direct apporach. See \cite{AmbrosioGigliSavare08} for a general overview on the topic.

\subsection{Optimal transport}
Let $(X,\sfd)$ be complete and separable and denote by $\prob X$ the space of Borel probability measures on $X$.

The space $\probt X\subset \prob X$ is the set of those probability measures with finite second moment, i.e.
\[
\probt X:=\Big\{\mu\in\prob X\ :\ \int\sfd^2(\cdot,x_0)\,\d\mu<\infty,\textrm{ for some - and thus any - }x_0\in X\Big\}.
\]
For $\mu,\nu\in \probt X$ their quadratic transportation distance $W_2(\mu,\nu)$ is defined by
\begin{equation}
\label{eq:ot}
W_2^2(\mu,\nu):=\min_{\sggamma}\int\sfd^2(x,y)\,\d\ggamma(x,y),
\end{equation}
where the minimum is considered among all $\ggamma\in\prob{X^2}$ such that $\pi^1_\sharp\ggamma=\mu$ and $\pi^2_\sharp\ggamma=\nu$. It turns out that $W_2$ is actually a distance on $\probt X$ and that $(\probt X,W_2)$ is complete and separable provided $(X,\sfd)$ is so.

An important characterization of $W_2$ can be given in terms on the dual formulation of the optimal transport problem: one can see that it holds
\begin{equation}
\label{eq:dualot}
\frac12W_2^2(\mu,\nu)=\sup\int \varphi\,\d\mu+\int\varphi^c\,\d\nu,
\end{equation}
where the $\sup$ is taken among all the Borel functions $\varphi:X\to \R\cup\{-\infty\}$ such that $\varphi\in L^1(\mu)$ and the $c$-transform $\varphi^c$ of $\varphi$ is defined as
\[
\varphi^c(y):=\inf_{x\in X}\frac{\sfd^2(x,y)}2-\varphi(x).
\]
A function $\varphi:X\to \R$ such that $\varphi^{cc}=\varphi$ is said $c$-concave.

It turns out that the $\sup$ in \eqref{eq:dualot} is always attained and that optimizers $\varphi$ can be chosen to be $c$-concave: any such $\varphi$ is called Kantorovich potential from $\mu$ to $\nu$, or Kantorovich potential relative to $(\mu,\nu)$. It is also possible to see that Kantorovich potentials can be chosen to satisfy the following property, slight stronger than $c$-concavity:
\[
\varphi(x)=\inf_{y\in\supp(\mu)}\frac{\sfd^2(x,y)}{2}-\varphi^c(y).
\]
This  shows, in particular, that if $\nu$ has bounded support then $\varphi$ can be chosen to be Lipschitz on bounded sets. If $\mu$ has also bounded support, the $\varphi$ can be chosen to be bounded and globally Lipschitz.

For $\varphi:X\to\R$ $c$-concave, the $c$-superdifferential $\partial^c\varphi\subset X^2$ is defined as the set of $(x,y)$ such that
\[
\varphi(x)+\varphi^c(y)=\frac{\sfd^2(x,y)}{2},
\]
(notice that for arbitrary $(x,y)$ the inequality $\leq$ holds). For $x\in X$, the set $\partial^c\varphi(x)\subset X$ is then the set of those $y$'s such that $(x,y)\in\partial^c\varphi$.

A crucial property of Kantorovich potentials $\varphi$ is that 
\begin{equation}
\label{eq:optconc}
\textrm{any plan $\ggamma$ which minimizes \eqref{eq:ot} must be concentrated on $\partial^c\varphi$.}
\end{equation}

If we further assume that $(X,\sfd)$ is a geodesic space, then $W_2$ can be equivalently characterized in dynamic terms as:
\begin{equation}
\label{eq:dot}
W_2^2(\mu,\nu)=\min\iint_0^1|\dot\gamma_t|^2\,\d t\,\d\ppi(\gamma),
\end{equation}
where the minimum is taken among all $\ppi\in\prob{C([0,1],X)}$ such that $(\e_0)_\sharp\ppi=\mu$ and $(\e_1)_\sharp\ppi=\nu$. Here and in the following we adopt the  convention according to which $\int_0^1|\dot\gamma_t|^2\,\d t=+\infty$ if $\gamma$ is not absolutely continuous, thus  any $\ppi$ such that $\iint_0^1|\dot\gamma_t|^2\,\d t\,\d\ppi(\gamma)<\infty$ must be concentrated on absolutely continuous curves. Any plan that realizes the $\min$ in \eqref{eq:dot} is called optimal geodesic plan, or simply optimal plan. The set of optimal geodesic plans from $\mu$ to $\nu$ is denoted by $\gopt(\mu,\nu)$. Any plan in $\gopt(\mu,\nu)$ must be concentrated on $\geo(X)$.

 If $\ppi\in\gopt(\mu,\nu)$, then $\ggamma:=(\e_0,\e_1)_\sharp\ppi$ minimizes \eqref{eq:ot}, thus by  \eqref{eq:optconc} we get that 
any $\ppi\in\gopt(\mu,\nu)$ fulfills $\gamma_1\in\partial^c\varphi(\gamma_0)$ for $\ppi$-a.e. $\gamma$,
where $\varphi$ is any Kantorovich potential from $\mu$ to $\nu$. 

There is a strict link between geodesics in $(\probt X,W_2)$ and optimal geodesic plans: for any $(\mu_t)$ geodesic there exists $\ppi\in\gopt(\mu_0,\mu_1)$ such that
\[
(\e_t)_\sharp\ppi=\mu_t,\qquad\forall t\in[0,1],
\]
and conversely any $\ppi\in\gopt(\mu_0,\mu_1)$ produces a geodesic via the above formula. Any such $\ppi$ is said to induce, or to be a lifting of, the geodesic $(\mu_t)$ and any Kantorovich potential from $\mu_0$ to $\mu_1$ is also said to be a Kantorovich potential relative to $(\mu_t)$.

More generally, Lisini (\cite{Lisini07}) proved that every absolutely continuos curve $(\mu_t)$ admits a lifting in the following sense:
\begin{theorem}
Let $(X,\sfd)$ be a complete, separable metric space and $(\mu_t)\subset\probt X$ a $W_2$-absolutely continuous curve such that $\int_0^1|\dot\mu_t|^2\,\d t<\infty$. Then there exists a plan\linebreak $\ppi\in\prob{C([0,1],X)}$ such that
\[
\begin{split}
(\e_t)_\sharp\ppi&=\mu_t,\qquad\ \forall t\in[0,1],\\
\int|\dot\gamma_t|^2\,\d\ppi(\gamma)&=|\dot\mu_t|,\qquad\rm{a.e.} \ t\in[0,1].
\end{split}
\]
\end{theorem}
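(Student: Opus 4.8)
The plan is to reproduce Lisini's argument: approximate $(\mu_t)$ by piecewise-geodesic measure-valued curves, realise each approximation as a probability measure on $C([0,1],X)$, and pass to the limit by a compactness argument. \emph{Step 1 (dyadic plans).} For $n\in\N$ put $t^n_i:=i2^{-n}$, $i=0,\dots,2^n$; for each $i$ choose $\ppi^n_i\in\gopt(\mu_{t^n_i},\mu_{t^n_{i+1}})$, reparametrise its geodesics affinely onto $[t^n_i,t^n_{i+1}]$, and glue these $2^n$ plans along their common one-time marginals $\mu_{t^n_i}$ by disintegration and the usual gluing lemma. This yields $\ppi^n\in\prob{C([0,1],X)}$ concentrated on curves that are constant-speed geodesics on each dyadic subinterval, with $(\e_{t^n_i})_\sharp\ppi^n=\mu_{t^n_i}$ for all $i$. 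On $[t^n_i,t^n_{i+1}]$ such a curve has speed $2^n\sfd(\gamma_{t^n_i},\gamma_{t^n_{i+1}})$, so $W_2(\mu_{t^n_i},\mu_{t^n_{i+1}})\le\int_{t^n_i}^{t^n_{i+1}}|\dot\mu_r|\,\d r$ together with Cauchy--Schwarz gives
\[
\iint_0^1|\dot\gamma_t|^2\,\dt\,\d\ppi^n(\gamma)=2^n\sum_{i=0}^{2^n-1}W_2^2(\mu_{t^n_i},\mu_{t^n_{i+1}})\le\int_0^1|\dot\mu_t|^2\,\dt=:E<\infty .
\]

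\emph{Step 2 (tightness).} Let $\Psi(\gamma):=\int_0^1|\dot\gamma_t|^2\,\dt$, set to $+\infty$ when $\gamma$ is not absolutely continuous; $\Psi$ is lower semicontinuous on $C([0,1],X)$ and $\sfd(\gamma_s,\gamma_t)\le\sqrt{|s-t|}\,\Psi(\gamma)^{1/2}$. Hence the modulus of continuity $w_\delta(\gamma):=\sup_{|s-t|\le\delta}\sfd(\gamma_s,\gamma_t)$ obeys $\ppi^n(\{w_\delta(\cdot)\ge\eta\})\le\ppi^n(\{\Psi\ge\eta^2/\delta\})\le E\delta/\eta^2$ uniformly in $n$, while for each dyadic $t$ the family $\{(\e_t)_\sharp\ppi^n\}_n$ is tight (it equals $\{\mu_t\}$ for $n$ large, and the remaining members form a finite, hence tight, set). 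By the Ascoli--Arzel\`a tightness criterion in $\prob{C([0,1],X)}$ the sequence $\{\ppi^n\}$ is tight, so some subsequence satisfies $\ppi^{n_k}\weakto\ppi$.

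\emph{Step 3 (passage to the limit).} For dyadic $t$ one has $(\e_t)_\sharp\ppi^{n_k}=\mu_t$ for $k$ large, and continuity of $\e_t$ yields $(\e_t)_\sharp\ppi=\mu_t$; narrow continuity of $t\mapsto(\e_t)_\sharp\ppi$ and continuity of $t\mapsto\mu_t$ extend this to all $t\in[0,1]$, the first claimed identity. Lower semicontinuity of $\Psi$ and Fatou give $\iint_0^1|\dot\gamma_t|^2\,\dt\,\d\ppi\le\limi_k\iint_0^1|\dot\gamma_t|^2\,\dt\,\d\ppi^{n_k}\le E$. Putting $v_t:=\big(\int|\dot\gamma_t|^2\,\d\ppi(\gamma)\big)^{1/2}$ (Borel in $t$ by Fubini), for $s<t$ one estimates $W_2(\mu_s,\mu_t)\le\int\sfd(\gamma_s,\gamma_t)\,\d\ppi\le\int_s^t v_r\,\d r$, whence $|\dot\mu_t|\le v_t$ for a.e.\ $t$; combined with $\int_0^1v_t^2\,\dt\le E=\int_0^1|\dot\mu_t|^2\,\dt$ this forces $v_t=|\dot\mu_t|$, i.e.\ $\int|\dot\gamma_t|^2\,\d\ppi(\gamma)=|\dot\mu_t|^2$ for a.e.\ $t$, which is the second identity. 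The one point requiring real care is the tightness in Step 2: since $X$ need not be proper, relative compactness of action sublevels is not available by itself, so one must combine the uniform action bound (which controls the moduli of continuity in probability) with the elementary tightness of the one-time marginals at a dense set of times, and only then invoke the Ascoli--Arzel\`a argument for $C([0,1],X)$-valued laws; the remaining steps are the gluing construction and standard semicontinuity.
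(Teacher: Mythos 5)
The paper does not prove this statement; it is recalled verbatim from Lisini (\cite{Lisini07}) as a preliminary fact, so there is no in-paper proof to compare against. Your argument is precisely the standard Lisini construction --- dyadic piecewise-geodesic plans glued along common marginals, the action bound from absolute continuity and Cauchy--Schwarz, tightness via the uniform action control together with tightness of one-time marginals at a dense set of times, and the Fatou/Minkowski argument identifying the a.e.\ limit --- and it is correct, including the delicate point you flag that in a non-proper $X$ one cannot get tightness from action bounds alone. One remark: the displayed conclusion in the paper reads $\int|\dot\gamma_t|^2\,\d\ppi(\gamma)=|\dot\mu_t|$, which is a typo for $|\dot\mu_t|^2$; your derivation correctly yields the squared version.
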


\section{Result}
In this section we shall assume that:
\begin{equation}
\label{eq:ass1}
\begin{split}
&(X,\sfd)\textrm{ is a proper geodesic space, }\bar\gamma:\R\to X\textrm{ is a line,}\\
& \b^+\textrm{ and $\b^-$ are the associated Busemann functions as in \eqref{eq:busemann}},\\
&\textrm{the identity $\b^++\b^-=0$ holds on all $X$. Put $\b:=\b^+$}
\end{split}
\end{equation}
Recall that according to \eqref{eq:basebus} in general only the inequality $\b^++\b^-\leq 0$ holds. Here we want to analyze which sort of metric rigidity properties can be inferred by the hypothesis $\b^++\b^-\equiv 0$:
\begin{theorem}[Multiples of $\b$ are Kantorovich potentials]\label{thm:basemetric}
Let $(X,\sfd)$ and $\bar\gamma$ be as in \eqref{eq:ass1}. Then the following are true.
\begin{itemize} 
\item[i)] For every $a\in\R$ the function $a\b$ is $c$-concave and fulfills
\begin{equation}
\label{eq:abcconc}
\begin{split}
(a\b)^c&=-a\b-\frac{a^2}2,\\
(-a\b)^c&=a\b-\frac{a^2}2.
\end{split}
\end{equation}
In particular, $(x,y)\in\partial^c(a\b)$ if and only if $(y,x)\in \partial^c(-a\b)$.
\item[ii)] $\lip(\b)\equiv 1$.
\item[iii)] For $a\in\R$ and $\gamma:[0,1]\to X$ the following are equivalent:
\begin{itemize}
\item[a)] $\gamma$ is a constant speed geodesic and $\gamma_1\in \partial^c(a\b)(\gamma_0)$
\item[b)] $\gamma$ is a gradient flow trajectory of $a\b$ 
\end{itemize}
In particular, if $\gamma:[0,1]\to X$ is a geodesic with $\gamma_1\in\partial^c(a\b)(\gamma_0)$, then we have
\[
\gamma_t\in\partial^c(ta\b)(\gamma_0),\qquad\forall t\in[0,1].
\]
\item[iv)] For $a\in\R$ holds
\begin{equation}
\label{eq:facile}
\begin{split}
y\in \partial^c(a\b)(x)\qquad&\Leftrightarrow\qquad\sfd(x,y)= |a|\quad\textrm{ and }\quad \b(x)-\b(y)=a\\
\end{split}
\end{equation}
\item[v)] For $a_1,a_2\in\R$ with $a_1a_2\geq 0$ it holds
\[
y\in\partial^c(a_1\b)(x),\quad z\in\partial^c(a_2\b)(y)\qquad\Rightarrow\qquad z\in\partial^c((a_1+a_2)\b)(x).
\]
\end{itemize}
\end{theorem}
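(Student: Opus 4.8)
The plan is to reduce the whole statement to two ingredients: (a) the 1-Lipschitz bound $|\b(x)-\b(y)|\le\sfd(x,y)$ together with the elementary inequality $\tfrac12\sfd(x,y)^2+\tfrac12 a^2\ge|a|\,\sfd(x,y)$, which supply all the ``easy'' inequalities and show that none of the quantities in (i)--(iv) can beat the candidate formulas; and (b) a \emph{ray lemma}: for every $x\in X$ and every $a\in\R$ there is a point at distance exactly $|a|$ from $x$ at which $\b$ takes the value $\b(x)+a$, which supplies the equality cases. The organizing remark is that item (iv) is the hub of the theorem: once (iv) is available, (v) follows from the triangle inequality used in both directions, the final clause of (iii) is just (iv) applied at time $t$, and (i) is essentially a rewriting of the pointwise bounds plus (b). So I would first prove the ray lemma, then deduce (ii), (i), (iv), (v) and (iii) in that order.

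For the ray lemma, fix $x$ and $a\ge 0$. For each large $t$ take a unit-speed geodesic from $x$ to $\bar\gamma_t$ --- this is defined up to arclength $a$ because $\sfd(x,\bar\gamma_t)\ge t-\sfd(x,\bar\gamma_0)\to+\infty$ --- and let $p_t$ be its point at arclength $a$, so that $\sfd(x,p_t)=a$ and $\sfd(p_t,\bar\gamma_t)=\sfd(x,\bar\gamma_t)-a$. By properness some subsequence $p_{t_k}\to p$ with $\sfd(x,p)=a$. The bound $\b(p)\le\b(x)+a$ is 1-Lipschitzness; for the reverse, recall that $s\mapsto s-\sfd(p,\bar\gamma_s)$ converges to $\b(p)$ as $s\to+\infty$, hence also along $s=t_k$, and estimate
\[
t_k-\sfd(p,\bar\gamma_{t_k})\ \ge\ t_k-\sfd(p,p_{t_k})-\sfd(p_{t_k},\bar\gamma_{t_k})\ =\ \big(t_k-\sfd(x,\bar\gamma_{t_k})\big)+a-\sfd(p,p_{t_k})\ \xrightarrow[k\to\infty]{}\ \b(x)+a.
\]
For $a<0$ I run the identical argument with the half-line $t\mapsto\bar\gamma_{-t}$, obtaining $p$ with $\sfd(x,p)=|a|$ and $\b^-(p)=\b^-(x)+|a|$ (the Busemann function of that half-line grows at unit rate along its own asymptotic ray, and its 1-Lipschitz bound gives the matching upper bound); the hypothesis $\b^++\b^-\equiv 0$ then converts this into $\b(p)=\b^+(p)=-\b^-(p)=\b^+(x)+a=\b(x)+a$.

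The deductions are then bookkeeping. Item (ii): $\lip(\b)\le 1$ by 1-Lipschitzness, and $\lip(\b)(x)\ge 1$ because the ray-lemma points for small $a>0$ tend to $x$ with difference quotient exactly $1$. Item (i): for every $y$ and $x$ one has $\tfrac12\sfd(x,y)^2-a\b(x)\ge -a\b(y)-\tfrac12 a^2$, with equality at the ray-lemma point realizing $\sfd(x,y)=|a|$, $\b(x)-\b(y)=a$; hence $(a\b)^c=-a\b-\tfrac12 a^2$, and the same computation applied to $-a\b-\tfrac12 a^2$ gives $(-a\b-\tfrac12 a^2)^c=a\b$, so $a\b$ is a $c$-transform, hence $c$-concave; the formula for $(-a\b)^c$ is the case $a\mapsto-a$, and the symmetry of $\partial^c$ drops out of the two membership conditions. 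Item (iv): $(x,y)\in\partial^c(a\b)$ reads $a(\b(x)-\b(y))=\tfrac12(\sfd(x,y)^2+a^2)$, but $a(\b(x)-\b(y))\le|a|\sfd(x,y)\le\tfrac12(\sfd(x,y)^2+a^2)$, so the two inequalities must be equalities, i.e. $\sfd(x,y)=|a|$ and $\b(x)-\b(y)=a$; the converse is one substitution. Item (v): with $a_1a_2\ge0$, (iv) gives $\sfd(x,y)=|a_1|$, $\sfd(y,z)=|a_2|$, $\b(x)-\b(z)=a_1+a_2$, so $|a_1+a_2|=|\b(x)-\b(z)|\le\sfd(x,z)\le|a_1|+|a_2|=|a_1+a_2|$ and (iv) again finishes. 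Item (iii): for a constant-speed geodesic with $\gamma_1\in\partial^c(a\b)(\gamma_0)$, (iv) forces $|\dot\gamma_t|\equiv|a|$ and, via 1-Lipschitzness, $\b(\gamma_0)-\b(\gamma_t)=ta$; since $\lip(a\b)\equiv|a|$ by (ii), the defining identity of a gradient flow trajectory becomes a trivial equality, giving (a)$\Rightarrow$(b); conversely $a\b(\gamma_t)-a\b(\gamma_s)\le|a|\int_t^s|\dot\gamma_r|\,\d r\le\tfrac12\int_t^s|\dot\gamma_r|^2\,\d r+\tfrac12 a^2(s-t)$, and this upper bound equals $a\b(\gamma_t)-a\b(\gamma_s)$ by the gradient-flow identity, so all is equality: $|\dot\gamma_r|\equiv|a|$, $\gamma$ is a constant-speed geodesic of length $|a|$, $\b(\gamma_0)-\b(\gamma_1)=a$, and (iv) yields (a). The last assertion of (iii) is (iv) for the pair $(\gamma_0,\gamma_t)$, using $\sfd(\gamma_0,\gamma_t)=t|a|=|ta|$ and $\b(\gamma_0)-\b(\gamma_t)=ta$.

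The only genuine difficulty is the ray lemma, and within it the case $a<0$: 1-Lipschitzness of $\b=\b^+$ alone gives only the two-sided bound $|\b(p)-\b(x)|\le|a|$ along a $\bar\gamma^-$-asymptotic ray, and the missing piece is precisely the upper bound $\b(p)\le\b(x)-|a|$, which is exactly what the hypothesis $\b^++\b^-\equiv 0$ buys from the (always true) statement that $\b^-$ grows at unit rate along its own asymptotic ray. This is the single point where that hypothesis is used. A minor technical care: $\b$ is defined by a genuine limit, so evaluating it along the subsequence $s=t_k$ is legitimate; and one should record $\lip(a\b)\equiv|a|$ (immediate from (ii)) before attacking (iii), since it enters both implications. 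Everything downstream of the ray lemma is routine manipulation of the Lipschitz bound and Young's inequality.
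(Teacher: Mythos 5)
Your proof is correct. The central construction---passing to a convergent subsequence of the points at arclength $|a|$ along geodesics from a given point to $\bar\gamma_{t_k}$ (respectively to $\bar\gamma_{-t_k}$ when $a<0$, where the hypothesis $\b^++\b^-\equiv 0$ converts an estimate on $\b^-$ into one on $\b$)---is exactly the construction the paper performs inside its proof of (i), and your $c$-transform computation for (i) matches the paper's. The genuine difference is the decomposition of the statement. The paper goes (i), (ii), then (iii) by verifying the gradient-flow identity (writing the verification only for $t=0$, $s=1$ and leaving the general subinterval case implicit), derives (iv) from (iii), and finishes with (v). You instead isolate a ray lemma, deduce (ii) and (i) from it, prove (iv) directly from (i) by observing that $(x,y)\in\partial^c(a\b)$ forces equality throughout the elementary chain $a(\b(x)-\b(y))\le|a|\,|\b(x)-\b(y)|\le|a|\sfd(x,y)\le\tfrac12\sfd(x,y)^2+\tfrac12 a^2$, and then obtain both (v) and (iii) from (iv). Your route to (iv) is cleaner since it bypasses the detour through gradient flows, and your version of (iii)(a)$\Rightarrow$(b) is more explicit: by first establishing $\b(\gamma_0)-\b(\gamma_t)=ta$ for every $t$, the gradient-flow identity holds on every subinterval without a further additivity argument. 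The one step worth spelling out is how 1-Lipschitzness yields $\b(\gamma_0)-\b(\gamma_t)=ta$: from $|\b(\gamma_0)-\b(\gamma_t)|\le t|a|$, $|\b(\gamma_t)-\b(\gamma_1)|\le(1-t)|a|$, and $(\b(\gamma_0)-\b(\gamma_t))+(\b(\gamma_t)-\b(\gamma_1))=a$, each summand is forced to its extreme compatible value. With that made explicit the argument is complete.
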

\begin{proof}$\ $\\
\noindent{${\mathbf{ (i)}}$} Fix $a\in\R$ and notice that since $a\b$ is $|a|$-Lipschitz we have
\[
a\b(x)-a\b(y)\leq|a|\sfd(x,y)\leq \frac{\sfd^2(x,y)}2+\frac{a^2}2,\qquad\forall x,y\in X,
\]
which yields $\frac{\sfd^2(x,y)}{2}-a\b(x)\geq -a\b(y)-\frac{a^2}2$ for any $x,y\in X$, and thus
\[
(a\b)^c(y)\geq -a\b(y)-\frac{a^2}2,\qquad\forall y\in X.
\]
To prove the opposite inequality, fix $y\in X$ and assume for the moment $a\geq 0$. Let  $\gamma^{t,y}:[0,\sfd(y,\gamma_t)]\to X$ be a unit speed geodesic connecting $y$ to $\gamma_t$ and notice that since $(X,\sfd)$ is proper, for some sequence $t_n\uparrow +\infty$ the sequence $n\mapsto \gamma^{t_n,y}_a$ converges to some point $ y_a\in X$ which clearly has distance $a$ from $y$. 

Letting $n\to\infty$  in
\[
\begin{split}
t_n-\sfd(y_a,\bar\gamma_{t_n})\geq t_n-\sfd(\gamma^{t_n,y}_a,\bar\gamma_{t_n})-\sfd(y_a,\gamma^{t_n,y}_a)= t_n-\sfd(y,\bar\gamma_{t_n})+a-\sfd(y_a,\gamma^{t_n,y}_a),
\end{split}
\]
and recalling that $\b=\lim_{n\to\infty}t_n-\sfd(\cdot,\bar\gamma_{t_n})$ we deduce 
\begin{equation}
\label{eq:perdopo}
\b(y_a)\geq \b(y)+a.
\end{equation}
Choosing $ y_a$ as competitor in the definition of $(a\b)^c(y)$ we obtain
\[
(a\b)^c(y)=\inf_x\frac{\sfd^2(x,y)}{2}-a\b(x)\leq\frac{\sfd^2( y_a,y)}{2}-a\b( y_a)\leq -a\b(y)-\frac{a^2}2,
\]
as desired. The case $a\leq0$ is handled analogously by letting $y_a$ be any limit of $\gamma^{-t,y}_{|a|}$ as $t\to+\infty$ and using the fact that $\b=\lim_{t\to+\infty}\sfd(\cdot,\gamma_{-t})-t$.

This proves the first identity in \eqref{eq:abcconc}. The second follows from the first choosing $-a$ in place of $a$. Finally, the $c$-concavity of $a\b$ is obtained by direct algebraic manipulation:
\[
(a\b)^{cc}=\left(-a\b-\frac{a^2}2\right)^c=(-a\b)^c+\frac{a^2}2=a\b.
\]
The last assertion follows from the fact that $(x,y)\in\partial^c(a\b)$ if and only if $(y,x)\in\partial^c(a\b)^c$ and identities \eqref{eq:abcconc}.

\noindent{${\mathbf{ (ii)}}$}  We already know that $\b$ is 1-Lipschitz and thus $\lip(\b)(x)\leq 1$ for every $x\in X$. For $y\in X$ and $a>0$, we proved in point $(i)$, that there exists a point $y_a\in X$ such that $\sfd(y,y_a)=a$ and  \eqref{eq:perdopo} holds. Hence $\frac{\b(y_a)-\b(y)}{\sfd(y,y_a)}\geq 1$ and letting $a\downarrow 0$ we deduce $\lip(\b)(y)\geq 1$. 

\noindent{${\mathbf{ (iii)}}$}  The second part of the claim follows directly from the equivalence of $(a)$ and $(b)$.

\noindent\underline{(a) $\Rightarrow$ (b)} We know that
\[
a\b(\gamma_0)+\big(a\b\big)^c(\gamma_1)=\frac{\sfd^2(\gamma_0,\gamma_1)}{2}=\frac12\int_0^1|\dot\gamma_t|^2\,\d t,
\] 
where the second equality comes from the fact that $\gamma$ is a constant speed geodesic. Recalling the  first in \eqref{eq:abcconc} we
deduce
\[
a\b(\gamma_0)=a\b(\gamma_1)+\frac{a^2}2+\frac12\int_0^1|\dot\gamma_t|^2\,\d t=a\b(\gamma_1)+\frac12\int_0^1\lip(a\b)^2(\gamma_t)\,\d t+\frac12\int_0^1|\dot\gamma_t|^2\,\d t,
\]
having used point $(ii)$.

\noindent\underline{(b) $\Rightarrow$ (a)}  Recalling point $(ii)$ and the definition of gradient flow trajectory we have
\begin{equation}
\label{eq:ba}
\begin{split}
a\b(\gamma_0)&= a\b(\gamma_1)+\frac12\int_0^1\lip(a\b)^2(\gamma_t)\,\d t+\frac12\int_0^1|\dot\gamma_t|^2\,\d t\geq a\b(\gamma_1)+\frac{a^2}2+\frac12\left(\int_0^1|\dot\gamma_t|\,\d t\right)^2\\
&\geq a\b(\gamma_1)+\frac{a^2}2+\frac12\sfd^2(\gamma_0,\gamma_1).
\end{split}
\end{equation}
Thus from the first in \eqref{eq:abcconc} we obtain
\[
a\b(\gamma_0)\geq -\big(a\b\big)^c(\gamma_1)+\frac12\sfd^2(\gamma_0,\gamma_1),
\]
which shows that $\gamma_1\in\partial^c(a\b)(\gamma_0)$. Furthermore, this last inequality is in fact an equality, which forces the inequalities in \eqref{eq:ba} to be equalities, i.e. $\int_0^1|\dot\gamma_t|^2\,\d t=\sfd^2(\gamma_0,\gamma_1)$, which is true if and only if $\gamma$ is a constant speed geodesic.

\noindent{${\mathbf{ (iv)}}$}  $\Leftarrow$ is obvious. For $\Rightarrow$ pick  $x,y\in X$ such that $y\in \partial^c(a\b)(x)$ and $\gamma$ a constant speed geodesic connecting $x$ to $y$. Then by point $(iii)$ we know that $\gamma$ is a gradient flow trajectory for $a\b$, which in particular implies $|\dot\gamma_t|=\lip (a\b)(\gamma_t)=|a|$ for a.e. $t$ and thus $\sfd(\gamma_0,\gamma_1)=|a|$. The equality $\b(\gamma_0)-\b(\gamma_1)=a$ now follows from point $(i)$.

\noindent{${\mathbf{ (v)}}$} Direct consequence of point $(iv)$.
\end{proof}

\chapter{The gradient flow of $\b$ preserves the measure}\label{se:meas}
\section{Preliminary notions}
\subsection{Borel selection}

We shall occasionally make use of the following well known selection theorem, due to Kuratowski and Ryll-Nardzeweski (\cite{Kuratowski65}), which we will state as:
\begin{theorem}[Borel selection]\label{thm:borelsel}
Let $(X,\sfd)$ and $(X',\sfd')$ be complete and separable metric spaces, denote by $\mathcal F(X')$ the collection of closed subsets of $X'$ and let $S:X\to\mathcal F(X')$ be a map. Assume that:
\begin{itemize}
\item[i)] $S(x)\neq\emptyset$ for every $x\in X$,
\item[ii)] for every open set $\Omega\subset X'$ the set $\{x\in X:S(x)\cap \Omega\neq\emptyset\}$ is Borel.
\end{itemize}
Then there exists a Borel map $T:X\to X'$ such that $T(x)\in S(x)$ for every $x\in X$.
\end{theorem}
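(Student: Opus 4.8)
The plan is to recover a selection as a uniform limit of countably-valued Borel approximations, exploiting the separability of $X'$ and hypothesis (ii). First I would fix a countable dense set $\{y_n\}_{n\in\N}\subset X'$ and construct inductively Borel maps $f_k:X\to X'$, each taking only values among the $y_n$, such that
\[
\sfd'(f_k(x),S(x))<2^{-k}\quad\text{and}\quad\sfd'(f_{k+1}(x),f_k(x))<3\cdot 2^{-k-1}\qquad\forall x\in X.
\]
The idea is that the first bound forces the limit into $S(x)$ (which is closed) while the second, being summable, forces uniform convergence.

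For the base step, the sets $A_n:=\{x:S(x)\cap B_1(y_n)\neq\emptyset\}$ are Borel by (ii), and they cover $X$ because $S(x)\neq\emptyset$ by (i) and $\{y_n\}$ is dense; replacing $A_n$ by $A_n\setminus\bigcup_{m<n}A_m$ gives a Borel partition of $X$, and I would let $f_0$ equal $y_n$ on the $n$-th piece, so that $\sfd'(f_0(x),S(x))<1$. For the inductive step, on each Borel set $\{f_k=y_j\}$ one has $S(x)\cap B_{2^{-k}}(y_j)\neq\emptyset$; choosing $z\in S(x)$ with $\sfd'(z,y_j)<2^{-k}$ and then some $y_n$ with $\sfd'(y_n,z)<2^{-k-1}$ yields both $\sfd'(y_n,S(x))<2^{-k-1}$ and $\sfd'(y_n,y_j)<3\cdot 2^{-k-1}$. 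Hence the Borel sets $\{f_k=y_j\}\cap\{x:S(x)\cap B_{2^{-k-1}}(y_n)\neq\emptyset\}$, ranging over the (countably many) $n$ with $\sfd'(y_n,y_j)<3\cdot 2^{-k-1}$, cover $\{f_k=y_j\}$; disjointifying this countable family over all such admissible pairs $(j,n)$ and setting $f_{k+1}:=y_n$ on the corresponding piece produces a Borel, countably-valued map satisfying the two desired bounds.

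Since $\sum_k 3\cdot 2^{-k-1}<\infty$, for each $x$ the sequence $(f_k(x))_k$ is Cauchy with geometric tail $\sfd'(f_k(x),T(x))<3\cdot 2^{-k}$; by completeness of $X'$ it converges to a point $T(x)$, and $T$ is Borel as the pointwise limit of Borel maps. Finally $\sfd'(T(x),S(x))\le\sfd'(T(x),f_k(x))+\sfd'(f_k(x),S(x))<3\cdot 2^{-k}+2^{-k}\to 0$ as $k\to\infty$, and since $S(x)$ is closed this forces $T(x)\in S(x)$, which is the assertion.

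I expect the only real obstacle to be the bookkeeping in the inductive step: one must keep the approximating maps Borel and countably valued while \emph{simultaneously} controlling the distance to the moving target $S(x)$ and the distance to the previous approximation, and one must observe that hypothesis (ii) is precisely what makes each set $\{x:S(x)\cap B_r(y_n)\neq\emptyset\}$ Borel; once this is set up, completeness of $X'$ and closedness of the values do the rest.
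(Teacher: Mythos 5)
Your argument is correct and complete: it is the classical proof of the Kuratowski--Ryll-Nardzewski selection theorem, building a sequence of countably-valued Borel maps $f_k$ with $\sfd'(f_k(x),S(x))<2^{-k}$ and $\sfd'(f_{k+1}(x),f_k(x))<3\cdot2^{-k-1}$, and letting the (summable) second estimate force uniform Cauchyness while the first forces the limit into the closed set $S(x)$. The inductive bookkeeping is handled correctly: on each piece $\{f_k=y_j\}$ the admissible indices $n$ (those with $\sfd'(y_n,y_j)<3\cdot2^{-k-1}$) yield sets $\{x:S(x)\cap B_{2^{-k-1}}(y_n)\neq\emptyset\}$ that are Borel by hypothesis (ii) and cover $\{f_k=y_j\}$ by density, so disjointifying over the countably many pairs $(j,n)$ gives a Borel countably-valued $f_{k+1}$ with both required bounds.

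Note, however, that the paper itself does not prove this statement; it is invoked as a known black-box result with a citation to Kuratowski and Ryll-Nardzewski, so there is no proof in the paper to compare yours against. Your proof is the standard one found in the literature and is a valid self-contained substitute for the citation.
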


Given a complete separable space $(X,\sfd)$ and a Borel set $E\subset X$, we say that a map $\mathbf T:E\to\prob X$ is Borel provided for every Borel $E'\subset X$ the map $x\mapsto \mathbf T(x)(E')\in[0,1]$ is Borel. Theorem \ref{thm:borelsel} above implies the following.
\begin{corollary}[Variant of Borel selection]\label{cor:borelsel}
Let $\mathbf T:X\to\prob X$ be a Borel map. Then there exists a Borel map $T:X\to X$ such that $T(x)\in\supp(\mathbf T(x))$ for every $x\in X$.
\end{corollary}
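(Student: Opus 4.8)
The plan is to invoke the Kuratowski--Ryll-Nardzewski selection theorem (Theorem \ref{thm:borelsel}) directly, taking the target space to be $X'=X$ and the closed-valued multifunction
\[
S(x):=\supp\big(\mathbf T(x)\big),\qquad x\in X.
\]
Since each $\mathbf T(x)$ is a Borel probability measure on the separable space $X$, its support is a nonempty closed subset of $X$, so $S$ indeed takes values in $\mathcal F(X)$ and hypothesis $(i)$ of Theorem \ref{thm:borelsel} holds automatically.

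The only point that needs an argument is hypothesis $(ii)$: that for every open $\Omega\subset X$ the set $\{x\in X:S(x)\cap\Omega\neq\emptyset\}$ is Borel. First I would record the elementary fact that for a Borel probability measure $\mu$ on a separable metric space and an open set $\Omega$ one has
\[
\supp(\mu)\cap\Omega\neq\emptyset\quad\Longleftrightarrow\quad \mu(\Omega)>0.
\]
For $\Leftarrow$: if $\Omega$ were disjoint from $\supp(\mu)$ it would be contained in the open $\mu$-null set $X\setminus\supp(\mu)$, contradicting $\mu(\Omega)>0$. For $\Rightarrow$: any $y\in\supp(\mu)\cap\Omega$ has $\Omega$ as an open neighbourhood, whence $\mu(\Omega)>0$ by definition of the support. (Separability enters here only through the Lindel\"of property, which is what makes $X\setminus\supp(\mu)$ genuinely $\mu$-null, i.e. makes $\supp(\mu)$ the smallest closed set of full measure.)

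With this equivalence in hand we get $\{x:S(x)\cap\Omega\neq\emptyset\}=\{x:\mathbf T(x)(\Omega)>0\}$. Since $\Omega$ is in particular Borel, the hypothesis that $\mathbf T$ is a Borel map gives that $x\mapsto\mathbf T(x)(\Omega)$ is a Borel function, so its superlevel set $\{x:\mathbf T(x)(\Omega)>0\}$ is Borel. Thus both hypotheses of Theorem \ref{thm:borelsel} are verified, and the theorem produces a Borel map $T:X\to X$ with $T(x)\in S(x)=\supp(\mathbf T(x))$ for every $x\in X$, which is exactly the assertion of the corollary.

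I do not expect a genuine obstacle here: the entire content is the translation $\supp(\mathbf T(x))\cap\Omega\neq\emptyset\Leftrightarrow\mathbf T(x)(\Omega)>0$ together with the definition of a Borel measure-valued map, after which Theorem \ref{thm:borelsel} does all the work.
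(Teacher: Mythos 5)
Your proof is correct and is essentially the same as the paper's: both reduce to the identity $\{x:\supp(\mathbf T(x))\cap\Omega\neq\emptyset\}=\{x:\mathbf T(x)(\Omega)>0\}$ and then invoke Theorem~\ref{thm:borelsel}. You simply spell out the elementary equivalence $\supp(\mu)\cap\Omega\neq\emptyset\Leftrightarrow\mu(\Omega)>0$, which the paper leaves implicit.
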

For the proof, just notice that $\supp(\mathbf T(x))$ is a closed non-empty set for any $x\in X$ and that for $\Omega\subset X$ open it holds
\[
\Big\{x\in X\ :\ \supp(\mathbf T(x))\cap \Omega\neq\emptyset\Big\}\quad=\quad \Big\{x\in X\ :\ \mathbf T(x)(\Omega)>0\Big\}.
\]

\subsection{Metric measure spaces and  $CD(0,N)$ condition}

In this paper all metric measure spaces $(X,\sfd,\mm)$ considered are such that $(X,\sfd)$ is complete and separable and $\mm$ is a non-negative Radon measure on $X$. These assumption will always be taken implicitly. 

\begin{definition}[Isomorphisms between metric measure spaces]
Two metric measure spaces $(X_1,\sfd_1,\mm_1)$, $(X_2,\sfd_2,\mm_2)$ are said \emph{isomorphic} provided there exists an isometry $T:(\supp(\mm_1),\sfd_1)\to(\supp(\mm_2),\sfd_2)$ such that $T_\sharp\mm_1=\mm_2$. Any such $T$ is called isomorphism.
\end{definition}
Notice that in the definition of isomorphism only the portion of the space where the measure is concentrated matters, so that $(X,\sfd,\mm)$ is always isomorphic to $(\supp(\mm),\sfd,\mm)$. For this reason and in order to simplify the terminology, in the following when saying that a certain subset $A$ of $X$ is bounded (resp. compact) we will always implicitly mean that $A\cap\supp(\mm)$ is bounded (resp. compact). 

\medskip

We turn to the definition of metric measure space with ${\rm Ric}\geq 0$ and ${\rm dim}\leq N$ according to Lott-Sturm-Villani (\cite{Lott-Villani09}, \cite{Sturm06II}). For $N\in(1,\infty)$ define  $u_N:[0,\infty]\to\R$ by
\[
u_N(z):=-z^{1-\frac1N},
\]
and define the functional $\u_N:\prob X\to\R\cup\{-\infty\}$ by
\[
\u_N(\mu):=\int u_N(\rho)\,\d\mm,\qquad\mu=\rho\mm+\mu^s,\ \mu^s\perp\mm.
\]
In the limiting case $N=1$ we put
\[
\u_1(\mu):=-\mm(\{\rho>0\}),\qquad\mu=\rho\mm+\mu^s,\ \mu^s\perp\mm.
\]

\begin{definition}[$CD(0,N)$ spaces]
Let $N\in[1,\infty)$ and $(X,\sfd,\mm)$ a metric measure space. We say that $(X,\sfd,\mm)$ is a $CD(0,N)$ space provided for any $\mu_0,\mu_1\in\prob X$ with supports contained in $\supp(\mm)$ there exists $\ppi\in\gopt(\mu_0,\mu_1)$ such that
\begin{equation}
\label{eq:defcd}
\u_{N'}\big((\e_t)_\sharp\ppi\big)\leq(1-t)\u_{N'}(\mu_0)+t\u_{N'}(\mu_1),\qquad\forall t\in[0,1],
\end{equation}
for every $N'\geq N$.
\end{definition}
In the following proposition we recall the Bishop-Gromov volume estimates on $CD(0,N)$ spaces proved in \cite{Lott-Villani09} and \cite{Sturm06II}. For $x\in \supp(\mm)$ we shall put ${\sf v}_x(r):=\mm(B_r(x))$ and ${\sf s}_x(r):=\lims_{\eps\downarrow0}\frac{{\sf v}_x(r+\eps)-{\sf v}_x(r)}{\eps}$.
\begin{proposition}[Generalized Bishop-Gromov estimates]\label{prop:BG}
Let $(X,\sfd,\mm)$ be a $CD(0,N)$ space and $x\in \supp(\mm)$. Then  ${\sf v}_x(r),{\sf s}_x(r)<\infty$ for every $r\geq 0$ and 
\begin{equation}
\label{eq:bgarea}
\R^+\ni r\qquad\mapsto\qquad\frac{{\sf s}_x(r)}{r^{N-1}}\textrm{\qquad is non-increasing,}
\end{equation}
$r\mapsto {\sf v}_x(r)$ is locally Lipschitz and
\begin{equation}
\label{eq:bgvol}
\R^+\ni r\qquad\mapsto\qquad\frac{{\sf v}_x(r)}{r^{N}}\textrm{\qquad is non-increasing.}
\end{equation}
\end{proposition}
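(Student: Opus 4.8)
The plan is to deduce all three assertions from the displacement convexity of $\u_N$ (using \eqref{eq:defcd} with $N'=N$) together with Jensen's inequality, by transporting suitable uniform measures onto $\delta_x$. Fix $x\in\supp(\mm)$; since $\mm$ is Radon it is finite on bounded sets, so ${\sf v}_x(r)<\infty$ for every $r\ge0$, and the finiteness of ${\sf s}_x$ will follow once ${\sf v}_x$ is shown locally Lipschitz. First I would isolate two elementary inputs. \emph{(a)} If $A\subset X$ is Borel with $0<\mm(A)<\infty$ and $\mu=\rho\mm+\mu^s\in\prob X$ is concentrated on $A$, then $\u_N(\mu)\ge\mm(A)\,u_N\big(\tfrac1{\mm(A)}\big)=-\mm(A)^{1/N}$: for $N>1$ this is Jensen for the convex decreasing $u_N$ on the probability space $(A,\mm|_A/\mm(A))$ (using $\int_A\rho\,\d\mm\le1$), and for $N=1$ it is immediate from $\u_1(\mu)=-\mm(\{\rho>0\})\ge-\mm(A)$. \emph{(b)} $\u_N\big(\tfrac1{\mm(B)}\mm|_B\big)=-\mm(B)^{1/N}$ whenever $0<\mm(B)<\infty$, and $\u_N(\delta_x)\le0$ in all cases.

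For the volume estimate, fix $0<r<R$, set $\mu_0:=\tfrac1{{\sf v}_x(R)}\mm|_{B_R(x)}$, $\mu_1:=\delta_x$ and pick $\ppi\in\gopt(\mu_0,\mu_1)$ satisfying \eqref{eq:defcd}. Every $\gamma\in\supp(\ppi)$ is a geodesic with $\gamma_1=x$ and $\gamma_0\in B_R(x)$, so $\sfd(x,\gamma_t)=(1-t)\sfd(x,\gamma_0)<(1-t)R$, and hence $\mu_t:=(\e_t)_\sharp\ppi$ is concentrated on $B_{(1-t)R}(x)$. Combining (a), (b) and \eqref{eq:defcd},
\[
-\mm\big(B_{(1-t)R}(x)\big)^{1/N}\ \le\ \u_N(\mu_t)\ \le\ (1-t)\,\u_N(\mu_0)\ =\ -(1-t)\,\mm(B_R(x))^{1/N},
\]
so that ${\sf v}_x(R)\le (R/\rho)^{N}\,{\sf v}_x(\rho)$ for $\rho:=(1-t)R\in(0,R)$, i.e.\ ${\sf v}_x(R)/R^N\le{\sf v}_x(\rho)/\rho^N$; that is, $r\mapsto{\sf v}_x(r)/r^{N}$ is non-increasing.

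The surface estimate and the Lipschitz regularity come from the same scheme applied to a thin shell. For $r,\delta>0$ take $\mu_0$ to be the normalized restriction of $\mm$ to $B_{r+\delta}(x)\setminus B_r(x)$ (if this set is $\mm$-null the inequality below is trivial, as ${\sf v}_x$ is non-decreasing) and $\mu_1=\delta_x$; then $\mu_t$ is concentrated on the set $B_{(1-t)(r+\delta)}(x)\setminus B_{(1-t)r}(x)$, whose $\mm$-measure equals ${\sf v}_x((1-t)(r+\delta))-{\sf v}_x((1-t)r)$, and (a)--(b)--\eqref{eq:defcd} give
\[
{\sf v}_x\big((1-t)(r+\delta)\big)-{\sf v}_x\big((1-t)r\big)\ \ge\ (1-t)^N\big({\sf v}_x(r+\delta)-{\sf v}_x(r)\big).
\]
Dividing by $\delta$ and passing to the upper limit as $\delta\downarrow0$ yields $(1-t)\,{\sf s}_x((1-t)r)\ge(1-t)^N\,{\sf s}_x(r)$, i.e.\ ${\sf s}_x(\rho)\ge(\rho/r)^{N-1}{\sf s}_x(r)$ for $\rho=(1-t)r<r$; that is, $r\mapsto{\sf s}_x(r)/r^{N-1}$ is non-increasing. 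Choosing $r_0>0$ at which ${\sf v}_x$ is differentiable (a.e.\ one works, ${\sf v}_x$ being monotone) this gives ${\sf s}_x(\rho)\le(\rho/r_0)^{N-1}{\sf s}_x(r_0)$ for $\rho\ge r_0$, so ${\sf s}_x$ is locally bounded on $(0,\infty)$. Moreover ${\sf v}_x$ has no jumps: if the sphere $\{\sfd(x,\cdot)=r_1\}$ had positive mass, transporting its normalized restriction onto $\delta_x$ would give, by the same computation, $\u_N(\mu_t)<0$ for all $t\in(0,1)$, forcing every sphere of radius in $(0,r_1)$ to have positive mass and hence ${\sf v}_x(r_1)=\infty$, a contradiction. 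Thus ${\sf v}_x$ is continuous, and a standard Dini-derivative argument (a continuous non-decreasing function whose upper right derivative is $\le M$ on an interval is $M$-Lipschitz there) upgrades the local boundedness of ${\sf s}_x$ to local Lipschitz continuity of ${\sf v}_x$ on $(0,\infty)$; in particular ${\sf s}_x(r)<\infty$ for all $r$ and ${\sf v}_x(R)-{\sf v}_x(r)=\int_r^R{\sf s}_x(t)\,\d t$.

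The monotonicity statements are the whole point, and the thing to watch is robustness rather than depth. Conceptually, \eqref{eq:defcd} only provides \emph{some} optimal geodesic plan $\ppi$, not a prescribed radial one; but the argument uses nothing about $\ppi$ beyond the fact that $\supp(\mu_t)$ lies in the ball (resp.\ shell) of radius $(1-t)$ times the starting radius, which holds for every geodesic plan with the prescribed marginals. Technically, one must justify the upper limit as $\delta\downarrow0$ in the surface estimate and keep light track of open versus closed balls and spheres when turning the one-parameter family of inequalities into the clean statements; the no-jump observation — i.e.\ the continuity of ${\sf v}_x$ — is exactly what licenses the final Dini argument, since without it the upper-derivative bound would not by itself force Lipschitz continuity.
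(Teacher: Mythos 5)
The paper does not prove this proposition; it simply recalls it from \cite{Lott-Villani09} and \cite{Sturm06II}, so there is no internal proof to compare against. Your argument is, in substance, the standard Sturm/Lott--Villani proof: transport the normalized uniform measure on a ball (respectively on a thin shell) onto $\delta_x$, note that every interpolated measure is supported in the scaled ball (respectively scaled shell), combine the Jensen bound $\u_N(\mu)\ge-\mm(A)^{1/N}$ with displacement convexity \eqref{eq:defcd}, and read off ${\sf v}_x((1-t)R)\ge(1-t)^N{\sf v}_x(R)$ and the corresponding shell inequality, hence the two monotonicities; local Lipschitzness then follows from the local boundedness of ${\sf s}_x$ via the Dini-derivative principle once continuity of ${\sf v}_x$ is in hand. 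The proof is correct and is the same route as the cited references.

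Two small points worth tightening. In the no-jump step, from ``$\u_N(\mu_t)<0$ for all $t\in(0,1)$'' you conclude that every sphere of radius in $(0,r_1)$ has positive mass, and then assert ${\sf v}_x(r_1)=\infty$. The contradiction does come, but the clean way to land it is the countability observation: $\mm$ restricted to $B_{r_1}(x)$ is a finite measure, so it admits at most countably many pairwise disjoint subsets of positive measure, whereas you have produced a continuum of them. (Equivalently: a sum of uncountably many positive reals is infinite, contradicting ${\sf v}_x(r_1)<\infty$.) This is what you are implicitly using, and it is worth making explicit since it also dispels any appearance of circularity with the later surface computation. The second point is the one you already flag: with open balls, $\mu_t$ lands in a closed shell while ${\sf v}_x$ counts open ones, so to equate $\mm(\text{closed shell})$ with an increment of ${\sf v}_x$ one either perturbs the outer radius and lets it shrink, or invokes the continuity of ${\sf v}_x$ already established from the no-jump argument (which is logically prior, so this is fine). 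Neither issue affects the soundness of the overall scheme.
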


\subsection{Sobolev functions}\label{se:sob}
Here we recall the definition of Sobolev function on a metric measure space $(X,\sfd,\mm)$.  The definition is taken from \cite{AmbrosioGigliSavare11} (along the presentation given in \cite{Gigli12}), where also the proof of the equivalence with the notions introduced in \cite{Cheeger00} and \cite{Shanmugalingam00} is given. See also \cite{AmbrosioGigliSavare11-3}.
\begin{definition}[Test Plans]
Let $\ppi\in\prob{C([0,1],X)}$. We say that $\ppi$ is a test plan provided 
\[
(\e_t)_\sharp\ppi\leq C\mm,\qquad\forall t\in[0,1],
\]
for some constant $C>0$, and 
\[
\iint_0^1|\dot\gamma_t|^2\,\d t\ppi(\gamma)<\infty.
\]
\end{definition}
Notice that in particular according to the convention $\int_0^1|\dot\gamma_t|^2\,\d t=+\infty$ if $\gamma$ is not absolutely continuous, any test plan must be concentrated on absolutely continuous curves.
\begin{definition}[The Sobolev class $\s^2(X,\sfd,\mm)$]
The Sobolev class $\s^2(X,\sfd,\mm)$ (resp.\linebreak $\s^2_{\rm loc}(X,\sfd,\mm)$) is the space of all Borel functions $f:X\to\R$ such that there exists a non-negative $G\in L^2(X,\mm)$ (resp. $G\in L^2_{\rm loc}(X,\sfd,\mm)$) for which it holds
\begin{equation}
\label{eq:defsob}
\int|f(\gamma_1)-f(\gamma_0)|\,\d\ppi(\gamma)\leq \iint_0^1G(\gamma_t)|\dot\gamma_t|\,\d t\,\d\ppi(\gamma),\qquad\forall \ppi\textrm{ test plan}.
\end{equation}
\end{definition}
Here and in the following by $L^2_{\rm loc}(X,\mm)$ we intend the set of Borel functions $f$ such that for every $x\in X$ there exists an open set $\Omega_x\ni x$ such that $f\in L^2(\Omega_x,\mm)$.

It turns out that for $f\in \s^2(X,\sfd,\mm)$ there exists a minimal $G$ in the $\mm$-a.e. sense for which \eqref{eq:defsob} holds: we will denote it by $\weakgrad f$ and call it minimal weak upper gradient (this terminology is the standard one in the setting of analysis in metric measure spaces, yet,  being this object defined in duality with speed of curves, it is closer to the norm of a cotangent vector rather to a tangent one, whence the notation used).

The minimal weak upper gradient $\weakgrad f$ is a local object, in the sense that for $f\in\s^2_{\rm loc}(X,\sfd,\mm)$ we have
\begin{equation}
\label{eq:nullgrad}
\weakgrad f=0,\qquad\textrm{ on }f^{-1}(N),\qquad\forall N\subset \R,\ \textrm{ with }\mathcal L^1(N)=0,
\end{equation}
and
\begin{equation}
\label{eq:localgrad}
\weakgrad f=\weakgrad g,\qquad\mm\textrm{-a.e.\ on }\{f=g\},\ \forall f,g\in\s^2_{\rm loc}(X,\sfd,\mm).
\end{equation}
Also, for any $f\in \s^2(X,\sfd,\mm)$, $\ppi$ test plan and $t<s\in[0,1]$ it holds
\begin{equation}
\label{eq:localplan}
|f(\gamma_s)-f(\gamma_t)|\leq \int_t^s\weakgrad f(\gamma_r)|\dot\gamma_r|\,\d r,\qquad\ppi\ae\ \gamma.
\end{equation}

In particular, the definition of Sobolev class can be directly localized to produce the notion of Sobolev function defined on an open set $\Omega\subset X$:
\begin{definition}\label{def:parigi}
Let $\Omega\subset X$ be an open set. A Borel function $f:\Omega\to X$ belongs to $\s^2_{\rm loc}(\Omega,\sfd,\mm)$ provided for any Lipschitz function $\nchi:X\to \R$ with $\supp(\nchi)\subset \Omega$ it holds $f\nchi\in \s^2_{\rm loc}(X,\sfd,\mm)$. In this case, the function $\weakgrad f:\Omega\to [0,\infty]$ is $\mm$-a.e. defined by
\[
\weakgrad{f}:=\weakgrad{(\nchi f)},\qquad\mm\ae \ {\rm on}\ \nchi\equiv1,
\] 
for any $\nchi$ as above. Notice that thanks to \eqref{eq:localgrad} this is a good definition. The space $\s^2(\Omega)\subset\s^2_{\rm loc}(\Omega)$ is the set of $f$'s such that $\weakgrad f\in L^2(\Omega,\mm)$.
\end{definition}

The basic calculus properties of Sobolev functions are collected below.  $\Omega\subset X$ is open and all the (in)equalities are intended $\mm$-a.e. on $\Omega$.

\noindent\underline{Lower semicontinuity of minimal weak upper gradients}. Let $(f_n)\subset \s^2(\Omega,\sfd,\mm)$ and $f:\Omega\to\R$ be such that $f_n(x)\to f(x)$ as $n\to\infty$ for $\mm$-a.e. $x\in\Omega$. Assume that $(\weakgrad {f_n})$ converges to some $G\in L^2(\Omega,\mm)$ weakly in $L^2(\Omega,\mm)$.

Then $f\in \s^2(\Omega)$ and $\weakgrad f\leq  G$ $\mm$-a.e..

\noindent\underline{Weak gradients and local Lipschitz constants}. For any $f:X\to\R$ Lipschitz it holds
\begin{equation}
\label{eq:lipweak}
\weakgrad f\leq \lip(f).
\end{equation}
\noindent\underline{Vector space structure}. $\s^2_{\rm loc}(\Omega,\sfd,\mm)$ is a vector space and it holds
\[
\weakgrad{(\alpha f+\beta g)}\leq |\alpha|\weakgrad f+|\beta|\weakgrad g,\qquad\textrm{for any $f,g\in\s^2_{\rm loc}(\Omega,\sfd,\mm)$, $\alpha,\beta\in\R$,}
\]
similarly for  $\s^2(\Omega,\sfd,\mm)$.

\noindent\underline{Algebra structure}.  $\s^2_{\rm loc}(\Omega,\sfd,\mm)\cap L^\infty_{\rm loc}(\Omega,\mm)$. is an algebra and it holds
\begin{equation}
\label{eq:leibbase}
\weakgrad{(fg)}\leq |f|\weakgrad g+g\weakgrad f,\qquad\textrm{for any $f,g\in \s^2_{\rm loc}(\Omega,\sfd,\mm)\cap L^\infty_{\rm loc}(\Omega,\mm)$,}
\end{equation}
and analogously  for the space  $\s^2(\Omega,\sfd,\mm)\cap L^\infty(\Omega,\mm)$. Similarly, if $f\in \s^2_{\rm loc}(\Omega,\sfd,\mm)$ and $g:\Omega\to\R$ is locally Lipschitz, then $fg\in\s^2_{\rm loc}(\Omega,\sfd,\mm)$ and the bound \eqref{eq:leibbase} holds.

\noindent\underline{Chain rule}. Let  $f\in \s^2_{\rm loc}(\Omega,\sfd,\mm)$ and $\varphi:\R\to \R$  with the following property: for any $x\in \Omega$ there exists a neighborhood $U_x\subset\Omega$ of $x$ and an interval $I_x\subset \R$ such that $\mm(f^{-1}(\R\setminus I_x)\cap U_x)=0$ and the restriction of $\varphi$ to $I_x$ is Lipschitz. Then $\varphi\circ f\in \s^2_{\rm loc}(\Omega,\sfd,\mm)$ and
\begin{equation}
\label{eq:chainbase}
\weakgrad{(\varphi\circ f)}=|\varphi'|\circ f\weakgrad f,
\end{equation}
where $|\varphi'|\circ f$ is defined arbitrarily at points where $\varphi$ is not differentiable (observe that the identity \eqref{eq:nullgrad} ensures that on $f^{-1}(\mathcal N)$ both $\weakgrad{(\varphi\circ f)}$ and $\weakgrad f$ are 0 $\mm$-a.e., $\mathcal N$ being the negligible set of points of non-differentiability of $\varphi$). In particular, if $f\in \s^2(\Omega,\sfd,\mm)$ and $\varphi$ is Lipschitz, then $\varphi\circ f\in \s^2(\Omega,\sfd,\mm)$ as well.

\medskip

Finally, we remark that from the definition of Sobolev class it is easy to produce the definition of Sobolev space $W^{1,2}(X,\sfd,\mm)$: it is sufficient to put 
\begin{equation}
\label{eq:w12}
W^{1,2}(X,\sfd,\mm):=L^2(X,\mm)\cap\s^2(X,\sfd,\mm)
\end{equation}
the corresponding $W^{1,2}$-norm being given by
\[
\|f\|_{W^{1,2}}^2:=\|f\|^2_{L^2}+\|\weakgrad f\|^2_{L^2}.
\]
An analogous definition works for the Sobolev space $W^{1,2}(\Omega,\sfd,\mm)$ for $\Omega\subset X$ open. As a consequence of the lower semicontinuity of minimal weak upper gradients one easily gets that  $W^{1,2}(\Omega,\sfd,\mm)$ is a Banach space for every $\Omega\subset X$ open, see for instance the argument in Theorem 2.7 in \cite{Cheeger00}.

\medskip

\emph{To simplify the notation, in the following we will often write $W^{1,2}(X)$, $\s^2_{\rm loc}(X)$, $\s^2_{\rm loc}(\Omega)$ ecc. in place of  $W^{1,2}(X,\sfd,\mm)$, $\s^2_{\rm loc}(X,\sfd,\mm)$, $\s^2_{\rm loc}(\Omega,\sfd,\mm)$. Similarly, we will write $L^p(X)$, $L^p(\Omega)$, $L^p_{\rm loc}(\Omega)$ in place of $L^p(X,\mm)$, $L^p(\Omega,\mm)$, $L^p_{\rm loc}(\Omega,\mm)$.}

\medskip

 In \cite{AmbrosioGigliSavare11} the following approximation result has been proved:
\begin{theorem}[Density in energy of Lipschitz functions in $W^{1,2}(X,\sfd,\mm)$]\label{thm:energylip}\ \linebreak
Let $(X,\sfd,\mm)$ be a metric measure space.

Then Lipschitz functions are dense in energy in $W^{1,2}(X)$, i.e. for any $f\in W^{1,2}(X)$ there exists a sequence $(f_n)\subset W^{1,2}(X)$ of Lipschitz functions such that $f_n\to f$, $\weakgrad {f_n}\to\weakgrad f$ and $\lip(f_n)\to\weakgrad f$ as $n\to\infty$ in $L^2(X)$.

Furthermore:
\begin{itemize}
\item if $\mm$ gives finite mass to bounded sets, then the $f_n$'s can be chosen with bounded support for every $n\in\N$,
\item if $(\supp(\mm),\sfd)$ is proper, then the $f_n$'s can be chosen with compact support for every $n\in\N$.
\end{itemize}
\end{theorem}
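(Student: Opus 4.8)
The plan is to realize $W^{1,2}(X,\sfd,\mm)$ as the domain of the \emph{Cheeger energy}
\[
\Ch(f):=\inf\Big\{\liminf_{n\to\infty}\tfrac12\int\lip(f_n)^2\,\d\mm\ :\ f_n\ \textrm{Lipschitz},\ f_n\to f\ \textrm{in}\ L^2(X)\Big\},
\]
whose relaxed minimal gradient is denoted $|\nabla f|_*$, and to prove that $f\in W^{1,2}(X)$ if and only if $\Ch(f)<\infty$, in which case $\weakgrad f=|\nabla f|_*$ $\mm$-a.e.\ (equivalently, $2\Ch(f)=\int\weakgrad f^2\,\d\mm$). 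Granting this, the theorem is soft. Given $f\in W^{1,2}(X)$, choose Lipschitz $f_n\to f$ in $L^2$ with $\tfrac12\int\lip(f_n)^2\,\d\mm\to\Ch(f)=\tfrac12\int\weakgrad f^2\,\d\mm$; then $(\lip(f_n))$ is bounded in $L^2$, so along a subsequence $\lip(f_n)\weakto G$ for some $G\in L^2(X)$, and by \eqref{eq:lipweak} together with the lower semicontinuity of minimal weak upper gradients $G\geq\weakgrad f$ $\mm$-a.e.; since $\int G^2\leq\liminf\int\lip(f_n)^2=\int\weakgrad f^2$, this forces $G=\weakgrad f$ $\mm$-a.e.\ and $\|\lip(f_n)\|_{L^2}\to\|\weakgrad f\|_{L^2}$, hence $\lip(f_n)\to\weakgrad f$ strongly in $L^2$ (weak convergence plus convergence of norms in a Hilbert space); the squeeze $\weakgrad{f_n}\leq\lip(f_n)$ and the same reasoning give $\weakgrad{f_n}\to\weakgrad f$ in $L^2$ as well, so $f_n\to f$ in energy. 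The support refinements follow by first replacing $f$ with $f\nchi_k$, $\nchi_k$ a $1$-Lipschitz cutoff equal to $1$ on $B_k(x_0)$ and supported in $B_{k+1}(x_0)$: by \eqref{eq:leibbase}, \eqref{eq:localgrad} and \eqref{eq:nullgrad} one gets $f\nchi_k\to f$ in energy while each $f\nchi_k$ has bounded (resp.\ compact, if $(\supp\mm,\sfd)$ is proper) support; one then applies the unrestricted statement to $f\nchi_k$, multiplies the resulting Lipschitz functions by a fixed larger cutoff, and diagonalizes.

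So everything reduces to the identification above. One inclusion is immediate: if $\Ch(f)<\infty$, an almost optimal sequence of Lipschitz $f_n\to f$ in $L^2$ has $f_n\in\s^2(X)$ with $\weakgrad{f_n}\leq\lip(f_n)$ by \eqref{eq:lipweak}, and the lower semicontinuity of weak upper gradients yields $f\in\s^2(X)$, i.e.\ $f\in W^{1,2}(X)$, together with $\weakgrad f\leq|\nabla f|_*$ $\mm$-a.e.

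The reverse inclusion — $f\in W^{1,2}(X)\ \Rightarrow\ \Ch(f)\leq\tfrac12\int\weakgrad f^2\,\d\mm$ — is where all the work lies, and I would obtain it through the Hopf--Lax semigroup. Using that $\Ch$ is $L^2$-lower semicontinuous and that truncation and multiplication by cutoffs do not increase $\int\weakgrad f^2\,\d\mm$ beyond a vanishing tail (by \eqref{eq:chainbase}, \eqref{eq:leibbase}), one reduces to $f\in W^{1,2}(X)\cap L^\infty(X)$ with bounded support. For such $f$ set $Q_t f(x):=\inf_y\big(f(y)+\tfrac{\sfd^2(x,y)}{2t}\big)$; then $Q_t f$ is Lipschitz for $t>0$, $Q_t f\to f$ in $L^2$ as $t\downarrow0$, and $Q_t f$ is a subsolution of Hamilton--Jacobi, $\partial_t Q_t f+\tfrac12\lip(Q_t f)^2\leq0$ for a.e.\ $t$ and $\mm$-a.e.\ $x$; integrating in $t$ gives the lower bound $f-Q_t f\geq\tfrac12\int_0^t\lip(Q_s f)^2\,\d s$. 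For the matching upper bound one writes $f(x)-Q_t f(x)=\sup_y\big(f(x)-f(y)-\tfrac{\sfd^2(x,y)}{2t}\big)$, estimates $f(x)-f(y)\leq\int_0^1\weakgrad f(\gamma_s)|\dot\gamma_s|\,\d s$ along a geodesic $\gamma$ from $y$ to $x$ using \eqref{eq:localplan}, and uses $ab-\tfrac{b^2}{2}\leq\tfrac{a^2}{2}$, so that — tested against an arbitrary bounded density $\rho\mm$ — one gets $\tfrac1t\int(f-Q_t f)\rho\,\d\mm\leq\tfrac12\int\weakgrad f^2\rho\,\d\mm+o(1)$ as $t\downarrow0$. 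Combining the two estimates and letting $t\downarrow0$ yields $\Ch(f)\leq\tfrac12\int\weakgrad f^2\,\d\mm$; with the easy inclusion this forces the two integrals to coincide, and then $\weakgrad f\leq|\nabla f|_*$ pointwise forces $\weakgrad f=|\nabla f|_*$ $\mm$-a.e.

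The main obstacle is exactly this last estimate: to invoke \eqref{eq:localplan} one must realize the displacement along the near-optimal geodesics in the definition of $Q_t f$ by an honest \emph{test plan} $\ppi_t\in\prob{C([0,1],X)}$ — concentrated on (reparametrized) geodesics with terminal marginal $\rho\mm$ — whose intermediate marginals still satisfy $(\e_s)_\sharp\ppi_t\leq C\mm$. Producing such a plan of bounded compression, where the boundedness of $f$ and of its support enter through a covering / maximal-function estimate controlling the concentration of the intermediate-time marginals, is the technical heart; the rest is bookkeeping with the calculus rules recalled before the theorem. An alternative route to the hard inclusion uses the $L^2$-gradient flow $h_t$ of $\Ch$ and its energy-dissipation identity $\Ch(h_s f)-\Ch(h_t f)=\int_s^t\|\Delta h_r f\|_{L^2}^2\,\d r$, compared with the dissipation of $r\mapsto\tfrac12\int\weakgrad{h_r f}^2\,\d\mm$; there the delicate point becomes the chain rule for the weak gradient along the flow.
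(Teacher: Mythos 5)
Your overall framework — identifying $W^{1,2}(X,\sfd,\mm)$ with the domain of the Cheeger energy $\Ch$ and showing $\weakgrad f = |\nabla f|_*$ $\mm$-a.e.\ — is exactly the one taken in the cited paper \cite{AmbrosioGigliSavare11}, and your reductions (deducing the theorem from the identification, the easy inclusion via lower semicontinuity of weak upper gradients, the passage to bounded compactly supported $f$, and the cutoff argument for the refined conclusions on the support) are all sound. The genuine gap is in your primary route for the hard inclusion $\Ch(f)\leq\tfrac12\int\weakgrad f^2\,\d\mm$. To invoke \eqref{eq:localplan} along near-minimizing geodesics of the Hopf--Lax infimum you need a test plan $\ppi_t$ with $(\e_s)_\sharp\ppi_t\leq C\mm$ for all $s\in[0,1]$, but in a general metric measure space there is no reason such a plan should exist, and no covering or maximal-function device can produce the bounded-compression bound without structural hypotheses (doubling, Poincar\'e, curvature) that the theorem does not assume. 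The obstruction is real even on $\R^n$ with Lebesgue measure: for a merely Sobolev $f$, the map $x\mapsto y(x)$ realizing the infimum in $Q_tf(x)$ can concentrate Lebesgue measure arbitrarily, so the intermediate marginals of the plan along the associated geodesics need not have bounded density.

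The proof in \cite{AmbrosioGigliSavare11} obtains the test plan from a structurally different source. One runs the $L^2$-gradient flow $\h_t$ of $\Ch$ starting from a bounded, compactly supported probability density $\rho$; the $L^\infty$-maximum principle for $\h_t$ supplies bounded compression for free. The Hopf--Lax semigroup and its Hamilton--Jacobi subsolution inequality are used, but their role is confined to Kuwada's duality lemma, which bounds the $W_2$-metric speed of $t\mapsto\h_t(\rho)\mm$ by the Fisher information computed with the relaxed gradient $|\nabla\cdot|_*$. Lisini's superposition theorem then lifts this $W_2$-absolutely continuous curve to a plan $\ppi$, which is a test plan by the two facts just mentioned; testing the definition of $\weakgrad f$ against $\ppi$ and exploiting self-adjointness of $\h_t$ yields the missing inequality upon letting $t\downarrow0$. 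Your ``alternative route'' via the energy-dissipation identity of $\h_t$ is closer in spirit, but comparing the dissipations of $\Ch$ and of $r\mapsto\tfrac12\int\weakgrad{\h_r f}^2\,\d\mm$ requires a chain rule essentially equivalent to what one is trying to prove; the Kuwada--Lisini detour avoids that circularity by passing to the Wasserstein side of the duality.
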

See \cite{AmbrosioGigliSavare11-3}  for a stronger version of this theorem.

\subsection{Differentials and  gradients}
Here we recall the definition of duality relation between differentials and gradients of Sobolev functions as given in \cite{Gigli12}. Notice that we are not going to define what the differential and the gradient of a Sobolev function are, but only the `value of the differential of $f$ applied to the gradient of $g$'. More precisely, we will define two functions $D^\pm f(\nabla g)$ which in the case of normed spaces are the maximal/minimal values of the differential of $f$ applied to all the possible gradients of $g$. Recall indeed that on $(\R^d,\|\cdot\|)$ the gradient of a smooth function might be not uniquely defined if the norm $\|\cdot\|$ is not strictly convex (see e.g. the introduction of \cite{Gigli12} for a quick glance at this sort of problematic).

\begin{definition}[The objects $D^\pm f(\nabla g)$]\label{def:dpmfg}
Let $\Omega\subset X$ be open and $f,g\in \s^2_{\rm loc}(\Omega)$. The functions $D^\pm f(\nabla g):\Omega\to \R$ are defined $\mm$-a.e. by 
\[
\begin{split}
D^+f(\nabla g)&=\inf_{\eps>0}\frac{\weakgrad{(g+\eps f)}^2-\weakgrad g^2}{2\eps},\\
D^-f(\nabla g)&=\sup_{\eps<0}\frac{\weakgrad{(g+\eps f)}^2-\weakgrad g^2}{2\eps},\\
\end{split}
\]
\end{definition}
Notice that the $\mm$-a.e. convexity of $\eps\mapsto\frac12\weakgrad{(g+\eps f)}^2$  grants the the $\inf_{\eps>0}$ and $\sup_{\eps<0}$ in this definition can be substituted with $\lim_{\eps\downarrow 0}$ and $\lim_{\eps\uparrow 0}$ respectively.

The following calculus rules hold, $\Omega\subset X$ being any given open set.

\noindent\underline{Basic properties}. For any $f,g\in\s^2(\Omega)$ we have
\begin{equation}
\label{eq:basefg}
\begin{split}
D^-f(\nabla g)&\leq D^+f(\nabla g),\\
|D^\pm f(\nabla g)|&\leq\weakgrad f\weakgrad g,\\
D^+(-f)(\nabla g)&=-D^-f(\nabla g)=D^+f(\nabla (-g)),\\
D^\pm f(\nabla f)&=\weakgrad f^2,
\end{split}
\end{equation}
$\mm$-a.e. on $\Omega$.

\noindent\underline{1-Lipschitz continuity of differentials}.  For any $f,\tilde f,g\in \s^2_{\rm loc}(\Omega)$ it holds
\[
|D^\pm f(\nabla g)-D^\pm \tilde f(\nabla g)|\leq \weakgrad{(f-\tilde f)}\,\weakgrad g,\qquad\mm\ae\ \textrm{on }\Omega.
\]

\noindent\underline{Locality}.  For any $f,\tilde f,g,\tilde g\in \s^2_{\rm loc}(\Omega)$ it holds
\begin{equation}
\label{eq:localgrad21}
D^\pm f(\nabla g)=D^\pm \tilde f(\nabla \tilde g),\qquad\mm\ae \ \textrm{on}\ \{f=\tilde f\}\cap\{g=\tilde g\}\cap\Omega.
\end{equation}

\noindent\underline{Leibniz rule for differentials}. For any $\Omega\subset X$ open, $f_0,f_1\in \s^2_{\rm loc}(\Omega)\cap L^\infty_{\rm loc}(\Omega)$ and $g\in \s^2_{\rm loc}(\Omega)$ it holds
\begin{equation}
\label{eq:leibineq}
\begin{split}
D^+(f_0f_1)(\nabla g)&\leq f_0\,D^{s_1}f_1(\nabla g)+f_1\,D^{s_2}f_0(\nabla g),\\
D^-(f_0f_1)(\nabla g)&\geq f_0\,D^{-s_1}f_1(\nabla g)+f_1\,D^{-s_2}f_0(\nabla g),
\end{split}
\end{equation}
$\mm$-a.e. on $\Omega$, where $s_i={\rm sign}f_i$, $i=1,2$.

\noindent\underline{Chain rules}. For $f,g\in\s^2(\Omega)$ and $\varphi:\R\to\R$ Lipschitz it holds
\begin{equation}
\label{eq:chainsbase}
\begin{split}
D^\pm (\varphi\circ f)(\nabla g)&=\varphi'\circ f\,D^{\pm{\rm sign}(\varphi'\circ f)}f(\nabla g),\\
D^\pm f(\nabla(\varphi\circ g))&=\varphi'\circ g\,D^{\pm{\rm sign}(\varphi'\circ g)}f(\nabla g),
\end{split}
\end{equation}
$\mm$-a.e. on $\Omega$, where $\varphi'$ is defined arbitrarily at points where $\varphi$ is not differentiable. The Lipschitz continuity of $\varphi$ can be relaxed as in the chain rule \eqref{eq:chainbase}.

We will also use the equality
\begin{equation}
\label{eq:buonofg}
D^-f(\nabla(g+\eps f))=D^+f(\nabla(g+\eps f)),\qquad\mm\ae\textrm{ on }\Omega,\ \textrm{ for every $\eps$ except a countable number},
\end{equation}
which follows from the the first inequality in \eqref{eq:basefg} and the fact that $\eps\mapsto\frac12\int_\Omega\weakgrad{(g+\eps f)}^2\,\d\mm$ is convex, thus its left and right derivatives are equal for every $\eps$ except a countable number.

The definition of $D^\pm f(\nabla g)$ also allows to state and prove a crucial first order differentiation formula. Notice at first that for $g\in \s^2(\Omega)$ and $\ppi$ test plan such that  $\supp((\e_t)_\sharp\ppi)\subset \Omega$ for every $t\in[0,1]$, an easy consequence of the definitions is that
\begin{equation}
\label{eq:perplanrepr}
\lims_{t\downarrow0}\int\frac{g(\gamma_t)-g(\gamma_0)}{t}\,\d\ppi(\gamma)\leq \frac12\int\weakgrad g^2(\gamma_0)\,\d\ppi(\gamma)+\frac12\lims_{t\downarrow0}\frac1{2t}\iint_0^t|\dot\gamma_s|^2\,\d s\,\d\ppi(\gamma).
\end{equation}
In a smooth setting, the opposite inequality holds if and only if $\gamma_0'=\nabla g(\gamma_0)$ for $\ppi$-a.e. $\gamma$. This remark and the fact that in inequality \eqref{eq:perplanrepr} the behavior of curves in the support of $\ppi$ is relevant only for $t$ close to 0,  justify the following definition:
\begin{definition}[Plans representing gradients]\label{def:planrepgrad}
Let $\Omega\subset X$ be an open set, $g\in \s^2(\Omega)$ and $\ppi\in\prob{C([0,1],X)}$ a plan. 

We say that $\ppi$ represents $\nabla g$ in $\Omega$ provided  for some $T\in (0,1]$ it holds $\supp((\e_t)_\sharp\ppi)\subset \Omega$ and $(\e_t)_\sharp\ppi\leq C\mm$ for every $t\in[0,T]$ and some $C>0$, $\iint_0^T|\dot\gamma_t|^2\,\d t\,\d\ppi(\gamma)<\infty$ and furthermore
\begin{equation}
\label{eq:defrepr}
\limi_{t\downarrow0}\int\frac{g(\gamma_t)-g(\gamma_0)}{t}\,\d\ppi(\gamma)\geq \frac12\int\weakgrad g^2(\gamma_0)\,\d\ppi(\gamma)+\frac12\lims_{t\downarrow0}\frac1{2t}\iint_0^t|\dot\gamma_s|^2\,\d s\,\d\ppi(\gamma).
\end{equation}
\end{definition}
See Section 3.2 in \cite{Gigli12} for a general existence theorem of plans representing gradients. With this notion at disposal, we can study the limit as $t\downarrow0$ of the incremental ratios
\[
\int \frac{f(\gamma_t)-f(\gamma_0)}{t}\,\d\ppi(\gamma),
\]
where $f$ is Sobolev and $\ppi$ represents the gradient of some other Sobolev function $g$. It is certainly expected that such `horizontal' limit is in relation with the `vertical' limit used in the definition of $D^\pm f(\nabla g)$. This is indeed the case:
\begin{proposition}[First order differentiation formula]\label{prop:firstdiff}
Let $\Omega\subset X$ be an open set, $f,g\in \s^2(\Omega)$ and $\ppi\in\prob{C([0,1],X)}$  be representing $\nabla g$ in $\Omega$.

Then
\begin{equation}
\label{eq:firstdiff}
\begin{split}
\int D^-f(\nabla g)(\gamma_0)\,\d\ppi(\gamma)&\leq \limi_{t\downarrow0}\int\frac{f(\gamma_t)-f(\gamma_0)}{t}\,\d\ppi(\gamma)\\
&\leq\lims_{t\downarrow0}\int\frac{f(\gamma_t)-f(\gamma_0)}{t}\,\d\ppi(\gamma)\leq \int D^+f(\nabla g)(\gamma_0)\,\d\ppi(\gamma).
\end{split}
\end{equation}
\end{proposition}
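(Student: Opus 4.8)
The plan is to prove the two outer inequalities in \eqref{eq:firstdiff}; the two middle ones are trivial. By replacing $f$ with $-f$ and using the third identity in \eqref{eq:basefg} together with the fact that $\ppi$ still represents $\nabla g$, it suffices to establish the last inequality
\[
\lims_{t\downarrow0}\int\frac{f(\gamma_t)-f(\gamma_0)}{t}\,\d\ppi(\gamma)\leq \int D^+f(\nabla g)(\gamma_0)\,\d\ppi(\gamma),
\]
since applying this to $-f$ yields $\lims_{t\downarrow0}\int\frac{-f(\gamma_t)+f(\gamma_0)}{t}\,\d\ppi\leq\int D^+(-f)(\nabla g)(\gamma_0)\,\d\ppi=-\int D^-f(\nabla g)(\gamma_0)\,\d\ppi$, i.e. $\limi_{t\downarrow0}\int\frac{f(\gamma_t)-f(\gamma_0)}{t}\,\d\ppi\geq\int D^-f(\nabla g)(\gamma_0)\,\d\ppi$.

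First I would fix $\eps>0$ and write $g+\eps f=:g_\eps$. The point is that $g_\eps\in\s^2(\Omega)$ and, by \eqref{eq:localplan} applied on $[0,t]$ with the test plan $\ppi$ (legitimate for $t\le T$ by the definition of representing plan, after a harmless rescaling of the time interval), for $\ppi$-a.e.\ $\gamma$,
\[
g_\eps(\gamma_t)-g_\eps(\gamma_0)\leq\int_0^t\weakgrad{g_\eps}(\gamma_r)|\dot\gamma_r|\,\d r\leq\frac12\int_0^t\weakgrad{g_\eps}^2(\gamma_r)\,\d r+\frac12\int_0^t|\dot\gamma_r|^2\,\d r.
\]
Dividing by $t$, integrating against $\ppi$ and taking $\lims_{t\downarrow0}$, I would argue that $\frac1t\int_0^t\!\int\weakgrad{g_\eps}^2(\gamma_r)\,\d\ppi(\gamma)\,\d r\to\int\weakgrad{g_\eps}^2(\gamma_0)\,\d\ppi(\gamma)$ as $t\downarrow0$: this is the continuity at $0$ of the map $r\mapsto\int\weakgrad{g_\eps}^2(\gamma_r)\,\d\ppi(\gamma)=\int\weakgrad{g_\eps}^2\,\d(\e_r)_\sharp\ppi$, which follows from the uniform bound $(\e_r)_\sharp\ppi\le C\mm$, the $L^1$-integrability of $\weakgrad{g_\eps}^2$ on the relevant bounded region, and weak continuity $r\mapsto(\e_r)_\sharp\ppi$ (or, more robustly, dominated convergence after passing to a subsequence realizing the $\limsup$). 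Hence
\[
\lims_{t\downarrow0}\int\frac{g_\eps(\gamma_t)-g_\eps(\gamma_0)}{t}\,\d\ppi(\gamma)\leq\frac12\int\weakgrad{g_\eps}^2(\gamma_0)\,\d\ppi(\gamma)+\frac12\lims_{t\downarrow0}\frac1{2t}\iint_0^t|\dot\gamma_s|^2\,\d s\,\d\ppi(\gamma)\cdot 2.
\]
(One must be slightly careful matching the factor of $\tfrac12$ with the normalization in \eqref{eq:defrepr}; I would simply carry the constant through.)

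Next I would use that $\ppi$ represents $\nabla g$: expanding $g_\eps(\gamma_t)-g_\eps(\gamma_0)=(g(\gamma_t)-g(\gamma_0))+\eps(f(\gamma_t)-f(\gamma_0))$, subtracting off the bound \eqref{eq:defrepr} for the $g$-part, and dividing by $\eps>0$, the $\lims$ of the $f$-incremental ratio gets bounded by $\frac1{2\eps}\int\big(\weakgrad{(g+\eps f)}^2-\weakgrad g^2\big)(\gamma_0)\,\d\ppi(\gamma)$, the curve-energy terms cancelling exactly because the representing-plan inequality \eqref{eq:defrepr} is sharp and matches the upper bound \eqref{eq:perplanrepr}. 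Finally I would let $\eps\downarrow0$: by Definition \ref{def:dpmfg}, $\frac{\weakgrad{(g+\eps f)}^2-\weakgrad g^2}{2\eps}\downarrow D^+f(\nabla g)$ pointwise $\mm$-a.e.\ as $\eps\downarrow0$ (monotone by convexity), and the integrand is dominated from below by $-\weakgrad f\weakgrad g$ via \eqref{eq:basefg}, so monotone/dominated convergence against $(\e_0)_\sharp\ppi\le C\mm$ gives $\lims_{t\downarrow0}\int\frac{f(\gamma_t)-f(\gamma_0)}{t}\,\d\ppi(\gamma)\leq\int D^+f(\nabla g)(\gamma_0)\,\d\ppi(\gamma)$, as desired.

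I expect the main obstacle to be the interchange of limits: passing the $\lims_{t\downarrow0}$ through the $\eps$-dependence and justifying $\frac1t\int_0^t\!\int\weakgrad{g_\eps}^2(\gamma_r)\,\d\ppi\,\d r\to\int\weakgrad{g_\eps}^2(\gamma_0)\,\d\ppi$ rigorously. The cleanest route is to replace the $\limsup$ by a fixed sequence $t_n\downarrow0$ attaining it, extract a further subsequence along which $(\e_{t_n})_\sharp\ppi$ and the averaged measures converge weakly-$*$, and exploit that all these measures are dominated by $C\mm$ on a common bounded set where $\weakgrad{g_\eps}^2\in L^1$, so that the required convergences reduce to standard weak-convergence/uniform-integrability arguments. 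Everything else is the algebra of incremental ratios plus the definitions, and the symmetrization $f\rightsquigarrow-f$ at the end.
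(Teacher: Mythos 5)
Your proposal is correct and follows the paper's argument almost verbatim: both write \eqref{eq:perplanrepr} for the function $g+\eps f$, subtract \eqref{eq:defrepr}, observe that the kinetic-energy terms cancel to get $\lims_{t\downarrow0}\eps\int\frac{f(\gamma_t)-f(\gamma_0)}{t}\,\d\ppi\leq\frac12\int\big(\weakgrad{(g+\eps f)}^2-\weakgrad g^2\big)(\gamma_0)\,\d\ppi$, and then divide by $\eps$ and let $\eps\to 0$ using the monotonicity from convexity and the domination $|D^\pm f(\nabla g)|\leq\weakgrad f\weakgrad g\in L^1\big((\e_0)_\sharp\ppi\big)$. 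The only cosmetic differences are that you re-derive \eqref{eq:perplanrepr} from the test-plan inequality \eqref{eq:localplan}, whereas the paper simply invokes it as already established, and that you obtain the lower inequality via $f\rightsquigarrow -f$ and \eqref{eq:basefg} rather than by dividing by $\eps<0$ — the two are trivially equivalent; the re-derivation of \eqref{eq:perplanrepr} and the limit-interchange concerns you flag at the end are therefore unnecessary for this proposition.
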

\begin{proof}
Write inequality \eqref{eq:perplanrepr} for the function $g+\eps f$ and subtract inequality \eqref{eq:defrepr} to get
\[
\lims_{t\downarrow0}\eps\int\frac{f(\gamma_t)-f(\gamma_0)}{t}\,\d\ppi(\gamma)\leq\frac12\int \weakgrad{(g+\eps f)}^2(\gamma_0)-\weakgrad g^2(\gamma_0)\,\d\ppi(\gamma).
\]
Divide by $\eps>0$ (resp. $\eps<0$) and let $\eps\downarrow 0$ (resp. $\eps\uparrow0$) to conclude.
\end{proof}

\subsection{Infinitesimally strictly convex spaces}
Following \cite{Gigli12} we introduce a class of metric measure spaces resembling Finsler manifolds such that for a.e. $x$ the norm in the tangent space at $x$ is strictly convex. The fact that a norm is strictly convex if and only if its dual norm is differentiable and that in a smooth framework the object $\weakgrad f$ is the  (dual) norm of the distributional differential of $f$ is at the basis of the following definition:
\begin{definition}[Infinitesimally strictly convex spaces]\label{def:infstrconv}
We say that $(X,\sfd,\mm)$ is infinitesimally strictly convex provided
\[
\int_XD^+f(\nabla g)=\int_XD^-f(\nabla g),\qquad\textrm{for any $f,g\in \s^2(X,\sfd,\mm)$.}
\]
\end{definition}
The first inequality in \eqref{eq:basefg} and the locality property \eqref{eq:localgrad21} ensures that   if $(X,\sfd,\mm)$ is infinitesimally strictly convex and  $\Omega\subset X$ is open, then for every  $f,g\in \s^2(\Omega)$, it holds $D^+f(\nabla g)=D^-f(\nabla g)$ $\mm$-a.e. on $\Omega$. Such common value will be denoted by $Df(\nabla g)$.

On these spaces, the general calculus rules presented before simplify, for $\Omega\subset X$ open we have:

\noindent\underline{1-Lipschitz continuity of differentials}.  For any $f,\tilde f,g\in \s^2_{\rm loc}(\Omega)$ it holds
\[
|D f(\nabla g)-D \tilde f(\nabla g)|\leq \weakgrad{(f-\tilde f)}\,\weakgrad g,\qquad\mm\ae\ \textrm{on }\Omega.
\]

\noindent\underline{Locality}.  For $f,\tilde f,g,\tilde g\in \s^2_{\rm loc}(\Omega)$ it holds
\begin{equation}
\label{eq:localgrad22}
Df(\nabla g)=D\tilde f(\nabla \tilde g),\qquad\mm\ae\textrm{ on } \{f=\tilde f\}\cap\{g=\tilde g\}.
\end{equation}
\noindent\underline{Linearity of the differential}. For  $f_0,f_1,g\in \s^2(\Omega)$ and $\alpha_0,\alpha_1\in \R$ it holds
\[
D(\alpha_0 f_0+\alpha_1  f_1)(\nabla g)=\alpha_0 Df_0(\nabla g)+\alpha_1  Df_1(\nabla g),\qquad\mm\ae\ \textrm{on }\Omega.
\]

\noindent\underline{Leibniz rule for differentials}. For $f_0,f_1\in \s^2(\Omega)\cap L^\infty(\Omega)$ and $g\in \s^2(\Omega)$ it holds
\[
D(f_0f_1)(\nabla g)=f_0Df_1(\nabla g)+f_1Df_0(\nabla g),\qquad\mm\ae\ \textrm{on }\Omega.
\]

\noindent\underline{Chain rules}. For $f,g\in\s^2(\Omega)$ and $\varphi:\R\to\R$ Lipschitz it holds
\[
\begin{split}
D(\varphi\circ f)(\nabla g)&=\varphi'\circ fDf(\nabla g),\\
Df(\nabla(\varphi\circ g))&=\varphi'\circ gDf(\nabla g),
\end{split}
\]
$\mm$-a.e. on $\Omega$, where $\varphi'$ is defined arbitrarily at points where $\varphi$ is not differentiable. The Lipschitz continuity of $\varphi$ can be relaxed as in the chain rule \eqref{eq:chainbase}.

\noindent\underline{First order differentiation formula}. For  $f,g\in\s^2(\Omega)$ and $\ppi$ representing the gradient of $g$ in $\Omega$ it holds
\begin{equation}
\label{eq:firststrictconv}
\lim_{t\downarrow0}\int\frac{f(\gamma_t)-f(\gamma_0)}{t}\,\d\ppi(\gamma)=\int Df(\nabla g)(\gamma_0)\,\d\ppi(\gamma),
\end{equation}
in particular, the limit at the left-hand side exists.

\subsection{Measure valued Laplacian}

The definition of the object $Df(\nabla g)$ allows to integrate by parts. In particular we can give a meaning to the equation $\bd g=\mu$ for a Sobolev function $g$ and a measure $\mu$.

\begin{definition}[Measure valued Laplacian]\label{def:measlap}
Let $(X,\sfd,\mm)$ be an infinitesimally strictly convex space, and $g\in \s^2_{\rm loc}(X,\sfd,\mm)$. We say that $g$ is in the domain of the Laplacian, and write $g\in D(\bd)$, provided there exists a Radon measure $\mu$ on $X$ such that
\begin{equation}
\label{eq:deflap}
-\int Df(\nabla g)\,\d\mm=\int f\,\d\mu,
\end{equation}
for any Lipschitz function $f:X\to\R$ in $L^1(X,|\mu|)$ such that $\supp(f)$ is bounded and of finite $\mm$ measure.
\end{definition}
It is clear that if $g\in D(\bd)$ the measure $\mu$ satisfying \eqref{eq:deflap} is unique: we will denote it by $\bd g$, the bold notation standing to remember that we deal with a measure valued Laplacian, possibly absolutely continuous w.r.t. $\mm$. This definition of Laplacian directly generalizes the one available in the smooth Finsler setting (see e.g. \cite{Shen98}). Notice that in general the Laplacian is not a linear operator and similarly $D(\bd)$ might be not a vector space. These properties will be granted if we further assume that $(X,\sfd,\mm)$ is infinitesimally Hilbertian, an assumption that we will do from the next chapter on.

Yet, 1-homogeneity of $\bd$ is granted, i.e. for $g\in D(\bd)$ and $\alpha\in \R$ we have $\alpha g\in D(\bd)$ and $\bd(\alpha g)=\alpha\bd g$.

One of the main results in \cite{Gigli12} is that on an infinitesimally strictly convex $CD(K,N)$ space, for the distance function it holds the same Laplacian comparison that holds on Riemannian manifold with Ricci curvature bounded from below by $K$ and dimension bounded above by $N$. The basic idea to get the result is to combine the first order differentiation formula
\begin{equation}
\label{eq:descrder}
\lim_{t\downarrow0}\frac{\u_N(\mu_t)-\u_N(\mu_0)}t=-\frac1N\int D(\rho^{1-\frac1N})(\nabla \varphi)\,\d\mm,
\end{equation}
valid in a smooth Finsler world, where $(\mu_t)$ is a geodesic induced by the Kantorovich potential $\varphi$ and $\mu_0=\rho\mm$, with the curvature-dimension condition. For example, for $\mu_1=\delta_{\bar x}$ we can take $\varphi:=\frac{\sfd^2(\cdot,\bar x)}2$ independently on the chosen $\mu_0$ and  in the $CD(0,N)$ case we get
\[
\frac{\u_N(\mu_t)-\u_N(\mu_0)}t\leq\u_N(\mu_1)-\u_N(\mu_0)=\int \rho^{1-\frac1N}\,\d\mm, \qquad\forall t\in(0,1],
\]
so that the sharp Laplacian comparison estimate for $\varphi$ follows this inequality, \eqref{eq:descrder} and  the arbitrariness of $\rho$.

Part of the job carried out in \cite{Gigli12} was to prove formula \eqref{eq:descrder} in the non-smooth world  with a $\geq$ in place of the $=$. We mention in particular one issue, given that later on we will face a similar problem. The convexity of $u_N$ yields
\[
\frac{\u_N(\mu_t)-\u_N(\mu_0)}t\geq-\left(1-\frac1N\right)\int\rho^{-\frac1N}\,\d\frac{\mu_t-\mu_0}t=-\left(1-\frac1N\right)\int\frac{\rho^{-\frac1N}\circ\e_t-\rho^{-\frac1N}\circ\e_0}t\,\d\ppi,
\]
where $\ppi$ is any lifting of  $(\mu_t)$. It turns out that $\ppi$ represents the gradient of $-\varphi$ in quite high generality (by the metric Brenier theorem proved in \cite{AmbrosioGigliSavare11}, see also Section \ref{se:metrbren}), thus formally  letting $t\downarrow0$ and using the first order differentiation formula we obtain exactly the right hand side of \eqref{eq:descrder}. However, in practice this might not work  if for $t>0$ the support of $\mu_t$ is not contained in the one of $\mu_0$ (which is often the case): the problem is that the term $\rho^{-\frac1N}\circ\e_t$ would be equal to $+\infty$ on a set of positive $\ppi$-measure, thus destroying all the informations.

To get around this issue, in \cite{Gigli12} two propositions have been used: one to prove that whenever $\supp(\mu_t)\subset\supp(\mu_0)$ holds for any $t\in[0,1]$ indeed the computation can be carried out, and another one to show that one can always reduce to such well behaved case. They are recalled below in the formulation that we will need later on.
\begin{proposition}[Bound from below on the derivative of the internal energy]\label{prop:boundbasso}\ \linebreak
Let $(X,\sfd,\mm)$ be an infinitesimally strictly convex $CD(0,N)$ space and $\mu_0\in \prob X$ a measure with bounded support such that  $\mu_0\ll\mm$, say $\mu_0=\rho\mm$. Assume that $\supp(\mu_0)=\overline\Omega$, with $\Omega$ bounded, open and such that $\mm(\partial\Omega)=0$. Assume also that the restriction of  $\rho$ to $\overline\Omega$ is Lipschitz and bounded from below by a positive constant.  Also, let $\mu_1\in\prob X$ and $\ppi\in\gopt(\mu_0,\mu_1)$. Assume that  $\supp((\e_t)_\sharp\ppi)\subset \overline\Omega$ for every $t\in[0,1]$ and that  $\u_N((\e_t)_\sharp\ppi)\to \u_N(\mu_0)$, as $t\downarrow0$. 

Then:
\begin{equation}
\label{eq:bounddalbasso}
\limi_{t\downarrow0}\frac{\u_N((\e_t)_\sharp\ppi)-\u_N((\e_0)_\sharp\ppi)}t\geq-\frac1N\int_\Omega D(\rho^{1-\frac1N})(\nabla\varphi)\,\d\mm,
\end{equation}
where $\varphi$ is any Kantorovich potential from $\mu_0$ to $\mu_1$ which is Lipschitz on bounded sets.
\end{proposition}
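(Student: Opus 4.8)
The plan is to combine the pointwise convexity of $u_N$ with the first order differentiation formula \eqref{eq:firststrictconv}, the hypotheses being exactly what is needed to keep the relevant integrand bounded near $t=0$. Write $\mu_t:=(\e_t)_\sharp\ppi=\rho_t\mm+\mu_t^s$, so $\u_N(\mu_t)=-\int\rho_t^{1-\frac1N}\,\d\mm$. First I would reduce to a computation with densities: since $\supp(\mu_t)\subset\overline\Omega$ and $\mm(\partial\Omega)=0$ we have $\rho_t=0$ $\mm$-a.e.\ outside $\overline\Omega$, hence $\u_N(\mu_t)=-\int_\Omega\rho_t^{1-\frac1N}\,\d\mm$. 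On $\overline\Omega$ the density $\rho$ is Lipschitz and $\geq c>0$, so $\rho^{-\frac1N}$ is Lipschitz and bounded there, and I would fix a bounded Lipschitz $\tilde f\in\s^2(X,\sfd,\mm)$ with $\tilde f=\rho^{-\frac1N}$ on $\overline\Omega$ and $\tilde f\geq c'>0$ on $X$. Because the interpolants are concentrated on $\overline\Omega$, $\tilde f(\gamma_t)=\rho^{-\frac1N}(\gamma_t)$ for $\ppi$-a.e.\ $\gamma$ and all $t$; this is precisely where the hypothesis $\supp(\mu_t)\subset\overline\Omega$ matters, since without it $\rho^{-\frac1N}\circ\e_t$ would be $+\infty$ on a set of positive $\ppi$-measure.

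Then, from the convexity of $z\mapsto-z^{1-\frac1N}$ I would write, $\mm$-a.e.\ on $\Omega$,
\[
-\rho_t^{1-\frac1N}\ \geq\ -\tfrac1N\rho^{1-\frac1N}-\Big(1-\tfrac1N\Big)\rho^{-\frac1N}\rho_t,
\]
integrate over $\Omega$, and use $\rho_t,\mu_t^s,\tilde f\geq0$ (so that $\int_\Omega\rho^{-\frac1N}\rho_t\,\d\mm\leq\int\tilde f\,\d\mu_t=\int\tilde f(\gamma_t)\,\d\ppi$) together with $\int_\Omega\rho^{1-\frac1N}\,\d\mm=\int\tilde f\,\d\mu_0=\int\tilde f(\gamma_0)\,\d\ppi$ to obtain, for $t\in(0,1]$,
\[
\frac{\u_N(\mu_t)-\u_N(\mu_0)}{t}\ \geq\ -\Big(1-\tfrac1N\Big)\int\frac{\tilde f(\gamma_t)-\tilde f(\gamma_0)}{t}\,\d\ppi(\gamma).
\]
Next I would invoke the metric Brenier theorem of \cite{AmbrosioGigliSavare11} to see that $\ppi$ represents $\nabla(-\varphi)$ in $\Omega$, so that \eqref{eq:firststrictconv} applies with $f=\tilde f$ and $g=-\varphi$; using $D\tilde f(\nabla(-\varphi))=-D\tilde f(\nabla\varphi)$, the locality of $D(\cdot)(\nabla\cdot)$ (whence $D\tilde f(\nabla\varphi)=D(\rho^{-\frac1N})(\nabla\varphi)$ $\mm$-a.e.\ on $\Omega$) and $(\e_0)_\sharp\ppi=\rho\mm$, this gives
\[
\lim_{t\downarrow0}\int\frac{\tilde f(\gamma_t)-\tilde f(\gamma_0)}{t}\,\d\ppi\ =\ -\int_\Omega\rho\,D\big(\rho^{-\frac1N}\big)(\nabla\varphi)\,\d\mm .
\]
Finally the chain rule for differentials gives $\rho\,D(\rho^{-\frac1N})(\nabla\varphi)=-\frac1{N-1}D(\rho^{1-\frac1N})(\nabla\varphi)$, i.e.\ $(1-\frac1N)\rho\,D(\rho^{-\frac1N})(\nabla\varphi)=-\frac1N D(\rho^{1-\frac1N})(\nabla\varphi)$ $\mm$-a.e.; substituting this into the last two displays and taking $\liminf_{t\downarrow0}$ yields exactly \eqref{eq:bounddalbasso}.

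The hard part will be the middle step: verifying that the (arbitrary) optimal geodesic plan $\ppi$ genuinely represents $\nabla(-\varphi)$ in the sense of Definition \ref{def:planrepgrad}. This is where all the quantitative hypotheses enter — $\rho$ Lipschitz and bounded below, $\supp((\e_t)_\sharp\ppi)\subset\overline\Omega$, and the energy convergence $\u_N((\e_t)_\sharp\ppi)\to\u_N(\mu_0)$ — and it is the reason the statement is confined to this ``well behaved'' situation rather than being formulated for a general geodesic. Once that is granted, the remainder is just the convexity of $u_N$ and the calculus rules for $D(\cdot)(\nabla\cdot)$ already recalled in the preliminaries.
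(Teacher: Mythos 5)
Your overall strategy is the right one: first use the pointwise convexity of $u_N$ on $\Omega$ (where $\rho$ is controlled) to reduce to the difference quotient of $\int\tilde f\,\d\mu_t$ along $\ppi$, with $\tilde f$ a bounded Lipschitz extension of $\rho^{-1/N}|_{\overline\Omega}$, then identify the limit as $t\downarrow0$ with $-\int_\Omega\rho\,D(\rho^{-1/N})(\nabla\varphi)\,\d\mm$ via the first-order differentiation formula \eqref{eq:firststrictconv}, and finish with the chain rule. The convexity estimate, the handling of the singular part of $\mu_t$ (nonnegativity plus $\tilde f\geq 0$ and $\mm(\partial\Omega)=0$), and the final algebra $(1-\tfrac1N)\rho\,D(\rho^{-1/N})(\nabla\varphi)=-\tfrac1N D(\rho^{1-1/N})(\nabla\varphi)$ are all correct.

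There is however a genuine gap at exactly the step you flag as ``the hard part,'' and flagging it is not the same as filling it. You invoke the second half of Theorem \ref{thm:brenmetr} to conclude that $\ppi$ represents $\nabla(-\varphi)$ in $\Omega$, but the hypotheses of that theorem (and of Definition \ref{def:planrepgrad} itself) require $(\e_t)_\sharp\ppi\leq C\mm$ for all $t$ in a right neighbourhood of $0$. The proposition does \emph{not} assume this: it only assumes that $\mu_0=\rho\mm$ with $\rho$ bounded on $\overline\Omega$, that $\supp((\e_t)_\sharp\ppi)\subset\overline\Omega$, and that $\u_N((\e_t)_\sharp\ppi)\to\u_N(\mu_0)$. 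For $t>0$ the interpolants $(\e_t)_\sharp\ppi$ could a priori have singular parts or unbounded densities, so $\ppi$ is not known to be a test plan, and neither \eqref{eq:perplanrepr} nor the metric Brenier theorem applies directly. The energy convergence hypothesis is precisely the ingredient that must be exploited to make some version of the first-order differentiation step go through, and your proposal does not say how; it merely asserts that the quantitative hypotheses ``are what is needed.'' Since controlling the behaviour of the interpolated densities near $t=0$ is the actual content of this proposition (the paper itself relegates the proof to \cite{Gigli12} rather than reproducing it), leaving this step open leaves the proof essentially incomplete. To close the gap you would need either to derive a density bound $(\e_t)_\sharp\ppi\leq C\mm$ for small $t$ (which is not available in general) or, more realistically, to prove a one-sided upper bound on $\lims_{t\downarrow0}\int\frac{\tilde f\circ\e_t-\tilde f\circ\e_0}{t}\,\d\ppi$ by a separate argument that uses the $c$-concavity of $\varphi$ and the energy convergence directly, without passing through the test-plan framework.
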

Notice that the integrand in the right-hand side of \eqref{eq:bounddalbasso} is well defined because $\rho^{1-\frac1N},\varphi\in \s^2(\Omega)$.
\begin{proposition}\label{le:cheppalle}
Let $(X,\sfd,\mm)$ be a proper  geodesic metric measure space, $\varphi:X\to\R$ a locally Lipschitz $c$-concave function and $B\subset X$ a compact set. Then there exists another locally Lipschitz $c$-concave function $\tilde\varphi:X\to\R$ and a bounded open set $\Omega \supset B$ such that the following are true. 
\begin{itemize}
\item[i)] $\tilde\varphi=\varphi$ on $B$.
\item[ii)] For any $x\in X$, the set $\partial^c\tilde\varphi(x)$ is non empty.
\item[iii)] For any $x\in\Omega$, $y\in\partial^c\tilde\varphi(x)$ and $\gamma\in\geo(X)$ connecting $x$ to $y$ it holds $\gamma_t\in\Omega$   for any $t\in[0,1]$.
\item[iv)] $\mm(\partial\Omega)=0$.
\end{itemize}
\end{proposition}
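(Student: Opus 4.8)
The plan is to obtain $\tilde\varphi$ from $\varphi$ by a two-step modification. First I would truncate the infimum representing $\varphi=\varphi^{cc}$ in the $y$-variable to a large closed ball, producing a function that is bounded below and grows at least quadratically at infinity; then I would cap this function above by a well-chosen constant. The cap makes the function bounded and, thanks to the quadratic growth, identically equal to its maximum outside some large ball — and this flatness far away is exactly what forces the $c$-superdifferential to be trivial there. The one elementary estimate I will use repeatedly is a \emph{geodesic comparison}: if $\psi$ is $c$-concave, $y\in\partial^c\psi(x)$, $\sigma$ is a unit-speed geodesic from $x$ to $y$ and $z:=\sigma_1$ (the case $\sfd(x,y)<1$ being trivial), then combining $\psi(z)\le\frac{\sfd^2(z,y)}2-\psi^c(y)$ with $\psi(x)+\psi^c(y)=\frac{\sfd^2(x,y)}2$ gives $\psi(z)-\psi(x)\le-\sfd(x,y)+\tfrac12$, hence $\sfd(x,y)\le\psi(x)-\psi(z)+\tfrac12$; the same holds with an extra $+\eta$ if $y$ is merely $\eta$-optimal for $\psi(x)=\inf_{y'}\frac{\sfd^2(x,y')}2-\psi^c(y')$.

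For Step 1 I would fix $o\in B$, set $M:=\sup_{\bar B_1(B)}|\varphi|<\infty$ (finite since $\bar B_1(B)$ is compact by properness and $\varphi$ is continuous), and deduce from the estimate above that every $y$ which is $1$-optimal for $\varphi(x)$, $x\in B$, satisfies $\sfd(x,y)\le 2M+\tfrac32=:\rho$. Choosing $R$ with $\bar B_\rho(B)\subset\bar B_R(o)$, I would then set
\[
\tilde\varphi_1(x):=\inf_{y\in\bar B_R(o)}\frac{\sfd^2(x,y)}2-\varphi^c(y).
\]
Using that $\varphi^c$ is upper semicontinuous (an infimum of continuous functions of $y$), nowhere $+\infty$, and — by the bound just proved — not identically $-\infty$ on $\bar B_R(o)$, so that $A:=\sup_{\bar B_R(o)}\varphi^c\in\R$, one checks that $\tilde\varphi_1$ is real-valued (indeed $\ge-A$), $c$-concave (it is the $c$-transform of the function equal to $\varphi^c$ on $\bar B_R(o)$ and $-\infty$ elsewhere), locally Lipschitz (an infimum of functions sharing a Lipschitz constant on each bounded set), and, comparing $1/n$-optimizers, that $\tilde\varphi_1=\varphi$ on $B$.

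For Step 2 I would put $c:=1+\max_B\varphi$ and $\tilde\varphi:=\min\{\tilde\varphi_1,c\}$. This is still $c$-concave (a minimum of two $c$-concave functions is $c$-concave, since $\min\{\psi_1^c,\psi_2^c\}=(\max\{\psi_1,\psi_2\})^c$), still locally Lipschitz, and still equal to $\varphi$ on $B$ (there $\tilde\varphi_1=\varphi\le\max_B\varphi<c$): this is (i). Now $-A\le\tilde\varphi\le c$, so $\tilde\varphi$ is bounded; the usual compactness argument (bounded minimizing sequences, properness, upper semicontinuity of $\tilde\varphi^c$) gives $\partial^c\tilde\varphi(x)\ne\emptyset$ for every $x$, which is (ii); and the geodesic comparison applied to $\tilde\varphi$ yields a constant $\kappa$ with $\sfd(x,y)\le\kappa$ whenever $y\in\partial^c\tilde\varphi(x)$. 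Moreover $\tilde\varphi_1(x)\ge\frac{(\sfd(x,o)-R)^2}2-A$, so there is $R'\ge R$ with $\tilde\varphi\equiv c$ on $X\setminus\bar B_{R'}(o)$. I would finally take $\Omega:=B_{R''}(o)$ with $R''>R'+2\kappa+1$ chosen so that $\mm(\partial B_{R''}(o))=0$ — possible since the spheres about $o$ are disjoint and $\mm$ is finite on bounded sets, so only countably many radii carry positive mass — an open bounded set containing $B\subset\bar B_R(o)$ with $\mm(\partial\Omega)=0$, which is (iv). For (iii), given $x\in\Omega$ and $y\in\partial^c\tilde\varphi(x)$: if $\sfd(x,o)\le R'+\kappa+1$, any geodesic from $x$ to $y$ lies in $\bar B_\kappa(x)\subset\bar B_{R'+2\kappa+1}(o)\subset\Omega$; if $\sfd(x,o)>R'+\kappa+1$ then $\bar B_\kappa(x)$ is disjoint from $\bar B_{R'}(o)$, so $\tilde\varphi\equiv c$ on it, in particular $\tilde\varphi(y)=c$, and since $\tilde\varphi^c(y)\le-\tilde\varphi(y)=-c$ while $\tilde\varphi(x)=c$, the relation $\frac{\sfd^2(x,y)}2=\tilde\varphi(x)+\tilde\varphi^c(y)\le0$ forces $y=x$ and the geodesic is constant at $x\in\Omega$.

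The step I expect to be the main obstacle is Step 1, namely truncating the infimum defining $\varphi$ without altering $\varphi$ on $B$: this rests entirely on the a priori bound $\sfd(x,y)\le\rho$ for $x\in B$ and (near-)optimal $y$, which is precisely where properness, the geodesic structure of $X$ and the local Lipschitz control on $\varphi$ all enter. A secondary nuisance is that $\varphi^c$ may take the value $-\infty$, which forces one to work with its (finite) supremum over the compact ball $\bar B_R(o)$ rather than with $\varphi^c$ directly.
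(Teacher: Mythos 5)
Your proof is correct and, as far as I can tell, follows the same truncate-and-cap strategy that the cited source \cite{Gigli12} uses: first restrict the infimum in $\varphi=\varphi^{cc}$ to a compact ball so that the function becomes coercive (your unit-step geodesic comparison is exactly what guarantees this does not alter $\varphi$ on $B$), then cap from above by a constant so that the function becomes bounded and constant far away, which forces $\partial^c\tilde\varphi(x)=\{x\}$ there and yields (iii) together with the $\kappa$-bound. The only slip is the claimed lower bound $-A\le\tilde\varphi$, which should read $\min\{-A,c\}\le\tilde\varphi$; this is immaterial since the argument only uses that $\tilde\varphi$ is bounded.
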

Finally, we recall that with a limiting argument similar to the one used in the smooth context gives the following result, see the last proposition in \cite{Gigli12} for the proof.
\begin{proposition}[Laplacian comparison for the Busemann function]\label{prop:lapbus}
Let $(X,\sfd,\mm)$ be an infinitesimally strictly convex $CD(0,N)$ space, $N<\infty$, such that $W^{1,2}(\Omega,\sfd,\mm)$ is uniformly convex for any $\Omega\subset X$ open. Assume that there is an half line $\bar\gamma:\R^+\to\supp(\mm)$ and let $\b$ be the Busemann function associated to it as in formula \eqref{eq:halfbus}.

Then $\b\in D(\bd)$ and $\bd\b\geq 0$.
\end{proposition}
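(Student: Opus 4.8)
The plan is to realise $\b$ as a monotone limit of distance functions and to transfer to the limit the Laplacian comparison for the distance function, which is available on infinitesimally strictly convex $CD(0,N)$ spaces by \cite{Gigli12}. First I would set $\b_t:=t-\sfd(\cdot,\bar\gamma_t)$ for $t\ge 0$; each $\b_t$ is $1$-Lipschitz, and from $\sfd(\cdot,\bar\gamma_s)\le\sfd(\cdot,\bar\gamma_t)+(s-t)$ for $s\ge t\ge 0$ one gets that $t\mapsto\b_t(x)$ is non-decreasing, while $\b_t(x)\le\sfd(x,\bar\gamma_0)$ bounds it from above; hence $\b_t\uparrow\b$ pointwise, and by monotonicity and Dini's theorem (the space is proper, being $CD(0,N)$ with $N<\infty$, and the functions are continuous) the convergence is uniform on compact sets. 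I would also record that $\weakgrad\b=1$ $\mm$-a.e.: the inequality $\weakgrad\b\le 1$ follows from \eqref{eq:lipweak} together with $\lip(\b)\equiv 1$ on $\supp(\mm)$ (the argument for Theorem \ref{thm:basemetric}(ii) in the case $a>0$ uses only the half-line $\bar\gamma$), while the reverse inequality is obtained, as for distance functions, by transporting a measure with bounded density along minimizing geodesics aimed at $\bar\gamma_t$ and letting $t\to+\infty$, using that the $CD(0,N)$ condition bounds the compression of such geodesics uniformly in the endpoint $\bar\gamma_t$ (cf.\ \cite{Rajala12}).

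Next I would invoke the Laplacian comparison for the distance function: for every $p\in\supp(\mm)$ the function $-\sfd(\cdot,p)$ lies in the domain of the measure valued Laplacian on the open set $X\setminus\{p\}$, with $\bd(-\sfd(\cdot,p))\ge-\frac{N-1}{\sfd(\cdot,p)}\mm$ there. Fix a bounded open set $U$; for $t$ large enough one has $\bar\gamma_t\notin\overline U$ and, for $x\in U$, $\sfd(x,\bar\gamma_t)\ge t-\sup_{x\in U}\sfd(x,\bar\gamma_0)$. Since $\b_t$ differs from $-\sfd(\cdot,\bar\gamma_t)$ by a constant, which does not affect the Laplacian, $\b_t$ lies in the domain of the Laplacian on $U$ with $\bd\b_t\restr{U}\ge-\eps_t\,\mm\restr{U}$, where $\eps_t:=(N-1)(t-\sup_{x\in U}\sfd(x,\bar\gamma_0))^{-1}\downarrow 0$.

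The hard part will be to upgrade $\b_t\to\b$ to strong convergence in $W^{1,2}(U)$; this is exactly where the uniform convexity hypothesis enters, and I expect it to be the main technical point. Since $\b_t\to\b$ uniformly on $U$ and $\mm(U)<\infty$ we have $\b_t\to\b$ in $L^2(U)$; since $\weakgrad{\b_t}\le 1$ the sequence is bounded in the reflexive space $W^{1,2}(U)$, so $\b_t\rightharpoonup\b$ weakly there; and the identity $\weakgrad\b=1$ $\mm$-a.e.\ gives $\|\weakgrad{\b_t}\|_{L^2(U)}^2\le\mm(U)=\|\weakgrad\b\|_{L^2(U)}^2$, which combined with lower semicontinuity of minimal weak upper gradients forces $\|\weakgrad{\b_t}\|_{L^2(U)}\to\|\weakgrad\b\|_{L^2(U)}$, hence $\|\b_t\|_{W^{1,2}(U)}\to\|\b\|_{W^{1,2}(U)}$. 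Weak convergence together with convergence of norms in the uniformly convex space $W^{1,2}(U)$ then yields $\b_t\to\b$ strongly in $W^{1,2}(U)$.

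Finally, I would fix a non-negative Lipschitz $f$ with bounded support of finite $\mm$-measure and a bounded open $U\supset\supp(f)$; for $t$ large the comparison above reads $-\int Df(\nabla\b_t)\,\d\mm=\int f\,\d\bd\b_t\ge-\eps_t\int f\,\d\mm$. Strong $W^{1,2}(U)$-convergence gives $\weakgrad{(\b_t+\eps f)}\to\weakgrad{(\b+\eps f)}$ in $L^2(U)$ for every $\eps\in\R$, so the convex maps $\eps\mapsto\tfrac12\int_U\weakgrad{(\b_t+\eps f)}^2\,\d\mm$ converge pointwise to $\eps\mapsto\tfrac12\int_U\weakgrad{(\b+\eps f)}^2\,\d\mm$, whose derivative at $0$ equals $\int Df(\nabla\b)\,\d\mm$ by infinitesimal strict convexity; hence the one-sided derivatives at $0$ converge, i.e.\ $\int Df(\nabla\b_t)\,\d\mm\to\int Df(\nabla\b)\,\d\mm$. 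Letting $t\to+\infty$ gives $-\int Df(\nabla\b)\,\d\mm\ge 0$ for every such $f$. Since $f\mapsto-\int Df(\nabla\b)\,\d\mm$ is linear (linearity of the differential on infinitesimally strictly convex spaces) and non-negative on non-negative test functions, the Riesz representation theorem produces a non-negative Radon measure $\mu$ with $-\int Df(\nabla\b)\,\d\mm=\int f\,\d\mu$ for all admissible $f$; that is, $\b\in D(\bd)$ and $\bd\b=\mu\ge 0$.
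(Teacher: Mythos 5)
Your proposal is, in structure and in all essential steps, the argument the paper attributes to the last proposition of \cite{Gigli12}: approximate $\b$ by $\b_t:=t-\sfd(\cdot,\bar\gamma_t)$, invoke the Laplacian comparison for distance functions (with error term vanishing on bounded sets as $t\to\infty$), upgrade the monotone pointwise convergence to strong $W^{1,2}_{\rm loc}$ convergence via the uniform convexity hypothesis, and then pass to the limit in the integrated inequality. The passage to the limit through pointwise convergence of the convex maps $\eps\mapsto\frac12\int_U\weakgrad{(\b_t+\eps f)}^2\,\d\mm$ is exactly the right way to exploit infinitesimal strict convexity, and the Riesz step at the end is correct.

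Two small glosses. The claim $\weakgrad\b=1$ $\mm$-a.e.\ is the thinnest part of your sketch, and it is genuinely needed (it is what forces convergence of the $W^{1,2}(U)$-norms). Your transport heuristic can be made precise, but the cleanest route is the one the paper itself points to right after Corollary \ref{cor:derfiga}: $CD(0,N)$ spaces with $N<\infty$ are doubling and support a weak-local $1$--$1$ Poincar\'e inequality, so by Cheeger's theorem $\weakgrad f=\lip(f)$ $\mm$-a.e.\ for any locally Lipschitz $f$; combine this with the elementary fact that $\lip(\b)\equiv 1$ on $\supp(\mm)$, which follows from the $a>0$ case of the argument for point $(ii)$ of Theorem \ref{thm:basemetric} and uses only the half-line and properness. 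Second, when you write ``$\b_t$ lies in the domain of the Laplacian on $U$'' keep in mind that $D(\bd)$ in Definition \ref{def:measlap} is a global notion while the distance comparison lives on $X\setminus\{\bar\gamma_t\}$; this is harmless here because of the locality of $Df(\nabla\b_t)$ (property \eqref{eq:localgrad22}) and the fact that $\supp(f)\subset U$, but it deserves a sentence. With these points addressed the proof is complete.
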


\section{Result}
Throughout this section we will assume that
\begin{equation}
\label{eq:ass2}
\begin{split}
&(X,\sfd,\mm)\textrm{ is an infinitesimally strictly convex $CD(0,N)$ space},\\
&W^{1,2}(\Omega,\sfd,\mm)\textrm{ is uniformly convex for any $\Omega\subset X$ open},\\
&\bar\gamma:\R\to \supp(\mm)\textrm{ is a line,  $\b^+,\b^-$ are the associated Busemann functions},\\
&\textrm{the identity $\b^++\b^-=0$ holds on $\supp(\mm)$. Put $\b:=\b^+$}
\end{split}
\end{equation}
It is unclear to us whether the identity  $\b^++\b^-=0$ on $\supp(\mm)$ can be deduced or not from the other hypothesis. It is so in the smooth Finsler case, but in the non-smooth one it is currently not available   a version of the strong maximum principle sufficient to get such result, see the discussion at the end of \cite{Gigli-Mondino12}.  In \cite{Gigli-Mondino12}, it has been proved that indeed $\b^++\b^-\equiv 0$ holds on infinitesimally Hilbertian spaces, see also Section \ref{se:maxprin}.

Notice that  the assumption $\b^++\b^-\equiv 0$ on $\supp(\mm)$, the inequalities $\bd(\b^\pm)\geq 0$ and the 1-homogeneity of the distributional Laplacian trivially yield
\[
\forall a\in \R\textrm{ it holds }\qquad a\b\in D(\bd)\quad\textrm{ and }\quad\bd(a\b)=0.
\]

We further remark  that the results of the previous chapter can be used, provided we replace $(X,\sfd)$ with $(\supp(\mm),\sfd)$ in all the instances. In this sense, notice that multiples of  $\b$ are known to be $c$-concave only in the space $(\supp(\mm),\sfd)$, the cost $c$ being the restriction of $\frac{\sfd^2}2$ to $[\supp(\mm)]^2$  (being $c$-concavity a global notion, it could be destroyed by how the distance behaves on $X\setminus\supp(\mm)$). Thus here and throughout all the paper, when referring to the $c$-concavity of $a\b$ we will always implicitly speak about the $c$-concavity of its restriction to $\supp(\mm)$, also, the set $\partial^c(a\b)$ will be thought as a subset of $[\supp(\mm)]^2$.

\bigskip

We now turn to the proof of existence, uniqueness (in an appropriate class) and preservation of the measure of the gradient flow of $\b$. It will be technically convenient to work for a while with `multivalued' gradient flows, more precisely, with maps from $X$ to the space of probability measures concentrated on curves in such a way that for $\mm$-a.e. $x$ the measure associated to $x$ is concentrated on gradient flow trajectories of $\b$ passing through $x$ at time 0. According to point $(iii)$ of Theorem \ref{thm:basemetric}, the study of such maps can be reduced to the study of maps $\mathbf T:X\to\prob X$ such that for some $a\in\R$ we have  $\supp(\mathbf T_a(x))\subset\partial^c(a\b)(x)$ for $\mm$-a.e. $x\in X$.

Given a Borel map $\mathbf T:X\to\prob X$ and a non-negative Borel measure $\nn$ on $X$ the non-negative Borel measure $\mathbf T_\sharp\nn$ on $X$  is defined by
\[
\mathbf T_\sharp\nn(E):=\int \mathbf T(x)(E)\,\d\nn(x),\qquad\forall E\subset X \textrm{ Borel.}
\]
Notice that if $\mathbf T(x)=\delta_{T(x)}$ for every $x\in X$ and some Borel map $T:X\to X$, then $\mathbf T_\sharp\nn=T_\sharp\nn$.
\begin{proposition}[Existence]\label{prop:esiste}
Assume \eqref{eq:ass2}  and let $a\in\R$. Then there exists a Borel map $\mathbf T_a:X\to\prob X$ such that
\begin{itemize}
\item $\supp(\mathbf T_a(x))\subset\partial^c(a\b)(x)$ for $\mm$-a.e. $x\in X$,
\item $(\mathbf T_a)_\sharp\mm\ll\mm$.
\end{itemize} 
\end{proposition}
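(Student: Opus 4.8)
The plan is to read the statement off the optimal-transport description of $a\b$ given in Theorem~\ref{thm:basemetric}, the $CD(0,N)$ condition, and the identity $\bd(a\b)=0$ recorded just after \eqref{eq:ass2}. Producing \emph{some} Borel map with the support property is cheap; the entire difficulty lies in the absolute continuity.

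\emph{Reduction.} Since $\mm$ is $\sigma$-finite, write $\mm=\sum_k\mm\restr{A_k}$ with the $A_k$ disjoint bounded Borel sets of finite measure; as a countable sum of measures each $\ll\mm$ is again $\ll\mm$, it suffices to prove: for one probability measure $\mu_0=\rho_0\mm$ with $\rho_0\in L^\infty$ and bounded support, and for any Borel $T:X\to X$ with $T(x)\in\partial^c(a\b)(x)$ on $\supp(\mm)$, one has $T_\sharp\mu_0\ll\mm$. Such a $T$ exists by Theorem~\ref{thm:borelsel}: by \eqref{eq:facile} the set $\partial^c(a\b)(x)$ is closed and nonempty for every $x\in\supp(\mm)$; passing between maps and probability-valued maps is Corollary~\ref{cor:borelsel}. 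Granting this, $\mathbf T_a:=\delta_{T(\cdot)}$ with a globally chosen selection $T$ does the job.

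\emph{Transport structure.} Fix $T$ and set $\mu_1:=T_\sharp\mu_0$. The plan $(\Id,T)_\sharp\mu_0$ is concentrated on $\partial^c(a\b)$, and since $a\b$ is $c$-concave on $(\supp(\mm),\sfd)$ (Theorem~\ref{thm:basemetric}~(i)) this plan is optimal, with $a\b$ a Kantorovich potential from $\mu_0$ to $\mu_1$; likewise $-a\b$ is a Kantorovich potential from $\mu_1$ to $\mu_0$. Invoke $CD(0,N)$ for the pair $(\mu_0,\mu_1)$ to get $\ppi\in\gopt(\mu_0,\mu_1)$ along which $\u_{N'}$ is geodesically convex for every $N'\ge N$; as every optimal plan is concentrated on $\partial^c(a\b)$, Theorem~\ref{thm:basemetric}~(iii) gives that $\ppi$-a.e.\ $\gamma$ is a gradient-flow trajectory of $a\b$ with $\gamma_t\in\partial^c(ta\b)(\gamma_0)$, and the metric Brenier theorem (after the localisation of Proposition~\ref{le:cheppalle}) shows $\ppi$ represents $\nabla(a\b)$.

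\emph{Monotonicity of $\u_{N'}$ and conclusion.} After replacing $\mu_0$, by Proposition~\ref{le:cheppalle} and a routine truncation, by a measure whose density is Lipschitz and bounded below by a positive constant on a bounded open set with $\mm$-negligible boundary, Proposition~\ref{prop:boundbasso}, the first-order differentiation formula \eqref{eq:firststrictconv}, the chain rule and $\bd(a\b)=0$ combine to give
\[
\liminf_{t\downarrow0}\frac{\u_{N'}\big((\e_t)_\sharp\ppi\big)-\u_{N'}(\mu_0)}{t}\ \ge\ \frac1{N'}\int\rho_0^{1-\frac1{N'}}\,\d\bd(a\b)\ =\ 0,\qquad N'\ge N,
\]
so geodesic convexity yields $\u_{N'}(\mu_1)\ge\u_{N'}(\mu_0)$ for every $N'\ge N$. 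The symmetric argument run from $\mu_1$ with the potential $-a\b$ (whose transport carries $\mu_1$ back to $\mu_0$) gives $\u_{N'}(\mu_0)\ge\u_{N'}(\mu_1)$, hence $\u_{N'}(\mu_1)=\u_{N'}(\mu_0)$ for every $N'\ge N$. Writing $\mu_1=\rho_1\mm+\mu_1^s$, which has bounded support and total mass $1$, and letting $N'\to\infty$ (dominated convergence), the identity $\u_{N'}(\mu_1)=\u_{N'}(\mu_0)$ becomes $\int\rho_1\,\d\mm=\int\rho_0\,\d\mm=1$, forcing $\mu_1^s=0$, i.e.\ $\mu_1\ll\mm$. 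The main obstacle is precisely this reverse inequality: a priori $\mu_1$ need not be absolutely continuous, let alone have a density in the regularity class of Proposition~\ref{prop:boundbasso}, so the symmetric step must be carried out through an approximation of $\mu_1$ that stays compatible both with the transport along $\partial^c(-a\b)$ and with the hypotheses of Proposition~\ref{prop:boundbasso}; this is where most of the technical work sits.
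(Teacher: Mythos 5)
You have correctly flagged the fatal gap yourself. The forward inequality $\u_{N'}(\mu_1)\ge\u_{N'}(\mu_0)$ is fine (it is essentially Proposition \ref{prop:der0}), but the symmetric step running from $\mu_1$ with potential $-a\b$ would need the machinery of Proposition \ref{prop:boundbasso} applied to $\mu_1$, and that requires $\mu_1\ll\mm$---which is precisely the conclusion. There is no evident way to approximate $\mu_1$ by regular densities compatible with the transport along $\partial^c(-a\b)$: any singular part of $\mu_1$ lives on an $\mm$-null set and is exactly the piece over which you have no control. The one-sided inequality alone, after $N'\to\infty$, gives only $\int\rho_1\,\d\mm\le 1$, which says nothing about $\mu_1^s$. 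There is also a quieter issue: your reduction asks that $T_\sharp\mu_0\ll\mm$ for an \emph{arbitrary} Borel selection $T$ of $\partial^c(a\b)$, which is strictly stronger than the proposition (which only asserts the existence of \emph{some} good $\mathbf T_a$); that selections are $\mm$-a.e.\ unique is established only a posteriori, in Proposition \ref{prop:unica}.

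The paper breaks the circularity with a different construction: it does not fix a selection of $\partial^c(a\b)$ and then try to prove absolute continuity a posteriori. Instead it selects $T$ into $\partial^c(2a\b)$, takes $\ppi\in\gopt(\mu,T_\sharp\mu)$ satisfying \eqref{eq:defcd}, and examines the \emph{midpoint} $\sigma:=(\e_{1/2})_\sharp\ppi$. The $CD(0,N)$ inequality at $t=\tfrac12$, together with the trivial bound $\u_N(T_\sharp\mu)\le0$ and a H\"older estimate on the bounded set $E'$ where $\sigma$ is concentrated, produces a quantitative lower bound on $\int\rho\,\d\mm$, $\rho$ being the density of the absolutely continuous part of $\sigma$. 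Restricting $\ppi$ to curves whose midpoint lands in that AC part and disintegrating with respect to $\e_0$ yields a measure-valued map $\mathbf T_a$ on a subset $E_0\subset E$ of positive measure whose pushforward is absolutely continuous \emph{by construction}; midpoints lie in $\partial^c(a\b)$ of the starting point by part $(iii)$ of Theorem \ref{thm:basemetric}. The quantitative bound on $\mm(E_0)$ lets this be iterated to exhaust $E$ up to a null set. Note finally that Proposition \ref{prop:der0} appears \emph{after} the existence proposition in the paper and is used for uniqueness and measure preservation, not for existence.
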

\begin{proof} It is not restrictive to assume $N>1$ ($N$ being the dimension bound in the $CD(0,N)$ condition).

With a patching argument, to conclude it is sufficient to show that for every bounded Borel set $E\subset X$ with $\mm(E)>0$ there exists a  Borel map $\mathbf T_a:E\to\prob X$ satisfying  $\supp(\mathbf T_a(x))\subset\partial^c(a\b)(x)$ for $\mm$-a.e. $x\in E$ and $(\mathbf T_a)_\sharp(\mm\restr E)\ll\mm$. Fix such $E$ and let $E'$ be the $|a|$-neighborhood of $E$:
\[
E':=\{x\in X\ :\ \sfd(x,E)\leq |a|\}.
\]
It is immediate to verify that the closed valued map $x\mapsto  \partial^c(2a\b)(x)$ satisfies the assumptions of Theorem \ref{thm:borelsel}, thus there exists a Borel  map $T:X\to X$ such that $T(x)\in \partial^c(2a\b)(x)$ for any $x\in X$. 

Define $\mu:=\mm(E)^{-1}\mm\restr E$, $\nu:= T_\sharp\mu$ and let $\ppi\in\gopt(\mu,\nu)$ be such that \eqref{eq:defcd} is satisfied (it exists because $\mu$ - and thus by \eqref{eq:facile} also $\nu$ - has bounded support). Let $\{\ppi_x\}$ be the disintegration of $\ppi$ w.r.t. $\e_0$ and notice that since by construction $2a\b$ is a Kantorovich potential from $\mu$ to $\nu$, taking into account that $\partial^c(2a\b)(x)$ is closed for every $x\in \supp(\mm)$ we get that for $\mu$-a.e. $x$ it holds $\supp((\e_1)_\sharp\ppi_x)\subset \partial^c(2a\b)(x)$. Put $\sigma:=(\e_{1/2})_\sharp\ppi$ and write $\sigma=\rho\mm+\sigma^s$ with $\sigma^s\perp\mm$. The inequality \eqref{eq:defcd}, the fact that $\u_N(\mu)=-\mm(E)^{1/N}$ and the trivial bound $\u_N(\nu)\leq 0$ give $\u_N(\sigma)\leq -\frac{\mm(E)^{1/N}}2$. By construction, point $(iii)$ of Theorem \ref{thm:basemetric} and \eqref{eq:facile} $\sigma$ is concentrated on $E'$, thus from
\[
\frac{\mm(E)^{1/N}}2\leq-\u_N(\sigma)=\int_X\rho^{1-\frac1N}\,\d\mm=\int_{E'}\rho^{1-\frac1N}\,\d\mm\leq \left(\int_{E'}\rho\,\d\mm\right)^{\frac{N-1}N}\mm(E')^{\frac1N}
\]
we deduce
\begin{equation}
\label{eq:massarho}
\int_X\rho\,\d\mm\geq \left(\frac{\mm(E)}{\mm(E')}\right)^{\frac{1}{N-1}}2^{-\frac N{N-1}}.
\end{equation}
Let $A\subset X$ be a Borel set where $\rho\mm$ is concentrated and such that  $\sigma^s(A)=0$, so that $(\e_{1/2})_\sharp(\ppi\restr{\e_{1/2}^{-1}(A)})=\rho\mm$ and notice that $(\e_0)_\sharp(\ppi\restr{\e_{1/2}^{-1}(A)})\leq \mu$ and in particular $(\e_0)_\sharp(\ppi\restr{\e_{1/2}^{-1}(A)})\ll\mm$, say  $(\e_0)_\sharp(\ppi\restr{\e_{1/2}^{-1}(A)})=\rho_0\mm$. Put $E_0:=\{\rho_0>0\}$ and notice that since $\rho_0\leq \mm(E)^{-1}$ we have $\frac{\mm(E_0)}{\mm(E)}\geq  \int_X\rho_0\,\d\mm=\ppi\restr{\e_{1/2}^{-1}(A)}(\geo(X))=\int_X\rho\,\d\mm$ and thus \eqref{eq:massarho} yields
\[
\mm(E_0)\geq  \frac{\mm(E)^{\frac{N}{N-1}}}{\mm(E')^{\frac{1}{N-1}}}2^{-\frac N{N-1}}.
\]
Define $\mm$-a.e. on $E_0$ the map $\mathbf T_a:E_0\to\prob X$ by putting $\mathbf T_a(x):=((\e_0,\e_{1/2})_\sharp\ppi\restr{\e_{1/2}^{-1}(A)})_x$, where $\{((\e_0,\e_{1/2})_\sharp\ppi\restr{\e_{1/2}^{-1}(A)})_x\}$ is the disintegration of $(\e_0,\e_{1/2})_\sharp\ppi\restr{\e_{1/2}^{-1}(A)}\in\prob{X^2}$ w.r.t. the projection on the first marginal. Notice that $\mathbf T_a$ is Borel.

Since $\supp((\e_0,\e_1)_\sharp\ppi)\subset \partial^c(2a\b)$, by point $(iii)$ of Theorem \ref{thm:basemetric} we have $\supp((\e_0,\e_{1/2})_\sharp\ppi)\subset \partial^c(a\b)$ and thus $\supp(\mathbf T_a(x))\subset\partial^c(a\b)(x)$ for $\mm$-a.e. $x\in E_0$. By construction it is also clear that $(\mathbf T_a)_\sharp(\mm\restr{E_0})\ll\mm$.

Now we repeat the construction with $E$ replaced by $E\setminus E_0$ to get a Borel set $E_1\subset E\setminus E_0$ with
\[
\mm(E_1)\geq   \frac{\mm(E\setminus E_0)^{\frac{N}{N-1}}}{\mm(E')^{\frac{1}{N-1}}}2^{-\frac N{N-1}}
\]
and an extension of $\mathbf T_a$ to $E_0\cup E_1$ satisfying $\supp(\mathbf T_a(x))\subset\partial^c(a\b)(x)$ for $\mm$-a.e. $x\in E_0\cup E_1$ and $(\mathbf T_a)_\sharp(\mm\restr{E_0\cup E_1})\ll\mm$.

Iterating the construction a countable number of times we produce a family $\{E_i\}_{i\in\N}$ of Borel sets which covers  $E$ up to $\mm$-negligible sets and a Borel map $\mathbf T_a: E\to\prob X$ defined $\mm$-a.e. satisfying $\supp(\mathbf T_a(x))\subset\partial^c(a\b)(x)$ for $\mm$-a.e. $x\in E$ and $(\mathbf T_a)_\sharp(\mm\restr{E})\ll\mm$, which is the thesis.
\end{proof}
The uniqueness and measure preservation  results are based one the following inequality, valid for couples of measures such that $a\b$ is a Kantorovich potential for them. 
\begin{proposition}[Energy inequality]\label{prop:der0} Assume \eqref{eq:ass2} and let $\mu,\nu\in\probt X$ be two measures with bounded support such that for some $a\in\R$ the function $a\b$ is a Kantorovich potential for the couple $(\mu,\nu)$. Assume also that $\mu\ll\mm$.

Then 
\[
\u_N(\nu)\geq \u_N(\mu).
\]
\end{proposition}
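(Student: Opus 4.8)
The plan is to make rigorous the heuristic around \eqref{eq:dagiustificare}. First I would use the $CD(0,N)$ condition to pick a plan $\ppi\in\gopt(\mu,\nu)$ for which \eqref{eq:defcd} holds. Since $a\b$ is a Kantorovich potential for $(\mu,\nu)$, property \eqref{eq:optconc} forces $\ppi$ to be concentrated on $\partial^c(a\b)$, and then Theorem \ref{thm:basemetric}(iii) gives that $(\e_0,\e_t)_\sharp\ppi$ is concentrated on $\partial^c(ta\b)$ for every $t$, so that $\mu_t:=(\e_t)_\sharp\ppi$ is a $W_2$-geodesic from $\mu$ to $\nu$. Writing \eqref{eq:defcd} with $N'=N$ and rearranging yields $\frac{\u_N(\mu_t)-\u_N(\mu)}{t}\le\u_N(\nu)-\u_N(\mu)$ for every $t\in(0,1]$, so it is enough to prove the one-sided bound $\liminf_{t\downarrow0}\frac{\u_N(\mu_t)-\u_N(\mu)}{t}\ge0$.

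To get this I would appeal to Proposition \ref{prop:boundbasso}, whose right-hand side in our situation is $-\tfrac1N\int_\Omega D(\rho^{1-1/N})(\nabla(a\b))\,\d\mm$, and this vanishes because $\bd(a\b)=0$: indeed $\b^++\b^-\equiv0$, together with Proposition \ref{prop:lapbus} applied to the two half-lines of $\bar\gamma$ and with $1$-homogeneity of $\bd$, gives $a\b\in D(\bd)$ with $\bd(a\b)=0$, whence by the defining property of the measure valued Laplacian (Definition \ref{def:measlap}) $\int D(\rho^{1-1/N})(\nabla(a\b))\,\d\mm=-\int\rho^{1-1/N}\,\d\bd(a\b)=0$ as soon as $\rho^{1-1/N}$ is Lipschitz with bounded support of finite $\mm$-measure. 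So the whole problem reduces to putting oneself in a position where Proposition \ref{prop:boundbasso} applies: $\mu=\rho\mm$ with $\rho$ Lipschitz and bounded below by a positive constant on its support $\bar\Omega$, $\Omega$ bounded open with $\mm(\partial\Omega)=0$, and---the crucial point---the geodesic $\mu_t$ staying inside $\bar\Omega$ for all $t$.

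The last requirement is the real obstacle, since the gradient flow of $a\b$ genuinely displaces mass and $\mu_t$ leaves $\supp(\mu)$. To fix this I would apply Proposition \ref{le:cheppalle} to $\varphi=a\b$ and to a compact set $B$ chosen large enough to contain $\supp(\mu)$, its $|a|$-neighbourhood, and $\supp(\nu)$: this produces a locally Lipschitz $c$-concave $\tilde\varphi$ equal to $a\b$ on $B$, and a bounded open $\Omega\supset B$ with $\mm(\partial\Omega)=0$, such that any geodesic issued from $\Omega$ along $\partial^c\tilde\varphi$ stays in $\Omega$. Running the scheme with $\tilde\varphi$ and the corresponding targets in place of $a\b$ and $\nu$, and choosing the density $\rho$ to equal a constant outside a compact subset of $B$ (so that the weak upper gradient of $\rho^{1-1/N}$ is supported where $\tilde\varphi=a\b$), Proposition \ref{prop:boundbasso} applies and, by locality of the differential \eqref{eq:localgrad22}, its right-hand side $-\tfrac1N\int D(\rho^{1-1/N})(\nabla\tilde\varphi)\,\d\mm$ equals $-\tfrac1N\int D(\rho^{1-1/N})(\nabla(a\b))\,\d\mm=0$; combined with the displacement convexity inequality of the first paragraph this gives $\u_N(\nu_\eps)\ge\u_N(\mu_\eps)$ for these approximating data.

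Finally one feeds the general case in: a general $\mu\ll\mm$ with bounded support is approximated by measures $\mu_\eps=\rho_\eps\mm$ of the above type, supported on the set $\bar\Omega$ produced by Proposition \ref{le:cheppalle}, with $\mu_\eps\to\mu$ and $\u_N(\mu_\eps)\to\u_N(\mu)$, and the inequality $\u_N(\nu_\eps)\ge\u_N(\mu_\eps)$ is carried to the limit using lower semicontinuity of $\u_N$ and, to keep track of the targets, the fact from Theorem \ref{thm:basemetric}(iv) that the transport attached to $a\b$ moves every point by distance exactly $|a|$ (so that $\nu_\eps$ can be taken converging to $\nu$). The points that require the most care are precisely the compatibility of these two reductions---that the nice approximating measures can be supported on the large $\bar\Omega$ while still converging to $\mu$ together with their $\u_N$-values---and the verification that the modification $\tilde\varphi$ does not alter the transport issued from a neighbourhood of $\supp(\mu)$, which is again where Theorem \ref{thm:basemetric}(iv) enters.
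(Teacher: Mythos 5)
Your overall strategy is the same as the paper's: apply Proposition~\ref{le:cheppalle} to replace $a\b$ by a $c$-concave $\tilde\varphi$ that coincides with $a\b$ near $\supp(\mu)$ but confines the geodesics to a bounded open set $\Omega$, approximate $\mu$ by measures with Lipschitz densities bounded below on $\bar\Omega$ so that Proposition~\ref{prop:boundbasso} applies, kill the right-hand side of Proposition~\ref{prop:boundbasso} via locality and $\bd(a\b)=0$, combine with displacement convexity, then pass to the limit. The place where your write-up has a genuine gap is the closing limit argument, and it is tied to a missing construction.

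First, you never say how the targets $\nu_\eps$ are produced in a way that makes $\tilde\varphi$ a Kantorovich potential for the pair $(\mu_\eps,\nu_\eps)$. Invoking Theorem~\ref{thm:basemetric}(iv) only tells you that every point of $\partial^c(a\b)(x)$ lies at distance $|a|$ from $x$; it does not select a target, and $\partial^c\tilde\varphi(x)$ need not be a singleton. The paper resolves this by using the Borel selection theorem to build a single Borel map $\mathbf T:\Omega\to\prob X$ with $\supp(\mathbf T(x))\subset\partial^c\tilde\varphi(x)$ for every $x$ and coinciding with the disintegration $\ggamma_x$ of the optimal plan $\ggamma$ from $\mu$ to $\nu$ for $\mu$-a.e.\ $x$, and then sets $\nu_\eps:=\mathbf T_\sharp\mu_\eps$. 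Without some mechanism of this kind, ``the corresponding targets'' is undefined.

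Second, and more seriously, ``carried to the limit using lower semicontinuity of $\u_N$'' is backwards. If $\nu_\eps\to\nu$ only weakly, lower semicontinuity gives $\liminf\u_N(\nu_\eps)\ge\u_N(\nu)$; combined with $\u_N(\nu_\eps)\ge\u_N(\mu_\eps)\to\u_N(\mu)$ this yields nothing about $\u_N(\nu)$. What you need is $\u_N(\nu_\eps)\to\u_N(\nu)$ (or at least an upper bound on $\limsup\u_N(\nu_\eps)$), and that is exactly what the fixed selection $\mathbf T$ buys: since $\mathbf T_\sharp$ is $1$-Lipschitz for the total-variation norm (\eqref{eq:tv} in the paper), $\mu_\eps\to\mu$ in TV forces $\nu_\eps=\mathbf T_\sharp\mu_\eps\to\mathbf T_\sharp\mu=\nu$ in TV, and on measures with uniformly bounded support TV-convergence implies convergence of $\u_N$ because $u_N$ has sublinear growth. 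So the two points you flagged as delicate at the end of your write-up are indeed the crux, and they are both resolved by the single device of the Borel map $\mathbf T$, which you omitted.
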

\begin{proof} It is not restrictive to assume $N>1$. Let $\Omega_0$ be a bounded open set containing $\supp(\mu)$ and  $\tilde\varphi$, $\Omega$  be given by Proposition \ref{le:cheppalle} with $a\b$ in place of $\varphi$ and $\overline\Omega_0$ in place of $B$. Let $\ggamma\in\probt{X^2}$ be an optimal plan from $\mu$ to $\nu$ and denote by $\{\ggamma_x\}$ its disintegration w.r.t. the projection onto the first marginal. Observe that $Y:=\cup_{x\in\Omega}\partial^c\tilde\varphi(x)$ is bounded and thus the space $\prob{\overline Y}$ endowed with the weak$^*$ topology is a Polish space. It is then easy to check that we can apply the Borel selection Theorem \ref{thm:borelsel} with $\Omega$ in place of $X$ and $\prob{\overline Y}$ in place of $X'$ to  find a Borel map ${\mathbf T}:\Omega\to\prob X$ such that for any $x\in \Omega$ it holds $\supp(\mathbf T(x))\subset\partial^c\tilde\varphi(x)$ and for $\mu$-a.e. $x$ it holds $\mathbf T(x)=\ggamma_x$. Notice that $\mathbf T_\sharp\mu=\nu$.

From the definition it directly follows that
\begin{equation}
\label{eq:tv}
\|{\mathbf T}_\sharp\sigma_1-{\mathbf T}_\sharp\sigma_2\|_{\rm TV}\leq\|\sigma_1-\sigma_2\|_{\rm TV},\qquad\forall \sigma_1,\sigma_2\in\prob X,\ \supp(\sigma_1),\supp(\sigma_2)\subset \Omega,
\end{equation}
and from  the fact that $u_N(z)=-z^{1-\frac 1N}$ has sublinear growth it is easy to deduce that 
\begin{equation}
\label{eq:limuN}
\supp(\sigma_n)\subset\Omega\ \forall n\in\N,\ \lim_{n\to\infty} \|\sigma_n-\sigma\|_{\rm TV}=0\qquad\Rightarrow\qquad
\lim_{n\to\infty}\u_N(\sigma_n)=\u_N(\sigma).
\end{equation}
Let $\rho$ be the density of $\mu$ and find a sequence $(\rho_n)$ of probability densities such that $\supp(\rho_n)\subset\Omega_0$ and $\rho_n^{1-\frac 1N}$ is Lipschitz for every $n\in\N$ and satisfying $\|\rho-\rho_n\|_{L^1(X)}\to0 $. Put $\mu_n:=\rho_n\mm$, $\nu_n:=\mathbf T_\sharp\mu_n$  and notice that \eqref{eq:tv} and \eqref{eq:limuN} give
\begin{equation}
\label{eq:limn1}
\textrm{ $\u_N(\mu_{n})\to\u_N(\mu)$ \quad and\quad $\u_N(\nu_{n})\to\u_N(\nu)$ \quad as\quad $n\to\infty$.}
\end{equation}
For $n\in\N$ and  $\eps>0$ define $\rho_{n,\eps}:=c_{n,\eps}(\rho_n^{1-\frac1N}+\eps)^{\frac N{N-1}}$ on $\Omega$ and $\rho_{n,\eps}:=0$ on $X\setminus\Omega$, $c_{n,\eps}$ being the normalization constant. Put $\mu_{n,\eps}:=\rho_{n,\eps}\mm$ and $\nu_{n,\eps}:=\mathbf T_\sharp\mu_{n,\eps}$ so that again from  \eqref{eq:tv} and \eqref{eq:limuN} we obtain
\begin{equation}
\label{eq:limneps}
\textrm{ $\u_N(\mu_{n,\eps})\to\u_N(\mu_n)$\quad and\quad $\u_N(\nu_{n,\eps})\to\u_N(\nu_n)$\quad as\quad $\eps\downarrow0$\quad for every $n\in\N$.}
\end{equation}
By construction, $\mu_{n,\eps}$ and $\nu_{n,\eps}$ have support bounded and contained in $\supp(\mm)$ (for the latter replace if necessary $X$ by $\supp(\mm)$ in all the instances), thus the $CD(0,N)$ condition ensures that there exists $\ppi_{n,\eps}\in\gopt(\mu_{n,\eps},\nu_{n,\eps})$ for which \eqref{eq:defcd} holds and notice that \eqref{eq:defcd} and the weak lower semicontinuity of $\u_N$ on sequences with uniformly bounded support ensures that $\u_N((\e_t)_\sharp\ppi_{n,\eps})\to\u_N(\mu_{n,\eps})$ as $t\downarrow0$. Also,  the construction ensures that  $\tilde\varphi$ is a Kantorovich potential for $(\mu_{n,\eps},\nu_{n,\eps})$ and therefore point $(iii)$ of Proposition \ref{le:cheppalle} yields that $(\e_t)_\sharp\ppi_{n,\eps}$ is concentrated on $\Omega$ for every $n,\eps,t$. Given that the restriction of $\rho_{n,\eps}$ to $\Omega$ is Lipschitz and bounded from below by a positive constant, all the assumptions of Proposition \ref{prop:boundbasso} are fulfilled and for every $n\in\N$, $\eps>0$ we get
\[
\limi_{t\downarrow0}\frac{\u_N((\e_t)_\sharp\ppi_{n,\eps})-\u_N(\mu_{n,\eps})}{t}\geq -\frac1N\int_{\Omega} D(\rho_{n,\eps}^{1-\frac 1N})(\nabla\tilde\varphi)\,\d\mm=-\frac{c_{n,\eps}^{1-\frac1N}}{N}\int_{\Omega} D(\rho_{n}^{1-\frac 1N})(\nabla\tilde\varphi)\,\d\mm.
\]
By \eqref{eq:defcd} we have $\u_N(\nu_{n,\eps})-\u_N(\mu_{n,\eps})\geq \frac{\u_N((\e_t)_\sharp\sppi_{n,\eps})-\u_N(\mu_{n,\eps})}{t}$ for any $t\in(0,1]$ and $\eps>0$ and thus recalling \eqref{eq:limneps} and the obvious limit relation $c_{n,\eps}\uparrow 1$ as $\eps\downarrow0$  we deduce
\[
\u_N(\nu_{n})-\u_N(\mu_{n})\geq -\frac{1}{N}\int_{\Omega} D(\rho_{n}^{1-\frac 1N})(\nabla\varphi)\,\d\mm.
\]
Now recall that $\supp(\mu_n)\subset\Omega_0$ for every $n\in\N$ and that $\tilde\varphi=a\b$ on $\Omega_0$, so that by the locality property  \eqref{eq:localgrad22} and the fact that $\rho_{n}^{1-\frac 1N}$ is Lipschitz with compact support we get
\[
-\frac{1}{N}\int_{\Omega} D(\rho_{n}^{1-\frac 1N})(\nabla\varphi)\,\d\mm=-\frac{1}{N}\int_{\Omega_0} D(\rho_{n}^{1-\frac 1N})(\nabla(a\b))\,\d\mm=\frac{a}{N}\int_{\Omega_0} \rho_{n}^{1-\frac 1N}\,\d\bd\b=0,
\]
and therefore $\u_N(\nu_{n})-\u_N(\mu_{n})\geq 0$. Recalling the limiting relations \eqref{eq:limn1} we get the thesis.
\end{proof}
We can now prove the uniqueness result.
\begin{proposition}[Uniqueness, measure preservation and  single value property]\label{prop:unica}\ \linebreak  Assume \eqref{eq:ass2} and let $a\in \R$. 

Then there exists a unique (up to $\mm$-a.e. equality) Borel map  $\mathbf T_a:X\to\prob X$ such that
\begin{itemize}
\item[i)] $\supp(\mathbf T_a(x))\subset\partial^c(a\b)(x)$ for $\mm$-a.e.   $x\in X$,
\item[ii)] $(\mathbf T_a)_\sharp\mm\ll\mm $.
\end{itemize} 
Furthermore, $\mathbf T_a(x)$ is of the form $\mathbf T_a(x)=\delta_{T_a(x)}$ for $\mm$-a.e. $x\in X$ for some Borel map $T_a:X\to X$ which is $\mm$-a.e. invertible, its inverse being $T_{-a}$ (i.e.  $T_a(T_{-a}(x))=x=T_{-a}(T_a(x))$ for $\mm$-a.e. $x\in X$) and satisfies
\[
(T_a)_\sharp\mm=\mm.
\]
\end{proposition}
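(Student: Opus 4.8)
The strategy is to extract everything from the energy inequality of Proposition~\ref{prop:der0}, combined with the strict convexity of $u_N$; assume $N>1$, the case $N=1$ being degenerate. The first step I would carry out is an \emph{energy equality}: for any Borel $\mathbf T_a:X\to\prob X$ satisfying (i)--(ii), any bounded Borel $E$ with $\mm(E)>0$, and $\mu:=\mm(E)^{-1}\mm\restr E$, $\nu:=(\mathbf T_a)_\sharp\mu$, one has $\u_N(\nu)=\u_N(\mu)$. Indeed, the plan $(\mathrm{id},\mathbf T_a)_\sharp\mu$ is concentrated on $\partial^c(a\b)$, hence optimal, so $a\b$ is a Kantorovich potential for $(\mu,\nu)$, and Proposition~\ref{prop:der0} (applicable since $\mu\ll\mm$ and, by \eqref{eq:facile}, $\nu$ has bounded support) gives $\u_N(\nu)\ge\u_N(\mu)$; reversing that plan and invoking the last assertion of Theorem~\ref{thm:basemetric}(i), $-a\b$ is a Kantorovich potential for $(\nu,\mu)$, and since $\nu\le(\mathbf T_a)_\sharp\mm\ll\mm$ by (ii), Proposition~\ref{prop:der0} applied with $-a$ yields the reverse inequality. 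The point is that, because $\bd\b=0$, the comparison in Proposition~\ref{prop:der0} degenerates to an equality in both directions.

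Next I would prove \emph{uniqueness}. If $\mathbf T_a^1,\mathbf T_a^2$ both satisfy (i)--(ii), then so does $\tfrac12(\mathbf T_a^1+\mathbf T_a^2)$: its values are still supported in $\partial^c(a\b)$, and $\tfrac12\big((\mathbf T_a^1)_\sharp\mm+(\mathbf T_a^2)_\sharp\mm\big)\ll\mm$. Applying the energy equality to all three and using that $\u_N$ is convex along linear interpolations of measures while $z\mapsto u_N(z)$ is strictly convex on $(0,\infty)$, the identity $\u_N\big(\tfrac12(\nu^1+\nu^2)\big)=\tfrac12\u_N(\nu^1)+\tfrac12\u_N(\nu^2)$ (with $\nu^i:=(\mathbf T_a^i)_\sharp\mu$) forces $\nu^1=\nu^2$. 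Thus $(\mathbf T_a^1)_\sharp(\mm\restr E)=(\mathbf T_a^2)_\sharp(\mm\restr E)$ for every bounded Borel $E$; testing against a countable generating algebra of Borel sets then yields $\mathbf T_a^1(x)=\mathbf T_a^2(x)$ for $\mm$-a.e.\ $x$. \emph{Single-valuedness} follows from uniqueness by a splitting trick: if $\mathbf T_a(x)$ were not a Dirac mass on a set of positive measure, one could disintegrate it along some Borel set that carries a fraction of its mass bounded away from $0$ and $1$, producing two distinct Borel maps both satisfying (i)--(ii) (extended by $\mathbf T_a$ off the bad set to keep absolute continuity of the pushforwards), contradicting uniqueness. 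Hence $\mathbf T_a=\delta_{T_a}$ for a Borel $T_a:X\to X$.

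It remains to establish \emph{invertibility} and \emph{measure preservation}. Let $T_{-a}$ be the (now unique) map associated with $-a$, whose existence is Proposition~\ref{prop:esiste}. Fixing a bounded $E$, the reversed plan $(T_a,\mathrm{id})_\sharp(\mm\restr E)$ is, by Theorem~\ref{thm:basemetric}(i), optimal with Kantorovich potential $-a\b$ and has first marginal $(T_a)_\sharp(\mm\restr E)\ll\mm$; disintegrating it and running the single-valuedness argument once more identifies it with the graph of a map $S$ with $S\circ T_a=\mathrm{id}$ $\mm$-a.e.\ on $E$, and $S$ is a map in the class of $-a$, hence $S=T_{-a}$ by uniqueness. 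Symmetrically $T_a\circ T_{-a}=\mathrm{id}$, so $T_a$ is $\mm$-a.e.\ invertible with inverse $T_{-a}$. Finally, writing $(T_a)_\sharp\mm=h\mm$ (legitimate by (ii)), invertibility gives $(T_a)_\sharp(\mm\restr E)=\big((T_a)_\sharp\mm\big)\restr{T_a(E)}$, so the energy equality becomes $\int_{T_a(E)}\big(h^{1-\frac1N}-h\big)\,\d\mm=0$; since $T_a(E)=T_{-a}^{-1}(E)$ ranges over essentially all bounded Borel sets, $h\in\{0,1\}$ $\mm$-a.e.; combining this with the relation $(T_{-a})_\sharp(T_a)_\sharp\mm=\mm$ and its analogue for $-a$ forces $h\ge1$, whence $h\equiv1$, i.e.\ $(T_a)_\sharp\mm=\mm$.

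I expect the main obstacle to be precisely this last step: extracting the inverse map from the reversed optimal plan. One must verify that the objects obtained by reversing and disintegrating are absolutely-continuity preserving, so that the uniqueness statement genuinely applies to them; this rests crucially on the symmetry $\partial^c(a\b)^{-1}=\partial^c(-a\b)$ of Theorem~\ref{thm:basemetric}(i) and on the two-sided degeneracy of Proposition~\ref{prop:der0} coming from $\bd\b=0$.
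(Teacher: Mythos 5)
Your proof is correct, and it takes a genuinely different route through the uniqueness and single-valuedness steps, while arriving at essentially the same invertibility and measure-preservation arguments as the paper. The paper's central device is a map $R:\mathcal B\to\mathcal B$ on equivalence classes of bounded Borel sets, $R(E):=\{\textrm{density of }(\mathbf T_a)_\sharp\mu>0\}$; it proves $\mm(R(E))=\mm(E)$ by combining Proposition~\ref{prop:der0} with a Jensen-type lower bound on $\u_N$, derives single-valuedness set-theoretically (a non-Dirac $\mathbf T_a$ yields two splits $\mathbf T^1_a,\mathbf T^2_a$ with disjoint images $R^1(E),R^2(E)\subset R(E)$, violating the measure identity), and obtains uniqueness only afterwards, as a corollary of single-valuedness. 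You instead extract the sharper \emph{energy equality} $\u_N(\nu)=\u_N(\mu)$ by running Proposition~\ref{prop:der0} in both directions, which is legitimate precisely because condition (ii) makes $\nu$ absolutely continuous, prove uniqueness directly from the strict convexity of $u_N$ on $(0,\infty)$ (a tool the paper never invokes), and then read off single-valuedness as a consequence of uniqueness via the same splitting construction, with the logical order reversed. For measure preservation you replace the paper's identification of $R(E)$ with $T_{-a}^{-1}(E)$ up to null sets by the observation that the energy equality pins the density $h$ of $(T_a)_\sharp\mm$ to $\{0,1\}$ $\mm$-a.e., after which $\mm=(T_a\circ T_{-a})_\sharp\mm\ll(T_a)_\sharp\mm=h\mm$ forces $h>0$, hence $h\equiv1$. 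Both routes rest on the same raw input --- Proposition~\ref{prop:der0}, the $|a|$-displacement bound from \eqref{eq:facile}, disintegration, and the splitting trick --- but yours dispenses with the auxiliary $R$-map at the cost of invoking strict convexity. The one place that deserves more care is the assertion that $T_a(E)=T_{-a}^{-1}(E)$ ``ranges over essentially all bounded Borel sets'': taking $E=T_a^{-1}(F)$ only reaches those $F$ with $\int_F h\,\d\mm>0$, so the dichotomy $h\in\{0,1\}$ must first be established on $\{h>0\}$ (splitting further into $\{0<h<1\}$ and $\{h>1\}$ to preclude cancellation in $\int_F(h^{1-1/N}-h)\,\d\mm=0$), and only then does the composition identity dispose of $\{h=0\}$.
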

\begin{proof} 

\noindent{\bf Single value property}
Let   $\mathbf T:X\to\prob X$ be such that $(i),(ii)$ of the assumptions are fulfilled and  $\mathcal B$ be the set of equivalence classes of  bounded Borel subsets of $X$ with positive $\mm$-measure where we identify two sets whose symmetric difference has 0 $\mm$-measure.

We define a map $R:\mathcal B\to\mathcal B$ as follows. For $E\in\mathcal B$ let $\mu:=\mm(E)^{-1}\mm\restr E$ ($\mu$ depends only on the equivalence class of $E$), consider $\nu:=(\mathbf T_a)_\sharp\mu$ and notice that by $(ii)$ we have $\nu\ll\mm$, say $\nu=\rho\mm$. Then put $R(E):=\{\rho>0\}$.

We claim that 
\begin{equation}
\label{eq:mespres}
\mm(R(E))=\mm(E),\qquad\forall E\in\mathcal B.
\end{equation}
Indeed, given $E\in\mathcal B$ and $\mu,\nu$ as above, the assumption $(i)$ grants that $a\b$ is a Kantorovich potential from $\mu$ to $\nu$ and thus  by point $(i)$ of Theorem \ref{thm:basemetric} we get that $-a\b$ is Kantorovich potential from $\nu$ to $\mu$. By construction, we have $\nu\ll\mm$ and thus Proposition   \ref{prop:der0} yields $\u_N(\mu)\geq\u_N(\nu)$. Therefore
\[
-\mm(E)^{\frac1N}=\u_N(\mu)\geq \u_N(\nu)\geq-\mm(R(E))^{-\frac1N},
\]
where the second inequality follows from Jensen's inequality and the fact that $\nu$ in concentrated on $R(E)$. 

Now define $\ggamma\in\prob{X^2}$ by $\ggamma(A\times B):=\int_A \mathbf T_a(x)(B)\,\d\mu(x)$ for every $A,B\subset X$ Borel so that $\pi^1_\sharp\ggamma=\mu$ and $\pi^2_\sharp\ggamma=\nu$. Let $\tilde\ggamma\in\prob{X^2}$ be given by $\d\tilde\ggamma(x,y):=\frac{1}{\mm(R(E))\rho(y)}\d\ggamma(x,y)$, $\rho$ being the density of $\nu$, and put $\tilde\mu:=\pi^1_\sharp\tilde\ggamma$, $\tilde\nu:=\pi^2_\sharp\tilde\ggamma$. By construction it holds $\tilde\nu=\mm(R(E))^{-1}\mm\restr{R(E)}$ and $\tilde\mu$ is concentrated on $E$, thus arguing as before we get
\[
-\mm(R(E))^{\frac1N}=\u_N(\tilde\nu)=\u_N(\tilde\mu)\geq-\mm(E)^{-\frac1N},
\]
and our claim \eqref{eq:mespres} is proved.

Now assume by contradiction that $\mathbf T_a(x)$ is not a Dirac delta for a set of $x$ of positive $\mm$-measure. Then using the Borel selection result stated in Corollary \ref{cor:borelsel} it is easy to see that there exists $r>0$, a bounded Borel set $E$  with $\mm(E)>0$ and two Borel maps $T^i:E\to X$, $i=1,2$  such that $T^i(x)\in\supp(\mathbf T_a(x))$ for $\mm$-a.e. $x\in E$, $i=1,2$, $\sfd(T^1(x),T^2(x))>r$ for $\mm$-a.e. $x\in E$ and ${\rm diam}(T^1(E))\leq r/3$. Define two Borel maps $\mathbf T^i_a:X\to\prob X$, $i=1,2$, by
\[
\mathbf T^i_a(x):= \left\{\begin{array}{ll}
\mathbf T_a(x)&\qquad\textrm{ if $x\notin E$},\\
&\\
c_i(x)\mathbf T_a(x)\restr{B_{r/3}(T^i(x))},&\qquad\textrm{ if $x\in E$},
\end{array}\right.
\]
where $c_i(x):=\big(\mathbf T_a(x)\big(B_{r/3}(T^i(x))\big)\big)^{-1}$, $i=1,2$, is the normalization constant. By construction the maps $\mathbf T^i_a$, $i=1,2$, satisfy the properties $(i),(ii)$ in the assumption, hence defining the associated maps $R^i:\mathcal B\to\mathcal B$ as we previously did for $\mathbf T_a$, we get $\mm(E)=\mm(R^1(E))=\mm(R^2(E))$. The contradiction comes from the fact that by construction it holds $R^1(E)\cap R^2(E)=\emptyset$ and $R(E)\supset R^1(E)\cup R^2(E)$, thus
\[
\mm(E)=\mm(R(E))\geq\mm(R^1(E)\cup R^2(E))=\mm(R^1(E))+\mm(R^2(E))=2\mm(E).
\]
We therefore deduce that for some Borel map $T_a:X\to X$ it holds $\mathbf T_a(x)=\delta_{T_a(x)}$ for $\mm$-a.e. $x$.

\noindent{\bf Uniqueness} Assume that $\mathbf {\tilde T}_a:X\to \prob X$ also fulfills the assumptions. Then  also the map $x\mapsto \frac12(\mathbf T_a(x)+\mathbf{\tilde T}_a(x))$ would do so, and if $\mathbf {\tilde T}_a(x)\neq \mathbf T_a(x)$ for a set of positive $\mm$-measure, then $\frac12(\mathbf T_a(x)+\mathbf{\tilde T}_a(x))$ would not be a Dirac delta for a set of positive $\mm$-measure, contradicting what we just proved.

\noindent{\bf Invertibility} Apply  Proposition \ref{prop:esiste} with $-a$ in place of $a$ to find a Borel map $\mathbf T_{-a}$ fulfilling $(i),(ii)$ of the thesis. Hence it satisfies the assumptions of the current proposition and by what we just proved we know that $\mathbf T_{-a}(x)=\delta_{T_{-a}(x)}$ for $\mm$-a.e. $x$ for some Borel map $T_{-a}:X\to X$.

Let $\eeta:=(\Id,T_{-a})_\sharp\mm$ and $\{\eeta_x\}_{x\in X}$ its disintegration w.r.t. the projection onto the second coordinate. Notice that since $\pi^2_\sharp\eeta=(T_{-a})_\sharp\mm\ll\mm$, $\eeta_x$ is  well defined for $\mm$-a.e. $x\in\{\rho>0\}$, where $\rho$ is  the density of $(T_{-a})_\sharp\mm$ w.r.t. $\mm$. Define $\mm$-a.e. the Borel map $\tilde{\mathbf T}_a:X\to\prob X$ by
\[
\tilde{\mathbf T}_a(x):=\left\{
\begin{array}{ll}
\eeta_x,&\qquad\textrm{ if }\rho(x)>0,\\
\delta_{T_a(x)},&\qquad\textrm{ if }\rho(x)=0.
\end{array}
\right.
\]
It is clear by the construction that $\tilde{\mathbf T}_a$ fulfills  the assumptions  $(i),(ii)$ of the proposition. Thus by the uniqueness statement that we proved it follows that $\tilde{\mathbf T}_a(x)=\delta_{T_a(x)}$ for $\mm$-a.e. $x$, which easily implies $T_a(T_{-a}(x))=x$ for $\mm$-a.e. $x$. Inverting the roles of $T_a$ and $T_{-a}$ in this argument we also get $T_{-a}(T_a(x))=x$ for $\mm$-a.e. $x$.

\noindent{\bf Measure preservation}  We shall prove that $(T_{-a})_\sharp\mm=\mm$, the proof for $T_a$ being similar. Observe that the identity $T_a(T_{-a}(x))=x$ valid for $\mm$-a.e. $x$ gives
\begin{equation}
\label{eq:ultima}
\mm\Big(\big(T^{-1}_a(T^{-1}_{-a}(E))\setminus E\big)\cup \big(E\setminus T^{-1}_a(T^{-1}_{-a}(E))\big)\Big)=0,\qquad\forall \textrm{ Borel } E\subset  X.
\end{equation}
Also, by definition we have $(T_a)_\sharp\mm\ll\mm$ and $(T_{-a})_\sharp\mm\ll\mm$, therefore $\mm=(T_{-a})_\sharp(T_a)_\sharp\mm\ll(T_a)_\sharp\mm\ll\mm$ i.e.
\begin{equation}
\label{eq:asscont}
\mm(E)=0\quad\Leftrightarrow\quad \mm(T^{-1}_a(E))=0,\qquad\forall \textrm{ Borel } E\subset  X.
\end{equation}
Let $R:\mathcal B\to\mathcal B$ be defined as in the first step of the proof and for $E\in\mathcal B$ let $\mu:=\mm(E)^{-1}\mm\restr E$ and $\nu:=(T_a)_\sharp\mu$, as before. Then for every Borel $A\subset X$  we have
\[
\begin{split}
\mm(R(E)\cap A)>0&\quad\Leftrightarrow \quad\nu(A)>0 \quad\Leftrightarrow\quad \mu(T_a^{-1}(A))>0\quad\Leftrightarrow\quad\mm(E\cap T^{-1}_a(A))>0\\
\textrm{by \eqref{eq:ultima}}\qquad&\quad\Leftrightarrow \quad\mm\big(T^{-1}_a(T^{-1}_{-a}(E))\cap T^{-1}_a(A)\big)>0\\
&\quad\Leftrightarrow \quad \mm\big(T^{-1}_a\big(T^{-1}_{-a}(E)\cap A\big)\big)>0\\
\textrm{by \eqref{eq:asscont}}\qquad&\quad\Leftrightarrow \quad \mm(T^{-1}_{-a}(E)\cap A)>0.
\end{split}
\]
which shows that the symmetric difference between $R(E)$ and $T^{-1}_{-a}(E)$ has 0 $\mm$-measure. The conclusion follows from the arbitrariness of $E\in\mathcal B$ and \eqref{eq:mespres}.
\end{proof}

Collecting together all these results, we can now prove the main result of this chapter.
\begin{theorem}[The gradient flow of $\b$ preserves the measure]\label{thm:gfpresmes}  Assume \eqref{eq:ass2}.

Then there exists a Borel map $\X: X\times\R\to X$ such that the following are true.
\begin{itemize}
\item[i)] For $\mm$-a.e. $x\in X$ the curve $\R\ni t\mapsto\X_t(x)$ is a gradient flow trajectory of $\b$ passing from $x$ at time $t=0$. In particular it holds
\begin{equation}
\label{eq:distfl}
\sfd(\X_t(x),\X_s(x))=|t-s|,\qquad\forall t,s\in\R,\ \mm\ae\ x.
\end{equation}
\item[ii)] For any $t\in \R$ it holds
\begin{equation}
\label{eq:presmeas}
(\X_t)_\sharp\mm=\mm.
\end{equation}
\item[iii)] For any $t,s\in\R$ it holds
\begin{equation}
\label{eq:group}
\X_t\circ\X_s=\X_{t+s},\qquad\mm\ae.
\end{equation} 
\end{itemize}
The map $\X$ is unique in the following sense. Let $t_0\leq0\leq t_1$ and $\mathbf F:X\to\prob{C([t_0,t_1],X)}$ be a Borel map satisfying:
\begin{itemize}
\item[i')] For $\mm$-a.e. $x\in X$ and $\mathbf F(x)$-a.e. $\gamma$, $\gamma$ is a gradient flow trajectory of $\b$ passing from $x$ at time $t=0$.
\item[ii')] For any $t\in[t_0,t_1]$ it holds $(\mathbf F_t)_\sharp\mm\ll\mm$, where $\mathbf F_t:X\to\prob X$ is defined by $\mathbf F_t(x):=(\e_t)_\sharp(\mathbf F(x))$.
\end{itemize}
Then $\mathbf F(x)$ is concentrated on the curve $[t_0,t_1]\ni t\mapsto \X_t(x)$ for $\mm$-a.e. $x\in X$. 
\end{theorem}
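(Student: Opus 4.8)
The proof rests on the maps $T_a\colon X\to X$, $a\in\R$, furnished by Proposition \ref{prop:unica}. It is carried out in three steps; the only delicate point is the passage, in Step 2, from these $\mm$-a.e.\ defined, time-by-time maps to a single \emph{jointly Borel} map $\X$ enjoying the pointwise properties $(i)$--$(iii)$, and this is obtained by restricting to rational times and taking continuous completions, propagating null sets via the measure--preservation property.

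\textbf{Step 1: $T_a\circ T_b=T_{a+b}$ $\mm$-a.e.\ for all $a,b\in\R$.} Suppose first $ab\geq0$. The map $x\mapsto\delta_{T_a(T_b(x))}$ is Borel (a composition of Borel maps, post-composed with the continuous embedding $y\mapsto\delta_y$); since $T_b(x)\in\partial^c(b\b)(x)$ and $T_a(y)\in\partial^c(a\b)(y)$ for $\mm$-a.e.\ $x,y$, since $(T_b)_\sharp\mm=\mm$, and since $ab\geq0$, point $(v)$ of Theorem \ref{thm:basemetric} gives $T_a(T_b(x))\in\partial^c((a+b)\b)(x)$ for $\mm$-a.e.\ $x$, while $(T_a\circ T_b)_\sharp\mm=(T_a)_\sharp(T_b)_\sharp\mm=\mm\ll\mm$. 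Thus $x\mapsto\delta_{T_a(T_b(x))}$ satisfies conditions $(i)$--$(ii)$ of Proposition \ref{prop:unica} with parameter $a+b$, and by the uniqueness therein it coincides $\mm$-a.e.\ with $\delta_{T_{a+b}(\cdot)}$. When $a,b$ have opposite signs one reduces to the previous case using $T_a\circ T_{-a}=T_{-a}\circ T_a=\Id$ $\mm$-a.e.\ (Proposition \ref{prop:unica}): if $|a|\geq|b|$ then $a+b$ and $-b$ have the same sign, so $T_{a+b}\circ T_{-b}=T_a$ $\mm$-a.e., whence $T_a\circ T_b=T_{a+b}\circ T_{-b}\circ T_b=T_{a+b}$ $\mm$-a.e.; if $|b|\geq|a|$ then $-a$ and $a+b$ have the same sign, so $T_{-a}\circ T_{a+b}=T_b$ $\mm$-a.e., and composing on the left with $T_a$ and using $(T_{a+b})_\sharp\mm=\mm$ gives $T_a\circ T_b=T_{a+b}$ $\mm$-a.e. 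Finally $T_0=\Id$ $\mm$-a.e., since $\partial^c(0\cdot\b)(x)=\{x\}$ by \eqref{eq:facile}.

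\textbf{Step 2: construction of $\X$; proof of $(i)$--$(iii)$.} By Step 1, Proposition \ref{prop:unica} and \eqref{eq:facile} there is a Borel set $A_0\subset X$ of full $\mm$-measure such that for every $x\in A_0$: $T_0(x)=x$; $T_q(x)\in\partial^c(q\b)(x)$ for all $q\in\Q$; and $T_q(T_{q'}(x))=T_{q+q'}(x)$ for all $q,q'\in\Q$. Since $(T_q)_\sharp\mm=\mm$, the set $A:=A_0\cap\bigcap_{q\in\Q}T_q^{-1}(A_0)$ still has full $\mm$-measure. For $x\in A$ and $q,q'\in\Q$ we have $T_q(x)\in A_0$, and, writing $T_{q'}(x)=T_{q'-q}(T_q(x))$ and applying \eqref{eq:facile} at $T_q(x)$, we get $\sfd(T_q(x),T_{q'}(x))=|q-q'|$ and $\b(T_q(x))=\b(x)-q$. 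Hence $q\mapsto T_q(x)$ is an isometric embedding of $\Q$ into the complete space $X$, so it extends uniquely to an isometry $t\mapsto\X_t(x)$ on $\R$, which satisfies $\b(\X_t(x))=\b(x)-t$ and $\X_0(x)=x$ by continuity of $\b$; by points $(ii)$--$(iii)$ of Theorem \ref{thm:basemetric} (a unit speed geodesic along which $\b$ decreases at unit rate) this curve is a gradient flow trajectory of $\b$ passing from $x$ at time $0$. This gives $(i)$, including \eqref{eq:distfl}. For joint measurability, put $q_n(t):=\lfloor nt\rfloor/n$ and set $\X_t(x):=\lim_n T_{q_n(t)}(x)$ where this limit exists and $\X_t(x):=x$ otherwise; the map $(x,t)\mapsto T_{q_n(t)}(x)$ is Borel (being Borel on each set $X\times[k/n,(k+1)/n)$), the domain of convergence is Borel (Cauchy criterion), so $\X\colon X\times\R\to X$ is Borel, and on $A$ it agrees with the isometric extension above. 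Fixing $t\in\R$: by Step 1, $T_{q_n(t)}=T_{q_n(t)-t}\circ T_t$ $\mm$-a.e.; combining this with $\sfd(T_s(y),y)=|s|$ $\mm$-a.e.\ (from \eqref{eq:facile}) and $(T_t)_\sharp\mm=\mm$ yields $\sfd(T_{q_n(t)}(x),T_t(x))=|q_n(t)-t|\to0$ for $\mm$-a.e.\ $x$, so $\X_t=T_t$ $\mm$-a.e. Consequently $(\X_t)_\sharp\mm=(T_t)_\sharp\mm=\mm$, which is $(ii)$; and $\X_t\circ\X_s=T_t\circ T_s=T_{t+s}=\X_{t+s}$ $\mm$-a.e.\ (using $\X_s=T_s$ $\mm$-a.e., $(T_s)_\sharp\mm=\mm$ and Step 1), which is $(iii)$.

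\textbf{Step 3: uniqueness.} Let $\mathbf F\colon X\to\prob{C([t_0,t_1],X)}$ be Borel and satisfy $(i')$, $(ii')$, and fix $t\in[t_0,t_1]$. Then $\mathbf F_t=(\e_t)_\sharp\mathbf F(\cdot)$ is Borel, and since $\mathbf F(x)$-a.e.\ $\gamma$ is a gradient flow trajectory of $\b$ through $x$ at $0$, Theorem \ref{thm:basemetric} gives $\gamma_t\in\partial^c(t\b)(x)$; as $\partial^c(t\b)(x)=\{y:\sfd(x,y)=|t|,\ \b(x)-\b(y)=t\}$ is closed (by \eqref{eq:facile}), we get $\supp(\mathbf F_t(x))\subset\partial^c(t\b)(x)$ for $\mm$-a.e.\ $x$; and $(\mathbf F_t)_\sharp\mm\ll\mm$ by $(ii')$. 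By the uniqueness in Proposition \ref{prop:unica} with parameter $t$, $\mathbf F_t(x)=\delta_{T_t(x)}=\delta_{\X_t(x)}$ for $\mm$-a.e.\ $x$. Intersecting over $t$ in the countable set $[t_0,t_1]\cap\Q$, and with $A$, we obtain a set of full $\mm$-measure of points $x$ for which $\mathbf F(x)$-a.e.\ curve $\gamma$ satisfies $\gamma_q=\X_q(x)$ for every $q\in[t_0,t_1]\cap\Q$; since $\gamma$ and $t\mapsto\X_t(x)$ are continuous, $\mathbf F(x)$ is concentrated on the curve $[t_0,t_1]\ni t\mapsto\X_t(x)$ for $\mm$-a.e.\ $x$, which is the asserted uniqueness.

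I expect no conceptual obstacle beyond Proposition \ref{prop:unica} (which carries all the use of the $CD(0,N)$ condition); the care required is entirely in the null-set bookkeeping of Steps 1--2, in particular in upgrading the semigroup identity from rational to real parameters and in producing the jointly Borel representative $\X$.
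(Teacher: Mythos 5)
Your proof is correct and follows essentially the same route as the paper's: derive the semigroup identity for the maps $T_a$ from the uniqueness clause of Proposition~\ref{prop:unica} together with point~$(v)$ of Theorem~\ref{thm:basemetric} (using invertibility for opposite signs), restrict to rational parameters to obtain a full-measure Borel set of good points, extend along each trajectory by continuity, and read off the gradient-flow property from $\lip(\b)\equiv1$ and the unit-rate decay of $\b$; uniqueness then reduces to $\mm$-a.e.\ identification at each rational time via Proposition~\ref{prop:unica}. If anything, your version is a bit more scrupulous than the original: you prove the semigroup law for all real parameters (which simplifies passing $(\X_t)_\sharp\mm=\mm$ and the group law from $T_t$ to $\X_t$), you explicitly enlarge the good set to $A=A_0\cap\bigcap_{q\in\Q}T_q^{-1}(A_0)$ so that the $c$-superdifferential relation and hence the isometry $\sfd(T_q(x),T_{q'}(x))=|q-q'|$ can be checked at $T_q(x)$ and not only at $x$ (a point the paper leaves implicit), and you spell out joint Borel measurability via the dyadic-type approximation $q_n(t)=\lfloor nt\rfloor/n$, which the paper takes for granted.
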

Note: the uniqueness part gives in particular that if a Borel map $\tilde\X:X\times\R\to X$ fulfills $(i)$ and $(\tilde\X_t)_\sharp\mm\ll\mm$, for any $t\in \R$, then for any $t\in\R$ it holds $\tilde \X_t(x)=\X_t(x)$ for $\mm$-a.e. $x\in X$.
\begin{proof}

\noindent{\bf Existence}
Let $t\in\Q$ and combine Propositions \ref{prop:esiste} and \ref{prop:unica} to get the existence of a Borel map $\X_t:X\to X$ such that $\X_t(x)\in\partial^c(t\b)(x)$ for $\mm$-a.e. $x\in X$ and $(\X_t)_\sharp\mm=\mm$. From point $(v)$ of Theorem \ref{thm:basemetric} and the uniqueness part of Proposition \ref{prop:unica} we deduce that
\begin{equation}
\label{eq:groupq}
\X_t\circ \X_s=\X_{t+s},\qquad\mm\ae,\qquad\forall t,s\in \Q.
\end{equation}
(use the $\mm$-a.e. invertibility of $\X_t$ to deal with the case where $t$ and $s$ have different signs). Let $\mathcal N\subset X$ be the set of $x$'s such that either $\X_t(x)\notin\partial^c(t\b)(x)$ for some $t\in \Q$ or $\X_t(\X_s(x))\neq \X_{t+s}(x)$ for some $t,s\in \Q$. Then $\mathcal N$ is Borel and negligible.

From point $(iv)$ of Theorem \ref{thm:basemetric} and the group property \eqref{eq:groupq} we deduce
\begin{equation}
\label{eq:distq}
\sfd(\X_t(x),\X_s(x))=|t-s|,\qquad\forall \ x\in X\setminus\mathcal N,
\end{equation}
for any $t,s\in\Q$. Hence for $x\in X\setminus\mathcal N$ and any sequence $(t_n)\subset \Q$ converging to $t\in\R$, the limit of $(\X_{t_n}(x))$ as $n\to\infty$ exists and coincides with $\X_t(x)$ for $t\in \Q$. Call $\X_t(x)$ such limit.

By construction, \eqref{eq:distq} holds for any $t,s\in\R$, which means that for $x\in X\setminus\mathcal N$ the curve $t\mapsto \X_t(x)$ is a line. Clearly, such line passes through $x$ at time 0. Moreover, from point $(iii)$ of Theorem \ref{thm:basemetric} and the group property \eqref{eq:groupq} we deduce that such line is a gradient flow trajectory of $\b$. This proves point $(i)$ of the thesis.

By \eqref{eq:distq} it follows that the curve   $t\mapsto (\X_t)_\sharp\mm$ is weakly continuous in duality with $C_c(X)$. Thus, since \eqref{eq:presmeas} holds for $t\in\Q$ by construction, it holds for every $t\in\R$.

Finally, the group property \eqref{eq:group} follows directly from \eqref{eq:groupq} and \eqref{eq:distq}.

\noindent{\bf Uniqueness} By the uniqueness part of Proposition \ref{prop:unica} we know that there exists a Borel negligible set $\mathcal N'\subset X$ such that for any $t\in \Q\cap [t_0,t_1]$ it holds $\mathbf F_t(x)=\delta_{\X_t(x)}$ for every $x\in X\setminus \mathcal N'$, and by $(i')$ we know that for some Borel negligible set $\mathcal N''\subset X$ the measure $\mathbf F(x)$ is concentrated on 1-Lipschitz curves for any $x\in X\setminus\mathcal N''$. Thus for  $x\in X\setminus(\mathcal N\cup\mathcal N'\cup\mathcal N'')$ (with $\mathcal N$ defined as before) we know that $\mathbf F_t(x)=\delta_{\X_t(x)}$ for $t\in\Q\cap [t_0,t_1]$, that $t\mapsto \X_t(x)$ is continuous and that each curve in $\supp(\mathbf F(x))$ is continuous as well. This is enough to conclude.
\end{proof}
\begin{remark}{\rm
Notice that our uniqueness result does not tell that for $\mm$-a.e. $x\in X$ there is a unique gradient flow trajectory for $\b$ passing through $x$ at time $0$: at the moment we don't know whether such uniqueness holds (the technical problem is that  it seems not possible to drop the assumption $\mu\ll\mm$ in Proposition \ref{prop:der0}). What we know is that uniqueness is granted, also at the level of measure valued flows $\mathbf F$, provided the pointwise information `the measure is concentrated on gradient flows' is coupled with the local requirement  $(\mathbf F_t)_\sharp\mm\ll\mm$.

This is an unexpected link with Ambrosio's notion of regular Lagrangian flow \cite{Ambrosio04} in connection with DiPerna-Lions theory \cite{DiPerna-Lions89}, where a similar requirement is imposed to get uniqueness of the flow associated to vector fields on $\R^d$ with Sobolev/BV regularity. 
}\fr\end{remark}

The measure preservation property just proved   has the following important consequence about the behavior of Sobolev functions along the flow:
\begin{proposition}\label{prop:nizza} Assume \eqref{eq:ass2} and let $\X$ be the gradient flow of $\b$ given by Theorem \ref{thm:gfpresmes}. Then for every $f\in \s^2_{\rm loc}(X)$ and $t\geq 0$ it holds
\begin{equation}
\label{eq:lungogf}
|f(\X_t(x))-f(x)|\leq\int_0^t\weakgrad f(\X_s(x))\,\d s,\qquad\mm\ae\ x\in X,
\end{equation}
similarly for $t\leq 0$ replacing the integral from 0 to $t$ with the integral from $t$ to 0. In particular, for $f\in\s^2(X)$ we have
\begin{equation}
\label{eq:lungogf2}
\int|f(\X_t(x))-f(x)|^2\,\d\mm(x)\leq t^2\int\weakgrad f^2(x)\,\d\mm(x),\qquad\forall t\in\R.
\end{equation}
\end{proposition}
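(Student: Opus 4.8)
The plan is to reduce \eqref{eq:lungogf} to a statement about test plans, using the measure-preservation property \eqref{eq:presmeas} and the group property \eqref{eq:group} from Theorem \ref{thm:gfpresmes}. First I would fix a bounded open set $\Omega$ on which we work and recall that along $\mm$-a.e. flow line $t\mapsto\X_t(x)$ the curve is 1-Lipschitz (indeed a line), by point $(i)$ of Theorem \ref{thm:gfpresmes}. The natural object to introduce is, for a probability density $g$ with bounded support and $g\leq C\mm$, the plan $\ppi_{t}:=(\X_{\cdot}|_{[0,t]})_\sharp(g\,\mm)$, i.e. the law of the curve $s\mapsto\X_s(x)$ for $s\in[0,t]$ with $x$ distributed according to $g\,\mm$. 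Since $(\e_s)_\sharp\ppi_t=(\X_s)_\sharp(g\,\mm)$ and $(\X_s)_\sharp\mm=\mm$ by \eqref{eq:presmeas}, we get $(\e_s)_\sharp\ppi_t\leq C\mm$ for every $s$; and since each curve is 1-Lipschitz (metric speed $\equiv 1$), the kinetic energy is finite. Hence $\ppi_t$ is a test plan.

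Then I would apply \eqref{eq:localplan} (the pointwise upper-gradient inequality along test plans) to $f\in\s^2(X)$ and this test plan: for $\ppi_t$-a.e.\ $\gamma$ we have $|f(\gamma_t)-f(\gamma_0)|\le\int_0^t\weakgrad f(\gamma_s)|\dot\gamma_s|\,\d s=\int_0^t\weakgrad f(\gamma_s)\,\d s$, using $|\dot\gamma_s|=1$. Translating back through the definition of $\ppi_t$, this says exactly that $|f(\X_t(x))-f(x)|\le\int_0^t\weakgrad f(\X_s(x))\,\d s$ for $(g\,\mm)$-a.e. $x$. Since this holds for a family of densities $g$ whose union of supports exhausts $X$ up to $\mm$-null sets (or one can localize via cutoffs and \eqref{eq:localgrad}, \eqref{eq:nullgrad}, to handle $f\in\s^2_{\mathrm{loc}}(X)$), \eqref{eq:lungogf} follows $\mm$-a.e.\ on $X$. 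The case $t\le 0$ is identical, using that $s\mapsto\X_{-s}(x)$ is the gradient flow of $-\b$ and $\weakgrad{(-\b)}=\weakgrad\b$, or simply running the same test-plan argument on $[t,0]$.

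For \eqref{eq:lungogf2}, given \eqref{eq:lungogf} I would square both sides, apply the Cauchy--Schwarz (or Jensen) inequality $\big(\int_0^t\weakgrad f(\X_s(x))\,\d s\big)^2\le t\int_0^t\weakgrad f^2(\X_s(x))\,\d s$, integrate in $x$ over $X$, use Fubini, and then invoke \eqref{eq:presmeas} once more: $\int_X\weakgrad f^2(\X_s(x))\,\d\mm(x)=\int_X\weakgrad f^2\,\d(\X_s)_\sharp\mm=\int_X\weakgrad f^2\,\d\mm$ for every $s$, giving the factor $t\cdot t=t^2$. The main obstacle I anticipate is purely bookkeeping: checking that the map $x\mapsto(s\mapsto\X_s(x))$ is Borel into $C([0,t],X)$ so that $\ppi_t$ is a well-defined Borel probability measure on curves, and that the measurability/negligible-set exceptions from Theorem \ref{thm:gfpresmes} (the set $\mathcal N$) do not interfere; this is where one must be slightly careful, but it is routine given that $\X$ is Borel and the flow lines are continuous off a Borel null set. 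The genuinely substantive inputs --- $(\X_t)_\sharp\mm=\mm$, the group property, and unit speed of the flow lines --- are all already established in Theorem \ref{thm:gfpresmes}.
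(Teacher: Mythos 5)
Your proposal is correct and follows essentially the same route as the paper: construct a test plan by pushing forward a suitable finite-measure density under $x\mapsto(s\mapsto\X_s(x))$, use the measure preservation $(\X_s)_\sharp\mm=\mm$ to get the marginal bound and unit speed of the flow lines, apply \eqref{eq:localplan}, and then handle \eqref{eq:lungogf2} by Cauchy--Schwarz plus Fubini plus measure preservation. The only cosmetic difference is that the paper works with a single probability measure $\tilde\mm\leq\mm$ with $\mm\ll\tilde\mm$ rather than exhausting by bounded-support densities, which avoids the exhaustion step but is otherwise equivalent.
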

\begin{proof} We start with \eqref{eq:lungogf}. An iteration argument based on the measure preservation property shows that it is sufficient to prove the thesis for $t\in[0,1]$.  Let $T:X\to C([0,1],X)$ be $\mm$-a.e. defined by $(T(x))_t:=\X_t(x)$, $\tilde\mm\in\prob X$ such that $\tilde\mm\leq \mm$ and $\mm\ll\tilde\mm$ and put   $\ppi:=T_\sharp\tilde\mm\in \prob{C([0,1],X)}$. Then $\ppi$ is concentrated on 1-Lipschitz curves and $(\e_t)_\sharp\ppi=(\X_t)_\sharp\tilde\mm\leq (\X_t)_\sharp\mm=\mm$ for every $t\in[0,1]$. Thus $\ppi$ is a test plan. If $f\in\s^2(X)$ inequality \eqref{eq:localplan} yields
\[
|f(\gamma_t)-f(\gamma_0)|\leq \int_0^t\weakgrad f(\gamma_s)|\dot\gamma_s|\,\d s=\int_0^t\weakgrad f(\gamma_s)\,\d s,\qquad\ppi\ae \ \gamma,
\]
which, by definition of $\ppi$ is equivalent to the thesis. If $f$ just belongs to $\s^2_{\rm loc}(X)$ use a cut-off argument to reduce to the previous case and the locality of minimal weak upper gradients to conclude. The case $t\leq 0$ is handled analogously.

For \eqref{eq:lungogf2}, square and integrate \eqref{eq:lungogf} to get
\[
\begin{split}
\int | f(\X_t(x))-f(x)|^2\,\d\mm(x)&\leq \int\left(\int_0^t\weakgrad f(\X_s(x))\,\d s\right)^2\,\d\mm(x)\\
&\leq t\iint_0^t\weakgrad f^2(\X_s(x))\,\d s\,\d\mm(x)\\
&=t \int\weakgrad f^2(x)\,\d\left(\int_0^t (\X_s)_\sharp\mm\,\d s\right)(x)\\
&=t^2\int\weakgrad f^2(x)\,\d \mm(x).
\end{split}
\]
\end{proof}
A direct consequence of this proposition and its proof is that  $\weakgrad \b=1$ $\mm$-a.e., a fact that we shall frequently use later on without explicit reference. Indeed, being $\b$ 1-Lipschitz it holds $\weakgrad\b\leq 1$ $\mm$-a.e., and with $\ppi$ as in the proof we have
\[
\begin{split}
\int |\b(\gamma_1)-\b(\gamma_0)|\,\d\ppi(\gamma)&\leq \iint_0^1\weakgrad \b(\gamma_t)|\dot\gamma_t|\,\d t\,\d\ppi(\gamma)\leq \iint_0^1\weakgrad \b(\gamma_t)\,\d t\,\d\ppi(\gamma)=\int \weakgrad \b\,\d\nn,
\end{split}
\]
where $\nn:=\int_0^1 (\X_t)_\sharp\tilde\mm$. Given that by construction of $\ppi$ we know that $|\b(\gamma_1)-\b(\gamma_0)|=1$ for $\ppi$-a.e. $\gamma$, the left-hand side of the above inequality is 1 and the fact that $\mm\ll\nn$ yields the claim (notice that in fact the more sophisticated arguments of Cheeger \cite{Cheeger00}  ensure that $\weakgrad f=\lip(f)$ $\mm$-a.e. for any $f$ locally Lipschitz, given that $CD(0,N)$ spaces are doubling and support a weak-local 1-1 Poincar\'e inequality - \cite{Lott-Villani07}, \cite{Rajala12}). 

\medskip

We know that for $\mm$-a.e. $x\in X$ the curve $t\mapsto \X_t(x)$ is a gradient flow trajectory of $\b$. The next Corollary reformulates this property in a different way w.r.t. that used to define gradient flows in Definition \ref{def:gf} and closer in spirit to the first order differentiation formula in Proposition \ref{prop:firstdiff}.
\begin{corollary}\label{cor:derfiga}  Assume \eqref{eq:ass2}, let $\X$ be given by Theorem \ref{thm:gfpresmes} and pick  $f\in\s^2(X)$. 

Then
\begin{equation}
\label{eq:derfiga}
\lim_{t\to 0}\frac{f\circ\X_t-f}{t}=-Df(\nabla\b),\qquad\textrm{ weakly in }L^2(X).
\end{equation}
\end{corollary}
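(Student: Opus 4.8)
The plan is to combine the first order differentiation formula of Proposition~\ref{prop:firstdiff} with the fact, established in Theorem~\ref{thm:gfpresmes}, that the curve $t\mapsto\X_t$ both preserves $\mm$ and is, for $\mm$-a.e.\ $x$, a gradient flow trajectory of $\b$ passing through $x$ at time $0$. The key observation is that if we disintegrate $\mm$ (or a probability measure equivalent to it) along the map $x\mapsto(t\mapsto\X_t(x))$ we obtain a plan that represents the gradient of $\b$ in the sense of Definition~\ref{def:planrepgrad}. Indeed, put $\tilde\mm\in\probt X$ with $\tilde\mm\leq\mm$ and $\mm\ll\tilde\mm$, let $T(x):=(t\mapsto\X_t(x))$ and $\ppi:=T_\sharp\tilde\mm$, exactly as in the proof of Proposition~\ref{prop:nizza}. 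Then $(\e_t)_\sharp\ppi\leq\mm$ for all $t$, $\ppi$ is concentrated on $1$-Lipschitz curves, and since each such curve is a gradient flow trajectory of $\b$ we have, for $\ppi$-a.e.\ $\gamma$ and $t>0$,
\[
\b(\gamma_0)-\b(\gamma_t)=\tfrac12\int_0^t|\dot\gamma_s|^2\,\d s+\tfrac12\int_0^t\lip(\b)^2(\gamma_s)\,\d s
=\tfrac12\int_0^t|\dot\gamma_s|^2\,\d s+\tfrac t2,
\]
using $\lip(\b)\equiv1$. Integrating in $\ppi$, dividing by $t$ and recalling $\weakgrad\b=1$ $\mm$-a.e.\ and $|\dot\gamma_s|\equiv1$, one checks that \eqref{eq:defrepr} holds (with $g=-\b$, or equivalently reading the inequality for $-\b$); hence $\ppi$ represents $\nabla(-\b)$ in $X$.

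Next I would apply Proposition~\ref{prop:firstdiff} to the pair $f,-\b$ and the plan $\ppi$. Since the ambient space is infinitesimally strictly convex, $D^+f(\nabla(-\b))=D^-f(\nabla(-\b))=Df(\nabla(-\b))=-Df(\nabla\b)$ $\mm$-a.e.\ by the basic properties in \eqref{eq:basefg}, so \eqref{eq:firstdiff} collapses to
\[
\lim_{t\downarrow0}\int\frac{f(\gamma_t)-f(\gamma_0)}{t}\,\d\ppi(\gamma)=\int Df(\nabla(-\b))(\gamma_0)\,\d\ppi(\gamma)=-\int Df(\nabla\b)(x)\,\d\tilde\mm(x).
\]
Unwinding the definition of $\ppi$, this says exactly $\int\frac{f(\X_t(x))-f(x)}{t}\,\d\tilde\mm(x)\to-\int Df(\nabla\b)\,\d\tilde\mm$, i.e.\ convergence of the incremental ratios tested against $\tilde\mm$. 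Running the same argument on the reversed flow $t\mapsto\X_{-t}$, which is a gradient flow of $-\b$, gives the analogous statement for $t\uparrow0$, so we obtain the two-sided limit $\lim_{t\to0}\int\frac{f\circ\X_t-f}{t}\,\d\tilde\mm=-\int Df(\nabla\b)\,\d\tilde\mm$.

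To upgrade this from a single test measure to weak $L^2(X)$ convergence, I would first note that $t\mapsto\frac{f\circ\X_t-f}{t}$ is bounded in $L^2(X,\mm)$ as $t\to0$: by Proposition~\ref{prop:nizza} (estimate \eqref{eq:lungogf2}) we have $\|f\circ\X_t-f\|_{L^2}^2\leq t^2\|\weakgrad f\|_{L^2}^2$, so the incremental ratios have $L^2$-norm at most $\|\weakgrad f\|_{L^2}$. A bounded family in a Hilbert space is weakly precompact, so it suffices to identify all weak limit points with $-Df(\nabla\b)$. Given any weakly convergent sequence $t_n\to0$ with limit $h\in L^2(X)$, testing against a bounded Borel $g\geq0$ with $\int g\,\d\mm<\infty$ — run the argument above with $\tilde\mm$ replaced by (a normalization of) $g\,\mm$, which is legitimate since the same disintegration/representation argument works for any such measure absolutely continuous with bounded density — yields $\int g h\,\d\mm=-\int g\,Df(\nabla\b)\,\d\mm$. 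Since such $g$ are dense enough to separate points of $L^2$, $h=-Df(\nabla\b)$ $\mm$-a.e., and the full limit \eqref{eq:derfiga} follows.

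The main obstacle I expect is the verification that $\ppi$ genuinely represents $\nabla(-\b)$, and in particular handling the borderline case in the definition correctly: one must be careful that the inequality \eqref{eq:defrepr} is an equality here (because along gradient flow trajectories of $\b$ the inequality \eqref{eq:tuttecurve} is saturated), which is what makes both the $\limsup$ and $\liminf$ in \eqref{eq:firstdiff} coincide and forces the limit to exist rather than merely providing one-sided bounds. A secondary technical point is the passage from a single absolutely continuous measure with bounded density to arbitrary test functions $g\in L^2$: strictly speaking one should either argue by density after establishing the bound from \eqref{eq:lungogf2}, or observe directly that for every $g\in L^\infty$ with bounded support the measure $\frac{g^+}{\int g^+\,\d\mm}\mm$ (and similarly $g^-$) is an admissible base measure for the representation argument. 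Neither difficulty is serious, but both require the measure-preservation property \eqref{eq:presmeas} in an essential way — without it the disintegrated plan would fail the density bound $(\e_t)_\sharp\ppi\leq C\mm$ needed to be a plan representing a gradient.
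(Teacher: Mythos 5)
Your argument follows the paper's proof essentially step for step: build a test plan $\ppi=T_\sharp\mu$ by pushing forward a probability measure along the flow map $T(x)=(t\mapsto\X_t(x))$, check that $\ppi$ represents $\nabla(-\b)$ using the gradient-flow identity together with $\lip(\b)\equiv1$ and $\weakgrad\b\equiv1$, and then invoke the first-order differentiation formula together with the a priori $L^2$ bound from \eqref{eq:lungogf2}. There is one technical slip you should fix: in your opening step you pick $\tilde\mm\in\probt X$ with $\mm\ll\tilde\mm$ (so $\tilde\mm$ has full, hence typically unbounded, support) and conclude that $\ppi$ represents $\nabla(-\b)$ ``in $X$''. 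But Definition~\ref{def:planrepgrad} requires the reference function to belong to $\s^2(\Omega)$ on an open set $\Omega$ containing $\supp((\e_t)_\sharp\ppi)$ for small $t$, and $\b\notin\s^2(X)$ whenever $\mm(X)=\infty$ because $\weakgrad\b\equiv1$; so the representation is only meaningful on a bounded $\Omega$ where $\mm(\Omega)<\infty$. Consequently the test measure must have bounded support from the outset, not merely a bounded density. You do acknowledge this in the final sentences (``observe directly that for every $g\in L^\infty$ with bounded support\ldots''), which is exactly the reduction the paper makes at the beginning of its proof; but the intermediate claim that the representation argument ``works for any such measure absolutely continuous with bounded density'' without bounded support is not correct. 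Once the argument is run only for $g\in L^1\cap L^\infty$ non-negative with bounded support (and then closed up using the uniform $L^2$ bound from \eqref{eq:lungogf2}), the proof is complete and coincides with the paper's.
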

\begin{proof} We shall only prove the limiting property as $t\downarrow0$, the proof of the case $t\uparrow0$ being similar. Inequality \eqref{eq:lungogf2} grants that the $L^2(X)$ norms of the functions in the left hand side of \eqref{eq:derfiga} are uniformly bounded. To conclude the proof it is therefore sufficient to show that for any $g\in L^2(X)$ it holds
\[
\lim_{t\downarrow0}\int \frac{f\circ\X_t-f}{t}g\,\d\mm=-\int Df(\nabla \b)g\,\d\mm.
\]
A simple approximation argument shows that it is sufficient to prove it just  for $g\in L^1\cap L^\infty(X)$ non-negative and with bounded support. Pick such $g$, assume $g$ is not identically 0 (otherwise there is nothing to prove) and define $\mu:=(\int g\,\d\mm)^{-1}g\mm\in\prob X$ and $\ppi:=T_\sharp\mu\in \prob{(C([0,1],X))}$, where $T:X\to C([0,1],X)$ is  defined by $(T(x))_t:=\X_t(x)$ as in the proof of Proposition \ref{prop:nizza}. Arguing as in the proof of Proposition \ref{prop:nizza}  and using the fact that $g$ is bounded we get that $\ppi$ is a test plan. By construction, for some bounded open set $\Omega$ it holds $\supp((\e_t)_\sharp\ppi)\subset\Omega$ for any $t\in[0,1]$. Notice that $\b\in\s^2(\Omega)$ and that 
\[
\lim_{t\downarrow0}\int\frac{\b(\gamma_0)-\b(\gamma_t)}{t}\,\d\ppi(\gamma)=1=\frac12\int\weakgrad\b^2(\gamma_0)\,\d\ppi(\gamma)+\frac12\lims_{t\downarrow0}\frac1t\iint_0^t|\dot\gamma_s|^2\,\d s\,\d\ppi(\gamma),
\]
i.e. $\ppi$ represents the gradient of $-\b$ on $\Omega$ (Definition \ref{def:planrepgrad}). By the first order differentiation formula \ref{eq:firststrictconv} we deduce
\[
\lim_{t\downarrow0}\int \frac{f\circ\X_t-f}{t}g\,\d\mm=\lim_{t\downarrow 0}\int\frac{f(\gamma_t)-f(\gamma_0)}{t}\,\d\ppi(\gamma)=-\int Df(\nabla \b)g\,\d\mm,
\]
which is the thesis.
\end{proof}

\chapter{The gradient flow of $\b$ preserves the distance}\label{se:dist}
\section{Preliminary notions}
\subsection{Infinitesimally Hilbertian spaces}\label{se:infhil}
We shall now introduce the crucial property ensuring a `Riemannian-like' behavior of our $CD(0,N)$ space. Recall that a smooth Finsler manifold $F$ is Riemannian if and only if the Sobolev space $W^{1,2}$ built over it is Hilbert. 

Motivated by the results in \cite{AmbrosioGigliSavare11-2} (see also \cite{AmbrosioGigliMondinoRajala12}), in \cite{Gigli12} the following definition has been proposed:
\begin{definition}[Infinitesimally Hilbertian spaces]
Let $(X,\sfd,\mm)$ be a metric measure space. We say that $(X,\sfd,\mm)$ is infinitesimally Hilbertian provided $W^{1,2}(X,\sfd,\mm)$ is an Hilbert space.
\end{definition}

It is immediate to verify that infinitesimal Hilbertianity implies infinitesimal strict convexity. Moreover, much like in Riemannian manifolds, on infinitesimally Hilbertian spaces we can `identify differential and gradients' in the sense made precise by the following proposition, see \cite{AmbrosioGigliSavare11-2} and \cite{Gigli12} for a proof (see also \cite{Gigli-Mosconi12} for a quick overview on the subject).
\begin{proposition}
Let $(X,\sfd,\mm)$ be an infinitesimally Hilbertian space and $\Omega\subset X$ an open set. Then for every $f,g\in \s^2_{\rm loc}(\Omega)$ it holds
\[
Df(\nabla g)=Dg(\nabla f),\qquad\mm\ae\ \rm{ on }\ \Omega.
\]
\end{proposition}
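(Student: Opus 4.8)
The plan is to deduce the statement from a \emph{pointwise} parallelogram law for minimal weak upper gradients. First I would reduce to the global case $f,g\in\s^2(X)$: given $f,g\in\s^2_{\rm loc}(\Omega)$ and a point $x\in\Omega$, choose a Lipschitz function $\nchi$ with bounded support contained in $\Omega$ and $\nchi\equiv1$ on a neighbourhood of $x$, replace $f,g$ by $\nchi f_M,\nchi g_M$ (with $f_M,g_M$ the truncations of $f,g$ at level $M$), which lie in $\s^2(X)$, and at the end invoke the locality property \eqref{eq:localgrad21} on the set $\{\nchi=1\}\cap\{|f|<M\}\cap\{|g|<M\}$, letting $M\to\infty$, to transfer the identity back to $\Omega$.

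For $f,g\in\s^2(X)$, the key input is that $W^{1,2}(X,\sfd,\mm)$ is Hilbert, hence $\|\cdot\|^2_{W^{1,2}}$ obeys the parallelogram law; subtracting the parallelogram law for $\|\cdot\|_{L^2}$ (recall the decomposition \eqref{eq:w12}) one gets
\[
\int \weakgrad{(f+g)}^2+\weakgrad{(f-g)}^2\,\d\mm=\int 2\weakgrad f^2+2\weakgrad g^2\,\d\mm .
\]
I would then upgrade this integrated identity to a pointwise one. The map $v\mapsto\weakgrad v^2$ is $\mm$-a.e. convex (by the sublinearity bound $\weakgrad{(\alpha v+\beta w)}\le|\alpha|\weakgrad v+|\beta|\weakgrad w$ and convexity of $s\mapsto s^2$), $2$-homogeneous (chain rule, $\weakgrad{(\lambda v)}=|\lambda|\weakgrad v$) and even. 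For such a function one has $\weakgrad{(u+w)}^2\le 2\weakgrad u^2+2\weakgrad w^2$ pointwise; applying this with $(u,w)=(f+g,f-g)$ and with $(u,w)=(f+g,g-f)$ and adding gives $\weakgrad{(f+g)}^2+\weakgrad{(f-g)}^2\ge 2\weakgrad f^2+2\weakgrad g^2$ $\mm$-a.e. Since this nonnegative integrand has vanishing integral, it is $0$ $\mm$-a.e., i.e. the parallelogram law holds $\mm$-a.e.\ for the pair $(f,g)$.

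Applying the previous step to all pairs $(af+bg,\;a'f+b'g)$ with $a,b,a',b'\in\Q$ — countably many identities — I obtain a set of full $\mm$-measure on which, at each point $x$, the function $(a,b)\mapsto\weakgrad{(af+bg)}^2(x)$ satisfies the parallelogram law over $\Q^2$; since it is convex (hence continuous) in $(a,b)$, its $\Q$-biadditive polarization extends to an honest symmetric bilinear form, so $\varepsilon\mapsto\weakgrad{(g+\varepsilon f)}^2(x)$ is a genuine quadratic polynomial in $\varepsilon$ for $\mm$-a.e.\ $x$. Consequently the one-sided limits of Definition \ref{def:dpmfg} agree — re-proving infinitesimal strict convexity — and their common value $Df(\nabla g)(x)$ is the off-diagonal coefficient of that quadratic form, which is symmetric under exchanging $f$ and $g$; hence $Df(\nabla g)=Dg(\nabla f)$ $\mm$-a.e. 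Together with the reduction of the first paragraph this gives the claim. The only non-routine step, and the one I would flag as the main obstacle, is precisely the passage from the integrated parallelogram identity to its pointwise form; the point that makes it go through is that convexity and $2$-homogeneity already furnish one of the two inequalities pointwise, so equality of the integrals forces pointwise equality.
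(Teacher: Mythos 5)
Your reduction to $f,g\in\s^2(X)$ via cut-offs and truncations, and your closing polarization argument over rational coefficients, are both sound. The genuine gap is in the middle step, the passage from the integrated parallelogram identity to the pointwise one, and it is not merely a slip.

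First, the algebra does not give what you claim. Starting from $\weakgrad{(u+w)}^2\le 2\weakgrad u^2+2\weakgrad w^2$ and substituting $(u,w)=(f+g,f-g)$, the $2$-homogeneity gives $\weakgrad{(2f)}^2=4\weakgrad f^2\le 2\weakgrad{(f+g)}^2+2\weakgrad{(f-g)}^2$, and likewise for $g$ via $(u,w)=(f+g,g-f)$. Adding the two and dividing yields only
\[
\weakgrad f^2+\weakgrad g^2\ \le\ \weakgrad{(f+g)}^2+\weakgrad{(f-g)}^2\qquad\mm\ae,
\]
which is off by a factor of $2$ from what you need; combined with the integrated identity $\int\weakgrad{(f+g)}^2+\weakgrad{(f-g)}^2\,\d\mm=2\int\weakgrad f^2+\weakgrad g^2\,\d\mm$ it gives nothing.

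Second, the inequality you actually invoke, $\weakgrad{(f+g)}^2+\weakgrad{(f-g)}^2\ge 2\weakgrad f^2+2\weakgrad g^2$ $\mm$-a.e., is \emph{not} a consequence of convexity, $2$-homogeneity and evenness, and is false for general seminorm-squares. On $(\R^2,\|\cdot\|_1,\mathcal L^2)$ with $f(x)=x_1$ and $g(x)=x_2$, the minimal weak upper gradient of a smooth function is the $\ell^\infty$-norm of its Euclidean differential, so $\weakgrad f=\weakgrad g=\weakgrad{(f+g)}=\weakgrad{(f-g)}=1$ everywhere: the left-hand side is $2$, the right-hand side is $4$. The opposite inequality fails by the symmetric computation on $(\R^2,\|\cdot\|_\infty,\mathcal L^2)$. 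So there is no a priori one-sided pointwise parallelogram inequality available, and the ``nonnegative integrand with vanishing integral'' device cannot be set up; a priori the integrand could change sign, with contributions cancelling on integration.

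The upgrade from the integrated to the pointwise parallelogram law is in fact the whole content of this proposition, and it genuinely uses the locality, chain and Leibniz rules of minimal weak upper gradients, not just soft convexity of $v\mapsto\weakgrad v^2$. A route that does work starts by setting $\mathcal E(f,g):=\frac14\int\weakgrad{(f+g)}^2-\weakgrad{(f-g)}^2\,\d\mm$, which is symmetric and bilinear precisely by the Hilbert assumption, noting that the difference quotients $\frac{1}{2\eps}\bigl(\weakgrad{(g+\eps f)}^2-\weakgrad g^2\bigr)$ have integral $\mathcal E(f,g)+\frac\eps2\mathcal E(f,f)$ and converge monotonically to $D^\pm f(\nabla g)$ as $\eps\to0^\pm$, and passing to the limit to get $\int D^+f(\nabla g)\,\d\mm=\mathcal E(f,g)=\int D^-f(\nabla g)\,\d\mm$; this gives infinitesimal strict convexity and the integral identity $\int Df(\nabla g)\,\d\mm=\int Dg(\nabla f)\,\d\mm$. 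Localizing the latter to a pointwise statement is the remaining work, and it requires testing against compositions $\varphi\circ f$, $\psi\circ g$ and products, via the chain and Leibniz rules of Section~\ref{se:sob}. That remaining work is exactly what your argument skips.
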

To highlight the symmetry of this object, we will use the notation $\la\nabla f,\nabla g\ra$ in place of $Df(\nabla g)$.  Also, to mimic the notation used in a Riemannian context, we shall write $|\nabla f|$ in place of $\weakgrad f$ to denote the minimal weak upper gradient of the Sobolev function $f$. Still, we remark again that we are not really defining what the gradient of a Sobolev function is, but just what is the value of the `scalar product between two gradients'. The notation is justified by the following calculus rules,  where $\Omega\subset X$ is open and all the state equalities must be intended  $\mm$-a.e. on $\Omega$.

\noindent\underline{Squared norm as scalar product}. For any $f\in\s^2_{\rm loc}(\Omega)$ it holds
\begin{equation}
\label{eq:norm}
\la\nabla f,\nabla f\ra=|\nabla f|^2.
\end{equation}
\noindent\underline{Symmetry}. For any $f,g\in\s^2_{\rm loc}(\Omega)$ it holds 
\begin{equation}
\label{eq:simm}
\la\nabla f,\nabla g\ra=\la\nabla g,\nabla f\ra.
\end{equation}
\noindent\underline{Linearity in $f,g$}. For any $f_0,f_1,g\in \s^2_{\rm loc}(\Omega)$ and any $\alpha_0,\alpha_1\in \R$ it holds 
\begin{equation}
\label{eq:lin}
\la\alpha_0\nabla f_0+\alpha_1\nabla f_1,\nabla g\ra=\alpha_0\la\nabla f_0,\nabla g\ra+\alpha_1\la\nabla f_1,\nabla g\ra,
\end{equation}
and similarly, due to \eqref{eq:simm}, for linearity in $g$.

\noindent\underline{1-Lipschitz continuity  in $f,g$}. For any $f_0,f_1,g\in \s^2_{\rm loc}(\Omega)$  it holds 
\begin{equation}
\label{eq:1lipfg}
|\la \nabla f_0,\nabla g\ra-\la\nabla f_1,\nabla g\ra|\leq |\nabla (f_0-f_1)||\nabla g|,
\end{equation}
and similarly, due to \eqref{eq:simm}, for 1-Lipschitz continuity in $g$.

\noindent\underline{Leibniz rule in $f,g$}. For any $f_0,f_1\in \s^2_{\rm loc}(\Omega)\cap L^\infty_{\rm loc}(\Omega)$ and $g\in \s^2_{\rm loc}(\Omega)$ it holds
\begin{equation}
\label{eq:leibniz}
\la\nabla(f_1f_2),\nabla g\ra=f_1\la\nabla f_2,\nabla g\ra+f_2\la \nabla f_1,\nabla g\ra,
\end{equation}
and similarly, due to \eqref{eq:simm}, for the Leibniz rule in $g$.

\noindent\underline{Chain rule in $f,g$}. For any $f,g\in \s^2_{\rm loc}(\Omega)$ and $\varphi:\R\to\R$ Lipschitz it holds
\begin{equation}
\label{eq:chainhil}
\la\nabla(\varphi\circ f),\nabla g\ra=\varphi'\circ f\la \nabla f,\nabla g\ra,
\end{equation}
where $\varphi'$ is defined arbitrarily at points where $\varphi$ is not differentiable. Similarly, due to \eqref{eq:simm}, at the level of  $g$. The Lipschitz continuity of $\varphi$ can be relaxed as in the chain rule \eqref{eq:chainbase}.
\noindent\underline{First order differentiation formula} For $\Omega\subset X$ open,  $f,g\in\s^2(\Omega)$ and $\ppi$ representing the gradient of $g$ in $\Omega$ it holds
\begin{equation}
\label{eq:firsthil}
\lim_{t\downarrow0}\int\frac{f(\gamma_t)-f(\gamma_0)}{t}\,\d\ppi(\gamma)=\int \la\nabla f,\nabla g\ra(\gamma_0)\,\d\ppi(\gamma),
\end{equation}
in particular, the limit at the left-hand side exists.

\medskip

Notice that in particular we also have that  $W^{1,2}(\Omega)$ is Hilbert for every $\Omega\subset X$ open, that $D(\bd)$ is a vector space and that $\bd:D(\bd)\to\textrm{ \{Radon measures on $X$\}}$ is a linear operator.

\bigskip

It is worth to underline that although these calculus rules are the same as the ones available in a Riemannian framework, they are produced following a different path. So, for instance, while on a Riemannian manifold the given data is the scalar product and \eqref{eq:norm} is the definition of the norm of $\nabla f$, in our context the given (actually: built) object is $|\nabla f|$ and the starting from it and with the assumption of infinitesimal Hilbertianity we produce a sort of scalar product between gradients. In this sense, the identity \eqref{eq:norm} is a theorem and not a definition in the current setting. Other basic first order calculus rules follows from those presented, for instance from \eqref{eq:norm} and \eqref{eq:lin} we get
\[
|\nabla(f+g)|^2=|\nabla f|^2+2\la\nabla f,\nabla g\ra+|\nabla g|^2,
\]
which we shall occasionally use later on without explicit reference. In summary,  the first order Sobolev calculus works as in the smooth Riemannian case, and this fact - not surprisingly - plays a crucial role in the derivation of the desired geometric properties of infinitesimally Hilbertian $CD(K,N)$ spaces.

These calculus rules are strongly reminiscent of the $\Gamma$-calculus available in the context of Dirichlet forms,  the carr\'e du champ $\Gamma(f,g)$ taking the place of what we are calling $\la\nabla f,\nabla g\ra$. There are indeed strong connections between the two points of view, in particular in relation with lower bounds on the Ricci curvature, see \cite{AmbrosioGigliSavare11-2}, \cite{AmbrosioGigliSavare12} and \cite{Koskela-Zhou12} for recent progresses in this direction. Yet, beside the fact that a priori the two are defined on different structures (topological spaces with a measure and a Dirichlet form for the former, and metric measure spaces for the latter), there are two differences between the two approaches. The first is that Dirichlet forms are quadratic forms by definition, in this sense they always produce, by nature, an Hilbertian-like calculus. We have seen instead that the duality relation between differentials and gradients of Sobolev functions covers also the case of Finsler structures and that if one wants an Hilbertian-like structure something must be imposed: what we are calling infinitesimal Hilbertianity. The second, and most important, is the possibility of stating and proving the first order differentiation formula \ref{eq:firstdiff} in the context of metric measure spaces, which reduces to \eqref{eq:firsthil} in the current setting: this sort of `horizontal' derivation seems unavailable by direct means in the `vertical'   $L^2$ world of Dirichlet forms (but if the form is good enough it is certainly possible to first produce the intrinsic metric from the form and then prove \eqref{eq:firsthil} in the resulting metric measure space, see e.g. \cite{AmbrosioGigliSavare12}).

\bigskip

Notice that the uniform convexity of $W^{1,2}(X,\sfd,\mm)$ and Theorem \ref{thm:energylip} give for free the following stronger density result for Lipschitz functions:
\begin{theorem}[Strong density of Lipschitz functions in $W^{1,2}(X,\sfd,\mm)$]\label{thm:stronglip}
Let $(X,\sfd,\mm)$ be an infinitesimally Hilbertian space.

Then Lipschitz functions are dense in  $W^{1,2}(X)$, i.e. for any $f\in W^{1,2}(X)$ there exists a sequence $(f_n)\subset W^{1,2}(X)$ of Lipschitz functions such that $f_n\to f$, $\weakgrad {(f_n-f)}\to 0$ and $\lip(f_n)\to\weakgrad f$ as $n\to\infty$ in $L^2(X)$.

Furthermore:
\begin{itemize}
\item if $\mm$ gives finite mass to bounded sets, then the $f_n$'s can be chosen with bounded support for every $n\in\N$,
\item if $(\supp(\mm),\sfd)$ is proper, then the $f_n$'s can be chosen with compact support for every $n\in\N$.
\end{itemize}
\end{theorem}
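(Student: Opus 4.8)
The plan is to obtain the required sequence directly from Theorem~\ref{thm:energylip} and then to upgrade the convergence of the minimal weak upper gradients from convergence ``in energy'' to convergence ``in norm'', using only that $W^{1,2}(X)$ is a Hilbert space (equivalently, that it is uniformly convex, hence reflexive and enjoys the Radon--Riesz property).

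First, given $f\in W^{1,2}(X)$, apply Theorem~\ref{thm:energylip} to produce a sequence $(f_n)\subset W^{1,2}(X)$ of Lipschitz functions with $f_n\to f$, $\weakgrad{f_n}\to\weakgrad f$ and $\lip(f_n)\to\weakgrad f$ in $L^2(X)$, and — by the two bullet points of that theorem — with bounded (resp.\ compact) support when $\mm$ is finite on bounded sets (resp.\ when $(\supp(\mm),\sfd)$ is proper). The only thing missing, compared to the present statement, is the stronger conclusion $\weakgrad{(f_n-f)}\to 0$ in $L^2(X)$; all the other assertions, including the support properties, will then be inherited verbatim from this same sequence.

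Since $\|f_n\|_{L^2}$ and $\|\weakgrad{f_n}\|_{L^2}$ both converge, they are bounded, so $(f_n)$ is bounded in the Hilbert space $W^{1,2}(X)$; by reflexivity some subsequence converges weakly in $W^{1,2}(X)$ to a limit $g$. The continuous inclusion $W^{1,2}(X)\hookrightarrow L^2(X)$ is weak-to-weak continuous, so $f_{n_k}\weakto g$ in $L^2(X)$, while at the same time $f_{n_k}\to f$ strongly in $L^2(X)$; hence $g=f$. As this applies to every subsequence, the whole sequence satisfies $f_n\weakto f$ in $W^{1,2}(X)$. Moreover
\[
\|f_n\|_{W^{1,2}}^2=\|f_n\|_{L^2}^2+\|\weakgrad{f_n}\|_{L^2}^2\ \longrightarrow\ \|f\|_{L^2}^2+\|\weakgrad f\|_{L^2}^2=\|f\|_{W^{1,2}}^2 .
\]

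Finally, in a Hilbert space weak convergence together with convergence of the norms forces strong convergence: denoting by $\langle\cdot,\cdot\rangle_{W^{1,2}}$ the scalar product of $W^{1,2}(X)$, one has $\|f_n-f\|_{W^{1,2}}^2=\|f_n\|_{W^{1,2}}^2-2\langle f_n,f\rangle_{W^{1,2}}+\|f\|_{W^{1,2}}^2\to 0$. Since $\|\weakgrad{(f_n-f)}\|_{L^2}\le\|f_n-f\|_{W^{1,2}}$, this gives $\weakgrad{(f_n-f)}\to 0$ in $L^2(X)$, as desired. There is no genuine obstacle in the argument: the sole place where infinitesimal Hilbertianity (or at least uniform convexity of $W^{1,2}$) enters is in the passage from weak plus norm convergence to strong convergence — the extraction of a weakly convergent subsequence relies on reflexivity, and the final implication relies on the Radon--Riesz property — both of which may fail for a generic Banach $W^{1,2}$.
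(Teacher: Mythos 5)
Your proof is correct and follows exactly the route the paper indicates (it states that the result "follows for free" from the uniform convexity of $W^{1,2}$ together with Theorem~\ref{thm:energylip}, which is precisely what you use). The only cosmetic difference is that you invoke the Hilbert inner product identity $\|f_n-f\|^2_{W^{1,2}}=\|f_n\|^2_{W^{1,2}}-2\langle f_n,f\rangle_{W^{1,2}}+\|f\|^2_{W^{1,2}}$ where the paper's hint points at the Radon--Riesz property of a uniformly convex norm; since $W^{1,2}$ is assumed Hilbert, these are the same argument.
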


\begin{remark}{\rm
From the definition proposed, it seems that the Sobolev exponent $p=2$ plays a special role in the definition of infinitesimal Hilbertianity. This is in contrast with the smooth world, where recognizing Riemannian manifolds among Finsler ones requires no choice of any Sobolev exponent. 

The point is the following. On a general metric measure space one can define the Sobolev class $\s^p(X,\sfd,\mm)$ for any $p\in(1,\infty)$ (the borderline cases $p=1,\infty$ can also be dealt with, but not surprisingly they are harder to handle). For $f\in\s^p(X,\sfd,\mm)$ there is a well defined  $p$-minimal weak upper gradient $|Df|_p$. However, in general for $f\in \s^p\cap\s^{p'}(X,\sfd,\mm)$, $p<p'$, one only has $|Df|_p\leq |Df|_{p'}$ $\mm$-a.e., the other inequality being unknown. Therefore also the duality between differentials and gradients can a priori  be affected by the choice of the Sobolev exponent one is working with.  Yet, if $\mm$ is doubling and the space supports a weak-local 1-1 Poincar\'e inequality (which is always the case for $CD(K,N)$ spaces with $N<\infty$ - see \cite{Lott-Villani07} and \cite{Rajala12}), then the results of Cheeger in \cite{Cheeger00} ensure that $|Df|_p=|Df|_{p'}$ $\mm$-a.e. for $f$ as above and using this fact and Lusin's type approximation with Lipschitz functions (see e.g. Theorem 5.1 in \cite{Bjorn-Bjorn11}) one sees that infinitesimal Hilbertianity can be defined asking for the map $f\mapsto |D f|_p^2$ to be a quadratic form for some (and thus any) $p\in(1,\infty)$.
}\fr\end{remark}
\subsection{Heat flow}

On an infinitesimally Hilbertian space $(X,\sfd,\mm)$, the map 
\[
L^2(X,\mm)\ni f\qquad\mapsto\qquad\left\{\begin{array}{ll}
\displaystyle{\frac12\int|\nabla f|^2\,\d\mm},&\qquad\textrm{ if }f\in W^{1,2}(X,\sfd,\mm),\\
+\infty,&\qquad\textrm{ otherwise},
\end{array}\right.
\]
is a closed Dirichlet form. We shall denote by $\h_t$ the associated semigroup and call it  heat flow. A lower bound on the Ricci curvature and an upper bound on the dimension grants several regularizing effects for the heat flow, in the following theorem we collect  those that  we shall use later on.
\begin{theorem}[Basic properties of the heat flow]
Let $(X,\sfd,\mm)$ be an infinitesimally\linebreak Hilbertian $CD(0,N)$ space. Then the following holds.
\begin{itemize}
\item\underline{$L^2\to W^{1,2}$ regularization}. It holds
\begin{equation}
\label{eq:l2contr}
\|\h_t(f)\|_{L^2(X)}\leq \|f\|_{L^2(X)},\qquad\forall f\in L^2(X),\ t\geq 0,
\end{equation}
and $\h_t(f)\in \s^2(X)$ for any $f\in L^2(X)$, $t>0$ with
\begin{equation}
\label{eq:l2s2}
\||\nabla \h_t(f)|\|_{L^2(X)}\leq \frac1{\sqrt{2t}}\|f\|_{L^2(X)},\qquad \forall f\in L^2(X),\ t>0.
\end{equation}
\item\underline{Heat kernel}. There exists a Borel map $(0,\infty)\times [\supp(\mm)]^2\ni (t,x,y)\mapsto \rho(t,x,y)\in \R^+$ satisfying $\rho(t,x,y)=\rho(t,y,x)$ for any $(t,x,y)\in(0,\infty)\times [\supp(\mm)]^2$, 
\begin{equation}
\label{eq:mass1}
\int\rho(t,x,y)\,\d\mm(y)=1,\qquad\forall t\in(0,\infty),\ x\in \supp(\mm),
\end{equation}
and 
\begin{equation}
\label{eq:heatbdd}
\sup_{y\in Y}\rho(t,x,y)<\infty,\qquad\forall t\in(0,\infty),\ x\in \supp(\mm),
\end{equation}
such that for any $f\in L^2(X)$ we have the representation formula
\begin{equation}
\label{eq:reprker}
\h_t(f)(x)=\int f(y)\rho(t,x,y)\,\d\mm(y),\qquad\forall t\in(0,\infty).
\end{equation}
\item\underline{Gaussian estimates}. There exists a constant $\mathcal C_1(N)$  such that the Gaussian bound
\begin{equation}
\label{eq:gaussest}
\rho(t,x,y)\leq\frac{ \mathcal C_1}{\mm(B_{\sqrt t}(x))}\,e^{-\dfrac{\sfd^2(x,y)}{5t}},
\end{equation}
holds for any $t,x,y\in (0,\infty)\times X^2$. 
\item\underline{Bakry-\'Emery condition}. For any $f\in W^{1,2}(X)$ and any $t\in (0,\infty)$ it holds
\begin{equation}
\label{eq:BE}
|\nabla(\h_t(f))|^2\leq \h_t(|\nabla f|^2),\qquad\mm\ae,
\end{equation}
the right-hand side being defined by formula \eqref{eq:reprker} which, thanks to \eqref{eq:heatbdd} and the fact that  $|\nabla f|^2\in L^1(X)$, makes sense.
\item\underline{Lipschitz regularization}. There exists  constants $\mathcal C_2(t,x,N)$ such that for every $t>0$ and $x\in\supp(\mm)$ the map  $\supp(\mm)\ni y\mapsto\rho(t,x,y),$ is $\mathcal C_2(t,x,N)$-Lipschitz. 
\item\underline{Fisher information estimate}. It holds $\rho(t,x,y)>0$ for any $(t,x,y)\in (0,\infty)\times X^2$ and denoting by $\rho_t[x]:X\to\R^+$ the map $\rho_t[x](y):=\rho(t,x,y)$ we have $\rho_t[x]\in \s^2_{\rm loc}(X)$  for every $t\in(0,\infty)$, $x\in \supp(\mm)$ and the bound
\begin{equation}
\label{eq:fish}
t^2\int\frac{|\nabla\rho_t[x]|^2}{\rho_t[x]}\,\d\mm\leq\mathcal C_3(N)(1+t)
\end{equation}
holds for any $x\in\supp(\mm)$, $t\in(0,\infty)$ for some constant $\mathcal C_3(N)$.
\end{itemize}
\end{theorem}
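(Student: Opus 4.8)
This statement gathers regularizing properties of the heat semigroup that are, in one form or another, already available in the literature; what has to be done is to assemble them and to check that the various structural constants depend only on the declared parameters. The plan is the following. Infinitesimal Hilbertianity makes $f\mapsto\tfrac12\int|\nabla f|^2\,\d\mm$ a quadratic form on $L^2(X,\mm)$, and the calculus rules recalled in Section~\ref{se:infhil} show that it is a strongly local symmetric Dirichlet form (see \cite{AmbrosioGigliSavare11-2}, \cite{Gigli12}). Its $L^2$-gradient flow is a symmetric Markov semigroup, whence the contraction~\eqref{eq:l2contr}; the quantitative bound~\eqref{eq:l2s2} then follows from the energy dissipation identity $\ddt\tfrac12\|\h_t f\|_{L^2(X)}^2=-\int|\nabla\h_t f|^2\,\d\mm$ together with the monotonicity of $t\mapsto\int|\nabla\h_t f|^2\,\d\mm$, which give $2t\int|\nabla\h_t f|^2\,\d\mm\le\|f\|_{L^2(X)}^2$.

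Next I would produce the kernel $\rho$, together with its symmetry, the pointwise bound~\eqref{eq:heatbdd}, the conservativeness~\eqref{eq:mass1} and the Gaussian estimate~\eqref{eq:gaussest}, from the classical theory of heat kernels on strongly local Dirichlet forms enjoying volume doubling and a Poincar\'e inequality. Indeed, by Proposition~\ref{prop:BG} the measure $\mm$ is locally doubling with doubling function controlled by $N$ alone, and by \cite{Rajala12} (see also \cite{Lott-Villani07}) the space supports a weak local $(1,1)$-Poincar\'e inequality with constants depending only on $N$; the parabolic Harnack principle and the two-sided Gaussian bounds then apply and yield~\eqref{eq:gaussest} with $\mathcal C_1=\mathcal C_1(N)$, while the at most polynomial volume growth encoded in~\eqref{eq:bgvol} forces the semigroup to be conservative, i.e.~\eqref{eq:mass1}. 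The same circle of ideas, i.e.\ local De Giorgi--Nash--Moser regularity combined with the semigroup identity $\rho(t,x,\cdot)=\h_{t/2}\big(\rho(t/2,x,\cdot)\big)$ coming from~\eqref{eq:reprker} and the symmetry of $\rho$, also gives the regularity of $y\mapsto\rho(t,x,y)$ away from the diagonal.

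For the Bakry-\'Emery inequality~\eqref{eq:BE} the point is that $CD(0,N)\subset CD(0,\infty)$, so that an infinitesimally Hilbertian $CD(0,N)$ space is in particular an $RCD(0,\infty)$ space; for such spaces the pointwise gradient estimate $|\nabla\h_t f|^2\le\h_t(|\nabla f|^2)$ is one of the main results of \cite{AmbrosioGigliSavare11-2}. From~\eqref{eq:BE} and the Sobolev-to-Lipschitz property of $RCD(0,\infty)$ spaces one obtains the smoothing bound $\Lip(\h_t g)\le C\,t^{-1/2}\|g\|_{L^\infty(X)}$, and applying it to $g:=\rho(t/2,x,\cdot)$, which is bounded thanks to~\eqref{eq:heatbdd}, gives the asserted Lipschitz regularity of $y\mapsto\rho(t,x,y)$, the $x$-dependence entering only through $\mm(B_{\sqrt t}(x))$ via~\eqref{eq:heatbdd} and~\eqref{eq:gaussest}. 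Finally I would deduce~\eqref{eq:fish} from entropy dissipation: writing $\mathcal I(u):=\int\frac{|\nabla u|^2}{u}\,\d\mm$ for a probability density $u$, one has $\frac{\d}{\d s}{\rm Ent}_\mm(\rho_s[x]\mm)=-\mathcal I(\rho_s[x])$ and, the entropy being geodesically convex under $CD(0,\infty)$, $s\mapsto\mathcal I(\rho_s[x])$ is non-increasing; integrating on $[t/2,t]$ gives $\tfrac t2\,\mathcal I(\rho_t[x])\le{\rm Ent}_\mm(\rho_{t/2}[x]\mm)-{\rm Ent}_\mm(\rho_t[x]\mm)$, and the two-sided Gaussian bounds behind~\eqref{eq:gaussest} express both entropies, up to $N$-dependent additive errors, as $-\log\mm(B_{\sqrt s}(x))$, so that their difference is bounded by the logarithm of the doubling constant, i.e.\ by a constant depending only on $N$; this yields~\eqref{eq:fish}, with room to spare in the factor $1+t$.

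The conceptual content of all these steps is standard, so I do not expect a serious structural obstacle; the genuinely delicate part is the bookkeeping of constants, namely verifying at every invocation of the Gaussian heat-kernel machinery that the resulting constant is a function of $N$ alone --- this is exactly where the scale-invariant forms of the Bishop--Gromov estimate and of the weak Poincar\'e inequality are needed --- and keeping the entropy estimates that feed~\eqref{eq:fish} dimensionally consistent, so that they produce the stated growth rate $1+t$ rather than a worse power of $t$.
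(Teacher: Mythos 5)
Your proposal follows essentially the same route as the paper: both ultimately reduce every item to known results for Dirichlet-form heat semigroups on doubling Poincar\'e spaces and to the $RCD(0,\infty)$ machinery. The paper's own proof is, in fact, little more than a list of references (\cite{AmbrosioGigliSavare11-2}, \cite{AmbrosioGigliMondinoRajala12}, \cite{Sturm96}, \cite{Gigli-Kuwada-Ohta10}, \cite{Gigli-Mondino-Savare13}); your sketch supplies more of the connective tissue but invokes the same circle of results.

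Two points are worth flagging. First, for the Lipschitz regularity you use a bound of the form $\Lip(\h_t g)\le C\,t^{-1/2}\|g\|_{L^\infty(X)}$ and attribute it to \eqref{eq:BE} together with the Sobolev-to-Lipschitz property; but \eqref{eq:BE} alone only controls $\Lip(\h_t g)$ in terms of $\Lip(g)$. The $L^\infty$-to-Lipschitz smoothing requires the \emph{reverse Poincar\'e} consequence of Bakry--\'Emery, $|\nabla\h_t g|^2\le\tfrac1{2t}\big(\h_t(g^2)-(\h_t g)^2\big)$, which is indeed available on $RCD(0,\infty)$ spaces (and is exactly what the argument of \cite{Gigli-Kuwada-Ohta10} delivers, after first checking $\rho_t[x]\in L^2$ via the Gaussian upper bound and doubling, as the paper does). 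Second, for \eqref{eq:fish} the paper deduces the bound from a general slope estimate along EVI gradient flows, combined with the Gaussian estimates and polynomial volume growth, rather than from the entropy-dissipation identity you use. Your route is legitimate, but it needs a \emph{lower} bound on ${\rm Ent}_\mm(\rho_t[x]\mm)$ of the form $-\log\mm(B_{\sqrt t}(x))-C(N)$ to make the entropy difference $O(1)$; this does not follow from the stated upper Gaussian bound \eqref{eq:gaussest} alone but requires the matching lower Gaussian bound from \cite{Sturm96} (which is available, just not recorded in the statement). With that explicitly invoked, your argument closes, and in fact gives a slightly sharper linear-in-$t$ bound than the stated $\mathcal C_3(N)(1+t)$.
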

\begin{proof}
The $L^2\mapsto W^{1,2}$ regularization estimates are classical. For the existence of the heat kernel see for instance \cite{AmbrosioGigliSavare11-2} and \cite{AmbrosioGigliMondinoRajala12}. The gaussian estimates are a consequence of \cite{Sturm96III} in conjunction with the results in \cite{AmbrosioGigliSavare11-2}. The Bakry-\'Emery condition has been proved in \cite{AmbrosioGigliSavare11-2} and \cite{AmbrosioGigliMondinoRajala12} (see also the original argument in \cite{Gigli-Kuwada-Ohta10}). For the Lipschitz regularity notice that from the Gaussian estimates and the doubling property we get that $\rho(t,x,\cdot)\in L^2(X)$ for every $t>0$ and $x\in\supp(\mm)$. Then use the  Bakry-\'Emery condition in conjunction with the argument given in \cite{Gigli-Kuwada-Ohta10} (see also the presentation given in  \cite{AmbrosioGigliSavare11-2}). Finally, the Fisher information estimates are a consequence of general bounds on the slope along a gradient flow satisfying the so-called EVI condition, the Gaussian estimates and the polynomial volume growth, see for instance the arguments in \cite{GMS15}.
\end{proof}
It is a standard construction within the theory of Dirichlet form to build the diffusion operator associated to the form itself. In our case the definition reads as:
\begin{definition}[Laplacian in $L^2$] Let $(X,\sfd,\mm)$ be an infinitesimally Hilbertian space. Then the  space $D(\Delta)\subset W^{1,2}(X,\sfd,\mm)$ is the space of all $f$ such that for some $h\in L^2(X,\mm)$ it holds
\[
\int gh\,\d\mm=-\int\la\nabla f,\nabla g\ra\,\d\mm,\qquad\forall g\in W^{1,2}(X,\sfd,\mm).
\]
In this case the function $h$, which is clearly uniquely determined, will by denoted by $\Delta f$ and called Laplacian of $f$.
\end{definition}
On infinitesimally Hilbertian spaces such that $\mm$ gives finite mass to bounded sets (the latter being true on any $CD(K,\infty)$ space - see \cite{Sturm06I}),  this definition is nothing but a particular case of the one of measure valued Laplacian given in Definition \ref{def:measlap}. Indeed, it is immediate to verify that 
\[
f\in D(\Delta)
\]
is equivalent to 
\[
\textrm{$f\in W^{1,2}(X,\sfd,\mm)\cap D(\bd)$ and $\bd f=h\mm$ for some $h\in L^2(X,\mm)$,}
\]
and that if these holds we also have $h=\Delta f$: one implication is obvious, and the other one follows from the approximation result in Theorem \ref{thm:stronglip}. 

It is anyway useful to  single-out the definition of $\Delta$ as it is more manageable and the standard one used in the theory of linear semigroups. In particular it allows us to use the following well known results which will be useful later on.
\begin{proposition}\label{prop:semi} Let $(X,\sfd,\mm)$ be an infinitesimally Hilbertian space. Then:
\begin{itemize}
\item[i)] For any $f\in L^2(X)$ and $t>0$ it holds $\h_t(f)\in D(\Delta)$ and
\[
\lim_{h\to 0}\frac{\h_{t+h}(f)-\h_t(f)}{h}=\Delta\h_t(f),\qquad\textrm{ in }W^{1,2}(X).
\]
If $f\in D(\Delta)$ with $\Delta f\in W^{1,2}(X)$ we can take $t=0$.
\item[ii)] For any $f\in L^2(X)$ it holds $\h_t(\Delta f)=\Delta\h_t(f)$, $\forall t>0$.
\item[iii)] Denote by $\Delta^{(n)}$ the application of $n$-times $\Delta$ and define the space $D(\Delta^{(n)})$ inductively as the space of $f$'s in $D(\Delta^{(n-1)})$ such that $\Delta^{(n-1)}f\in D(\Delta)$. Then for every $t>0$,  $n\in\N$ and $f\in L^2(X)$ we have $\h_t(f)\in D(\Delta^{(n)})$ and 
\[
\textrm{the map\qquad $L^2(X,\mm)\ni f\qquad\mapsto\qquad \Delta^{(n)}\h_t(f)\in W^{1,2}(X)$ \qquad is continuous.}
\] 
\item[iv)] For any $\eps>0$ and $f\in L^2(X)$
\[
\textrm{the map\qquad $[\eps,+\infty)\ni t\qquad\mapsto\qquad \Delta^{(n)}\h_t(f)\in W^{1,2}(X)$\qquad is Lipschitz.}
\] 
If $f\in D(\Delta^{(n)})$ with $\Delta^{(n)}f\in W^{1,2}(X)$ then we can take $\eps=0$.
\item[v)] For any $f\in L^2(X)$ it holds
\[
\lim_{t\downarrow0}\int|\nabla\h_t(f)|^2\,\d\mm=\int|\nabla f|^2\,\d\mm,
\]
where the right-hand side is intended to be $+\infty$ if $f\notin W^{1,2}(X)$.
\end{itemize}
\end{proposition}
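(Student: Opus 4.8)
\medskip
\noindent
All of the assertions are standard facts from the theory of symmetric Dirichlet forms and self-adjoint (in fact analytic) semigroups, and the plan is to deduce each of them from the spectral calculus of the $L^2$-Laplacian together with a single elementary estimate. First I would record that, since $(X,\sfd,\mm)$ is infinitesimally Hilbertian, the form $\mathcal E(f):=\frac12\int|\nabla f|^2\,\d\mm$ (set to $+\infty$ if $f\notin W^{1,2}(X)$) is a closed symmetric \emph{quadratic} form, hence $-\Delta$ is a non-negative self-adjoint operator and $\h_t=e^{t\Delta}$ by the spectral theorem. Denoting by $(E_\lambda)$ the spectral resolution of $-\Delta$ and by $\mu_f$ the finite measure $\lambda\mapsto\|E_\lambda f\|_{L^2(X)}^2$, one has $\int_0^\infty\d\mu_f=\|f\|_{L^2(X)}^2$, $\int_0^\infty\lambda\,\d\mu_f=\int|\nabla f|^2\,\d\mm=2\mathcal E(f)$ (finite precisely when $f\in W^{1,2}(X)$) and, more generally, $\int_0^\infty\lambda^{2n}\,\d\mu_f<\infty$ precisely when $f\in D(\Delta^{(n)})$ — the case $n=1$ being the identification of $D(\Delta)$ with the measure-valued notion already recorded above — while $\Delta^{(n)}\h_t(f)=(-1)^n\int_0^\infty\lambda^n e^{-t\lambda}\,\d E_\lambda f$. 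The only further analytic input is the bound $\sup_{\lambda\geq0}\lambda^k e^{-t\lambda}=(k/(et))^k<\infty$ for $t>0$, $k\in\N$, which renders all the spectral integrals below convergent.

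Granting this, (ii) is immediate, $\h_t$ and $\Delta$ being functions of the same self-adjoint operator. For (iii) one observes that $\h_t(f)\in D(\Delta^{(n)})$ because $\int_0^\infty\lambda^{2n}e^{-2t\lambda}\,\d\mu_f<\infty$, and that
\[
\|\Delta^{(n)}\h_t(f)\|_{W^{1,2}(X)}^2=\int_0^\infty(\lambda^{2n}+\lambda^{2n+1})e^{-2t\lambda}\,\d\mu_f\leq C(n,t)\,\|f\|_{L^2(X)}^2 ;
\]
since $f\mapsto\Delta^{(n)}\h_t(f)$ is linear, this bound is exactly its continuity from $L^2(X)$ to $W^{1,2}(X)$, and when moreover $\Delta^{(n)}f\in W^{1,2}(X)$ one rewrites $\Delta^{(n)}\h_t(f)=\h_t(\Delta^{(n)}f)$ and uses the strong continuity of $\h$ on $W^{1,2}(X)$. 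For (iv) one differentiates, $\frac{\d}{\d t}\Delta^{(n)}\h_t(f)=\Delta^{(n+1)}\h_t(f)$, and notes that by the previous display the $W^{1,2}(X)$-norm of the right-hand side is bounded uniformly for $t\geq\eps$, giving the Lipschitz bound on $[\eps,\infty)$; the endpoint case again goes through the commutation $\Delta^{(n)}\h_t(f)=\h_t(\Delta^{(n)}f)$ and the regularity of $t\mapsto\h_t(g)$ for sufficiently regular $g$.

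For (i), the differentiability of $t\mapsto\h_t(f)$ in $L^2(X)$ with derivative $\Delta\h_t(f)$ (two-sided for $t>0$, one-sided at $t=0$ under the stated hypothesis) is classical linear semigroup theory; the only point requiring an argument is the upgrade to convergence in $W^{1,2}(X)$. For this I would compute the Dirichlet energy of the error spectrally,
\[
2\,\mathcal E\!\Big(\tfrac{\h_{t+h}(f)-\h_t(f)}{h}-\Delta\h_t(f)\Big)=\int_0^\infty\lambda\,e^{-2t\lambda}\Big(\tfrac{e^{-h\lambda}-1}{h}+\lambda\Big)^{2}\,\d\mu_f ,
\]
and use the elementary bound $\big|\tfrac{e^{-h\lambda}-1}{h}+\lambda\big|\leq|h|\,\lambda^2 e^{|h|\lambda}$: for $|h|$ small (so that $t+h>0$) this dominates the integrand by a fixed bounded — hence, $\mu_f$ being finite, $\mu_f$-integrable — function, so dominated convergence forces the right-hand side to $0$, and combined with the $L^2(X)$-convergence this yields convergence in $W^{1,2}(X)$. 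At $t=0$ the limit is one-sided, the integrand is bounded by $\lambda^3$, and the hypothesis $\Delta f\in W^{1,2}(X)$ gives $\int_0^\infty\lambda^3\,\d\mu_f<\infty$, so the same conclusion holds. Finally (v) is monotone convergence: $\int|\nabla\h_t(f)|^2\,\d\mm=\int_0^\infty\lambda\,e^{-2t\lambda}\,\d\mu_f\uparrow\int_0^\infty\lambda\,\d\mu_f=\int|\nabla f|^2\,\d\mm$ as $t\downarrow0$, the identity being valid also when the right-hand side is $+\infty$.

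The only genuinely non-bookkeeping point is this passage from $L^2$- to $W^{1,2}$-convergence in (i), (iii) and (iv), i.e. the control of the Dirichlet-energy part of the various error terms; the spectral representation trivialises it, but without that device one would have to combine the integration-by-parts identity $\mathcal E(\phi)=-\frac12\int\phi\,\Delta\phi\,\d\mm$, valid on $D(\Delta)$, with the smoothing estimate $\|\Delta^{(n)}\h_t\|_{L^2(X)\to L^2(X)}\leq C(n)\,t^{-n}$, which is more delicate to organise.
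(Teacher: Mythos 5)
The paper offers no proof of this proposition: it is introduced as a list of ``well known results'' from the theory of linear semigroups attached to closed symmetric Dirichlet forms, so there is no argument in the paper to compare yours against, and your task was simply to supply the standard one. The spectral route you take is the canonical one and it is correctly set up: infinitesimal Hilbertianity makes $\mathcal E(f)=\frac12\int|\nabla f|^2\,\d\mm$ a quadratic form, closedness follows from lower semicontinuity of minimal weak upper gradients and density of the domain from density of Lipschitz functions, so $-\Delta$ is non-negative self-adjoint, $\h_t=e^{t\Delta}$, and $\|u\|_{W^{1,2}}^2=\int(1+\lambda)\,\d\mu_u$. From this (ii), (iii), (v) and the interior part of (iv) follow exactly as you say. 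For (i) your domination $|h|\lambda^2 e^{|h|\lambda}$ works (for $|h|<t/2$ the integrand is dominated by $|h|^2\lambda^5 e^{-t\lambda}$, bounded and hence $\mu_f$-integrable); at $t=0$ the full $W^{1,2}$ error integrand is $(1+\lambda)\big(\frac{e^{-h\lambda}-1}{h}+\lambda\big)^2\le \lambda^2+\lambda^3$ rather than just $\lambda^3$, but this is still $\mu_f$-integrable under $\Delta f\in W^{1,2}(X)$, so the dominated convergence argument is sound (and the stray remark at the end of your treatment of (iii) really belongs to (iv)).

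The one genuine gap is the endpoint case of (iv), which you dispose of by invoking ``the regularity of $t\mapsto\h_t(g)$ for sufficiently regular $g$.'' With $g:=\Delta^{(n)}f$ the stated hypothesis gives only $g\in W^{1,2}(X)$, i.e.\ $\int(1+\lambda)\,\d\mu_g<\infty$, whereas Lipschitz continuity of $t\mapsto\h_t(g)\in W^{1,2}(X)$ up to $t=0$ requires $\sup_{t\ge 0}\|\Delta\h_t(g)\|_{W^{1,2}}<\infty$, i.e.\ $\int(1+\lambda)\lambda^2\,\d\mu_g<\infty$ — two extra powers of $\lambda$. This is not automatic: a spectral measure behaving like $\d\mu_g(\lambda)\sim\sum_n n^{-4}\delta_n$ satisfies $\int(1+\lambda)\,\d\mu_g<\infty$ but yields $\|\h_t(g)-g\|_{W^{1,2}}\sim t\sqrt{\log(1/t)}$ as $t\downarrow 0$, which is not Lipschitz. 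So either the hypothesis should be $f\in D(\Delta^{(n+1)})$ with $\Delta^{(n+1)}f\in W^{1,2}(X)$, or the endpoint conclusion should be weakened. This is arguably an imprecision already present in the paper's statement (and that endpoint case is never used: all applications of (iv) in the paper are on intervals $[t_0,t_1]$ with $t_0>0$), but your proof should say that the stated hypothesis is insufficient rather than appeal to unspecified regularity.
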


\begin{remark}[The  Sobolev space $W^{2,2}$]{\rm On a smooth Riemannian ma\-nifold $M$ with Ricci curvature bounded from below by $K$ the Bochner identity implies
\[
\Delta\frac{|\nabla f|^2}{2}\geq \|\nabla ^2f\|^2_{\rm HS}+\la\nabla f,\nabla\Delta f\ra+K|\nabla f|^2,\qquad\forall f\in C^\infty_c(M).
\]
Hence by integration we get
\begin{equation}
\label{eq:hessian}
\int \|\nabla ^2f\|^2_{\rm HS}\,\d\mm\leq \int (\Delta f)^2-K|\nabla f|^2\,\d\mm,\qquad\forall f\in C^\infty_c(M),
\end{equation}
which shows that in this case functions in $W^{2,2}$ can be characterized as functions in $W^{1,2}$ whose Laplacian is in $L^2$.

Given that \eqref{eq:hessian} is a dimension-free inequality, its seems natural to expect that on infinitesimally Hilbertian $CD(K,\infty)$  spaces (=$RCD(K,\infty)$ spaces) the same inequality holds. It is unclear to us if this can really be done, part of the problem being  to define what the Hessian is, but we point out that Honda in \cite{Honda11} proved that some sort of second order differential structure  exists on spaces which are limits of Riemannian manifolds with Ricci curvature uniformly bounded from below, and in the recent paper \cite{Savare13} Savar\'e, generalizing some inequalities due to  Bakry,  proved promising estimates in this direction directly in the abstract case.
}\fr\end{remark}

The Gaussian estimates and the volume growth  also allow to extend the domain of the definition of the heat flow far beyond the space $L^2(X,\mm)$. We will be satisfied in considering as Domain of the Heat flow the (non maximal) space $\dom(X)=\dom(X,\sfd,\mm,\bar x)$ defined by
\[
\dom(X):=\Big\{f:X\to\R\ \textrm{ Borel : }\int |f|(x)e^{-\sfd(x,\bar x)}\,\d\mm(x)<\infty\Big\},
\]
where $\bar x\in \supp(\mm)$ is a point that we shall consider as fixed from now on. Clearly, the choice of $\bar x$ does not affect the set $\dom(X)$, but to keep $\bar x$ fixed allows to introduce the norm
\[
\|f\|_\dom:=\int |f|(x)e^{-\sfd(x,\bar x)}\,\d\mm(x),
\]
so that $(\dom(X),\|\cdot\|_\dom)=L^1(X,e^{-\sfd(\cdot,\bar x)}\mm)$ is a Banach space. We claim that  for $f\in \dom(X)$ and $t>0$ it holds $f\rho_t[x]\in L^1(X,\mm)$ for any $x\in \supp(\mm)$ and that defining $\h_t(f)$ by   the formula
\begin{equation}
\label{eq:defext}
\h_t(f)(x):=\int f(y)\rho_t[x](y)\,\d\mm(y),
\end{equation}
for some constant $\mathcal C(t,N)$  it holds
\begin{equation}
\label{eq:normheat}
\|\h_t(f)\|_\dom\leq \mathcal C(t,N)\|f\|_\dom,
\end{equation}
so that $\h_t$ maps $\dom(X)$ into $\dom(X)$.

To see this, let $f\in\dom(X)$ and notice that from the simple inequalities
\[
\begin{split}
\int |\h_t(f)|(x)e^{-\sfd(x,\bar x)}\,\d\mm(x)&\leq \iint |f|(y)\rho(t,x,y) e^{-\sfd(x,\bar x)}\,\d\mm(y)\,\d\mm(x)\\
&\leq \int |f|(y)e^{-\sfd(y,\bar x)}\int\rho(t,x,y) e^{\sfd(y, x)}\,\d\mm(x)\,\d\mm(y),
\end{split}
\]
we deduce that to prove \eqref{eq:normheat} it is sufficient to prove that
\begin{equation}
\label{eq:pernormheat}
\int \rho(t,x,y) e^{\sfd(y, x)}\,\d\mm(x)\leq \mathcal C(t,N),\qquad\forall y\in\supp(\mm),\ t>0.
\end{equation}
We have
\begin{equation}
\label{eq:dentro}
\int_{B_{\sqrt t}(y)} \rho(t,x,y) e^{\sfd(y, x)}\,\d\mm(x)\leq \mathcal C_1(N)\sup_{r\leq \sqrt t}e^{-\frac{r^2}{5t}+r}
\end{equation}
Now recall that ${\sf s}_y(r):=\lims_{\eps\downarrow0}\frac{\mm(B_{r+\eps}(y)\setminus B_r(y))}{\eps}$ and that (Proposition \ref{prop:BG}) $r\mapsto\mm(B_r(y))$ is  locally Lipschitz, thus  differentiable a.e. with derivative given by ${\sf s}_y(r)$. As a consequence we have
\begin{equation}
\label{eq:volint}
\mm(B_r(y))=\int_0^r{\sf s}_y(s)\,\d s,\qquad\forall r\geq 0,\ y\in\supp(\mm),
\end{equation}
which can be equivalently written as $\d(\sfd_y)_\sharp\mm(r)={\sf s}_y(r)\d r$, where $\sfd_y(x):= \sfd(x,y)$. By   \eqref{eq:volint} and the monotonicity property \eqref{eq:bgarea}  we have
\begin{equation}
\label{eq:pallasfera}
\mm(B_{r}(y))= \int_0^{r}{\sf s}_y(s)\,\d s=\int_0^{r}\frac{{\sf s}_y(s)}{s^{N-1}}s^{N-1}\,\d s\geq \frac{{\sf s}_y(r)}{r^{N-1}}\int_0^{r}s^{N-1}\,\d s=\frac1N{r}\,{\sf s}_y({r}),
\end{equation}
and therefore the Gaussian estimates \eqref{eq:gaussest} give
\[
\begin{split}
\int_{X\setminus B_{\sqrt t}(y)} \rho(t,x,y) e^{\sfd(y, x)}\,\d\mm(x)&\leq\frac{\mathcal C_1(N)}{\mm(B_{\sqrt t}(y))}\int_{X\setminus B_{\sqrt t}(y)}  e^{-\frac{\sfd^2(x,y)}{5t}+\sfd(x,y)}\,\d\mm(y)\\
&=\frac{\mathcal C_1(N)}{\mm(B_{\sqrt t}(y))}\int _{\sqrt t}^\infty e^{r-\frac{r^2}{5t}}{\sf s}_y(r)\,\d r\\
&=\frac{\mathcal C_1(N)}{\mm(B_{\sqrt t}(y))}\int _{\sqrt t}^\infty e^{r-\frac{r^2}{5t}}\frac{{\sf s}_y(r)}{r^{N-1}}r^{N-1}\,\d r\\
\textrm{by \eqref{eq:bgarea}}\qquad&\leq \frac{\mathcal C_1(N)}{\mm(B_{\sqrt t}(y))} \frac{{\sf s}_y(\sqrt t)}{\sqrt t^{N-1}}\int _{\sqrt t}^\infty e^{r-\frac{r^2}{5t}}r^{N-1}\,\d r\\
\textrm{by \eqref{eq:pallasfera}}\qquad&\leq \frac{N\mathcal C_1(N)}{\sqrt t^N}\int _0^\infty e^{r-\frac{r^2}{5t}}r^{N-1}\,\d r.
\end{split}
\]
Coupling this bound with \eqref{eq:dentro} we get \eqref{eq:pernormheat} and thus \eqref{eq:normheat}.

With similar means, we can obtain the bound
\begin{equation}
\label{eq:momenti}
\int \sfd^n(y,x)\rho_t[x](y)\,\d\mm(y)\leq \mathcal C(n,N) t^{n/2},\qquad\forall  x\in\supp(\mm),\ t>0.
\end{equation}
Indeed from the Gaussian bounds \eqref{eq:gaussest} we have
\[
\begin{split}
\int_{ B_{\sqrt t}(x)} \sfd^n(y,x)\rho_t[x](y)\,\d\mm(y)&\leq\frac{\mathcal C_1(N)}{\mm(B_{\sqrt t}(x))} \int_{B_{\sqrt t}(x)} \sfd^n(y,x)e^{-\frac{\sfd^2(y,x)}{5t}}\,\d\mm(y)\\
&\leq \mathcal C_1(N)\sup_{r>0}r^ne^{-\frac{r^2}{5t}}= \mathcal C_1(N)(5t)^{n/2}\sup_{r>0}r^ne^{-r^2},
\end{split}
\]
and
\[
\begin{split}
\int_{ X\setminus B_{\sqrt t}(x)} \sfd^n(y,x)\rho_t[x](y)\,\d\mm(y)&\leq\frac{\mathcal C_1(N)}{\mm(B_{\sqrt t}(x))} \int_{\sqrt t}^\infty r^ne^{-\frac{r^2}{5t}}{\sf s}_x(r)\,\d r\\
\textrm{by \eqref{eq:bgarea}}\qquad&\leq \frac{\mathcal C_1(N)}{\mm(B_{\sqrt t}(x))}\frac{{\sf s}_x(\sqrt t)}{t^{\frac{N-1}2}} \int_{\sqrt t}^\infty r^{n+N-1}e^{-\frac{r^2}{5t}}\,\d r\\
\textrm{by \eqref{eq:pallasfera}}\qquad&\leq N\mathcal C_1(N) (5t)^{\frac{n}2}\int_{r>0}r^{n+N-1}e^{-r^2}\,\d r.
\end{split}
\]
\subsection{From Sobolev to Lipschitz}\label{se:sobtolip}

On arbitrary metric measure spaces a Sobolev information on a functions may yield little to none information about its metric regularity, the standard example being a space where there are no non-constant Lipschitz curves: in this case every $L^2$ function is Sobolev with 0 minimal weak upper gradient. We single out in the following definition a basic property which allows to pass from a Sobolev information to a metric one:
\begin{definition}[Sobolev-to-Lipschitz property]\label{def:sobtolip}
Let $(X,\sfd,\mm)$ be a metric measure space.

We say that $(X,\sfd,\mm)$ has the Sobolev-to-Lipschitz property provided any $f\in W^{1,2}(X,\sfd,\mm)$ with $\weakgrad f\leq 1$ $\mm$-a.e. admits a 1-Lipschitz representative, i.e. a 1-Lipschitz map $g:X\to\R$ such that $f=g$ $\mm$-a.e..
\end{definition}

As we shall see in Proposition \ref{prop:isom}, on spaces with the Sobolev-to-Lipschitz property isometries can be recognized by means of Sobolev calculus.

Two important class of spaces have such property:
\begin{itemize}
\item $CD(K,N)$ spaces. Indeed, Rajala proved in \cite{Rajala12} the following result:
\begin{quote}
Let $(X,\sfd,\mm)$ be a $CD(K,N)$ space and  $\mu_0,\mu_1\in\probt X$ with compact support and such that $\mu_0,\mu_1\leq C\mm$ for some $C>0$. Then there exists a $W_2$-geodesic $(\mu_t)$ form $\mu_0$ to $\mu1$ such that $\mu_t\leq C'\mm$ for every $t\in[0,1]$ for some $C'>0$.
\end{quote}
With this statement at disposal, the proof follows easily. Indeed, pick  $f\in \s^2_{loc}(X)$, notice that with a truncation argument we can assume $f\in L^\infty(X)$, let $x,y\in\supp(\mm)$ and for $r>0$ define $\mu^r_0:=\mm(B_r(x))^{-1}\mm\restr{B_r(x)}$ and $\mu^r_1:=\mm(B_r(x))^{-1}\mm\restr{B_r(x)}$. Letting $\ppi^r$ being any lifting of the geodesic provided by Rajala's construction, we know that $\ppi^r$ is an optimal geodesic plan $\ppi^r$ such that $(\e_t)_\sharp\ppi^r\leq C'\mm$ for every $t\in[0,1]$ and some $C'>0$ depending on $r$. Thus $\ppi^r$ is a test plan and from $\weakgrad f\leq 1$ $\mm$-a.e. we get
\[
\begin{split}
\left|\int f\,\d\mu^r_1-\int f\,\d\mu^r_0\right|\leq \int|f(\gamma_1)-f(\gamma_0)|\,\d\ppi^r(\gamma)\leq \iint_0^1|\dot\gamma_t|\,\d t\,\d\ppi^r(\gamma)\leq W_2(\mu_0^r,\mu_1^r).
\end{split}
\]
The conclusion the follows by picking $x,y$ to be Lebesgue points for $f$ and letting $r\downarrow0$.
\item $RCD(K,\infty)$ spaces. This has been proved in  \cite{AmbrosioGigliSavare11-2} as a consequence of  the Bakry-\'Emery contraction estimates. 
\end{itemize}
Any of these two implies:
\begin{theorem}\label{thm:link2} 
Infinitesimally Hilbertian $CD(0,N)$ spaces have the Sobolev-to-Lipschitz property.
\end{theorem}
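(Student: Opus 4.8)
The plan is to derive the statement at once from the two results recalled just above, by observing that an infinitesimally Hilbertian $CD(0,N)$ space lies in \emph{both} classes treated there. First I would note that such a space is in particular a $CD(0,N)$ space with $N<\infty$ (take $K=0$ in the curvature-dimension inequality), so that Rajala's theorem on the existence of $W_2$-geodesics with uniformly bounded densities applies. Then I would carry out the short argument sketched in the first bullet: given $f\in\s^2_{\rm loc}(X)$ with $\weakgrad f\leq 1$ $\mm$-a.e., a truncation reduces us to $f\in L^\infty(X)$; for two Lebesgue points $x,y\in\supp(\mm)$ of $f$ and $r>0$ I set $\mu^r_0:=\mm(B_r(x))^{-1}\mm\restr{B_r(x)}$ and $\mu^r_1:=\mm(B_r(y))^{-1}\mm\restr{B_r(y)}$, take any lifting $\ppi^r$ of the geodesic produced by Rajala's theorem (which is then a test plan), and use \eqref{eq:localplan} together with $\weakgrad f\leq 1$ $\mm$-a.e. to get
\[
\Big|\int f\,\d\mu^r_1-\int f\,\d\mu^r_0\Big|\leq\int|f(\gamma_1)-f(\gamma_0)|\,\d\ppi^r(\gamma)\leq\iint_0^1|\dot\gamma_t|\,\d t\,\d\ppi^r(\gamma)\leq W_2(\mu^r_0,\mu^r_1).
\]
Letting $r\downarrow 0$, the left-hand side converges to $|f(y)-f(x)|$ (since $x,y$ are Lebesgue points) and the right-hand side to $\sfd(x,y)$, whence $|f(y)-f(x)|\leq\sfd(x,y)$ for all Lebesgue points $x,y$.

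It then remains to upgrade the estimate ``$|f(y)-f(x)|\leq\sfd(x,y)$ for $\mm$-a.e.\ $x,y$'' to the existence of a genuine $1$-Lipschitz representative. This is routine: in a $CD(0,N)$ space $\mm$ is doubling, so the set of Lebesgue points of $f$ has full $\mm$-measure and in particular is dense in $\supp(\mm)$; the restriction of $f$ to it is $1$-Lipschitz and hence extends uniquely to a $1$-Lipschitz map $g:\supp(\mm)\to\R$, which is the desired representative of $f$.

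As an alternative, independent route, I would instead note that an infinitesimally Hilbertian $CD(0,N)$ space is in particular an $RCD(0,\infty)$ space --- it is $CD(0,\infty)$ and $W^{1,2}(X,\sfd,\mm)$ is Hilbert --- and then invoke the second bullet above, i.e.\ the fact, proved in \cite{AmbrosioGigliSavare11-2} via the Bakry-\'Emery contraction estimate \eqref{eq:BE}, that $RCD(K,\infty)$ spaces have the Sobolev-to-Lipschitz property.

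I do not expect any real obstacle here: the substantive content has already been established in the two quoted results, and the proof only checks the (immediate) set-theoretic inclusions and performs the standard passage from an a.e.\ Lipschitz bound to a Lipschitz representative. The single point that genuinely needs a line of argument is that last passage, sketched above.
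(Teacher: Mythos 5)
Your proof is correct and takes essentially the same route as the paper: the theorem is presented there as an immediate consequence of the two preceding bullets, and your argument reproduces the Rajala-based one in detail (including fixing a typo in the paper's sketch, where $\mu^r_1$ should indeed be centred at $y$) and then spells out the routine upgrade from the a.e.\ estimate at Lebesgue points to an honest $1$-Lipschitz representative, which the paper leaves implicit. The alternative route through $RCD(0,\infty)$ that you mention is also exactly the paper's second bullet, so no new ground is broken there either.
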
 

\subsection{Strong maximum principle for the Busemann function}\label{se:maxprin}

In \cite{Bjorn-Bjorn07} it has been proved that on metric measure spaces with a doubling measure and supporting a weak-local 1-2 Poincar\'e inequality, the strong maximum principle holds for local subminimizers of the energy $\int_\Omega \weakgrad f^2\,\d\mm$. In \cite{Gigli-Mondino12} it has been shown that local subminimizers $f$ of the energy can be characterized by the inequality $\bd f\geq  0$. 

Thus taking into account  the Laplacian comparison estimate for the Busemann function recalled in Proposition \ref{prop:lapbus} (whose hypotheses are fulfilled in the infinitesimally Hilbertian case), the linearity of $\bd$ ensured by the infinitesimal Hilbertianity assumption  and in accordance with the strategy used in the smooth setting, we get:
\begin{theorem}\label{thm:busharm}
Let $(X,\sfd,\mm)$ be an infinitesimally Hilbertian $CD(0,N)$ space, $\bar\gamma:\R\to\supp(\mm)$ a line and $\b^\pm$ the Busemann functions associated to it as in \eqref{eq:busemann}.

Then $\b^++\b^-\equiv0$ on $\supp(\mm)$ and $\bd\b^+=\bd\b^-=0$.
\end{theorem}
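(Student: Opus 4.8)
The plan is to combine the one–sided Laplacian estimate for Busemann functions (Proposition \ref{prop:lapbus}) with the strong maximum principle. First I would observe that each of $\b^+$ and $\b^-$ is the Busemann function of a half-line taking values in $\supp(\mm)$, namely $t\mapsto\bar\gamma_t$ and $t\mapsto\bar\gamma_{-t}$ for $t\ge 0$. Since an infinitesimally Hilbertian space is infinitesimally strictly convex and $W^{1,2}(\Omega,\sfd,\mm)$ is Hilbert, hence uniformly convex, for every open $\Omega\subset X$, the hypotheses of Proposition \ref{prop:lapbus} are met for both half-lines, so $\b^\pm\in D(\bd)$ with $\bd\b^+\ge 0$ and $\bd\b^-\ge 0$.

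Next, using that infinitesimal Hilbertianity makes $\bd$ a linear operator on $D(\bd)$, I would set $u:=\b^++\b^-\in D(\bd)$ and note $\bd u=\bd\b^++\bd\b^-\ge 0$. By the characterization established in \cite{Gigli-Mondino12}, the inequality $\bd u\ge 0$ means exactly that $u$ is a local subminimizer of the Dirichlet energy $\int_\Omega\weakgrad f^2\,\d\mm$. On the other hand $u$ is continuous (a sum of two $1$-Lipschitz functions), satisfies $u\le 0$ on all of $X$ by the triangle inequality \eqref{eq:basebus}, and $u(\bar\gamma_s)=s+(-s)=0$ for every $s\in\R$; thus $u$ attains its supremum $0$ at every point of the line $\bar\gamma$, in particular at interior points of $\supp(\mm)$.

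I would then invoke the strong maximum principle of \cite{Bjorn-Bjorn07}, applicable because $CD(0,N)$ spaces are doubling (Proposition \ref{prop:BG}) and support a weak-local Poincar\'e inequality: a local subminimizer of the energy attaining its supremum at an interior point of a connected open set must be constant there. Since $(\supp(\mm),\sfd)$ is a (connected) geodesic space, $u$ is constant equal to $0$, i.e. $\b^++\b^-\equiv 0$ on $\supp(\mm)$. Finally, $u\equiv 0$ forces $\weakgrad u=0$ $\mm$-a.e., hence $\bd u=0$; combined with $\bd\b^\pm\ge 0$ and $\bd\b^++\bd\b^-=\bd u=0$, both nonnegative Radon measures must vanish, so $\bd\b^+=\bd\b^-=0$.

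The point demanding care — rather than a genuine obstacle — is matching the hypotheses of the strong maximum principle of \cite{Bjorn-Bjorn07} with the present setting: one must make sure that the object for which it applies is the class of functions characterized by $\bd f\ge 0$ in \cite{Gigli-Mondino12}, and that the relevant ambient space $(\supp(\mm),\sfd)$ — on which alone $\b^\pm$ need be regular — is connected and carries the doubling and Poincar\'e properties. Once this bookkeeping is in place, the statement is an immediate consequence of Proposition \ref{prop:lapbus} and the linearity of the measure-valued Laplacian.
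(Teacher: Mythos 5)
Your proof is correct and follows exactly the route the paper outlines in Section~\ref{se:maxprin} before stating Theorem~\ref{thm:busharm} (the paper itself only sketches the argument and refers to \cite{Gigli-Mondino12} for the details): one-sided Laplacian comparison from Proposition~\ref{prop:lapbus}, linearity of $\bd$ from infinitesimal Hilbertianity, the identification of $\{\bd f\ge 0\}$ with local energy subminimizers from \cite{Gigli-Mondino12}, the Bj\"orn--Bj\"orn strong maximum principle (applicable since $CD(0,N)$ spaces are doubling and support a weak local Poincar\'e inequality), and finally the observation that two nonnegative measures summing to zero both vanish. The bookkeeping point you flag is the right one and is handled correctly; nothing essential is missing.
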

See \cite{Gigli-Mondino12} for the details. We stress that in order to get this result it is crucial to have at disposal the measure valued Laplacian, because this is the a priori regularity of  the Laplacians of $\b^+,\b^-$.

\section{Result}
From this section on we shall always assume the following:
\begin{equation}
\label{eq:assfin}
\begin{split}
&(X,\sfd,\mm)\textrm{ is an infinitesimally Hilbertian $CD(0,N)$ space, $\bar\gamma:\R\to \supp(\mm)$ is a line}\\
&\textrm{and  $\b:=\b^+$ is the corresponding Busemann function, $\b^+$ being defined as in \eqref{eq:busemann}}.
\end{split}
\end{equation}
By Theorem \ref{thm:busharm} and the fact that $W^{1,2}(\Omega)$ is Hilbert for every $\Omega\subset X$ open, we know that the assumptions \eqref{eq:ass2} at the basis of the previous chapter are fulfilled. Thus Theorem \ref{thm:gfpresmes} holds, and $\bd\b=0$. 

\medskip

The polynomial volume growth \eqref{eq:bgvol} easily gives that $\b\in\dom(X)$, thus from $\bd\b=0$ we expect  $\h_t(\b)=\b$ to hold for every $t\geq 0$.  The next simple proposition shows that this is actually the case, the proof being based on the estimates on the heat kernel we previously recalled. Notice that in stating the result we are using the fact that formula \eqref{eq:defext} defines the value of $\h_t(\b)$ for every $x\in\supp(\mm)$, and not just $\mm$-a.e..
\begin{proposition}[Invariance of $\b$ under the heat flow]\label{prop:bstable}
Assume \eqref{eq:assfin}. Then for any $t> 0$ it holds
\[
\h_t(\b)(x)=\b(x),\qquad\forall x\in\supp(\mm).
\]
\end{proposition}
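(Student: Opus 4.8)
The plan is to show that the map $t\mapsto\heatl_t(\b)$ is, in an appropriate sense, constant and equal to $\b$, exploiting that $\b\in\dom(X)$, that $\bd\b=0$, and the regularizing properties of the heat flow recalled above. The first point to settle is that $\heatl_t(\b)$ is well defined for every $x\in\supp(\mm)$: this is exactly \eqref{eq:normheat}, once we know $\b\in\dom(X)$, which follows since $\b$ is $1$-Lipschitz and hence $|\b|(x)\leq|\b(\bar x)|+\sfd(x,\bar x)$, so that $\int|\b|(x)e^{-\sfd(x,\bar x)}\,\d\mm(x)<\infty$ by the polynomial volume growth \eqref{eq:bgvol}. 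Thus $\heatl_t(\b)\in\dom(X)$ for every $t>0$.

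Next I would localize. Fix $R>0$ and a cutoff $\nchi$ which is $1$ on $B_R(\bar x)$ and compactly supported. The function $\b\nchi$ lies in $W^{1,2}(X)$ (it is Lipschitz with bounded support), and from $\bd\b=0$ together with the Leibniz-type computation one checks that $\b\nchi\in D(\Delta)$ on the region where $\nchi\equiv 1$; more precisely, I would instead compare $\heatl_t(\b)$ directly with $\b$ by an energy/duality estimate. The cleanest route: pick a test plan, or rather use that for $f\in\dom(X)$ the quantity $\frac\d{\d t}\heatl_t(f)=\Delta\heatl_t(f)$ in the appropriate sense (Proposition \ref{prop:semi}), so that for $g\in L^2(X)$ with bounded support, $\frac\d{\d t}\int g\,\heatl_t(\b)\,\d\mm=\int g\,\Delta\heatl_t(\b)\,\d\mm=-\int\la\nabla g,\nabla\heatl_t(\b)\ra\,\d\mm$. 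Using the Bakry-\'Emery bound \eqref{eq:BE}, $|\nabla\heatl_t(\b)|^2\leq\heatl_t(|\nabla\b|^2)=\heatl_t(1)=1$, so $\heatl_t(\b)$ is $1$-Lipschitz on $\supp(\mm)$ by the Sobolev-to-Lipschitz property (Theorem \ref{thm:link2}); this gives uniform control. Then from $\bd\b=0$ one gets $\int\la\nabla g,\nabla\heatl_t(\b)\ra\,\d\mm\to\int\la\nabla g,\nabla\b\ra\,\d\mm=0$ as $t\downarrow 0$ for $g$ Lipschitz with bounded support, and more importantly $\int\la\nabla\heatl_s(g),\nabla\b\ra\,\d\mm=0$ for all $s$, from which $\frac\d{\d t}\int g\,\heatl_t(\b)\,\d\mm=-\int\la\nabla\heatl_t(g),\nabla\b\ra\,\d\mm=0$ using self-adjointness of the heat semigroup and $\bd\b=0$ (valid since $\heatl_t(g)$ is Lipschitz with suitable decay so it is an admissible test function in Definition \ref{def:measlap} for $\bd\b$). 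Hence $\int g\,\heatl_t(\b)\,\d\mm=\int g\,\b\,\d\mm$ for all such $g$, giving $\heatl_t(\b)=\b$ $\mm$-a.e.

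Finally, upgrade from $\mm$-a.e.\ to everywhere on $\supp(\mm)$: both $\b$ and $\heatl_t(\b)$ are $1$-Lipschitz functions on $\supp(\mm)$ (the former by definition, the latter by the Bakry-\'Emery plus Sobolev-to-Lipschitz argument above, or directly from \eqref{eq:defext} and the Lipschitz regularity of the kernel), and two continuous functions agreeing $\mm$-a.e.\ on $\supp(\mm)$ agree everywhere there. This yields $\heatl_t(\b)(x)=\b(x)$ for all $x\in\supp(\mm)$, $t>0$. The main obstacle I anticipate is the justification that $\heatl_t(g)$ is an admissible test function for the measure-valued Laplacian $\bd\b$ (it need not have bounded support), which requires the exponential-decay estimates \eqref{eq:normheat} and \eqref{eq:momenti} together with the fact that $\b$ has at most linear growth, so that the integrations by parts converge absolutely; handling this truncation carefully — rather than the a.e.-to-everywhere step, which is routine — is where the real work lies.
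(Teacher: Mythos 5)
Your overall strategy is a sensible reroute of the paper's argument, but it stops exactly at the hard step. The paper truncates $\b$ to $\b\nchi_R\in L^2(X)$ so as to stay within the domain of the $L^2$ semigroup theory of Proposition~\ref{prop:semi}, and then controls the errors introduced by the cutoff using the moment estimates \eqref{eq:momenti} and the Fisher information bound \eqref{eq:fish}. You instead keep $\b$ intact and truncate the test function $g$, which at first sight looks cleaner. However, there are two places where this plan needs the same sort of work that you do not supply.

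First, the very first equality $\frac{\d}{\d t}\int g\,\h_t(\b)\,\d\mm=\int g\,\Delta\h_t(\b)\,\d\mm$ is not justified as written: $\b\notin L^2(X)$, so $\h_t(\b)$ is defined only through the kernel formula \eqref{eq:defext} and $\Delta\h_t(\b)$ does not make $L^2$ sense; one would first have to move the semigroup onto $g$ via the kernel symmetry and a Fubini argument (which requires the Gaussian bound \eqref{eq:gaussest} plus the at-most-linear growth of $\b$), obtaining $\int g\,\h_t(\b)\,\d\mm=\int\h_t(g)\,\b\,\d\mm$ and then differentiate there. Second and more seriously, the step $\int\la\nabla\h_t(g),\nabla\b\ra\,\d\mm=0$ cannot be obtained by citing $\bd\b=0$: the test functions in Definition~\ref{def:measlap} are required to have \emph{bounded} support, and $\h_t(g)$ never does. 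To pass from bounded-support test functions to $\h_t(g)$ one must truncate $\h_t(g)$ with $\nchi_R$, apply the Leibniz rule, and estimate two error terms of the type $\int\la\nabla\b,\nabla\nchi_R\ra\,\h_t(g)\,\d\mm$ and $\int\b\,\la\nabla\nchi_R,\nabla\h_t(g)\ra\,\d\mm$; showing these vanish as $R\to\infty$ is precisely what requires the moment estimate \eqref{eq:momenti} and the Fisher information bound \eqref{eq:fish} in the paper's proof. You correctly identify this as ``where the real work lies'' and then leave it undone, so the proposal has a genuine gap at its core, even though the surrounding frame (the a.e.-to-everywhere upgrade via continuity, the Bakry--\'Emery bound giving $|\nabla\h_t\b|\leq 1$) is fine.
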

\begin{proof} The mass preservation formula \eqref{eq:mass1} yields  $\b(x)=\int\b(x)\rho_t[x](y)\,\d\mm(y)$ and thus from
\[
\begin{split}
\left|\int \b(x)-\b(y)\rho(t,x,y)\,\d\mm(y)\right|\leq \int \sfd(x,y)\rho(t,x,y)\,\d\mm(y),
\end{split}
\]
and the bound \eqref{eq:momenti}  we deduce that  $\lim_{t\to 0}\h_t(\b)(x)=\b(x)$. Hence to conclude it is sufficient to show that for any $t_1>t_0>0$ and $x_0\in \supp(\mm)$ it holds $\h_{t_1}(\b)(x_0)=\h_{t_0}(\b)(x_0).$

Fix $x_0\in\supp(\mm)$, let $R>|\b(x_0)|$ and  $\nchi_R:\supp(\mm)\to[0,1]$  a 1-Lipschitz function identically 1 on $B_R(x_0)$ and identically 0 on $B_{R+1}(x_0)$. From
\[
\begin{split}
\left|\h_t(\b\nchi_R)(x_0)-\h_t(\b)(x_0)\right|&\leq\int_X|\b|(y)(1-\nchi_R(y))\rho(t,x_0,y)\,\d\mm(y)\\
&\leq\int_{X\setminus B_R(x_0)}\big(\sfd(y,x_0)+|\b(x_0|\big)\rho(t,x_0,y)\,\d\mm(y)\\
&\leq\frac2R\int_{X}\sfd(y,x_0)^2\rho(t,x_0,y)\,\d\mm(y),
\end{split}
\]
and the moment estimates \eqref{eq:momenti} we get $\lim_{R\to+\infty}\h_t(\b\nchi_R)(x_0)=\h_t(\b)(x_0)$, $\forall t\geq 0$.

It is trivial that $\rho_{t_0/2}[x_0]\in L^2(X)$ and thus according to point $(iv)$ in Proposition \ref{prop:semi} the map $t\mapsto \h_{t}(\rho_{t_0}[x_0])\in L^2(X)$ is Lipschitz. Since $\nchi_R\b\in L^2(X)$ as well, the map $t\mapsto\h_t(\b\nchi_R)(x_0)=\int \nchi_R\b\rho_t[x_0]\,\d\mm$ is Lipschitz on $[t_0,t_1]$.  Its derivative is given by
\[
\begin{split}
\frac{\d}{\d t}\h_t(\b\nchi_R)(x_0)&=\int \b\nchi_R\,\frac{\d}{\d t}\rho_t[x_0]\,\d\mm\\
&=\int  \b\nchi_R\Delta (\rho_t[x_0])\,\d\mm\\
&=-\int\la\nabla(\b\nchi_R),\nabla(\rho_t[x_0])\ra\d\mm\\
&=\int-\la\nabla\b,\nabla(\nchi_R\rho_t[x_0])\ra+ \la\nabla \b,\nabla\nchi_R\ra\rho_t[x_0]-\b\la\nabla\nchi_R,\nabla(\rho_t[x_0])\ra\d\mm,
\end{split}
\]
having used the Leibniz rule \eqref{eq:leibniz} in the last step. Given that  $\bd \b=0$ and that $\nchi_R\rho_t[x_0]$ is Lipschitz with compact support we have $\int\la\nabla\b,\nabla(\nchi_R\rho_t[x_0])\ra\,\d\mm=0$.

The  fact that $|\nabla\nchi_R|\equiv 0$ on $B_R(x_0)$ gives
\[
\left|\int_X \la\nabla \b,\nabla\nchi_R\ra\rho_t[x_0]\,\d\mm\right|\leq \int_{X\setminus B_R(x_0)}\rho_t[x_0]\,\d\mm\leq\frac1R\int_X\sfd(\cdot,x_0)\rho_t[x_0]\,\d\mm
\]
and thus the moment estimate \eqref{eq:momenti} yields
\[
\lim_{R\to\infty}\left|\int \la\nabla \b,\nabla\nchi_R\ra\rho_t[x_0]\,\d\mm\right|\to 0,\qquad\textrm{  uniformly on }t\in[t_0,t_1].
\]
Since $R> |\b(x_0)|$ and  $|\nabla\nchi_R|\equiv 0$ on $B_R(x_0)$  we also have
\[
\begin{split}
\left|\int_X \b\la\nabla\nchi_R,\nabla(\rho_t[x_0])\ra\,\d\mm\right|&\leq\int_{X\setminus B_R(x_0)}|\b|\,|\nabla(\rho_t[x_0])|\,\d\mm\\
&\leq \frac2R\int_{X\setminus B_R(x_0)}\sfd^2(\cdot,x_0)|\nabla(\rho_t[x_0])|\,\d\mm\\
&\leq \frac2R \sqrt{\int_X\sfd^4(y,\bar x)\rho_t[x](y)\,\d\mm(y)}\,\sqrt{\int_X\frac{|\nabla\rho_t[x]|^2(y)}{\rho_t[x](y)}\,\d\mm(y)},
\end{split}
\]
thus the moment estimate \eqref{eq:momenti} and the bound \eqref{eq:fish} on the Fisher information give
\[
\lim_{R\to\infty}\left|\int \b\la\nabla\nchi_R,\nabla(\rho_t[x_0])\ra\,\d\mm\right|=0,\qquad\textrm{  uniformly on }t\in[t_0,t_1].
\]
Collecting together all these informations we obtain
\[
\begin{split}
|\h_{t_1}\b(x_0)-\h_{t_0}\b(x_0)|&=\lim_{R\to\infty}|\h_{t_1}(\nchi_R\b)(x_0)-\h_{t_0}(\nchi_R\b)(x_0)|\\
&\leq \lim_{R\to\infty}\int_{t_0}^{t_1}\left|\frac\d{\d t}\h_t(\nchi_R\b)(x_0)\right|\,\d t=0,
\end{split}
\]
and the proof is completed.
\end{proof}
The next simple Lemma extends the domain of validity of the Bakry-\'Emery condition.
\begin{proposition}[Bakry-\'Emery condition on $\dom(X)$]\label{le:BEext} Assume \eqref{eq:assfin} and let\linebreak $f\in\dom(X)\cap\s^2_{\rm loc}(X)$ be such that $f^2,|\nabla f|^2\in \dom(X)$. 

Then $\h_t(f)\in\s^2_{\rm loc}(X)$ and
\[
|\nabla (\h_t(f))|^2\leq \h_t(|\nabla f|^2),\qquad\mm\ae, \qquad\forall t\geq 0.
\]
\end{proposition}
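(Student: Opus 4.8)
The plan is to deduce the claim from the already established Bakry-\'Emery inequality \eqref{eq:BE} for functions in $W^{1,2}(X)$, via a truncation argument that exploits the continuity of the extended heat flow on $\dom(X)$ expressed by \eqref{eq:normheat}. For $R>0$ choose a $1$-Lipschitz cutoff $\nchi_R\colon\supp(\mm)\to[0,1]$ with $\nchi_R\equiv 1$ on $B_R(\bar x)$, $\nchi_R\equiv 0$ outside $B_{R+1}(\bar x)$ and $\{|\nabla\nchi_R|>0\}\subset A_R:=B_{R+1}(\bar x)\setminus B_R(\bar x)$, where $\bar x$ is the fixed point in the definition of $\dom(X)$. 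Since $f^2,|\nabla f|^2\in\dom(X)$ we have $f\in L^2_{\rm loc}(X)$ and $|\nabla f|\in L^2_{\rm loc}(X)$; hence $f_R:=f\nchi_R$ has bounded support, lies in $L^2(X)$, and by the Leibniz rule for weak upper gradients $f_R\in\s^2(X)$ with $|\nabla f_R|\le\nchi_R|\nabla f|+|f|\,\mathbf 1_{A_R}\in L^2(X)$. Thus $f_R\in W^{1,2}(X)$, and \eqref{eq:BE} applies: $|\nabla(\h_t(f_R))|^2\le\h_t(|\nabla f_R|^2)$ $\mm$-a.e., for every $t\ge 0$.

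Next I would verify the convergences $f_R\to f$ and $|\nabla f_R|^2\to|\nabla f|^2$ in $\dom(X)=L^1\bigl(X,e^{-\sfd(\cdot,\bar x)}\mm\bigr)$ as $R\to\infty$. The first is immediate from $|f-f_R|\le|f|\,\mathbf 1_{X\setminus B_R(\bar x)}$ and $f\in\dom(X)$. For the second, note $|\nabla f_R|=|\nabla f|$ on $B_R(\bar x)$, while on $X\setminus B_R(\bar x)$ one has $|\nabla f_R|^2\le 2|\nabla f|^2+2f^2\mathbf 1_{A_R}$, so dominated convergence applies thanks to $f^2,|\nabla f|^2\in\dom(X)$. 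By \eqref{eq:normheat} and linearity of $\h_t$ it follows that $\h_t(f_R)\to\h_t(f)$ and $\h_t(|\nabla f_R|^2)\to\h_t(|\nabla f|^2)$ in $\dom(X)$; passing to a subsequence $R_k\to\infty$ we may further assume both convergences hold $\mm$-a.e. and in $L^1(B)$ for every bounded open set $B$.

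The final step is to pass to the limit on a fixed ball $B=B_\rho(\bar x)$. From $\int_B|\nabla\h_t(f_{R_k})|^2\,\d\mm\le\int_B\h_t(|\nabla f_{R_k}|^2)\,\d\mm$, whose right-hand side converges and hence is bounded, the functions $|\nabla\h_t(f_{R_k})|$ are bounded in $L^2(B)$, so along a further subsequence $|\nabla\h_t(f_{R_k})|\weakto G$ weakly in $L^2(B)$ with $G\ge 0$. Since $\h_t(f_{R_k})\to\h_t(f)$ $\mm$-a.e.\ on $B$ and $\h_t(f_{R_k})\in\s^2(B)$ (by the $L^2\to W^{1,2}$ regularization when $t>0$, trivially when $t=0$), the lower semicontinuity of minimal weak upper gradients gives $\h_t(f)\in\s^2(B)$ with $|\nabla\h_t(f)|\le G$ $\mm$-a.e.\ on $B$. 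On the other hand $|\nabla\h_t(f_{R_k})|\le h_k:=\sqrt{\h_t(|\nabla f_{R_k}|^2)}$, and the elementary bound $|\sqrt a-\sqrt b|^2\le|a-b|$ yields $h_k\to h:=\sqrt{\h_t(|\nabla f|^2)}$ \emph{strongly} in $L^2(B)$; testing the weak convergence against an arbitrary non-negative $\varphi\in L^2(B)$ gives $\int_B G\varphi\,\d\mm\le\liminf_k\int_B h_k\varphi\,\d\mm=\int_B h\varphi\,\d\mm$, whence $G\le h$ $\mm$-a.e.\ on $B$. Combining the two bounds, $|\nabla\h_t(f)|^2\le\h_t(|\nabla f|^2)$ $\mm$-a.e.\ on $B$; a diagonal choice of the subsequence over an exhaustion of $X$ by balls then gives $\h_t(f)\in\s^2_{\rm loc}(X)$ together with the claimed inequality $\mm$-a.e.\ on $X$, the case $t=0$ being trivial.

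The genuinely routine parts are the Leibniz-type gradient estimates for $f_R$ and the two $\dom(X)$-convergences. The one point that deserves care is the last step: one must simultaneously use the weak $L^2$-compactness of the gradients $|\nabla\h_t(f_{R_k})|$, the lower-semicontinuity theorem for weak upper gradients, and the strong $L^2$-convergence of the square roots $\sqrt{\h_t(|\nabla f_{R_k}|^2)}$ in order to transport the pointwise inequality \eqref{eq:BE} through the limit. This is the main, and rather mild, obstacle.
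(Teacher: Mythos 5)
Your proof is correct and follows the same strategy as the paper's: cut off with $1$-Lipschitz $\nchi_R$ to land in $W^{1,2}(X)$, apply the Bakry--\'Emery inequality \eqref{eq:BE}, use \eqref{eq:normheat} to pass the relevant quantities to a limit in $\dom(X)$ (hence $\mm$-a.e.\ along a subsequence), and conclude via lower semicontinuity of minimal weak upper gradients. The paper compresses the final step into a single sentence; your explicit chain of steps — extracting a weak $L^2(B)$-limit $G$ of the gradients, applying the lower semicontinuity theorem to get $|\nabla\h_t(f)|\le G$, and then showing $G\le\sqrt{\h_t(|\nabla f|^2)}$ via strong $L^2(B)$-convergence of the square roots and testing against nonnegative functions — is exactly the routine argument the paper leaves to the reader. (Incidentally, no diagonal subsequence is needed at the end, since the conclusion $|\nabla\h_t(f)|^2\le\h_t(|\nabla f|^2)$ on each ball does not reference the subsequence; but this extra care does no harm.)
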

\begin{proof}
Let $(B_n)$ be an increasing sequence of bounded sets such that $\supp(\mm)=\cup_nB_n$ and for every $n\in\N$ let $\nchi_n:\supp(\mm)\to[0,1]$ be a 1-Lipschitz function with compact support identically 1 on $B_n$.

Clearly, $f\nchi_n\in L^2(X)$ so that  inequality \eqref{eq:leibbase} and the assumption $|\nabla f|^2\in \dom(X)$ also grant  $f\nchi_n\in \s^2(X)$. Thus $f\nchi_n\in W^{1,2}(X)$ and \eqref{eq:BE} yields
\begin{equation}
\label{eq:beperlim}
|\nabla (\h_t(f\nchi_n))|^2\leq \h_t(|\nabla (f\nchi_n)|^2),\qquad\mm\ae,
\end{equation}
for any $t\geq 0$.

Again from  \eqref{eq:leibbase} we obtain $|\nabla (f\nchi_n)|^2\leq 2|\nabla f|^2+2|f|^2\in\dom(X)$, and given that trivially $|\nabla(f\nchi_n)|\to|\nabla f|$ $\mm$-a.e. as $n\to\infty$, by the dominate convergence theorem we deduce $\||\nabla (f\nchi_n)|^2-|\nabla f|^2\|_\dom\to 0$ as $n\to\infty$. Inequality \eqref{eq:normheat} then ensures that 
\begin{equation}
\label{eq:sonno}
\h_t(|\nabla(f\nchi_n)|^2)\to \h_t(|\nabla f|^2)\quad\textrm{ in $\dom(X)$ as $n\to\infty$ for any $t\geq 0$}.
\end{equation}

By construction we have $\|f\nchi_n-f\|_\dom\to 0$ as $n\to\infty$, so that \eqref{eq:normheat} yields $\|\h_t(f\nchi_n)-\h_t(f)\|_\dom\to0$ as $n\to\infty$. Thus up to pass to a subsequence, not relabeled, we can assume that $\h_t(f\nchi_n)\to\h_t(f)$ $\mm$-a.e. as $n\to\infty$. This fact, the lower semicontinuity of minimal weak upper gradients stated after Definition \ref{def:parigi}, \eqref{eq:sonno} and \eqref{eq:beperlim} give the conclusion.
\end{proof}
The last two proposition allow to write down the Euler equation for the Busemann function $\b$.
\begin{corollary}[Euler's equation for $\b$] Assume \eqref{eq:assfin}.
Then for any $f\in W^{1,2}(X)$ it holds
\begin{equation}
\label{eq:euler}
\h_t(\la\nabla\b,\nabla f\ra)=\la\nabla\b,\nabla \h_t(f)\ra,\qquad\mm\ae.
\end{equation}
Furthermore,  for  $f\in W^{1,2}(X)\cap D(\Delta)$ with $\Delta f\in W^{1,2}(X)$ and $g\in D(\Delta)$ we have
\begin{equation}
\label{eq:key}
\int \Delta g\la\nabla\b,\nabla f\ra\d\mm=\int g\la\nabla\b,\nabla\Delta f\ra \d\mm.
\end{equation}
\end{corollary}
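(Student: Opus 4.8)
The plan is to prove \eqref{eq:euler} first and then derive \eqref{eq:key} by integrating against $\Delta g$ and moving the heat semigroup around using its self-adjointness together with Proposition~\ref{prop:semi}. For \eqref{eq:euler}, the idea is the standard "perturb the harmonic function" trick, now carried out inside the linear heat flow. Fix $f\in W^{1,2}(X)$ and $t>0$. For $\eps\in\R$ consider $\b+\eps f$; since $\b\in\dom(X)\cap\s^2_{\rm loc}(X)$ with $\b^2,|\nabla\b|^2=1\in\dom(X)$ (using $\b\in\dom(X)$, which follows from the polynomial volume growth \eqref{eq:bgvol}, and $|\nabla\b|=1$ $\mm$-a.e.), and since $f\in W^{1,2}(X)$, the function $\b+\eps f$ also satisfies the hypotheses of Proposition~\ref{le:BEext}: indeed $(\b+\eps f)^2\le 2\b^2+2\eps^2 f^2\in\dom(X)$ because $f^2\in L^1(X)\subset\dom(X)$, and $|\nabla(\b+\eps f)|^2\le 2+2|\nabla f|^2\in\dom(X)$. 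Hence Proposition~\ref{le:BEext} gives, $\mm$-a.e. and for every $t\ge0$,
\[
|\nabla \h_t(\b+\eps f)|^2\le \h_t\big(|\nabla(\b+\eps f)|^2\big).
\]
Now expand both sides using $|\nabla u|^2=\la\nabla u,\nabla u\ra$, bilinearity, and linearity of $\h_t$: on the left $|\nabla(\h_t(\b)+\eps\h_t(f))|^2=|\nabla\h_t(\b)|^2+2\eps\la\nabla\h_t(\b),\nabla\h_t(f)\ra+\eps^2|\nabla\h_t(f)|^2$, and by Proposition~\ref{prop:bstable} we have $\h_t(\b)=\b$ on $\supp(\mm)$, so $|\nabla\h_t(\b)|^2=|\nabla\b|^2=1$ $\mm$-a.e.; on the right $\h_t(|\nabla\b|^2)+2\eps\h_t(\la\nabla\b,\nabla f\ra)+\eps^2\h_t(|\nabla f|^2)=1+2\eps\h_t(\la\nabla\b,\nabla f\ra)+\eps^2\h_t(|\nabla f|^2)$, using $\h_t(1)=1$ from the mass-preservation \eqref{eq:mass1}. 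Subtracting the common constant $1$ and dividing by $2\eps$ gives, for $\eps>0$,
\[
\la\nabla\b,\nabla\h_t(f)\ra+\tfrac{\eps}{2}|\nabla\h_t(f)|^2\le \h_t(\la\nabla\b,\nabla f\ra)+\tfrac{\eps}{2}\h_t(|\nabla f|^2),
\]
and letting $\eps\downarrow0$ yields $\la\nabla\b,\nabla\h_t(f)\ra\le \h_t(\la\nabla\b,\nabla f\ra)$ $\mm$-a.e.; repeating with $\eps<0$, dividing by the negative number $2\eps$ (reversing the inequality) and letting $\eps\uparrow0$ yields the reverse inequality, so \eqref{eq:euler} holds $\mm$-a.e. for every $t>0$; the case $t=0$ is trivial and the a.e. statement extends by continuity in $t$ if desired.

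For \eqref{eq:key}, let $f\in W^{1,2}(X)\cap D(\Delta)$ with $\Delta f\in W^{1,2}(X)$ and $g\in D(\Delta)$. Multiply \eqref{eq:euler} by $\Delta g\in L^2(X)$, integrate (the integrand lies in $L^1$ since $\la\nabla\b,\nabla\h_t(f)\ra\in L^2$ by Cauchy–Schwarz and $|\nabla\b|=1$, and $\Delta g\in L^2$), and use the self-adjointness of $\h_t$ on $L^2(X)$:
\[
\int \Delta g\,\la\nabla\b,\nabla\h_t(f)\ra\,\d\mm=\int \Delta g\,\h_t(\la\nabla\b,\nabla f\ra)\,\d\mm=\int \h_t(\Delta g)\,\la\nabla\b,\nabla f\ra\,\d\mm.
\]
Now I want to let $t\downarrow0$. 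On the right, $\h_t(\Delta g)\to\Delta g$ in $L^2(X)$ by strong continuity of the semigroup (point~$(i)$ of Proposition~\ref{prop:semi} applied to $\Delta g\in L^2$, or just the $L^2$-contraction semigroup continuity), while $\la\nabla\b,\nabla f\ra\in L^2(X)$ is fixed, so the right-hand side converges to $\int \Delta g\,\la\nabla\b,\nabla f\ra\,\d\mm$. On the left, since $\Delta f\in W^{1,2}(X)$ we may take $t=0$ in point~$(i)$ of Proposition~\ref{prop:semi}, giving $\h_t(f)\to f$ and, crucially, $\Delta\h_t(f)\to\Delta f$ in $W^{1,2}(X)$; in particular $\nabla\h_t(f)\to\nabla f$ in $L^2$ in the sense that $|\nabla(\h_t(f)-f)|\to0$ in $L^2$, whence $\la\nabla\b,\nabla\h_t(f)\ra\to\la\nabla\b,\nabla f\ra$ in $L^2(X)$ by the $1$-Lipschitz continuity \eqref{eq:1lipfg} of the scalar product in its first slot together with $|\nabla\b|=1$. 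Pairing this $L^2$-convergence against the fixed $\Delta g\in L^2(X)$, the left-hand side converges to $\int \Delta g\,\la\nabla\b,\nabla f\ra\,\d\mm$. But wait — that would give a tautology; instead I should pass to the limit only on the \emph{right} side after rewriting the left side at $t=0$ directly. Concretely: the displayed identity holds for every $t>0$; take $t\downarrow0$ using $\h_t(\Delta g)\to\Delta g$ in $L^2$ on the far right while keeping $\la\nabla\b,\nabla f\ra$ fixed, and on the far left use $\nabla\h_t(f)\to\nabla f$ to get $\la\nabla\b,\nabla\h_t(f)\ra\to\la\nabla\b,\nabla f\ra$ in $L^2$; hence both sides converge and we obtain
\[
\int \Delta g\,\la\nabla\b,\nabla f\ra\,\d\mm=\lim_{t\downarrow0}\int \h_t(\Delta g)\,\la\nabla\b,\nabla f\ra\,\d\mm,
\]
which is not yet \eqref{eq:key}. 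To reach \eqref{eq:key} I instead integrate \eqref{eq:euler} differently: pair it with $g$ directly and differentiate in $t$. Namely, for $g\in D(\Delta)$ and $h\in W^{1,2}(X)$ one has $\int g\,\la\nabla\b,\nabla h\ra\,\d\mm = -\int \la\nabla g,\nabla(\text{something})\ra$... the cleanest route is: apply \eqref{eq:euler} with $f$, multiply by $g\in L^2$, integrate, use self-adjointness to move $\h_t$ onto $g$, getting $\int \h_t(g)\,\la\nabla\b,\nabla f\ra\,\d\mm=\int g\,\la\nabla\b,\nabla\h_t(f)\ra\,\d\mm$; then differentiate both sides at $t=0$ using Proposition~\ref{prop:semi}$(i)$, which gives $\tfrac{\d}{\d t}\h_t(g)|_{t=0}=\Delta g$ (valid since $g\in D(\Delta)$, with the limit in $L^2$) on the left and $\tfrac{\d}{\d t}\h_t(f)|_{t=0}=\Delta f$ in $W^{1,2}(X)$ on the right (valid since $\Delta f\in W^{1,2}$), and use \eqref{eq:1lipfg} to pass the derivative through $\la\nabla\b,\nabla\cdot\ra$; this produces exactly $\int \Delta g\,\la\nabla\b,\nabla f\ra\,\d\mm=\int g\,\la\nabla\b,\nabla\Delta f\ra\,\d\mm$.

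The main obstacle is the justification of differentiating under the integral sign and through the bilinear pairing at $t=0$ in the last step: one needs the difference quotients $\tfrac1t(\h_t(g)-g)$ to converge in $L^2(X)$ (which is exactly Proposition~\ref{prop:semi}$(i)$, available since $g\in D(\Delta)$), and $\tfrac1t(\nabla\h_t(f)-\nabla f)$ to converge in $L^2(X)$, which requires $\h_t(f)\to f$ in $W^{1,2}(X)$ with quantitative control — this is where the hypothesis $\Delta f\in W^{1,2}(X)$ enters, allowing $t=0$ in Proposition~\ref{prop:semi}$(i)$ so that in fact $\tfrac1t(\h_t(f)-f)\to\Delta f$ in $W^{1,2}(X)$. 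Given these two convergences, the bilinearity and \eqref{eq:1lipfg} (controlling $\|\la\nabla\b,\nabla u\ra\|_{L^2}\le\|\,|\nabla u|\,\|_{L^2}$) make the passage to the limit routine, and one must only check that the fixed factors ($\la\nabla\b,\nabla f\ra\in L^2$ on one side, $g\in L^2$ on the other) have the required integrability, which they do. The rest is bookkeeping with the calculus rules \eqref{eq:norm}–\eqref{eq:1lipfg}, $\h_t(\b)=\b$ (Proposition~\ref{prop:bstable}), $\h_t(1)=1$ (mass preservation), and self-adjointness of the heat semigroup.
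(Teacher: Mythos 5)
Your proof is correct and follows essentially the same route as the paper: for \eqref{eq:euler} you apply the extended Bakry--\'Emery estimate (Proposition~\ref{le:BEext}) to $\b+\eps f$, use the invariance $\h_t(\b)=\b$ and $|\nabla\b|=1$, expand both sides as polynomials in $\eps$, and let $\eps\to0^\pm$; for \eqref{eq:key} you pair with $g$, move $\h_t$ by self-adjointness to obtain $\int\h_t(g)\la\nabla\b,\nabla f\ra\,\d\mm=\int g\la\nabla\b,\nabla\h_t(f)\ra\,\d\mm$, and differentiate at $t=0$ using Proposition~\ref{prop:semi}$(i)$ for both $g$ (in $L^2$) and $f$ (in $W^{1,2}$, thanks to $\Delta f\in W^{1,2}$), which is precisely the paper's identity \eqref{eq:trenino2} in difference-quotient form. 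The detour where you first pair with $\Delta g$, notice the tautology, and then restart, could simply be omitted — the final argument you settle on is the right one and matches the paper's.
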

\begin{proof} It is obvious that $\b,\b^2,|\nabla\b|^2\in \dom(X)$. By definition we have $L^1(X)\subset\dom(X)$ and the inequality 
\[
\int |f|e^{-\sfd(\cdot,\bar x)}\,\d\mm\leq \sqrt{\int f^2\,\d\mm}\sqrt{\int e^{-2\sfd(\cdot,\bar x)}\,\d\mm},
\]
and the polynomial growth rate \eqref{eq:bgvol} grant that $L^2(X)\subset \dom(X)$. Thus for $f\in W^{1,2}(X)$ we get $f,f^2,|\nabla f|^2\in \dom(X)$ as well. It is then immediate to see that for any $\eps\in\R$ we also have $(\b+\eps f),(\b+\eps f)^2,|\nabla(\b+\eps f)|^2\in\dom(X)$.

Hence we can apply  Proposition \ref{le:BEext} to the function  $\b+\eps f$ and obtain 
\begin{equation}
\label{eq:letto}
|\nabla(\h_t(\b+\eps f))|^2\leq \h_t(|\nabla(\b+\eps f)|^2).
\end{equation}
The linearity of $\h_t$, Proposition \ref{prop:bstable} and the identity $|\nabla\b|= 1$ $\mm$-a.e.  give
\[
\begin{split}
|\nabla(\h_t(\b+\eps f))|^2&=1+2\eps\la\nabla \b,\nabla \h_t(f)\ra+\eps^2|\nabla \h_t(f)|^2,\\
 \h_t(|\nabla(\b+\eps f)|^2)&=1+2\eps \h_t(\la\nabla\b,\nabla f\ra)+\eps^2\h_t(|\nabla f|^2),
\end{split}
\]
$\mm$-a.e.. Using these equalities in \eqref{eq:letto} we obtain the Euler equation written as in \eqref{eq:euler}.

To get \eqref{eq:key}, start noticing  that from $|\nabla \b|= 1$ $\mm$-a.e. we deduce that both sides of \eqref{eq:euler} are in $L^2(X)$.

Now assume that $f\in W^{1,2}(X)\cap D(\Delta)$ with $\Delta f\in W^{1,2}(X)$ and let $g\in D(\Delta)$. From  \eqref{eq:euler} we get
\begin{equation}
\label{eq:trenino2}
\int\frac{\h_t(g)-g}{t}\la\nabla\b,\nabla f\ra\,\d\mm=\int_Xg\la\nabla\b,\nabla\Big(\frac{\h_t(f)-f}t\Big)\ra\,\d\mm,\qquad\forall t>0.
\end{equation}
The assumption $g\in D(\Delta)$  grants that $\frac{\h_t(g)-g}{t}\to \Delta g$ in $L^2(X)$ as $t\downarrow0$, thus the left-hand side of \eqref{eq:trenino2} converges to the one of \eqref{eq:key} as $t\downarrow0$.

The assumptions on $f$ and point $(i)$ of Proposition \ref{prop:semi} ensure that $\frac{\h_t(f)-f}{t}$ converges to $\Delta f$ as $t\downarrow0$ in $W^{1,2}(X)$. Thus we have
\[
\begin{split}
&\left|\int_Xg\la\nabla\b,\nabla\Big(\frac{\h_t(f)-f}t\Big)\ra\,\d\mm-\int g\la\nabla \b,\nabla\Delta f\ra\,\d\mm\right|\\
&\leq \int |g|\left|\la\nabla\b,\nabla\Big(\frac{\h_t(f)-f}t\Big)\ra-\la\nabla \b,\nabla\Delta f\ra\right|\,\d\mm\leq\int |g|\left|\nabla \Big(\frac{\h_t(f)-f}t-\Delta f\Big)\right| \,\d\mm\to 0,
\end{split}
\]
and the conclusion follows.
\end{proof}
\begin{remark}[Hessian of $\b$]{\rm
\emph{Formally} we can rewrite the Euler equation \eqref{eq:key} as
\[
\Delta(\la\nabla\b,\nabla f\ra)=\la\nabla\Delta f,\nabla\b\ra,
\]
for any `smooth' $f$. This is formal because we don't really know if $\la\nabla\b,\nabla f\ra\in D(\Delta)$. Replacing $f$ with $\frac{f^2}2$ and after little algebraic manipulation based on the calculus rules recalled in Section \ref{se:infhil} we get
\begin{equation}
\label{eq:eulerfake}
\la\nabla(\la\nabla\b,\nabla f\ra),\nabla f\ra=\la\nabla \b,\nabla\tfrac{f^2}2\ra
\end{equation}
Recalling that on a smooth Riemannian manifold the formula
\[
{\rm Hess}(g)(\nabla f,\nabla f)=\la\nabla(\la\nabla g,\nabla f\ra),\nabla f\ra-\la\nabla g,\nabla\tfrac{f^2}2\ra,
\]
holds, we can interpret the formal equation \eqref{eq:eulerfake} as the fact that the Hessian of $\b$ is 0. In a smooth world, this easily implies that $\b$ is affine along geodesics. Let us show a \emph{formal} 
argument which yields the same conclusion in the non-smooth one. According to Corollary \ref{cor:pertcont} that we shall see later on, for any  $\mu_0,\mu_1\in\probt X$ with bounded support and such that $\mu_0,\mu_1\leq C\mm$ for some $C>0$, there exists a unique geodesic $(\mu_t)$ connecting them which further satisfies $\mu_t\leq C\mm$ for every $t\in[0,1]$. With an  approximation argument we see that the claim is equivalent to prove that $t\mapsto \int \b\,\d\mu_t$ 
is affine for any such geodesic. According to Proposition \ref{prop:intf} that we shall see later, the map $t\mapsto\int\b\,\d\mu_t$ is $C^1$ and its derivative is given by
\begin{equation}
\label{eq:firstb}
\frac{\d}{\d t}\int\b\,\d\mu_t=\int\la\nabla\b,\nabla Q_t(-\varphi)\ra\,\d\mu_t,\qquad\forall t\in(0,1),
\end{equation}
where $\varphi$ is any Kantorovich potential from $\mu_0$ to $\mu_1$ and $Q_t$ is the Hopf-Lax semigroup (see Definition \ref{def:hl}). In \cite{AmbrosioGigliSavare11} it has been proved that for any $f$ bounded from below the map $t\mapsto Q_t(f)$ is locally semiconcave and that the formula
\begin{equation}
\label{eq:hlhj}
\frac{\d}{\d t}Q_tf(x)+\frac{\lip(Q_tf)^2(x)}{2}=0,\qquad \textrm{ for any $t>0$ except a countable number},
\end{equation}
is valid for any $x\in X$, in line with the fact that the Hopf-Lax formula produces solutions of the Hamilton-Jacobi equation on the Euclidean space. By the well known results of Cheeger (\cite{Cheeger00}) we know that $\mm$-a.e. it holds $\lip(Q_tf)=|\nabla f|$, thus by formally applying the first order differentiation formula \eqref{eq:firstb} to the function $\la \nabla\b,\nabla Q_t(-\varphi)\ra$ and taking into account \eqref{eq:hlhj} we get
\[
\frac{\d^2}{\d t^2}\int\b\,\d\mu_t=\int\la\nabla(\la\nabla\b,\nabla (Q_t(-\varphi))\ra),\nabla Q_t(-\varphi) \ra-\la\nabla \b,\nabla\tfrac{|\nabla Q_t(-\varphi)|^2}2\ra\,\d\mu_t,
\]
and we see from  \eqref{eq:eulerfake} taking $f:=Q_t(-\varphi)$ that  the right hand side is 0.

This cannot be rigorously justified with the current technology. Yet, this is not a crucial issue, because even pretending that we know that $\b$ is affine along geodesics, the proof of the splitting would still be quite far. Indeed, one should proceed by first proving that its gradient flow preserves the distance (which, following the ideas used in the next chapter, is possible), then by proving that the quotient space is an infinitesimally Hilbertian $CD(0,N-1)$ space and that the distance splits according to `Pythagora's theorem'. In other words, one should still repeat all the arguments contained in the next chapters.

Instead, as mentioned in the introduction, we will never really need the fact that $\b$ is affine along geodesics, and use the Euler equation \eqref{eq:euler} and \eqref{eq:key} to prove that the right composition with the gradient flow of $\b$ produces isometries of $W^{1,2}(X)$ into itself.  By the general duality principle expressed in Proposition \ref{prop:isom} below, this will be enough to prove that the gradient flow is a one-parameter group of isometries.
}\fr\end{remark}
Notice that Corollary \ref{cor:derfiga} reads, in the current notation, as
\begin{equation}
\label{eq:horver}
\lim_{h\to 0}\frac{f\circ \X_{h}-f}{h}=-\la\nabla f,\nabla\b\ra,\qquad\textrm{weakly in }L^2(X,\mm)\textrm{ for any }f\in\s^2(X).
\end{equation}
We shall also need the identity
\begin{equation}
\label{eq:divb}
\int \la \nabla g,\nabla\b\ra f\,\d\mm=-\int \la \nabla f,\nabla\b\ra g\,\d\mm,\qquad\forall f,g\in W^{1,2}(X),
\end{equation}
which can be proved by first choosing sequences $(f_n),(g_n)$ of Lipschitz functions with compact support converging to $f,g$ respectively in $W^{1,2}(X)$ (Theorem \ref{thm:stronglip}), then noticing that $\bd \b=0$ yields $\int\la \nabla (f_ng_n),\nabla\b\ra\,\d\mm=0$ and thus
\begin{equation}
\label{eq:divlip}
\int \la \nabla g_n,\nabla\b\ra f_n\,\d\mm=-\int \la \nabla f_n,\nabla\b\ra g_n\,\d\mm,\qquad\forall n\in\N,
\end{equation}
then observing that $\la\nabla f_n,\nabla\b\ra\to\la\nabla f,\nabla\b\ra $ in $L^2(X)$ as $n\to\infty$ (and similarly $\la\nabla g_n,\nabla\b\ra\to\la\nabla g,\nabla\b\ra $ in $L^2(X)$) and finally passing to the limit in \eqref{eq:divlip}.

\medskip

The next proposition  provides the crucial ingredient which allows to deduce that the gradient flow of $\b$ is a one parameter family of isometries: it shows that  the right composition with $\X_t$ is a bijective isometry of $W^{1,2}(X)$ into itself. Together with the duality argument which we present in Proposition \ref{prop:isom} this will be sufficient to conclude. The proof will follow the same ideas presented in Section \ref{se:nohess}, the major difference being that we don't have any a priory Sobolev regularity for $f\circ\X_t$ and we will thus need an intermediate regularization via the heat flow.
\begin{proposition}[Right compositions with $\X_t$ give isometries of $W^{1,2}$ into itself]\label{prop:crucial} Assume \eqref{eq:assfin} and recall that the gradient flow $\X$ of $\b$ has been defined in Theorem \ref{thm:gfpresmes}. 

Then for any $t\in\R$ the map $f\mapsto f\circ\X_t$ is an isometry of $W^{1,2}(X)$ into itself, i.e. $f\in W^{1,2}(X)$ if and only if $f\circ \X_t\in W^{1,2}(X)$ and in this case $\|f\|_{W^{1,2}}=\|f\circ\X_t\|_{W^{1,2}}$.
\end{proposition}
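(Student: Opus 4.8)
The plan is to exploit the measure-preservation property $(\X_t)_\sharp\mm=\mm$ together with the first-order differentiation/Euler equations \eqref{eq:horver}, \eqref{eq:euler}, \eqref{eq:divb} to show that $f\mapsto f\circ\X_t$ preserves the $L^2$-norm trivially and the Dirichlet energy $\int|\nabla\cdot|^2\,\d\mm$. Since $(\X_t)_\sharp\mm=\mm$ and $\X_{-t}$ is the $\mm$-a.e. inverse of $\X_t$, the map $f\mapsto f\circ\X_t$ is a bijective isometry of $L^2(X)$ onto itself, so the whole content is to prove that $f\circ\X_t\in\s^2(X)$ with $\int|\nabla(f\circ\X_t)|^2\,\d\mm=\int|\nabla f|^2\,\d\mm$ for every $f\in W^{1,2}(X)$; applying this both to $f$ and (with $-t$) to $f\circ\X_t$ gives the equivalence and the equality of norms. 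First I would reduce to a dense class: by Theorem \ref{thm:stronglip} it suffices to treat, say, $f\in W^{1,2}(X)\cap D(\Delta)$ with $\Delta f\in W^{1,2}(X)$ (or even $f=\h_s(g)$ for $g\in L^2(X)$, which by Proposition \ref{prop:semi} lies in every $D(\Delta^{(n)})$), and then pass to the limit using $L^2$-continuity of $f\mapsto f\circ\X_t$ together with lower semicontinuity of minimal weak upper gradients.

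The core computation, mimicking Section \ref{se:nohess}, is to differentiate $t\mapsto\frac12\int|\nabla(\h_s(f\circ\X_t))|^2\,\d\mm$ for a regularized function. Concretely, fix $g\in L^2(X)$, set $F:=\h_s(g)$ for $s>0$, and consider $E(t):=\frac12\int|\nabla\h_s(F\circ\X_t)|^2\,\d\mm$. Using \eqref{eq:horver} one gets $\frac{\d}{\d t}(F\circ\X_t)=-\la\nabla(F\circ\X_t),\nabla\b\ra$ weakly in $L^2$; the heat flow is a bounded linear operator, so $\frac{\d}{\d t}\h_s(F\circ\X_t)=-\h_s(\la\nabla(F\circ\X_t),\nabla\b\ra)$, and since $\h_s$ maps into $D(\Delta)$ with good continuity (Proposition \ref{prop:semi}) one can compute
\[
\frac{\d}{\d t}E(t)=\int\la\nabla\h_s(F\circ\X_t),\nabla\tfrac{\d}{\d t}\h_s(F\circ\X_t)\ra\,\d\mm
=-\int\Delta\h_s(F\circ\X_t)\,\h_s\big(\la\nabla(F\circ\X_t),\nabla\b\ra\big)\,\d\mm.
\]
Now the key algebraic identity is the Euler equation \eqref{eq:key}: with $h_t:=F\circ\X_t$ (which for the regularized $F$ should be shown to lie in $D(\Delta)$ with $\Delta h_t\in W^{1,2}$, or the argument is run at the level of $\h_s h_t$ which manifestly is), \eqref{eq:key} gives $\int\Delta\h_s(h_t)\la\nabla\b,\nabla h_t\ra\,\d\mm=\int\h_s(h_t)\la\nabla\b,\nabla\Delta h_t\ra\,\d\mm$, and using self-adjointness of $\h_s$ to move it onto $\la\nabla h_t,\nabla\b\ra$ and then \eqref{eq:divb} to integrate by parts in the $\b$-direction, one transforms the expression for $\frac{\d}{\d t}E(t)$ into its own negative, forcing $\frac{\d}{\d t}E(t)=0$. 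This is exactly the telescoping cancellation in \eqref{eq:localintro} of the introduction. Hence $\int|\nabla\h_s(F\circ\X_t)|^2\,\d\mm=\int|\nabla\h_s F|^2\,\d\mm$ for all $t$; letting $s\downarrow0$ and using Proposition \ref{prop:semi}(v) on both sides (the left side via $(\X_t)_\sharp\mm=\mm$ to identify $\lim_s\int|\nabla\h_s(F\circ\X_t)|^2$ with $\int|\nabla(F\circ\X_t)|^2$, the right with $\int|\nabla F|^2$) yields $\int|\nabla(F\circ\X_t)|^2\,\d\mm=\int|\nabla F|^2\,\d\mm$, in particular $F\circ\X_t\in\s^2(X)$.

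The main obstacle I anticipate is the justification of the formal manipulations: (i) showing $F\circ\X_t\in\s^2(X)$ with enough regularity (membership in $D(\Delta)$ with $W^{1,2}$-Laplacian) to legitimately invoke \eqref{eq:key}, when a priori one knows nothing about the Sobolev regularity of $F\circ\X_t$ — this is precisely why the intermediate heat-flow regularization $\h_s(\cdot)$ is inserted, and one must be careful to do all differentiations in $t$ and integrations by parts \emph{after} applying $\h_s$, where Proposition \ref{prop:semi} supplies the needed smoothness and continuity; (ii) the differentiability in $t$ of $t\mapsto\h_s(F\circ\X_t)$ as an $L^2$- or $W^{1,2}$-valued curve, which follows from \eqref{eq:horver} (an $L^2$-weak derivative) combined with the smoothing $\h_s$, but requires care since \eqref{eq:horver} is only a weak limit; and (iii) the final passage $s\downarrow0$, where one needs the uniform-in-$s$ bound $\int|\nabla\h_s(F\circ\X_t)|^2\,\d\mm\le\int|\nabla(F\circ\X_t)|^2\,\d\mm$ (from the energy-dissipation property of $\h_s$) to get $F\circ\X_t\in W^{1,2}(X)$, then Proposition \ref{prop:semi}(v) for the equality. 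Once the statement holds for $F$ in the dense set $\{\h_s g:g\in L^2(X),\,s>0\}$, a routine approximation argument using $L^2$-isometry of $f\mapsto f\circ\X_t$, lower semicontinuity of minimal weak upper gradients, and the reverse inequality obtained by applying the result with $-t$ to $f\circ\X_t$, upgrades it to all of $W^{1,2}(X)$ and simultaneously gives the equality $\|f\|_{W^{1,2}}=\|f\circ\X_t\|_{W^{1,2}}$.
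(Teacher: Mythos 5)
Your high-level plan matches the paper's: regularize the Dirichlet energy along the flow with $\h_\eps$, show the regularized energy is constant in $t$ via the Euler equations \eqref{eq:euler}, \eqref{eq:key}, \eqref{eq:divb}, then let $\eps\downarrow0$ with Proposition \ref{prop:semi}(v) and finish by lower semicontinuity and the $\X_{-t}$ trick. You also correctly identify the danger spot: you cannot write $\nabla(F\circ\X_t)$ before knowing $F\circ\X_t\in\s^2(X)$, which is part of the conclusion. The problem is that your sketch, as written, commits exactly this circularity and does not supply the mechanism that removes it. The display
\[
\frac{\d}{\d t}(F\circ\X_t)=-\la\nabla(F\circ\X_t),\nabla\b\ra
\]
is not available: \eqref{eq:horver} requires the argument to be in $\s^2(X)$, and at best a careful use of the group law and measure preservation gives $\frac{\d}{\d t}(F\circ\X_t)=-\big(\la\nabla F,\nabla\b\ra\big)\circ\X_t$ weakly in $L^2$ — a composition of an $L^2$ function with $\X_t$, not a scalar product involving $\nabla(F\circ\X_t)$. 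Plugging this corrected form into your expression for $\frac{\d}{\d t}E(t)$ gives $-\int\Delta\h_s(F\circ\X_t)\,\h_s\big(\la\nabla F,\nabla\b\ra\circ\X_t\big)\,\d\mm$, which is no longer in a form where \eqref{eq:key} applies (the arguments of the scalar products still involve $\X_t$), and your "transforms into its own negative" step then has no purchase.

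The ingredient the paper supplies, and which your outline lacks, is the \emph{adjoint transfer}: rather than differentiating $f_t:=F\circ\X_t$ at all, one writes the difference quotient of the regularized energy, moves $\h_\eps$ to the other side by self-adjointness so that the \emph{undifferentiated} $f_t$ sits against $\Delta\h_{2\eps}(f_t)$, and then uses $(\X_h)_\sharp\mm=\mm$ to shift the difference quotient from $f_t$ (unknown Sobolev regularity) onto $\Delta\h_{2\eps}(f_t)$ (which is in $W^{1,2}$ by Proposition \ref{prop:semi}(iii)):
\[
-\lim_{h\to0}\int\Delta\h_{2\eps}(f_t)\,\frac{f_t\circ\X_h-f_t}{h}\,\d\mm
=-\lim_{h\to0}\int\frac{\Delta\h_{2\eps}(f_t)\circ\X_{-h}-\Delta\h_{2\eps}(f_t)}{h}\,f_t\,\d\mm.
\]
Only after this transfer is \eqref{eq:horver} applied, to the smooth object $\Delta\h_{2\eps}(f_t)$, and the resulting expression $-\int\la\nabla(\Delta\h_{2\eps}(f_t)),\nabla\b\ra f_t\,\d\mm$ is then shown to vanish for all $f_t\in L^2(X)$ via density, \eqref{eq:divb} and the Euler equations \eqref{eq:euler}, \eqref{eq:key}. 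This transfer is not a cosmetic rearrangement; it is the step that lets the proof avoid ever asserting Sobolev regularity of $f\circ\X_t$ before the conclusion, and without it the argument does not close. The remaining items you flag — Lipschitz dependence of $t\mapsto\h_\eps(f_t)$ in $W^{1,2}$, the passage $\eps\downarrow0$, and the density/$\X_{-t}$ wrap-up — are handled essentially as you describe.
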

\begin{proof} We already know that $\X_t$ is measure preserving and thus $\|f\circ\X_t\|_{L^2}=\|f\|_{L^2}$. We claim that  for $f\in W^{1,2}(X)$ it holds $f\circ\X_t\in \s^2(X)$ with $\||\nabla (f\circ\X_t)|\|_{L^2}=\||\nabla f|\|_{L^2}$ for any $t\in\R$. This will be sufficient to conclude by applying this statement also to $\X_{-t}$ and recalling the group property \eqref{eq:group}.

Fix such $f$ and recall inequality \eqref{eq:lungogf} to get
\[
\int |f\circ\X_s-f\circ \X_t|^2\,\d\mm=\int |f\circ\X_{s-t}-f|^2\,\d\mm\leq |s-t|^2\int |\nabla f|^2\,\d\mm,
\]
which shows that the map $\R\ni t\mapsto f_t:=f\circ\X_t\in L^2(X)$ is Lipschitz with Lipschitz constant bounded by $\||\nabla f|\|_{L^2}$.

Fix $\eps>0$ and notice that from inequalities \eqref{eq:l2contr} and \eqref{eq:l2s2} we deduce
\[
\|\h_\eps(f_s)-\h_\eps(f_t)\|_{W^{1,2}}\leq C(\eps)\|f_s-f_t\|_{L^2},
\]
and thus 
\begin{equation}
\label{eq:elip}
\textrm{the map \quad$\R\ni t\quad\mapsto\quad \h_\eps(f_t)\in W^{1,2}(X)$\quad is Lipschitz for every $\eps>0$,}
\end{equation}
its Lipschitz constant being bounded by $C(\eps)\||\nabla f|\|_{L^2}$. 

In particular, the map $t\mapsto \frac1{2}\int |\nabla \h_\eps(f_t)|^2\,\d\mm$ is Lipschitz; our aim is  to show that it is constant. Start from
\[
\begin{split}
\int |\nabla \h_\eps(f_{t+h})|^2-|\h_\eps(\nabla f_t)|^2\,\d\mm&= \int 2\la\nabla \h_\eps(f_t),\nabla {\h_\eps(f_{t+h}-f_t)}{}\ra+|\nabla(\h_\eps(f_{t+h}-f_t))|^2\,\d\mm,
\end{split}
\]
and notice that the simple bound  $\int |\nabla(\h_\eps(f_{t+h}-f_t))|^2\,\d\mm\leq \big(C(\eps)\||\nabla f|\|_{L^2}\big)^2|h|^2$ yields that  for any $t\in\R$ it holds
\[
\lim_{h\to 0}\int\frac{ |\nabla \h_\eps(f_{t+h})|^2-|\h_\eps(\nabla f_t)|^2}{2h}\,\d\mm=\lim_{h\to 0}\int\la \nabla \h_\eps(f_t),\nabla\frac{\h_\eps(f_{t+h})-\h_\eps(f_t)}h\ra\,\d\mm.
\]
We compute the limit in the right-hand-side of this expression.
\begin{equation}
\label{eq:cuneo2}
\begin{split}
\lim_{h\to 0}\int \la\nabla \h_\eps(f_t),\nabla\frac{\h_\eps(f_{t+h})-\h_\eps(f_t)}h\ra\,\d\mm&=-\lim_{h\to 0}\int \Delta \h_\eps( f_t) \frac{\h_\eps\big(f_{t+h}-f_t\big)}h\,\d\mm\\
&=-\lim_{h\to 0}\int \Delta \h_{2\eps}(f_t) \frac{ f_t\circ\X_{h}-f_t}h\,\d\mm\\
&=-\lim_{h\to 0}\int \frac{\big(\Delta \h_{2\eps}(f_t)\big)\circ\X_{-h}-\Delta \h_{2\eps}(f_t)}h   f_t \,\d\mm\\
&=-\int\la\nabla\big(\Delta \h_{2\eps}(f_t)\big),\nabla\b\ra\, f_t\,\d\mm,
\end{split}
\end{equation}
having used \eqref{eq:horver} in the last step. We claim that
\begin{equation}
\label{eq:claim0}
\int\la\nabla\big(\Delta \h_{2\eps}(f)\big),\nabla\b\ra\, f\,\d\mm=0,\qquad\forall f\in L^2(X).
\end{equation}
Recall that by point $(iii)$ of Proposition \ref{prop:semi} the map $L^2(X)\ni f\mapsto \Delta \h_{2\eps}(f)\in W^{1,2}(X)$ is continuous, thus from the fact that $\b$ is Lipschitz we get
\[
L^2(X)\ni f\quad\mapsto\quad \la\nabla\big(\Delta \h_{2\eps}(f)\big),\nabla\b\ra\in L^2(X)\qquad\textrm{ is continuous}.
\]
Hence it is sufficient to check \eqref{eq:claim0}  for $f\in D(\Delta)$ such that $\Delta f\in W^{1,2}(X)$, because - by regularization with the heat flow - the set of such $f$'s is dense in $L^2(X)$. With this choice of $f$, recalling \eqref{eq:divb} and the Euler equation \eqref{eq:key} we have
\begin{equation}
\label{eq:pezzoclaim1}
\begin{split}
\int\la\nabla\big(\Delta \h_{2\eps}(f)\big),\nabla\b\ra\, f\,\d\mm=-\int \Delta \h_{2\eps}(f)\la\nabla\b,\nabla f\ra\,\d\mm=-\int \h_{2\eps}(f)\la\nabla\Delta f,\nabla \b\ra\,\d\mm.
\end{split}
\end{equation}
On the other hand, the Euler equation \eqref{eq:euler} applied with $\Delta f$ in place of $f$ yields
\[
\int\la\nabla\big(\Delta \h_{2\eps}(f)\big),\nabla\b\ra\, f\,\d\mm=\int\la\nabla\big( \h_{2\eps}(\Delta f)\big),\nabla\b\ra\, f\,\d\mm=\int \h_{2\eps}(\la\nabla\Delta f,\nabla\b\ra) f\,\d\mm,
\]
which together with \eqref{eq:pezzoclaim1} yields \eqref{eq:claim0}. According to \eqref{eq:elip} and \eqref{eq:cuneo2} we thus obtained that
\[
\R\ni t\qquad\mapsto\qquad\frac12\int|\nabla\h_\eps(f_t)|^2\,\d\mm,\qquad\textrm{is constant for every }\eps>0.
\]
Letting $\eps\downarrow0 $, recalling point $(v)$ of Proposition \ref{prop:semi} and the fact that $f_0=f\in W^{1,2}(X)$ we deduce that $f_t\in W^{1,2}(X)$ for any $t\in\R$ and that
\[
\R\ni t\qquad\mapsto\qquad\frac12\int|\nabla f_t|^2\,\d\mm,\qquad\textrm{is constant,}
\]
which by the initial discussion gives  the conclusion.
\end{proof}
We now want to show that the information given by Proposition \ref{prop:crucial} is sufficient to deduce that - up to a redefinition on a negligible set - the gradient flow $\X$ of $\b$ is a one-parameter group of isometries. As discussed in the introduction, this fact has nothing to do with infinitesimal Hilbertianity and lower Ricci curvature bounds, and is rather based on the Sobolev-to-Lipschitz property expressed in Definition \ref{def:sobtolip}.

\begin{lemma}[Localization]\label{le:puntuale}
Let $(X_1,\sfd_1,\mm_1)$ and $(X_2,\sfd_2,\mm_2)$ be metric measure spaces  and $T:X_1\to X_2$ an invertible map such that $T_\sharp\mm_1=\mm_2$ and for which  $f\in W^{1,2}(X_2,\sfd_2,\mm_2)$ iimplies  $f\circ T\in W^{1,2}(X_1,\sfd_1,\mm_1)$ and in this case 
\begin{equation}
\label{eq:glob}
\int_{X_1}\weakgrad{  (f\circ T)}^2\,\d\mm_1=\int_{X_2}\weakgrad f^2\,\d\mm_2.
\end{equation}
Then for every  $f\in  W^{1,2}(X_2,\sfd_2,\mm_2)$ it holds   $f\circ T\in W^{1,2}(X_1,\sfd_1,\mm_1)$ and in this case 
\begin{equation}
\label{eq:puntu}
\weakgrad{(f\circ T)}=\weakgrad f \circ T,\qquad\mm_1\ae.
\end{equation}
\end{lemma}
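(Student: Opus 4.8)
\medskip

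\noindent\emph{Proof sketch.} The plan is to reduce the pointwise statement \eqref{eq:puntu} to a \emph{local} form of the energy identity \eqref{eq:glob}, namely
\begin{equation}
\label{eq:localgoal}
\int_{T^{-1}(A)}\weakgrad{(f\circ T)}^2\,\d\mm_1=\int_A\weakgrad f^2\,\d\mm_2\qquad\textrm{for all Borel }A\subseteq\supp(\mm_2),\ f\in W^{1,2}(X_2),
\end{equation}
and then deduce \eqref{eq:puntu} from \eqref{eq:localgoal}. This last reduction is immediate: since $T_\sharp\mm_1=\mm_2$ the right-hand side of \eqref{eq:localgoal} equals $\int_{T^{-1}(A)}(\weakgrad f\circ T)^2\,\d\mm_1$, and since $T$ is invertible (with Borel inverse) the sets $T^{-1}(A)$ range over all Borel subsets of $X_1$; hence $\weakgrad{(f\circ T)}^2$ and $(\weakgrad f\circ T)^2$ are nonnegative $L^1(\mm_1)$ functions with the same integral over every Borel set, so they coincide $\mm_1$-a.e., and taking square roots gives \eqref{eq:puntu} (that $f\circ T\in W^{1,2}(X_1)$ is part of the hypothesis).

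First I would prove \eqref{eq:localgoal} for sets $A=f^{-1}(I)$, $I\subseteq\R$ an interval. Applying \eqref{eq:glob} to $\varphi\circ f$ for $\varphi:\R\to\R$ Lipschitz, noting $(\varphi\circ f)\circ T=\varphi\circ(f\circ T)$ and using the chain rule \eqref{eq:chainbase} on both sides, one gets $\int(|\varphi'|^2\circ f\circ T)\weakgrad{(f\circ T)}^2\,\d\mm_1=\int(|\varphi'|^2\circ f)\weakgrad f^2\,\d\mm_2$; letting $|\varphi'|^2$ run over indicators of intervals yields \eqref{eq:localgoal} on such $A$ (the integrability needed to apply \eqref{eq:glob} is secured by the usual truncation/exhaustion, harmless since in the applications $\mm_1,\mm_2$ are finite on bounded sets). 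This only localizes along the level sets of $f$ itself, so to reach arbitrary $A$ I would first pass to the polarized form of \eqref{eq:glob}: applying it to $g+\eps f$ and differentiating at $\eps=0^{\pm}$ — legitimate because $\eps\mapsto\tfrac12\weakgrad{(g+\eps f)}^2$ is $\mm$-a.e.\ convex and the difference quotients are $L^1$-dominated thanks to $|D^{\pm}f(\nabla g)|\le\weakgrad f\weakgrad g$, see \eqref{eq:basefg} — one obtains
\begin{equation}
\label{eq:polar}
\int_{X_1}D^{\pm}(f\circ T)\big(\nabla(g\circ T)\big)\,\d\mm_1=\int_{X_2}D^{\pm}f(\nabla g)\,\d\mm_2,\qquad\forall f,g\in W^{1,2}(X_2).
\end{equation}

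Composing the two slots of \eqref{eq:polar} with nondecreasing Lipschitz functions (and their several-variable analogues), using the chain rules \eqref{eq:chainsbase} and the locality \eqref{eq:localgrad21} of $D^{\pm}$, and propagating the resulting identities along the $\pi$-system of sublevel sets of bounded Sobolev functions — this is a $\pi$-system generating $\mathcal B(\supp(\mm_2))$, since $\{f<a\}\cap\{g<b\}=\{\max(f-a,g-b)<0\}$ with $\max(f-a,g-b)$ again Sobolev by \eqref{eq:chainbase}, and since the small balls $\{\sfd_2(\cdot,x_n)<r\}$ belong to it — a Dynkin-class argument upgrades \eqref{eq:polar} to $\int_{T^{-1}(A)}D^{\pm}(f\circ T)(\nabla(g\circ T))\,\d\mm_1=\int_A D^{\pm}f(\nabla g)\,\d\mm_2$ for every Borel $A$. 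Taking $g:=f$ and recalling $D^{\pm}f(\nabla f)=\weakgrad f^2$ from \eqref{eq:basefg} gives exactly \eqref{eq:localgoal}, and we conclude as above.

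The step I expect to be the main obstacle is precisely this last localization. The chain rule localizes the arguments of $D^{\pm}f(\nabla g)$ only along level sets of the functions $f,g$ occurring in it, so obtaining the identity over a genuinely arbitrary Borel $A$ — and not merely one measurable with respect to $f\circ T$ and $g\circ T$ — forces one to insert functions of the form $\Phi(g_1,\dots,g_n)$ into the gradient slot, hence to use (and keep careful track of the signs in) the chain rule for compositions with Lipschitz maps $\R^n\to\R$ provided by the calculus of \cite{Gigli12}; the $\pi$-system/Dynkin packaging is what keeps this book-keeping under control. The integrability reductions (truncation of $f$, exhaustion by bounded sets, and discarding the at most countably many level values of positive measure) are routine but need to be carried out with some care.
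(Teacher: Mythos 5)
Your proposal correctly identifies the two openings the paper also exploits — the reduction of \eqref{eq:puntu} to a local energy identity, and the passage from \eqref{eq:glob} to the polarized form \eqref{eq:polar} — but the step you flag as the ``main obstacle'' is indeed a gap, and the route you sketch around it does not close.

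The issue is with the Dynkin-class upgrade. The chain rules \eqref{eq:chainsbase} let you insert non-decreasing Lipschitz reparametrizations of the two functions \emph{occupying the slots} of $D^{\pm}$, so you can localize the polarized identity on $\{\varphi(f)\in I\}\cap\{\psi(g)\in J\}$, i.e.\ on sets measurable with respect to the pair $(f,g)$. But your $\pi$-system consists of sublevel sets $\{h<a\}$ of \emph{arbitrary} bounded Sobolev $h$, which in general are unrelated to $f$ and $g$. There is no chain rule that pushes an indicator $\mathbf 1_{\{h<a\}}$ in front of $D^{\pm}f(\nabla g)$ when $h$ is a third function; and the locality property \eqref{eq:localgrad21} only tells you that $D^{\pm}f(\nabla g)$ agrees with $D^{\pm}\tilde f(\nabla\tilde g)$ where the arguments themselves agree — it does not furnish a multiplicative cut-off. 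So the ``class of $A$ for which the local identity holds'' is never shown to contain the $\pi$-system you invoke, and the Dynkin argument has nothing to start from. (Concretely: if $f$ is constant, localizing along level sets of $f$ gives you nothing, yet you must still identify $\weakgrad{(f\circ T)}$ pointwise.) The appeal to a ``several-variable'' chain rule for $\Phi(g_1,\dots,g_n)$ is not something the paper's calculus provides, and even granted such a rule, replacing $g$ by $\Phi(g_1,\dots,g_n)$ in the gradient slot changes the object $D^{\pm}f(\nabla g)$ rather than multiplying it by a cut-off.

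What the paper does instead is to avoid Borel sets altogether and test against a bounded nonnegative Lipschitz function $g$ with $\mm_2(\supp g)<\infty$. The mechanism that transports this test function across \eqref{eq:polar} is the Leibniz rule \eqref{eq:leibineq}: for the perturbation $f_\eps:=f+\eps g$ one writes
\[
g\,\weakgrad{f_\eps}^2 \;\geq\; D^+(g f_\eps)(\nabla f_\eps) - D^+ g\big(\nabla(\tfrac{f_\eps^2}{2})\big),
\]
the second term having been massaged with the chain rule, so the right-hand side is a difference of terms of the exact form $D^{\pm}\alpha(\nabla\beta)$ with $\alpha,\beta\in W^{1,2}(X_2)$, to which \eqref{eq:polar} applies verbatim. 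Running the Leibniz and chain rules in the reverse direction on $X_1$ and letting $\eps\downarrow0$ along a subsequence on which $D^+$ and $D^-$ agree (this is the role of \eqref{eq:buonofg}) produces the inequality $\int_{X_2} g\,\weakgrad f^2\,\d\mm_2 \geq \int_{X_1}(g\circ T)\,\weakgrad{(f\circ T)}^2\,\d\mm_1$, and a symmetric argument gives the reverse; arbitrariness of $g$ and $T_\sharp\mm_1=\mm_2$ then yield \eqref{eq:puntu} after the usual truncation. In short: the missing ingredient in your sketch is the Leibniz-rule device that multiplies the energy density by a test function while remaining inside the class of quantities the polarized identity controls. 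Without it, the $\pi$-system you propose is not accessible.
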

\begin{proof}
By definition of $D^\pm f(\nabla g)$ (Definition \ref{def:dpmfg}), the fact that $T_\sharp\mm_1=\mm_2$ and assumption \eqref{eq:glob} yield
\begin{equation}
\label{eq:polar}
\int_{X_2}D^\pm f(\nabla g)\,\d\mm_2=\int_{X_1} D^\pm (f\circ T)(\nabla (g\circ T))\d\mm_1,\qquad\forall f,g\in W^{1,2}(X_2).
\end{equation}
Pick $f\in W^{1,2}(X_2)$ non-negative, assume for the moment that $f\in L^\infty(X_2)$ as well and let $g:X_2\to[0,\infty)$ be bounded and Lipschitz  with $\mm_2(\supp(g))<\infty$ and for $\eps>0$ put $f_\eps:=f+\eps g$. Notice that $f_\eps^2,f_\eps g\in W^{1,2}(X_2)$. Using the last in \eqref{eq:basefg}, the first Leibniz rule in \eqref{eq:leibineq} and then the second chain rule in \eqref{eq:chainsbase} we get
\[
\begin{split}
\int_{X_2}g\weakgrad{f_\eps}^2\,\d\mm_2&\geq\int_{X_2} D^+(f_\eps g) (\nabla f_\eps)-fD^+g(\nabla f_\eps)\,\d\mm_2\\
&=\int_{X_2} D^+(f_\eps g) (\nabla f_\eps)- D^+g(\nabla (\tfrac{f_\eps^2}2))\,\d\mm_2.
\end{split}
\]
Put for brevity $\tilde f_\eps:=f_\eps\circ T$ and $\tilde g:=g\circ T$ and notice that by \eqref{eq:polar} we get 
\[
\int_{X_2} D^+(f_\eps g) (\nabla f_\eps)- D^+g(\nabla (\tfrac{f_\eps^2}2))\,\d\mm_2=\int_{X_1} D^+(\tilde f_\eps \tilde g) (\nabla \tilde f_\eps)- D^+\tilde g(\nabla (\tfrac{\tilde f_\eps^2}2))\,\d\mm_1.
\]
Then continue using the first inequality in \eqref{eq:basefg}, the second Leibniz rule in  \eqref{eq:leibineq} and then again the second chain rule in \eqref{eq:chainsbase} to obtain
\[
\begin{split}
\int_{X_1} D^+(\tilde f_\eps \tilde g) (\nabla \tilde f_\eps)- D^+\tilde g(\nabla (\tfrac{\tilde f_\eps^2}2))\,\d\mm_1&\geq\int_{X_1} D^-(\tilde f_\eps \tilde g) (\nabla \tilde f_\eps)- \tilde f_\eps D^+\tilde g(\nabla \tilde f_\eps)\,\d\mm_1 \\
&\geq \int_{X_1} \tilde g D^-\tilde f_\eps  (\nabla \tilde f_\eps)+ \tilde f_\eps D^- g (\nabla \tilde f_\eps)- \tilde f_\eps D^+\tilde g(\nabla \tilde f_\eps)\,\d\mm_1  \\
&=\int_{X_1} \tilde g \weakgrad{\tilde f_\eps}^2+\tilde f_\eps \Big(D^-\tilde g (\nabla \tilde f_\eps)- D^+\tilde g(\nabla \tilde f_\eps)\Big)\,\d\mm_1,
\end{split}
\]
having used the last identity in \eqref{eq:basefg} in the last step. In summary, we proved that
\begin{equation}
\label{eq:sumlim}
\int_{X_2}g\weakgrad{f_\eps}^2\,\d\mm_2\geq \int_{X_1} \tilde g \weakgrad{\tilde f_\eps}^2+\tilde f_\eps D^-\tilde g (\nabla \tilde f_\eps)- \tilde f_\eps D^+\tilde g(\nabla \tilde f_\eps)\,\d\mm_1,\qquad\forall\eps> 0.
\end{equation}
It is immediate to check that $\weakgrad{(f-f_\eps)}\to 0$ in $L^2(X_2)$ as $\eps\downarrow 0$ which also gives, by our assumptions on $T$, that  $\weakgrad{(\tilde f-\tilde f_\eps)}\to 0$ in $L^2(X_1)$ as $\eps\downarrow 0$, where $\tilde f:=f\circ T$. In particular $\int_{X_2}g\weakgrad{f_\eps}^2\,\d\mm_2\to \int_{X_2}g\weakgrad{f }^2\,\d\mm_2$  and $\int_{X_1}\tilde g\weakgrad{\tilde f_\eps}^2\,\d\mm_1\to \int_{X_1}\tilde g\weakgrad{\tilde f}^2\,\d\mm_1$  as $\eps\downarrow 0$.

Now recall that by \eqref{eq:buonofg} we know that there exists a countable set $\mathcal N\subset\R$ such that for $\eps\in\R\setminus\mathcal N$ it holds $D^-\tilde g (\nabla \tilde f_\eps)=D^+\tilde g(\nabla \tilde f_\eps)$ $\mm_1$-a.e., hence it is sufficient to pass to the limit in \eqref{eq:sumlim} as $\eps\downarrow 0$ in $\R\setminus\mathcal N$ to obtain
\[
\int_{X_2}g\weakgrad{f}^2\,\d\mm_2\geq \int_{X_1} \tilde g \weakgrad{\tilde f}^2\,\d\mm_1.
\]
Arguing similarly starting from the inequality $g\weakgrad{f_\eps}^2\leq D^-(f_\eps g)(\nabla f_\eps)-f_\eps D^-g(\nabla f_\eps)$ we get $\int_{X_2}g\weakgrad{f}^2\,\d\mm_2\leq \int_{X_1} \tilde g \weakgrad{\tilde f}^2\,\d\mm_1$ and thus
\[
\int_{X_2}g\weakgrad{f}^2\,\d\mm_2=\int_{X_1} \tilde g \weakgrad{\tilde f}^2\,\d\mm_1.
\]
Observing that $\int_{X_2}g\weakgrad{f}^2\,\d\mm_2=\int_{X_1}\tilde g\weakgrad{f}^2\circ T\,\d\mm_1$, from the arbitrariness of $g$ we deduce that \eqref{eq:puntu} holds for non-negative $f\in W^{1,2}\cap L^\infty(X_2)$. It is now obvious that the restriction to non-negative functions can be dropped. The general case then follows via a truncation argument using the local nature of the thesis.
\end{proof}

\begin{remark}\label{re:chissa}{\rm
It is natural to ask whether substituting the assumption \eqref{eq:glob} with the weaker
\[
\int_{X_1}\weakgrad{(f\circ T)}^2\,\d\mm_1\leq \int_{X_2}\weakgrad{f}^2\,\d\mm_2,
\]
one could deduce
\[
\weakgrad{(f\circ T)}\leq \weakgrad f\circ T,\qquad\mm_1\ae.
\]
This is indeed the case on the smooth Riemannian/Finslerian case. It is unclear to us if the same holds in the abstract setting.
}\fr\end{remark}
\begin{lemma}[Contractions by local duality]\label{le:contrdual}
Let $(X_1,\sfd_1,\mm_1)$, and  $(X_2,\sfd_2,\mm_2)$ be two metric measure spaces with the Sobolev-to-Lipschitz property (Definition \ref{def:sobtolip})  and   $T:X_1\to X_2$ a  Borel map such that $T_\sharp\mm_1\leq C\mm_2$ for some $C>0$. Assume also that $\mm_2$ gives finite mass to bounded sets. Then the following are equivalent
\begin{itemize}
\item[i)] $T$ is $\mm_1$-a.e. equivalent to a 1-Lipschitz map  from $(\supp(\mm_1),\sfd_1)$ to $(\supp(\mm_2),\sfd_2)$, i.e. there exists a 1-Lipschitz map $\tilde T$ from $(\supp(\mm_1),\sfd_1)$ to $(\supp(\mm_2),\sfd_2)$ such that $\tilde T=T$ $\mm_1$-a.e..
\item[ii)] For any $f\in W^{1,2}(X_2,\sfd_2,\mm_2)$ it holds $f\circ T\in  W^{1,2}(X_1,\sfd_1,\mm_1)$ with 
\[
\weakgrad{(f\circ T)}\leq \weakgrad{ f}\circ T,\qquad \mm_1\ae.
\]
\end{itemize} 
\end{lemma}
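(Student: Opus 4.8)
The statement is a biconditional, and the implication $(i)\Rightarrow(ii)$ is the easy direction while $(ii)\Rightarrow(i)$ is where the Sobolev-to-Lipschitz hypothesis really bites. For $(i)\Rightarrow(ii)$, I would assume $T=\tilde T$ $\mm_1$-a.e.\ with $\tilde T$ $1$-Lipschitz from $(\supp(\mm_1),\sfd_1)$ to $(\supp(\mm_2),\sfd_2)$. Given $f\in W^{1,2}(X_2)$, first use Theorem \ref{thm:stronglip} (strong density of Lipschitz functions, valid since the $X_i$ need not be infinitesimally Hilbertian here — actually one should use Theorem \ref{thm:energylip}, which holds on arbitrary metric measure spaces) to pick Lipschitz $f_n\to f$ in $W^{1,2}(X_2)$ with $\lip(f_n)\to\weakgrad f$ in $L^2(X_2)$. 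Each $f_n\circ\tilde T$ is Lipschitz on $\supp(\mm_1)$ with $\lip(f_n\circ\tilde T)\leq (\lip f_n)\circ\tilde T$ by the chain rule for local Lipschitz constants along a $1$-Lipschitz map, hence $f_n\circ T\in\s^2(X_1)$ with $\weakgrad{(f_n\circ T)}\leq \lip(f_n)\circ\tilde T$ $\mm_1$-a.e.\ by \eqref{eq:lipweak}. Since $T_\sharp\mm_1\leq C\mm_2$ we get $\int_{X_1}(\lip f_n)^2\circ T\,\d\mm_1\leq C\int_{X_2}(\lip f_n)^2\,\d\mm_2$, so the functions $\weakgrad{(f_n\circ T)}$ are bounded in $L^2(X_1)$; passing to a weak limit and using lower semicontinuity of minimal weak upper gradients (stated after Definition \ref{def:parigi}), together with $f_n\circ T\to f\circ T$ in $L^2(X_1)$ (again by $T_\sharp\mm_1\leq C\mm_2$), yields $f\circ T\in\s^2(X_1)$ and $\weakgrad{(f\circ T)}\leq \weakgrad f\circ T$ $\mm_1$-a.e., after checking $(\lip f_n)\circ T\to \weakgrad f\circ T$ in $L^2(X_1)$.

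For $(ii)\Rightarrow(i)$ I would exploit that $(X_2,\sfd_2)$ is separable and that distance functions are $1$-Lipschitz, hence in $W^{1,2}$ on bounded sets. Fix a countable dense set $\{y_k\}\subset\supp(\mm_2)$ and, for each $k$ and each integer $m$, set $f_{k,m}:=\max\{0,\min\{\sfd_2(\cdot,y_k),\,m-\sfd_2(\cdot,y_k)\}\}$; these are $1$-Lipschitz with bounded support, so $f_{k,m}\in W^{1,2}(X_2)$ with $\weakgrad{f_{k,m}}\leq 1$ $\mm_2$-a.e. By hypothesis $(ii)$, $f_{k,m}\circ T\in W^{1,2}(X_1)$ with $\weakgrad{(f_{k,m}\circ T)}\leq \weakgrad{f_{k,m}}\circ T\leq 1$ $\mm_1$-a.e.; since $(X_1,\sfd_1,\mm_1)$ has the Sobolev-to-Lipschitz property, each $f_{k,m}\circ T$ has a $1$-Lipschitz representative, and there is a common $\mm_1$-negligible set $\mathcal N$ outside which all of them are simultaneously $1$-Lipschitz (countably many functions). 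For $x,x'\in\supp(\mm_1)\setminus\mathcal N$ one then estimates
\[
\sfd_2(T(x),T(x'))=\sup_{k,m}\big|f_{k,m}(T(x))-f_{k,m}(T(x'))\big|\leq \sfd_1(x,x'),
\]
where the first equality uses density of $\{y_k\}$ and that $\sfd_2(T(x),\cdot)$ is recovered as a sup of the $f_{k,m}$ on bounded sets, and the inequality is the $1$-Lipschitz bound. Thus the restriction of $T$ to $\supp(\mm_1)\setminus\mathcal N$ is $1$-Lipschitz into $\supp(\mm_2)$, and — here using that $\supp(\mm_1)$ is the support, so $\supp(\mm_1)\setminus\mathcal N$ is dense in it — this restriction extends uniquely to a $1$-Lipschitz map $\tilde T:\supp(\mm_1)\to\supp(\mm_2)$ which agrees with $T$ $\mm_1$-a.e., giving $(i)$.

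The main obstacle I anticipate is the bookkeeping in the last display: one must verify carefully that the supremum of the truncated distance functions over the countable dense set genuinely reconstructs $\sfd_2(T(x),T(x'))$, i.e.\ that for points in bounded regions the truncation level $m$ can be taken large enough and some $y_k$ can be taken close enough to $T(x)$; this requires knowing a priori that $T(x)$ ranges in a fixed bounded region only after localizing (which one can do since $\mm_2$ gives finite mass to bounded sets, so one works on countably many balls $B_R(\bar y)$ and takes $x$ with $T(x)\in B_R(\bar y)$, a positive-$\mm_1$-measure event by $T_\sharp\mm_1\leq C\mm_2$, then lets $R\to\infty$). A secondary technical point is the measurability/negligibility argument needed to pass from "$\mm_1$-a.e.\ $1$-Lipschitz" to an honest $1$-Lipschitz representative defined on all of $\supp(\mm_1)$; this is routine but must be stated, using that each $f_{k,m}\circ T$ already has a genuine $1$-Lipschitz representative and that finitely-many-at-a-time one can intersect the negligible sets, then take a countable union.
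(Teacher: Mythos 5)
Your proof is correct and follows essentially the same argument as the paper: the $(ii)\Rightarrow(i)$ direction uses the identical family of truncated distance functions $f_{k,n}$, the Sobolev-to-Lipschitz property to extract $1$-Lipschitz representatives off a common negligible set, and the supremum identity to recover the distance, while $(i)\Rightarrow(ii)$ — which the paper dismisses as "obvious" — you flesh out correctly via density of Lipschitz functions and lower semicontinuity of minimal weak upper gradients. The added care you take about extending from $\supp(\mm_1)\setminus\mathcal N$ to all of $\supp(\mm_1)$ by density is a legitimate step the paper leaves implicit.
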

\begin{proof}$\ $\\
\noindent{${\mathbf{ (i)\Rightarrow (ii)}}$} Obvious.

\noindent{${\mathbf{ (ii)\Rightarrow (i)}}$} Let   $\{y_n\}_{n\in\N}\subset X_2$ be a countable dense set and for $k,n\in\N$ define $f_{k,n}:X_2\to\R$ by 
\[
f_{k,n}:=\max\{0,\min\{\sfd_2(\cdot,y_n),k-\sfd_2(\cdot,y_n)\}\}. 
\]
Since  $f_{n}:X_2\to\R$ is 1-Lipschitz with bounded support, from the assumption that $\mm_2$ gives finite mass to bounded sets we deduce $f_{k,n}\in W^{1,2}(X_2)$. It is also clear that, being 1-Lipschitz, we also have $\weakgrad {f_{k,n}}\leq 1$ $\mm_2$-a.e..

Using our  assumption  we deduce that $f_{k,n}\circ T$ is in $W^{1,2}(X_1)$ and $\weakgrad{ (f_{k,n}\circ T)}\leq 1$ $\mm_1$-a.e.. Now we use the Sobolev-to-Lipschitz property  to deduce that there exists an $\mm_1$-negligible Borel set $\mathcal N_{k,n}$ such that the restriction of  $f_{k,n}\circ T$ to $X_1\setminus\mathcal N_{n}$ is 1-Lipschitz.

Let $\mathcal N:=\cup_{k,n}\mathcal N_{k,n}$ so that $\mathcal N$ is Borel and  $\mm_1$-negligible and observe that the inequality
\[
\sfd_1(x,y)\geq \sup_{k,n\in\N}|f_{k,n}(T(x))-f_{k,n}(T(y))|=\sfd_{2}(T(x),T(y)),\qquad\forall x,y\in X_1\setminus\mathcal N,
\] X
grants that the restriction of $T$ to $X_1\setminus \mathcal N$ is 1-Lipschitz to get the conclusion.
\end{proof}
\begin{proposition}[Isomorphisms via duality with  Sobolev norms]\label{prop:isom}
Let $(X_1,\sfd_1,\mm_1)$\linebreak and $(X_2,\sfd_2,\mm_2)$ be two metric measure spaces with the Sobolev-to-Lipschitz property and let $T:X_1\to X_2$ be a Borel map. Assume that both $\mm_1$ and $\mm_2$ give finite mass to bounded sets. Then the following are equivalent.
\begin{itemize}
\item[i)] Up to a modification on a $\mm_1$-negligible set, $T$ is an isomorphism of the metric measure spaces, i.e.  $T_\sharp\mm_1=\mm_2$ and $\sfd_2(T(x),T(y))=\sfd_1(x,y)$ for any $x,y\in \supp(\mm_1)$.
\item[ii)] The following two are true.
\begin{itemize}
\item[ii-a)] There exist a Borel $\mm_1$-negligible set $\mathcal N\subset X_1$ and a Borel map $S:X_2\to X_1$ such that $S(T(x))=x$, $\forall x\in X_1\setminus \mathcal N$.
\item[ii-b)]The right composition with $T$ produces a bijective isometry of $W^{1,2}(X_2,\sfd_2,\mm_2)$ in $W^{1,2}(X_1,\sfd_1,\mm_1)$, i.e. $f\in W^{1,2}(X_2,\sfd_2,\mm_2)$ if and only if $f\circ T\in W^{1,2}(X_1,\sfd_1,\mm_1)$ and in this case $\|f\|_{W^{1,2}(X_2)}=\|f\circ T\|_{W^{1,2}(X_1)}$.
\end{itemize}
\end{itemize}
\end{proposition}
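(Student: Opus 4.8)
The plan is to prove both implications, with the bulk of the work in $(ii)\Rightarrow(i)$.

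\medskip

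\textbf{Direction $(i)\Rightarrow(ii)$.} If $T$ is (a modification of) an isomorphism of metric measure spaces, then it is in particular an invertible isometry on $\supp(\mm_1)$, so $(ii\textrm{-}a)$ is immediate (take $S$ to be the inverse isometry on $\supp(\mm_2)$, extended arbitrarily in a Borel way off $\supp(\mm_2)$). For $(ii\textrm{-}b)$, the point is that Sobolev spaces, minimal weak upper gradients and the $W^{1,2}$-norm are manifestly invariant under an isomorphism of metric measure spaces: a test plan on $X_2$ pushes forward via $S$ to a test plan on $X_1$ and conversely via $T$ (using $T_\sharp\mm_1=\mm_2$ and the isometry property, which together guarantee $(\e_t)_\sharp$-bounds and metric-speed identities are preserved), so the defining inequality \eqref{eq:defsob} transfers both ways. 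Hence $f\in W^{1,2}(X_2)\Leftrightarrow f\circ T\in W^{1,2}(X_1)$ with $\weakgrad{(f\circ T)}=\weakgrad f\circ T$ $\mm_1$-a.e., and after integrating and adding the (trivially preserved) $L^2$-norms we get $\|f\|_{W^{1,2}(X_2)}=\|f\circ T\|_{W^{1,2}(X_1)}$.

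\medskip

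\textbf{Direction $(ii)\Rightarrow(i)$.} Assume $(ii\textrm{-}a)$ and $(ii\textrm{-}b)$. The first task is to show $T_\sharp\mm_1=\mm_2$. Since $S\circ T=\Id$ $\mm_1$-a.e., $T$ is injective off a null set, so $T_\sharp\mm_1$ is a well-defined Borel measure on $X_2$; I would argue it equals $\mm_2$ by testing against Sobolev (indeed Lipschitz, boundedly supported) functions and using that $\|f\circ T\|_{L^2(X_1)}=\|f\|_{L^2(X_2)}$, which is part of $(ii\textrm{-}b)$ (the $W^{1,2}$ isometry contains the $L^2$ isometry once we separately control the gradient terms — or more directly, apply the norm identity to $f$ and to $cf$ and to $f+g$ to extract that $\int f^2\circ T\,\d\mm_1=\int f^2\,\d\mm_2$ for a rich enough class of $f$, which by a monotone class argument forces $T_\sharp\mm_1=\mm_2$). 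Having this, $(ii\textrm{-}b)$ feeds into Lemma \ref{le:puntuale} (whose hypotheses are now exactly met): we upgrade the global norm identity \eqref{eq:glob} to the pointwise identity $\weakgrad{(f\circ T)}=\weakgrad f\circ T$ $\mm_1$-a.e. for every $f\in W^{1,2}(X_2)$. In particular $T$ satisfies condition $(ii)$ of Lemma \ref{le:contrdual}, so $T$ agrees $\mm_1$-a.e. with a $1$-Lipschitz map $\supp(\mm_1)\to\supp(\mm_2)$; after replacing $T$ by this representative we may assume $T$ itself is $1$-Lipschitz on $\supp(\mm_1)$.

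\medskip

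\textbf{Symmetrizing and finishing.} Now the inverse map is handled by symmetry. The relation $S\circ T=\Id$ $\mm_1$-a.e.\ together with $T_\sharp\mm_1=\mm_2$ gives $T\circ S=\Id$ $\mm_2$-a.e., so $S$ is (up to null sets) the two-sided inverse of $T$ and $S_\sharp\mm_2=\mm_1$. For any $f\in W^{1,2}(X_1)$ write $f=(f\circ S)\circ T$; since $f\circ S$ is a Borel function on $X_2$ and $f=(f\circ S)\circ T\in W^{1,2}(X_1)$, the "only if" half of $(ii\textrm{-}b)$ forces $f\circ S\in W^{1,2}(X_2)$ with $\|f\circ S\|_{W^{1,2}(X_2)}=\|f\|_{W^{1,2}(X_1)}$. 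Thus right composition with $S$ is likewise a bijective $W^{1,2}$-isometry with $S_\sharp\mm_2=\mm_1$, so Lemmas \ref{le:puntuale} and \ref{le:contrdual} apply to $S$ as well and, after a further modification on an $\mm_2$-null set, $S$ is $1$-Lipschitz on $\supp(\mm_2)$. One must check the modifications of $T$ and $S$ can be chosen to remain mutually inverse on the supports (this is a routine measure-theoretic bookkeeping step: both $T\circ S$ and $S\circ T$ equal the identity off null sets, and two $1$-Lipschitz — hence continuous — maps agreeing $\mm$-a.e.\ on $\supp(\mm)$ agree everywhere on $\supp(\mm)$, since $\supp(\mm)$ is the closure of any full-measure subset). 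Finally, for $x,y\in\supp(\mm_1)$ the $1$-Lipschitz bounds give $\sfd_2(T(x),T(y))\le\sfd_1(x,y)$ and $\sfd_1(S(u),S(v))\le\sfd_2(u,v)$; plugging $u=T(x),v=T(y)$ and using $S\circ T=\Id$ on $\supp(\mm_1)$ yields the reverse inequality, so $T$ is a bijective isometry $\supp(\mm_1)\to\supp(\mm_2)$ with $T_\sharp\mm_1=\mm_2$, i.e.\ an isomorphism of metric measure spaces, which is $(i)$.

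\medskip

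\textbf{Main obstacle.} The delicate point is not the Lipschitz/duality machinery — that is exactly what Lemmas \ref{le:puntuale} and \ref{le:contrdual} are built for — but rather extracting $T_\sharp\mm_1=\mm_2$ from the $W^{1,2}$-isometry statement together with the merely one-sided inverse in $(ii\textrm{-}a)$, and then keeping the various $\mm_1$- and $\mm_2$-negligible modification sets compatible so that the final $T$ and $S$ are genuine mutual inverses \emph{on the supports} rather than just almost everywhere. Both are soft measure-theoretic arguments, but they are where one has to be careful; everything else is an application of the already-established calculus rules and of the two preceding lemmas.
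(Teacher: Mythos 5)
Your overall architecture matches the paper's: establish $T_\sharp\mm_1=\mm_2$, feed it into Lemma \ref{le:puntuale} to upgrade the global $W^{1,2}$-isometry to the pointwise identity $\weakgrad{(f\circ T)}=\weakgrad f\circ T$, apply Lemma \ref{le:contrdual} to get a $1$-Lipschitz representative, and symmetrize in $S$; your bookkeeping that $S_\sharp\mm_2=\mm_1$ and $T\circ S=\Id$ $\mm_2$-a.e.\ is sound and close to the paper's. However, the pivotal first step---showing $T_\sharp\mm_1=\mm_2$ from $(ii)$---is where you have a genuine gap, and the two methods you sketch for it do not work.

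You write that $\|f\circ T\|_{L^2(X_1)}=\|f\|_{L^2(X_2)}$ is ``part of $(ii$-$b)$''. It isn't: $(ii$-$b)$ only equates the full $W^{1,2}$ norms, i.e.\ $\|f\|_{L^2}^2+\|\weakgrad f\|_{L^2}^2$. Extracting the $L^2$ component on its own is exactly the nontrivial content. Your fallback---apply the norm identity to $f$, $cf$, and $f+g$ and polarize---also fails, because in a general metric measure space the map $f\mapsto\int\weakgrad f^2\,\d\mm$ is not a quadratic form (this is precisely what infinitesimal Hilbertianity would buy, and it is not assumed here). So expanding $\|f+g\|_{W^{1,2}}^2$ does not split into an $L^2$ cross term plus a gradient cross term; there is simply no polarization identity available for the gradient part, and hence no way to isolate $\int fg\,\d\mm_2$ this way. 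Applying the identity to $cf$ gives nothing new since it is just $c^2$ times the identity for $f$.

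The paper's fix is not a generic polarization but a specifically engineered auxiliary function: fix $\bar x\in\supp(\mm_2)$ and set $\nchi_r(x)=\max\{0,\min\{1,2-\sfd_2(x,\bar x)/r\}\}$. Then $\nchi_r\in W^{1,2}(X_2)$ (here you need $\mm_2$ to give finite mass to bounded sets), $\nchi_r\equiv1$ on $B_r(\bar x)$, and by locality $\weakgrad{\nchi_r}\equiv0$ $\mm_2$-a.e.\ on $B_r(\bar x)$. For $f\in W^{1,2}(X_2)$ with $\supp(f)\subset B_r(\bar x)$, locality of the minimal weak upper gradient forces $\weakgrad{(\nchi_r-f)}=\weakgrad f$ on $B_r(\bar x)$ and $\weakgrad{(\nchi_r-f)}=\weakgrad{\nchi_r}$ off it. So in the combination $\|f\|_{W^{1,2}(X_2)}^2-\|\nchi_r-f\|_{W^{1,2}(X_2)}^2-\|\nchi_r\|_{W^{1,2}(X_2)}^2$ the gradient contributions collapse, leaving (up to a term depending only on $\nchi_r$) exactly $2\int_{X_2}f\nchi_r\,\d\mm_2=2\int_{X_2}f\,\d\mm_2$. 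Now apply $(ii$-$b)$ to each of the three $W^{1,2}$-norms; the $\nchi_r$-dependent terms match on both sides and cancel, giving $\int_{X_2}f\,\d\mm_2=\int_{X_1}f\circ T\,\d\mm_1$ for all $f\in W^{1,2}(X_2)$ supported in $B_r(\bar x)$. Letting $r\to\infty$ and using a monotone-class argument gives $T_\sharp\mm_1=\mm_2$. With that in hand, the remainder of your proof goes through as written.
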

\begin{proof}

\noindent{${\mathbf{ (i)\Rightarrow (ii)}}$} Obvious.

\noindent{${\mathbf{ (ii)\Rightarrow (i)}}$} Pick $\bar x\in\supp(\mm_2)$ and for each  $r>0$ consider the function $\nchi_r:X_2\to[0,1]$ defined by $\nchi_r(x):=\max\{0,\min\{1,2-\sfd(x,\bar x)/r\}\}$, so that $\nchi_r\in W^{1,2}(X_2,\sfd_2,\mm_2)$ and $\weakgrad{\nchi_r}=0$ $\mm_2$-a.e. on $B_r(\bar x)$. 

For $f\in W^{1,2}(X_2)$ with $\supp(f)\subset B_r(\bar x)$ the locality property \eqref{eq:localgrad} grants that\linebreak $\weakgrad{(\nchi_r-f)}=\weakgrad f$ $\mm_2$-a.e. on $B_r(\bar x)$ and $\weakgrad{(\nchi_r-f)}=\weakgrad {\nchi_r}$ $\mm_2$-a.e. on $X_2\setminus B_r(\bar x)$. Hence by direct computation we get
\[
\|f\|_{W^{1,2}(X_2)}^2-\|\nchi_r-f\|_{W^{1,2}(X_2)}^2-\|\nchi_r\|_{W^{1,2}(X_2)}^2=2\int_{X_2}f\nchi_r\,\d\mm_2=2\int_{X_2}f\,\d\mm_2
\]
Taking into account our assumption $(ii$-$b)$ we deduce
\[
\int_{X_2}f\,\d\mm_2=\int_{X_1}f\circ T\,\d\mm_1,\qquad\forall f\in W^{1,2}(X_2)\textrm{ such that  } \supp(f)\subset B_r(\bar x),
\]
so that taking into account the  arbitrariness of $r>0$  we deduce  $T_\sharp\mm_1=\mm_2$. 

In particular, the right composition with $T$ provides an isometry of $L^2(X_2)$ into $L^2(X_1)$ and thus by the assumption $(ii$-$b)$ we get that $f\in W^{1,2}(X_2)$ if and only if $f\circ T\in W^{1,2}(X_1)$ and in this case it holds $\int_{X_1}\weakgrad{(f\circ T)}^2\,\d\mm_1=\int_{X_2}\weakgrad{f}^2\,\d\mm_2$.  Applying first Lemma \ref{le:puntuale} and then Lemma \ref{le:contrdual} we get the existence of a Borel $\mm_1$-negligible set $\mathcal N_1\subset X_1$ such that the restriction of $T$ to $X_1\setminus \mathcal N_1$ is 1-Lipschitz.

Now we claim that $S_\sharp\mm_2=\mm_1$. To see this, observe that for $x\in X_1\setminus\mathcal N$ the identity $S(T(x))=x$ yields $\{x\}\subset T^{-1}(S^{-1}(x))$ and therefore $E\subset T^{-1}(S^{-1}(E))$ for every Borel $E\subset X_1$ with $E\cap \mathcal N=\emptyset$, which gives
\[
\mm_1(E)=\mm_1(E\setminus\mathcal N)\leq \mm_1(T^{-1}(S^{-1}(E\setminus \mathcal N)))=\mm_2(S^{-1}(E\setminus \mathcal N))\leq \mm_2(S^{-1}(E)),
\]
for every Borel set $E\subset X_1$. Similarly, if $\tilde x\in T^{-1}(S^{-1}(x))\setminus\mathcal N$ we have both $S(T(\tilde x))=x$ (because $\tilde x\in T^{-1}(S^{-1}(x))$) and $S(T(\tilde x))=\tilde x$ (because $\tilde x\notin\mathcal N$). Hence $\{x\}\supset T^{-1}(S^{-1}(x))\setminus \mathcal N$ and thus $E \supset T^{-1}(S^{-1}(x))\setminus \mathcal N$ for every Borel set $E\subset X_1$, which yields
\[
\mm_1(E)\geq \mm_1( T^{-1}(S^{-1}(E))\setminus \mathcal N)=\mm_1( T^{-1}(S^{-1}(E)))=\mm_2(S^{-1}(E)),
\]
for every Borel set $E\subset X_1$. Thus $S_\sharp\mm_2=\mm_1$ as claimed. In particular, the Borel set $F\subset X_2$\linebreak of $x$'s such that $T(S(x))\neq x$ is $\mm_2$-negligible (because if $x\in T^{-1}(F)$ it holds $T(S(T(x)))\neq T(x)$ and thus $S(T(x))\neq x$, so that $T^{-1}(F)\subset \mathcal N$). Therefore for any Borel function $g:X_2\to\R$ it holds $g\circ T\circ S=g$ $\mm_2$-a.e. and assumption $(ii$-$b)$ yields that the right composition with $S$ produces an isometry of $W^{1,2}(X_1)$ with $W^{1,2}(X_2)$. Arguing as before we therefore deduce that there exists some Borel $\mm_2$-negligible set $\mathcal N_2$ such that the restriction of $S$ to $X_2\setminus\mathcal N_2$ is 1-Lipschitz.

Conclude observing that $\mathcal N_1\cup T^{-1}(\mathcal N_2)\subset X_1$ is Borel and $\mm_1$-negligible and that for $x,y\in X_1\setminus (\mathcal N_1\cup T^{-1}(\mathcal N_2))$ it holds $\sfd_1(x,y)=\sfd_2(T(x),T(y))$.
\end{proof}
\begin{remark}{\rm
The assumption about the left invertibility of $T$ cannot be dropped, as shown by the example where $(X_1,\sfd_1)$ is made by two distant copies of $(X_2,\sfd_2)$ each one carrying the measure $\frac12\mm_2$.
}\fr\end{remark}
\begin{remark}{\rm
It is unclear to us if the assumption `the measures give finite mass to bounded sets' can be dropped or not.
}\fr\end{remark}

\begin{remark}{\rm Proposition \ref{prop:isom} can be interpreted in terms of category theory by saying that: in the category of metric measure spaces with the Sobolev-to-Lipschitz property and with measures giving finite mass to bounded sets, a reasonable choice of morphisms is given by
\[
T:X\to Y\textrm{ is a morphism if it is Borel and }\|f\circ T\|_{W^{1,2}(X)}\leq \|f\|_{W^{1,2}(Y)},\ \forall f:Y\to\R\ \textrm{ Borel},
\]
the role of Proposition \ref{prop:isom} being to tell that   $X$ and $Y$ are isomorphic if and only if there are morphisms $T:X\to Y$ and $S:Y\to X$ with $S\circ T=\Id_X$ and $T\circ S=\Id_Y$. 

This is in analogy with the fact that in the category {\bf Met} of metric spaces the natural choice of morphisms is given by 1-Lipschitz maps (see also the notion of enriched category  \cite{Lawvere74}), or equivalently
\[
T:X\to Y\textrm{ is a morphism if } \Lip_X(f\circ T)\leq \Lip_Y(f),\quad\forall f:Y\to \R.
\]
}\fr\end{remark}
\begin{remark}{\rm
There is nothing special about the Sobolev exponent 2 in Proposition \ref{prop:isom}. Similar results hold for maps preserving the $W^{1,p}$-norm, provided the appropriate reformulation of the Sobolev-to-Lipschitz property is considered.
}\fr\end{remark}

We are now ready to prove the main result of the chapter:
\begin{theorem}[The gradient flow of $\b$ preserves the distance]\label{thm:gfpresdist} Assume \eqref{eq:assfin} and recall that the map $\X$ is defined in Theorem \ref{thm:gfpresmes}. Then:
\begin{itemize}
\item[i)] There exists a unique continuous map $\bar\X:\supp(\mm)\times\R\to\supp(\mm)$ such that $\bar\X=\X$ $\mm\times\mathcal L^1$-a.e..
\item[ii)] For every $t\in\R$ the map $\bar\X_t:(\supp(\mm),\sfd)\to(\supp(\mm),\sfd)$ is an isometry.
\item[iii)] It holds $\bar\X_t(\bar \X_s(x))=\bar\X_{t+s}(x)$, for any $x\in\supp(\mm)$ and $t,s\in\R.$
\end{itemize}
\end{theorem}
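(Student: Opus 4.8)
The plan is to combine the Sobolev-level rigidity of Proposition \ref{prop:crucial} with the abstract duality principle of Proposition \ref{prop:isom}, obtaining for each fixed $t$ an isometric representative of $\X_t$, and then to glue these pointwise-defined isometries into a jointly continuous flow using the $\mm$-a.e.\ group and distance identities of Theorem \ref{thm:gfpresmes}.

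First I would fix $t\in\R$ and apply Proposition \ref{prop:isom} with $(X_1,\sfd_1,\mm_1)=(X_2,\sfd_2,\mm_2)=(\supp(\mm),\sfd,\mm)$ and $T=\X_t$. All of its hypotheses are available: $\mm$ gives finite mass to bounded sets by the Bishop-Gromov bound (Proposition \ref{prop:BG}); $(\supp(\mm),\sfd,\mm)$ has the Sobolev-to-Lipschitz property by Theorem \ref{thm:link2}; assumption (ii-a) holds with the Borel left inverse $S:=\X_{-t}$, since $\X_{-t}\circ\X_t=\Id$ $\mm$-a.e.\ by Theorem \ref{thm:gfpresmes} (cf.\ Proposition \ref{prop:unica}); and assumption (ii-b), that $f\mapsto f\circ\X_t$ is a bijective isometry of $W^{1,2}$, is exactly Proposition \ref{prop:crucial}. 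Hence, after modifying $\X_t$ on an $\mm$-negligible set, one gets a representative $\hat\X_t\colon\supp(\mm)\to\supp(\mm)$ with $(\hat\X_t)_\sharp\mm=\mm$ and $\sfd(\hat\X_t(x),\hat\X_t(y))=\sfd(x,y)$ for all $x,y\in\supp(\mm)$; the image of $\hat\X_t$ is closed and of full $\mm$-measure, hence equals $\supp(\mm)$, so $\hat\X_t$ is a bijective isometry.

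Next I would assemble $\bar\X$. For $t,s\in\Q$ the identity $\sfd(\X_t(x),\X_s(x))=|t-s|$ from \eqref{eq:distfl}, valid for $\mm$-a.e.\ $x$, transfers to every $x\in\supp(\mm)$: both sides are continuous in $x$ and agree $\mm$-a.e., hence on the support. Consequently, for each $x\in\supp(\mm)$ the map $\Q\ni t\mapsto\hat\X_t(x)$ is an isometric embedding of $\Q$ into the complete space $\supp(\mm)$, and it extends uniquely to a $1$-Lipschitz map $\R\ni t\mapsto\bar\X_t(x)$; I set $\bar\X(x,t):=\bar\X_t(x)$, which lies in $\supp(\mm)$ since the latter is closed. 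Letting rationals approach arbitrary reals in $\sfd(\hat\X_t(x),\hat\X_t(y))=\sfd(x,y)$ and in $\sfd(\hat\X_t(x),\hat\X_s(x))=|t-s|$ propagates these identities to all real $t,s$; the first is statement (ii), and together with the triangle inequality the two give $\sfd(\bar\X(x,t),\bar\X(y,s))\le\sfd(x,y)+|t-s|$, so $\bar\X$ is Lipschitz-continuous on $\supp(\mm)\times\R$. For (i): off a single $\mm$-negligible set one has $\X_t(x)=\hat\X_t(x)=\bar\X_t(x)$ for every $t\in\Q$ and $t\mapsto\X_t(x)$ is continuous, so the two continuous curves $t\mapsto\X_t(x)$ and $t\mapsto\bar\X_t(x)$ coincide there, which gives $\bar\X=\X$ $\mm\times\mathcal L^1$-a.e.; uniqueness of the continuous representative follows because two of them must agree $\mm$-a.e.\ for $\mathcal L^1$-a.e.\ $t$, hence on $\supp(\mm)$ for a dense set of $t$, hence for all $t$. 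Finally, for the group law (iii): for $t,s\in\Q$, using $(\X_s)_\sharp\mm=\mm$, the relation \eqref{eq:group} gives $\hat\X_t\circ\hat\X_s=\hat\X_{t+s}$ $\mm$-a.e., hence everywhere on $\supp(\mm)$ by continuity; choosing rationals $t_n\to t$, $s_n\to s$ and using the joint continuity just proved, one passes to the limit to get $\bar\X_t(\bar\X_s(x))=\bar\X_{t+s}(x)$ for all $x\in\supp(\mm)$ and all $t,s\in\R$.

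The analytic substance has already been discharged by Propositions \ref{prop:crucial} and \ref{prop:isom}, so the main work here is of a soft nature: verifying that the hypotheses of Proposition \ref{prop:isom} really hold for $T=\X_t$, and systematically upgrading the $\mm$-a.e.\ identities available for the Borel flow $\X$ to pointwise identities for the continuous flow $\bar\X$ via the principle that two continuous maps on $(\supp(\mm),\sfd)$ which agree $\mm$-a.e.\ agree everywhere. I expect the only delicate point to be the careful bookkeeping of the various $\mm$-negligible exceptional sets and of the order of the quantifiers in $x$ and $t$.
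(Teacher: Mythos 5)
Your proof is correct and follows essentially the same route as the paper: Proposition \ref{prop:crucial} plus Proposition \ref{prop:isom} give, for each $t$, an isometric representative of $\X_t$, and then \eqref{eq:distfl}, \eqref{eq:group}, and \eqref{eq:presmeas} upgrade the $\mm$-a.e.\ identities to pointwise ones by continuity. The detour through $\Q$ followed by a Lipschitz extension is harmless but superfluous: since the $\mm$-negligible exceptional set in \eqref{eq:distfl} is independent of $t,s$, the argument ``$\sfd(\hat\X_t(x),\hat\X_s(x))=|t-s|$ holds $\mm$-a.e., both sides are continuous in $x$, hence it holds on all of $\supp(\mm)$'' already works for every pair $(t,s)\in\R^2$, so the family $\{\hat\X_t\}_{t\in\R}$ is directly jointly continuous without any extension step.
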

\begin{proof}$\ $\\
\noindent{${\mathbf{(i),(ii)}}$} Uniqueness is obvious. By Proposition \ref{prop:crucial} we know that for every $t\in\R$ the right composition with $\X_t$ produces an isometry of $W^{1,2}(X)$ into itself. Apply  Proposition \ref{prop:isom} to get the existence of an isometry $\bar\X_t$ of $(\supp(\mm),\sfd)$ into itself $\mm$-a.e. coinciding with $\X_t$. Identity \eqref{eq:distfl} then yields $\sfd(\bar\X_t(x),\bar\X_s(x))=|t-s|$ for every $x\in\supp(\mm)$ and $t,s\in\R$, which gives the continuity of $\bar\X$ jointly in $t,x$.

\noindent{${\mathbf{(iii)}}$} Direct consequence of the group property \eqref{eq:group}, the measure preservation property \eqref{eq:presmeas} and what we just proved.
\end{proof}

\chapter{The quotient space isometrically embeds into the original one}\label{se:quot}
\section{Preliminary notions}
\subsection{Evolution of Kantorovich potentials along geodesics}
We briefly recall how Kantorovich potentials evolve along a $W_2$-geodesic.
\begin{definition}[Hopf-Lax formula]\label{def:hl}
Let $(X,\sfd)$ be a metric space and $f:X\to\R\cup\{\pm\infty\}$ a function. For $t>0$ define the function $Q_tf:X\to\R\cup\{\pm\infty\}$ as
\[
Q_tf(x):=\inf_{y\in X}f(y)+\frac{\sfd^2(x,y)}{2t}.
\]
Put $Q_0f:=f$.
\end{definition}
We recall the following simple continuity result (see e.g. \cite{AmbrosioGigliSavare11} for a proof)
\begin{equation}
\label{eq:basehl2}
 f\in C(X)\qquad\Rightarrow\qquad Q_{s}f(x)\to Q_tf(x)  \textrm{ as $s\to t$ for every  }x\in X,\ t\geq 0,
\end{equation}
and the Lipschitz continuity estimate
\begin{equation}
\label{eq:hllip}
\Lip(Q_tf)\leq 2\sqrt{\frac{\sup f-\inf f}{t}},
\end{equation}
which directly follows from the definition.

The next proposition shows in what sense the evolution of Kantorovich potentials is driven by the Hopf-Lax formula, see e.g. Theorem 7.36 in \cite{Villani09} or Theorem 2.18 in \cite{AmbrosioGigli11} for a proof.
\begin{proposition}\label{prop:kantev}
Let $(X,\sfd)$ be a metric space, $(\mu_t)\subset \probt X$ a geodesic and $\varphi: X\to\R\cup\{-\infty\}$ a Kantorovich potential relative to it.

Then for every $t\in [0,1]$:
\begin{itemize} 
\item the function $tQ_t(-\varphi)$ is a Kantorovich potential from $\mu_t$ to $\mu_0$,
\item  the function $(1-t)Q_{1-t}(-\varphi^c)$ is a Kantorovich potential from $\mu_t$ to $\mu_1$.
\end{itemize}
Furthermore, for every $t\in[0,1]$ it holds
\begin{equation}
\label{eq:intpot}
\begin{split}
Q_t(-\varphi)+Q_{1-t}(-\varphi^c)&\geq 0,\qquad\textrm{ everywhere},\\
Q_t(-\varphi)+Q_{1-t}(-\varphi^c)&=0,\qquad\textrm{on } \supp(\mu_t),
\end{split}
\end{equation}
and for $t\in(0,1)$ the functions $Q_t(-\varphi)$ and $Q_{1-t}(-\varphi^c)$ are Lipschitz on bounded sets.
\end{proposition}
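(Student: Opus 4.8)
The strategy is to lift the geodesic to an optimal geodesic plan and distill everything from a single computational lemma about the Hopf--Lax flow. First I would pick $\ppi\in\gopt(\mu_0,\mu_1)$ inducing $(\mu_t)$, so that $(\e_t)_\sharp\ppi=\mu_t$ for all $t\in[0,1]$; by optimality and the fact that $\varphi$ is a Kantorovich potential relative to $(\mu_t)$ one has $\gamma_1\in\partial^c\varphi(\gamma_0)$ for $\ppi$-a.e.\ $\gamma$, and $\sfd(\gamma_s,\gamma_t)=|s-t|\,\sfd(\gamma_0,\gamma_1)$ for every $s,t$. The key claim is that for $\ppi$-a.e.\ $\gamma$ and every $t\in(0,1]$ the infimum defining $Q_t(-\varphi)(\gamma_t)$ is attained at $y=\gamma_0$:
\[
Q_t(-\varphi)(\gamma_t)=-\varphi(\gamma_0)+\frac{\sfd^2(\gamma_t,\gamma_0)}{2t}.
\]
To prove this I would use $c$-concavity of $\varphi$: since $\varphi(\gamma_0)=\tfrac12\sfd^2(\gamma_0,\gamma_1)-\varphi^c(\gamma_1)$ and $\varphi(y)\le\tfrac12\sfd^2(y,\gamma_1)-\varphi^c(\gamma_1)$, one gets $\varphi(\gamma_0)-\varphi(y)\ge\tfrac12\big(\sfd^2(\gamma_0,\gamma_1)-\sfd^2(y,\gamma_1)\big)$, and with $a:=\sfd(\gamma_0,\gamma_1)$, $r:=\sfd(\gamma_t,y)$ and the triangle inequality $\sfd(y,\gamma_1)\le r+(1-t)a$ the required inequality $-\varphi(y)+\tfrac1{2t}\sfd^2(\gamma_t,y)\ge-\varphi(\gamma_0)+\tfrac1{2t}\sfd^2(\gamma_t,\gamma_0)$ reduces to the elementary estimate $(1-t)(ta-r)^2\ge0$ (for $t=1$ the claim is just the definition of $\partial^c\varphi$).

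Granting the lemma, the two potential claims are pure bookkeeping with $c$-transforms. Directly from the definitions $tQ_t(-\varphi)=(t\varphi)^c$, so $tQ_t(-\varphi)$ is $c$-concave and $\big(tQ_t(-\varphi)\big)^c=(t\varphi)^{cc}\le t\varphi$. The lemma gives $tQ_t(-\varphi)(\gamma_t)+t\varphi(\gamma_0)=\tfrac12\sfd^2(\gamma_t,\gamma_0)$ for $\ppi$-a.e.\ $\gamma$, which forces $\big(tQ_t(-\varphi)\big)^c(\gamma_0)=t\varphi(\gamma_0)$ and hence $(\gamma_t,\gamma_0)\in\partial^c\big(tQ_t(-\varphi)\big)$. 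Since $(\e_t,\e_0)_\sharp\ppi$ is an optimal plan from $\mu_t$ to $\mu_0$ — because $\int\sfd^2(\gamma_t,\gamma_0)\,\d\ppi=t^2W_2^2(\mu_0,\mu_1)=W_2^2(\mu_t,\mu_0)$, the last identity being a property of $W_2$-geodesics — and it is concentrated on $\partial^c\big(tQ_t(-\varphi)\big)$ with $tQ_t(-\varphi)$ $c$-concave, $tQ_t(-\varphi)$ is a Kantorovich potential from $\mu_t$ to $\mu_0$ (the case $t=0$ being trivial). The second bullet follows by applying this to the $c$-concave function $\varphi^c$, which is a Kantorovich potential relative to the time-reversed geodesic $s\mapsto\mu_{1-s}$, and then substituting $s=1-t$, using $(\varphi^c)^c=\varphi$.

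For the interpolation identities in \eqref{eq:intpot}, adding the lemma to its time-reversed analogue at the point $\gamma_t$ and using $\varphi(\gamma_0)+\varphi^c(\gamma_1)=\tfrac12\sfd^2(\gamma_0,\gamma_1)$ gives $Q_t(-\varphi)(\gamma_t)+Q_{1-t}(-\varphi^c)(\gamma_t)=0$ for $\ppi$-a.e.\ $\gamma$, hence $\mu_t$-a.e. The pointwise bound $Q_t(-\varphi)+Q_{1-t}(-\varphi^c)\ge0$ everywhere follows from $\varphi(y)+\varphi^c(z)\le\tfrac12\sfd^2(y,z)$ together with $\tfrac1{2t}\sfd^2(x,y)+\tfrac1{2(1-t)}\sfd^2(x,z)\ge\tfrac12\sfd^2(y,z)$, valid for all $x,y,z$ and $t\in(0,1)$ by the triangle inequality and the convexity bound $(\alpha+\beta)^2\le\tfrac{\alpha^2}t+\tfrac{\beta^2}{1-t}$. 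For the Lipschitz regularity on bounded sets when $t\in(0,1)$: $c$-concavity of $\varphi$ gives a quadratic upper bound $\varphi(y)\le\tfrac12\sfd^2(y,z_0)-\varphi^c(z_0)$, so for $x$ in a fixed bounded set the quantity $-\varphi(y)+\tfrac1{2t}\sfd^2(x,y)$ tends to $+\infty$ as $\sfd(x,y)\to\infty$ (using $\tfrac1{2t}>\tfrac12$), whence the infimum defining $Q_t(-\varphi)(x)$ may be restricted to $y$ in a bounded set, and the infimum of an equi-Lipschitz family of functions is Lipschitz on bounded sets; likewise for $Q_{1-t}(-\varphi^c)$. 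A function that is $\ge0$ everywhere, vanishes $\mu_t$-a.e., and is continuous (by this regularity) must vanish on all of $\supp(\mu_t)$, giving the second line of \eqref{eq:intpot} for $t\in(0,1)$; the endpoints $t=0,1$ are immediate since $Q_0(-\varphi)=-\varphi$ and $Q_1(-\varphi^c)=(\varphi^c)^c=\varphi$.

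The only step that is not essentially bookkeeping is the core lemma, whose proof hinges on the fiddly but elementary reduction to $(1-t)(ta-r)^2\ge0$; the coercivity estimate behind the Lipschitz regularity is the other place requiring a little care. A recurring minor technicality, handled by restricting the relevant infima to $\supp(\mu_0)$ (resp.\ $\supp(\mu_1)$) where the potentials are real-valued, is that $\varphi$ and $\varphi^c$ may a priori take the value $-\infty$.
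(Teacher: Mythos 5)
Your proof is correct. Note, however, that the paper does not present its own argument for the two potential claims or for \eqref{eq:intpot}: it cites Theorem 7.36 of \cite{Villani09} and Theorem 2.18 of \cite{AmbrosioGigli11} for those, and only sketches the Lipschitz regularity. Your key lemma --- that for $\ppi$-a.e.\ geodesic $\gamma$ the Hopf--Lax infimum defining $Q_t(-\varphi)(\gamma_t)$ is attained at $y=\gamma_0$, reduced by $c$-concavity and the triangle inequality to the scalar identity $(1-t)(ta-r)^2\ge 0$ --- is precisely the computational engine underneath those cited results, and the rest (the identity $tQ_t(-\varphi)=(t\varphi)^c$, restriction of the plan showing $(\e_t,\e_0)_\sharp\ppi$ optimal, and time reversal for the second bullet) is the bookkeeping you carry out explicitly. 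The one place you genuinely depart from the paper is the Lipschitz regularity for $t\in(0,1)$: the paper derives two-sided local boundedness (upper bound by plugging in a fixed $y$, lower bound from the first line of \eqref{eq:intpot}) and then invokes the general fact, quoted from \cite{FigalliGigli11}, that a $c$-concave function locally bounded is locally Lipschitz, whereas you argue directly via a coercivity estimate showing the Hopf--Lax infimum may be restricted to a bounded set of $y$'s, so that the infimand becomes an equi-Lipschitz family in $x$. Your route is more elementary and self-contained; it also quietly sidesteps the point that $Q_t(-\varphi)$ itself is not $c$-concave for $t\in(0,1)$ --- the paper's cited fact must be applied to the rescaled function $tQ_t(-\varphi)=(t\varphi)^c$, after which it transfers by the obvious rescaling. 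The paper's route is shorter once one accepts that abstract regularity result for $c$-concave functions.
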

The aforementioned references do not mention the Lipschitz continuity of $Q_t(-\varphi),Q_{1-t}(-\varphi^c)$ but this can be easily deduced observing that:
\begin{itemize}
\item[-] they are both bounded from above on bounded sets by definition,
\item[-] the first inequality in \eqref{eq:intpot} ensures that they are also bounded from below on bounded sets,
\item[-] a $c$-concave function which is bounded on some open set $\Omega$ is also Lipschitz on any set $C\subset \Omega$ with $\sfd(C,X\setminus\Omega)>0$ (see for instance the argument in \cite{FigalliGigli11}).
\end{itemize}

\subsection{Metric Brenier's theorem}\label{se:metrbren}
In \cite{AmbrosioGigliSavare11} a  metric-measure theoretic version of Brenier's theorem has been proved which links the minimal weak upper gradient of Kantorovich potentials to the $W_2$-distance. The version we give is weaker than the one proved in \cite{AmbrosioGigliSavare11}, but sufficient for our purposes.
\begin{theorem}\label{thm:brenmetr}
Let $(X,\sfd,\mm)$ be a metric measure space, $(\mu_t)\subset\probt X$ a geodesic such that  for some $C,T>0$ it holds $\mu_t\leq C\mm$ for every $t\in[0,T]$ and $\varphi$ a Kantorovich potential inducing it. Assume that $\varphi$ is locally Lipschitz.

Then 
\[
\int \weakgrad \varphi^2\,\d\mu_0=W_2^2(\mu_0,\mu_1).
\]
Assume furthermore that  for some open set $\Omega$ it holds $\varphi\in\s^2(\Omega)$ and $\supp(\mu_t)\subset \Omega$ for every $t\in[0,T]$. Then every lifting $\ppi\in\prob{C([0,1],X)}$ of $(\mu_t)$ represents the gradient of $-\varphi$ in $\Omega$ in the sense of Definition \ref{def:planrepgrad}.
\end{theorem}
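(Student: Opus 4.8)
\textbf{Plan for the proof of Theorem \ref{thm:brenmetr}.}

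The statement has two parts. For the first identity $\int\weakgrad\varphi^2\,\d\mu_0=W_2^2(\mu_0,\mu_1)$, the strategy is the usual two-sided estimate. The inequality $\int\weakgrad\varphi^2\,\d\mu_0\le W_2^2(\mu_0,\mu_1)$ follows from the \emph{metric} side: since $\varphi$ is a Kantorovich potential, for any lifting $\ppi\in\gopt(\mu_0,\mu_1)$ one has $\varphi(\gamma_0)-\varphi^c(\gamma_1)=\frac12\sfd^2(\gamma_0,\gamma_1)$ for $\ppi$-a.e.\ $\gamma$, so that by $c$-concavity $\varphi(\gamma_0)-\varphi(\gamma_t)\ge\frac t2\sfd^2(\gamma_0,\gamma_1)-o(t)$ and hence $\lip(\varphi)(\gamma_0)\ge\sfd(\gamma_0,\gamma_1)$ for $\ppi$-a.e.\ $\gamma$, whence also $\weakgrad\varphi\ge|\dot\gamma_0|$ in an integrated sense after recalling \eqref{eq:lipweak} for the opposite; integrating $\weakgrad\varphi^2$ and using $\int\sfd^2(\gamma_0,\gamma_1)\,\d\ppi=W_2^2(\mu_0,\mu_1)$ gives one inequality, the reverse being the content of the lifting statement below. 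Concretely, to get $\int\weakgrad\varphi^2\,\d\mu_0\le W_2^2$ one tests the Sobolev inequality \eqref{eq:defsob}/\eqref{eq:localplan} against the test plan $\ppi$ (which is a test plan because $\mu_t=(\e_t)_\sharp\ppi\le C\mm$ for $t\in[0,T]$ — here one must restrict the curves to $[0,T]$ and rescale, or rather use that a test plan only needs the density bound for small times, exactly as in Definition \ref{def:planrepgrad}): this yields $\int|\varphi(\gamma_T)-\varphi(\gamma_0)|\,\d\ppi\le\iint_0^T\weakgrad\varphi(\gamma_s)|\dot\gamma_s|\,\d s\,\d\ppi$, and combined with Young's inequality and the fact that $\varphi$ decreases along the optimal curves at the maximal rate this closes the loop. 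The reverse inequality $\int\weakgrad\varphi^2\,\d\mu_0\ge W_2^2$ is the genuinely non-trivial direction and is precisely what the cited reference \cite{AmbrosioGigliSavare11} establishes; I would simply invoke it, since reproving the metric Brenier theorem is outside the scope here.

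For the second part, I must show that any lifting $\ppi$ of $(\mu_t)$ represents the gradient of $-\varphi$ in $\Omega$, i.e.\ verifies \eqref{eq:defrepr} with $g=-\varphi$. The admissibility conditions in Definition \ref{def:planrepgrad} are immediate: $\supp((\e_t)_\sharp\ppi)=\supp(\mu_t)\subset\Omega$ and $(\e_t)_\sharp\ppi=\mu_t\le C\mm$ for $t\in[0,T]$, and $\iint_0^T|\dot\gamma_t|^2\,\d t\,\d\ppi=T\,W_2^2(\mu_0,\mu_1)<\infty$ since $\ppi$ is a lifting of a geodesic. For the key inequality \eqref{eq:defrepr}, I compute
\[
\liminf_{t\downarrow0}\int\frac{-\varphi(\gamma_t)+\varphi(\gamma_0)}{t}\,\d\ppi(\gamma).
\]
Using that $tQ_t(-\varphi)$ is a Kantorovich potential from $\mu_t$ to $\mu_0$ (Proposition \ref{prop:kantev}) and that $\ppi$ is optimal between $\mu_0$ and $\mu_t$ (a restriction/reparametrization of the geodesic plan is still optimal), one has $\varphi(\gamma_0)-\varphi(\gamma_t)=\tfrac12\sfd^2(\gamma_0,\gamma_t)+(\,\text{correction from }Q_t\,)$; more cleanly, the first-variation identity for geodesics in $\probt X$ gives $\varphi(\gamma_0)-\varphi(\gamma_t)\ge\frac t2|\dot\gamma_0|^2+\tfrac t2\sfd^2(\gamma_0,\gamma_1)-\ldots$, and dividing by $t$ and letting $t\downarrow0$, together with $\int|\dot\gamma_0|^2\,\d\ppi=W_2^2$ and the first part $\int\weakgrad\varphi^2\,\d\mu_0=W_2^2$, yields exactly the right-hand side $\tfrac12\int\weakgrad\varphi^2(\gamma_0)\,\d\ppi+\tfrac12\limsup_{t\downarrow0}\tfrac1{2t}\iint_0^t|\dot\gamma_s|^2\,\d s\,\d\ppi$ (the last term being $\tfrac12 W_2^2$ by the constant-speed property). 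The point is that no inequality is wasted because, by the first part, the metric speed and the Sobolev norm of $\varphi$ agree on $\supp(\mu_0)$.

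The main obstacle I anticipate is the bookkeeping around the time interval $[0,T]$ versus $[0,1]$: the density bound $\mu_t\le C\mm$ only holds for small $t$, so one cannot literally use $\ppi$ as a test plan on $[0,1]$, and one must either reparametrize to $[0,T]$ (noting that $(\e_{tT})_\sharp\ppi$-type rescalings produce legitimate test plans and that \eqref{eq:localplan} then applies) or work with the "for some $T$" flexibility already built into Definition \ref{def:planrepgrad}. A secondary subtlety is justifying that the restriction $\ppi\restr{[0,t]}$ (suitably reparametrized) is an optimal geodesic plan between $\mu_0$ and $\mu_t$ so that $\varphi$ (or $tQ_t(-\varphi)$) is indeed a Kantorovich potential for that pair — this is standard (restrictions of optimal plans are optimal) but needs to be stated. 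Once these are in place the computation is the same one-line Young-inequality argument as in the proof of the metric Brenier theorem in \cite{AmbrosioGigliSavare11}, and I would present it as such rather than redoing it from scratch.
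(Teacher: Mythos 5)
The paper does not supply a proof of Theorem \ref{thm:brenmetr}: it is quoted in Section 5.1.2 as a deliberately weakened version of the metric Brenier theorem from \cite{AmbrosioGigliSavare11}, so there is no internal proof to compare against, and your eventual appeal to that reference mirrors what the paper itself does.

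The part of the argument you do spell out, however, has the two inequalities crossed, and this is a real gap. Every step you actually carry out --- the $c$-superdifferential estimate $\varphi(\gamma_0)-\varphi(\gamma_t)\ge\sfd^2(\gamma_0,\gamma_1)(t-t^2/2)$ for $\ppi$-a.e.\ $\gamma$ (and note the sign: the Kantorovich identity is $\varphi(\gamma_0)+\varphi^c(\gamma_1)=\tfrac12\sfd^2(\gamma_0,\gamma_1)$, a sum, not a difference), followed by the test-plan inequality \eqref{eq:localplan} and Young's inequality --- assembles into the \emph{lower} bound $W_2^2(\mu_0,\mu_1)\le\int\weakgrad\varphi^2\,\d\mu_0$. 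You announce this argument, twice, as producing $\int\weakgrad\varphi^2\,\d\mu_0\le W_2^2$, and then cite \cite{AmbrosioGigliSavare11} for the reverse $\ge$; the net effect is that $\ge$ is obtained twice and $\le$ is never addressed. The $\le$ half needs the other side of $c$-concavity: for $z\in\partial^c\varphi(x)$ one has $\varphi(y)-\varphi(x)\le\tfrac12\sfd^2(y,z)-\tfrac12\sfd^2(x,z)$, which yields $\lip(\varphi)(x)\le\sfd(x,\partial^c\varphi(x))$; combined with $\weakgrad\varphi\le\lip(\varphi)$ from \eqref{eq:lipweak} (which you cite but invoke in a way that points the wrong direction) and concentration of the optimal plan on $\partial^c\varphi$, this gives $\int\weakgrad\varphi^2\,\d\mu_0\le\int\sfd^2(\gamma_0,\gamma_1)\,\d\ppi=W_2^2(\mu_0,\mu_1)$. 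Your outline of the second assertion (that liftings represent $\nabla(-\varphi)$) is conceptually on target --- once equality in part one is known, the constant-speed identity $\iint_0^t|\dot\gamma_s|^2\,\d s\,\d\ppi=t\,W_2^2(\mu_0,\mu_1)$ makes Young's inequality saturated and yields \eqref{eq:defrepr} --- but should be phrased via the explicit $c$-superdifferential inequality above rather than an unspecified first-variation formula for Wasserstein geodesics.
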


\subsection{Optimal maps}

Given a metric measure space $(X,\sfd,\mm)$, the relative entropy functional ${\rm Ent}_\mm:\prob X\to\R\cup\{+\infty\}$ is defined as
\[
{\rm Ent}_\mm(\mu):=\left\{\begin{array}{ll}
\displaystyle{\int\rho\log\rho\,\d\mm},&\qquad\textrm{ if $\mu=\rho\mm$ and $(\rho\log\rho)^-\in L^1(X,\mm)$},\\
+\infty,&\qquad\textrm{ otherwise}.
\end{array}
\right.
\]
We shall denote by $D({\rm Ent}_\mm)\subset\probt X$ the set of those $\mu\in\probt X$ such that ${\rm Ent}_\mm(\mu)<\infty$.

We recall the definition of $CD(K,\infty)$ and $RCD(K,\infty)$ spaces (\cite{Lott-Villani09}, \cite{Sturm06I}, \cite{AmbrosioGigliSavare11-2}, \cite{AmbrosioGigliMondinoRajala12}):
\begin{definition}[$CD(K,\infty)$ and $RCD(K,\infty)$]\label{def:cdinfty} A metric measure space $(X,\sfd,\mm)$ is a\linebreak $CD(K,\infty)$ space provided for any $\mu,\nu\in D({\rm Ent}_\mm)$ there exists $\ppi\in\gopt(\mu,\nu)$ such that
\[
{\rm Ent}_\mm((\e_t)_\sharp\ppi)\leq(1-t){\rm Ent}_\mm(\mu)+t{\rm Ent}_\mm(\nu)-\frac K2W_2^2(\mu,\nu),\qquad\forall t\in[0,1].
\]
A $CD(K,\infty)$ space which is also infinitesimally Hilbertian is called $RCD(K,\infty)$ space.
\end{definition}

In the recent paper \cite{RajalaSturm12} (see also \cite{Gigli12a}), the following result has been proved:
\begin{theorem}[Optimal maps in $RCD(K,\infty)$ spaces]\label{thm:optmap}
Let $(X,\sfd,\mm)$ be an  $RCD(K,\infty)$ space and $\mu,\nu\in\probt X$ two measures absolutely continuous w.r.t. $\mm$. 

Then there exists a unique $\ppi\in\gopt(\mu,\nu)$ and this plan is induced by a map, i.e. there exists a Borel map $T:X\to\geo(X)$ such that $\ppi=T_\sharp\mu$.
\end{theorem}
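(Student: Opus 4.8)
The statement to prove is Theorem \ref{thm:optmap}, on existence and uniqueness of optimal maps in $RCD(K,\infty)$ spaces. Since the excerpt attributes this to \cite{RajalaSturm12} (and \cite{Gigli12a}), the plan is to reproduce the structure of that proof, which rests on two pillars: the existence of $W_2$-geodesics along which the densities stay uniformly bounded (a Rajala-type interpolation estimate), and the essential non-branching of $RCD$ spaces, which upgrades ``there is a good geodesic'' to ``the geodesic and the optimal plan are unique and induced by a map.''

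First I would reduce to the case where $\mu,\nu$ have bounded densities and bounded support by a standard restriction-and-glueing argument: decompose $\mu=\sum_i \mu_i$, $\nu=\sum_i\nu_i$ into countably many pieces with $\mu_i,\nu_i\leq C_i\mm$ of bounded support, prove the statement for each normalized pair, and then check that the resulting maps patch together into a single Borel map $T:X\to\geo(X)$ with $\ppi=T_\sharp\mu$ (here one uses that a convex combination of optimal plans between the pieces is optimal, so optimality is preserved under glueing, and that uniqueness for the pieces forces consistency). The key analytic input at the level of the pieces is the interpolation estimate: on an $RCD(K,\infty)$ space, for $\mu_0=\rho_0\mm,\mu_1=\rho_1\mm$ with $\rho_0,\rho_1\leq C$, there is $\ppi\in\gopt(\mu_0,\mu_1)$ with $(\e_t)_\sharp\ppi=\rho_t\mm$ and $\rho_t\leq C'(C,K,W_2(\mu_0,\mu_1))$ for all $t$. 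This is proved via the contraction property of the heat flow / $EVI$ gradient flow of the entropy, which is available precisely because the space is $RCD$; alternatively one can cite Rajala's construction as quoted in Section \ref{se:sobtolip} of the excerpt, which already gives geodesics with bounded interpolated densities on $CD(K,N)$ spaces, and the $RCD(K,\infty)$ version follows from the Bakry-\'Emery estimates also quoted there.

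Next comes the rigidity step. Given such a ``good'' geodesic plan $\ppi$ with uniformly bounded interpolated densities, one shows it is concentrated on non-branching geodesics and that the endpoint map $\gamma\mapsto(\gamma_0,\gamma_1)$ is injective $\ppi$-a.e.; this is where essential non-branching of $RCD$ spaces enters — in fact $RCD(K,\infty)$ spaces are essentially non-branching, a consequence of the uniqueness of $EVI$ gradient flows of the entropy combined with the interpolation estimate. Essential non-branching then forces any two optimal plans with the same marginals to coincide (otherwise one could build a branching geodesic plan contradicting essential non-branching), giving uniqueness of $\ppi$; and the $\mm$-a.e. uniqueness of geodesics between $x$ and $T(x)$ together with a measurable selection (Theorem \ref{thm:borelsel}) produces the Borel map $T:X\to\geo(X)$ with $\ppi=T_\sharp\mu$. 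Finally one verifies $(\e_0,\e_1)_\sharp\ppi$ is induced by the map $x\mapsto(\gamma^x_0,\gamma^x_1)$, i.e. that the optimal coupling itself is a graph.

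The main obstacle is the rigidity/non-branching step rather than the interpolation estimate: establishing that $RCD(K,\infty)$ spaces are essentially non-branching and exploiting this to get uniqueness requires the full strength of the identification of the heat flow as the $EVI_K$ gradient flow of the entropy (quoted in the introduction of the excerpt) together with a careful argument that branching of an optimal geodesic plan with bounded interpolated densities would violate the strong convexity/contractivity estimates. One has to be careful that all estimates are quantitative and stable under the restriction-and-glueing reduction, so that the local maps assemble into a genuinely global Borel map; the measurable-selection bookkeeping, while routine, must be done with some care to ensure Borel measurability of $T$ as a map into $\geo(X)$.
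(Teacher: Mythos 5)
The paper does not actually prove Theorem \ref{thm:optmap}: it is a recalled result from \cite{RajalaSturm12} and \cite{Gigli12a}, and what the paper supplies immediately after the statement is a four-step bibliographic sketch of how the ingredients combine. Your proposal is at a comparable level of detail and correctly names the two structural pillars — essential non-branching and measurable selection — but there are two places where the mechanism you describe diverges from the actual argument and would not close.

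First, the localization. Decomposing $\mu=\sum_i\mu_i$, $\nu=\sum_i\nu_i$ into bounded pieces and then invoking ``a convex combination of optimal plans between the pieces is optimal'' is not correct as stated: optimality of a coupling between $\mu$ and $\nu$ is a global constraint which does not respect a pre-chosen decomposition of the marginals, there is no reason the optimal geodesic plan should pair the piece $\mu_i$ with the piece $\nu_i$, and the mixture $\sum_i\lambda_i\ppi_i$ of piecewise-optimal $\ppi_i\in\gopt(\mu_i,\nu_i)$ will in general fail to be optimal for $(\mu,\nu)$. The localization actually used (step (iv) of the paper's sketch, from \cite{RajalaSturm12}) goes the other way: one starts from an arbitrary $\ppi\in\gopt(\mu,\nu)$ and \emph{restricts the plan} to Borel subsets of $\geo(X)$ so that the induced marginals become bounded; a restriction of a cyclically monotone plan remains optimal between its own marginals, so one can apply the bounded-density result to each restricted plan and only then assemble a global map by a.e.\ uniqueness.

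Second, the source of essential non-branching. You attribute it to ``uniqueness of EVI gradient flows of the entropy combined with the interpolation estimate,'' but this skips the step that does the work. The chain recalled in the paper is: $RCD(K,\infty)$ yields $K$-EVI gradient flows of the entropy (\cite{AmbrosioGigliSavare11-2}); by the Daneri--Savar\'e principle \cite{DaneriSavare08}, existence of $K$-EVI gradient flows forces $K$-convexity of the entropy along \emph{all} $W_2$-geodesics, not merely along the test geodesics appearing in the $CD(K,\infty)$ definition; and Rajala--Sturm \cite{RajalaSturm12} then show that this ``strong'' convexity forces every optimal geodesic plan between absolutely continuous marginals to be concentrated on a set of non-branching geodesics. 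The Daneri--Savar\'e upgrade from ``some'' to ``all'' geodesics is the crucial intermediate and is absent from your account, while Rajala's interpolation estimate with bounded intermediate densities — which you single out as the key analytic input — plays no role in the non-branching step and does not appear in the paper's sketch of the proof.
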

Notice that the uniqueness result is expressed at the level of geodesics, which in particular means that for $\ppi$-a.e. $\gamma$ the geodesic connecting $\gamma_0$ and $\gamma_1$ is unique. In this sense, also in this abstract setting we recover the fact that `optimal maps almost never hits the cut locus', a well known property of optimal transport   in the smooth framework of Riemannian manifolds.

This result can certainly be seen as a generalization of the well-known Brenier-McCann theorem about optimal maps on Riemannian manifolds. However, the strategy of the proof is very different from the classical one: in order to prove Theorem \ref{thm:optmap}, neither the dual formulation of the transport problem, nor Kantorovich potentials are used, not even implicitly. In particular, this result has no relation with the metric Brenier theorem recalled before.

The full proof of Theorem \ref{thm:optmap} is spread around various recent papers, we recall which are the key steps leading to the result:
\begin{itemize}
\item[i)] In \cite{AmbrosioGigliSavare11-2} (see also \cite{AmbrosioGigliMondinoRajala12}) it is proved that on $RCD(K,\infty)$ spaces the relative entropy admits gradient flows in a sense stronger than the one given in Definition \ref{def:gf}: the so-called $K$-Evolution-Variational-Inequality formulation of gradient flows  (see also Appendix \ref{app:infhil}). 
\item[ii)] A general result in  \cite{DaneriSavare08} tells  that if a functional has gradient flows in the $K$-EVI sense, then it is $K$-convex along \emph{all} geodesics. Thus in particular this is the case for the relative entropy on $RCD(K,\infty)$ spaces.
\item[iii)] in \cite{RajalaSturm12} it is proved that if the relative entropy is $K$-convex along any $W_2$-geodesic, then given absolutely continuous measures $\mu,\nu$ every optimal geodesic plan $\ppi\in\gopt(\mu,\nu)$  must be concentrated on a set of non-branching geodesics
\item[iv)] In \cite{Gigli12a} it has been shown  that on non-branching $CD(K,\infty)$ spaces the same conclusions of Theorem \ref{thm:optmap} hold provided one assumes that both $\mu$ and $\nu$ have finite entropy. In \cite{RajalaSturm12} the authors observed that their result mentioned in $(iii)$ above is sufficient for the  argument in \cite{Gigli12a} to work and  - via a localization procedure  - that the hypothesis  about finiteness of the entropy can be weakened into absolute continuity of the measures, thus leading to Theorem \ref{thm:optmap}.
\end{itemize}
One of the effects of the uniqueness part of Theorem \ref{thm:optmap} is that optimal geodesic plans between absolutely continuous measures are concentrated on a set of non-branching geodesics (as said, this is in fact the ingredient of the proof produced in \cite{RajalaSturm12}). This property allows for a localization of the $CD(K,\infty)$ condition similar to the one available on non-branching metric spaces which we present in the following corollary. Once one has at disposal Theorem \ref{thm:optmap}, the proof follows standard means (see for instance the proof of Theorem 30.32 in \cite{Villani09}) and therefore we omit it.

\begin{corollary}\label{cor:cdpunt}
Let $(X,\sfd,\mm)$ be an $RCD(K,\infty)$ space and $(\mu_t)\subset\probt X$ a geodesic such that $\mu_0,\mu_1\ll\mm$. Then $\mu_t\ll\mm$ for every $t\in[0,1]$, say $\mu_t=\rho_t\mm$, and for any $0\leq t\leq r\leq s\leq 1$ it holds
\begin{equation}
\label{eq:pointcd}
\log(\rho_{r}(\gamma_{r}))\leq \frac{s-r}{s-t}\log(\rho_t(\gamma_t))+\frac{r-t}{s-t}\log(\rho_s(\gamma_s))-\frac K2\frac{(r-t)(s-r)}{(s-t)^2}\sfd^2(\gamma_t,\gamma_s),\qquad\ppi\ae \ \gamma,
\end{equation}
where $\ppi\in\gopt(\mu_0,\mu_1)$ is the unique optimal plan given by Theorem \ref{thm:optmap}. 
\end{corollary}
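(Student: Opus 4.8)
The final statement to prove is Corollary \ref{cor:cdpunt}, the pointwise (localized) form of the $CD(K,\infty)$ inequality along a $W_2$-geodesic between absolutely continuous measures in an $RCD(K,\infty)$ space.

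\medskip

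The plan is to combine the existence and uniqueness of optimal maps (Theorem \ref{thm:optmap}) with the convexity of the relative entropy along \emph{all} geodesics (step (ii) in the discussion preceding the corollary, i.e. the Daneri--Savar\'e result applied to the $K$-EVI gradient flow of the entropy). First I would record that, since $\mu_0,\mu_1\ll\mm$, Theorem \ref{thm:optmap} gives a unique $\ppi\in\gopt(\mu_0,\mu_1)$, induced by a Borel map $T:X\to\geo(X)$, and that $\ppi$ is concentrated on a set of non-branching geodesics. Next I would check that $\mu_t=(\e_t)_\sharp\ppi\ll\mm$ for every $t\in[0,1]$: this follows because the entropy is $K$-convex along the geodesic $(\mu_t)$, hence finite at every interior time whenever it is finite at the endpoints, and a standard truncation/approximation argument reduces the general absolutely continuous case (where the entropy need not be finite) to the finite-entropy case by restricting to sublevel sets of the densities and renormalizing, exactly as in \cite{RajalaSturm12}. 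Write $\mu_t=\rho_t\mm$.

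\medskip

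The heart of the argument is the localization. Fix $0\le t\le r\le s\le 1$. The restriction of $(\mu_\cdot)$ to $[t,s]$ is again a geodesic (after affine reparametrization), and the restriction of $\ppi$ to any Borel set of geodesics, renormalized, is still an optimal geodesic plan between its endpoint marginals, which are $\ll\mm$; by the uniqueness in Theorem \ref{thm:optmap} such restrictions are the \emph{unique} optimal plans for their marginals. Applying the global $K$-convexity inequality
\[
{\rm Ent}_\mm(\mu_r^A)\le\frac{s-r}{s-t}{\rm Ent}_\mm(\mu_t^A)+\frac{r-t}{s-t}{\rm Ent}_\mm(\mu_s^A)-\frac K2\frac{(r-t)(s-r)}{(s-t)^2}W_2^2(\mu_t^A,\mu_s^A)
\]
to the normalized restrictions $\mu_\cdot^A:=\ppi(A)^{-1}(\e_\cdot)_\sharp(\ppi\restr A)$ over all Borel $A\subset\geo(X)$, then letting $A$ shrink to individual geodesics via a martingale/Lebesgue differentiation argument along a countable generating algebra of sets (using that $W_2^2(\mu_t^A,\mu_s^A)\to \sfd^2(\gamma_t,\gamma_s)$ and $\ppi(A)^{-1}\int_A\log\rho_\cdot(\gamma_\cdot)\,\d\ppi\to\log\rho_\cdot(\gamma_\cdot)$ for $\ppi$-a.e.\ $\gamma$), yields \eqref{eq:pointcd} for $\ppi$-a.e.\ $\gamma$. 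This is precisely the scheme of the proof of Theorem 30.32 in \cite{Villani09}, the only new input being that non-branching along $\ppi$ plus uniqueness of optimal plans (Theorem \ref{thm:optmap}) replaces the global non-branching hypothesis used there; as noted in the excerpt, once Theorem \ref{thm:optmap} is available the argument is standard.

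\medskip

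The main obstacle is the disintegration/differentiation step: passing from the inequality for every averaged restriction $\mu_\cdot^A$ to the pointwise inequality along $\ppi$-a.e.\ geodesic. The subtlety is that one must simultaneously control three quantities — the two entropies $\tfrac1{\ppi(A)}\int_A\log\rho_t(\gamma_t)\,\d\ppi$ and the transport cost $W_2^2(\mu_t^A,\mu_s^A)$ — in the limit as $A$ contracts, and one needs the non-branching property exactly to ensure that $W_2^2(\mu_t^A,\mu_s^A)$ converges to $(s-t)^2\sfd^2(\gamma_0,\gamma_1)=\sfd^2(\gamma_t,\gamma_s)$ rather than something smaller (which would happen on a branching set). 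Since all of this is carried out in detail in the references, I would state the corollary with a short proof that sets up the restriction argument, invokes $K$-convexity along all geodesics, and points to \cite{Villani09} for the localization, as the paper does.
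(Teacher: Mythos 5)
Your proposal is correct and takes essentially the same route the paper does: invoke Theorem \ref{thm:optmap} to get the unique plan $\ppi$ concentrated on a set of non-branching geodesics, then run the standard restriction-and-differentiation localization of the $K$-convexity of ${\rm Ent}_\mm$ along $(\mu_t)$, exactly as in Theorem 30.32 of Villani's book. The paper in fact omits this proof, citing precisely that reference and noting that once Theorem \ref{thm:optmap} is available the argument is routine, so you have simply written out the sketch the paper leaves to the reader.
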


A useful consequence of inequality \eqref{eq:pointcd} is the following $\mm$-a.e. convergence result for densities of measures along a geodesic:
\begin{corollary}\label{cor:pertcont}
Let $(X,\sfd,\mm)$ be an $RCD(K,\infty)$ space and $(\mu_t)\subset\probt X$ a geodesic such that $\mu_0,\mu_1$ are absolutely continuous w.r.t. $\mm$ with bounded density and bounded support.

Then for some constant $C$ it holds $\mu_t\leq C\mm$ for any $t\in[0,1]$ and denoting by $\rho_t$ the density of $\mu_t$ the following holds: for any $t\in[0,1]$ and any sequence $(t_n)\subset [0,1]$ converging to $t$ there exists a subsequence $(t_{n_k})$ such that
\[
\rho_{t_{n_k}}\to\rho_t,\qquad \mm\ae\ {\rm{ as }}\ k\to\infty.
\]
\end{corollary}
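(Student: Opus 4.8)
The statement to prove is Corollary \ref{cor:pertcont}: given an $RCD(K,\infty)$ space, a geodesic $(\mu_t)$ with $\mu_0,\mu_1\leq C_0\mm$ of bounded support, we want a uniform bound $\mu_t\leq C\mm$ and the $\mm$-a.e.\ convergence (along subsequences) of the densities $\rho_{t_n}\to\rho_t$ whenever $t_n\to t$. The natural tool is the pointwise $CD(K,\infty)$ inequality \eqref{eq:pointcd} from Corollary \ref{cor:cdpunt}, which holds $\ppi$-a.e.\ along the unique optimal plan $\ppi$ furnished by Theorem \ref{thm:optmap}. First I would record that bounded support of $\mu_0,\mu_1$ forces bounded support of every $\mu_t$ (triangle inequality along geodesics, since any $\gamma$ in $\supp(\ppi)$ stays in a fixed bounded set), so on that fixed bounded set $B$ one has $\sfd^2(\gamma_t,\gamma_s)\leq \mathrm{diam}(B)^2=:D^2$ uniformly. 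Plugging $r=t$, $(t,s)=(0,1)$ into \eqref{eq:pointcd} and using $\rho_0,\rho_1\leq C_0$ gives $\log\rho_t(\gamma_t)\leq (1-t)\log C_0+t\log C_0+\tfrac{|K|}{8}D^2$, i.e.\ $\rho_t\leq C:=C_0\,e^{|K|D^2/8}$ for all $t\in[0,1]$, which is the first claim.

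For the convergence statement, fix $t\in[0,1]$ and $t_n\to t$. The measures $\mu_{t_n}=\rho_{t_n}\mm$ have densities uniformly bounded by $C$ and supported in the fixed bounded set $B$, hence $(\rho_{t_n})$ is bounded in $L^2(X,\mm)$ (since $\mm(B)<\infty$, as $\mm$ is Radon) and, up to a subsequence $(t_{n_k})$, converges weakly in $L^2$ to some $\tilde\rho$ with $0\leq\tilde\rho\leq C$. On the other hand $\mu_{t_{n_k}}\to\mu_t$ weakly (geodesics in $(\probt X,W_2)$ are $W_2$-continuous, hence narrowly continuous), so $\tilde\rho\mm=\mu_t=\rho_t\mm$, i.e.\ $\tilde\rho=\rho_t$ $\mm$-a.e.; thus $\rho_{t_{n_k}}\rightharpoonup\rho_t$ weakly in $L^2$. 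To upgrade weak to $\mm$-a.e.\ convergence I would use the \emph{convexity} inherent in \eqref{eq:pointcd}: apply it with the triple $(t,r,s)$ replaced by $(t_{n_k}, t, s)$ for a suitable endpoint, or more cleanly with the midpoint-type inequality obtained by writing $t$ between $t_{n_k}$ and a reflected parameter. Concretely, for $k$ large so that $t_{n_k}$ and $2t-t_{n_k}$ both lie in $[0,1]$, \eqref{eq:pointcd} with $(t,r,s)=(t_{n_k},\,t,\,2t-t_{n_k})$ gives along $\ppi$-a.e.\ $\gamma$
\[
\log\rho_t(\gamma_t)\leq \tfrac12\log\rho_{t_{n_k}}(\gamma_{t_{n_k}})+\tfrac12\log\rho_{2t-t_{n_k}}(\gamma_{2t-t_{n_k}})+\tfrac{|K|}{8}D^2(t-t_{n_k})^2,
\]
i.e.\ after exponentiating and integrating against $\ppi$, one gets an inequality between integrals of $\sqrt{\rho_{t_{n_k}}}$, $\sqrt{\rho_{2t-t_{n_k}}}$ composed with evaluation maps, pushing forward to $\int\sqrt{\rho_t}\sqrt{\cdot}\,$ type terms; combined with the trivial reverse Jensen/Cauchy-Schwarz bound, this forces $\int(\sqrt{\rho_{t_{n_k}}}-\sqrt{\rho_t})^2\,\d\mm\to0$, hence (a further subsequence of) $\rho_{t_{n_k}}\to\rho_t$ $\mm$-a.e.

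The main obstacle is making the last step rigorous: the inequality \eqref{eq:pointcd} is stated along the curves $\gamma$ under $\ppi$, not directly in terms of $\mm$-integrals of the densities, so one must transfer it via $(\e_\tau)_\sharp\ppi=\mu_\tau$ and the change-of-variables for the evaluation maps, and one must be careful that $t_{n_k}$ and its reflection lie in $[0,1]$ (true for large $k$ once $t\in(0,1)$; the endpoint cases $t=0,1$ are handled by a one-sided variant of the same argument, or are trivial since $\rho_0,\rho_1$ are the prescribed data). An alternative route avoiding the reflection trick is to use lower semicontinuity: the relative entropy (or the functional $\int\rho^{1-1/N}$, available since the space is also $CD(0,N)$ in our application) is convex along the geodesic by Corollary \ref{cor:cdpunt}/\eqref{eq:defcd} and lower semicontinuous under narrow convergence, while $\mu_\tau\mapsto\int u(\rho_\tau)\,\d\mm$ is continuous at the endpoints; weak $L^2$-convergence plus convergence of these entropy-type functionals then yields strong $L^1_{\mathrm{loc}}$, hence $\mm$-a.e., convergence along a subsequence. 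Either way the quantitative content is exactly the geodesic convexity encoded in \eqref{eq:pointcd}, and the rest is soft functional-analytic bookkeeping.
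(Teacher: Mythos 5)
Your derivation of the uniform bound $\mu_t\leq C\mm$ from \eqref{eq:pointcd} with $(t,r,s)=(0,t,1)$ is exactly the paper's argument and is correct. Your ``alternative route'' for the a.e.\ convergence is also essentially the paper's proof: by integrating \eqref{eq:pointcd} against $\ppi$ one finds that $s\mapsto{\rm Ent}_\mm(\mu_s)$ is $K$-convex and finite at $s=0,1$, hence continuous on $[0,1]$; combining the resulting convergence ${\rm Ent}_\mm(\mu_{t_n})\to{\rm Ent}_\mm(\mu_t)$ with weak convergence of $(\rho_{t_n})$ and the \emph{strict} convexity of $z\mapsto z\log z$ via a Young-measure argument (equality case in Jensen's inequality), the paper obtains convergence of all $L^p$-norms, hence strong $L^p$ and a.e.\ convergence along a subsequence. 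You compress exactly this chain of reasoning, although the Young-measure step is the real content, not ``soft bookkeeping''; also, since the corollary is stated on $RCD(K,\infty)$ spaces, you should use ${\rm Ent}_\mm$ rather than the $N$-dimensional functional $\int\rho^{1-1/N}\,\d\mm$.

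Your primary route, the reflection trick, has a genuine gap. Exponentiating \eqref{eq:pointcd} at $(t_{n_k},t,2t-t_{n_k})$ gives, for $\ppi$-a.e.\ $\gamma$,
\[
\rho_t(\gamma_t)\;\leq\;\sqrt{\rho_{t_{n_k}}(\gamma_{t_{n_k}})}\,\sqrt{\rho_{2t-t_{n_k}}(\gamma_{2t-t_{n_k}})}\;e^{O((t-t_{n_k})^2)},
\]
and integrating against $\ppi$ produces on the left $\int\rho_t\,\d\mu_t=\int\rho_t^2\,\d\mm$, but on the right the two square roots are evaluated at \emph{different} times along the same curve, so the $\ppi$-integral cannot be pushed forward to the spatial overlap integral $\int\sqrt{\rho_{t_{n_k}}\rho_t}\,\d\mm$ you would need for a Hellinger estimate. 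The best you can extract, via Cauchy--Schwarz under $\ppi$ and $(\e_\tau)_\sharp\ppi=\mu_\tau$, is the mid-point log-convexity $F(t)^2\leq F(t_{n_k})\,F(2t-t_{n_k})\,e^{o(1)}$ for $F(s):=\int\rho_s^2\,\d\mm$; but this bounds $F(t)$ from above by a geometric mean, and combined with the weak lower semicontinuity $\liminf F(t_{n_k})\geq F(t)$, $\liminf F(2t-t_{n_k})\geq F(t)$, it constrains the product from below rather than $F(t_{n_k})$ from above. It is perfectly consistent with $F(t_{n_k})\equiv 2F(t)$ and $F(2t-t_{n_k})\equiv F(t)$, so $F(t_{n_k})\to F(t)$ does not follow, and neither does $\int(\sqrt{\rho_{t_{n_k}}}-\sqrt{\rho_t})^2\,\d\mm\to0$. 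In short, geodesic convexity compares the density \emph{along a curve} at different times, not the densities $\rho_{t_{n_k}}$ and $\rho_t$ at a fixed spatial point. The paper sidesteps this by passing through the scalar, genuinely $K$-convex map $s\mapsto{\rm Ent}_\mm(\mu_s)$ (for which finiteness at the endpoints plus convexity does force continuity on all of $[0,1]$) and then invoking strict convexity of $z\log z$ to upgrade weak to strong convergence.
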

\begin{proof}
The fact that $\mu_t\leq C\mm$ for some $C$ follows directly from \eqref{eq:pointcd} and the fact that $\rho_0,\rho_1$ are bounded and with bounded support.

For the second part of the statement it is sufficient to prove that $\rho_s\to\rho_t$ strongly in $L^p(X,\mm)$ for every $p\in[1,\infty)$, which follows by general arguments involving Young's measures. Indeed, the fact that the $\rho_t$'s are uniformly bounded and the weak convergence of $\mu_s$ to $\mu_t$ as $s\to t$ in duality with continuous and bounded functions (because of $W_2$-convergence) yield that $\rho_s\weakto\rho_t$ in $L^p(X,\mm)$ as $s\to t$. Hence to conclude it is sufficient to prove convergence of the $L^p$-norms.

Integrating \eqref{eq:pointcd} w.r.t. $\ppi$ we deduce that the map $t\mapsto{\rm Ent}_\mm(\mu_t)$ is $K$-convex, and given that it is finite at $t=0,1$ it is continuous, i.e.
\begin{equation}
\label{eq:content}
{\rm Ent}_\mm(\mu_{s})\to{\rm Ent}_\mm(\mu_{t}),\qquad\textrm{ as $s\to t$}.
\end{equation}
Let $K$ be a bounded set containing $\supp(\rho_t)$ for every $t\in[0,1]$, so that $\mm(K)<\infty$ and define $\nu_t:=(\Id,\rho_t)_\sharp(\mm\restr K)$, so that $\{\nu_t\}$ is a family of measures in $X\times[0,C]$ with uniformly bounded mass. The tightness of $\{\mu_t\}_{t\in[0,1]}$ easily yields the one of $\{\nu_t\}_{t\in[0,1]}$, hence for any sequence $(s_n)$ converging to some $t\in[0,1]$ there is a subsequence, not relabeled, such that $(\nu_{s_n})$ converges to some measure $\nu$ in duality with $C_b(X\times[0,C])$. It is obvious that $\pi^X_\sharp\nu=\mm$ and choosing test functions in $C_b(X\times\R)$ of the form $\varphi(x)z$ for  $\varphi\in C_b(X)$ we see that $\int z\,\d\nu_x(z)=\rho_t(x)$ for $\mm$-a.e. $x$, where $\{\nu_x\}$ is the disintegration of $\nu$ w.r.t. the projection on $X$. Consider the function $\Psi:X\times[0,C]\to\R$ given by $\Psi(x,z):=z\log z$. Clearly $\Psi\in C_b(X\times[0,C])$ and therefore
\[
\begin{split}
{\rm Ent}_\mm(\mu_{s_n})=\int \Psi\,\d\nu_{s_n}\to\int\Psi\,\d\nu&=\iint z\log z\,\d\nu_x(z)\,\d\mm(x)\\
&\geq \int \left(\int z\,\d\nu_x(z)\right)\log\left(\int z\,\d\nu_x(z)\right)\,\d\mm(x),
\end{split}
\]
having used Jansen's inequality. Recalling \eqref{eq:content}, that $\int z\,\d\nu_x(z)=\rho_t(x)$ for $\mm$-a.e. $x$ and that $z\log z$ is strictly convex, from the equality case of Jensen's inequality we deduce that $\nu_x(z)=\delta_{\rho_t(x)}$ for $\mm$-a.e. $x$. Given that the result does not depend on the particular subsequence chosen, we proved that $(\Id,\rho_s)_\sharp(\mm\restr K)\to (\Id,\rho_t)_\sharp(\mm\restr K)$ as $s\to t$ in duality with $C_b(X\times[0,C])$. Considering now test functions of the form $(x,z)\mapsto |z|^p$ we get the desired continuity of the $L^p$-norms and the conclusion.
\end{proof}

\section{Result}
It is obvious that on a smooth Riemannian manifold $M$, given a geodesic  $(\mu_t)\subset\probt M$ the Kantorovich potential $\varphi$ inducing it is not unique in general: one can add an arbitrary constant to it and, more generally, slightly different constants on different connected components of $\mu_0$ provided an appropriate rearrangement which does not destroy $c$-concavity exists. Yet, Brenier-McCann's theorem ensures that if $\mu_0$ is absolutely continuous w.r.t. the volume measure, then the gradient $\nabla\varphi$ of the Kantorovich potential $\varphi$ is uniquely determined $\mu_0$-a.e., because the only optimal transport map from $\mu_0$ to $\mu_t$ is given by the formula $x\mapsto\exp_x(-t\nabla\varphi(x))$.

The next lemma is an analogous of this uniqueness result valid on general infinitesimally Hilbertian spaces.

\begin{lemma}\label{le:gradkp} Let $(\tilde X,\tilde\sfd,\tilde\mm)$ be an infinitesimally Hilbertian space, $(\mu_t)\subset \probt X$ a geodesic and $\varphi_1,\varphi_2$ two Kantorovich potentials inducing it. Assume that $\varphi_1,\varphi_2$ are locally Lipschitz and that for some $T>0$ it holds $\mu_t\leq C\mm$ for every $t\in[0,T]$ and some $C>0$.

Then
\[
|\nabla(\varphi_1-\varphi_2)|=0,\qquad\mu_0\ae.
\]
\end{lemma}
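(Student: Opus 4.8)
The plan is to use the metric Brenier theorem (Theorem \ref{thm:brenmetr}) together with the first order differentiation formula on infinitesimally Hilbertian spaces to show that, $\mu_0$-a.e., both $\varphi_1$ and $\varphi_2$ ``generate the same flow'', and then to exploit this to get that their gradients agree $\mu_0$-a.e. First I would recall what Theorem \ref{thm:brenmetr} gives for each $i=1,2$: since $\varphi_i$ is a locally Lipschitz Kantorovich potential inducing $(\mu_t)$ and $\mu_t\leq C\mm$ on $[0,T]$, we have $\int|\nabla\varphi_i|^2\,\d\mu_0=W_2^2(\mu_0,\mu_1)$, and moreover, fixing a bounded open set $\Omega$ containing $\supp(\mu_t)$ for $t$ in a neighbourhood of $0$ (possible since the supports are bounded), any lifting $\ppi\in\prob{C([0,1],X)}$ of $(\mu_t)$ represents the gradient of $-\varphi_i$ in $\Omega$. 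Note that the \emph{same} $\ppi$ works for both $i$: it is a lifting of the one geodesic $(\mu_t)$, and the statement of Theorem \ref{thm:brenmetr} does not depend on which potential we use.

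Next I would apply the first order differentiation formula \eqref{eq:firsthil}: since $\ppi$ represents $\nabla(-\varphi_1)$ in $\Omega$ and $-\varphi_2\in\s^2(\Omega)$, we get
\[
\lim_{t\downarrow0}\int\frac{-\varphi_2(\gamma_t)+\varphi_2(\gamma_0)}{t}\,\d\ppi(\gamma)=\int\langle\nabla(-\varphi_2),\nabla(-\varphi_1)\rangle(\gamma_0)\,\d\ppi(\gamma)=\int\langle\nabla\varphi_1,\nabla\varphi_2\rangle\,\d\mu_0,
\]
while taking $\varphi_2=\varphi_1$ and using the last identity in \eqref{eq:basefg} (or directly the definition of ``plan representing a gradient'') the same incremental ratio for $-\varphi_1$ converges to $\int|\nabla\varphi_1|^2\,\d\mu_0$. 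The point now is to identify the left-hand limits for $\varphi_1$ and $\varphi_2$. Here I would use that $\ppi$ is supported on geodesics realizing the $W_2$-transport and that both $t\varphi_i(\gamma_0)$-type quantities are controlled by the Hopf--Lax evolution described in Proposition \ref{prop:kantev}: for a constant speed geodesic $\gamma$ in the support of $\ppi$, the map $t\mapsto\varphi_i(\gamma_t)$ has, at $t=0^+$, right derivative equal to $-\tfrac12|\dot\gamma|^2=-\tfrac12\sfd^2(\gamma_0,\gamma_1)$ for $\ppi$-a.e.\ $\gamma$ (this is the content of the pointwise identities \eqref{eq:intpot} combined with $\varphi_i$ being a potential: along the geodesic $\varphi_i$ decreases at the maximal rate allowed by its being $c$-concave), and in particular this limit is \emph{independent of $i$}. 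Integrating against $\ppi$ gives
\[
\int\langle\nabla\varphi_1,\nabla\varphi_2\rangle\,\d\mu_0=\tfrac12\int\sfd^2(\gamma_0,\gamma_1)\,\d\ppi(\gamma)=W_2^2(\mu_0,\mu_1)=\int|\nabla\varphi_1|^2\,\d\mu_0=\int|\nabla\varphi_2|^2\,\d\mu_0,
\]
where the last equality is Brenier applied to $\varphi_2$.

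From these equalities the conclusion is immediate: expand
\[
\int|\nabla(\varphi_1-\varphi_2)|^2\,\d\mu_0=\int|\nabla\varphi_1|^2\,\d\mu_0-2\int\langle\nabla\varphi_1,\nabla\varphi_2\rangle\,\d\mu_0+\int|\nabla\varphi_2|^2\,\d\mu_0=0,
\]
so $|\nabla(\varphi_1-\varphi_2)|=0$ $\mu_0$-a.e., which is the thesis. (Strictly speaking one should first restrict to $\Omega$ and to the portion of $\mu_0$ there, but since $\supp(\mu_0)$ is covered by such $\Omega$'s — or one single bounded $\Omega$ suffices here as $\mu_0$ has bounded support — and $|\nabla\cdot|$ is local, this causes no difficulty; also if $\varphi_1-\varphi_2\notin\s^2$ globally one works with it only on $\Omega$ where both potentials lie in $\s^2(\Omega)$.) The main obstacle I anticipate is the middle step: justifying rigorously that the left-hand limit $\lim_{t\downarrow0}\int\tfrac{\varphi_i(\gamma_0)-\varphi_i(\gamma_t)}{t}\,\d\ppi(\gamma)$ equals $\tfrac12\int\sfd^2(\gamma_0,\gamma_1)\,\d\ppi$ \emph{independently of $i$}. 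This should follow from the Hopf--Lax description in Proposition \ref{prop:kantev} (using that $t\varphi_i = $ essentially the Hopf--Lax evolved potential up to the evolution being in the reverse time variable, together with \eqref{eq:basehl2} and \eqref{eq:intpot}) plus the fact that for any Kantorovich potential and $\ppi$-a.e.\ $\gamma$ one has $\varphi_i(\gamma_0)-\varphi_i(\gamma_t)+t\varphi_i^c\text{-type term}$ pinned by the $c$-concavity inequality becoming an equality along the transport geodesic; but getting the uniform-in-$i$ control past the $\liminf/\limsup$ is the delicate bookkeeping, and is precisely where the hypothesis $\mu_t\leq C\mm$ (hence $\ppi$ being a test plan, and \eqref{eq:localplan} applicable) is used.
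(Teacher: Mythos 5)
Your overall strategy matches the paper's: metric Brenier to get $\int|\nabla\varphi_i|^2\,\d\mu_0=W_2^2(\mu_0,\mu_1)$ for both $i$, the first-order differentiation formula applied to the lifting $\ppi$ (which represents $\nabla(-\varphi_1)$) to express a left-hand time-limit as $-\int\langle\nabla\varphi_1,\nabla\varphi_2\rangle\,\d\mu_0$, and then expand $\int|\nabla(\varphi_1-\varphi_2)|^2\,\d\mu_0$. But the middle step — the one you yourself flag as delicate — has a genuine gap, and also an arithmetic error that masks it.

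You claim that for $\ppi$-a.e.\ geodesic $\gamma$ the right derivative of $t\mapsto\varphi_i(\gamma_t)$ at $t=0^+$ equals $-\tfrac12\sfd^2(\gamma_0,\gamma_1)$, independently of $i$. Two problems. First, the factor is wrong. From $\gamma_1\in\partial^c\varphi_i(\gamma_0)$ and $\varphi_i(\gamma_t)+\varphi_i^c(\gamma_1)\le\tfrac12\sfd^2(\gamma_t,\gamma_1)=\tfrac12(1-t)^2\sfd^2(\gamma_0,\gamma_1)$ with equality at $t=0$, one gets $\varphi_i(\gamma_t)-\varphi_i(\gamma_0)\le \tfrac12((1-t)^2-1)\sfd^2(\gamma_0,\gamma_1)$, so $\limsup_{t\downarrow0}\tfrac{\varphi_i(\gamma_t)-\varphi_i(\gamma_0)}{t}\le -\sfd^2(\gamma_0,\gamma_1)$, not $-\tfrac12\sfd^2$. (You can sanity-check this on $\R^d$ with $\varphi=\langle\cdot,v\rangle$: the transport geodesic is $\gamma_t=x-tv$ and $\tfrac{\d}{\d t}\varphi(\gamma_t)|_{t=0}=-|v|^2=-\sfd^2(\gamma_0,\gamma_1)$.) You then also write $\tfrac12\int\sfd^2(\gamma_0,\gamma_1)\,\d\ppi=W_2^2(\mu_0,\mu_1)$, which is again off by $1/2$; the two factor-of-two errors cancel in your final line, which is how the computation \emph{appears} to close. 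Second and more fundamentally, $c$-concavity gives only a one-sided inequality for the incremental ratio, and you would need a genuine equality (a two-sided pointwise control) to run the argument as written. Establishing such pointwise differentiability of $\varphi_i$ along transport geodesics is precisely the content of a (pointwise) Brenier theorem, which is more than what the lemma's hypotheses supply and more than what is needed.

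The fix, and what the paper does, is to drop the attempt at a pointwise two-sided identity and use the one-sided bound for $\varphi_2$ alone. Concretely: integrate the $c$-concavity estimate above against $\ppi$ to get $\limsup_{t\downarrow0}\int\tfrac{\varphi_2(\gamma_t)-\varphi_2(\gamma_0)}{t}\,\d\ppi\le -\int\sfd^2(\gamma_0,\gamma_1)\,\d\ppi=-W_2^2(\mu_0,\mu_1)$; on the other hand, since $\ppi$ represents $\nabla(-\varphi_1)$, the first-order differentiation formula \eqref{eq:firsthil} gives $\lim_{t\downarrow0}\int\tfrac{\varphi_2(\gamma_t)-\varphi_2(\gamma_0)}{t}\,\d\ppi=-\int\langle\nabla\varphi_1,\nabla\varphi_2\rangle\,\d\mu_0$. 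Combining, $\int\langle\nabla\varphi_1,\nabla\varphi_2\rangle\,\d\mu_0\ge W_2^2(\mu_0,\mu_1)$, and then $\int|\nabla(\varphi_1-\varphi_2)|^2\,\d\mu_0 = 2W_2^2-2\int\langle\nabla\varphi_1,\nabla\varphi_2\rangle\,\d\mu_0\le 0$. One avoids any pointwise identity entirely. (Your localization remark at the end is fine as a way to reduce to a bounded $\Omega$ with $\varphi_1,\varphi_2\in\s^2(\Omega)$; the paper uses the same reduction.)
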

\begin{proof}
Notice that $|\nabla(\varphi_1-\varphi_2)|$ is well defined $\mm$-a.e., thus since $\mu_0\ll\mm$ the statement makes sense. Assume for a moment that there exists an open set $\Omega\subset X$ such that $\varphi_1,\varphi_2\in\s^2(\Omega)$ and $\supp(\mu_t)\subset\Omega$ for every $t\in[0,T']$ for some $T'>0$.

By the first part of the metric Brenier theorem \ref{thm:brenmetr} we know 
\begin{equation}
\label{eq:lettino0}
\int|\nabla\varphi_1|^2\,\d\mu_0=\int|\nabla\varphi_2|^2\,\d\mu_0=W_2^2(\mu_0,\mu_1).
\end{equation}
Now let $\ppi\in\prob{C([0,1],X)}$ be a lifting of $(\mu_t)$ and use the second part of  the metric Brenier theorem  \ref{thm:brenmetr} to get that $\ppi$ represents $\nabla(-\varphi_1)$. Thus the first order differentiation formula \eqref{eq:firsthil} yields
\begin{equation}
\label{eq:lettino1}
\lim_{t\downarrow0}\int\frac{\varphi_2(\gamma_t)-\varphi_2(\gamma_0)}{t}\,\d\ppi(\gamma)=\int \la\nabla\varphi_2,\nabla(-\varphi_1)\ra\,\d\mu_0=-\int \la\nabla\varphi_2,\nabla\varphi_1\ra\,\d\mu_0.
\end{equation}
On the other hand, since $\varphi_2$ is also a Kantorovich potential, for any $\gamma\in\supp(\ppi)$ it holds $\gamma_1\in\partial^c\varphi_2(\gamma_0)$ and thus
\[
\varphi_2(\gamma_0)-\varphi_2(\gamma_t)\geq\frac{\sfd^2(\gamma_0,\gamma_1)}{2}-\frac{\sfd^2(\gamma_t,\gamma_1)}{2}=\sfd^2(\gamma_0,\gamma_1)(t-t^2/2).
\]
Dividing by $-t$, integrating w.r.t. $\ppi$ and letting $t\downarrow0$ we get
\begin{equation}
\label{eq:lettino2}
\lim_{t\downarrow0}\int\frac{\varphi_2(\gamma_t)-\varphi_2(\gamma_0)}{t}\,\d\ppi(\gamma)\leq-\int\sfd^2(\gamma_0,\gamma_1)\,\d\ppi(\gamma)=-W_2^2(\mu_0,\mu_1).
\end{equation}
Coupling \eqref{eq:lettino1} and \eqref{eq:lettino2} we get $-\int \la\nabla\varphi_2,\nabla\varphi_1\ra\,\d\mu_0\leq -W_2^2(\mu_0,\mu_1)$ which together with  \eqref{eq:lettino0} gives
\[
\int|\nabla(\varphi_1-\varphi_2)|^2\,\d\mu_0=\int|\nabla\varphi_1|^2\,\d\mu_0+\int|\nabla\varphi_2|^2\,\d\mu_0-2\int\la\nabla\varphi_1,\nabla\varphi_2\ra\,\d\mu_0\leq 0,
\]
so that in this case the thesis is proved.

To reduce to the case where an $\Omega$ with the stated properties exists, we use the local nature of the thesis, the Lindelof property of $(\tilde X,\tilde \sfd)$ and the fact that for any Borel $\Gamma\subset C([0,1],X)$ with $\ppi(\Gamma)>0$ the Kantorovich potentials $\varphi_1,\varphi_2$ induce the geodesic $t\mapsto(\e_t)_\sharp(c\ppi\restr\Gamma)$, where $c:=\ppi(\Gamma)^{-1}$ is the normalizing constant. The thesis follows.
\end{proof}
It is a classical fact in optimal transport theory that on a Riemannian manifold $M$, a geodesic $(\mu_t)\subset\probt M$ solves the continuity equation $\frac\d{\d t}\mu_t+\frac1t\nabla\cdot(\nabla\varphi_t\mu_t)=0$ in the sense of distributions, where    $\varphi_t$ is any Kantorovich potential from $\mu_t$ to $\mu_0$ differentiable on $\supp(\mu_t)$ (there exists at least one of these). Reversing the time we also know that $(\mu_t)$ solves $\frac\d{\d t}\mu_t-\frac1{1-t}\nabla\cdot(\nabla\psi_t\mu_t)=0$,  where  $\psi_t$ is any Kantorovich potential from $\mu_t$ to $\mu_1$ differentiable on $\supp(\mu_t)$. Thus we expect that $\frac1t\nabla\varphi_t+\frac1{1-t}\nabla\psi=0$ holds on $\supp(\mu_t)$ for every $t$. This can indeed be rigorously proved, and the next lemma provides an analogous of this statement on infinitesimally Hilbertian spaces. The proof is based on the relations \eqref{eq:intpot}.

We shall make use of the restriction (and rescaling) maps ${\rm Restr}_t^s:C([0,1],X)\to C([0,1],X)$ defined for any $t,s\in[0,1]$ by
\[
({\rm Restr}_t^s(\gamma))_r:=\gamma_{(1-r)t+rs}.
\]
\begin{lemma}\label{le:bilatero}
Let $(\tilde X,\tilde\sfd,\tilde\mm)$ be an infinitesimally Hilbertian space and $(\mu_t)\subset\probt {\tilde X}$ a geodesic such that $\mu_t\leq C\mm$ for every $t\in[0,1]$ and some $C>0$. 

Then for every $t\in(0,1)$ it holds
\begin{equation}
\label{eq:oppvel}
|\nabla(\tfrac1t\varphi_t+\tfrac1{1-t}\psi_t)|=0,\qquad\mu_t\ae,
\end{equation}
for any choice of locally Lipschitz Kantorovich potentials $\varphi_t,\psi_t$  relative to the couples $(\mu_t,\mu_0)$ and $(\mu_t,\mu_1)$ respectively.
\end{lemma}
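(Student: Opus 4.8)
The plan is to reduce the statement to a single Hopf-Lax representation and then apply Lemma \ref{le:gradkp}. First I would fix $t\in(0,1)$ and use Proposition \ref{prop:kantev}: since $(\mu_t)$ is a geodesic with some Kantorovich potential $\varphi$ inducing it, the function $tQ_t(-\varphi)$ is a Kantorovich potential from $\mu_t$ to $\mu_0$, the function $(1-t)Q_{1-t}(-\varphi^c)$ is a Kantorovich potential from $\mu_t$ to $\mu_1$, and both $Q_t(-\varphi)$ and $Q_{1-t}(-\varphi^c)$ are Lipschitz on bounded sets (hence locally Lipschitz, hence in $\s^2_{\rm loc}$ of any open set by \eqref{eq:lipweak}). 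Moreover the crucial relations \eqref{eq:intpot} give
\[
Q_t(-\varphi)+Q_{1-t}(-\varphi^c)\geq 0 \text{ everywhere}, \qquad Q_t(-\varphi)+Q_{1-t}(-\varphi^c)=0 \text{ on }\supp(\mu_t).
\]
So the sum $h:=Q_t(-\varphi)+Q_{1-t}(-\varphi^c)$ is a nonnegative locally Lipschitz function vanishing on $\supp(\mu_t)$, i.e.\ it attains its minimum value $0$ at every point of $\supp(\mu_t)$. Since $\mu_t\leq C\mm$, we have $\mu_t\ll\mm$, and I would want to conclude $|\nabla h|=0$ $\mu_t$-a.e.; combining the chain/locality rules this would give $|\nabla(\tfrac1t\varphi_t^{HL}+\tfrac1{1-t}\psi_t^{HL})|=0$ $\mu_t$-a.e.\ for the specific Hopf-Lax potentials $\varphi_t^{HL}:=tQ_t(-\varphi)$, $\psi_t^{HL}:=(1-t)Q_{1-t}(-\varphi^c)$.

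For the step "$h\geq 0$, locally Lipschitz, $h=0$ on $\supp(\mu_t)$ $\Rightarrow$ $|\nabla h|=0$ $\mu_t$-a.e.", the clean argument is that $\mu_t$-a.e.\ point $x$ is a point where, by $h\geq 0=h(x)$ and local Lipschitzianity of $h$, the local Lipschitz constant relation combined with $\weakgrad h\leq \lip(h)$ forces $\weakgrad h=0$ at density-one points of $\supp(\mu_t)$ (one uses that $\mu_t$ is concentrated on $\supp(\mu_t)$ and that $h$ being minimized on a set carrying positive $\mm$-mass forces $\lip(h)$, hence $\weakgrad h$, to vanish $\mu_t$-a.e.\ on that set; more precisely one can square and apply \eqref{eq:chainbase} to $\min\{h,\eps\}$, or simply note $h\wedge 1$ agrees with the constant $0$ on $\supp(\mu_t)$ modulo... ). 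A more robust route avoiding density-point subtleties: apply the locality property \eqref{eq:localgrad} — since $h=0$ $\mm$-a.e.\ on the Borel set $E:=\supp(\mu_t)$ is false in general (it holds everywhere on $\supp(\mu_t)$, but that set may be $\mm$-negligible), so instead I should argue via Lemma \ref{le:gradkp} directly rather than through $h$.

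So the main line I would actually follow is: apply Lemma \ref{le:gradkp} to the two Kantorovich potentials $\varphi_t^{HL}=tQ_t(-\varphi)$ and $-\psi_t^{HL}=-(1-t)Q_{1-t}(-\varphi^c)$ relative to the geodesic $(\mu_{t(1-r)}^{})_{r\in[0,1]}$ — i.e.\ the restricted/rescaled geodesic $r\mapsto \mu_{(1-r)t}$, which has $\mu_t$ as its base point, is itself a geodesic (use ${\rm Restr}$), and still satisfies $\mu\leq C\mm$. Both $tQ_t(-\varphi)$ and $-(1-t)Q_{1-t}(-\varphi^c)$ are Kantorovich potentials from $\mu_t$ to $\mu_0$: the first by Proposition \ref{prop:kantev}, and the second because on $\supp(\mu_t)$ it equals $tQ_t(-\varphi)$ by \eqref{eq:intpot} while being $c$-concave (a Kantorovich potential from $\mu_t$ to $\mu_1$, reflected; one checks the $c$-transform identity as in Theorem \ref{thm:basemetric}(i)-style manipulations, or simply invokes that two functions agreeing on $\supp(\mu_t)$ that are both appropriately $c$-concave induce the same geodesic from $\mu_t$). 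Then Lemma \ref{le:gradkp} yields $|\nabla(tQ_t(-\varphi)+(1-t)Q_{1-t}(-\varphi^c))|=0$ $\mu_t$-a.e., i.e.\ $|\nabla(\tfrac1{1-t}\varphi_t^{HL}\cdot\tfrac{1-t}{t}\cdots)|$... cleaning constants, $|\nabla(\tfrac1t\varphi_t^{HL}+\tfrac1{1-t}\psi_t^{HL})|=0$ $\mu_t$-a.e. Finally, for \emph{arbitrary} locally Lipschitz Kantorovich potentials $\varphi_t,\psi_t$ relative to $(\mu_t,\mu_0)$ and $(\mu_t,\mu_1)$: apply Lemma \ref{le:gradkp} once more to the pair $\varphi_t, \varphi_t^{HL}$ (both Kantorovich potentials from $\mu_t$ to $\mu_0$ for the reversed geodesic, which satisfies the density bound) to get $|\nabla(\varphi_t-\varphi_t^{HL})|=0$ $\mu_t$-a.e., and similarly $|\nabla(\psi_t-\psi_t^{HL})|=0$ $\mu_t$-a.e.; then linearity \eqref{eq:lin} of $\nabla$ and the triangle inequality in $|\nabla\cdot|$ give \eqref{eq:oppvel}.

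The main obstacle I anticipate is the bookkeeping needed to present $(1-t)Q_{1-t}(-\varphi^c)$ as a bona fide Kantorovich potential \emph{from $\mu_t$ to $\mu_0$} (not to $\mu_1$) in a form where Lemma \ref{le:gradkp} applies — one must reverse the geodesic and verify the $c$-concavity/$c$-transform relations survive, which is where the relations \eqref{eq:intpot} do the real work; plus checking that every auxiliary geodesic obtained by ${\rm Restr}$ inherits the uniform density bound $\mu\leq C\mm$ so that the hypotheses of Lemma \ref{le:gradkp} (and of the metric Brenier theorem used inside it) are met. Everything else is routine application of the first-order calculus rules \eqref{eq:lin}, \eqref{eq:norm}, \eqref{eq:1lipfg} on the infinitesimally Hilbertian space.
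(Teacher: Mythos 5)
Your reduction step at the end is sound and matches the paper's opening move: by Lemma \ref{le:gradkp} applied to each of the geodesics $r\mapsto\mu_{(1-r)t}$ and $r\mapsto\mu_{t+r(1-t)}$, together with the estimate \eqref{eq:1lipfg}, it suffices to prove \eqref{eq:oppvel} for the Hopf–Lax potentials $\tilde\varphi_t:=Q_t(-\varphi)$, $\tilde\psi_t:=Q_{1-t}(-\varphi^c)$, i.e.\ to show $|\nabla(\tilde\varphi_t+\tilde\psi_t)|=0$ $\mu_t$-a.e. Two of your intermediate steps, however, have concrete problems.

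\textbf{The ``main line'' has a coefficient error and an unjustified $c$-concavity claim.} On $\supp(\mu_t)$ the relation \eqref{eq:intpot} gives $Q_t(-\varphi)=-Q_{1-t}(-\varphi^c)$, hence $tQ_t(-\varphi)=-t\,Q_{1-t}(-\varphi^c)$, \emph{not} $-(1-t)Q_{1-t}(-\varphi^c)$; the two differ by the factor $\tfrac{t}{1-t}$ and coincide only when $t=\tfrac12$. Even with the correct constant, you would need $-t\,Q_{1-t}(-\varphi^c)$ to be a $c$-concave Kantorovich potential from $\mu_t$ to $\mu_0$ in order to invoke Lemma \ref{le:gradkp}. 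Merely agreeing with the bona fide potential $tQ_t(-\varphi)$ on $\supp(\mu_t)$ does not make a function $c$-concave globally, and $-c$ times a $c$-concave function is not $c$-concave in general. This step is a genuine gap.

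\textbf{Your reason for abandoning the locality route is wrong.} You worry that $\supp(\mu_t)$ ``may be $\mm$-negligible''. It cannot be: from $\mu_t\leq C\mm$ and $\mu_t(\supp(\mu_t))=1$ we get $\mm(\supp(\mu_t))\geq 1/C>0$. More to the point, the locality argument you first sketched does not actually require any density-point considerations. The function $h:=\tilde\varphi_t+\tilde\psi_t$ is Lipschitz on bounded sets, hence in $\s^2_{\rm loc}$, and by \eqref{eq:intpot} it vanishes \emph{pointwise} on $\supp(\mu_t)$. Applying \eqref{eq:nullgrad} with $N=\{0\}$ (or \eqref{eq:localgrad} with $g\equiv0$) yields $\weakgrad h=0$ $\mm$-a.e.\ on the closed set $\{h=0\}\supset\supp(\mu_t)$. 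Since $\mu_t\ll\mm$ and $\mu_t$ is concentrated on $\supp(\mu_t)$, this gives $\weakgrad h=0$ $\mu_t$-a.e., which is exactly the claim.

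\textbf{What the paper actually does.} The paper does not use locality for the key identity, and it does not try to realize $-\tilde\psi_t$ as a Kantorovich potential for the reversed geodesic. Instead it runs a first-variation argument: it restricts the lifting $\ppi$ to $\ppi^+:=({\rm Restr}_t^1)_\sharp\ppi$, invokes the metric Brenier Theorem \ref{thm:brenmetr} to deduce that $\ppi^+$ represents the gradient of $-\tilde\psi_t$, uses the first-order differentiation formula \eqref{eq:firsthil} to identify the limit $\lim_{h\downarrow0}\int\frac{\tilde\varphi_t(\gamma_h)-\tilde\varphi_t(\gamma_0)}{h}\,\d\ppi^+$ with $-\int\la\nabla\tilde\varphi_t,\nabla\tilde\psi_t\ra\,\d\mu_t$, and uses both the inequality $h\geq0$ and the equality $h=0$ on $\supp(\mu_t)$ from \eqref{eq:intpot} to bound this incremental ratio from below by the corresponding one for $-\tilde\psi_t$. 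Combined with $\int|\nabla\tilde\varphi_t|^2\,\d\mu_t=\int|\nabla\tilde\psi_t|^2\,\d\mu_t=W_2^2(\mu_0,\mu_1)$, this forces $\int|\nabla(\tilde\varphi_t+\tilde\psi_t)|^2\,\d\mu_t\leq 0$. So the paper uses the \emph{sign} information in \eqref{eq:intpot} as a differential inequality at the minimum, whereas the locality argument uses only the equality; both are valid, but they are distinct proof strategies and you should commit to one and execute it rather than switching midstream on a mistaken premise.
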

\begin{proof}
By Lemma \ref{le:gradkp} and the simple 1-Lipschitz estimate \eqref{eq:1lipfg} it is sufficient to prove \eqref{eq:oppvel} for some specific choice of locally Lipschitz Kantorovich potentials.  Hence by Proposition \ref{prop:kantev} we can choose an arbitrary Kantorovich potential $\varphi$ from $\mu_0$ to $\mu_1$, put  for simplicity $\tilde\varphi_t:=Q_t(-\varphi)$ and $\tilde\psi_t:=Q_{1-t}(-\varphi^c)$  and  reduce to prove that
\[
|\nabla \big(\tilde\varphi_t+\tilde\psi_t\big)|=0,\qquad\mu_t\ae.
\]
Assume for a moment that for some bounded open set $\Omega$ we have $\supp(\mu_t)\subset\Omega$ for every $t\in[0,1]$. Fix $t\in(0,1)$, let $\ppi\in\gopt(\mu_0,\mu_1)$ be inducing the geodesic $(\mu_t)$ and define the plans $\ppi^\pm\in\prob{\geo(X)}$ by 
\[
\begin{split}
\ppi^+&:=({\rm Restr}_t^1)_\sharp\ppi,\qquad\qquad\qquad\ppi^-:=({\rm Restr}_t^0)_\sharp\ppi.
\end{split}
\]
By construction, $\ppi^+$ and $\ppi^-$ induce the geodesics $s\mapsto \mu_{t+s(1-t)}$ and $s\mapsto \mu_{t(1-s)}$ respectively and the metric Brenier theorem \ref{thm:brenmetr} together with Proposition \ref{prop:kantev} give
\[
\int|\nabla (t\tilde\varphi_t)|^2\,\d\mu_t=W_2^2(\mu_t,\mu_0),\qquad\textrm{and}\qquad\int|\nabla ((1-t)\tilde\psi_t)|^2\,\d\mu_t=W_2^2(\mu_t,\mu_1),
\]
which implies
\begin{equation}
\label{eq:permeno}
\int|\nabla \tilde\varphi_t|^2\,\d\mu_t=\int|\nabla \tilde\psi_t|^2\,\d\mu_t=W_2^2(\mu_0,\mu_1).
\end{equation}
The metric Brenier theorem also ensures that $\ppi^+$ represents the gradient of $-\tilde\psi_t $ in $\Omega$ so that the first order differentiation formula \eqref{eq:firsthil} gives
\begin{equation}
\label{eq:stanchino2}
\lim_{h\downarrow0}\int \frac{\tilde\varphi_t(\gamma_h)-\tilde\varphi_t(\gamma_0)}{h}\,\d\ppi^+(\gamma)=-\int\la\nabla \tilde\varphi_t,\nabla\tilde\psi_t\ra\,\d\mu_t.
\end{equation}
Now notice that the identity $(\e_0)_\sharp\ppi^+=\mu_t$ grants that for every $\gamma\in\supp(\ppi^+)$ it holds $\gamma_0\in\supp(\mu_t)$, thus from the relations \eqref{eq:intpot} we get
\begin{equation}
\label{eq:stanchino3}
\int \frac{\tilde\varphi_t (\gamma_h)-\tilde\varphi_t (\gamma_0)}{h}\,\d\ppi^+(\gamma)\geq -\int \frac{\tilde\psi_t (\gamma_h)-\tilde\psi_t (\gamma_0)}{h}\,\d\ppi^+(\gamma),\qquad\forall h\in(0,1].
\end{equation}
Letting $h\downarrow0$ and using again the fact that $\ppi^+$ represents the gradient of $-\tilde\psi_t $ we have
\[
\lim_{h\downarrow0}-\int \frac{\tilde\psi_t (\gamma_h)-\tilde\psi_t (\gamma_0)}{h}\,\d\ppi^+(\gamma)=\int|\nabla \tilde\psi_t |^2\,\d\mu_t.
\]
This fact together with \eqref{eq:stanchino2} and \eqref{eq:stanchino3} imply
\[
\int\la\nabla \tilde\varphi_t ,\nabla\tilde\psi_t \ra\,\d\mu_t\leq- \int|\nabla \tilde\psi_t |^2\,\d\mu_t,
\]
and thus using \eqref{eq:permeno} we obtain
\[
\begin{split}
\int |\nabla \big(\tilde\varphi_t +\tilde\psi_t \big)|^2\,\d\mu_t=\int |\nabla\tilde\varphi_t  |^2+|\nabla\tilde\psi_t |^2+2\la\nabla\tilde\varphi_t ,\nabla\tilde\psi_t \ra\,\d\mu_t\leq 0,
\end{split}
\]
which is the thesis. To remove the assumption on the existence of $\Omega$ we use the same localization procedure used in the proof of Lemma \ref{le:gradkp} above, we omit the details. 
\end{proof}
\begin{remark}[Infinitesimally smooth spaces]\label{re:infsmooth}{\rm
In the two lemmas \ref{le:gradkp}, \ref{le:bilatero} above, the assumption of infinitesimal Hilbertianity is more than what actually needed to conclude. Notice indeed that it has been used only to deduce that for
\begin{equation}
\label{eq:infsmooth1}
\begin{split}
&f,g\in\s^2(\Omega),\ \textrm{and } \ppi\in\prob{C([0,1],X)}\textrm{ representing $\nabla(- g)$}\\
& \textrm{such that} \int_\Omega\weakgrad f^2(\gamma_0)\,\d\ppi(\gamma)=\int_\Omega\weakgrad g^2(\gamma_0)\,\d\ppi(\gamma)\leq \limi_{t\downarrow0}\int\frac{f(\gamma_0)-f(\gamma_t)}{t}\,\d\ppi(\gamma), \\
\end{split}
\end{equation}
it must hold
\begin{equation}
\label{eq:infsmooth2}
\weakgrad{(f-g)}=0,\qquad(\e_0)_\sharp\ppi\ae.
\end{equation}
This conclusion can be derived in \emph{infinitesimally smooth spaces}, defined as
\begin{quote}
$(X,\sfd,\mm)$ is infinitesimally smooth provided for any $\mu\in\probt X$ with $\mu\leq C\mm$ for some $C>0$, the seminorm $\|f\|_\mu:=\sqrt{\int\weakgrad f^2\,\d\mu}$ on $\s^2(X,\sfd,\mm)$ is strictly convex in the sense that: $\|f\|_\mu=\|g\|_\mu=\frac12\|f+g\|_\mu$ implies $\weakgrad{(f-g)}=0$ $\mu$-a.e..
\end{quote}
To see that on an infinitesimally smooth space one can deduce \eqref{eq:infsmooth2} from \eqref{eq:infsmooth1}, just notice that from \eqref{eq:perplanrepr} and putting $h:=\frac{f+g}{2}$ we get
\[
\begin{split}
\frac12\int\weakgrad h^2\,\d(\e_0)_\sharp\ppi(\gamma)&\geq \lims_{t\downarrow0}\int\frac{h(\gamma_0)-h(\gamma_t)}{t}\,\d\ppi(\gamma)-\frac12\lims_{t\downarrow}\frac1t\iint_0^t|\dot\gamma_s|^2\,\d s\,\d\ppi(\gamma)\\
&=\frac12\int\weakgrad g^2\,\d(\e_0)_\sharp\ppi(\gamma),
\end{split}
\]
having used the assumptions and the identity $\lims_{t\downarrow0}\frac1t\iint_0^t|\dot\gamma_s|^2\,\d s\,\d\ppi(\gamma)=\int\weakgrad g^2\,\d(\e_0)_\sharp\ppi(\gamma)$, which directly follows from the fact that $\ppi$ represents the gradient of $-g$.

Notice that the hypothesis of being infinitesimally smooth is, in a sense, dual of the one of being infinitesimally strictly convex. On $\R^d$ equipped with the Lebesgue measure and a norm it is equivalent to the fact that the squared norm  is $C^1$, whence the terminology. We won't discuss this topic any further.
}\fr\end{remark}
The next lemma can be seen as a variant of the basic result in Hilbert spaces granting that `weak convergence of $(v_n)$ to $v$ plus strong convergence of $(w_n)$ to $w$ implies convergence of $\la v_n,w_n\ra$ to $\la v,w\ra$'.
\begin{lemma}[`Weak-strong convergence']\label{le:weakcont} Let $(\tilde X,\tilde\sfd,\tilde\mm)$ be an infinitesimally Hilbertian space and $\Omega\subset \tilde X$ an open set. Also:
\begin{itemize}
\item[i)] Let $(\mu_n)\subset \probt X$ be such that $\mu_n\leq C\mm$ for some $C>0$ and $\supp(\mu_n)\subset \Omega$ for any $n\in\N$.  Let $\rho_n$ be the density of $\mu_n$ and assume that $\rho_n\to\rho$ $\mm$-a.e. for some probability density $\rho$ with $\supp(\rho)\subset \Omega$. Put $\mu:=\rho\mm$.
\item[ii)] Let  $(f_n)\subset \s^2(\Omega)$ be such that 
\begin{equation}
\label{eq:boundedweak}
\sup_{n\in\N}\int_{\Omega}|\nabla f_n|^2\,\d\mm<\infty,
\end{equation}
and assume that $f_n\to f$ $\mm$-a.e. on $\Omega$ as $n\to\infty$ for some Borel function  $f:\Omega\to\R$.
\item[iii)]  Let $(g_n)\subset \s^2(\Omega)$ and $g\in \s^2(\Omega)$ be such that $g_n\to g$ $\mm$-a.e. as $n\to\infty$ and
\[
\sup_{n\in\N}\int_{\Omega}|\nabla g_n|^2\,\d\mm<\infty,\qquad\textrm{ and }\qquad \lim_{n\to\infty}\int |\nabla g_n|^2\,\d\mu_n=\int|\nabla g|^2\,\d\mu.
\]
\end{itemize}
Then
\begin{equation}
\label{eq:tesiweak}
\lim_{n\to\infty}\int\la\nabla f_n,\nabla g_n\ra\d\mu_n=\int\la\nabla f,\nabla g\ra\d\mu.
\end{equation}
\end{lemma}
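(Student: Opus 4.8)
The plan is to treat the statement as the metric--measure incarnation of the elementary Hilbert--space fact recalled just before it, with $(f_n)$ playing the role of the weakly convergent sequence and $(g_n)$ that of the strongly convergent one. The proof will rest on two auxiliary facts and one algebraic reduction. The first auxiliary fact is a \emph{generalized lower semicontinuity}: whenever $\psi_n\to\psi$ $\mm$-a.e.\ on $\Omega$ with $\sup_n\int_\Omega|\nabla\psi_n|^2\,\d\mm<\infty$, one has $\psi\in\s^2(\Omega)$ and $\liminf_n\int|\nabla\psi_n|^2\,\d\mu_n\ge\int|\nabla\psi|^2\,\d\mu$. The second is a \emph{strong convergence} statement for the good sequence, namely $\int|\nabla(g_n-g)|^2\,\d\mu_n\to0$.

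Granting these, the reduction goes as follows. Write
\[
\int\la\nabla f_n,\nabla g_n\ra\,\d\mu_n=\int\la\nabla f_n,\nabla g\ra\,\d\mu_n+\int\la\nabla f_n,\nabla(g_n-g)\ra\,\d\mu_n .
\]
The last term is bounded in absolute value, by the pointwise inequality $|\la\nabla\phi,\nabla\psi\ra|\le|\nabla\phi|\,|\nabla\psi|$ (cf.\ \eqref{eq:basefg}) followed by Cauchy--Schwarz in $L^2(\mu_n)$, by $\big(\int|\nabla f_n|^2\,\d\mu_n\big)^{1/2}\big(\int|\nabla(g_n-g)|^2\,\d\mu_n\big)^{1/2}$; since $\supp(\mu_n)\subset\Omega$ and $\rho_n\le C$ we have $\int|\nabla f_n|^2\,\d\mu_n\le C\sup_n\int_\Omega|\nabla f_n|^2\,\d\mm<\infty$ by $(ii)$, so this term tends to $0$ by the second auxiliary fact. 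In the first term, split $\la\nabla f_n,\nabla g\ra=\la\nabla f,\nabla g\ra+\la\nabla(f_n-f),\nabla g\ra$ (here $f\in\s^2(\Omega)$ by the lower semicontinuity of minimal weak upper gradients applied to a weakly $L^2(\Omega)$-convergent subsequence of $(|\nabla f_n|)$, so both pieces make sense). Since $\la\nabla f,\nabla g\ra$ is a fixed $L^1(\Omega)$-function and $\rho_n\to\rho$ boundedly $\mm$-a.e.\ with support in $\Omega$, dominated convergence gives $\int\la\nabla f,\nabla g\ra\,\d\mu_n\to\int\la\nabla f,\nabla g\ra\,\d\mu$. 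Thus the whole statement reduces to proving $\int\la\nabla(f_n-f),\nabla g\ra\,\d\mu_n\to0$.

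The two auxiliary facts are comparatively soft. For the generalized lower semicontinuity, pass to a subsequence with $|\nabla\psi_n|\rightharpoonup G$ weakly in $L^2(\Omega)$; the lower semicontinuity recalled after Definition \ref{def:parigi} gives $|\nabla\psi|\le G$. From $(|\nabla\psi_n|-G)^2\ge0$ one gets $\int|\nabla\psi_n|^2\,\d\mu_n\ge 2\int G|\nabla\psi_n|\rho_n\,\d\mm-\int G^2\rho_n\,\d\mm$; since $G\rho_n\to G\rho$ strongly in $L^2(\Omega)$ (dominated convergence, $0\le\rho_n\le C$, $G\in L^2(\Omega)$) and $|\nabla\psi_n|\rightharpoonup G$ weakly in $L^2(\Omega)$, the right-hand side converges to $2\int G^2\,\d\mu-\int G^2\,\d\mu\ge\int|\nabla\psi|^2\,\d\mu$, and a routine subsequence argument removes the restriction. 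The strong convergence of $(g_n)$ then follows by applying this to $g+\eps g_n\to(1+\eps)g$: expanding $\int|\nabla(g+\eps g_n)|^2\,\d\mu_n=\int|\nabla g|^2\,\d\mu_n+2\eps\int\la\nabla g,\nabla g_n\ra\,\d\mu_n+\eps^2\int|\nabla g_n|^2\,\d\mu_n$ and using $\int|\nabla g|^2\,\d\mu_n\to B:=\int|\nabla g|^2\,\d\mu$ (dominated convergence) and $\int|\nabla g_n|^2\,\d\mu_n\to B$ (hypothesis $(iii)$), the bound $\liminf_n\int|\nabla(g+\eps g_n)|^2\,\d\mu_n\ge(1+\eps)^2B$ forces $\liminf_n\int\la\nabla g,\nabla g_n\ra\,\d\mu_n\ge B$ for $\eps>0$ and $\limsup_n\int\la\nabla g,\nabla g_n\ra\,\d\mu_n\le B$ for $\eps<0$; hence $\int\la\nabla g_n,\nabla g\ra\,\d\mu_n\to B$, and $\int|\nabla(g_n-g)|^2\,\d\mu_n=\int|\nabla g_n|^2\,\d\mu_n+\int|\nabla g|^2\,\d\mu_n-2\int\la\nabla g_n,\nabla g\ra\,\d\mu_n\to0$.

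The hard part will be the remaining limit $\int\la\nabla(f_n-f),\nabla g\ra\,\d\mu_n\to0$, where no strong convergence of $f_n$ is available; this is where the freezing of $g$ in the second slot is essential. Set $v_n:=\la\nabla(f_n-f),\nabla g\ra$. Because $\int_A|v_n|\,\d\mm\le\big(\sup_n\int_\Omega|\nabla(f_n-f)|^2\,\d\mm\big)^{1/2}\big(\int_A|\nabla g|^2\,\d\mm\big)^{1/2}$ for every Borel $A\subset\Omega$ and $|\nabla g|^2\in L^1(\Omega)$, the family $\{v_n\}$ is uniformly integrable and tight in $L^1(\Omega,\mm)$; by Dunford--Pettis, along any subsequence $v_n\rightharpoonup\xi$ weakly in $L^1(\Omega)$, and a standard Egorov argument (using $0\le\rho_n\le C$, $\rho_n\to\rho$ $\mm$-a.e.) upgrades this to $\int v_n\rho_n\,\d\mm\to\int\xi\rho\,\d\mm$, so it suffices to prove $\xi=0$, i.e.\ $\int\phi\,v_n\,\d\mm\to0$ for every bounded Lipschitz $\phi$ with bounded support in $\Omega$. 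The Leibniz rule \eqref{eq:leibniz} rewrites $\phi\,v_n$ as $\la\nabla(\phi(f_n-f)),\nabla g\ra-(f_n-f)\la\nabla\phi,\nabla g\ra$, and using that $f_n-f\to0$ $\mm$-a.e.\ together with the fact that the maps $\psi\mapsto\la\nabla\psi,\nabla g\ra$ and $\psi\mapsto\la\nabla\phi,\nabla\psi\ra$ are bounded, hence weakly continuous, linear operators, one identifies the relevant weak limits as $0$. The genuine difficulty --- and the point that must be handled with care --- is that $f_n,f$ are only known to lie in $\s^2(\Omega)$, not in $L^2_{\mathrm{loc}}$ nor $L^\infty_{\mathrm{loc}}$, so one cannot directly multiply by $\phi$ nor invoke weak compactness in $W^{1,2}(X)$: the remedy is to truncate $f_n-f$ at a level $M$, run the calculus and weak-convergence arguments for the truncations (which \emph{are} bounded, hence give sequences bounded in $W^{1,2}(X)$ on $\supp\phi$), and let $M\to\infty$ while controlling via locality \eqref{eq:localgrad22} and the Cauchy--Schwarz bound above the error over $\{|f_n-f|>M\}$. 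Once $\xi=0$ is verified for every subsequential weak limit, $\int v_n\rho_n\,\d\mm\to0$ along the full sequence, which together with the reduction above completes the proof.
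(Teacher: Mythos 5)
Your proposal is correct, but it takes a genuinely different and considerably longer route than the paper's proof. The paper does a single polarization: fixing $\eps\in\R$ it writes
\[
2\eps\int\la\nabla f_n,\nabla g_n\ra\,\d\mu_n=\int|\nabla(\eps f_n+g_n)|^2\rho_n\,\d\mm-\eps^2\int|\nabla f_n|^2\rho_n\,\d\mm-\int|\nabla g_n|^2\rho_n\,\d\mm,
\]
extracts a weak $L^2(\Omega)$-subsequential limit $G$ of $|\nabla(\eps f_n+g_n)|$, uses boundedness and $\mm$-a.e.\ convergence of $\rho_n$ to get $\liminf_n\int|\nabla(\eps f_n+g_n)|^2\rho_n\,\d\mm\geq\int G^2\rho\,\d\mm\geq\int|\nabla(\eps f+g)|^2\,\d\mu$, invokes hypothesis $(iii)$ for the third term and a crude $CC'$ bound for the second, divides by $\eps$ and lets $\eps\to0^\pm$. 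Your argument is this polarization in disguise, but split across several steps; one genuine dividend of your route is the explicit strong-convergence statement $\int|\nabla(g_n-g)|^2\,\d\mu_n\to0$, which the paper never isolates. However, the Dunford--Pettis/Egorov/Leibniz/truncation machinery you deploy for the residual $R_n:=\int\la\nabla(f_n-f),\nabla g\ra\,\d\mu_n$ is an avoidable detour that you have not fully carried out: your own ``generalized lower semicontinuity'' lemma, applied to $g\pm\eps(f_n-f)\to g$ $\mm$-a.e.\ (this is exactly the paper's polarization with the good sequence frozen at $g$), yields
\[
\liminf_n\Bigl(\pm2\eps R_n+\eps^2\int|\nabla(f_n-f)|^2\,\d\mu_n\Bigr)\geq0,
\]
hence $\limsup_n|R_n|\leq\tfrac{\eps}{2}\,\sup_n\int|\nabla(f_n-f)|^2\,\d\mu_n$ for every $\eps>0$, so $R_n\to0$ with no $L^1$-weak compactness, no Leibniz rule, and no truncation. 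The truncation sketch can in principle be pushed through (the set $\{|f_n-f|>M\}\cap\supp\phi$ has $\mm$-measure tending to $0$ as $n\to\infty$ for fixed $M$, which suffices), but it is far heavier than necessary; I would replace the entire ``hard part'' by the $\eps$-argument you already have.
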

\begin{proof} The assumption \eqref{eq:boundedweak} grants the weak relative compactness of $(|\nabla f_n|)$ in $L^2(\Omega)$, thus the lower semicontinuity of minimal weak upper gradients stated after Definition \ref{def:parigi} yields $f\in \s^2(\Omega)$. Hence  taking into account that $\supp(\mu)\subset\Omega$ and $\mu\leq C\mm$ by assumption, the right hand side of \eqref{eq:tesiweak} is well defined and the statement makes sense.

Put $C':=\sup_{n\in\N}\int_\Omega|\nabla f_n|^2\,\d\mm<\infty$, fix $\eps\in\R$ and notice that
\begin{equation}
\label{eq:bien0}
\begin{split}
2\eps\int_\Omega \la\nabla f_n,\nabla g_n\ra\,\d\mu_n=\int|\nabla(\eps  f_n+  g_n)|^2\rho_n\,\d\mm-\eps^2\int|\nabla f_n|^2\rho_n\,\d\mm-\int|\nabla g_n|^2\rho_n\,\d\mm.
\end{split}
\end{equation}
From the assumption \eqref{eq:boundedweak} and the boundedness of the $\rho_n$'s again we obtain
\begin{equation}
\label{eq:boundf}
\sup_{n\in\N}\int |\nabla f_n|^2\rho_n\,\d\mm\leq C\,C'.
\end{equation}
For every $n\in\N$ the function  $G_n:=|\nabla(\eps f_n+ g_n)|$ is $\mm$-a.e. well defined on $\Omega$ and the sequence $(G_n)$ is  bounded in $L^2(\Omega)$, thus up to pass to a subsequence - not relabeled - it weakly converges in $L^2(\Omega)$ to some  $G\in L^2(\Omega)$. From the fact that the $\rho_n$'s are uniformly bounded and $\mm$-a.e. converge to $\rho$ we easily deduce that  $(G_n\sqrt{\rho_n})$ weakly converges in $L^2(\Omega)$ to $G\sqrt\rho$. Hence
\begin{equation}
\label{eq:bien}
\limi_{n\to\infty}\int G_n^2\rho_n\,\d\mm\geq \int G^2\rho\,\d\mm.
\end{equation}
The definition of $G$ and the lower semicontinuity of minimal weak upper gradients  again, ensure that $\mm$-a.e. on $\Omega$ it holds  $|\nabla(\eps f+g)|\leq G$  and therefore \eqref{eq:bien} gives
\[
\limi_{n\to\infty}\int |\nabla(\eps f_n+ g_n)|^2\rho_n\,\d\mm\geq \int|\nabla(\eps f+g)|^2\rho\,\d\mm.
\]
Plugging this inequality in \eqref{eq:bien0} and using the assumption on $(g_n),g$ and \eqref{eq:boundf} we obtain
\[
\begin{split}
\limi_{n\to\infty}2\eps\int \la\nabla f_n,\nabla g_n\ra\,\d\mu_n&\geq \int|\nabla(\eps f+g)|^2\rho\,\d\mm-\eps^2C\,C'-\int|\nabla g|^2\rho\,\d\mm\\
&=2\eps\int \la \nabla f,\nabla g\ra\,\d\mu-\eps^2\left(C\,C'-\int |\nabla f|^2\,\d\mu\right).
\end{split}
\]
Dividing by $\eps>0$ (resp. $\eps<0$), letting $\eps\downarrow0$ (resp. $\eps\uparrow 0$) and noticing that the result does not depend on the particular subsequence chosen we conclude.
\end{proof}

\begin{remark}[Infinitesimally uniformly convex spaces]\label{re:infunifconv}{\rm In Lemma \ref{le:weakcont} above, the assumption about infinitesimal Hilbertianity can be weakened into `the space is \emph{infinitesimally uniformly convex}', this class being defined as:
\begin{quote}
$(X,\sfd,\mm)$ is infinitesimally uniformly convex provided there exists a bounded Borel function $X\times[0,\infty]\ni (x,\tau)\mapsto\omega_x(\tau)$ such that for $\mm$-a.e. $x$ the map $\tau\mapsto\omega_x(\tau)$ is continuous,  non-decreasing and satisfies $\omega_x(0)=0$ and for which  the inequality
\[
\weakgrad g\frac{\weakgrad{(g+\eps f)}+\weakgrad{(g-\eps f)}-2\weakgrad g}{2\eps}\leq \weakgrad f^2\,\omega\left(\eps\frac{\weakgrad f}{\weakgrad g}\right),\qquad\mm\ae,
\]
holds  for any $f,g\in\s^2(X,\sfd,\mm)$ (on the set $\{\weakgrad f=\weakgrad g=0\}$ the left-hand side is 0, thus it has no importance how to define the value of the ratio $\frac{\weakgrad f}{\weakgrad g}$ on this set).
\end{quote}
The terminology comes from the following fact: on a Banach space $(B,\|\cdot\|)$ the inequality
\[
\|v\|\frac{\|v+\eps w\|+\|v-\eps w\|-2\|v\|}{2\eps}\leq\|w\|^2\omega\Big(\eps\frac{\|w\|}{\|v\|}\Big),\qquad\forall v,w\in B,
\]
is equivalent to the inequality
\[
\frac{\|v+\eps w\|+\|v-\eps w\|}{2}-1\leq\eps \omega(\eps),\qquad\forall v,w\in B,\ \|v\|=\|w\|=1,
\]
as shown by some simple algebraic manipulation. The validity of the latter for some continuous function $\omega$ with $\omega(0)=0$ is the defining property of uniformly smooth Banach spaces, and since a space is uniformly smooth if and only if its dual is uniformly convex (see e.g. \cite{Diestel75}), we deduce that a Finsler manifold $F$ is infinitesimally uniformly convex in the terminology above if and only if for a.e. $x\in F$ the norm in the tangent space at $x$ is uniformly convex (not necessarily uniformly on $x$).

\medskip

Coming back to the case of metric measure spaces and recalling Definition \ref{def:dpmfg}, it is not hard to show that
\begin{equation}
\label{eq:dpmvar}
D^+f(\nabla g)=\inf_{\eps>0}\weakgrad g\frac{\weakgrad{(g+\eps f)}-\weakgrad g}{\eps},\qquad D^-f(\nabla g)=\sup_{\eps<0}\weakgrad g\frac{\weakgrad{(g+\eps f)}-\weakgrad g}{\eps},
\end{equation}
and in particular we see that infinitesimally uniformly convex spaces are infinitesimally strictly convex. In this remark we want to prove that under the same assumptions of the Lemma \ref{le:weakcont}, weakening  `infinitesimal Hilbertianity' into `infinitesimal uniform convexity' and further assuming that
\begin{equation}
\label{eq:domination}
\mu_n\leq h\mm,\qquad\forall n\in\N,\quad\textrm{ for some $h\in L^1(\Omega,\mm)$},
\end{equation}
one can still conclude that
\[
\lim_{n\to\infty}\int Df_n(\nabla g_n)\,\d\mu_n=\int Df(\nabla g)\,\d\mu.
\]

Indeed, put
\[
{\rm Err}(\eps,f,g):=\weakgrad g\frac{\weakgrad{(g+\eps f)}+\weakgrad{(g-\eps f)}-2\weakgrad g}{2\eps}
\]
and notice that by assumption and the triangle inequality for minimal weak upper gradients we have
\begin{equation}
\label{eq:err}
{\rm Err}(\eps,f,g)\leq\min\left\{ \weakgrad f\weakgrad g,\weakgrad f^2\,\omega\left(\eps\frac{\weakgrad f}{\weakgrad g}\right)\right\},
\end{equation}
then pick $\eps>0$ and use \eqref{eq:dpmvar} to get
\[
\begin{split}
\int Df_n(\nabla g_n)\,\d\mu_n&\leq \int \weakgrad{g_n}\frac{\weakgrad{(g_n+\eps f_n)}-\weakgrad {g_n}}{\eps}\,\d\mu_n\\
&=\int \weakgrad{g_n}\frac{\weakgrad {g_n}-\weakgrad{(g_n-\eps f_n)}}{\eps}+2{\rm Err}(\eps,f_n,g_n)\,\d\mu_n.
\end{split}
\]
Arguing as in the proof of Lemma \ref{le:weakcont} we obtain
\[
\begin{split}
\lims_{n\to\infty}\int\weakgrad{g_n}\frac{\weakgrad {g_n}-\weakgrad{(g_n-\eps f_n)}}{\eps}\,\d\mu_n&\leq\int\weakgrad g\frac{\weakgrad {g}-\weakgrad{(g-\eps f)}}{\eps}\,\d\mu\\
&=\int\weakgrad g\frac{\weakgrad{(g-\eps f)}-\weakgrad {g}}{-\eps}\,\d\mu\leq \int Df(\nabla g)\,\d\mu.
\end{split}
\]
Notice that by assumption the sequence $(\weakgrad{g_n})$ is bounded in $L^2(\Omega)$, thus up to pass to a subsequence we can assume that it weakly converges to some $G\in L^2(\Omega)$. The lower semicontinuity of minimal weak upper gradients grants $G\geq\weakgrad g$ $\mm$-a.e. on $\Omega$. The $\mm$-a.e. convergence of $\rho_n$ to $\rho$ and the fact that these densities are uniformly bounded also give that $(\weakgrad{g_n}\sqrt{\rho_n})$ weakly converges to $G\sqrt\rho$ and thus taking into account the assumption $(iii)$ on the $g_n$'s we get
\[
\int \weakgrad g^2\rho\,\d\mm\leq\int G^2\rho\,\d\mm\leq \limi_{n\to\infty}\int \weakgrad{g_n}^2\rho_n\,\d\mm\leq \lims_{n\to\infty}\int \weakgrad{g_n}^2\rho_n\,\d\mm=\int \weakgrad g^2\rho\,\d\mm,
\]
which forces strong $L^2$-convergence of $(\weakgrad{g_n}\sqrt{\rho_n})$ to $\weakgrad g\sqrt\rho$. Thus up to pass to a non-relabeled subsequence, we can assume that
\begin{equation}
\label{eq:dom2}
\weakgrad{g_n}\sqrt{\rho_n}\leq H,\qquad\forall n\in\N,\textrm{ for some }H\in L^2(\Omega).
\end{equation}
Now  fix $a,b>0$ and notice that
\begin{equation}
\label{eq:finalm}
\begin{split}
\int_\Omega{\rm Err}(\eps,f_n,g_n)\,\d\mu_n&= \int_{\big(\{\weakgrad {f_n}\geq a\}\big)\cup\big(\{\weakgrad {g_n}\leq b\}\big)\cup\big(\{\weakgrad{f_n}\leq a\}\cap\{\weakgrad {g_n}\geq b\}\big)}{\rm Err}(\eps,f_n,g_n)\,\d\mu_n\\
\textrm{by \eqref{eq:err}}\qquad&\leq\int_{\{\weakgrad{f_n}\geq a\}}\weakgrad {f_n}\weakgrad {g_n}\,\d\mu_n+b\int_{\Omega}\weakgrad {f_n}\,\d\mu_n+a^2\int_\Omega \omega\big(\eps\frac{a}{b}\big)\,\d\mu_n\\
\textrm{by \eqref{eq:domination},\eqref{eq:dom2}}\qquad&\leq C\sqrt{\int_{\{\weakgrad{f_n}\geq a\}}H^2\,\d\mm}+bC+a^2\int_\Omega h\,\omega\big(\eps\frac{1}{a}\big)\,\d\mm,
\end{split}
\end{equation}
where $C:=\sup_n\sqrt{\int\weakgrad{f_n}^2\,\d\mu_n}<\infty$. The dominate convergence theorem and the assumption on $\omega$ grants that $\lim_{\eps\downarrow0}\int_\Omega h\,\omega\big(\eps\frac{1}{a}\big)\,\d\mm=0$, while the Chebyshev inequality, the uniform bound on $\|\weakgrad{f_n}\|_{L^2(\Omega)}$ and the absolute continuity of the integral ensure that\linebreak $\lim_{a\uparrow\infty}\int_{\{\weakgrad{f_n}\geq a\}}H^2\,\d\mm=0$. Thus letting first $\eps\downarrow0$ and then $a,b^{-1}\uparrow\infty$ in \eqref{eq:finalm} we deduce
\[
\lim_{\eps\downarrow0}\sup_{n\in\N}\int_\Omega{\rm Err}(\eps,f_n,g_n)\,\d\mu_n=0.
\]
Taking all together we thus proved that
\[
\lims_{n\to\infty}\int Df_n(\nabla g_n)\,\d\mu_n\leq \int Df(\nabla g)\,\d\mu,
\]
and exchanging $f_n$ with $-f_n$ we get the other inequality and the conclusion.
}\fr\end{remark}
\begin{remark}[Strong-strong convergence on inf. strictly convex spaces]\label{re:strstrstrconf}{\rm
If assumptions $(ii),(iii)$ in Lemma \ref{le:weakcont} are strengthened into 
\begin{itemize}
\item[ii')]  $(f_n)\subset \s^2(\Omega)$ and $f\in \s^2(\Omega)$ are such that $f_n\to f$ $\mm$-a.e. as $n\to\infty$ and
\[
\weakgrad{(f_n-f)}\to0,\quad\mm\ae,\qquad\quad\weakgrad{f_n}\leq F,\quad\forall n\in\N\ \textrm{ for some $F\in L^2(\Omega)$},
\]
\item[iii')] $(g_n)\subset \s^2(\Omega)$ and $g\in \s^2(\Omega)$ are such that $g_n\to g$ $\mm$-a.e. as $n\to\infty$ and
\[
\weakgrad{(g_n-g)}\to0,\quad\mm\ae,\qquad\quad\weakgrad{g_n}\leq G,\quad\forall n\in\N\ \textrm{ for some $G\in L^2(\Omega)$},
\]
\end{itemize}
then the conclusion
\[
\lim_{n\to\infty}\int Df_n(\nabla g_n)\,\d\mu_n=\int Df(\nabla g)\,\d\mu,
\]
holds on infinitesimally strictly convex spaces. Indeed, it is easy to see that the hypothesis yield that for $\delta\in\R\setminus\{0\}$ the sequence of functions $\frac{\weakgrad{(g_n+\delta f_n)}^2-\weakgrad{g_n}^2}{2\delta}$
is dominated and $\mm$-a.e. converge to $\frac{\weakgrad{(g+\delta f)}^2-\weakgrad{g}^2}{2\delta}$. Thus fix $\eps>0$, pick $\delta<0$ such that
\[
\int Df(\nabla g)\,\d\mu\leq \eps+\int\frac{\weakgrad{(g+\delta f)}^2-\weakgrad{g}^2}{2\delta}\,\d\mu,
\]
and notice that
\[
\int\frac{\weakgrad{(g+\delta f)}^2-\weakgrad{g}^2}{2\delta}\,\d\mu=\lim_{n\to\infty}\int\frac{\weakgrad{(g_n+\delta f_n)}^2-\weakgrad{g_n}^2}{2\delta}\,\d\mu\leq \limi_{n\to\infty}\int Df_n(\nabla g_n)\,\d\mu_n.
\]
Thus $\int Df(\nabla g)\,\d\mu\leq\limi_{n\to\infty}\int Df_n(\nabla g_n)\,\d\mu_n$ and exchanging $f_n$ with $-f_n$ we get the other inequality and the conclusion.
}\fr\end{remark}

We will now use the calculus tools just developed to produce quite general first order regularity results. In order to make use of Lemma \ref{le:weakcont}, we  will work with $W_2$-geodesics as follows:
\begin{definition}[Curves with time-continuous densities]\label{def:timecont}
Let $(X,\sfd,\mm)$ be a metric\linebreak measure space and $(\mu_t)\subset\probt X$ an absolutely continuous curve. We say that $(\mu_t)$ has time-continuous density provided for some $C>0$ it holds $\mu_t\leq C\mm$ for every $t\in[0,1]$ and, denoting by $\rho_t$ the density of $\mu_t$, the following holds:  for any $t\in[0,1]$ and any sequence $(t_n)\subset [0,1]$ converging to $t$ there exists a subsequence $(t_{n_k})$ such that
\[
\rho_{t_{n_k}}\to\rho_t,\qquad \mm\ae \ {\rm{ as }}\ k\to\infty.
\]
\end{definition}
In our applications, Corollary \ref{cor:pertcont} will grant the existence of many geodesics of this kind on $RCD(K,\infty)$ spaces.
\begin{proposition}[From $t\mapsto f(x_t)$ to $t\mapsto \int f\,\d\mu_t$]\label{prop:intf}
Let $(X,\sfd,\mm)$ be an infinitesimally\linebreak Hilbertian metric measure space, $(\mu_t)\subset \probt X$ a geodesic with time-continuous density, $\Omega\subset X$ a bounded open set such that $\supp(\mu_t)\subset \Omega$ for every $t\in[0,1]$  and $f\in \s^2(\Omega)\cap L^1(\Omega)$.

Then the map $t\mapsto \int f\,\d\mu_t$ is $C^1$ and it holds
\begin{equation}
\label{eq:pezzi}
\begin{split}
\frac{\d}{\d t}\int f\,\d\mu_t&=\frac1t\int \la\nabla f,\nabla \varphi_t\ra\d\mu_t\qquad\qquad\ \ \forall t\in(0,1]\\
&=-\frac1{1-t}\int \la\nabla f,\nabla \psi_t\ra\d\mu_t,\qquad\forall t\in[0,1),
\end{split}
\end{equation}
where for any $t\in[0,1]$ the functions $\varphi_t,\psi_t$ are locally Lipschitz Kantorovich potentials from $\mu_t$ to $\mu_0$ and from $\mu_t$ to $\mu_1$ respectively. In particular, for any $t\in(0,1)$ it holds
\begin{equation}
\label{eq:intero}
\frac{\d}{\d t}\int f\,\d\mu_t=\int\la\nabla f,\nabla(Q_t(-\varphi))\ra\d\mu_t=-\int\la\nabla f,Q_{1-t}(-\varphi^c)\ra\d\mu_t,
\end{equation}
where $\varphi$ is any Kantorovich potential from $\mu_0$ to $\mu_1$.
\end{proposition}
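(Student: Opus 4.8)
The plan is to first produce the one-sided derivatives of $t\mapsto\int f\,\d\mu_t$ at each interior point by transplanting the first order differentiation formula \eqref{eq:firsthil} along suitable restrictions of the geodesic, then to reconcile the two one-sided expressions via Lemma~\ref{le:bilatero}, and finally to upgrade pointwise differentiability to $C^1$ regularity by means of the weak-strong convergence Lemma~\ref{le:weakcont}. Throughout I fix a Kantorovich potential $\varphi$ from $\mu_0$ to $\mu_1$ which is Lipschitz on bounded sets (it exists since $\mu_0,\mu_1$ have bounded support), and work with the explicit representatives $\varphi_t:=tQ_t(-\varphi)$ and $\psi_t:=(1-t)Q_{1-t}(-\varphi^c)$ from Proposition~\ref{prop:kantev}, which for $t\in(0,1)$ are Lipschitz on bounded sets, hence lie in $\s^2(\Omega)$ with $|\nabla\varphi_t|,|\nabla\psi_t|$ bounded on $\Omega$ by \eqref{eq:hllip}; Lemma~\ref{le:gradkp} together with \eqref{eq:1lipfg} then shows that the right-hand sides of \eqref{eq:pezzi} do not depend on the particular choice of locally Lipschitz Kantorovich potentials. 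Note also that $f\in L^1(\Omega)$, $\mu_t\leq C\mm$ and $\supp(\mu_t)\subset\Omega$ make $t\mapsto\int f\,\d\mu_t$ well defined and finite.

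\emph{One-sided derivatives.} Fix $t_0\in(0,1)$, let $\ppi$ be a lifting of $(\mu_t)$ and form $\ppi^+:=({\rm Restr}_{t_0}^1)_\sharp\ppi$ and $\ppi^-:=({\rm Restr}_{t_0}^0)_\sharp\ppi$, which are liftings of the geodesics $s\mapsto\mu_{t_0+s(1-t_0)}$ and $s\mapsto\mu_{(1-s)t_0}$. Both of these take values in $\Omega$ and have density $\leq C$, so by the metric Brenier theorem \ref{thm:brenmetr} the plan $\ppi^+$ represents the gradient of $-\psi_{t_0}$ in $\Omega$ and $\ppi^-$ represents the gradient of $-\varphi_{t_0}$ in $\Omega$. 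Rewriting the incremental ratios of $t\mapsto\int f\,\d\mu_t$ in terms of $\ppi^\pm$ via the obvious reparametrization and invoking \eqref{eq:firsthil} gives
\[
\frac{\d^+}{\d t}\Big|_{t=t_0}\!\!\int f\,\d\mu_t=-\frac1{1-t_0}\int\la\nabla f,\nabla\psi_{t_0}\ra\,\d\mu_{t_0},\qquad \frac{\d^-}{\d t}\Big|_{t=t_0}\!\!\int f\,\d\mu_t=\frac1{t_0}\int\la\nabla f,\nabla\varphi_{t_0}\ra\,\d\mu_{t_0}.
\]
The endpoint cases $t_0=0$ and $t_0=1$, where only one of the two is needed, are handled the same way.

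\emph{Agreement and continuity.} By Lemma~\ref{le:bilatero}, $|\nabla(\tfrac1{t_0}\varphi_{t_0}+\tfrac1{1-t_0}\psi_{t_0})|=0$ $\mu_{t_0}$-a.e., hence by \eqref{eq:1lipfg} also $\tfrac1{t_0}\la\nabla f,\nabla\varphi_{t_0}\ra=-\tfrac1{1-t_0}\la\nabla f,\nabla\psi_{t_0}\ra$ $\mu_{t_0}$-a.e., so the two one-sided derivatives coincide; thus $t\mapsto\int f\,\d\mu_t$ is differentiable on $(0,1)$ and, recalling $\varphi_t=tQ_t(-\varphi)$, $\psi_t=(1-t)Q_{1-t}(-\varphi^c)$, formulas \eqref{eq:pezzi} and \eqref{eq:intero} follow. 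It remains to check that $t\mapsto\tfrac1t\int\la\nabla f,\nabla\varphi_t\ra\,\d\mu_t$ is continuous on $(0,1]$ (and symmetrically with $\psi_t$ on $[0,1)$). Given $t_n\to t$, I apply Lemma~\ref{le:weakcont} with $\mu_n:=\mu_{t_n}$, $f_n:=f$, $g_n:=\varphi_{t_n}$, $g:=\varphi_t$: time-continuity of the density gives $\mm$-a.e. convergence of the densities along a subsequence, the Hopf-Lax continuity \eqref{eq:basehl2} gives $\varphi_{t_n}\to\varphi_t$ $\mm$-a.e., the bound \eqref{eq:hllip} on $\Omega$ gives the required uniform $L^2$ bounds on $|\nabla f|$ and $|\nabla\varphi_{t_n}|$, and the metric Brenier theorem \ref{thm:brenmetr} gives the energy identity $\int|\nabla\varphi_{t_n}|^2\,\d\mu_{t_n}=W_2^2(\mu_{t_n},\mu_0)=t_n^2W_2^2(\mu_0,\mu_1)\to\int|\nabla\varphi_t|^2\,\d\mu_t$. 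Lemma~\ref{le:weakcont} then yields $\int\la\nabla f,\nabla\varphi_{t_n}\ra\,\d\mu_{t_n}\to\int\la\nabla f,\nabla\varphi_t\ra\,\d\mu_t$; since the limit does not depend on the subsequence, the full sequence converges, and $t\mapsto\int f\,\d\mu_t$ is $C^1$.

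I expect the main obstacle to be this last step: verifying all hypotheses of the delicate Lemma~\ref{le:weakcont} (especially the uniform $L^2$-bounds on $|\nabla\varphi_{t_n}|$ and the fact that the energies $\int|\nabla\varphi_{t_n}|^2\,\d\mu_{t_n}$ converge — which works only because they equal $W_2^2(\mu_{t_n},\mu_0)=t_n^2W_2^2(\mu_0,\mu_1)$ by metric Brenier), together with the bookkeeping that makes the formulas independent of the chosen Kantorovich potentials (via Lemma~\ref{le:gradkp}) and the reparametrization identities relating the incremental ratios of $t\mapsto\int f\,\d\mu_t$ to those controlled by $\ppi^\pm$.
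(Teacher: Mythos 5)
Your proof is correct and follows essentially the same route as the paper: one-sided derivatives from the metric Brenier theorem and the first order differentiation formula \eqref{eq:firsthil} applied to the restricted plans $\ppi^\pm$, agreement of the two via Lemma~\ref{le:bilatero}, and continuity of the derivative via Lemma~\ref{le:weakcont}. The one place where you deviate from the paper is which sequence plays the ``strong'' role in Lemma~\ref{le:weakcont}: you take $g_n:=\varphi_{t_n}$ and verify the energy convergence $\int|\nabla\varphi_{t_n}|^2\,\d\mu_{t_n}\to\int|\nabla\varphi_t|^2\,\d\mu_t$ via the metric Brenier identity $\int|\nabla\varphi_{t_n}|^2\,\d\mu_{t_n}=W_2^2(\mu_{t_n},\mu_0)=t_n^2W_2^2(\mu_0,\mu_1)$, keeping $f_n:=f$ fixed on the weak side; the paper makes the opposite choice, $g_n:=f$ (so the energy convergence $\int|\nabla f|^2\rho_{t_n}\,\d\mm\to\int|\nabla f|^2\rho_t\,\d\mm$ comes from dominated convergence using $\rho_{t_n}\to\rho_t$ $\mm$-a.e., $\rho_{t_n}\le C$) and $f_n:=-Q_{1-t_n}(-\varphi^c)$ on the weak side via \eqref{eq:basehl2} and \eqref{eq:hllip}. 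Both are valid; yours is arguably a hair slicker since the energy convergence for $g_n$ falls out directly from the geodesic identity. One small gloss: to invoke \eqref{eq:hllip} for the uniform Lipschitz bound on $\Omega$ you should, as the paper does, pick $\varphi$ \emph{bounded} and globally Lipschitz (which is possible since $\mu_0,\mu_1$ both have bounded support), rather than merely ``Lipschitz on bounded sets''.
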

\begin{proof} A direct consequence of the assumptions is that $\int |f|\,\d\mu_t<\infty$ for any $t\in[0,1]$ so that the statement makes sense. Also, according to Lemma \ref{le:gradkp} the values of the right-hand sides of expressions \eqref{eq:pezzi} and \eqref{eq:intero} do not depend on the particular choice of Kantorovich potentials. Fix a bounded Kantorovich potential $\varphi$ from $\mu_0$ to $\mu_1$ so  that in particular - as it is easily seen - both $\varphi$ and $\varphi^c$  are Lipschitz.

Fix $t_0\in[0,1)$, put $\ppi^+_{t_0}:=({\rm Restr}_{t_0}^1)_\sharp\ppi$ so that $\ppi^+_{t_0}$ is a lifting of the geodesic $t\mapsto\mu_{t_0+t(1-t_0)}$ and recall that by Proposition \ref{prop:kantev} $Q_{1-t_0}(-\varphi^c)$ induces such geodesic. The metric Brenier theorem \ref{thm:brenmetr} and the first order differentiation formula \eqref{eq:firsthil} give
\[
\begin{split}
\lim_{h\downarrow0}\int f\,\d\frac{\mu_{t_0+h}-\mu_{t_0}}{h}&=\lim_{h\downarrow0}\int\frac{f(\gamma_{t_0+h})-f(\gamma_{t_0})}h\,\d\ppi(\gamma)\\
&=\lim_{h\downarrow0}\int\frac{f(\gamma_{h/(1-t_0)})-f(\gamma_{0})}h\,\d\ppi^+_{t_0}(\gamma)\\
&=\frac{1}{1-t_0}\lim_{h\downarrow0}\int\frac{f(\gamma_{h})-f(\gamma_{0})}h\,\d\ppi^+_{t_0}(\gamma)\\
&=-\int \la\nabla f,\nabla (Q_{1-t_0}(-\varphi^c))\ra\,\d(\e_0)_\sharp\ppi^+_{t_0}\\
&=-\int \la\nabla f,\nabla (Q_{1-t_0}(-\varphi^c))\ra\,\d\mu_{t_0}.
\end{split}
\]
We claim that  $t\mapsto \int \la\nabla f,\nabla (Q_{1-t}(-\varphi^c))\ra\,\d\mu_{t}$ is continuous on $[0,1)$ and to this aim we shall apply Lemma \ref{le:weakcont}. Let $(t_n)\subset[0,1)$ be converging to $t\in[0,1)$ and use the assumption that $(\mu_t)$ has time-continuous density to extract a subsequence, not relabeled, such that the density of $\mu_{t_n}$ converges $\mm$-a.e. to the density of $\mu_t$   as $n\to\infty$. Put $\mu_n:=\mu_{t_n}$, $\mu:=\mu_t$, $g_n:=f$, $g:=f$, $f_n:=-Q_{1-t_n}(-\varphi^c)$ and $f:=-Q_{1-t}(-\varphi^c)$ so that assumptions $(i)$ and $(iii)$ of Lemma \ref{le:weakcont} are fulfilled. By the continuity property \eqref{eq:basehl2} and the fact that the functions $-Q_{1-s}(-\varphi^c)$ are uniformly Lipschitz on $s\in[0,1-\eps]$ (recall \eqref{eq:hllip}) we also get that assumption $(ii)$ is satisfied and we get $\int \la\nabla f,\nabla (Q_{1-t_n}(-\varphi^c))\ra\,\d\mu_{t_n}\to \int \la\nabla f,\nabla (Q_{1-t}(-\varphi^c))\ra\,\d\mu_{t}$ as desired. Given that the result does not depend on the particular subsequence chosen, the claimed continuity follows.

Now repeat the argument for the time reversed geodesic $t\mapsto\mu_{1-t}$ to deduce that the left derivative of $\int f\,\d\mu_t$ exists, is continuous on $(0,1]$ and equal to  $\int \la\nabla f,\nabla (Q_t(-\varphi))\ra\d\mu_t$. 

Using Lemma \ref{le:bilatero} we get that left and right derivatives coincide on $(0,1)$ and Proposition \ref{prop:kantev} allows to switch between \eqref{eq:pezzi} and \eqref{eq:intero}.
\end{proof}
\begin{proposition}[From $t\mapsto\frac12\sfd^2(x_t,y)$ to $t\mapsto\frac12W_2^2(\mu_t,\nu)$]\label{prop:intdist} Let $(\tilde X,\tilde\sfd,\tilde\mm)$ be an infinitesimally Hilbertian space,  $(\mu_t)\subset \probt X$ a geodesic with time-continuous density and $\nu\in\probt X$ be such that   for some bounded open set $\Omega$ it holds $\supp(\mu_t)\subset\Omega$ for every $t\in[0,1]$ and $\supp(\nu)\subset\Omega$.

Then the function $t\mapsto \frac12W_2^2(\mu_t,\nu)$ is $C^1$ and its derivative is given by
\begin{equation}
\label{eq:derdistpezzi}
\begin{split}
\frac{\d}{\d t}\frac12W_2^2(\mu_t,\nu)&=\frac1t\int \la\nabla \phi_t,\nabla\varphi_t\ra\d\mu_t\qquad\qquad\ \ \forall t\in(0,1]\\
&=-\frac1{1-t}\int \la\nabla \phi_t,\nabla\psi_t\ra\d\mu_t,\qquad\forall t\in[0,1),
\end{split}
\end{equation}
where $\varphi_t,\psi_t,\phi_t$ are locally Lipschitz Kantorovich potentials from $\mu_t$ to $\mu_0$, from $\mu_t$ to $\mu_1$ and from $\mu_t$ to $\nu$ respectively. In particular, for any $t\in(0,1)$ it holds
\begin{equation}
\label{eq:derdist}
\frac{\d}{\d t}\frac12W_2^2(\mu_t,\nu)=\int \la\nabla \phi_t,\nabla (Q_t(-\varphi))\ra\d\mu_t=-\int \la\nabla \phi_t,\nabla (-Q_{1-t}(\varphi^c))\ra\d\mu_t,
\end{equation}
where $\varphi$ is any   Kantorovich potential from $\mu_0$ to $\mu_1$ and $\phi_t$ is as before.
\end{proposition}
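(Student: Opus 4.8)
The plan is to imitate the proof of Proposition~\ref{prop:intf}, using Kantorovich duality to reduce everything to a \emph{fixed}-potential difference quotient together with the a~priori regularity of $D(s):=\tfrac12W_2^2(\mu_s,\nu)$. Since $s\mapsto\mu_s$ is $W_2$-Lipschitz (being a constant speed geodesic) and $W_2(\cdot,\nu)$ is $1$-Lipschitz, the map $D$ is Lipschitz and bounded on $[0,1]$, hence absolutely continuous and differentiable at $\mathcal L^1$-a.e.\ $t$. For every $s$ I fix a bounded, globally Lipschitz Kantorovich potential $\phi_s$ from $\mu_s$ to $\nu$, normalized so that $\inf_{\supp(\mu_s)}\phi_s=0$ (possible since $\mu_s$ and $\nu$ are supported in the bounded set $\Omega$); then $\{\phi_s\}$ is equibounded and equi-Lipschitz on $\overline\Omega$, each $\phi_s\in\s^2(\Omega)\cap L^1(\Omega)$, and $D(s)=\int\phi_s\,\d\mu_s+\int\phi_s^c\,\d\nu$. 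Write $L(t):=\frac1t\int\la\nabla\phi_t,\nabla\varphi_t\ra\,\d\mu_t=-\frac1{1-t}\int\la\nabla\phi_t,\nabla\psi_t\ra\,\d\mu_t$ for the value claimed in \eqref{eq:derdistpezzi}; by Lemma~\ref{le:gradkp} it is independent of the choices of $\phi_t,\varphi_t,\psi_t$.

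Sub-optimality of $\phi_t$ as a competitor for the couple $(\mu_{t+h},\nu)$ gives, for every $h$, the single inequality $D(t+h)-D(t)\ge\int\phi_t\,\d(\mu_{t+h}-\mu_t)$. On the other hand, Proposition~\ref{prop:intf} applied to the fixed function $f:=\phi_t$ and the geodesic $(\mu_s)$ (which has time-continuous density by hypothesis) shows that $s\mapsto\int\phi_t\,\d\mu_s$ is $C^1$ with derivative $L(t)$ at $s=t$; hence $\tfrac1h\int\phi_t\,\d(\mu_{t+h}-\mu_t)\to L(t)$ as $h\to0$. Combining the two facts, at every $t$ where $D$ is differentiable one gets $D'(t)\ge L(t)$ by letting $h\downarrow0$ and $D'(t)\le L(t)$ by letting $h\uparrow0$ (division by a negative number reversing the inequality). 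Therefore $D'=L$ $\mathcal L^1$-a.e.\ on $(0,1)$.

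The main point left, and the step I expect to be the hardest, is the continuity of $t\mapsto L(t)$: this is where the variation of the \emph{moving} potential $\phi_t$ must be controlled. Given $t_n\to t$ in $(0,1)$, I would extract a subsequence along which $\phi_{t_n}\to\tilde\phi$ uniformly on $\overline\Omega$ (by a compactness argument exploiting that $\{\phi_s\}$ is equibounded and equi-Lipschitz) and $\rho_{t_n}\to\rho_t$ $\mm$-a.e.\ (using the time-continuous density hypothesis, Definition~\ref{def:timecont}); by stability of optimal transport, $\tilde\phi$ is again a Kantorovich potential from $\mu_t$ to $\nu$, so $|\nabla(\tilde\phi-\phi_t)|=0$ $\mu_t$-a.e.\ by Lemma~\ref{le:gradkp}. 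Taking $\psi_s:=(1-s)Q_{1-s}(-\varphi^c)$ as Kantorovich potentials from $\mu_s$ to $\mu_1$ (Proposition~\ref{prop:kantev}), these are equi-Lipschitz near $t$, converge pointwise by the Hopf--Lax continuity \eqref{eq:basehl2}, and satisfy $\int|\nabla\psi_s|^2\,\d\mu_s=W_2^2(\mu_s,\mu_1)$ by the metric Brenier theorem~\ref{thm:brenmetr}, a quantity continuous in $s$; hence all hypotheses of the weak--strong convergence Lemma~\ref{le:weakcont} are met with $f_n:=\phi_{t_n}$, $g_n:=\psi_{t_n}$, $\mu_n:=\mu_{t_n}$, and it yields $\int\la\nabla\phi_{t_n},\nabla\psi_{t_n}\ra\,\d\mu_{t_n}\to\int\la\nabla\tilde\phi,\nabla\psi_t\ra\,\d\mu_t=\int\la\nabla\phi_t,\nabla\psi_t\ra\,\d\mu_t$, the last equality by Lemma~\ref{le:gradkp} and \eqref{eq:1lipfg}. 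As the limit is independent of the subsequence, $L$ is continuous on $(0,1)$.

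Finally, $D$ is absolutely continuous with $D'=L$ a.e.\ and $L$ is continuous, so $D(b)-D(a)=\int_a^b L\,\d s$ forces $D\in C^1((0,1))$ with $D'=L$ there; the behaviour at $t=0,1$ is recovered one-sidedly, the left derivative at $t=1$ by applying the same reasoning to the reversed geodesic $s\mapsto\mu_{1-s}$. This proves \eqref{eq:derdistpezzi}, and Proposition~\ref{prop:kantev} converts it into \eqref{eq:derdist}.
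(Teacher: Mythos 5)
Your proposal is correct and follows essentially the same route as the paper's proof: establish $D'(t)=L(t)$ at a.e.\ $t$ via Kantorovich sub-optimality plus a two-sided difference-quotient limit, then prove continuity of $L$ via the weak--strong Lemma~\ref{le:weakcont} with $f_n:=\phi_{t_n}$ (which only converge weakly) and $g_n$ the Hopf--Lax evolved potentials (which satisfy the required convergence of $\|\cdot\|_{L^2(\mu_n)}$-norms of gradients by metric Brenier), and conclude by the fundamental theorem of calculus. The only, inessential, difference is that you invoke Proposition~\ref{prop:intf} directly to compute $\lim_{h\to0}\tfrac1h\int\phi_t\,\d(\mu_{t+h}-\mu_t)$, whereas the paper reruns its restriction/rescaling argument and Lemma~\ref{le:bilatero} inline.
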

\begin{proof} By Lemma \ref{le:gradkp} we know that the right-hand sides of \eqref{eq:derdistpezzi} and \eqref{eq:derdist} do not depend on the particular choice of Kantorovich potentials. Thus fix a family $\{\phi_t\}$ of Kantorovich potentials from $\mu_t$ to $\nu$ uniformly Lipschitz and uniformly bounded on $\Omega$ and a Kantorovich potential $\varphi$ from $\mu_0$ to $\mu_1$ such that both $\varphi$ and $\varphi^c$ are Lipschitz on $\Omega$  (these exist  because the supports of the measures are all contained in the same bounded set).

It is obvious that  $t\mapsto\frac12W_2^2(\mu_t,\nu)$ is Lipschitz and in particular differentiable for a.e. $t\in[0,1]$. Let $\mathcal D\subset(0,1)$ be the set of points of differentiability, pick $t\in\mathcal D$  and notice that
\[
\begin{split}
\frac12W_2^2(\mu_t,\nu)&=\int\phi_t\,\d\mu_t+\int\phi_t^c\,\d\nu,\\
\frac12W_2^2(\mu_{t+h},\nu)&\geq \int\phi_t\,\d\mu_{t+h}+\int\phi_t^c\,\d\nu,\qquad\forall h\in\R\textrm{ s.t. }t+h\in[0,1].
\end{split}
\]
Hence 
\begin{equation}
\label{eq:perderivare}
\begin{split}
\lim_{h\downarrow0}&\frac{W_2^2(\mu_{t+h},\nu)-W_2^2(\mu_t,\nu)}{2h}\geq \lims_{h\downarrow0}\int\phi_t\,\d\frac{\mu_{t+h}-\mu_t}{h}=\lims_{h\downarrow0}\int\frac{\phi_t\circ\e_{t+h}-\phi_t\circ\e_t}h\,\d\ppi,\\
\lim_{h\uparrow0}&\frac{W_2^2(\mu_{t+h},\nu)-W_2^2(\mu_t,\nu)}{2h}\leq \limi_{h\uparrow0}\int\phi_t\,\d\frac{\mu_{t+h}-\mu_t}{h}=\limi_{h\uparrow0}\int\frac{\phi_t\circ\e_{t+h}-\phi_t\circ\e_t}h\,\d\ppi,\\
\end{split}
\end{equation}
where $\ppi\in\prob{\geo(X)}$ is any lifting of $(\mu_t)$. With the same restriction and rescaling arguments used in the proof of Proposition \ref{prop:intf} above we see that the $\lims$ and $\limi$ at the right-hand sides in  the above expressions are actually limits and that for any $t\in\mathcal D$ it holds
\[
\begin{split}
\lim_{h\downarrow0}\int\frac{\phi_t\circ\e_{t+h}-\phi_t\circ\e_t}h\,\d\ppi&=-\int \la\nabla \phi_t,\nabla (Q_{1-t}(-\varphi^c))\ra\d\mu_t,\\
\lim_{h\uparrow0}\int\frac{\phi_t\circ\e_{t+h}-\phi_t\circ\e_t}h\,\d\ppi&=\int \la\nabla \phi_t,\nabla (Q_{t}(-\varphi))\ra\d\mu_t.
\end{split}
\] 
By Lemma \ref{le:bilatero} we know that the right-hand sides of these expressions coincide, therefore  by \eqref{eq:perderivare} we deduce that formula \eqref{eq:derdist} holds for $t\in\mathcal D$.

Now we claim that $t\mapsto \int\la\nabla\phi_t,\nabla Q_{1-t}(-\varphi^c)\ra\,\d\mu_t$ is continuous on $[0,1)$. Using the same time-reversion argument used in Proposition \ref{prop:intf} above and Proposiiton \ref{prop:kantev} to switch between \eqref{eq:derdist} and \eqref{eq:derdistpezzi}, this will be sufficient to conclude.

We shall apply Lemma \ref{le:weakcont}. Let $(t_n)\subset[0,1)$ be converging to $t\in[0,1)$ and use the assumption that $(\mu_t)$ has time-continuous density to extract a subsequence, not relabeled, such that the density of $\mu_{t_n}$ converges $\mm$-a.e. to the density of $\mu_t$   as $n\to\infty$. Put $\mu_n:=\mu_{t_n}$, $\mu:=\mu_t$, $g_n:=-Q_{1-t_n}(-\varphi^c)$, $g:=-Q_{1-t}(-\varphi^c)$ and $f_n:=\phi_{t_n}$. It is clear that assumptions $(i)$ and $(iii)$ of Lemma \ref{le:weakcont} are fulfilled (use the fact that $s\mapsto\int|\nabla Q_{1-s}(-\varphi^c)|^2\,\d\mu_s$ is constantly equal to $W_2^2(\mu_0,\mu_1)$ - see the proof of Lemma \ref{le:bilatero} - to get that $\int|\nabla g_n|^2\,\d\mu_n\to\int|\nabla g|^2\,\d\mu$). Now recall that the functions $f_n$ are uniformly Lipschitz and uniformly bounded on $\Omega$. Thus up to passing to a subsequence, not relabeled, we can assume that for some Lipschitz function $f$ it holds $f_n(x)\to f(x)$ as $n\to\infty$ for every $x\in \Omega$. Thus also the assumption $(ii)$ of Lemma \ref{le:weakcont} is fulfilled and noticing that  $f$ must be a Kantorovich potential from $\mu_t$ to $\nu$ and that  the result does not depend on the particular subsequence chosen, we get the thesis.
\end{proof}
\begin{remark}[Averaging out the unsmoothness]\label{rem:averaging}{\rm It is a banality that one of the main problems in making analysis on non-smooth spaces is that they are, indeed, non-smooth. This means in particular that we cannot realistically hope to have a well defined and well behaved tangent space at every point, and hence there is little chance to define $C^1$ functions. However, if one believes that a tangent space exists at $\mm$-a.e. point (which as we have seen is in some sense always the case from the point of view of Sobolev calculus), then analysis should becomes easier when lifting computations from  points to absolutely continuous measures with bounded densities. Very roughly and heuristically said, this has the effect of `averaging out the  unsmoothness' because the $\mm$-a.e. defined tangent space becomes a well behaved `integrated' tangent space at the considered measures. This is the idea behind Propositions \ref{prop:intf} and \ref{prop:intdist}, where we see that after such lift we can carry out a first order calculus similar to that available in the smooth setting.

As a curiosity, we point out that Propositions \ref{prop:intf} and \ref{prop:intdist} give improved regularity even if the underlying space is a smooth Riemannian manifold. Indeed, the squared distance is certainly not a $C^1$ function in general, but when lifted to probability measures with bounded densities such $C^1$ regularity along minimal $W_2$-geodesics is always granted.
}\fr\end{remark}
\begin{remark}[Weakly $C^1$ curves]{\rm
One of the ingredients needed in Propositions \ref{prop:intf}, \ref{prop:intdist}  to get $C^1$ regularity is that, shortly and roughly said, the velocity vector fields of the considered curves are continuous in time: the same conclusions certainly cannot be derived under the only hypothesis that the curve $(\mu_t)$ is absolutely continuous w.r.t. $W_2$ and with  time-continuous densities as in Definition \ref{def:timecont}.

At least for Proposition \ref{prop:intf}, the same sort of $C^1$ regularity can be obtained if $(\mu_t)$ is a  \emph{weakly $C^1$ curve}, these being defined as:
\begin{quote}
$(\mu_t)$ is a weakly $C^1$ curve provided it has time-continuous densities and there exists a family $\{f_t\}_{t\in[0,1]}\subset\s^2(\Omega)$, $\Omega$ being an open set such that $\supp(\mu_t)\subset\Omega$ for every $t\in[0,1]$, such that the following holds: 
\begin{itemize}
\item[i)] $\sup_{t\in[0,1]}\int_\Omega\weakgrad {f_t}^2\,\d\mm<\infty$ and for every sequence $(t_n)$ converging to $t$ there exists a subsequence $(t_{n_k})$ such that $f_{t_{n_k}}\to f_t$ $\mm$-a.e. as $k\to \infty$,
\item[ii)] for some (and thus any) lifting $\ppi$ of $(\mu_t)$ and for any $t\in[0,1]$ the plans $({\rm Restr}_t^0)_\sharp\ppi$ and $({\rm Restr}_t^1)_\sharp\ppi$ represent the gradients of $-tf_t$ and $(1-t)f_t$ respectively.
\end{itemize}
\end{quote}
Then if the space $(X,\sfd,\mm)$ is infinitesimally smooth and infinitesimally uniformly convex (Remarks \ref{re:infsmooth} and \ref{re:infunifconv})  Proposition \ref{prop:intf} holds if instead of geodesics one considers weakly  $C^1$ curves. 

Proposition \ref{prop:intdist} is a bit more delicate to handle because we have to check the continuity of $\int D\phi_t(\nabla(Q_t(-\varphi)))\,\d\mu_t$ rather than of $\int Df(\nabla(Q_t(-\varphi)))\,\d\mu_t$: the additional time-dependence of the $\phi_t$'s requires some care. We won't enter the discussion about the natural `infinitesimal' requirement needed to get such $C^1$ regularity.

An example of a weakly $C^1$ curve on a general space $(X,\sfd,\mm)$ with $\mm\in\prob X$ is given by $t\mapsto\mu_t:=\rho_t\mm$, where $(\rho_t)$ is the gradient flow trajectory of $\frac12\int\weakgrad\rho^2\,\d\mm$ starting from some probability density $\rho_0\in \s^2(X)$ with $\rho,\rho^{-1}\in L^\infty(X)$, in this case the functions $f_t$ are given by $f_t:=\log(\rho_t)$. See \cite{GigliHan13} for further comments in this direction.
}\fr\end{remark}

We will now use the  $C^1$ regularity just proved to get an important geometric information about the behavior of the squared distance along the gradient flow of $\b$ on our infinitesimally Hilbertian $CD(0,N)$ space $(X,\sfd,\mm)$. Notice that while the above calculus rules are valid even in non-infinitesimally Hilbertian spaces (Remarks \ref{re:infsmooth}, \ref{re:infunifconv} and \ref{re:strstrstrconf}), the following geometric consequence strongly relies on such assumption, see Remark \ref{rem:whyinfhil}.

We shall use    formulas \eqref{eq:abcconc}, which can be equivalently  rewritten as
\begin{equation}
\label{eq:hlb}
\begin{split}
Q_t\b&=\b-\frac t2,\qquad\qquad\qquad Q_t(-\b)=-\b-\frac t2,
\end{split}
\end{equation}
for any $t\geq 0$.

\begin{corollary}\label{cor:min}
With the same notation and assumptions as in \eqref{eq:assfin} and recalling that the flow $\bar\X$ is defined in Theorem \ref{thm:gfpresdist}, the following holds.

Let $\mu,\nu\in\probt X$ be two measures with bounded support such that $\mu,\nu\leq C\mm$ for some $C>0$ and put $\mu_t:=(\bar\X_t)_\sharp\mu$.

Then the map $t\mapsto W_2^2(\mu_t,\nu)$ has a unique minimum and such minimum is the only $t\in\R$ for which $\int\b\,\d\mu_{t}=\int\b\,\d\nu$.
\end{corollary}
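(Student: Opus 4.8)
The plan is to study $h(t):=\tfrac12 W_2^2(\mu_t,\nu)$, to show it is $C^1$ and coercive, that every minimizer $t_0$ satisfies the ``Euler equation'' $\int\la\nabla\phi_{t_0},\nabla\b\ra\,\d\mu_{t_0}=0$ (with $\phi_{t_0}$ a Kantorovich potential from $\mu_{t_0}$ to $\nu$), and finally to convert this into the identity $\int\b\,\d\mu_{t_0}=\int\b\,\d\nu$ by a geometric argument. First I would record two elementary facts. Since for $\mm$-a.e.\ $x$ one has $\bar\X_t(x)\in\partial^c(t\b)(x)$, formula \eqref{eq:facile} gives $\b\circ\bar\X_t=\b-t$ on $\supp(\mm)$, hence $\int\b\,\d\mu_t=\int\b\,\d\mu-t$; in particular $t\mapsto\int\b\,\d\mu_t$ is affine with slope $-1$, so there is exactly one $t^{*}$ with $\int\b\,\d\mu_{t^{*}}=\int\b\,\d\nu$, namely $t^{*}=\int\b\,\d\mu-\int\b\,\d\nu$. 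Moreover, $\b$ being $1$-Lipschitz, $W_2(\mu_t,\nu)\ge W_1(\mu_t,\nu)\ge|\int\b\,\d\mu_t-\int\b\,\d\nu|=|t-t^{*}|$, so $h$ is coercive and attains a minimum. I will also use throughout that, $\bar\X_t$ being a bijective isometry of $\supp(\mm)$ preserving $\mm$ (Theorems \ref{thm:gfpresmes}, \ref{thm:gfpresdist}), the pushforward $(\bar\X_t)_\sharp$ preserves both the bound $\le C\mm$ and the distance $W_2$, and that $t\mapsto\mu_t$ is a constant-speed $W_2$-line (a consequence of Theorem \ref{thm:basemetric}).

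Next I would establish the $C^1$-regularity and the Euler equation. Restricting the line $t\mapsto\mu_t$ (resp.\ any flow line $t\mapsto(\bar\X_t)_\sharp\eta$ with $\eta\le C\mm$ of bounded support) to a bounded interval and rescaling to $[0,1]$ produces a geodesic whose endpoints are $\le C'\mm$ with bounded support; by Corollary \ref{cor:pertcont} such a geodesic has time-continuous density, so Propositions \ref{prop:intf} and \ref{prop:intdist} apply. Using \eqref{eq:hlb} (which identifies the Hopf--Lax potential $Q_s(-\varphi)$ of a flow-geodesic as an affine function of $\b$, so $\nabla Q_s(-\varphi)$ is a constant multiple of $-\nabla\b$) together with $|\nabla\b|=1$ $\mm$-a.e., Proposition \ref{prop:intdist} gives that $h$ is $C^1$ on $\R$ with $h'(t)=-\int\la\nabla\phi_t,\nabla\b\ra\,\d\mu_t$ for any locally Lipschitz Kantorovich potential $\phi_t$ from $\mu_t$ to $\nu$ (a short computation with \eqref{eq:hlb} and the chain rule checks the constant is exactly $1$; in any case only its non-vanishing matters). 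Hence any minimizer $t_0$ is an interior critical point, so
\[
\int\la\nabla\phi_{t_0},\nabla\b\ra\,\d\mu_{t_0}=0 .
\]

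The heart of the argument is the geometric step. Fix a minimizer $t_0$ and let $(\tau_s)_{s\in[0,1]}$ be the geodesic from $\tau_0=\nu$ to $\tau_1=\mu_{t_0}$ (unique by Theorem \ref{thm:optmap}, with time-continuous density by Corollary \ref{cor:pertcont}). For each $s\in[0,1]$ and $t\in\R$, the triangle inequality, the group property $(\bar\X_t)_\sharp\mu_{t_0}=\mu_{t_0+t}$, the fact that $(\bar\X_t)_\sharp$ is a $W_2$-isometry, and the minimality of $t_0$ give
\[
W_2(\nu,(\bar\X_t)_\sharp\tau_s)\ \ge\ W_2(\nu,\mu_{t_0+t})-W_2(\mu_{t_0},\tau_s)\ \ge\ W_2(\nu,\mu_{t_0})-(1-s)W_2(\nu,\mu_{t_0})\ =\ W_2(\nu,\tau_s),
\]
with equality at $t=0$; thus $t\mapsto\tfrac12 W_2^2(\nu,(\bar\X_t)_\sharp\tau_s)$ has an interior minimum at $t=0$. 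Applying the Euler equation of the previous paragraph to the flow line through $\tau_s$ (which is licit since $\tau_s\le C'\mm$ has bounded support) yields $\int\la\nabla\psi_s,\nabla\b\ra\,\d\tau_s=0$ for every $s$, where $\psi_s$ is a locally Lipschitz Kantorovich potential from $\tau_s$ to $\nu$; by Lemma \ref{le:gradkp} this value is independent of the chosen potential. Then Proposition \ref{prop:intf} (formula \eqref{eq:pezzi} with $f=\b$) applied to $(\tau_s)$ gives $\tfrac{\d}{\d s}\int\b\,\d\tau_s=\tfrac1s\int\la\nabla\b,\nabla\psi_s\ra\,\d\tau_s=0$ for $s\in(0,1]$, and since $s\mapsto\int\b\,\d\tau_s$ is $C^1$ on $[0,1]$ it is constant; hence $\int\b\,\d\mu_{t_0}=\int\b\,\d\tau_1=\int\b\,\d\tau_0=\int\b\,\d\nu$, i.e.\ $t_0=t^{*}$. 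As this holds for every minimizer, $h$ has a unique minimum, located at $t^{*}$, which by the first paragraph is precisely the unique $t$ with $\int\b\,\d\mu_t=\int\b\,\d\nu$.

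I expect the main obstacle to be bookkeeping rather than conceptual: making sure that every curve which is differentiated genuinely has time-continuous density in the sense of Definition \ref{def:timecont} (handled uniformly by always working on bounded time-intervals and invoking Corollary \ref{cor:pertcont}), and carefully matching the Hopf--Lax potentials appearing in Propositions \ref{prop:intf} and \ref{prop:intdist} with affine multiples of $\b$ via \eqref{eq:hlb}, so that the derivative formulas collapse to $\pm\int\la\nabla\phi,\nabla\b\ra\,\d\mu$. Once this is in place the geometric step is elementary, relying only on $\bar\X_t$ being an isometry and a one-parameter group.
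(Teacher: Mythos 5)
Your argument is correct and follows essentially the same route as the paper's proof: pick a minimizer $t_0$, run the geodesic $(\tau_s)$ from $\nu$ to $\mu_{t_0}$, use the isometry property of $\bar\X_t$ to show that $t=0$ minimizes $t\mapsto W_2^2(\nu,(\bar\X_t)_\sharp\tau_s)$ for each $s$, derive the Euler equation $\int\la\nabla\psi_s,\nabla\b\ra\,\d\tau_s=0$ via Proposition \ref{prop:intdist}, then apply Proposition \ref{prop:intf} to conclude $s\mapsto\int\b\,\d\tau_s$ is constant. The only cosmetic differences are that you argue the minimality at $t=0$ by a direct reverse-triangle-inequality chain where the paper runs the contrapositive, and you make the coercivity of $t\mapsto W_2^2(\mu_t,\nu)$ explicit via $W_2\ge W_1\ge|\int\b\,\d\mu_t-\int\b\,\d\nu|$ where the paper states it without proof; neither difference changes the substance.
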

\begin{proof} Since $\int\b\,\d\mu_t=\int\b\circ\bar\X_t\,\d\mu=-t+\int\b\,\d\mu$, there is exactly one $t\in\R$ such that $\int\b\,\d\mu_{t}=\int\b\,\d\nu$. Also, the map $t\mapsto W_2^2(\mu_t,\nu)$ is continuous and converges to $+\infty$ as $|t|\to+\infty$, hence it has a minimum, say $t_0$. Thus to conclude we need just to show that $\int\b\,\d\mu_{t_0}=\int\b\,\d\nu$.

Let $\ppi\in \gopt(\nu,\mu_{t_0})$ and $s\mapsto \nu_s:=(\e_s)_\sharp\ppi$ be the corresponding geodesic from $\nu$ to $\mu_{t_0}$. We claim that for each $s\in[0,1]$ the map $t\mapsto W_2^2(\nu,(\bar\X_t)_\sharp\nu_s)$ has a minimum for $t=0$. Indeed, if by absurdum for some $t\in\R$ it holds $W_2(\nu,(\bar\X_t)_\sharp\nu_s)<W_2(\nu,\nu_s)$, the fact that $\bar\X_t:\supp(\mm)\to \supp(\mm)$ is an isometry would give
\[
\begin{split}
W_2(\nu,(\bar\X_t)_\sharp\mu_{t_0})&\leq W_2(\nu,(\bar\X_t)_\sharp\nu_s)+W_2((\bar\X_t)_\sharp\nu_s,(\bar\X_t)_\sharp\mu_{t_0})\\
&<W_2(\nu,\nu_s)+W_2(\nu_s,\mu_{t_0})=W_2(\nu,\mu_{t_0}),
\end{split}
\]
thus contradicting the minimality of $t_0$.

Let $\varphi$ be a  Kantorovich potential from $\nu$ to $\mu_{t_0}$ and recall (Proposition \ref{prop:kantev}) that for $\varphi_s:=Q_s(-\varphi)$ the function $s\varphi_s$ is a Kantorovich potential from $\nu_s$ to $\nu$ which is Lipschitz on bounded sets. Also, by Theorem \ref{thm:basemetric}  we know that $\b$ is a Kantorovich potential relative to the geodesic $[0,1]\ni t\mapsto(\bar\X_{t-\frac12})_\sharp\nu_s$.

From the measure preservation property $(\bar\X_t)_\sharp\mm=\mm$ it is immediate to verify that $t\mapsto(\bar\X_{t-\frac12})_\sharp\nu_s$ has time-continuous density, thus  by Proposition \ref{prop:intdist}, the minimality of $\nu_s$ and recalling the formulas \eqref{eq:hlb} we have
\[
0=\frac{\d}{\d t}\frac12W_2^2(\nu,(\X_t)_\sharp\nu_s)\restr{t=0}=s\int \la\nabla\varphi_s,\nabla\b \ra\,\d\nu_s,\qquad\forall s\in[0,1].
\]
By Corollary \ref{cor:pertcont} we know that also $(\nu_s)$ has time-continuous density, hence applying Proposition \ref{prop:intf}  we get
\[
\frac{\d}{\d t}\int\b\,\d\nu_s=\int \la\nabla\varphi_s,\nabla\b \ra\,\d\nu_s=0,\qquad\forall s\in(0,1]
\]
i.e., $s\mapsto \int\b\,\d\nu_s$ is constant and the proof is completed.
\end{proof}

We are now ready to introduce the quotient metric space:
\begin{definition}[The quotient metric space]\label{def:xprimo}
With the same notation and assumptions as in \eqref{eq:assfin} and recalling that the flow $\bar\X$ is built in Theorem \ref{thm:gfpresdist}, we put  $X':=\supp(\mm)/\sim$ where $x\sim y$ if $x=\X_t(y)$ for some $t\in \R$. 

We also let $\pi:\supp(\mm)\to X'$ be the natural projection and define  $\sfd':[X']^2\to\R^+$ by
\begin{equation}
\label{eq:defquot}
\sfd'(\pi(x),\pi(y)):=\inf_{t\in\R}\sfd(x,\bar\X_t(y)),\qquad\forall x,y\in \supp(\mm).
\end{equation}
\end{definition}
From the fact that $(\bar\X_t)$ is a one-parameter group of isometries it is immediate to see that the definition of $\sfd'$  is well posed, i.e. that the right hand side of \eqref{eq:defquot} depends only on $\pi(x),\pi(y)$. 
\begin{proposition}\label{prop:basexp} With the same notation and assumptions as in Definition \ref{def:xprimo} the following holds.
$(X',\sfd')$ is a complete, separable and geodesic metric space, and the topology induced by $\sfd'$ is the quotient topology.
\end{proposition}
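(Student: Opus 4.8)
The statement asserts four properties of $(X',\sfd')$: that $\sfd'$ is a genuine metric (positivity, symmetry, triangle inequality), completeness, separability, that it is geodesic, and that its topology is the quotient topology. I will treat them in this order, since later properties lean on earlier ones.

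\emph{First, $\sfd'$ is a pseudometric.} Symmetry is clear from the fact that $(\bar\X_t)$ is a one-parameter group of isometries: $\sfd(x,\bar\X_t(y))=\sfd(\bar\X_{-t}(x),y)=\sfd(y,\bar\X_{-t}(x))$, and taking the infimum over $t$ gives $\sfd'(\pi(x),\pi(y))=\sfd'(\pi(y),\pi(x))$. For the triangle inequality, given $x,y,z\in\supp(\mm)$ and $s,t\in\R$, the group and isometry properties give
\[
\sfd(x,\bar\X_{s+t}(z))\leq\sfd(x,\bar\X_s(y))+\sfd(\bar\X_s(y),\bar\X_{s+t}(z))=\sfd(x,\bar\X_s(y))+\sfd(y,\bar\X_t(z)),
\]
and taking infima over $s$ and $t$ separately (using that $s+t$ ranges over all of $\R$) yields $\sfd'(\pi(x),\pi(z))\leq\sfd'(\pi(x),\pi(y))+\sfd'(\pi(y),\pi(z))$. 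The only nontrivial point is that $\sfd'(\pi(x),\pi(y))=0$ forces $\pi(x)=\pi(y)$; \emph{this is the main obstacle}. One must show that the infimum in \eqref{eq:defquot} is attained, i.e. that for each $x,y$ there is $t_0$ with $\sfd(x,\bar\X_{t_0}(y))=\sfd'(\pi(x),\pi(y))$, and moreover that the minimizer is unique and characterized by $\b(x)=\b(\bar\X_{t_0}(y))$. Attainment follows because $t\mapsto\sfd(x,\bar\X_t(y))$ is continuous and, since $\sfd(\bar\X_t(y),\bar\X_0(y))=|t|$ by \eqref{eq:distfl}/Theorem~\ref{thm:gfpresdist}, one has $\sfd(x,\bar\X_t(y))\geq|t|-\sfd(x,y)\to+\infty$ as $|t|\to\infty$, so the infimum is a minimum over a compact interval. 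For uniqueness and the characterization I will invoke Corollary~\ref{cor:min}: approximating the Dirac masses at $x$ and $y$ by normalized restrictions $\mu^\eps:=\mm(B_\eps(x))^{-1}\mm\restr{B_\eps(x)}$ and $\nu^\eps:=\mm(B_\eps(y))^{-1}\mm\restr{B_\eps(y)}$, which have bounded support and densities bounded by a multiple of $\mm$, the corollary gives that $t\mapsto W_2^2((\bar\X_t)_\sharp\nu^\eps,\mu^\eps)$ has a \emph{unique} minimizer, namely the unique $t$ with $\int\b\,\d(\bar\X_t)_\sharp\nu^\eps=\int\b\,\d\mu^\eps$. Letting $\eps\downarrow0$, using $|\int\b\,\d\mu^\eps-\b(x)|\leq\eps$ and the analogous bound for $\nu^\eps$, together with the convergence $W_2(\mu^\eps,\delta_x)\to0$, one concludes that $t\mapsto\sfd^2(x,\bar\X_t(y))$ has its unique minimum at the $t_0$ with $\b(x)=\b(\bar\X_{t_0}(y))$. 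Hence if $\sfd'(\pi(x),\pi(y))=0$ then $x=\bar\X_{t_0}(y)$ for this $t_0$, so $x\sim y$ and $\pi(x)=\pi(y)$. (This also records, for later chapters, that the natural section $\iota$ sending $\pi(x)$ to the unique point of $\pi^{-1}(\pi(x))$ with $\b$-value $0$ satisfies $\sfd'(\pi(x),\pi(y))=\sfd(\iota(\pi(x)),\iota(\pi(y)))$.)

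\emph{Topology, separability, completeness.} Since $\pi$ is by construction surjective and $\sfd'\circ(\pi\times\pi)\leq\sfd$, the map $\pi$ is $1$-Lipschitz, hence continuous; and a standard argument shows that the metric topology of $\sfd'$ coincides with the quotient topology — one checks that $U\subset X'$ is $\sfd'$-open iff $\pi^{-1}(U)$ is $\sfd$-open, the nontrivial inclusion following from the fact that, by the attainment established above and the isometry property, $\pi^{-1}(B^{\sfd'}_r(\pi(x)))=\bigcup_{t\in\R}\bar\X_t(B^{\sfd}_r(x))$ is open. Separability of $(X',\sfd')$ is immediate: the $\pi$-image of a countable dense subset of $\supp(\mm)$ is dense, since $\pi$ is continuous and surjective. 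For completeness, let $(\pi(x_n))$ be $\sfd'$-Cauchy; using attainment of the infimum I can choose the representatives $x_n$ inductively so that $\sfd(x_{n+1},x_n)=\sfd'(\pi(x_{n+1}),\pi(x_n))$, whence $(x_n)$ is $\sfd$-Cauchy in the complete space $(\supp(\mm),\sfd)$, converges to some $x$, and then $\pi(x_n)\to\pi(x)$ by continuity of $\pi$.

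\emph{Geodesicity.} Given $\pi(x),\pi(y)$, replace $y$ by $\bar\X_{t_0}(y)$ so that $\sfd(x,y)=\sfd'(\pi(x),\pi(y))$. Since $(\supp(\mm),\sfd)$ is geodesic (it is a $CD(0,N)$ space, hence proper and geodesic), let $[0,1]\ni s\mapsto\gamma_s$ be a constant-speed geodesic from $x$ to $y$ in $\supp(\mm)$. Then $s\mapsto\pi(\gamma_s)$ is a curve from $\pi(x)$ to $\pi(y)$ and, using that $\pi$ is $1$-Lipschitz,
\[
\sfd'(\pi(\gamma_s),\pi(\gamma_{s'}))\leq\sfd(\gamma_s,\gamma_{s'})=|s-s'|\,\sfd(x,y)=|s-s'|\,\sfd'(\pi(x),\pi(y)),
\]
so $s\mapsto\pi(\gamma_s)$ is a constant-speed geodesic in $(X',\sfd')$. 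This completes the proof. I expect the verification of the topological identification and completeness to be routine once attainment of the infimum is in hand; the genuine content, and the place where the $CD(0,N)$ plus infinitesimal Hilbertianity hypotheses enter (through Corollary~\ref{cor:min}), is the proof that $\sfd'(\pi(x),\pi(y))=0\implies x\sim y$.
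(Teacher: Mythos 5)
Your proof is correct and follows the same overall structure as the paper's, but it takes an unnecessary detour and, as a result, mislocates where the curvature hypotheses do their work.

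\textbf{The detour.} For the point-definiteness of $\sfd'$ (that $\sfd'(\pi(x),\pi(y))=0$ forces $x\sim y$), you invoke Corollary~\ref{cor:min} to identify the \emph{unique} minimizer $t_0$ and characterize it via $\b$. None of that is needed. The attainment argument you begin with — $t\mapsto\sfd(x,\bar\X_t(y))$ is continuous and tends to $+\infty$ as $|t|\to\infty$, hence the infimum is a minimum, say at $t_0$ — already finishes the job: if $\sfd'(\pi(x),\pi(y))=0$ then $\sfd(x,\bar\X_{t_0}(y))=0$, so $x=\bar\X_{t_0}(y)$ and $x\sim y$. No uniqueness of $t_0$, no $\b$-characterization, no Corollary~\ref{cor:min}. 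This is exactly what the paper does; your extra machinery essentially re-proves (a pointwise version of) Theorem~\ref{thm:embed} inside this proposition, which is wasteful and somewhat misleading about what the proposition is really about.

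\textbf{The mislocated hypotheses.} Your concluding sentence says ``the genuine content, and the place where the $CD(0,N)$ plus infinitesimal Hilbertianity hypotheses enter (through Corollary~\ref{cor:min}), is the proof that $\sfd'(\pi(x),\pi(y))=0\implies x\sim y$.'' This is not accurate for the proposition under review. Once Theorem~\ref{thm:gfpresdist} is available — i.e.\ once $(\bar\X_t)$ is a continuous one-parameter group of isometries of the proper geodesic space $(\supp(\mm),\sfd)$ — every step of Proposition~\ref{prop:basexp}, including point-definiteness, is elementary metric topology. The curvature-dimension and Hilbertianity assumptions were spent earlier, in Chapters~\ref{se:meas}--\ref{se:dist}, to produce $\bar\X$; they are not used again here. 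Corollary~\ref{cor:min} is used later, in Theorem~\ref{thm:embed}, precisely to upgrade this ``some $t_0$ minimizes'' to ``the minimizer is the one that matches the $\b$-values'', which is a genuinely stronger fact.

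Aside from this, your arguments for the triangle inequality, topology (the identity $\pi^{-1}(B^{\sfd'}_r(\pi(x)))=\bigcup_{t}\bar\X_t(B^{\sfd}_r(x))$ together with saturatedness of $\pi^{-1}(U)$ is the right observation, though it does not depend on attainment as you state), separability, completeness and geodesicity all match the paper's in substance.
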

\begin{proof}
Symmetry, triangle inequality and the fact that $\sfd'(x',x')=0$ for any $x'\in  X'$ are obvious. Let $x',y'\in X'$ and notice that the map $t\mapsto \sfd(x,\X_t(y))$ is continuous and goes to $+\infty$ as $|t|\to+\infty$. Hence it has a minimum and from this fact and the definition \eqref{eq:defquot} it easily follows that for any couple $x',y'\in X'$ and $x\in\pi^{-1}(x')$ we can find $y\in\pi^{-1}(y')$  such that $\sfd(x,y)=\sfd'(x',y')$. In particular, $\sfd'(x',y')=0$ implies  $x'=y'$ so that  $(X',\sfd')$ is a metric space.

The separability follows from the one of $(X,\sfd)$. To prove completeness let $(x_n')\subset X'$ be such that $\sum_i\sfd'(x_i',x_{i+1}')<\infty$ and find $(x_n)\subset \supp(\mm)$ such that $\pi(x_n)=x'_n$ and $\sfd(x_n,x_{n+1})=\sfd'(x_n',x_{n+1}')$ for any $n\in\N$ (the above discussion ensures that such $x_n$'s can be found). Then $(x_n)$ is a Cauchy sequence in $\supp(\mm)$ and thus it converges to a limit $x\in  \supp(\mm)$. Given that $\sfd'(\pi(x),\pi(x_n))\leq \sfd(x,x_n)\to 0$, completeness is proved.

To prove that it is geodesic, let $x',y'\in X'$ and  find $x\in\pi^{-1}(x'),y\in \pi^{-1}(y')$ with   $\sfd(x,y)=\sfd'(x',y')$. Let $\gamma$ be a geodesic connecting $x$ to $y$. Then the fact that $\pi:(X,\sfd)\to (X',\sfd')$ is 1-Lipschitz yields
\[
\sfd'(\pi(\gamma_t),\pi(\gamma_s))\leq \sfd(\gamma_t,\gamma_s)=|s-t|\sfd(x,y)=|s-t|\sfd'(x',y'),\qquad\forall t,s\in[0,1],
\]
i.e., the curve $t\mapsto \pi(\gamma_t)$ is a geodesic connecting $x'$ to $y'$.

For the final statement, let $\tau$ be the topology on $\supp(\mm)$ induced by $\sfd$, $\tau_{\sfd'}$ be the topology on $X'$ induced by $\sfd'$ and $\tau_{\pi}$ the quotient topology on $X'$. Since $\pi:(\supp(\mm),\sfd)\to (X',\sfd')$ is 1-Lipschitz, it is continuous from $(\supp(\mm),\tau)$ to $(X',\tau_{\sfd'})$ and thus $\tau_{\sfd'}\subset\tau_{\pi}$. 

We claim that $\pi:(\supp(\mm),\tau)\to(X,\tau_{\sfd'})$ is open. Indeed, let $U\in\tau$, pick $x'\in\pi(U)$ and let $(x_n')\subset X'$ be such that $\sfd'(x',x_n')\to 0$ and $\sum_i\sfd'(x_{i}',x_{i+1}')<\infty$: to get the claim it is sufficient to prove that eventually $x_n'\in\pi(U)$. Let, as before,  $(x_n)\subset  \supp(\mm)$ be such that  $\pi(x_n)=x_n'$ and $\sfd(x_n,x_{n+1})=\sfd'(x_n',x_{n+1}')$ for any $n\in\N$. The completeness of $(\supp(\mm),\sfd)$ grants that there exists a limit point $x\in \supp(\mm)$ of $(x_n)$ and the continuity of $\pi:(\supp(\mm),\tau)\to (X,\tau_{\sfd'})$ that $\pi(x)=x'$. The assumption $x'\in \pi(U)$ gives that $\pi^{-1}(x)\cap U\neq\emptyset$, thus the fact that $\pi(x)=x'$ implies the existence of $t\in\R$ such that $\X_t(x)\in U$. Since $U$ is open and $\X_t:(\supp(\mm),\sfd)\to(\supp(\mm),\sfd)$ an isometry, for $n\in\N$ large enough it holds $\X_t(x_n)\in U$. Thus for $n\in\N$ large enough $x_n'\in \pi(U)$, as desired.

To conclude the proof pick now $V\in\tau_{\pi}$, so that $\pi^{-1}(V)$ is open in $\supp(\mm)$ and observe that by what we just proved $V=\pi(\pi^{-1}(V))$ belongs to $\tau_{\sfd'}$. Hence $\tau_{\pi}\subset \tau_{\sfd'}$ and the proof is completed.
\end{proof}
The projection map $\pi:X\to X'$ has a natural right inverse:
\begin{definition}[The embedding of $X'$ into $X$]\label{def:iota} With the same notation and assumptions as in Definition \ref{def:xprimo},  the map $\iota:X'\to X$ is defined by
\[
\iota(x')=x,\textrm{ provided $\pi(x)=x'$ and $\b(x)=0$},
\]
\end{definition}
Corollary \ref{cor:min} allows us to prove the main result of this chapter:
\begin{theorem}[The quotient space isometrically embeds into the original one]\label{thm:embed}\ \linebreak With the same notation and assumptions as in Definition \ref{def:xprimo},
$\iota$ is an isometric embedding of $(X',\sfd')$ into $(X,\sfd)$.
\end{theorem}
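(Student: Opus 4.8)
The plan is to show that $\iota$ is distance-preserving, i.e. $\sfd(\iota(x'),\iota(y'))=\sfd'(x',y')$ for all $x',y'\in X'$. Since $\pi:(X,\sfd)\to(X',\sfd')$ is $1$-Lipschitz and $\pi\circ\iota=\Id_{X'}$, the inequality $\sfd'(x',y')\le\sfd(\iota(x'),\iota(y'))$ is immediate; the whole content is the reverse inequality. Write $x:=\iota(x')$, $y:=\iota(y')$, so that $\b(x)=\b(y)=0$. By definition of $\sfd'$ we have $\sfd'(x',y')=\inf_{t\in\R}\sfd(x,\bar\X_t(y))$, and since $t\mapsto\sfd(x,\bar\X_t(y))$ is continuous and diverges as $|t|\to\infty$ (because $t\mapsto\b(\bar\X_t(y))=-t$ forces $\sfd(x,\bar\X_t(y))\ge|{-t-\b(x)}|=|t|$), the infimum is attained at some $t_0$. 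So it suffices to prove $t_0=0$, equivalently that the minimum of $t\mapsto\sfd^2(x,\bar\X_t(y))$ is attained at the point where $\b(\bar\X_t(y))=\b(x)=0$, i.e. at $t=0$.

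The idea, exactly as sketched in Section~\ref{se:nohess} and in the introduction, is to replace the points $x,y$ by probability measures with bounded density, apply Corollary~\ref{cor:min}, and pass to the limit. First I would fix $\eps>0$ and set $\mu^\eps:=\mm(B_\eps(x))^{-1}\mm\restr{B_\eps(x)}$ and $\nu^\eps:=\mm(B_\eps(y))^{-1}\mm\restr{B_\eps(y)}$; these have bounded support and satisfy $\mu^\eps,\nu^\eps\le C_\eps\mm$, so Corollary~\ref{cor:min} applies with $\mu=\mu^\eps$, $\nu=\nu^\eps$: the map $t\mapsto W_2^2((\bar\X_t)_\sharp\mu^\eps,\nu^\eps)$ has a unique minimum, attained at the only $t=t_\eps$ with $\int\b\,\d(\bar\X_{t_\eps})_\sharp\mu^\eps=\int\b\,\d\nu^\eps$. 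Since $\b$ is $1$-Lipschitz and $\b(x)=\b(y)=0$, we have $|\int\b\,\d\mu^\eps|\le\eps$ and $|\int\b\,\d\nu^\eps|\le\eps$, and $\int\b\,\d(\bar\X_t)_\sharp\mu^\eps=-t+\int\b\,\d\mu^\eps$, so $|t_\eps|\le 2\eps$. Next I would use that $W_2(\mu^\eps,\delta_x)\le\eps$ and $W_2(\nu^\eps,\delta_y)\le\eps$, together with the fact that each $\bar\X_t$ is a $W_2$-isometry (being a metric isometry preserving $\mm$), to compare $W_2^2((\bar\X_t)_\sharp\mu^\eps,\nu^\eps)$ with $\sfd^2(\bar\X_t(x),y)$ uniformly in $t$ on a fixed compact interval: $|W_2((\bar\X_t)_\sharp\mu^\eps,\nu^\eps)-\sfd(\bar\X_t(x),y)|\le 2\eps$. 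Evaluating at $t=t_\eps$ and at $t=0$ and using minimality of $t_\eps$ for the $W_2$-functional gives $\sfd(\bar\X_{t_\eps}(x),y)\le\sfd(x,y)+4\eps$; combined with $|t_\eps|\le2\eps$ and the triangle inequality $\sfd(x,y)\le\sfd(\bar\X_{t_\eps}(x),y)+|t_\eps|$ this pins things down. Letting $\eps\downarrow0$ yields that the minimum of $t\mapsto\sfd(x,\bar\X_t(y))$ over $t$ is $\sfd(x,y)$, i.e. $t_0=0$ is optimal, hence $\sfd'(x',y')=\sfd(x,y)=\sfd(\iota(x'),\iota(y'))$.

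The main obstacle I anticipate is the bookkeeping in the limiting argument: one must be careful that the minimizers $t_\eps$ of the approximate problems stay in a fixed compact set (handled by the coercivity estimate $\sfd(x,\bar\X_t(y))\ge|t|$ above, which survives the $\eps$-perturbation) and that the $W_2$-to-$\sfd$ comparison is uniform in $t$ on that compact set, so that one can legitimately take the limit. A secondary point worth spelling out is the well-definedness of $\iota$: one should note that for each $x'\in X'$ there is exactly one $x\in\pi^{-1}(x')$ with $\b(x)=0$ — existence because $t\mapsto\b(\bar\X_t(x_0))=\b(x_0)-t$ surjects onto $\R$ for any representative $x_0$, and uniqueness because $\b$ is strictly decreasing along the flow lines and the flow lines are exactly the equivalence classes of $\sim$. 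Once distance preservation is established, $\iota$ is automatically injective, and it is an isometric embedding in the stated sense. Everything else (continuity of $\iota$, that its image is closed, etc.) is not needed for the statement and can be omitted.
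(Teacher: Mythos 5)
Your proposal is the paper's own argument: same balls $\mu^\eps:=\mm(B_\eps(x))^{-1}\mm\restr{B_\eps(x)}$, $\nu^\eps:=\mm(B_\eps(y))^{-1}\mm\restr{B_\eps(y)}$, same application of Corollary~\ref{cor:min} to locate the $W_2$-minimizer and deduce $|t_\eps|\le2\eps$, same limit $\eps\downarrow0$. Your comparison bound $|W_2((\bar\X_t)_\sharp\mu^\eps,\nu^\eps)-\sfd(\bar\X_t(x),y)|\le2\eps$, obtained from the triangle inequality against $\delta_{\bar\X_t(x)},\delta_y$ and the fact that $\bar\X_t$ is a $W_2$-isometry, is a cleaner uniform estimate than the paper's $\sqrt{2\eps(2\eps+2t+2\sfd(x,y))}$ bound on the squared cost; both serve the same purpose.

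The one step you should make explicit is the passage from ``$t_\eps$ is near $0$ and nearly realizes the minimum'' to ``$t=0$ is the minimizer''. As written you only compare $t_\eps$ against $t=0$, which gives $|\sfd(\bar\X_{t_\eps}(x),y)-\sfd(x,y)|\le4\eps$ but does not by itself rule out a lower value of $\sfd(\bar\X_t(x),y)$ at some other $t$. The fix is exactly the paper's final chain, where $t$ is kept free: for any $t\in\R$,
\[
\sfd(x,y)\le W_2(\mu^\eps,\nu^\eps)+2\eps\le W_2((\bar\X_{t_\eps})_\sharp\mu^\eps,\nu^\eps)+|t_\eps|+2\eps\le W_2((\bar\X_t)_\sharp\mu^\eps,\nu^\eps)+4\eps\le\sfd(\bar\X_t(x),y)+6\eps,
\]
using successively your $W_2$-comparison at $t=0$, the $1$-Lipschitzness in $t$ together with $|t_\eps|\le2\eps$, the minimality of $t_\eps$, and your $W_2$-comparison at the arbitrary $t$. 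Letting $\eps\downarrow0$ gives $\sfd(x,y)\le\sfd(\bar\X_t(x),y)$ for every $t$, which is the desired conclusion. You had every ingredient for this; the missing line is simply to test minimality of $t_\eps$ against an arbitrary $t$ rather than only against $t=0$.
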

\begin{proof}
Let $x',y'\in X'$ and $x:=\iota(x')$, $y:=\iota(y')$. By definition of $\sfd'$ and $\iota$ it certainly holds $\sfd'(x',y')\leq \sfd(x,y)$. To prove the converse inequality amounts to prove that the minimum of the function $f(t):=\sfd(x,\X_t(y))$ is attained at $t=0$. Notice that by definition we have $x,y\in\supp(\mm)$ and for $\eps>0$ let $\mu_\eps,\nu_\eps\in\probt X$ be given by $\mu_\eps:=\mm(B_\eps(x))^{-1}\mm\restr{B_\eps(x)}$, $\nu_\eps:=\mm(B_\eps(y))^{-1}\mm\restr{B_\eps(y)}$.

Define $f_\eps(t):=W_2(\mu_\eps,(\X_t)_\sharp\nu_\eps)$, notice that $f_\eps$ is 1-Lipschitz and that the inequality
\[
\begin{split}
\big|\sfd^2(x_1,y_1)-\sfd^2(x,\X_t(y))\big|&\leq \big|\sfd(x_1,y_1)-\sfd(x,\X_t(y))\big|\big|\sfd(x_1,y_1)+\sfd(x,\X_t(y))\big|\\
&\leq 2\eps(2\eps+2t+2\sfd(x,y)),\qquad\forall t\in\R,\ x_1\in B_\eps(x),\ y_1\in B_\eps(\X_t(y)),
\end{split}
\]
yields
\[
\begin{split}
\left|\sqrt{\int\sfd^2(x_1,y_1)\,\d\ggamma(x_1,y_1)}-\sfd(x,\bar\X_t(y))\right|&\leq \sqrt{\int\left|\sfd^2(x_1,y_1)-\sfd^2(x,\bar\X_t(y))\right|\d\ggamma(x_1,y_1)}\\
&\leq\sqrt{ 2\eps(2\eps+2t+2\sfd(x,y))},
\end{split}
\]
where $\ggamma$ is any transport plan from $\mu_\eps$ to $(\X_t)_\sharp\nu_\eps$. Hence
\[
|f_\eps(t)-f(t)|\leq \sqrt{2\eps(2\eps+2t+2\sfd(x,y))},\qquad\forall t\in\R.
\]
By definition, it holds $|\int \b\,\d\mu_\eps|,|\int\b\,\d\nu_\eps|\leq \eps$, thus letting $t_\eps$ be the minimum of $f_\eps$, Corollary \ref{cor:min} and the fact that $\int\b\,\d(\X_t)_\sharp\nu_\eps=\int\b\,\d\nu_\eps+t$ for any $t\in\R$  yield $|t_\eps|\leq 2\eps$.

Thus for any $t\in\R$ we have
\[
\begin{split}
f(0)&\leq \sqrt{2\eps(2\eps+2\sfd(x,y))}+f_\eps(0)\\
&\leq \sqrt{2\eps(2\eps+2\sfd(x,y))}+f_\eps(t_\eps)+|t_\eps|\\
&\leq \sqrt{2\eps(2\eps+2\sfd(x,y))}+f_\eps(t)+2\eps\\
&\leq \sqrt{2\eps(2\eps+2\sfd(x,y))}+f(t)+\sqrt{2\eps(2\eps+2t+2\sfd(x,y))}+2\eps,
\end{split}
\] 
so that letting $\eps\downarrow0$ we conclude $f(0)\leq f(t)$ for any $t\in\R$, as desired.
\end{proof}
\begin{remark}[The role of infinitesimal Hilbertianity]\label{rem:whyinfhil}{\rm As the proof shows, Theorem \ref{thm:embed} relies only Corollary \ref{cor:min}, which in turn heavily depends on the infinitesimal Hilbertianity assumption. To see why, let's perform the same kind of computation on a smooth Finsler manifold $F$. Assume that there is a smooth function $\b$ on $F$ whose gradient flow $\bar\X$ is a one-parameter group of isometries from $F$ to itself. Fix points $x,y\in X$, let $f(z):=\frac12\sfd^2(z,x)$ and assume that  the minimum of $\R\ni t\mapsto f(\bar\X_t(y))$ is attained at $t=0$. Let $(x_s)$ be a geodesic from $x$ to $y$ and notice that with the same arguments as above we get that the minimum of $\R\ni t\mapsto f(\bar\X_t(x_s))$ is attained at $t=0$ for every $s\in[0,1]$.

Pretending that for each $s\in[0,1]$  the map  $t\mapsto f(\bar\X_t(x_s))$ is smooth near 0, the minimality of $x_s$ gives
\[
0=\frac{\d}{\d t}f(\bar\X_t(x_s))\restr{t=0}= Df(\bar\X_0'(x_s))=-Df(\nabla\b)(x_s).
\]
On the other hand and again neglecting smoothness issues, we have the trivial identity  $x_s'=\frac1s \nabla f(x_s)$, thus for the derivative of $\b$ along $x_s$ we have
\[
\frac{\d}{\d s}\b(x_s)=\frac1s D\b(\nabla f)(x_s).
\]
The problem is now evident: from the fact that $Df(\nabla\b)(x_s)=0$ we cannot deduce\linebreak $D\b(\nabla f)(x_s)=0$ on a general Finsler manifold. Indeed, the identity $Df(\nabla g)=Dg(\nabla f)$ is true for arbitrary smooth $f,g$ if and only if the manifold is actually Riemannian. 

The identification of differentials and gradients (i.e. the symmetry relation \eqref{eq:simm}) is precisely what makes the argument of the proof of Corollary \ref{cor:min} work. 
}\fr\end{remark}
Theorem \ref{thm:embed} has a number of simple consequences about the structure of $X'$. We start defining the natural maps from $X'\times\R$ to $X$ and viceversa.
\begin{definition}[From $X'\times\R$ to $X$ and viceversa]\label{def:maumad}  With the same notation and assumptions as in Definitions \ref{def:xprimo} and \ref{def:iota},
the maps $\mau:X'\times\R\to \supp(\mm)$ and $\mad:\supp(\mm)\to X'\times\R$ are defined by
\[
\begin{split}
\mau(x',t)&:=\bar\X_t(\iota(x')),\\
\mad(x)&:=(\pi(x),\b(x)).
\end{split}
\]
\end{definition}

\begin{proposition}[$\mau$ and $\mad$ are homeomorphisms]\label{prop:homeo}
With the same notation and assumptions as in Definition \ref{def:xprimo}, the maps $\mau,\mad$ are homeomorphisms each one inverse of the other which satisfy
\begin{equation}
\label{eq:bilip}
\begin{split}
\frac1{\sqrt 2}\sqrt{\sfd'(x_1',x_2')^2+|t_1-t_2|^2}\leq \sfd\big(\mau(x'_1,t_1),\mau(x'_2,t_2)\big)&\leq\sqrt 2\sqrt{\sfd'(x_1',x_2')^2+|t_1-t_2|^2},\\
\end{split}
\end{equation}
for any $x_1',x_2'\in X'$, $t_1,t_2\in\R$.
\end{proposition}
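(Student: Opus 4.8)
The plan is to deduce everything from a handful of facts already at our disposal: $\bar\X$ is a one-parameter group of isometries of $(\supp(\mm),\sfd)$ with $\sfd(\bar\X_s(x),\bar\X_{s'}(x))=|s-s'|$ for every $x\in\supp(\mm)$ (Theorem \ref{thm:gfpresdist}); $\iota$ is an isometric embedding of $(X',\sfd')$ into $(X,\sfd)$ (Theorem \ref{thm:embed}); $\pi$ is $1$-Lipschitz with $\pi\circ\bar\X_t=\pi$; and $\b$ is $1$-Lipschitz with $\b\circ\bar\X_t=\b-t$ (point $(iv)$ of Theorem \ref{thm:basemetric}; this identity holds $\mm$-a.e.\ and extends to all of $\supp(\mm)$ by continuity). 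Throughout I will write $x_i:=\iota(x_i')$, so that $\pi(x_i)=x_i'$, $\b(x_i)=0$ and $\mau(x_i',t_i)=\bar\X_{t_i}(x_i)$.

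First I would record that $\mau$ and $\mad$ are bijections, inverse to one another: this is a direct computation from the definitions, using $\pi\circ\bar\X_t=\pi$, $\pi\circ\iota=\mathrm{id}_{X'}$, $\b\circ\iota\equiv 0$, $\b\circ\bar\X_t=\b-t$, the group law $\bar\X_t\circ\bar\X_s=\bar\X_{t+s}$ and the fact that $s\mapsto\bar\X_s(x)$ is injective on $\R$ (so every $\sim$-class meets $\b^{-1}(0)$ in exactly one point, which is what makes $\iota$ well defined). Granting the two-sided estimate \eqref{eq:bilip}, the map $\mau$ is then a bi-Lipschitz bijection from $(X'\times\R,\sfd'\times\sfd_{\rm Eucl})$ onto $(\supp(\mm),\sfd)$; hence $\mau$ is a homeomorphism and so is its inverse $\mad$. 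Thus the whole statement reduces to proving \eqref{eq:bilip}.

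For the upper bound in \eqref{eq:bilip} I would use the triangle inequality together with the fact that $\bar\X_{t_1}$ is an isometry and that $\sfd(\bar\X_{t_1}(x_2),\bar\X_{t_2}(x_2))=|t_1-t_2|$:
\[
\sfd\big(\bar\X_{t_1}(x_1),\bar\X_{t_2}(x_2)\big)\le \sfd\big(\bar\X_{t_1}(x_1),\bar\X_{t_1}(x_2)\big)+\sfd\big(\bar\X_{t_1}(x_2),\bar\X_{t_2}(x_2)\big)=\sfd(x_1,x_2)+|t_1-t_2|.
\]
By Theorem \ref{thm:embed} one has $\sfd(x_1,x_2)=\sfd(\iota(x_1'),\iota(x_2'))=\sfd'(x_1',x_2')$, and since $a+b\le\sqrt2\sqrt{a^2+b^2}$ for $a,b\ge0$ this yields the right-hand inequality in \eqref{eq:bilip}. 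For the lower bound I would instead project onto the two coordinates: $\b$ being $1$-Lipschitz and $\b(\bar\X_{t_i}(x_i))=\b(x_i)-t_i=-t_i$ gives
\[
\sfd\big(\bar\X_{t_1}(x_1),\bar\X_{t_2}(x_2)\big)\ge \big|\b(\bar\X_{t_1}(x_1))-\b(\bar\X_{t_2}(x_2))\big|=|t_1-t_2|,
\]
while $\pi$ being $1$-Lipschitz and $\pi(\bar\X_{t_i}(x_i))=\pi(x_i)=x_i'$ gives $\sfd(\bar\X_{t_1}(x_1),\bar\X_{t_2}(x_2))\ge\sfd'(x_1',x_2')$. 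Hence $\sfd(\bar\X_{t_1}(x_1),\bar\X_{t_2}(x_2))\ge\max\{|t_1-t_2|,\sfd'(x_1',x_2')\}\ge\frac1{\sqrt2}\sqrt{\sfd'(x_1',x_2')^2+|t_1-t_2|^2}$, which is the left-hand inequality.

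The only ingredient in all of this that is not pure bookkeeping is the equality $\sfd(x_1,x_2)=\sfd'(x_1',x_2')$ used for the upper bound: the inequality $\sfd(x_1,x_2)\ge\sfd'(x_1',x_2')$ is immediate from $\pi$ being $1$-Lipschitz, but the reverse inequality is precisely the content of Theorem \ref{thm:embed}, i.e.\ it encodes all the work of the previous chapters. So I expect no genuine obstacle in this proposition beyond invoking that theorem — the remaining steps are just the group law for $\bar\X$, the $1$-Lipschitz bounds for $\pi$ and $\b$, and the two elementary scalar inequalities $a+b\le\sqrt2\sqrt{a^2+b^2}$ and $\max\{a,b\}\ge\frac1{\sqrt2}\sqrt{a^2+b^2}$.
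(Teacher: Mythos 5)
Your argument matches the paper's proof almost line for line: bijectivity is dismissed as a direct check, the lower bound comes from $\pi$ and $\b$ being $1$-Lipschitz, and the upper bound follows from a triangle inequality using that $\bar\X$ is a one-parameter group of isometries together with $\sfd(\iota(x_1'),\iota(x_2'))=\sfd'(x_1',x_2')$ from Theorem \ref{thm:embed}. The only superficial difference is the choice of intermediate point in the triangle inequality (you go through $\bar\X_{t_1}(x_2)$, the paper first applies $\bar\X_{-t_2}$ and then goes through $\iota(x_1')$); both yield exactly $\sfd'(x_1',x_2')+|t_1-t_2|$, so the proofs are essentially identical.
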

\begin{proof} It is clear that $\mau\circ\mad={\rm Id}_{\supp(\mm)}$ and $\mad\circ\mau={\rm Id}_{X'\times\R}$, thus we only need to prove \eqref{eq:bilip}. 

For the first inequality notice that since both  $\pi:(\supp(\mm),\sfd)\to (X',\sfd')$ and $\b:(\supp(\mm),\sfd)\to (\R,\sfd_{\rm Eucl})$ are 1-Lipschitz, it holds
\[
\begin{split}
\sfd\big(\mau(x'_1,t_1),\mau(x'_2,t_2)\big)^2\geq\max\{\sfd'(x_1',x_2')^2,|t_1-t_2|^2\}\geq\frac12\big(\sfd'(x_1',x_2')^2+|t_1-t_2|^2\big).
\end{split}
\]
The second follows from:
\[
\begin{split}
\sfd\big(\mau(x'_1,t_1),\mau(x'_2,t_2)\big)&=\sfd\big(\X_{t_1}(\iota(x'_1)),\X_{t_2}(\iota(x'_2))\big)\\
&= \sfd\big(\X_{t_1-t_2}(\iota(x'_1)),\iota(x'_2)\big)\\
&\leq  \sfd\big(\X_{t_1-t_2}(\iota(x'_1)),\iota(x'_1)\big)+ \sfd\big(\iota(x'_1),\iota(x'_2)\big)\\
&=|t_1-t_2|+\sfd'(x'_1,x'_2)\\
&\leq \sqrt 2\sqrt{\sfd'(x_1',x_2')^2+|t_1-t_2|^2}.
\end{split}
\]
\end{proof}
We can now introduce the natural measure on $X'$ as follows:
\begin{definition}[The measure $\mm'$]\label{def:mmp} With the same notation and assumptions as in Definition \ref{def:xprimo}, 
 we define the measure $\mm'$ on $(X',\sfd')$ as:
\[
\mm'(E):=\mm\big(\pi^{-1}(E)\cap \b^{-1}([0,1])\big),\qquad\forall E\subset X'\ \textrm{Borel}.
\]
\end{definition}
Notice that the definition is well posed because from Proposition \ref{prop:homeo} we know that for $E\subset X'$ Borel the set $\pi^{-1}(E)\subset X$ is also Borel.

The fact that $\bar\X_t$ preserves $\mm$ easily grants that $\mau,\mad$ are measure preserving:
\begin{proposition}[$\mau$ and $\mad$ are measure preserving]\label{prop:prodmeas}
With the same notation and assumptions as in Definitions \ref{def:xprimo}, \ref{def:mmp}, 
we have $\mau_\sharp(\mm'\times\mathcal L^1)=\mm$ and  $\mad_\sharp\mm=\mm'\times\mathcal L^1$.
\end{proposition}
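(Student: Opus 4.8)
The plan is to unwind the definitions and show the measure identities directly, using the measure preservation of the flow $\bar\X$ established in Theorem \ref{thm:gfpresdist} (which rests on \eqref{eq:presmeas}). Since $\mau$ and $\mad$ are mutually inverse homeomorphisms by Proposition \ref{prop:homeo}, it suffices to prove just one of the two identities, say $\mad_\sharp\mm=\mm'\times\mathcal L^1$; the other follows by pushing forward through $\mau$. Recall $\mad(x)=(\pi(x),\b(x))$, so for a Borel rectangle $E\times[a,b]\subset X'\times\R$ we have $\mad^{-1}(E\times[a,b])=\pi^{-1}(E)\cap\b^{-1}([a,b])$, and the goal is to show $\mm\big(\pi^{-1}(E)\cap\b^{-1}([a,b])\big)=\mm'(E)\cdot(b-a)$.

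First I would treat the case of an interval of unit length shifted by an integer, i.e. $[a,b]=[k,k+1]$ for $k\in\Z$. Here the key observation is that $\bar\X_{-k}$ maps $\b^{-1}([k,k+1])$ bijectively onto $\b^{-1}([0,1])$, because $\b(\bar\X_t(x))=\b(x)-t$ (a consequence of point $(iv)$ of Theorem \ref{thm:basemetric}, or equivalently $\sfd(\bar\X_t(x),x)=|t|$ together with the fact that the flow decreases $\b$ at unit rate), and moreover $\bar\X_{-k}$ commutes with the projection $\pi$ in the sense that $\pi(\bar\X_{-k}(x))=\pi(x)$ by the very definition of the equivalence relation $\sim$. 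Hence $\bar\X_{-k}$ carries $\pi^{-1}(E)\cap\b^{-1}([k,k+1])$ onto $\pi^{-1}(E)\cap\b^{-1}([0,1])$, and since $(\bar\X_{-k})_\sharp\mm=\mm$ we get $\mm\big(\pi^{-1}(E)\cap\b^{-1}([k,k+1])\big)=\mm'(E)$. Summing over a finite range of $k$ handles intervals $[k,m]$ with integer endpoints.

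Next I would handle a general interval $[a,b]\subset[0,1]$. The natural approach is to apply $\bar\X_a$, which maps $\b^{-1}([a,b])$ onto $\b^{-1}([0,b-a])$ while preserving $\pi$-fibers and $\mm$, reducing to intervals of the form $[0,\ell]$ with $\ell\in[0,1]$. Then the function $\ell\mapsto\mm\big(\pi^{-1}(E)\cap\b^{-1}([0,\ell])\big)$ is nondecreasing, and an elementary additivity argument using the flow (splitting $[0,\ell_1+\ell_2]$ into $[0,\ell_1]$ and $[\ell_1,\ell_1+\ell_2]\cong_{\bar\X_{\ell_1}}[0,\ell_2]$) shows it is additive: $g(\ell_1+\ell_2)=g(\ell_1)+g(\ell_2)$ whenever $\ell_1+\ell_2\le1$. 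A monotone additive function on $[0,1]$ is linear, so $g(\ell)=\ell\cdot g(1)=\ell\cdot\mm'(E)$. Combining with the integer-shift case gives the formula for all $E\times[a,b]$, and since such rectangles generate the Borel $\sigma$-algebra of $X'\times\R$ and both $\mad_\sharp\mm$ and $\mm'\times\mathcal L^1$ are Radon (the former because $\mm$ is and $\mad$ is a homeomorphism, using the bi-Lipschitz bound \eqref{eq:bilip} to control finiteness on bounded sets), a standard Dynkin/$\pi$-$\lambda$ argument upgrades equality on rectangles to equality of measures.

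The main obstacle I anticipate is not any single deep step but the careful bookkeeping needed to make the reduction arguments rigorous: one must verify that $\bar\X_t$ genuinely preserves the $\pi$-fibers (immediate from Definition \ref{def:xprimo} once one knows $\bar\X$ extends $\X$, whose trajectories are gradient flow lines of $\b$), that $\b\circ\bar\X_t=\b-t$ holds \emph{everywhere} on $\supp(\mm)$ and not just $\mm$-a.e. (this uses the continuity of $\bar\X$ from Theorem \ref{thm:gfpresdist} together with \eqref{eq:facile}), and that the sets involved are Borel — which follows from Proposition \ref{prop:homeo} since $\pi^{-1}$ of a Borel set is Borel. Once these points are in place the argument is routine, and the linearity-from-monotone-additivity trick does all the real work.
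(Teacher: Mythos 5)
Your argument is correct and follows essentially the same route as the paper: both reduce to rectangles $E\times I$ via the uniqueness theorem for product measures, and both use the $\mm$-preservation of $\bar\X$ together with $\pi\circ\bar\X_t=\pi$ and $\b\circ\bar\X_t=\b-t$ (holding everywhere on $\supp(\mm)$ by continuity of $\bar\X$ and \eqref{eq:facile}, as you correctly flag) to shift intervals back to $[0,1)$. The only difference is the final scalar step — the paper proceeds by dyadic bisection plus approximation where you invoke ``monotone additive implies linear'' — an equivalent computation, though you should work with half-open intervals $[a,b)$ (as the paper does) so that additivity is exact without having to separately rule out $\mm$-mass on $\b$-level sets.
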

\begin{proof}
It is sufficient to   prove that $\mad_\sharp\mm=\mm'\times\mathcal L^1$. Given that both $\mm'\times\mathcal L^1$ and $\mad_\sharp\mm$ are Borel measures defined on the product space $X'\times\R$, to prove that they coincide it is sufficient (see e.g. Corollary 1.6.3 in \cite{Cohn80}) to prove that for any Borel set $E\subset X'$ and any interval $I\subset \R$ it holds
\begin{equation}
\label{eq:rettangoli}
\mad_\sharp\mm(E\times I)=\mm'(E)\mathcal L^1(I).
\end{equation}
By definition of $\mm'$, this is true if $I=[0,1)$ and the identity 
\[
\bar\X_a^{-1}(\mau(E\times[a,a+1)))=\mau (E\times[0,1)),\qquad\forall a\in\R,
\]
together with the fact that $(\X_a)_\sharp\mm=\mm$ shows that  \eqref{eq:rettangoli} also holds for $I$ of the kind $[a,a+1)$ for any $a\in\R$. Then using the fact that $(\X_{1/2})_\sharp\mm=\mm$ and the trivial identities
\[
E\times [0,1)=(E\times [0,1/2))\cup(E\times [1/2,1)),\qquad (E\times [0,1/2))\cap(E\times [1/2,1))=\emptyset, 
\]
we deduce that \eqref{eq:rettangoli} holds for $I=[0,1/2)$, and then again using  $(\X_a)_\sharp\mm=\mm$, that it holds for all intervals of the kind $[a,a+1/2)$ , $a\in\R$.

Continuing this way by bisections, we deduce the validity of \eqref{eq:rettangoli} for $I=[a,a+k/2^n)$ for $a\in\R$ and $k,n\in \N$. Then a simple approximation argument gives \eqref{eq:rettangoli} for any interval $I\subset \R$, and thus the conclusion.
\end{proof}
The metric information given by Theorem \ref{thm:embed}  and the measure theoretic one given by Proposition \ref{prop:prodmeas} grant natural relations   between Sobolev functions on $X$ and   $X'$. To emphasize the fact that the minimal weak upper gradients depend on the space and to help keeping track of spaces themselves, we write $|\nabla f|_X$ (resp. $|\nabla f|_{X'}$) for functions $f\in \s^2_{\rm loc}(X)$ (resp. in $\s^2_{\rm loc}(X')$).  Notice that we use the notation $|\nabla f|_{X'}$ even if for the moment we  don't know that $(X',\sfd',\mm')$ is infinitesimally Hilbertian:  this will be soon evident once the following proposition is proved, see Corollary \ref{cor:xp}.
\begin{proposition}\label{prop:sezioni1} With the same notation and assumptions as in Definitions \ref{def:xprimo}, \ref{def:mmp}, the following holds.
\begin{itemize}
\item[i)]  Let $f\in \s^2_{\rm loc}(X)$ and for $t\in\R$ let $f^{(t)}:X'\to\R$ be given by $f^{(t)}(x'):=f(\mau(x',t))$. Then for $\mathcal L^1$-a.e. $t$ it holds $f^{(t)}\in \s^2_{\rm loc}(X')$ and
\[
|\nabla f^{(t)}|_{X'}(x')\leq |\nabla f|_X(\mau(x',t)),\qquad \mm'\times\mathcal L^1\ae\ (x',t)\in X'\times\R.
\]
\item[ii)] Let $g\in \s^2_{\rm loc}(X')$ and define $f:X\to\R$ by $f(x):=g\circ\pi$. Then $f\in \s^2_{\rm loc}(X)$ and
\begin{equation}
\label{eq:nablaaperto2}
|\nabla f|_{X}(x)=|\nabla g|_{X'}(\pi(x)),\qquad\mm\ae\ x\in X.
\end{equation}
\end{itemize}
\end{proposition}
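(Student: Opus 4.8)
The two statements have somewhat different characters, so I would treat them separately. For part (ii), which is the more rigid one, the plan is to use the isometric embedding $\iota:(X',\sfd')\to(X,\sfd)$ from Theorem~\ref{thm:embed} together with the fact that $\pi$ is a left inverse of $\iota$ and that the fibers of $\pi$ are exactly the gradient flow lines. Concretely, I would first show the easy inequality $|\nabla f|_X\le |\nabla g|_{X'}\circ\pi$ by lifting test plans: given a test plan $\ppi$ on $X$, project it via $\pi$ to get a candidate plan on $X'$, using that $\pi$ is $1$-Lipschitz (so speeds do not increase) and that $\mad_\sharp\mm=\mm'\times\mathcal L^1$ (Proposition~\ref{prop:prodmeas}) to control the marginal densities. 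This gives $f=g\circ\pi\in\s^2_{\rm loc}(X)$ with $|\nabla f|_X\le |\nabla g|_{X'}\circ\pi$ $\mm$-a.e. For the reverse inequality, I would run the same argument ``downstairs'': a test plan on $X'$ can be lifted to a test plan on $X$ concentrated on the image $\iota(X')$ — but here one must be careful because $\iota(X')=\b^{-1}(0)$ may be $\mm$-negligible. The clean way around this is to use the product structure of the measure: fix a test plan $\ppi'$ on $X'$, form $\ppi:=(\mau(\cdot,\cdot))_\sharp(\ppi'\times(\text{normalized }\mathcal L^1\restr{[0,1]}))$, i.e. translate the lifted curves by all heights $t\in[0,1]$, and observe that since $\bar\X_t$ is an isometry and measure-preserving (Theorem~\ref{thm:gfpresdist}), this $\ppi$ is a test plan on $X$ whose curves are translates $t\mapsto\bar\X_t(\iota(\gamma'_s))$, on which $f=g\circ\pi$ takes the value $g(\gamma'_s)$ independently of $t$; applying the Sobolev inequality \eqref{eq:defsob} for $f$ along $\ppi$ and using that $\iota$ is an isometry so the metric speed of $s\mapsto\iota(\gamma'_s)$ equals $|\dot\gamma'_s|$, one recovers $|\nabla g|_{X'}\circ\pi\le |\nabla f|_X$ $\mm$-a.e. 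Combining the two gives \eqref{eq:nablaaperto2}.

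For part (i), the plan is a disintegration/Fubini argument. Since $\mad$ is a homeomorphism (Proposition~\ref{prop:homeo}) and $\mad_\sharp\mm=\mm'\times\mathcal L^1$, I would pull back $f$ to $\tilde f:=f\circ\mau$ on $X'\times\R$, note $\tilde f\in\s^2_{\rm loc}(X'\times\R,\sfd'\times\sfd_{\rm Eucl},\mm'\times\mathcal L^1)$ because $\mau$ is bi-Lipschitz by \eqref{eq:bilip} (bi-Lipschitz maps between metric measure spaces with comparable pushforward measures preserve the Sobolev class, with a bound on the minimal weak upper gradient deteriorated only by the bi-Lipschitz constant — but I actually want the \emph{sharp} constant $1$ in the $X'$-direction, so I cannot simply invoke the crude bi-Lipschitz bound). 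The right tool is the structure of Sobolev functions on product spaces: on $X'\times\R$, for $\mathcal L^1$-a.e.\ slice $t$ the function $x'\mapsto\tilde f(x',t)$ is in $\s^2_{\rm loc}(X')$ and $|\nabla(\tilde f(\cdot,t))|_{X'}(x')\le |\nabla \tilde f|_{X'\times\R}(x',t)$ $\mm'$-a.e. — this is the standard ``partial minimal weak upper gradient dominates the sectional one'' fact, provable via test plans that move only in the $X'$ factor (freeze the $\R$ coordinate). So it remains to compare $|\nabla \tilde f|_{X'\times\R}$ with $|\nabla f|_X\circ\mau$; but this is exactly where I should not yet assume $X\cong X'\times\R$ isometrically (that is the whole point of the later chapters), so instead I would bound $|\nabla\tilde f|_{X'\times\R}(x',t)\le |\nabla f|_X(\mau(x',t))$ by lifting: a test plan on $X'\times\R$ whose curves stay in an $X'$-slice pushes forward under $\mau$ to a plan on $X$ whose curves are $\bar\X_t$-translates of curves in $\iota(X')$, along which, by $\iota$ being an isometry and $\bar\X_t$ being isometries, metric speeds are preserved; then \eqref{eq:defsob} for $f$ along this plan yields the desired bound slicewise. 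Chaining the two slice inequalities gives $|\nabla f^{(t)}|_{X'}(x')\le |\nabla f|_X(\mau(x',t))$ for $\mm'\times\mathcal L^1$-a.e.\ $(x',t)$.

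The main obstacle I anticipate is the bookkeeping around \emph{which} direction of inequality is genuinely available before the splitting of the distance is proved. Part (ii) should be clean, because $\pi$ and $\iota$ are honest $1$-Lipschitz maps and $\iota$ is a genuine isometry, so both inequalities go through by lifting test plans; the only subtlety is handling the possibly $\mm$-negligible section $\b^{-1}(0)$, which the ``translate over all heights in $[0,1]$'' trick resolves using measure preservation. Part (i) is the delicate one: I must extract the sharp constant $1$ in the quotient direction \emph{without} using a product isometry, so the argument must go through test plans confined to $X'$-slices and exploit that $\bar\X$ is a one-parameter group of isometries preserving $\mm$, rather than through any structural product characterization. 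A secondary technical point is making the slicing rigorous: one needs that for $\s^2_{\rm loc}$ functions on $X'\times\R$ the sections are $\s^2_{\rm loc}$ for a.e.\ $t$ with the stated domination — this is a Fubini-type statement for minimal weak upper gradients that I would prove directly from the test-plan definition (freeze one coordinate, use that the product of a test plan in $X'$ with $\delta_t$, suitably disintegrated, is admissible). Once these slice-wise comparisons are in place, both parts follow by patching over a countable exhaustion using locality \eqref{eq:localgrad} of minimal weak upper gradients.
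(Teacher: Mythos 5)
Your approach is genuinely different from the paper's: you work directly from the test-plan definition of the Sobolev class, lifting and projecting plans across $\pi$, $\iota$, $\mau$; the paper instead works exclusively through the Lipschitz approximation result (Theorem~\ref{thm:stronglip}), comparing local Lipschitz constants $\lip_X$ and $\lip_{X'}$ pointwise and then passing to the limit using lower semicontinuity of minimal weak upper gradients. The paper also organizes (ii) differently: it proves only the inequality $|\nabla f|_X(x)\le|\nabla g|_{X'}(\pi(x))$ directly (via $\lip_X(g_n\circ\pi\cdot\nchi\circ\b)\le\lip_{X'}(g_n)\circ\pi$ on a strip, then a weak-$L^2$ passage to the limit), and gets the reverse inequality by applying part (i) to $f=g\circ\pi$, exploiting $f^{(t)}=g$ for all $t$. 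Your plan for (ii) is sound in spirit and more symmetric, though you should note the marginal-mass bookkeeping in the ``project a test plan on $X$ to $X'$'' step: $\pi_\sharp$ of an $\mm$-marginal is controlled only after cutting off in $\b$, since the $\b$-fibers are unbounded; this is a manageable localization, not a real problem.

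The genuine gap is in part (i), and it sits precisely where you flag it. You invoke ``test plans that move only in the $X'$ factor (freeze the $\R$ coordinate)'', but such plans are not test plans on $X$: their time-marginals are concentrated on a single slice $\b^{-1}(t)$, which is $\mm$-negligible, so the domination condition $(\e_t)_\sharp\ppi\le C\mm$ fails. Your thickening remedy (average over $s\in[t,t+\eps]$) does produce bona fide test plans, but it then proves an inequality involving the averaged gradient $\frac1\eps\int_t^{t+\eps}|\nabla f|_X(\mau(\cdot,s))\,\d s$ as an upper gradient of the averaged section, not the slice $f^{(t)}$ itself. To pass to the limit $\eps\downarrow0$ and recover the sectional claim ``for $\mathcal L^1$-a.e.\ $t$, $f^{(t)}\in\s^2_{\rm loc}(X')$ with the stated bound'', you must control an exceptional null set of $t$'s that, as written, depends on the chosen test plan $\ppi'$ on $X'$ --- and there are uncountably many of these, so no naive union works. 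This is exactly the measurability obstruction that the paper's route avoids: one fixes a single Lipschitz approximating sequence $(f_n)$ with $f_n\to f$ and $\lip_X(f_n)\to|\nabla f|_X$ in $L^2(X)$, extracts a subsequence summable in $L^2$, uses Fubini to get $L^2(X')$-convergence of the slices $f_n^{(t)}\to f^{(t)}$ and $\lip_X(f_n)(\mau(\cdot,t))\to|\nabla f|_X(\mau(\cdot,t))$ for $\mathcal L^1$-a.e.\ $t$ simultaneously (the null set now depends only on the single sequence), and then concludes slicewise from $\lip_{X'}(f_n^{(t)})\le\lip_X(f_n)\circ\mau(\cdot,t)$, the bound $\weakgrad{\cdot}\le\lip(\cdot)$, and lower semicontinuity of minimal weak upper gradients. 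If you want to save a test-plan proof of (i), you would need a similar countable reduction --- e.g.\ reducing to a countable generating family of test plans on $X'$ --- which is not routine and is not supplied in the proposal.
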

\begin{proof} For $f:X\to\R$  denote by $\lip_X(f):X\to[0,+\infty]$ its local Lipschitz constant in the space $(X,\sfd)$ and similarly for $g:X'\to\R$,  $\lip_{X'}(g):X'\to [0,+\infty]$ is its local Lipschitz constant of in the space $(X',\sfd')$.

\noindent{$\mathbf{(i)}$} We have the simple inequality
\begin{equation}
\label{eq:lipaperte}
\begin{split}
\lip_X(f)(x)&=\lims_{y\to x}\frac{|f(x)-f(y)|}{\sfd(x,y)}\geq \lims_{y\to x\atop \b(y)=\b(x)}\frac{|f(x)-f(y)|}{\sfd(x,y)}\\
&=\lims_{y\to x\atop \b(y)=\b(x)}\frac{|f^{(\b(x))}(\pi(x))-f^{(\b(x))}(\pi(y))|}{\sfd'(\pi(x),\pi(y))}=\lip_{X'}(f^{(\b(x))})(\pi(x)).
\end{split}
\end{equation}
Now notice that with a truncation and cut-off argument and thanks to the local nature of the claim it is not restrictive to assume that $f\in W^{1,2}(X)$. According to Theorem \ref{thm:stronglip} there exists a sequence $(f_n)\subset L^2(X)$ of Lipschitz functions such that $f_n\to f$  and $\lip_X(f_n)\to|\nabla f|_X$ in $L^2(X)$ as $n\to\infty$. Up to pass to a subsequence - not relabeled - we can further assume that $\sum_n\|f_n-f_{n+1}\|_{L^2}<\infty$ and $\sum_n\|\lip_X(f_n)-|\nabla f|_X\|_{L^2}<\infty$ which, taking into account Proposition \ref{prop:prodmeas}, easily implies that for $\mathcal L^1$-a.e. $t$ we have $f_n(\mau(t,\cdot))\to f(\mau(t,\cdot))$ and $\lip_X(f_n)(\mau(t,\cdot))\to|\nabla f|_X(\mau(t,\cdot))$ in $L^2(X')$ as $n\to\infty$.

Fix such $t$ and apply inequality \eqref{eq:lipaperte} to the function $f_n$ on $\b^{-1}(t)$, then let $n\to\infty$ and recall the inequality $|\nabla g|_{X'}\leq \lip_{X'}(g)$ valid for any Lipschitz function $g$ (inequality \eqref{eq:lipweak}) and the lower semicontinuity of minimal weak upper gradients to conclude.

\noindent{$\mathbf{(ii)}$} Let $R>0$ and $\nchi:\R\to[0,1]$ be a Lipschitz cut-off function  with compact support and identically 1 on $[-R,R]$. Thanks to the local nature of the thesis we can assume that $g\in W^{1,2}(X')$ and prove that $f\nchi\circ\b\in W^{1,2}(X)$ with the identity  \eqref{eq:nablaaperto2} being true for $\mm$-a.e. $x\in\b^{-1}([-R,R])$.

The argument is similar to the one we already used: let $(g_n)$ be a sequence of Lipschitz functions on $X'$ such that  $g_n\to g$  and  $\lip_{X'}(g_n)\to|\nabla g|_{X'}$ in $L^2(X')$ and notice that by Proposition \ref{prop:prodmeas} the functions $f_n:=g_n\circ \pi\nchi\circ \b$ converge to $f\nchi\circ\b$ in $L^2(X)$. Now observe that for $x\in \b^{-1}\big((-R,R)\big)$ and $n\in\N$ it holds
\begin{equation}
\label{eq:lipaperte2}
\begin{split}
\lip_X( f_n)(x)=\lims_{y\to x}\frac{|f_n(y)-f_n(x)|}{\sfd(x,y)}\leq \lims_{y\to x}\frac{|g_n(\pi(y))-g_n(\pi(x))|}{\sfd'(\pi(x),\pi(y))}=
\lip_{X'}( g_n)(\pi(x)),
\end{split}
\end{equation}
and that the inequality \eqref{eq:leibbase} and the construction ensure that $(\lip_X(f_n))$ is bounded in $L^2(X)$. Thus up to pass to a subsequence - not relabeled - we can assume that $\lip_X(f_n)\to G$ weakly in $L^2(X)$ for some Borel function $G$. By the lower semicontinuity of minimal weak upper gradients and the $L^2(X)$-convergence of $f_n$ to $f\nchi\circ\b$ we deduce that $|\nabla(f\nchi\circ\b)|_X\leq G$ $\mm$-a.e. and by the locality property \eqref{eq:localgrad} that $|\nabla f|_X=|\nabla (f\nchi\circ\b)|_X$ $\mm$-a.e. on $\b^{-1}([-R,R])$. Passing to the limit in \eqref{eq:lipaperte2} we deduce that inequality $\leq$ holds in \eqref{eq:nablaaperto2}. The other inequality is a consequence of point $(i)$ of the statement, hence the proof is complete.
\end{proof}
It is now easy to prove that the quotient space is an infinitesimally Hilbertian $CD(0,N)$ space, the proof of the curvature-dimension bound being similar to the one given in \cite{Lott-Villani09} for the case of a compact group action on non-branching spaces. Notice that to get  the dimension reduction we  will need  a further argument (which will be easy once the product structure will be clear), described  in the last chapter.
\begin{corollary}[$(X',\sfd',\mm')$ is an infinitesimally Hilbertian $CD(0,N)$ space]\label{cor:xp}
With the same notation and assumptions as in Definitions \ref{def:xprimo} and \ref{def:mmp}, $(X',\sfd',\mm')$ is an infinitesimally Hilbertian $CD(0,N)$ space.
\end{corollary}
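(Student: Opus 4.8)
The plan is to prove the two assertions -- infinitesimal Hilbertianity of $(X',\sfd',\mm')$ and the $CD(0,N)$ bound -- separately, using the identifications $\mau,\mad$ between $X$ and $X'\times\R$ established in Proposition \ref{prop:homeo} and the measure-preservation statement of Proposition \ref{prop:prodmeas}, together with the Sobolev calculus relations in Proposition \ref{prop:sezioni1}.

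For infinitesimal Hilbertianity, the strategy is to show that the map $g\mapsto |\nabla g|_{X'}^2$ is a quadratic form on $\s^2(X')$, equivalently that the parallelogram identity $|\nabla(g_1+g_2)|_{X'}^2+|\nabla(g_1-g_2)|_{X'}^2=2|\nabla g_1|_{X'}^2+2|\nabla g_2|_{X'}^2$ holds $\mm'$-a.e. for all $g_1,g_2\in\s^2(X')$. First I would use part $(ii)$ of Proposition \ref{prop:sezioni1}: for $g\in\s^2_{\rm loc}(X')$ the function $g\circ\pi$ belongs to $\s^2_{\rm loc}(X)$ and $|\nabla(g\circ\pi)|_X=|\nabla g|_{X'}\circ\pi$ $\mm$-a.e. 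Since $(X,\sfd,\mm)$ is infinitesimally Hilbertian by assumption \eqref{eq:assfin}, the map $f\mapsto |\nabla f|_X^2$ satisfies the parallelogram identity; applying it to $f_i=g_i\circ\pi$ and noting $(g_1\pm g_2)\circ\pi=g_1\circ\pi\pm g_2\circ\pi$ gives
\[
\big(|\nabla(g_1+g_2)|_{X'}^2+|\nabla(g_1-g_2)|_{X'}^2\big)\circ\pi=\big(2|\nabla g_1|_{X'}^2+2|\nabla g_2|_{X'}^2\big)\circ\pi,\qquad\mm\ae.
\]
Because $\mad_\sharp\mm=\mm'\times\mathcal L^1$ and $\pi$ is the composition of $\mad$ with the projection onto $X'$, an $\mm$-a.e. identity of functions of the form $h\circ\pi$ descends to an $\mm'$-a.e. identity for $h$; hence the parallelogram identity holds on $X'$ and $W^{1,2}(X')$ is Hilbert. (One should check $W^{1,2}(X')$ is nonetheless a genuine Hilbert space, but this is automatic once the norm comes from the quadratic form $\int|\nabla\cdot|_{X'}^2\,\d\mm'+\int|\cdot|^2\,\d\mm'$.)

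For the $CD(0,N)$ bound, the plan is to transfer geodesic convexity of $\u_N$ from $X$ to $X'$. Given $\mu'_0,\mu'_1\in\prob{X'}$ with supports in $\supp(\mm')$, lift them to measures on $X$ supported in the slice $\b^{-1}([0,1])$ via the section $\iota$: set $\mu_i:=\iota_\sharp\mu'_i$ (or, to deal with absolute-continuity and finite-entropy technicalities, first approximate $\mu'_i$ by measures with bounded density, lift, and pass to the limit using lower semicontinuity of $\u_N$ and the uniform doubling/compactness of $CD(0,N)$ spaces). Since $\iota$ is an isometric embedding (Theorem \ref{thm:embed}) and $\mad_\sharp\mm=\mm'\times\mathcal L^1$ identifies densities correctly, a $W_2$-geodesic $(\mu_t)$ in $X$ connecting $\mu_0,\mu_1$ projects under $\pi$ to a $W_2$-geodesic $(\mu'_t)$ in $X'$, and one has the entropy comparison $\u_N^{X'}(\mu'_t)\le\u_N^X(\mu_t)$ with equality whenever $\mu_t$ is ``vertically spread'' like the original slices; the $CD(0,N)$ convexity inequality \eqref{eq:defcd} on $X$ then yields the corresponding inequality on $X'$ after checking that the optimal geodesic realizing $CD$ on $X$ can be taken to project to the chosen geodesic downstairs. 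Here I would follow closely the argument of Lott--Villani \cite{Lott-Villani09} for quotients by group actions on non-branching spaces, the isometric $\R$-action $(\bar\X_t)$ playing the role of the compact group action, and using the existence and uniqueness of optimal maps (Theorem \ref{thm:optmap}, available since $X$ is $RCD(0,N)$, in particular $RCD(0,\infty)$) to make the gluing of geodesics between slices rigorous.

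The main obstacle I expect is the $CD(0,N)$ transfer rather than the Hilbertian part: one must produce, for arbitrary $\mu'_0,\mu'_1$ on $X'$, a geodesic in $\probt{X'}$ together with a lift to a geodesic in $\probt X$ whose endpoints live in a single slice $\b^{-1}(\{a\})$ (so that no entropy is lost along the vertical direction), and verify the $\u_N$-convexity inequality survives projection -- in particular that the geodesic downstairs does not ``create'' extra concentration. The cleanest route is to first observe that $X$ is isomorphic to $X'\times\R$ only at the level of sets/measures at this stage (the metric splitting \eqref{eq:intropit} is proved only later), so one cannot yet use Pythagoras; instead one works directly with $\iota_\sharp$, uses that along the flow $\bar\X$ the density is constant in the $\R$-direction on the lifted measures, and invokes the localization of $CD(0,N)$ via optimal maps (Corollary \ref{cor:cdpunt}) to control $\rho_t$ fiberwise. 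Once this bookkeeping is set up the inequality is immediate from \eqref{eq:defcd} applied on $X$.
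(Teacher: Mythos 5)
Your argument for infinitesimal Hilbertianity matches the paper: lift $g_1,g_2$ to $X$ via $\pi$, use Proposition \ref{prop:sezioni1}(ii) to transfer the parallelogram identity, and descend using $\mad_\sharp\mm=\mm'\times\mathcal L^1$ and Fubini. That part is correct.

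The $CD(0,N)$ part has a genuine gap, and it is precisely in the choice of lift. You propose to lift $\mu'_i$ via the section $\iota$, i.e.\ to set $\mu_i:=\iota_\sharp\mu'_i$, which concentrates the lifted measure on the single level set $\b^{-1}(\{0\})$. But $\mm$ restricted to that level set is zero ($\mad$ identifies $\mm$ with $\mm'\times\mathcal L^1$, and a single $\R$-fiber has $\mathcal L^1$-measure zero), so $\iota_\sharp\mu'_i$ is \emph{singular} with respect to $\mm$. For singular measures $\u_N$ vanishes identically, and the $CD(0,N)$ inequality \eqref{eq:defcd} on $X$ applied to these endpoints reads $\u_N(\mu_t)\le 0$, which is vacuous. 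Your heuristic ``so that no entropy is lost along the vertical direction'' is therefore exactly backwards: concentrating on a single slice destroys the entropy information entirely. Approximating $\mu'_i$ by measures with bounded density on $X'$ does not help, because the lifted measures remain concentrated on a $\mm$-null set. The paper's fix is to spread the measure uniformly across a unit slab: one sets $\mu:=\mau_\sharp(\mu'\times\mathcal L^1\restr{[0,1]})$, so the lift is supported on $\b^{-1}([0,1])$, is absolutely continuous with density $\rho'\circ\pi$ there, and then $\u_N(\mu|\mm)=\u_N(\mu'|\mm')$ holds by Fubini. The optimal dynamic plan upstairs is then constructed as $\mathcal I_\sharp(\ppi'\times\mathcal L^1\restr{[0,1]})$, where $\mathcal I$ moves the $\R$-coordinate in parallel and applies $\gamma'$ in the $X'$-coordinate; one checks it is optimal by direct energy comparison (projecting an arbitrary competitor to $X'$ via $\pi$ only decreases energy, and the constructed plan saturates $W_2^2(\mu',\nu')$). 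Uniqueness from Theorem \ref{thm:optmap} then ensures this is \emph{the} plan for which \eqref{eq:defcd} holds, and the density identification gives the $CD(0,N)$ inequality downstairs. Your gesture toward Lott--Villani's group-action argument and toward Theorem \ref{thm:optmap} is pointing in the right direction, but without replacing $\iota_\sharp$ by the slab lift the entropies do not match and the argument does not close.
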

\begin{proof}$\ $\\
\noindent{\bf Infinitesimal Hilbertianity} Let $f',g'\in \s^2_{\rm loc}(X')$ and define $f,g$ as $f(x):=f'(\pi(x))$, $g(x):=g'(\pi(x))$. By Proposition \ref{prop:sezioni1} above we know that $f,g\in \s^2_{\rm loc}(X)$, hence, since $(X,\sfd,\mm)$ is infinitesimally Hilbertian, it holds
\[
|\nabla(f+g)|^2_X+|\nabla (f-g)|^2_X=2\big(|\nabla f|_X^2+|\nabla g|_X^2\big),\qquad\mm\ae.
\]
Then noticing that $(f\pm g)(x)=(f'\pm g')(\pi(x))$, using Proposition \ref{prop:prodmeas} and Fubini's theorem we deduce
\[
|\nabla(f'+g')|^2_{X'}+|\nabla (f'-g')|^2_{X'}=2\big(|\nabla f'|_{X'}^2+|\nabla g'|_{X'}^2\big),\qquad\mm'\ae,
\]
which, by the arbitrariness of $f',g'\in \s^2_{\rm loc}(X)$, yields the claim.

\noindent{\bf Curvature Dimension condition} A simple approximation argument shows that we can prove the curvature-dimension inequality \eqref{eq:defcd} only for given $\mu',\nu'\in\probt{X'}$ with bounded support and absolutely continuous w.r.t. $\mm'$. Fix such  $\mu',\nu'$, say $\mu'=\rho'\mm$ and $\nu'=\eta'\mm'$,  and define $\mu,\nu\in\probt X$ as 
\[
\begin{split}
\mu(x)&=\mau_\sharp(\mu'\times\mathcal L^1\restr{[0,1]}),\qquad\qquad\qquad\nu(x)=\mau_\sharp(\nu'\times\mathcal L^1\restr{[0,1]}),
\end{split}
\]
and notice that $\mu,\nu$ have bounded support and, by Proposition \ref{prop:prodmeas}, they are absolutely continuous w.r.t. $\mm$ with density  $\rho:=(\rho'\circ\pi)\restr{\b^{-1}([0,1])}$ and   $\eta:=(\eta'\circ\pi)\restr{\b^{-1}([0,1])}$ respectively. 

Choose $\ppi'\in\gopt(\mu',\nu')$. We claim that $\ppi:=\mathcal I_\sharp(\ppi'\times\mathcal L^1\restr{[0,1]})$ belongs to $\gopt(\mu,\nu)$, where $\mathcal I:C([0,1],X')\times[0,1]\to C([0,1],X)$ is defined by 
\[
(\mathcal I(\gamma',t_0))_t:=\mau(\gamma'_t,t_0),\qquad\forall \gamma'\in C([0,1],X'),\ t_0,t\in[0,1].
\] 
We shall prove this by direct comparison:  let $\aalpha\in\gopt(\mu,\nu)$ and put $\aalpha':=\mathcal P_\sharp\aalpha$, where $\mathcal P:C([0,1],X)\to C([0,1],X')$ is given by
\[
\mathcal P(\gamma)_t:=\pi(\gamma_t),\qquad\forall \gamma\in C([0,1],X),\ t\in[0,1].
\]
Since $\pi:\supp(\mm)\to X'$ is 1-Lipschitz we have $|\dot\gamma_t|\geq |\dot{\mathcal P(\gamma)}_t|$ for a.e. $t\in[0,1]$ and any $\gamma\in AC([0,1],X)$, and by construction it also holds $(\e_0)_\sharp\aalpha'=\mu'$, $(\e_1)_\sharp\aalpha'=\nu'$. Thus we have
\[
\iint_0^1|\dot\gamma_t|^2\,\d t\,\d\aalpha(\gamma)\geq \iint_0^1|\dot\gamma'_t|^2\,\d t\,\d\aalpha'(\gamma')\geq W_2^2(\mu',\nu').
\]
On the other hand,  the definition of $\mathcal I$ and Theorems \ref{thm:gfpresdist} and \ref{thm:embed} we have that $\ppi$-a.e. $\gamma$ is a geodesic which satisfies $\sfd(\gamma_0,\gamma_1)=\sfd'(\pi(\gamma_0),\pi(\gamma_1))$. Hence
\[
\iint_0^1|\dot\gamma_t|^2\,\d t\,\d\ppi(\gamma)=\int \sfd^2(\gamma_0,\gamma_1)\,\d\ppi(\gamma)=\int \sfd'^2(\gamma'_0,\gamma'_1)\,\d\ppi'(\gamma')= W_2^2(\mu',\nu').
\] 
Thus our claim is proved, that is: $\ppi\in\gopt(\mu,\nu)$.  Denoting by $\u_N(\cdot|\mm)$ and $\u_N(\cdot|\mm')$ the R\'enyi entropy functional on $\prob X$, $\prob{X'}$ respectively,  by the uniqueness part of Theorem \ref{thm:optmap} and the $CD(0,N)$ property of $(X,\sfd,\mm)$ we deduce
\begin{equation}
\label{eq:preproj}
\mathcal U_N\big((\e_t)_\sharp\ppi|\mm\big)\leq (1-t)\mathcal U_N\big((\e_0)_\sharp\ppi|\mm\big)+t\,\mathcal U_N\big((\e_1)_\sharp\ppi|\mm\big).
\end{equation}
Putting $\mu_t':=(\e_t)_\sharp\ppi'$, by construction we have
\begin{equation}
\label{eq:inv}
\mu_t:=(\e_t)_\sharp\ppi=\mau_\sharp(\mu_t'\times\mathcal L^1\restr{[0,1]}),\qquad\forall t\in[0,1].
\end{equation}
Again by Theorem \ref{thm:optmap}, we have that $\mu_t\ll\mm$, say $\mu_t=\rho_t\mm$, for any $t\in[0,1]$.  Then the identity \eqref{eq:inv} yields that $\mu'_t\ll\mm'$, say $\mu_t'=\rho_t'\mm$, $\rho_t(x)=\rho_t'(\pi(x))$ on $\b^{-1}([0,1])$ and $\rho_t(x)=0$ on $\b^{-1}(\R\setminus[0,1])$, for any $t\in[0,1]$. Hence for any $t\in[0,1]$ we have
\[
\begin{split}
\mathcal U_N(\mu_t|\mm)&=-\int_X\rho_t^{1-\frac1N}\,\d\mm=-\int_{\b^{-1}([0,1])}\rho_t^{1-\frac1N}\,\d\mm\\
&=-\int_{X'\times [0,1]} \rho_t^{1-\frac1N}\circ \mau\,\d(\mm'\times\mathcal L^1\restr{[0,1]})=-\int_{X'}(\rho_t')^{1-\frac1N}\,\d\mm'=\mathcal U_N(\mu_t'),
\end{split}
\]
for any $t\in[0,1]$. Thus the conclusion follows from \eqref{eq:preproj}.
\end{proof}

\chapter{``Pythagoras' theorem'' holds}\label{se:pit}
\section{Preliminary notions}
Let  $(X_1,\sfd_1,\mm_1)$ and   $(X_2,\sfd_2,\mm_2)$  be two metric measure spaces and consider the product space $(X_1\times X_2,\sfd_1\times\sfd_2,\mm_1\times\mm_2)$ where here and in the following the distance $\sfd_1\times\sfd_2$ is given by
\begin{equation}
\label{eq:proddist}
\sfd_1\times\sfd_2\big((x_1,x_2),(y_1,y_2)\big):=\sqrt{\sfd_1^2(x_1,y_2)+\sfd^2_2(x_2,y_2)},\qquad\forall x_1,y_1\in X_1,\ x_2,y_2\in X_2.
\end{equation}
It is quite natural to ask what are the relations between the Sobolev spaces on $X_1,X_2$ and that on $X_1\times X_2$. The general answer is not known, but in \cite{AmbrosioGigliSavare11-2}, \cite{AmbrosioGigliSavare12}  the following (surprisingly non trivial) result has been proved, which asserts that under the $RCD(K,\infty)$ condition it holds the same relation valid in the smooth world.

We shall adopt the following convention: given $f:X_1\times X_2\to\R$ and $x_1\in X_1$, the map $f^{(x_1)}:X_2\to\R$ is given by $f^{(x_1)}(x_2):=f(x_1,x_2)$, and similarly given $x_2\in X_2$, $f^{(x_2)}:X_1\to\R$ is given by $f^{(x_2)}(x_1):=f(x_1,x_2)$. Also, we write $|\nabla f|_{X_1},|\nabla f|_{X_2},|\nabla f|_{X_1\times X_2}$ to denote the minimal weak upper gradient for a Sobolev function defined on $X_1,X_2,X_1\times X_2$ respectively.
\begin{theorem}\label{thm:prod}
Let $(X_1,\sfd_1,\mm_1)$ and $(X_2,\sfd_2,\mm_2)$ be two $RCD(K,\infty)$ spaces and consider the product space $(X_1\times X_2,\sfd_1\times \sfd_2,\mm_1\times\mm_2)$. 

Then the product space is $RCD(K,\infty)$ as well and in particular it has the Sobolev-to-Lipschitz property. Furthermore, the following are equivalent:
\begin{itemize}
\item[i)]  $f\in W^{1,2}(X_1\times X_2)$
\item[ii)] for $\mm_1$-a.e. $x_1$ it holds $f^{(x_1)}\in W^{1,2}(X_2)$, for $\mm_2$-a.e. $x_2$ it holds $f^{(x_2)}\in W^{1,2}(X_1)$ and 
\[
\int_{X_1}\int_{X_2}|\nabla f^{(x_1)}|^2_{X_2}(x_2)\,\d\mm_2(x_2)\,\d\mm_1(x_1)+\int_{X_2}\int_{X_1}|\nabla f^{(x_2)}|^2_{X_1}(x_1)\,\d\mm_1(x_1)\,\d\mm_2(x_2)< \infty.
\]
\end{itemize}
Moreover, if these holds the equality
\begin{equation}
\label{eq:gradprod}
|\nabla f|_{X_1\times X_2}^2(x_1,x_2)=|f^{(x_2)}|^2(x_1)|\nabla f^{(x_1)}|_{X_2}^2(x_2)+|f^{(x_1)}|^2(x_2)|\nabla f^{(x_2)}|_{X_1}^2(x_1),
\end{equation}
is true for $\mm_1\times\mm_2$-a.e. $(x_1,x_2)$.
\end{theorem}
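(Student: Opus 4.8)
\emph{Proof plan for Theorem~\ref{thm:prod}.} The statement we must establish is the description of $W^{1,2}$ on the product $X'\times\R$, but really the argument is the general one for two $RCD(K,\infty)$ factors stated here, and we are allowed to cite \cite{AmbrosioGigliSavare11-2}, \cite{AmbrosioGigliSavare12} for the fact that the product is $RCD(K,\infty)$ and has the Sobolev-to-Lipschitz property. So the only thing left is the characterization of membership in $W^{1,2}(X_1\times X_2)$ via the two families of slices together with the pointwise identity \eqref{eq:gradprod}. I would organize the proof in two inequalities: (a) if $f\in W^{1,2}(X_1\times X_2)$ then almost every slice is Sobolev on the corresponding factor and the right-hand side of \eqref{eq:gradprod} is an admissible upper bound for $|\nabla f|_{X_1\times X_2}^2$; (b) conversely, if both slice conditions hold with finite integral of the right-hand side, then $f\in W^{1,2}(X_1\times X_2)$ and the reverse inequality in \eqref{eq:gradprod} holds.

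For direction (a), the plan is to use test plans that move only in one factor. Concretely, given a test plan $\boldsymbol{\sigma}$ on $C([0,1],X_2)$ and a measure $\mu_1\le C\mm_1$ on $X_1$, the plan obtained by running $\boldsymbol\sigma$ in the second coordinate while holding the first coordinate fixed (distributed according to $\mu_1$) is a test plan on $X_1\times X_2$; plugging it into the defining inequality \eqref{eq:defsob} for $f$ and using Fubini shows that for $\mm_1$-a.e.\ $x_1$ the slice $f^{(x_1)}$ lies in $\s^2_{\rm loc}(X_2)$ with $|\nabla f^{(x_1)}|_{X_2}(x_2)\le |\nabla f|_{X_1\times X_2}(x_1,x_2)$ (and symmetrically in the other variable). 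The sharper inequality, namely that $|f^{(x_2)}|^2|\nabla f^{(x_1)}|_{X_2}^2+|f^{(x_1)}|^2|\nabla f^{(x_2)}|_{X_1}^2$ (with the characteristic-function factors $|f^{(x_j)}|^2$ understood as the indicator coming from the locality of minimal weak upper gradients, cf.\ \eqref{eq:nullgrad}, which is really $\mathbf 1$ here so those factors are harmless) dominates $|\nabla f|^2_{X_1\times X_2}$, should follow by the standard trick of testing against a plan that moves simultaneously in both coordinates with speeds $a|\dot\gamma^1|$ and $b|\dot\gamma^2|$ and optimizing over the split of the speed, exactly as in the Euclidean/Riemannian computation; the Pythagorean combination $\sqrt{a^2+b^2}$ of the metric speeds coming from \eqref{eq:proddist} is what produces the sum of squares on the right of \eqref{eq:gradprod}.

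For direction (b), I would build a concrete upper gradient. Starting from the slice Sobolev information one gets, via \eqref{eq:localplan} applied on each factor, control of $|f(\gamma_1)-f(\gamma_0)|$ along curves that are `horizontal then vertical'; a general test plan on $X_1\times X_2$ is not of this product form, but one can discretize a curve $\gamma=(\gamma^1,\gamma^2)$ into many small steps, on each of which one replaces the genuine move by a horizontal move followed by a vertical move, controls the error by the $L^2$ integrability of the two slice gradients, and passes to the limit. The cleaner route, and the one actually taken in \cite{AmbrosioGigliSavare11-2}, is to first prove the result for $f$ Lipschitz (where $\lip_{X_1\times X_2}(f)^2\le \lip_{X_1}(f^{(x_2)})^2+\lip_{X_2}(f^{(x_1)})^2$ is elementary from \eqref{eq:proddist}) and then invoke density of Lipschitz functions in energy (Theorem~\ref{thm:energylip}), combined with the lower semicontinuity of minimal weak upper gradients and the slice estimates from step (a), to upgrade to general $f$; here is where the $RCD$ hypothesis on the factors really enters, since we need the product to be a well-behaved metric measure space and, more subtly, we need to know that the energy of $f$ on the product is \emph{not strictly smaller} than the sum of the slice energies, which is the non-obvious content of \cite{AmbrosioGigliSavare11-2,AmbrosioGigliSavare12}.

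The main obstacle is exactly this last point: proving the `$\ge$' direction of \eqref{eq:gradprod}, i.e.\ that the product energy genuinely decomposes rather than being diminished by some interaction between the factors. On a general metric measure space this can fail (or is unknown), and the known proof relies on the $RCD$ structure — for instance on the tensorization of the heat semigroup, $\heatw^{X_1\times X_2}_t = \heatw^{X_1}_t\otimes\heatw^{X_2}_t$, and the identification of the Cheeger energy as the Dirichlet form of the heat flow — so in writing the proof I would simply cite \cite{AmbrosioGigliSavare11-2} and \cite{AmbrosioGigliSavare12} for this ingredient and concentrate the original work on verifying that the hypotheses of those results are met (namely that $X'$, as produced in Corollary~\ref{cor:xp}, is infinitesimally Hilbertian $CD(0,N)$ hence in particular $RCD(0,\infty)$, and that $\R$ is trivially $RCD(0,\infty)$) and on recording the slice inequalities of step (a), which are elementary and self-contained. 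Everything then assembles: \eqref{eq:gradprod} holds $\mm_1\times\mm_2$-a.e., and the equivalence (i)$\Leftrightarrow$(ii) is immediate by integrating \eqref{eq:gradprod} and using Fubini.
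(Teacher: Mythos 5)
The paper gives no proof of this theorem: it appears in a ``Preliminary notions'' subsection with the remark that the result is ``surprisingly non trivial'' and is attributed to \cite{AmbrosioGigliSavare11-2}, \cite{AmbrosioGigliSavare12}. Your decision to cite those references for the hard ingredients is therefore exactly the paper's own treatment, and there is no internal proof to compare against.

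Two issues with the sketch you added around the citation. First, the factors $|f^{(x_2)}|^2(x_1)$ and $|f^{(x_1)}|^2(x_2)$ in \eqref{eq:gradprod} are a typographical slip in the paper's statement: taken at face value both equal $|f(x_1,x_2)|^2$, which cannot be what is meant, and the formula in the cited references reads simply $|\nabla f|_{X_1\times X_2}^2=|\nabla f^{(x_1)}|_{X_2}^2+|\nabla f^{(x_2)}|_{X_1}^2$. Your attempt to read those factors as indicators ``coming from the locality of minimal weak upper gradients, cf.\ \eqref{eq:nullgrad}'' does not hold up --- they are not indicator functions and are not $\mathbf 1$ in general, and \eqref{eq:nullgrad} concerns the vanishing of $\weakgrad f$ on $f$-preimages of Lebesgue-null sets, which has nothing to do with this; you should just say the statement is to be read without them. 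Second, your allocation of difficulty between your steps (a) and (b) is off: testing against one-factor test plans shows only that for a.e.\ $x_1$, $x_2$ one has $\max\{|\nabla f^{(x_1)}|_{X_2}^2,|\nabla f^{(x_2)}|_{X_1}^2\}\leq|\nabla f|_{X_1\times X_2}^2$, and your idea of ``optimizing over the split of the speed'' along a diagonal test plan does not by itself upgrade that max to the sum. Both inequalities in \eqref{eq:gradprod} are genuinely non-trivial, and the known proof in \cite{AmbrosioGigliSavare12} sidesteps test plans altogether: it establishes tensorization of the heat semigroup and then identifies the Cheeger energy of the product with the sum of the factors' energies via the identification of Cheeger energies with Dirichlet forms --- this is the precise point where the $RCD$ hypothesis is used, which your proposal gestures at correctly but does not make precise.
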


We shall reformulate the second part of  Theorem \ref{thm:prod} above in the following way, more convenient for our purposes:
\begin{corollary}\label{cor:sezioniprod} With the same notations and assumptions of Theorem \ref{thm:prod} the following are true.
\begin{itemize}
\item[i)] Let $f\in \s^2_{\rm loc}(X_1\times X_2)$. Then for $\mm_1$-a.e. $x_1$ it holds $f^{(x_1)}\in \s^2_{\rm loc}(X_2)$, for $\mm_2$-a.e. $x_2$ it holds $f^{(x_2)}\in \s^2_{\rm loc}(X_1)$ and the identity \eqref{eq:gradprod} holds.
\item[ii)] Let $f_1\in \s^2_{\rm loc}(X_1)$ and define $f:X_1\times X_2\to \R$ by $f(x_1,x_2):=f_1(x_1)$. Then $f\in \s^2_{\rm loc}(X_1\times X_2)$ and
\[
|\nabla f|_{X_1\times X_2}(x_1,x_2)=|\nabla f_1|_{X_1}(x_1),\qquad\mm_1\times\mm_2\ae \ (x_1,x_2).
\]
\item[iii)] Let $f_2\in \s^2_{\rm loc}(X_2)$ and define $f:X_1\times X_2\to \R$ by $f(x_1,x_2):=f_2(x_2)$. Then $f\in \s^2_{\rm loc}(X_1\times X_2)$ and
\[
|\nabla f|_{X_1\times X_2}(x_1,x_2)=|\nabla f_2|_{X_2}(x_2),\qquad\mm_1\times\mm_2\ae\ (x_1,x_2).
\]
\end{itemize}
\end{corollary}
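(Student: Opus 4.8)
The statement to prove is Corollary \ref{cor:sezioniprod}, which merely reorganizes Theorem \ref{thm:prod} into a form that handles the local Sobolev class $\s^2_{\rm loc}$ and the special cases of functions depending on one coordinate. The plan is to deduce everything from Theorem \ref{thm:prod} by a localization/truncation argument, together with the calculus rules for minimal weak upper gradients (locality \eqref{eq:localgrad}, the chain rule \eqref{eq:chainbase}, and the product/algebra bounds) collected in Section \ref{se:sob}.

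First I would prove $(ii)$ and $(iii)$, which are symmetric, so it suffices to do $(ii)$. Let $f_1\in\s^2_{\rm loc}(X_1)$ and put $f(x_1,x_2):=f_1(x_1)$. By definition of $\s^2_{\rm loc}$ (Definition \ref{def:parigi}) one must check that $f\nchi\in\s^2_{\rm loc}(X_1\times X_2)$ for every Lipschitz $\nchi:X_1\times X_2\to\R$ with bounded support. Since $X_1\times X_2$ is covered by products $B_1\times B_2$ of bounded sets and the thesis is local, it is enough to argue on such a product, where $\mm_1\restr{B_1},\mm_2\restr{B_2}$ are finite. There one truncates $f_1$ to $f_1^R:=\max\{-R,\min\{R,f_1\}\}$, which lies in $\s^2(B_1)\cap L^\infty$ by the chain rule \eqref{eq:chainbase}, and applies Theorem \ref{thm:prod} to the function $(x_1,x_2)\mapsto f_1^R(x_1)$: its $x_1$-sections are constant (hence trivially Sobolev with zero gradient) and its $x_2$-sections equal $f_1^R$; the integrability condition $(ii)$ of Theorem \ref{thm:prod} holds because $\int_{B_2}\int_{B_1}|\nabla f_1^R|_{X_1}^2\,\d\mm_1\,\d\mm_2=\mm_2(B_2)\int_{B_1}|\nabla f_1^R|_{X_1}^2\,\d\mm_1<\infty$. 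Formula \eqref{eq:gradprod} then collapses to $|\nabla f_1^R|_{X_1\times X_2}(x_1,x_2)=|\nabla f_1^R|_{X_1}(x_1)$ $\mm_1\times\mm_2$-a.e.\ (the $|f^{(x_1)}|^2(x_2)|\nabla f^{(x_2)}|_{X_1}^2$ term, and only the $|f^{(x_2)}|^2|\nabla f^{(x_1)}|_{X_2}^2=0$ term, survive once one reads the labels correctly, using that $f^{(x_1)}$ is constant). Finally, the locality property \eqref{eq:localgrad} lets one remove the truncation: on $\{|f_1|<R\}$ one has $|\nabla f|_{X_1\times X_2}=|\nabla f_1^R|_{X_1\times X_2}=|\nabla f_1^R|_{X_1}=|\nabla f_1|_{X_1}$, and letting $R\to\infty$ gives $f\in\s^2_{\rm loc}(X_1\times X_2)$ with the stated identity.

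For $(i)$, let $f\in\s^2_{\rm loc}(X_1\times X_2)$; again by locality it suffices to treat $f$ supported in a product $B_1\times B_2$ of bounded sets, and by truncation we may assume $f\in\s^2(B_1\times B_2)\cap L^\infty$, hence $f\in W^{1,2}$ of the (finite-measure) product. Then the equivalence $(i)\Leftrightarrow(ii)$ of Theorem \ref{thm:prod}, applied on $B_1\times B_2$, yields that for $\mm_1$-a.e.\ $x_1$ the section $f^{(x_1)}$ is in $W^{1,2}(B_2)\subset\s^2_{\rm loc}(X_2)$ (restricted to $B_2$), similarly in the other variable, and that \eqref{eq:gradprod} holds $\mm_1\times\mm_2$-a.e.\ on $B_1\times B_2$. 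Covering $X_1\times X_2$ by countably many such products and again invoking \eqref{eq:localgrad} to patch, one obtains the global statement: $f^{(x_1)}\in\s^2_{\rm loc}(X_2)$ for $\mm_1$-a.e.\ $x_1$, $f^{(x_2)}\in\s^2_{\rm loc}(X_1)$ for $\mm_2$-a.e.\ $x_2$, and \eqref{eq:gradprod} holds $\mm_1\times\mm_2$-a.e.

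\textbf{Expected main obstacle.} There is no deep obstacle here: the content is entirely in Theorem \ref{thm:prod}, which is assumed. The only points requiring care are bookkeeping ones---making sure that the truncation and the passage from $W^{1,2}$ on finite-measure product pieces to $\s^2_{\rm loc}$ on the whole space is done using the locality of minimal weak upper gradients, and that the cut-off functions needed in Definition \ref{def:parigi} can be taken of product form (or reduced to that case). One mild subtlety is that the sections $f^{(x_1)}$ produced by Theorem \ref{thm:prod} on a piece $B_1\times B_2$ are a priori only defined up to the exceptional $\mm_1$-null set depending on the piece; handling the countable union of pieces and ensuring a single $\mm_1$-null exceptional set is a routine diagonal argument.
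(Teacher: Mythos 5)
Your proposal is correct and takes essentially the same approach as the paper, whose entire proof reads ``All the properties follow from Theorem \ref{thm:prod} with a truncation and cut-off argument based on the locality property \eqref{eq:localgrad} of minimal weak upper gradients.''\ You have filled in exactly those steps; the only minor point worth noting is that your parenthetical remark in part $(ii)$ about which term of \eqref{eq:gradprod} survives is garbled, but this is because \eqref{eq:gradprod} as printed carries spurious factors $|f^{(x_1)}|^2$, $|f^{(x_2)}|^2$ (a typo, as the paper's own use of the formula in the proof of Proposition \ref{prop:astable} confirms), and your argument is consistent with the intended $|\nabla f|_{X_1\times X_2}^2=|\nabla f^{(x_1)}|_{X_2}^2+|\nabla f^{(x_2)}|_{X_1}^2$.
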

\begin{proof}
All the properties follow from Theorem \ref{thm:prod} with a truncation and cut-off argument based on the locality property \ref{eq:localgrad} of minimal weak upper gradients.
\end{proof}

\section{Result}
Let us recall the notations that we shall use from now on
\begin{equation}
\label{eq:not}
\begin{split}
&(X,\sfd,\mm) \textrm{ is an infinitesimally Hilbertian $CD(0,N)$ space},\\
&\bar\gamma:\R\to\supp(\mm)\textrm{ is a line and $\b:=\b^+$ the corresponding Busemann function},\\
&(X',\sfd',\mm')\textrm{ is the quotient space as given by Definitions \ref{def:xprimo} and \ref{def:mmp}},\\
&\textrm{the maps $\mau,\mad$ from $X'\times\R$ to $ \supp(\mm)$ and viceversa  are given in Definition \ref{def:maumad}},\\
&\textrm{in the product space $(X'\times\R,\sfd'\times\sfd_{\rm Eucl},\mm'\times\mathcal L^1)$ the distance is defined by}\\
&\qquad\qquad\sfd'\times\sfd_{\rm Eucl}\big((x',t),(y's)\big)^2:=\sfd'(x',y')^2+|t-s|^2,\qquad\forall x',y'\in X',\ t,s\in\R.
\end{split}
\end{equation}

\bigskip

Aim of this section is to show that $\mau,\mad$ are isomorphisms of metric measure spaces, which according to Proposition \ref{prop:prodmeas} reduces to prove that 
\[
\sfd(\mau(x',t),\mau(y',s))^2=\sfd'(x',y')^2+|t-s|^2,\qquad\forall x',y'\in X',\ t,s\in\R.
\]
We will achieve this result by a duality argument based on Proposition \ref{prop:isom}.

\bigskip

It is a triviality that the standard definition of Sobolev space $W^{1,2}(\R)$ coincides with the one given by the formula \eqref{eq:w12} in the metric measure space $(\R,\sfd_{\rm Eucl},\mathcal L^1)$, and that for $f\in W^{1,2}(\R)$ its minimal weak upper gradient coincides with the modulus of its distributional derivative.  To keep consistency of the notation we shall denote this object by $|\nabla f|_\R$.

Arguing as in the proof of Proposition \ref{prop:sezioni2} we get the following result.
\begin{proposition}\label{prop:sezioni2} With the same notation as in \eqref{eq:not} the following holds.
\begin{itemize}
\item[i)] Let $f\in \s^2_{\rm loc}(X)$ and for $x'\in X'$ let $f^{(x')}:\R\to\R$ be given by $f^{(x')}(t):=f(\mau(x',t))$. Then  for $\mm'$-a.e. $x'$ it holds $f^{(x')}\in \s^2_{\rm loc}(\R)$ and
\[
|\nabla f^{(x')}|_{\R}(t)\leq |\nabla f|_X(\mau(x',t)),\qquad \mm'\times\mathcal L^1\ae\ (x',t)\in X'\times\R.
\]
\item[ii)] Let $h\in \s^2_{\rm loc}(\R)$ and define $f:X\to\R$ by $f(x):=h\circ\b$. Then $f\in \s^2_{\rm loc}(X)$ and
\[
|\nabla f|_{X}(x)=|\nabla h|_{\R}(\b(x)),\qquad\mm\ae \ x\in X.
\]
\end{itemize}
\end{proposition}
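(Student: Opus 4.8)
The statement is the exact analogue for the factor $\R$ of Proposition \ref{prop:sezioni1}, with the roles of $\pi$ and $\b$ interchanged, and the proof follows the same scheme. The two basic ingredients are: (a) the first identity in \eqref{eq:linee}, rephrased through the homeomorphism $\mau$, namely $\sfd(\mau(x',t),\mau(x',s))=|t-s|$ (which is immediate from Theorem \ref{thm:gfpresdist}, since $s\mapsto\mau(x',s)=\bar\X_s(\iota(x'))$ is a unit-speed line) together with $\b(\mau(x',t))=t$; and (b) the measure-preservation property $\mau_\sharp(\mm'\times\mathcal L^1)=\mm$ from Proposition \ref{prop:prodmeas}. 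The only abstract tools needed are the strong density of Lipschitz functions in $W^{1,2}$ (Theorem \ref{thm:stronglip}), the comparison $\weakgrad f\leq\lip(f)$ \eqref{eq:lipweak}, the lower semicontinuity of minimal weak upper gradients, and the locality property \eqref{eq:localgrad}.

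\textbf{Proof of (i).} First I would record the pointwise inequality between local Lipschitz constants: for $x\in\supp(\mm)$ and writing $x'=\pi(x)$, $t=\b(x)$,
\[
\lip_X(f)(x)\ \geq\ \lims_{s\to t}\frac{|f(\mau(x',s))-f(\mau(x',t))|}{\sfd(\mau(x',s),\mau(x',t))}\ =\ \lims_{s\to t}\frac{|f^{(x')}(s)-f^{(x')}(t)|}{|s-t|}\ =\ \lip_\R(f^{(x')})(t),
\]
where I restricted the $\lims$ to the line through $x$ in the direction of $\bar\X$ and used $\sfd(\mau(x',s),\mau(x',t))=|t-s|$. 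Then, exactly as in Proposition \ref{prop:sezioni1}(i), by truncation and a cut-off argument it is not restrictive to take $f\in W^{1,2}(X)$; choose via Theorem \ref{thm:stronglip} a sequence $(f_n)$ of Lipschitz functions with $f_n\to f$ and $\lip_X(f_n)\to|\nabla f|_X$ in $L^2(X)$, pass to a subsequence with $\sum_n\|f_n-f_{n+1}\|_{L^2}<\infty$ and $\sum_n\|\lip_X(f_n)-|\nabla f|_X\|_{L^2}<\infty$, and use $\mau_\sharp(\mm'\times\mathcal L^1)=\mm$ with Fubini to conclude that for $\mm'$-a.e.\ $x'$ one has $f_n^{(x')}\to f^{(x')}$ and $\lip_X(f_n)(\mau(x',\cdot))\to|\nabla f|_X(\mau(x',\cdot))$ in $L^2(\R)$. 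For such $x'$, apply the displayed inequality to each $f_n$, use $|\nabla g|_\R\leq\lip_\R(g)$ for Lipschitz $g$ and the lower semicontinuity of minimal weak upper gradients to get $f^{(x')}\in\s^2_{\rm loc}(\R)$ with $|\nabla f^{(x')}|_\R(t)\leq|\nabla f|_X(\mau(x',t))$ for $\mathcal L^1$-a.e.\ $t$; integrating in $x'$ gives the claimed $\mm'\times\mathcal L^1$-a.e.\ inequality.

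\textbf{Proof of (ii).} This mirrors Proposition \ref{prop:sezioni1}(ii). By the local nature of the thesis it suffices, fixing $R>0$ and a Lipschitz cut-off $\nchi:\R\to[0,1]$ with compact support and $\equiv 1$ on $[-R,R]$, to show $h\nchi\in W^{1,2}(\R)$ implies $(h\nchi)\circ\b\in W^{1,2}(X)$ with the identity holding $\mm$-a.e.\ on $\b^{-1}([-R,R])$. Pick Lipschitz $h_n\to h$ with $\lip_\R(h_n)\to|\nabla h|_\R$ in $L^2$; then $f_n:=(h_n\nchi)\circ\b$ converges to $(h\nchi)\circ\b$ in $L^2(X)$ by Proposition \ref{prop:prodmeas}, and since $\b$ is $1$-Lipschitz one has the pointwise bound $\lip_X(f_n)(x)\leq\lip_\R(h_n\nchi)(\b(x))$, whence $(\lip_X(f_n))$ is bounded in $L^2(X)$ by the Leibniz-type bound \eqref{eq:leibbase}. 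Extract a weak $L^2$ limit $G$ of (a subsequence of) $\lip_X(f_n)$; lower semicontinuity gives $|\nabla((h\nchi)\circ\b)|_X\leq G\leq|\nabla h|_\R\circ\b$ $\mm$-a.e., and \eqref{eq:localgrad} identifies this with $|\nabla f|_X$ on $\b^{-1}([-R,R])$, proving $\leq$ in the asserted identity. The reverse inequality $|\nabla f|_X\geq|\nabla h|_\R\circ\b$ is precisely what part (i) yields applied to $f=h\circ\b$ (note $f^{(x')}=h$ for every $x'$), since then $|\nabla h|_\R(t)=|\nabla f^{(x')}|_\R(t)\leq|\nabla f|_X(\mau(x',t))$ for $\mm'\times\mathcal L^1$-a.e.\ $(x',t)$, i.e.\ $|\nabla h|_\R\circ\b\leq|\nabla f|_X$ $\mm$-a.e.

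\textbf{Main obstacle.} There is no serious obstacle: the argument is structurally identical to Proposition \ref{prop:sezioni1}, the only point requiring care being the bookkeeping of the almost-everywhere statements under $\mau$, which is handled cleanly by Proposition \ref{prop:prodmeas} and Fubini, exactly as in the proof of Proposition \ref{prop:sezioni1}. (Indeed the paper itself says ``arguing as in the proof of Proposition \ref{prop:sezioni2}'', which is a slight mislabeling for Proposition \ref{prop:sezioni1}.)
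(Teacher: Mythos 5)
Your part (i) is correct and follows the route the paper intends (mirror the proof of Proposition \ref{prop:sezioni1}(i) via the pointwise bound $\lip_\R(f^{(x')})(t)\leq\lip_X(f)(\mau(x',t))$ along the lines $s\mapsto\mau(x',s)$, strong density of Lipschitz functions, Fubini through $\mau_\sharp(\mm'\times\mathcal L^1)=\mm$, and lower semicontinuity of minimal weak upper gradients).

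Part (ii) has a genuine gap. You claim that $(h\nchi)\circ\b\in W^{1,2}(X)$ and that $f_n:=(h_n\nchi)\circ\b\to(h\nchi)\circ\b$ in $L^2(X)$, but neither need hold: since $\mad_\sharp\mm=\mm'\times\mathcal L^1$ one has $\mm\big(\b^{-1}(\supp(\nchi))\big)=\mm'(X')\,\mathcal L^1(\supp(\nchi))$, which is infinite whenever $\mm'(X')=\infty$ (the generic case, e.g.\ Lebesgue measure on $\R^N$, $N\geq2$), so $(h\nchi)\circ\b\notin L^2(X)$, $(f_n)$ does not converge to it in $L^2(X)$, and $(\lip_X(f_n))$ is not bounded in $L^2(X)$ --- the weak-compactness and lower-semicontinuity steps you invoke then collapse. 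The source of the error is that you cut off along the wrong factor: in Proposition \ref{prop:sezioni1}(ii) the cut-off $\nchi\circ\b$ is taken in the direction \emph{complementary} to the one where the source function lives (a cut-off in $\R$, while $g$ lives on $X'$), which is exactly what makes $g\circ\pi\cdot\nchi\circ\b$ square-integrable. Here $h$ lives on $\R$, so the mirror argument requires an additional cut-off $\nchi'\circ\pi$ with $\nchi':X'\to[0,1]$ Lipschitz, with compact support, $\equiv1$ on a ball. Working with $f_n:=(h_n\nchi)\circ\b\cdot(\nchi'\circ\pi)$, one gets $f_n\to(h\nchi)\circ\b\cdot(\nchi'\circ\pi)$ in $L^2(X)$, and by Leibniz and $\lip_X(\nchi'\circ\pi)\leq\lip_{X'}(\nchi')\circ\pi$,
\[
\lip_X(f_n)\leq\big(\lip_\R(h_n\nchi)\circ\b\big)(\nchi'\circ\pi)+\big(|h_n\nchi|\circ\b\big)\big(\lip_{X'}(\nchi')\circ\pi\big),
\]
whose $L^2(X)$-norm factors via Fubini into products of $L^2(\R)$- and $L^2(X')$-norms and is therefore uniformly bounded. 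The rest of your argument (weak limit, lower semicontinuity, locality \eqref{eq:localgrad} on the set where both cut-offs equal $1$, and the reverse inequality from part (i)) then goes through.
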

\begin{proof}
The same arguments used in the proof of Proposition \ref{prop:sezioni1} can be applied also in this case recalling that the following are true:
\begin{itemize}
\item[-] for any $t,s\in \R$ it holds $|t-s|=\min_{x\in \b^{-1}(t),\ y\in \b^{-1}(s)}\sfd(x,y)$,
\item[-] for any $x\in \supp(\mm)$ the map $t\mapsto \bar\X_t(x)$ provides an isometric embedding of $\R$ in $X$,
\item[-] it holds $\mau_\sharp(\mm'\times\mathcal L^1)=\mm$ and $\mad_\sharp\mm=\mm'\times\mathcal L^1$.
\end{itemize}
We omit the details.
\end{proof}
Propositions \ref{prop:sezioni1}, \ref{prop:sezioni2} and Corollary \ref{cor:sezioniprod} are the basis of our proof of the fact that right composition with $\mad$ provides an isometry from $W^{1,2}(X'\times\R)$ to $W^{1,2}(X)$. In order to clarify the argument we introduce the following class of functions:
\[
\begin{split}
\mathcal G&:=\Big\{g:X'\times\R\to\R\ :\  g(x',t)=\tilde g(x')\textrm{ for some }  \tilde g\in \s^2(X')\cap L^\infty(X') \Big\},\\
\mathcal H&:=\Big\{h:X'\times\R\to\R\ :\  h(x',t)=\tilde h(t)\textrm{ for some }  \tilde h\in \s^2(\R)\cap L^\infty(\R) \Big\}.
\end{split}
\]
Notice that both $\mathcal G$ and $\mathcal H$ are algebras, i.e. are closed w.r.t. linear combinations and products.

Using Corollary \ref{cor:sezioniprod} and Proposition \ref{prop:sezioni1} we get
\begin{equation}
\label{eq:gradg}
g\in \mathcal G\qquad\Rightarrow\qquad\left\{\begin{array}{l} g\in \s^2_{\rm loc}(X'\times\R),\ g\circ\mad\in \s^2_{\rm loc}(X)\textrm{ and }\\
\\
|\nabla g|_{X'\times\R}\circ\mad=|\nabla (g\circ\mad)|_X\quad \mm\ae.
\end{array}\right.
\end{equation}
Similarly,  Corollary \ref{cor:sezioniprod} and Proposition \ref{prop:sezioni2} give
\begin{equation}
\label{eq:gradh}
h\in \mathcal H\qquad\Rightarrow\qquad\left\{\begin{array}{l} h\in \s^2_{\rm loc}(X'\times\R),\ h\circ\mad\in \s^2_{\rm loc}(X)\textrm{ and }\\
\\
|\nabla h|_{X'\times\R}\circ\mad=|\nabla (h\circ\mad)|_X\quad \mm\ae.
\end{array}\right.
\end{equation}
Now we introduce the algebra of functions $\mathcal A$ as:
\[
\mathcal A:=\textrm{ algebra generated by }\mathcal G\cup\mathcal H.
\]
Notice that $\mathcal A\subset \s^2_{\rm loc}(X'\times\R)$.

The proof of the fact that right composition with $\mad$ produces an isometry of $W^{1,2}(X'\times\R)$ into $W^{1,2}(X)$ is based on the following 3 facts:
\begin{itemize}
\item[1)] for $f\in\mathcal A$ it holds $f\circ\mad\in \s^2_{\rm loc}(X)$ and $|\nabla f|_{X'\times\R}\circ\mad=|\nabla(f\circ\mad)|_X$ $\mm$-a.e. (Proposition \ref{prop:astable}),
\item[2)] $\mathcal A\cap W^{1,2}(X'\times\R)$ is dense in $W^{1,2}(X'\times \R)$ (Proposition \ref{prop:approximation}),
\item[3)] the right composition with $\mad$ produces an homeomorphism of the spaces $W^{1,2}(X'\times\R)$ and $W^{1,2}(X)$ (Proposition \ref{prop:almost}).
\end{itemize}
In order to prove point $(1)$ above we shall need the following basic lemma, which asserts that two functions $g\in\mathcal G$ and $h\in\mathcal H$ have `orthogonal gradients' in $W^{1,2}(X'\times\R)$ and - after a right composition with $\mad$ - also in $W^{1,2}(X)$.
\begin{lemma}[Orthogonality relations] With the same notation as above, let $g\in \mathcal G$ and $h\in \mathcal H$.  Then it holds
\begin{equation}
\label{eq:perp1}
\la \nabla g,\nabla h\ra_{X'\times\R}=0,\qquad\mm'\times\mathcal L^1\ae,
\end{equation}
and
\begin{equation}
\label{eq:perp3}
\la\nabla( g\circ\mad),\nabla(h\circ\mad) \ra_X=0,\qquad\mm\ae.
\end{equation}
\end{lemma}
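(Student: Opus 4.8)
The two orthogonality relations will be proved separately but in parallel, using in each case the infinitesimal Hilbertianity of the relevant product/ambient space together with a Leibniz-rule computation. The key structural input is that $\mm'\times\mathcal L^1$ and $\mm$ are related via the measure-preserving homeomorphisms $\mau,\mad$ (Proposition \ref{prop:prodmeas}), that $\mathcal G$ and $\mathcal H$ are algebras of bounded Sobolev functions, and that functions depending on only one coordinate pull back to Sobolev functions on the ambient space with the expected minimal weak upper gradient (Corollary \ref{cor:sezioniprod}, Propositions \ref{prop:sezioni1} and \ref{prop:sezioni2}).

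\textbf{Step 1: the relation \eqref{eq:perp1} on $X'\times\R$.} Write $g(x',t)=\tilde g(x')$ and $h(x',t)=\tilde h(t)$. Since $X'\times\R$ is $RCD$-like only after we know $X'$ is $RCD$; but in fact we only need infinitesimal Hilbertianity of the product, which follows from Corollary \ref{cor:xp} ($(X',\sfd',\mm')$ is infinitesimally Hilbertian $CD(0,N)$) and Theorem \ref{thm:prod}. By Corollary \ref{cor:sezioniprod}(ii)--(iii), $g$ and $h$ lie in $\s^2_{\rm loc}(X'\times\R)$ with $|\nabla g|_{X'\times\R}(x',t)=|\nabla\tilde g|_{X'}(x')$ and $|\nabla h|_{X'\times\R}(x',t)=|\nabla\tilde h|_{\R}(t)$. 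To extract the scalar product, I would compute $|\nabla(g+\eps h)|^2_{X'\times\R}$ via formula \eqref{eq:gradprod} in Theorem \ref{thm:prod}: for the function $(x',t)\mapsto \tilde g(x')+\eps\tilde h(t)$ the section in $x'$ is $\tilde g+\eps\tilde h(t)$ (an $X'$-translate of $\tilde g$ by a constant, so same minimal weak upper gradient $|\nabla\tilde g|_{X'}$) and the section in $t$ is $\tilde h+\eps^{-1}\cdot\eps\tilde g(x')$ type, again with minimal weak upper gradient $|\nabla(\eps\tilde h)|_\R=\eps|\nabla\tilde h|_\R$; hence $|\nabla(g+\eps h)|^2_{X'\times\R}=|\nabla\tilde g|^2_{X'}+\eps^2|\nabla\tilde h|^2_{\R}=|\nabla g|^2_{X'\times\R}+\eps^2|\nabla h|^2_{X'\times\R}$. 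Differentiating in $\eps$ at $\eps=0$ (using Definition \ref{def:dpmfg} and that the product space is infinitesimally Hilbertian, so $D^\pm$ coincide) gives $\la\nabla g,\nabla h\ra_{X'\times\R}=0$ $\mm'\times\mathcal L^1$-a.e., which is \eqref{eq:perp1}. A cutoff/truncation argument handles the fact that $\tilde g,\tilde h$ need not be globally $W^{1,2}$ — but since they are already in $\s^2\cap L^\infty$ this is routine and local.

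\textbf{Step 2: the relation \eqref{eq:perp3} on $X$.} Here the ambient space $(X,\sfd,\mm)$ is infinitesimally Hilbertian by assumption. By \eqref{eq:gradg} and \eqref{eq:gradh} we already know $g\circ\mad\in\s^2_{\rm loc}(X)$ with $|\nabla(g\circ\mad)|_X=|\nabla\tilde g|_{X'}\circ\pi$ $\mm$-a.e., and $h\circ\mad\in\s^2_{\rm loc}(X)$ with $|\nabla(h\circ\mad)|_X=|\nabla\tilde h|_\R\circ\b$ $\mm$-a.e. Now consider $g\circ\mad+\eps\,h\circ\mad$, which equals $(g+\eps h)\circ\mad$. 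The point is that this is again (up to the additive-constant reshuffling exactly as in Step 1) a pullback under $\mad$ of a function in $\mathcal G\cup\mathcal H$-type form whose $X'\times\R$-gradient we computed to satisfy $|\nabla(g+\eps h)|^2_{X'\times\R}=|\nabla g|^2_{X'\times\R}+\eps^2|\nabla h|^2_{X'\times\R}$; but more directly I would argue: the section of $(g+\eps h)\circ\mad$ along the flow lines $t\mapsto\bar\X_t(x)$ has minimal weak upper gradient $\eps|\nabla\tilde h|_\R$ (Proposition \ref{prop:sezioni2}(i) applied to $g\circ\mad+\eps h\circ\mad$, noting the $g$-part is constant along flow lines since $\pi\circ\bar\X_t=\pi$), while the section on level sets $\b^{-1}(c)$ has minimal weak upper gradient $|\nabla\tilde g|_{X'}\circ\pi$ (Proposition \ref{prop:sezioni1}(i), noting the $\eps h$-part is constant on level sets of $\b$). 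These two together with a "Pythagorean at the level of weak upper gradients" statement — which is exactly what one gets by combining the two inequalities with the inequality $|\nabla(g\circ\mad+\eps h\circ\mad)|_X^2\le (|\nabla\tilde g|_{X'}\circ\pi)^2+\eps^2(|\nabla\tilde h|_\R\circ\b)^2$ coming from, e.g., applying Proposition \ref{prop:sezioni1}(ii) and \ref{prop:sezioni2}(ii) to each summand and the triangle inequality for minimal weak upper gradients plus the orthogonality \eqref{eq:perp1} transported via $\mad_\sharp\mm=\mm'\times\mathcal L^1$ — yield $|\nabla(g\circ\mad+\eps h\circ\mad)|_X^2=(|\nabla\tilde g|_{X'}\circ\pi)^2+\eps^2(|\nabla\tilde h|_\R\circ\b)^2$. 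Differentiating in $\eps$ at $0$ and using infinitesimal Hilbertianity of $X$ gives \eqref{eq:perp3}.

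\textbf{Main obstacle.} The delicate point is the matching upper bound $|\nabla(g\circ\mad+\eps h\circ\mad)|_X^2\le (|\nabla\tilde g|_{X'}\circ\pi)^2+\eps^2(|\nabla\tilde h|_\R\circ\b)^2$: the vertical/horizontal decomposition of the weak upper gradient on $X$ is not a priori available (we do not yet know $X$ splits as a product — that is the goal of the whole chapter), so one cannot simply invoke Theorem \ref{thm:prod} on $X$ itself. The way around this is to get the $\le$ from the \emph{definition} of Sobolev function on $X$ via test plans: write $g\circ\mad+\eps h\circ\mad = G\circ\mad$ where $G\in\s^2_{\rm loc}(X'\times\R)$ and use that $G\circ\mad\in\s^2_{\rm loc}(X)$ with $|\nabla G|_{X'\times\R}\circ\mad \ge |\nabla(G\circ\mad)|_X$ — but this inequality in the \emph{wrong} direction is again not obviously available. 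Hence I expect the cleanest route is: prove \eqref{eq:perp1} first (Step 1, entirely inside the $RCD$ product via Theorem \ref{thm:prod}), then transport it through $\mad$ using $\mad_\sharp\mm=\mm'\times\mathcal L^1$ together with the two one-coordinate identities \eqref{eq:gradg}, \eqref{eq:gradh} and the polarization identity $|\nabla(u+v)|^2 = |\nabla u|^2 + 2\la\nabla u,\nabla v\ra + |\nabla v|^2$ in $W^{1,2}_{\rm loc}(X)$, deducing \eqref{eq:perp3} from the fact that $\int \la\nabla(g\circ\mad),\nabla(h\circ\mad)\ra_X\,\phi\,\d\mm = \int \la\nabla g,\nabla h\ra_{X'\times\R}\,(\phi\circ\mau)\,\d(\mm'\times\mathcal L^1) = 0$ for a suitable dense family of test functions $\phi$ — provided one first establishes, via the Leibniz-rule computation above on a bounded region, that such a change-of-variables identity for the pointwise scalar product holds. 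Making this change of variables rigorous (so that the $\mm$-a.e. value of $\la\nabla(g\circ\mad),\nabla(h\circ\mad)\ra_X$ equals the $\mm'\times\mathcal L^1$-a.e. value of $\la\nabla g,\nabla h\ra_{X'\times\R}$ composed with $\mad$) is the real content, and it is where one must carefully invoke \eqref{eq:gradg}--\eqref{eq:gradh} and the already-established product structure of the \emph{Sobolev} spaces rather than of the metric spaces.
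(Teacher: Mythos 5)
Your Step 1 (the orthogonality \eqref{eq:perp1} on $X'\times\R$) is essentially the paper's argument: apply Corollary~\ref{cor:sezioniprod} to $g+h$ to get $|\nabla(g+h)|^2_{X'\times\R}=|\nabla\tilde g|^2_{X'}+|\nabla\tilde h|^2_{\R}$, compare with $|\nabla g|^2_{X'\times\R}$ and $|\nabla h|^2_{X'\times\R}$ via \eqref{eq:gradprod}, and polarize. Your extra parameter $\eps$ is harmless but unnecessary.

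Step 2 is where there is a genuine gap, and you sense it yourself. Neither of your two routes works as written. The first ("Pythagorean at the level of weak upper gradients") needs the upper bound $|\nabla((g+\eps h)\circ\mad)|_X^2\le(|\nabla\tilde g|_{X'}\circ\pi)^2+\eps^2(|\nabla\tilde h|_{\R}\circ\b)^2$, which as you note is exactly the splitting of the $X$-gradient into horizontal and vertical parts — not available yet, since the whole chapter is devoted to proving $X$ splits. The second (transport \eqref{eq:perp1} through $\mad$) requires that the pointwise scalar product commutes with pullback by $\mad$, i.e. $\la\nabla(g\circ\mad),\nabla(h\circ\mad)\ra_X=\la\nabla g,\nabla h\ra_{X'\times\R}\circ\mad$ $\mm$-a.e. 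But \eqref{eq:gradg} and \eqref{eq:gradh} only tell you that $\mad$ preserves the weak upper gradient of each of $g$ and $h$ \emph{separately}; they give no control on $|\nabla((g+h)\circ\mad)|_X$, which is what polarization needs. Proving that pullback commutes with the scalar product on the full algebra is exactly Proposition~\ref{prop:astable}, which uses \eqref{eq:perp3} as an input — so this route is circular.

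The paper avoids the issue entirely with a short, direct argument that you should look for. Since $h\circ\mad=\tilde h\circ\b$, the chain rule \eqref{eq:chainhil} reduces \eqref{eq:perp3} to showing $\la\nabla(g\circ\mad),\nabla\b\ra_X=0$ $\mm$-a.e. And $g\circ\mad=\tilde g\circ\pi$ is constant along the flow lines of $\b$ (because $\pi\circ\bar\X_t=\pi$), so the difference quotients $\tfrac{(g\circ\mad)\circ\X_t-g\circ\mad}{t}$ vanish identically; Corollary~\ref{cor:derfiga}, i.e. identity \eqref{eq:horver}, then gives $\la\nabla(g\circ\mad),\nabla\b\ra_X=0$ directly (after a routine truncation since $g\circ\mad$ is only in $\s^2_{\rm loc}$). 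This is the key idea your proposal is missing: use the gradient flow structure of $\b$ and the derivation formula, not a change of variables through $\mad$.
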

\begin{proof}
Let $\tilde g\in \s^2(X')\cap L^\infty(X')$ and $\tilde h\in \s^2(\R)\cap L^\infty(\R)$ be such that $g(x',t)=\tilde g(x')$ and $h(x',t)=\tilde h(t)$.
By Corollary \ref{cor:sezioniprod} we know that
\[
|\nabla(g+h)|^2_{X'\times\R}(x',t)= |\nabla \tilde g|_{X'}^2(x')+ |\nabla \tilde h|_{\R}^2(t),\qquad\mm'\times\mathcal L^1\ae \ (x',t).
\]
Thus taking into account formula \eqref{eq:gradprod}  we get
\[
\begin{split}
2\la g,h\ra_{X'\times\R}=|\nabla(g+h)|^2_{X'\times\R}-|\nabla g|^2_{X'\times\R}-|\nabla h|^2_{X'\times\R}=0,\qquad \mm'\times\mathcal L^1\ae,
\end{split}
\]
which is \eqref{eq:perp1}.

We pass to \eqref{eq:perp3}. The chain rule \eqref{eq:chainhil} and the trivial identity $ h\circ\mad=\tilde h\circ\b$ yields
\[
\la \nabla ( g\circ\mad ),\nabla( h\circ\mad) \ra_X=\tilde h'\circ\b\la\nabla (g\circ\mad ),\nabla\b\ra_X,\qquad\mm\ae,
\]
hence to conclude  it is sufficient to show that 
\[
\la\nabla (g\circ\mad),\nabla\b\ra_X=0,\qquad\mm\ae.
\]
This identity is a consequence of the derivation rule   \eqref{eq:horver} applied to $f:=g\circ\mad$:  in this case the left hand side of \eqref{eq:horver} is identically 0 (acutally, in formula \eqref{eq:horver} the function $f$ is assumed to be in $\s^2(X)$, while here we only have $f\in \s^2_{\rm loc}(X)$ - the thesis is anyway true as shown by a simple truncation argument, we omit the details).
\end{proof}

\begin{proposition}\label{prop:astable} With the same notation as above, for every $f\in\mathcal A$ it holds $f\circ\mad\in \s^2_{\rm loc}(X,\sfd,\mm)$ with
\[
|\nabla f|_{X'\times\R}\circ\mad=|\nabla (f\circ\mad)|_{X},\qquad \mm\ae.
\]
\end{proposition}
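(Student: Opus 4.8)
The plan is to reduce the general element of $\mathcal A$ to a finite sum of products of functions from $\mathcal G$ and $\mathcal H$ and then exploit the orthogonality relations \eqref{eq:perp1} and \eqref{eq:perp3} together with the calculus rules for $\langle\nabla\cdot,\nabla\cdot\rangle$ available on infinitesimally Hilbertian spaces (both $X'\times\R$, which is infinitesimally Hilbertian by Theorem \ref{thm:prod} and Corollary \ref{cor:xp}, and $X$ by assumption). First I would observe that since $\mathcal G$ and $\mathcal H$ are algebras and every element of $\mathcal G$ or $\mathcal H$ is bounded, a generic $f\in\mathcal A$ can be written as $f=\sum_{i=1}^n g_ih_i$ with $g_i\in\mathcal G$, $h_i\in\mathcal H$; moreover $g_ih_i\in\s^2_{\rm loc}\cap L^\infty_{\rm loc}$ on both spaces, so all the Leibniz rules \eqref{eq:leibniz} apply and $f\circ\mad=\sum_i (g_i\circ\mad)(h_i\circ\mad)\in\s^2_{\rm loc}(X)$ by \eqref{eq:gradg}, \eqref{eq:gradh} and the algebra structure of $\s^2_{\rm loc}(X)\cap L^\infty_{\rm loc}(X)$.

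Next I would expand $|\nabla f|^2_{X'\times\R}=\langle\nabla f,\nabla f\rangle_{X'\times\R}$ using bilinearity \eqref{eq:lin}, \eqref{eq:simm} and the Leibniz rule \eqref{eq:leibniz}: writing $f=\sum_i g_ih_i$ one gets a sum of terms of the form $g_ig_j\langle\nabla h_i,\nabla h_j\rangle$, $h_ih_j\langle\nabla g_i,\nabla g_j\rangle$ and mixed terms $g_jh_i\langle\nabla h_j,\nabla g_i\rangle$, $g_ih_j\langle\nabla g_i,\nabla h_j\rangle$. The key point is that $\langle\nabla g_i,\nabla g_j\rangle_{X'\times\R}$ depends only on the $X'$-variable (since $g_i,g_j$ do), $\langle\nabla h_i,\nabla h_j\rangle_{X'\times\R}$ depends only on the $\R$-variable, and every mixed term $\langle\nabla g_i,\nabla h_j\rangle_{X'\times\R}$ vanishes by \eqref{eq:perp1}. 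Hence $|\nabla f|^2_{X'\times\R}$ equals a sum of products of functions of $x'$ and functions of $t$, evaluated via Corollary \ref{cor:sezioniprod}. Doing the identical expansion on $X$ for $f\circ\mad=\sum_i (g_i\circ\mad)(h_i\circ\mad)$, using \eqref{eq:perp3} to kill the mixed terms, I obtain $|\nabla(f\circ\mad)|^2_X$ as the same algebraic combination, now of the functions $\langle\nabla(g_i\circ\mad),\nabla(g_j\circ\mad)\rangle_X$ and $\langle\nabla(h_i\circ\mad),\nabla(h_j\circ\mad)\rangle_X$.

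It then remains to match the two expressions term by term. For the pure-$\mathcal G$ terms I need $\langle\nabla(g_i\circ\mad),\nabla(g_j\circ\mad)\rangle_X\circ$ (identified with) $\langle\nabla g_i,\nabla g_j\rangle_{X'\times\R}\circ\mad$; this follows by polarization from \eqref{eq:gradg} applied to $g_i$, $g_j$ and $g_i+g_j$ (using that $\mathcal G$ is a linear space), together with $\mad_\sharp\mm=\mm'\times\mathcal L^1$ from Proposition \ref{prop:prodmeas} to transfer the $\mm$-a.e. identities. Similarly polarization of \eqref{eq:gradh} handles the pure-$\mathcal H$ terms, and the mixed terms are absent on both sides. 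Finally the Leibniz-rule coefficients $g_ig_j$, $h_ih_j$, etc., correspond under $\mad$ exactly because $(g_ig_j)\circ\mad=(g_i\circ\mad)(g_j\circ\mad)$ and likewise for the $h$'s and for products of a $g$ with an $h$. Collecting everything gives $|\nabla f|^2_{X'\times\R}\circ\mad=|\nabla(f\circ\mad)|^2_X$ $\mm$-a.e.

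The main obstacle I anticipate is purely bookkeeping: carefully carrying out the bilinear expansion of $|\nabla(\sum g_ih_i)|^2$ on both spaces in parallel and verifying that every single term on the $X'\times\R$ side has an exact counterpart on the $X$ side, with the orthogonality relations \eqref{eq:perp1}, \eqref{eq:perp3} eliminating precisely the terms that have no counterpart. A secondary technical point is the justification that polarization of the a.e.-identities \eqref{eq:gradg}, \eqref{eq:gradh} is legitimate, i.e. that $\langle\nabla g_i,\nabla g_j\rangle$ can be recovered $\mm$-a.e. from $|\nabla g_i|^2$, $|\nabla g_j|^2$, $|\nabla(g_i+g_j)|^2$ on an infinitesimally Hilbertian space — but this is immediate from \eqref{eq:norm} and \eqref{eq:lin}. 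No genuinely new idea beyond the orthogonality lemma and the product-space calculus of Theorem \ref{thm:prod} should be needed.
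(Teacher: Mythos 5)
Your proposal is correct and follows essentially the same route as the paper: write $f=\sum_i g_ih_i$, expand $|\nabla f|^2$ by bilinearity and Leibniz, eliminate the mixed terms by the orthogonality relations \eqref{eq:perp1} and \eqref{eq:perp3}, and match the surviving pure-$\mathcal G$ and pure-$\mathcal H$ terms via (polarized) \eqref{eq:gradg} and \eqref{eq:gradh}. Your explicit remark that polarization is what upgrades the modulus identities in \eqref{eq:gradg}, \eqref{eq:gradh} to the inner-product identities is a welcome clarification of a step the paper leaves implicit.
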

\begin{proof}
A generic element $f$ of $\mathcal A$ can be written as $f=\sum_{i\in I}g_ih_i$ for some finite set $I$ of indexes and functions $g_i\in \mathcal G$, $h_i\in \mathcal H$, $i\in I$.

Pick such $f$ and use  the infinitesimal Hilbertianity of $X'\times\R$ (Theorem \ref{thm:prod}) to get that $\mm'\times\mathcal L^1$-a.e. it holds
\begin{equation}
\label{eq:stanchino}
\begin{split}
|\nabla f|^2_{X'\times\R}&=\sum_{i,j\in I} g_ig_j\la\nabla h_i,\nabla h_j\ra_{X'\times\R}+ g_ih_j\la\nabla h_i,\nabla g_j\ra_{X'\times\R}\\
&\qquad\qquad+ h_ig_j\la\nabla g_i,\nabla h_j\ra_{X'\times\R}+ h_ih_j\la\nabla g_i,\nabla g_j\ra_{X'\times\R}\\
&=\sum_{i,j\in I} g_ig_j\la\nabla h_i,\nabla h_j\ra_{X'\times\R}+h_ih_j\la\nabla g_i,\nabla g_j\ra_{X'\times\R},
\end{split}
\end{equation}
having used the orthogonality relation  \eqref{eq:perp1} in the second step. The identities in \eqref{eq:gradg}  and  \eqref{eq:gradh}  grant
\[
\begin{split}
\la\nabla h_i,\nabla h_j\ra_{X'\times\R}\circ\mad=\la\nabla (h_i\circ\mad),\nabla (h_j\circ\mad)\ra_{X},\\
\la\nabla g_i,\nabla g_j\ra_{X'\times\R}\circ\mad=\la\nabla (g_i\circ\mad),\nabla (g_j\circ\mad)\ra_{X},
\end{split}
\]
$\mm$-a.e. for any $i,j\in I$. Thus writing - to shorten the notation - $\bar g_i,\bar h_i$ in place of $g_i\circ\mad,h_i\circ\mad$ respectively, from \eqref{eq:stanchino} we have
\[
|\nabla f|^2_{X'\times\R}\circ\mad=\sum_{i,j\in I} \bar g_i\bar g_j\la\nabla\bar  h_i,\nabla \bar h_j\ra_{X'\times\R}+\bar h_i\bar h_j\la\nabla \bar g_i,\nabla\bar  g_j\ra_{X'\times\R}.
\]
Using  the orthogonality relation  \eqref{eq:perp3} and the fact that $X$ is infinitesimally Hilbertian we can do the same computations done in \eqref{eq:stanchino} in reverse order to get
\[
\begin{split}
|\nabla f|^2_{X'\times\R}\circ\mad&=\sum_{i,j\in I} \bar g_i\bar g_j\la\nabla\bar  h_i,\nabla\bar  h_j\ra_{X}+ \bar g_i\bar h_j\la\nabla\bar  h_i,\nabla\bar  g_j\ra_{X}\\
&\qquad+ \bar h_i\bar g_j\la\nabla\bar  g_i,\nabla \bar h_j\ra_{X}+ \bar h_i\bar h_j\la\nabla\bar  g_i,\nabla \bar g_j\ra_{X}=|\nabla (f\circ\mad)|_X^2,
\end{split}
\]
$\mm$-a.e., which is the thesis.
\end{proof}

\begin{proposition}\label{prop:approximation} With the same notation as above, the set $\mathcal A\cap W^{1,2}(X'\times\R)$ is dense in $W^{1,2}(X'\times\R)$.
\end{proposition}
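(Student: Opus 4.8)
The goal is to prove that $\mathcal A\cap W^{1,2}(X'\times\R)$ is dense in $W^{1,2}(X'\times\R)$. The plan is to reduce, via standard truncation/cut-off arguments, to approximating functions with bounded support, and then to exploit the product structure together with the fact that on $RCD(K,\infty)$ spaces one has a good description of $W^{1,2}$ of a product in terms of the factors (Theorem \ref{thm:prod}).

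First I would recall that $X'\times\R$ is an $RCD(0,N)$ (hence $RCD(K,\infty)$) space by Corollary \ref{cor:xp} and Theorem \ref{thm:prod}, so it is infinitesimally Hilbertian and in particular $W^{1,2}(X'\times\R)$ is a separable Hilbert space. By a cut-off argument (multiplying by $\nchi_n\circ\pi_{X'}$ and $\zeta_n\circ\pi_\R$ for suitable Lipschitz cutoffs, using \eqref{eq:leibbase} and the density of Lipschitz functions from Theorem \ref{thm:stronglip}) and a truncation argument, it suffices to approximate, in $W^{1,2}(X'\times\R)$, any bounded $f\in W^{1,2}(X'\times\R)$ with bounded support by elements of $\mathcal A$. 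The natural building blocks are tensor products $\tilde g(x')\tilde h(t)$ with $\tilde g\in\s^2(X')\cap L^\infty$ and $\tilde h\in\s^2(\R)\cap L^\infty$ — these are products of an element of $\mathcal G$ and an element of $\mathcal H$, hence lie in $\mathcal A$. So the crux is: finite linear combinations of such tensor products are dense in $W^{1,2}$ of the product.

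The key step is therefore a tensorization-of-Sobolev-spaces statement. I would argue as follows. Fix $f\in W^{1,2}(X'\times\R)$ bounded with bounded support. Using the heat semigroup $\h_s^{X'\times\R}$ one can first regularize: $\h_s f\to f$ in $W^{1,2}$ as $s\downarrow 0$ (by the spectral theorem, since $f\in W^{1,2}=D(\mathcal E^{1/2})$), so it is enough to approximate $\h_s f$ for fixed small $s$. Now the heat semigroup on the product factorizes, $\h_s^{X'\times\R}=\h_s^{X'}\otimes\h_s^{\R}$ (this is standard for the Dirichlet form of a product, and follows from Theorem \ref{thm:prod} identifying the form as the sum of the two partial forms). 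Expanding $\h_s^{\R}$ in the time variable against a countable orthonormal basis of $L^2(\R)$ — say using the Hermite functions $\{e_k\}$, which are smooth, lie in $W^{1,2}(\R)\cap L^\infty$ up to truncation, and diagonalize a related operator — one writes $\h_s f(x',t)=\sum_k \big(\h_{s/2}^{X'}G_k\big)(x')\,\big(\h_{s/2}^\R e_k\big)(t)$ for appropriate $L^2$ coefficients $G_k$; the point is that $\h_{s/2}^{X'}G_k\in W^{1,2}(X')$ and $\h_{s/2}^\R e_k\in W^{1,2}(\R)$, and the partial sums converge to $\h_s f$ in $W^{1,2}(X'\times\R)$ because the two factors of the form act on the two variables separately and the series converges in both the $L^2$ norm and in the energy norm (using $\h_{s/2}$ to control the $W^{1,2}$ norms via \eqref{eq:l2s2}). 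After truncating the factors to make them bounded — which costs an arbitrarily small error in $W^{1,2}$ by dominated convergence and \eqref{eq:chainbase} — each partial sum lies in $\mathcal A$. This yields the density.

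The main obstacle I anticipate is making the tensorization of the heat semigroup and the convergence of the series expansion rigorous purely from the input we are allowed to use, namely Theorem \ref{thm:prod} (which is an $L^2$-type statement about $W^{1,2}$, not explicitly about semigroups). One must carefully check that the Dirichlet form of $X'\times\R$ is the form-sum of the partial forms — this is exactly the content of the Cheeger-energy identity \eqref{eq:gradprod} of Theorem \ref{thm:prod} — and then invoke the standard fact that the semigroup of a form-sum of commuting forms is the (tensor) product of the semigroups, together with convergence of the eigenfunction expansion in the graph norm of the generator. An alternative, possibly cleaner route that avoids semigroups is a direct Stone--Weierstrass-type argument: $\mathcal A$ is an algebra containing constants and separating points of $\supp(\mm')\times\R$, so it is uniformly dense in $C_b$; combined with \eqref{eq:gradprod}, the orthogonality relations, and a mollification of the $\R$-variable, one upgrades uniform density to $W^{1,2}$-density. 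Either way the heart of the matter is the same: reducing $W^{1,2}$ of the product to sums of tensor products, which is precisely what Theorem \ref{thm:prod} is designed to give us, so the remaining work is bookkeeping with cut-offs and the calculus rules \eqref{eq:leibbase}, \eqref{eq:chainbase}.
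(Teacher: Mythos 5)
Your heat-semigroup route is a genuinely different argument from the paper's, and it can be made to work, though one piece of the framing is off. The paper proceeds by an entirely explicit construction: it averages $f$ in the $t$-variable over dyadic intervals of length $1/n$ to produce slice functions $g_{i,n}\in \s^2(X')$, interpolates them with the piecewise-linear hat functions $h_{i,n}$, and shows by direct Jensen/Fubini estimates that the resulting $f_n=\sum_i h_{i,n}g_{i,n}\in\mathcal A$ converge to $f$ in $L^2$ with $\||\nabla f_n|\|_{L^2}\le\||\nabla f|\|_{L^2}$, whence $W^{1,2}$-convergence by uniform convexity. Your route instead regularizes by $\h_s$, uses the tensorization $\h_s^{X'\times\R}=\h_s^{X'}\otimes\h_s^{\R}$ (which indeed follows from the form-sum identity \eqref{eq:gradprod}), expands against a countable basis of $L^2(\R)$, and lets the $L^2\to W^{1,2}$ boundedness of $\h_s$ (from \eqref{eq:l2contr}, \eqref{eq:l2s2}) upgrade the $L^2$-convergence of partial sums to $W^{1,2}$-convergence. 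That core mechanism is sound: if $f=\sum_k f_k\otimes e_k$ in $L^2$, then $\h_s(\sum_{k\le N}f_k\otimes e_k)\to\h_s f$ in $W^{1,2}$, each term $(\h_s^{X'}f_k)\otimes(\h_s^\R e_k)$ lies in $\mathcal G\cdot\mathcal H\subset\mathcal A$ after truncation, and $\h_s f\to f$ in $W^{1,2}$ as $s\downarrow0$. The wrinkle is your invocation of Hermite functions as eigenfunctions of the relevant operator: the Laplacian on $(\R,\sfd_{\rm Eucl},\mathcal L^1)$ has purely continuous spectrum and no $L^2$-eigenbasis (Hermite functions diagonalize the Ornstein--Uhlenbeck operator, which lives on a Gaussian measure). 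Fortunately no eigenfunction structure is needed: any orthonormal basis $\{e_k\}$ of $L^2(\R)$ consisting of bounded functions does the job, since the convergence of partial sums is driven by linearity and boundedness of $\h_s$, not by diagonalization. Your Stone--Weierstrass alternative, on the other hand, does have a real gap: uniform density in $C_b$ gives no control on gradients, and the phrase ``mollification of the $\R$-variable'' is exactly where the actual work lies --- spelled out, it essentially becomes the paper's explicit construction. Comparing the two: the paper's argument is more elementary and self-contained (no semigroup, no spectral facts, only integral inequalities), which matters in a paper trying to keep track of precisely what structural assumptions are used; your argument is conceptually shorter once the semigroup tensorization is granted and would generalize more readily to products of two abstract factors, at the cost of importing heat-flow machinery into a step that can be done by hand.
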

\begin{proof} With a diagonalization argument it is sufficient to prove that for $f\in W^{1,2}(X'\times\R)$ bounded with compact support there exists a sequence $(f_n)\subset\mathcal A\cap W^{1,2}(X'\times\R)$ converging to $f$ in $W^{1,2}(X'\times\R)$. Fix such $f$ and for $n\in\N$ and $i\in \Z$ define
\[
g_{i,n}(x'):=n\int_{i/n}^{(i+1)/n}f(x',s)\,\d s,
\]
and
\[
h_{i,n}(t):=\nchi_n(t-i/n),
\]
where $\nchi_n:\R\to\R$ is given by
\[
\nchi_n(t):=\left\{\begin{array}{ll}
0,&\qquad\textrm{ if }t<-1/n,\\
nt+1,&\qquad\textrm{ if }-1/n\leq t< 0,\\
1-nt,&\qquad\textrm{ if }0\leq t<1/n,\\
0,&\qquad\textrm{ if }1/n<t.\\
\end{array}
\right.
\]
Then define $f_n:X'\times\R\to\R$ by
\begin{equation}
\label{eq:deffn}
f_n(x',t):=\sum_{i\in \Z}h_{i,n}(t)g_{i,n}(x').
\end{equation}
We claim that $f_n\in \mathcal A\cap W^{1,2}(X'\times\R)$. Indeed, given that the support of $f$ is compact only a finite number of terms in the the right hand side of \eqref{eq:deffn} is different from 0. By construction it is also obvious that $h_{i,n}$ and $g_{i,n}$ are both bounded with compact support and that $h_{i,n}$ is Lipschitz.  By Theorem \ref{thm:prod} we know that $f^{(t)}\in\s^2(X')$ for a.e. $t$ thus by  the convexity and $L^2$-lower semicontinuity of the $W^{1,2}$-norm (by the lower semicontinuity of minimal weak upper gradients) we get $g_{i,n}\in\s^2(X')$ with
\begin{equation}
\label{eq:venti}
\int_{X'}|\nabla g_{i,n}|^2_{X'}\,\d\mm'\leq n\int_{X'}\int_{i/n}^{(i+1)/n}|\nabla f^{(t)}|^2_{X'}\,\d t\,\d\mm'.
\end{equation}
Thus $f_n\in \mathcal A\cap W^{1,2}(X'\times\R)$, as claimed.

Now we claim that $f_n\to f$ in $L^{2}(X'\times\R)$ as $n\to\infty$. Integrating the inequality
\[
\begin{split}
\big(f_n(x',t)\big)^2&=\left(\sum_{i\in \Z}h_{i,n}(t)g_{i,n}(x')\right)^2\\
&\leq\sum_{i\in \Z}h_{i,n}(t)\big(g_{i,n}(x')\big)^2\leq \sum_{i\in \Z}h_{i,n}(t)n\int_{i/n}^{(i+1)/n}f^2(x',s)\,\d s,
\end{split}
\]
on $x'$ and $t$ we obtain $\|f_n\|_{L^2(X'\times\R)}\leq \|f\|_{L^2(X'\times\R)}$, $\forall n\in\N$.

Hence $L^2$-convergence will follow if we show that
\begin{equation}
\label{eq:weakl2}
\lim_{n\to\infty }\int \varphi f_n \,\d\mm'\,\d \mathcal L^1=\int \varphi f\,\d\mm'\,\d \mathcal L^1,
\end{equation}
for any $\varphi:X'\times\R\to\R$ Lipschitz with compact support. To check this start observing that 
\begin{equation}
\label{eq:perapp}
\begin{split}
\varphi(x',t)  f_n(x',t)&=\sum_{i\in\Z} \varphi(x',t)h_{i,n}(t)g_{i,n}(x')=n\sum_{i\in\Z} \varphi(x',t)h_{i,n}(t)\int_{\frac in}^{\frac{i+1}{n}}f(x',s)\,\d s\\
&=n\sum_{i\in\Z} h_{i,n}(t)\int_{\frac in}^{\frac{i+1}{n}}\varphi(x',s)f(x',s)\,\d s+{\rm Rem}_{n}(x',t),
\end{split}
\end{equation}
where the reminder term ${\rm Rem}_n(x',t)$ is bounded by
\begin{equation}
\label{eq:remapp}
\begin{split}
\Big|{\rm Rem}_{n}(x',t)\Big|&=\left|n\sum_{i\in\Z} h_{n,i}(t)\int_{\frac in}^{\frac{i+1}{n}}(\varphi(x',t)-\varphi(x',s))f(x',s)\,\d s\right|\\
&\leq n\Lip(\varphi)\sum_{i\in\Z} h_{n,i}(t)\int_{\frac in}^{\frac{i+1}{n}}|t-s|\,|f|(x',s)\,\d s\\
&\leq \Lip(\varphi)\sum_{i\in\Z} h_{n,i}(t)\int_{\frac in}^{\frac{i+1}{n}}|f|(x',s)\,\d s.
\end{split}
\end{equation}
Now integrate \eqref{eq:perapp} and use  the identity 
\[
\int_{X'\times\R} n\sum_{i\in\Z} h_{n,i}(t)\int_{\frac in}^{\frac{i+1}{n}}\varphi(x',s)f(x',s)\,\d s\,\d t\,\d \mm'(x')=\int_{X'\times\R}\varphi(x',s)f(x',s)\,\d s\,\d\mm'(x),
\]
to get
\[
\begin{split}
\int_{X'\times\R}\varphi(x',s) f_n(x',s)\,\d s\,\d\mm'(x)=&\int_{X'\times\R}\varphi(x',s)f(x',s)\,\d s\,\d\mm'(x)\\
&+\int_{X'\times\R}{\rm Rem}_n(x',t)\,\d t\,\d\mm'(x').
\end{split}
\]
From \eqref{eq:remapp} we obtain
\[
\left|\int_{X'\times\R}{\rm Rem}_n(x',t)\,\d t\,\d\mm'(x')\right|\leq \int_{X'\times\R}|{\rm Rem}_n(x',t)|\,\d t\,\d\mm'(x')\leq\frac{2\Lip(\varphi)\|f\|_{L^1(X'\times\R)}}{n},
\]
hence \eqref{eq:weakl2} follows.

Taking into account the $L^2$-lower semicontinuity of the $W^{1,2}$-norm again  and the uniform convexity of $W^{1,2}(X'\times\R)$ (consequence of the infinitesimal Hilbertianity stated in Theorem \ref{thm:prod}), to conclude it is sufficient to show that 
\begin{equation}
\label{eq:perfinirew12}
\int |\nabla  f_n|^2_{X'\times \R}\,\d\mathcal L^1\,\d\mm'\leq\int |\nabla  f|^2_{X'\times \R}\,\d\mathcal L^1\,\d\mm',\qquad\forall n\in\N.
\end{equation}
Notice that for $\mathcal L^1$-a.e. $t\in [i/n,(i+1)/n]$ the function $f_n^{(t)}:X'\to\R$ is $\mm'$-a.e. well defined and given by the expression
\begin{equation}
\label{eq:expr1}
f_n^{(t)}=(1+i-nt)g_{i,n}+(nt-i)g_{i+1,n},
\end{equation}
therefore by Theorem \ref{thm:prod} we know that $f^{(t)}_n\in W^{1,2}(X')$ for $\mathcal L^1$-a.e. $t$, and therefore from \eqref{eq:expr1} we get the bound
\[
\begin{split}
|\nabla f_n^{(t)} |_{X'}^2&\leq \big((1+i-nt)|\nabla g_{i,n}|_{X'}+(nt-i)|\nabla g_{i+1,n}|_{X'}\big)^2\\
&\leq(1+i-nt)|\nabla g_{i,n}|_{X'}^2+(nt-i)|\nabla g_{i+1,n}|_{X'}^2,
\end{split}
\]
which, together with \eqref{eq:venti}, gives
\begin{equation}
\label{eq:gradhor}
\begin{split}
\int_{X'\times\R}|\nabla f_n^{(t)} |_{X'}^2(x')\,\d(\mm'\times\mathcal L^1)(x',t)&\leq \frac1n\sum_{i\in\Z}\int_{X'}|\nabla g_{i,n}|_{X'}^2(x')\,\d\mm'( x')\\
&\leq \sum_{i\in\Z}\int_{X'}\int_{i/n}^{(i+1)/n}|\nabla f^{(t)}|_{X'}^2(x')\,\d t\,\d\mm'(x')\\
&=\int_{X'\times\R}|\nabla f^{(t)}|^2_{X'}(x')\,\d(\mm'\times\mathcal L^1)(x',t).
\end{split}
\end{equation}
Similarly, for $\mm'$-a.e. $x'\in X'$ the function $f_n^{(x')}:\R\to\R$ is $\mathcal L^1$-a.e. well defined and given by
\begin{equation}
\label{eq:expr2}
f_n^{(x')}(t)=(1+i-nt)g_{i,n}(x')+(nt-i)g_{i+1,n}(x'),\qquad\mathcal L^1\ae \  t\in[i/n,(i+1)/n].
\end{equation}
Arguing as before we get that $f_n^{(x')}\in W^{1,2}(\R)$ for $\mm'$-a.e. $x'$, so that \eqref{eq:expr2} gives
\[
\begin{split}
\int_{i/n}^{(i+1)/n}|\nabla f_n^{(x')}|^2_{\R}(t)\,\d t&=\int_{i/n}^{(i+1)/n}n^2\big( g_{i+1,n}(x')-g_{i,n}(x')\big)^2\,\d t\\
&=n\big( g_{i+1,n}(x')-g_{i,n}(x')\big)^2\\
&=n^3\left(\int_{(i+1)/n}^{(i+2)/n}f(x',t)\,\d t-\int_{i/n}^{(i+1)/n}f(x',t)\,\d t\right)^2\\
&=n^3\left(\int_{i/n}^{(i+1)/n}f^{(x')}(t+1/n)-f^{(x')}(t)\,\d t\right)^2\\
&\leq n^3\left(\int_{i/n}^{(i+1)/n}\int_t^{t+1/n}|\nabla f^{(x')}|_\R(s)\,\d s\,\d t\right)^2\\
&\leq n\int_{i/n}^{(i+1)/n}\int_t^{t+1/n}|\nabla f^{(x')}|_\R^2(s)\,\d s\,\d t,
\end{split}
\]
which after integration yields
\[
\int_{X'\times\R}|\nabla f_n^{(x')}|^2_{\R}(t)\,\d t\,\d\mm'(x') \leq \int_{X'\times\R}|\nabla f^{(x')}|^2_{\R}(t)\,\d t\,\d\mm'(x').
\]
This inequality, \eqref{eq:gradhor} and Theorem \eqref{thm:prod} give \eqref{eq:perfinirew12} and thus the conclusion.
\end{proof}

\begin{proposition}\label{prop:almost} With the same notation as above, it holds $f\in W^{1,2}(X'\times\R)$ if and only if $f\circ\mad\in W^{1,2}(X)$ and in this case it holds
\begin{equation}
\label{eq:almost1}
\frac1{\sqrt2}\||\nabla f|\|_{L^2(X'\times\R)}\leq\||\nabla(f\circ\mad)|\|_{L^2(X)}\leq\sqrt 2\||\nabla f|\|_{L^2(X'\times\R)}.
\end{equation}
\end{proposition}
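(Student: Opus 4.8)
\textbf{Proof proposal for Proposition \ref{prop:almost}.}

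The plan is to combine the three preparatory facts already established---the pointwise gradient identity on the algebra $\mathcal A$ (Proposition \ref{prop:astable}), the density of $\mathcal A\cap W^{1,2}(X'\times\R)$ in $W^{1,2}(X'\times\R)$ (Proposition \ref{prop:approximation}), and the bi-Lipschitz equivalence of the two distances coming from Proposition \ref{prop:homeo}, together with the measure-preservation $\mad_\sharp\mm=\mm'\times\mathcal L^1$ from Proposition \ref{prop:prodmeas}. First I would observe that since $\mau,\mad$ are bi-Lipschitz homeomorphisms (with constant $\sqrt2$) and measure-preserving, the right composition with $\mad$ sends $L^2(X'\times\R)$ isometrically onto $L^2(X)$, and moreover it is automatic that $f\in W^{1,2}(X'\times\R)$ implies $f\circ\mad\in W^{1,2}(X)$ with the two-sided bound \eqref{eq:almost1} replaced by the crude bi-Lipschitz estimate $\frac1{\sqrt2}\lip(f)\le\lip(f\circ\mad)\le\sqrt2\lip(f)$ on local Lipschitz constants---this already gives one implication with (non-optimal) constants via Theorem \ref{thm:energylip}, but in fact we will get the sharp constant $1$ for the gradient, not $\sqrt2$; the statement of the proposition is deliberately non-sharp because only a homeomorphism between the Sobolev spaces is needed.

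The core step is to upgrade Proposition \ref{prop:astable} from $\mathcal A$ to all of $W^{1,2}(X'\times\R)$. Given $f\in W^{1,2}(X'\times\R)$, use Proposition \ref{prop:approximation} to pick $(f_n)\subset\mathcal A\cap W^{1,2}(X'\times\R)$ with $f_n\to f$ in $W^{1,2}(X'\times\R)$, so in particular $|\nabla f_n|_{X'\times\R}\to|\nabla f|_{X'\times\R}$ in $L^2(X'\times\R)$ and (up to a subsequence) $\mm'\times\mathcal L^1$-a.e. By Proposition \ref{prop:astable}, $|\nabla(f_n\circ\mad)|_X=|\nabla f_n|_{X'\times\R}\circ\mad$ $\mm$-a.e., and since $\mad_\sharp\mm=\mm'\times\mathcal L^1$ the right-hand sides form an $L^2(X)$-Cauchy sequence converging to $|\nabla f|_{X'\times\R}\circ\mad$; also $f_n\circ\mad\to f\circ\mad$ in $L^2(X)$. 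The lower semicontinuity of minimal weak upper gradients (stated after Definition \ref{def:parigi}) then gives $f\circ\mad\in\s^2_{\rm loc}(X)$ with $|\nabla(f\circ\mad)|_X\le|\nabla f|_{X'\times\R}\circ\mad$ $\mm$-a.e.; since also the sequence $|\nabla(f_n\circ\mad)|_X$ converges in $L^2$ to exactly $|\nabla f|_{X'\times\R}\circ\mad$, a standard argument (e.g. testing against the $L^2$-limit, or applying the lower semicontinuity to $f-f_n$) forces the equality
\[
|\nabla(f\circ\mad)|_X=|\nabla f|_{X'\times\R}\circ\mad,\qquad\mm\ae.
\]
Combined with $\mad_\sharp\mm=\mm'\times\mathcal L^1$ this gives $\||\nabla(f\circ\mad)|\|_{L^2(X)}=\||\nabla f|\|_{L^2(X'\times\R)}$, which is stronger than \eqref{eq:almost1}; in particular $f\circ\mad\in W^{1,2}(X)$.

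For the converse implication one runs the same argument with the roles of $\mau$ and $\mad$ exchanged: given $h\in W^{1,2}(X)$, note $h=(h\circ\mau)\circ\mad$, so it suffices to show $h\circ\mau\in W^{1,2}(X'\times\R)$ whenever $h\in W^{1,2}(X)$. Here I would either (a) repeat the approximation scheme, approximating $h$ by Lipschitz functions on $X$ via Theorem \ref{thm:stronglip} and transporting them through $\mau$ using the bi-Lipschitz bound of Proposition \ref{prop:homeo} to control $\lip_{X'\times\R}(h_n\circ\mau)\le\sqrt2\,\lip_X(h_n)$, then passing to the limit with lower semicontinuity; or (b) simply observe that once the first implication is known to hold as an isometry onto its image, and the crude bi-Lipschitz argument shows $h\circ\mau$ is at least in $W^{1,2}_{\rm loc}$ with $L^2$ gradient, the identity $|\nabla(h\circ\mau)|_{X'\times\R}\circ\mad=|\nabla((h\circ\mau)\circ\mad)|_X=|\nabla h|_X$ combined with measure preservation closes the loop. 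Either route then yields \eqref{eq:almost1} (in fact with both constants equal to $1$). The main obstacle is purely bookkeeping: making sure the a.e.-convergence subsequences for $f_n$ on $X'\times\R$ transfer correctly to $X$ under $\mad$ (which is clean because $\mad$ is a measure-preserving homeomorphism) and that the lower-semicontinuity argument is applied to genuinely local (truncated, cut-off) versions so that the $\s^2_{\rm loc}$ rather than $\s^2$ hypotheses suffice; none of this presents a real difficulty given the tools already assembled.
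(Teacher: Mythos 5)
Your proposal is correct, but it does substantially more work than the paper's own proof and, in doing so, blurs the logical structure that the paper deliberately keeps separate. The paper's proof of this proposition is exactly your first paragraph and nothing else: the bi-Lipschitz bounds \eqref{eq:bilip} from Proposition \ref{prop:homeo}, the measure-preservation $\mad_\sharp\mm=\mm'\times\mathcal L^1$ from Proposition \ref{prop:prodmeas}, the scaling behavior of the gradient part of the Sobolev norm under a rescaling of the distance, and (for passing from a Lipschitz-constant estimate to a minimal-weak-upper-gradient estimate) Lemma \ref{le:contrdual} in its easy direction $(i)\Rightarrow(ii)$. That gives the two-sided bound with constant $\sqrt2$ and no more. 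Propositions \ref{prop:astable} and \ref{prop:approximation} are not used here; they are the ingredients of Theorem \ref{thm:pitagora}, which is where the sharp isometry $\||\nabla(f\circ\mad)|\|_{L^2(X)}=\||\nabla f|\|_{L^2(X'\times\R)}$ is actually established, \emph{using} Proposition \ref{prop:almost} as input.

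Your second and third paragraphs therefore reproduce the content of Theorem \ref{thm:pitagora} rather than of Proposition \ref{prop:almost}. That is not wrong in itself---the combined statement is stronger---but one small logical point in that extra argument deserves to be made explicit. Lower semicontinuity of minimal weak upper gradients, applied to $f_n\circ\mad\to f\circ\mad$ with $|\nabla(f_n\circ\mad)|_X=|\nabla f_n|_{X'\times\R}\circ\mad\to|\nabla f|_{X'\times\R}\circ\mad$ in $L^2(X)$, gives only the one-sided inequality $|\nabla(f\circ\mad)|_X\leq|\nabla f|_{X'\times\R}\circ\mad$. Neither ``testing against the $L^2$-limit'' nor ``applying lower semicontinuity to $f-f_n$'' closes this gap on its own: to get the reverse inequality you must first know that $f_n\circ\mad\to f\circ\mad$ in $W^{1,2}(X)$ (i.e. that $\||\nabla((f_n-f)\circ\mad)|\|_{L^2(X)}\to0$), and the only available tool for that is precisely the crude $\sqrt2$-bound which you establish in paragraph one---that is, Proposition \ref{prop:almost} itself. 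Once this dependency is stated, the uniqueness of $L^2$-limits gives the equality of gradients, exactly as in the paper's proof of Theorem \ref{thm:pitagora}. So your proof is logically sound provided you read paragraph one as the proof of the proposition and paragraphs two/three as a proof of the subsequent theorem that uses it; as written, the seam between the two is not explicit.

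One minor remark: you invoke Theorem \ref{thm:energylip} to pass from the Lipschitz-constant bound $\lip(f\circ\mad)\le\sqrt2\,\lip(f)\circ\mad$ to the weak-gradient bound. This works (approximate in energy, use $\weakgrad{\cdot}\le\lip(\cdot)$, pass to the limit by lower semicontinuity), and is essentially the same mechanism as the easy direction of Lemma \ref{le:contrdual}, which is what the paper cites. Both are fine.
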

\begin{proof} Direct consequence of Lemma \ref{le:contrdual}, the identity   $\mau_\sharp(\mm'\times\mathcal L^1)=\mm$, inequalities \eqref{eq:bilip} and   the fact that if a distance is scaled by a factor $\lambda$, the corresponding gradient part of the Sobolev norm is scaled by $\frac1\lambda$, which is a direct consequence of the definitions.
\end{proof}
The main theorem of this chapter now follows easily.
\begin{theorem}[``Pythagoras' theorem'' holds]\label{thm:pitagora} With the same notation and assumptions as in \eqref{eq:not}, the maps $\mau,\mad$ are isomorphisms of metric measure spaces.
\end{theorem}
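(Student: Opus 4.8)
The plan is to deduce Theorem~\ref{thm:pitagora} from the duality principle of Proposition~\ref{prop:isom} applied to the map $\mad:\supp(\mm)\to X'\times\R$. By Proposition~\ref{prop:prodmeas} we already know $\mad_\sharp\mm=\mm'\times\mathcal L^1$, and by Proposition~\ref{prop:homeo} $\mad$ is a homeomorphism with inverse $\mau$, so in particular the left-invertibility condition $(ii$-$a)$ of Proposition~\ref{prop:isom} is satisfied (take $S:=\mad^{-1}=\mau$, with empty negligible set). Moreover both $\mm$ and $\mm'\times\mathcal L^1$ give finite mass to bounded sets: for $\mm$ this is part of the hypotheses (it is a $CD(0,N)$ space, hence $\mm$ is Radon and by Bishop--Gromov has polynomial growth), and for $\mm'\times\mathcal L^1$ it follows from $\mau_\sharp(\mm'\times\mathcal L^1)=\mm$ together with the bi-Lipschitz bounds \eqref{eq:bilip}. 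Finally, $(X,\sfd,\mm)$ has the Sobolev-to-Lipschitz property by Theorem~\ref{thm:link2}, and $(X'\times\R,\sfd'\times\sfd_{\rm Eucl},\mm'\times\mathcal L^1)$ has it by Theorem~\ref{thm:prod} (since $X'$ is $RCD(0,\infty)$ by Corollary~\ref{cor:xp} and $\R$ trivially is, so the product is $RCD(0,\infty)$). Therefore the only thing left to check is condition $(ii$-$b)$: that right composition with $\mad$ is a bijective isometry of $W^{1,2}(X'\times\R)$ onto $W^{1,2}(X)$.

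To establish $(ii$-$b)$, I would first record from Proposition~\ref{prop:almost} that $f\in W^{1,2}(X'\times\R)$ if and only if $f\circ\mad\in W^{1,2}(X)$, so the two Sobolev spaces correspond bijectively under right composition; it remains only to prove the norm identity $\|f\|_{W^{1,2}(X'\times\R)}=\|f\circ\mad\|_{W^{1,2}(X)}$. The $L^2$ part is immediate from $\mad_\sharp\mm=\mm'\times\mathcal L^1$. For the gradient part, the key is to upgrade the pointwise identity $|\nabla f|_{X'\times\R}\circ\mad=|\nabla(f\circ\mad)|_X$ from the algebra $\mathcal A$ (where it holds by Proposition~\ref{prop:astable}) to all of $W^{1,2}(X'\times\R)$. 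This is where Proposition~\ref{prop:approximation} enters: given $f\in W^{1,2}(X'\times\R)$, pick a sequence $(f_n)\subset\mathcal A\cap W^{1,2}(X'\times\R)$ with $f_n\to f$ in $W^{1,2}(X'\times\R)$; then by Proposition~\ref{prop:almost} applied to $f_n-f_m$ the sequence $(f_n\circ\mad)$ is Cauchy in $W^{1,2}(X)$, hence converges to some $\tilde f\in W^{1,2}(X)$, which must coincide $\mm$-a.e.\ with $f\circ\mad$ (by $L^2$ convergence and $\mad_\sharp\mm=\mm'\times\mathcal L^1$). Passing to the limit in the identity $\||\nabla f_n|\|_{L^2(X'\times\R)}=\||\nabla(f_n\circ\mad)|\|_{L^2(X)}$ (valid for each $n$ since $f_n\in\mathcal A$, by Proposition~\ref{prop:astable}) gives $\||\nabla f|\|_{L^2(X'\times\R)}=\||\nabla(f\circ\mad)|\|_{L^2(X)}$, which is exactly what is needed. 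Combining this with the $L^2$ identity yields $(ii$-$b)$.

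With all hypotheses of Proposition~\ref{prop:isom} verified for $T:=\mad$, that proposition yields that $\mad$ is, up to a modification on an $\mm$-negligible set, an isomorphism of metric measure spaces: $\mad_\sharp\mm=\mm'\times\mathcal L^1$ and $\sfd'\times\sfd_{\rm Eucl}(\mad(x),\mad(y))=\sfd(x,y)$ for all $x,y\in\supp(\mm)$. But $\mad$ is already a homeomorphism (Proposition~\ref{prop:homeo}), so no modification is actually needed and the distance identity holds literally everywhere on $\supp(\mm)$; equivalently, $\sfd(\mau(x',t),\mau(y',s))^2=\sfd'(x',y')^2+|t-s|^2$ for all $x',y'\in X'$ and $t,s\in\R$. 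Since $\mau=\mad^{-1}$, this shows $\mau$ and $\mad$ are isomorphisms of metric measure spaces, which is the assertion of Theorem~\ref{thm:pitagora}.

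I expect the main obstacle to be purely bookkeeping rather than conceptual: one must carefully check that the approximation step interacts correctly with the pointwise gradient identity, in particular that the limit $\tilde f$ obtained in $W^{1,2}(X)$ genuinely equals $f\circ\mad$ and not merely agrees with it on a set of full measure in a way that could a priori differ from the honest composition—here the homeomorphism property of $\mad$ and the fact that $\mm$ has full support in $\supp(\mm)$ are what make the identification clean. A secondary point requiring a line of justification is the finiteness of $\mm'\times\mathcal L^1$ on bounded sets and the Sobolev-to-Lipschitz property of the product, but both are direct citations (to \eqref{eq:bilip} and Corollary~\ref{cor:xp}, and to Theorem~\ref{thm:prod}, respectively). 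No genuinely hard new estimate is needed; the substance has already been done in Propositions~\ref{prop:astable}, \ref{prop:approximation}, \ref{prop:almost} and Corollary~\ref{cor:xp}.
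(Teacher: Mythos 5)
Your proof is correct and follows essentially the same route as the paper: both apply Proposition~\ref{prop:isom} to $\mad$, with measure preservation from Proposition~\ref{prop:prodmeas}, Sobolev-to-Lipschitz from Theorems~\ref{thm:link2} and~\ref{thm:prod}, and the $W^{1,2}$ isometry obtained by combining Proposition~\ref{prop:approximation} (density of $\mathcal A$), Proposition~\ref{prop:astable} (pointwise gradient identity on $\mathcal A$), and Proposition~\ref{prop:almost} (which makes passing to the limit legitimate). The only difference is that you spell out the verification of the finite-mass-on-bounded-sets hypothesis and the left-invertibility condition $(ii$-$a)$, which the paper leaves implicit as obvious.
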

\begin{proof} We know by Theorems \ref{thm:link2} and \ref{thm:prod} that both $X$ and $X'\times\R$ have the Sobolev-to-Lipschitz property and thus to conclude  we can  apply  Proposition \ref{prop:isom}. We also already know that  $\mau,\mad$ are measure preserving.  To conclude it is therefore sufficient to prove that   $f\in W^{1,2}(X'\times\R)$ if and only if $f\circ\mad\in W^{1,2}(X)$ and in this case it holds
\begin{equation}
\label{eq:splitting}
\||\nabla (f\circ\mad)|_X\|_{L^{2}(X)}=\||\nabla f|_{X'\times\R}\|_{L^{2}(X'\times\R)}.
\end{equation}
Pick  $f\in W^{1,2}(X'\times\R)$ and notice that by Proposition \ref{prop:approximation} we know that there exists a sequence $(f_n)\subset \mathcal A\cap W^{1,2}(X'\times\R)$ converging to $f$ in $W^{1,2}(X'\times\R)$ and the second inequality in \eqref{eq:almost1} yields that $f_n\circ \mad,f\circ\mad\in W^{1,2}(X)$ with  $(f_n\circ \mad)$ converging to $f\circ\mad$ in $W^{1,2}(X)$.

Proposition \ref{prop:astable} tells that
\[
|\nabla f_n|_{X'\times\R}\circ\mad=|\nabla ( f_n\circ\mad)|_X,\qquad\mm\ae,
\]
hence squaring, integrating and passing to the limit as $n\to\infty$ we get \eqref{eq:splitting}. Viceversa, if $f:X'\times\R$ is such that $f\circ\mad\in W^{1,2}(X)$, the first inequality in \eqref{eq:almost1} grants that $f\in W^{1,2}(X'\times\R)$ and the above argument can be repeated.
\end{proof}

\chapter{The quotient space has dimension $N-1$}\label{se:dim}
\section{Preliminary notions}
We recall the following basic result about Hausdorff dimension on $CD(0,N)$ spaces:
\begin{proposition}\label{prop:hausd}
Let $(X,\sfd,\mm)$ be a $CD(0,N)$ space. Then the Hausdorff dimension of\linebreak $(\supp(\mm),\sfd)$ is bounded above by  $N$.
\end{proposition}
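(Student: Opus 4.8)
The plan is to combine the Bishop--Gromov volume monotonicity of Proposition \ref{prop:BG} with the standard density-comparison argument that turns a lower bound on the $N$-dimensional density of $\mm$ into an upper bound on the Hausdorff dimension of its support.

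First I would fix a point $x_0\in\supp(\mm)$ and a radius $R>0$ and establish a uniform lower density estimate on $B_R(x_0)$. If $x\in B_R(x_0)$ then the triangle inequality gives $B_R(x_0)\subset B_{2R}(x)$, so using the monotonicity \eqref{eq:bgvol} of $r\mapsto \mm(B_r(x))/r^{N}$ we get, for every $0<r\le R$,
\[
\frac{\mm(B_r(x))}{r^{N}}\ \ge\ \frac{\mm(B_{2R}(x))}{(2R)^{N}}\ \ge\ \frac{\mm(B_R(x_0))}{(2R)^{N}}\ =:\ c_0\ >\ 0 ,
\]
where $c_0>0$ because $x_0\in\supp(\mm)$, and $\mm(B_R(x_0))<\infty$ by Proposition \ref{prop:BG}. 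Hence $\mm(B_r(x))\ge c_0\,r^{N}$ for all $x\in B_R(x_0)\cap\supp(\mm)$ and all small $r$.

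Next I would invoke the ``reverse mass distribution principle'': if a Borel measure $\mu$ on a metric space satisfies $\liminf_{r\downarrow 0}\mu(B_r(x))/r^{s}\ge c>0$ for every $x$ in a Borel set $A$, then $\mathcal H^{s}(A)\le C(s)\,c^{-1}\,\mu(A)$. The proof is the usual one: for each $x\in A$ pick a small ball $B_r(x)$ with $r^{N}\le\mm(B_r(x))/c_0$, extract by the basic $5r$-covering lemma a countable disjoint subfamily whose fivefold dilations still cover $A$, and estimate $\mathcal H^{N}_{\delta}(A)\lesssim c_0^{-1}\sum_i\mm(B_{r_i}(x_i))\le c_0^{-1}\mm(U)$, $U$ being a fixed bounded neighbourhood of $A$ of finite $\mm$-measure. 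Applying this with $A=B_R(x_0)\cap\supp(\mm)$ yields $\mathcal H^{N}\big(B_R(x_0)\cap\supp(\mm)\big)<\infty$. Letting $R\to\infty$ along integers exhibits $\supp(\mm)$ as a countable union of sets of finite $\mathcal H^{N}$-measure, so $\mathcal H^{s}(\supp(\mm))=0$ for every $s>N$, i.e. $\dim_{\mathcal H}(\supp(\mm),\sfd)\le N$.

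The only non-routine ingredient is the covering step: in an abstract metric space one cannot use a full Vitali covering theorem and must work with the $5r$-covering lemma instead, which only inserts a harmless dimensional constant in front of $\mm(U)$ and does not affect the dimension bound. (One could alternatively simply cite \cite{Lott-Villani09},\cite{Sturm06II}, where this statement is proved.) Everything else is an immediate consequence of the volume monotonicity \eqref{eq:bgvol} and the finiteness of $\mm$ on balls asserted in Proposition \ref{prop:BG}.
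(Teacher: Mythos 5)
Your proof is correct and follows the standard route: the paper itself does not reprove the proposition but cites \cite{Sturm06II}, remarking that the argument rests on the Bishop--Gromov estimate, and that is exactly what you do. The volume-monotonicity \eqref{eq:bgvol} of Proposition \ref{prop:BG}, applied at points $x\in B_R(x_0)\cap\supp(\mm)$ (note the monotonicity is stated only for points in the support, which you correctly restrict to), yields the uniform lower density bound $\mm(B_r(x))\ge c_0 r^N$; the $5r$-covering lemma then converts this into $\mathcal H^N\big(B_R(x_0)\cap\supp(\mm)\big)\lesssim c_0^{-1}\mm(B_{R+1}(x_0))<\infty$; $\sigma$-finiteness of $\mathcal H^N$ on $\supp(\mm)$ gives $\mathcal H^s(\supp(\mm))=0$ for every $s>N$, hence the dimension bound. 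One small presentational point: in the statement of the ``reverse mass distribution principle'' you write $\mathcal H^s(A)\le C(s)c^{-1}\mu(A)$, whereas the covering argument you sketch actually produces $\mathcal H^s(A)\le C(s)c^{-1}\mu(U)$ for a bounded open neighbourhood $U\supset A$ (which is all you need and all you later use); stating it with $\mu(A)$ would require an extra outer-regularity step, so it is cleaner to keep $\mu(U)$ in the statement as well.
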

More generally, the same conclusion holds on $CD(K,N)$ spaces, the proof being based on the Bishop-Gromov volume estimate, see \cite{Sturm06II} for a proof.

We shall also make use of the following simple result about the behavior of geodesics in product spaces:
\begin{proposition}\label{prop:geodprod}
Let $(X_1,\sfd_1)$ and $(X_2,\sfd_2)$ be two complete and separable metric spaces, $\mu_i,\nu_i\in\probt{X_i}$ and  $\ppi_i\in\gopt(\mu_i,\nu_i)$, $i=1,2$. 

Define  $\mathcal J:C([0,1],X_1)\times C([0,1],X_2)\to C([0,1],X_1\times X_2)$ by
\[
\mathcal J(\gamma_1,\gamma_2)_t:=(\gamma_{1,t},\gamma_{2,t}),
\]
and the plan $\ppi_1\otimes\ppi_2\in\prob{C([0,1],X_1\times X_2)}$ as $\mathcal J_\sharp(\ppi_1\times\ppi_2)$.

Then $\ppi_1\otimes\ppi_2\in\gopt(\mu_1\times\mu_2,\nu_1\times\nu_2)$, where $X_1\times X_2$ is endowed of the product distance $\sfd_1\times\sfd_2$  defined as in formula \eqref{eq:proddist}.
\end{proposition}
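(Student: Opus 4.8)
\textbf{Proof proposal for Proposition \ref{prop:geodprod}.}

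The plan is to verify directly that $\ppi_1\otimes\ppi_2$ is an optimal geodesic plan from $\mu_1\times\mu_2$ to $\nu_1\times\nu_2$ by checking the three defining properties: that it is concentrated on geodesics of $(X_1\times X_2,\sfd_1\times\sfd_2)$, that its marginals at times $0$ and $1$ are the prescribed measures, and that its kinetic energy $\iint_0^1|\dot\gamma_t|^2\,\d t\,\d(\ppi_1\otimes\ppi_2)(\gamma)$ equals $W_2^2(\mu_1\times\mu_2,\nu_1\times\nu_2)$. The marginal condition is immediate from the definition of $\mathcal J$ and the fact that $(\e_0)_\sharp\ppi_i=\mu_i$, $(\e_1)_\sharp\ppi_i=\nu_i$: for $t\in\{0,1\}$ we have $(\e_t)_\sharp(\ppi_1\otimes\ppi_2)=\big((\e_t)_\sharp\ppi_1\big)\times\big((\e_t)_\sharp\ppi_2\big)$.

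For the geodesic property, first I would record the elementary computation that if $\gamma_1$ is a constant speed geodesic in $X_1$ and $\gamma_2$ one in $X_2$, then $t\mapsto(\gamma_{1,t},\gamma_{2,t})$ satisfies
\[
\sfd_1\times\sfd_2\big((\gamma_{1,t},\gamma_{2,t}),(\gamma_{1,s},\gamma_{2,s})\big)^2=\sfd_1(\gamma_{1,t},\gamma_{1,s})^2+\sfd_2(\gamma_{2,t},\gamma_{2,s})^2=|t-s|^2\big(\sfd_1(\gamma_{1,0},\gamma_{1,1})^2+\sfd_2(\gamma_{2,0},\gamma_{2,1})^2\big),
\]
so $\mathcal J(\gamma_1,\gamma_2)$ is a constant speed geodesic in the product with squared length $\sfd_1(\gamma_{1,0},\gamma_{1,1})^2+\sfd_2(\gamma_{2,0},\gamma_{2,1})^2$. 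Since $\ppi_i$ is concentrated on $\geo(X_i)$, the pushforward under $\mathcal J$ is concentrated on $\geo(X_1\times X_2)$, and moreover for $\ppi_1\otimes\ppi_2$-a.e. $\gamma$ we have $\int_0^1|\dot\gamma_t|^2\,\d t=\sfd_1(\gamma_{1,0},\gamma_{1,1})^2+\sfd_2(\gamma_{2,0},\gamma_{2,1})^2$. Integrating and using optimality of each $\ppi_i$ gives
\[
\iint_0^1|\dot\gamma_t|^2\,\d t\,\d(\ppi_1\otimes\ppi_2)(\gamma)=\iint_0^1|\dot\gamma_{1,t}|^2\,\d t\,\d\ppi_1(\gamma_1)+\iint_0^1|\dot\gamma_{2,t}|^2\,\d t\,\d\ppi_2(\gamma_2)=W_2^2(\mu_1,\nu_1)+W_2^2(\mu_2,\nu_2).
\]

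It remains to identify the right-hand side with $W_2^2(\mu_1\times\mu_2,\nu_1\times\nu_2)$, which is where the one genuine point lies: we need $W_2^2(\mu_1\times\mu_2,\nu_1\times\nu_2)\le W_2^2(\mu_1,\nu_1)+W_2^2(\mu_2,\nu_2)$ (the reverse inequality follows by projecting any coupling of the product measures onto the two factors, since $\sfd_1\times\sfd_2$ dominates each $\sfd_i$ componentwise). For the needed direction, take optimal plans $\ggamma_i\in\prob{X_i^2}$ realizing $W_2(\mu_i,\nu_i)$ and consider the product plan $\ggamma_1\otimes\ggamma_2$ (reordered to lie in $\prob{(X_1\times X_2)^2}$); its cost is exactly $\int\sfd_1^2\,\d\ggamma_1+\int\sfd_2^2\,\d\ggamma_2=W_2^2(\mu_1,\nu_1)+W_2^2(\mu_2,\nu_2)$, giving an admissible competitor of that cost and hence the inequality. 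Combining, the kinetic energy of $\ppi_1\otimes\ppi_2$ equals $W_2^2(\mu_1\times\mu_2,\nu_1\times\nu_2)$, so $\ppi_1\otimes\ppi_2\in\gopt(\mu_1\times\mu_2,\nu_1\times\nu_2)$. The main obstacle is thus purely bookkeeping — keeping the coordinate reorderings straight between $C([0,1],X_1)\times C([0,1],X_2)$, $C([0,1],X_1\times X_2)$, and the corresponding product-of-plans identifications — rather than anything conceptual; I would state the coordinate identifications once at the outset and then suppress them.
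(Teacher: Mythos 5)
Your proof is correct, but it takes a genuinely different route from the paper's. The paper argues through the dual formulation: it takes Kantorovich potentials $\varphi_1,\varphi_2$ for the factor problems, forms $\varphi(x_1,x_2):=\varphi_1(x_1)+\varphi_2(x_2)$ on the product, checks (via a short computation with $c$-transforms) that $\partial^c\varphi$ is, up to reordering of coordinates, the product $\partial^c\varphi_1\times\partial^c\varphi_2$, and then concludes optimality of $\ppi_1\otimes\ppi_2$ because the static plan $(\e_0,\e_1)_\sharp(\ppi_1\otimes\ppi_2)$ is concentrated on a $c$-superdifferential. You instead argue on the primal side: you show the candidate plan is admissible, concentrated on geodesics, with kinetic energy $W_2^2(\mu_1,\nu_1)+W_2^2(\mu_2,\nu_2)$, and then establish the identity $W_2^2(\mu_1\times\mu_2,\nu_1\times\nu_2)=W_2^2(\mu_1,\nu_1)+W_2^2(\mu_2,\nu_2)$ by a two-sided cost comparison (product-of-optimal-plans for $\leq$, projection onto factors for $\geq$). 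The paper's route is shorter once the duality machinery (in particular the criterion \eqref{eq:optconc} and the stability of $c$-concavity under sums on products) is in hand, and it never needs to evaluate any $W_2$ distance explicitly; your route is more elementary, avoids the $c$-transform bookkeeping entirely, and as a byproduct records the tensorization identity for $W_2^2$, which is a useful fact in its own right. Both are sound.

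One small remark: once you have exhibited the competitor with kinetic energy $W_2^2(\mu_1,\nu_1)+W_2^2(\mu_2,\nu_2)$, the inequality $W_2^2(\mu_1\times\mu_2,\nu_1\times\nu_2)\leq W_2^2(\mu_1,\nu_1)+W_2^2(\mu_2,\nu_2)$ is already automatic from the definition of $W_2$ as an infimum; the separate static competitor you build is therefore redundant, and the only genuinely needed ingredient beyond your first two paragraphs is the reverse inequality $\geq$ via projection. This does not affect correctness, only economy.
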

\begin{proof}
It is clear that $(\e_0)_\sharp(\ppi_1\otimes\ppi_2)=\mu_1\times\mu_2$ and $(\e_1)_\sharp(\ppi_1\otimes\ppi_2)=\nu_1\times\nu_2$ and that the map $\mathcal J$ sends $\geo(X_1)\times\geo(X_2)$ into $\geo(X_1\times X_2)$.  Now let $\varphi_1,\varphi_2$ be Kantorovich potentials relative to $(\mu_1,\nu_1)$ and $(\mu_2,\nu_2)$ respectively and define $\varphi:X_1\times X_2\to\R\cup\{-\infty\}$ by $\varphi(x_1,x_2):=\varphi_1(x_1)+\varphi_2(x_2)$. It is immediate to verify that $\partial^c\varphi=\partial^c\varphi_1^c\times\partial^c\varphi_2$ (after the appropriate permutation of coordinates) and therefore  $(\e_0,\e_1)_\sharp(\ppi_1\otimes\ppi_2)$ is concentrated on $\partial^c\varphi$, which is sufficient to conclude. 
\end{proof}

The proof of the dimension reduction is based on the following very simple  statement. Notice that the proposition below is actually a particular case of a more general statement used in \cite{Cavalletti-Sturm12} (see also \cite{Cavalletti12}) that has been used to grant a sort of dimension reduction in the setting of reduced curvature-dimension  bounds.
\begin{proposition}\label{prop:dim}
Let $N\geq 2$, $t\in[0,1]$, and $a,b,c\geq 0$ be three given non-negative numbers. Assume that for every $\alpha,\beta\in\Q$, $\alpha,\beta> 0$ it holds
\begin{equation}
\label{eq:N}
\left(\frac c{(1-t)\alpha+t\beta}\right)^{-\frac1N}\geq (1-t)\left(\frac a{\alpha}\right)^{-\frac1N}+t\left(\frac b{\beta}\right)^{-\frac1N}.
\end{equation}
Then:
\[
c^{-\frac1{N-1}}\geq(1-t)a^{-\frac1{N-1}}+tb^{-\frac1{N-1}}.
\]
\end{proposition}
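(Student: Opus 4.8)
The plan is to keep $t$ fixed and optimize the right-hand side of \eqref{eq:N} over the free parameters $\alpha,\beta$. Since the hypothesis is required only for rational $\alpha,\beta>0$ but both sides of \eqref{eq:N} are continuous in $(\alpha,\beta)$ on $(0,\infty)^2$, a density argument immediately upgrades \eqref{eq:N} to hold for \emph{all} real $\alpha,\beta>0$; so I would first note this reduction and then work with real parameters. The quantities $a,b,c$ should be treated as genuinely non-negative, so I would handle at the outset the degenerate cases: if $a=0$ or $b=0$ the corresponding term $(a/\alpha)^{-1/N}$ or $(b/\beta)^{-1/N}$ is $+\infty$ (with the convention $0^{-1/N}=+\infty$), which forces $c=0$ as well by \eqref{eq:N}, and then the desired inequality reads $+\infty\geq\textrm{something}$ or $0^{-1/(N-1)}\geq\cdots$, which is trivially true; similarly if $c=+\infty$-type issues arise. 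So the substance is the case $a,b,c>0$, which I assume from now on.

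With $a,b,c>0$, rewrite \eqref{eq:N} by homogeneity: set $\beta=s\alpha$ for $s>0$; the factor $\alpha^{1/N}$ cancels on both sides, leaving
\[
\Big(\frac{(1-t)+ts}{c}\Big)^{\frac1N}\geq (1-t)\Big(\frac1a\Big)^{\frac1N}+t\Big(\frac{s}{b}\Big)^{\frac1N},\qquad\forall s>0.
\]
Equivalently $\big((1-t)+ts\big)^{1/N}\geq c^{1/N}\big[(1-t)a^{-1/N}+t b^{-1/N}s^{1/N}\big]$ for every $s>0$. Now I would choose $s$ to make the two terms on the right ``aligned'' so that the elementary inequality $(u+v)^{1/N}\leq u^{1/N}+v^{1/N}$ (valid for $u,v\geq0$, $N\geq1$, with $u=(1-t)$, $v=ts$) is used in the reverse-sharp direction. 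Concretely, the natural guess is to take $s$ proportional to $(a/b)^{?}$; writing $A:=a^{-1/N}$, $B:=b^{-1/N}$, $C:=c^{1/N}$, the inequality becomes $\big((1-t)+ts\big)^{1/N}\geq C\big[(1-t)A+tBs^{1/N}\big]$. Choosing $s=(A/B)^{N/(N-1)}\cdot\lambda$ for a scalar $\lambda>0$ and then optimizing $\lambda$ — or, cleaner, applying H\"older/Jensen to the function $s\mapsto (1-t)+ts$ against the measure giving mass $1-t$ to the point $A$ and $t$ to the point $Bs^{1/N}$ — should extract precisely the exponent $N/(N-1)$. I expect the optimal choice to be $s^{1/N}=(A/B)^{1/(N-1)}$, i.e. $s=(a^{-1}b)^{1/(N-1)}$ up to the correct power, at which point the right side becomes $C\cdot\big[(1-t)^{?}A^{?}+\cdots\big]$ collapsing to the reverse-power-mean expression $C\big[(1-t)A^{(N-1)/N}+tB^{(N-1)/N}\big]^{?}$; raising the resulting inequality to the power $N/(N-1)$ and rearranging yields exactly $c^{-1/(N-1)}\geq (1-t)a^{-1/(N-1)}+tb^{-1/(N-1)}$.

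The only genuinely non-routine point is verifying that the chosen $s$ really is optimal and that the algebra collapses as claimed; the cleanest way to be sure is to phrase the whole thing as a statement about power means. Namely, \eqref{eq:N} (after the reductions above) says that for the weights $1-t,t$ and the exponent $-1/N$ one has a certain superadditivity; the claim is the same statement with exponent $-1/(N-1)$. I would invoke the standard fact that $p\mapsto \big((1-t)x^p+ty^p\big)^{1/p}$ is monotone in $p$ together with a scaling trick in the auxiliary variable $s$ — this is exactly the mechanism by which $CD(0,N)$ along a product with an interval of variable length improves to $CD(0,N-1)$ in the base, and it is the content of Proposition 30.36 of Villani's book in the smooth setting. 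So the write-up would be: (i) reduce to real $\alpha,\beta$ by continuity and density of $\Q$; (ii) dispose of degenerate cases; (iii) normalize via homogeneity to the one-parameter family in $s$; (iv) choose $s$ optimally and apply the elementary power-mean/Minkowski inequality in the form $(u+v)^{1/N}\le u^{1/N}+v^{1/N}$; (v) raise to the power $N/(N-1)$ and rearrange. The main obstacle is purely computational — pinning down the optimal $s$ and checking no sign or exponent is misplaced — and I would carry that out carefully but expect no conceptual difficulty.
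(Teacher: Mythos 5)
Your overall strategy is the same as the paper's: upgrade \eqref{eq:N} to all real $\alpha,\beta>0$ by continuity, dispose of the degenerate cases $a=0$ or $b=0$, and then substitute one specific $(\alpha,\beta)$. However, the specific choice you propose is the reciprocal of the right one, and you overestimate the work that remains. The correct substitution is $\alpha=a^{-1/(N-1)}$, $\beta=b^{-1/(N-1)}$ (so in your normalized variable $s=\beta/\alpha=(a/b)^{1/(N-1)}$, not $(b/a)^{1/(N-1)}$). With this choice the whole thing is pure algebra and no power-mean, Minkowski or Jensen inequality enters at all: one has
\[
\frac{a}{\alpha}=a^{N/(N-1)},\qquad \Big(\frac{a}{\alpha}\Big)^{-\frac1N}=a^{-\frac1{N-1}},
\]
and likewise for $b,\beta$, so that writing $S:=(1-t)a^{-1/(N-1)}+tb^{-1/(N-1)}$, the hypothesis \eqref{eq:N} collapses to $\big(S/c\big)^{1/N}\geq S$, i.e.\ $c^{-1}\geq S^{N-1}$, i.e.\ $c^{-1/(N-1)}\geq S$, which is the claim. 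You should verify your proposed $s=(b/a)^{1/(N-1)}$ does not give this (with it, the right-hand side of the normalized inequality factors into powers $a^{-(N-2)/(N(N-1))}$ and $b^{-(N-2)/(N(N-1))}$ rather than $a^{-1/(N-1)}$ and $b^{-1/(N-1)}$, so nothing collapses). A further caution about the ``power-mean monotonicity'' route you sketch: since $-1/(N-1)<-1/N$, the standard monotonicity of $p\mapsto((1-t)x^p+ty^p)^{1/p}$ points the wrong way; the improvement from $N$ to $N-1$ genuinely comes from the free parameters $\alpha,\beta$, not from any abstract inequality between means.
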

\begin{proof} If $a$ or $b$ are 0 the thesis is obvious. Also, the terms in  \eqref{eq:N} are continuous in $\alpha,\beta$ positive. Thus if \eqref{eq:N} holds for positive rationals, it also holds for positive reals. Conclude picking  $\alpha:= a^{-\frac1{N-1}}$ and $\beta:=b^{-\frac1{N-1}}$.
\end{proof}

\section{Result}

\begin{theorem}[The quotient space has dimension $N-1$]\label{thm:dimm1} With the same notation and assumptions as in \eqref{eq:not} the following holds.
\begin{itemize}
\item[i)] If $N\geq 2$, then $(X',\sfd',\mm')$ is an infinitesimally Hilbertian $CD(0,N-1)$ space.
\item[ii)] If $N\in[1,2)$, then $X'$ contains exactly one point.
\end{itemize}
\end{theorem}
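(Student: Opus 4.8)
The case $N\in[1,2)$ is the easy one: by Theorem~\ref{thm:pitagora} the space $(X,\sfd,\mm)$ is isomorphic to $(X'\times\R,\sfd'\times\sfd_{\rm Eucl},\mm'\times\mathcal L^1)$, so if $X'$ contained two distinct points $x',y'$ with $\sfd'(x',y')=\delta>0$, then $\supp(\mm)$ would contain an isometric copy of the product of a two-point set scaled by $\delta$ with $\R$; more to the point, it would contain an isometric copy of $\{x',y'\}\times\R$, and together with the splitting one checks that $\supp(\mm)$ has Hausdorff dimension at least $2$ (the line contributes dimension $1$ and the transversal direction contributes a positive amount once there is more than one point, since $X'$ being a complete geodesic space with more than one point has Hausdorff dimension $\geq 1$). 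This contradicts Proposition~\ref{prop:hausd}, which forces $\dim_H(\supp(\mm))\leq N<2$. Hence $X'$ is a single point. A cleaner way to phrase the transversal estimate: $\dim_H(A\times B)\geq \dim_H(A)+\dim_H(B)$ for the $\ell^2$-product metric, and a geodesic metric space with at least two points has Hausdorff dimension at least $1$, so $\dim_H(\supp(\mm))=\dim_H(X')+1\geq 2$ if $X'$ is nontrivial.

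\textbf{The main case $N\geq 2$.} We already know from Corollary~\ref{cor:xp} that $(X',\sfd',\mm')$ is an infinitesimally Hilbertian $CD(0,N)$ space; the task is to improve $N$ to $N-1$. I would follow the localization-of-$CD$ strategy sketched in the introduction. Fix a geodesic $t\mapsto\mu_t=\rho_t\mm'$ in $\probt{X'}$ between two absolutely continuous measures with bounded density and bounded support; by Theorem~\ref{thm:optmap} (applicable since $X'$ is $RCD(0,\infty)$) there is a unique $\ppi'\in\gopt(\mu_0,\mu_1)$, induced by a map. For arbitrary $\alpha,\beta>0$ consider on $\probt\R$ the geodesic $t\mapsto\nu_t:=\frac1{(1-t)\alpha+t\beta}\mathcal L^1\restr{[0,(1-t)\alpha+t\beta]}$, whose density is the constant $\big((1-t)\alpha+t\beta\big)^{-1}$ on its support. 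By Proposition~\ref{prop:geodprod}, $t\mapsto\mu_t\times\nu_t$ is a geodesic in $\probt{X'\times\R}$, and via the isomorphism $\mau$ of Theorem~\ref{thm:pitagora} it transfers to a geodesic in $\probt X$ between absolutely continuous measures with bounded density. Since $X$ is $CD(0,N)$ and (again by Theorem~\ref{thm:optmap}) admits unique optimal maps, one can localize the $CD(0,N)$ inequality along $\ppi\otimes(\text{plan for }\nu_t)$ exactly as in Corollary~\ref{cor:cdpunt} to get the pointwise bound
\[
\left(\frac{\rho_t(\gamma_t)}{(1-t)\alpha+t\beta}\right)^{-\frac1N}\geq(1-t)\left(\frac{\rho_0(\gamma_0)}{\alpha}\right)^{-\frac1N}+t\left(\frac{\rho_1(\gamma_1)}{\beta}\right)^{-\frac1N}
\]
for $\ppi'$-a.e.\ $\gamma$, for every $\alpha,\beta>0$.

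\textbf{Finishing.} With the above inequality in hand, apply Proposition~\ref{prop:dim} with $a=\rho_0(\gamma_0)$, $b=\rho_1(\gamma_1)$, $c=\rho_t(\gamma_t)$ (it suffices to have it for rational $\alpha,\beta$, which follows from the real case by continuity) to conclude, for $\ppi'$-a.e.\ $\gamma$,
\[
\rho_t(\gamma_t)^{-\frac1{N-1}}\geq(1-t)\,\rho_0(\gamma_0)^{-\frac1{N-1}}+t\,\rho_1(\gamma_1)^{-\frac1{N-1}}.
\]
Integrating this against $\ppi'$ and using the standard argument (Jensen's inequality applied along the disintegration of $\ppi'$, or equivalently integrating $u_{N-1}(\rho_t)$) yields $\u_{N-1}((\e_t)_\sharp\ppi')\leq(1-t)\u_{N-1}(\mu_0)+t\,\u_{N-1}(\mu_1)$, and the same for every $N'\geq N-1$ since the pointwise inequality with exponent $N-1$ implies the one with exponent $N'$ by convexity. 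Since absolutely continuous measures with bounded density and bounded support are dense in $\probt{X'}$ in the required sense, a routine approximation (lower semicontinuity of $\u_{N'}$ along sequences with uniformly bounded support) gives the $CD(0,N-1)$ inequality for all pairs of measures supported in $\supp(\mm')$. Combined with Corollary~\ref{cor:xp} this is exactly the claim. I expect the only delicate point to be the precise bookkeeping in the localization step — i.e.\ checking that the unique optimal plan on $X$ corresponding to $\mu_t\times\nu_t$ really factors through $\ppi'\otimes(\text{the obvious plan on }\R)$ so that the pointwise $CD(0,N)$ bound can be read fiberwise; this is where uniqueness of optimal maps (Theorem~\ref{thm:optmap}) and the product structure from Proposition~\ref{prop:geodprod} must be used together, but no new idea beyond what is already in Corollary~\ref{cor:xp} is needed.
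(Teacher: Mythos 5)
Your proof is correct and follows essentially the same path as the paper for both parts: for $N\geq 2$ you take the product with the one-dimensional geodesic of uniform measures, localize the $CD(0,N)$ inequality via the uniqueness of optimal maps and the product plan of Proposition~\ref{prop:geodprod}, and feed the pointwise estimate into Proposition~\ref{prop:dim}; for $N\in[1,2)$ you argue by Hausdorff dimension using Proposition~\ref{prop:hausd}. The only cosmetic difference is in the $N\in[1,2)$ case, where you invoke the general inequality $\dim_H(A\times B)\geq\dim_H(A)+\dim_H(B)$, whereas the paper argues more elementarily by extracting an isometric copy of a nontrivial interval $I\subset X'$ (since $X'$ is geodesic) and noting directly that $I\times\R$ has Hausdorff dimension $2$.
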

\begin{proof} $\ $\\
\noindent{$\mathbf{ (i)}$}
We already know that $(X',\sfd',\mm')$ is an  infinitesimally Hilbertian $CD(0,N)$ space and a simple approximation argument ensures that to conclude it is sufficient to check the $CD(0,N-1)$ condition for given   $\mu_0,\mu_1\in\probt{X'}$ with bounded support and absolutely continuous w.r.t. $\mm'$, say $\mu_i=\rho_i\mm'$, $i=0,1$. By Theorem \ref{thm:optmap} we know that there exists a unique $\ppi\in\gopt(\mu_0,\mu_1)$, and by Corollary \ref{cor:cdpunt} that the measures $\mu_t:=(\e_t)_\sharp\ppi$ are absolutely continuous w.r.t. $\mm'$, say $\mu_t=\rho_t\mm'$, for every $t\in[0,1]$.

Let $\alpha,\beta>0$  be arbitrary, put $\nu_0:=\frac1\alpha\mathcal L^1\restr{[0,\alpha]}$, $\nu_1:=\frac1\beta\mathcal L^1\restr{[0,\beta]}$ so that $\nu_0,\nu_1\in\probt \R$, let $t\mapsto\nu_t=\frac{1}{(1-t)\alpha+t\beta}\mathcal L^1\restr{[0,(1-t)\alpha+t\beta]}$ be the unique geodesic connecting $\nu_0$ to $\nu_1$ and $\tilde\ppi$  the unique element of $\gopt(\nu_0,\nu_1)$. 

By Proposition \ref{prop:geodprod},  the plan $\ppi\otimes\tilde\ppi$ belongs to $\gopt(\mu_0\times\nu_0,\mu_1\times\nu_1)$ and by definition  satisfies  $(\e_t)_\sharp(\ppi\otimes\ppi')=\mu_t\times\nu_t$  and thus
\begin{equation}
\label{eq:densprod}
\frac{\d(\e_t)_\sharp(\ppi\otimes\ppi')}{\d(\mm'\times\mathcal L^1)}(\gamma_t,\tilde\gamma_t)=\frac{\rho_t(\gamma_t)}{(1-t)\alpha+t\beta},\qquad\ppi\times\ppi'\ae \ (\gamma,\tilde\gamma).
\end{equation}
By assumption we know that $(X,\sfd,\mm)$ is an infinitesimally Hilbertian $CD(0,N)$ space and by Theorem \ref{thm:pitagora} that it is isomorphic to $(X'\times\R,\sfd'\times\sfd_{\rm Eucl},\mm'\times\mathcal L^1)$. Thus the latter is an infinitesimally Hilbertian $CD(0,N)$ space and Theorem \ref{thm:optmap} and its proof grant that $\ppi\otimes\tilde\ppi$ is concentrated on a set of non-branching geodesics. Thus by \eqref{eq:densprod} and standard means in optimal transport theory (we omit the details), by the $CD(0,N)$ property we get
\[
\left(\frac{\rho_t(\gamma_t)}{(1-t)\alpha+t\beta}\right)^{-\frac1N}\geq(1-t)\left(\frac{\rho_0(\gamma_0)}{\alpha}\right)^{-\frac1N}+t\left(\frac{\rho_1(\gamma_1)}{\beta}\right)^{-\frac1N},\qquad\ppi\ae \ \gamma.
\]
Given that $\alpha,\beta$ were arbitrary positive numbers, we further obtain that
\[
\left(\frac{\rho_t(\gamma_t)}{(1-t)\alpha+t\beta}\right)^{-\frac1N}\geq(1-t)\left(\frac{\rho_0(\gamma_0)}{\alpha}\right)^{-\frac1N}+t\left(\frac{\rho_1(\gamma_1)}{\beta}\right)^{-\frac1N},\qquad\forall \alpha,\beta\in\Q,\ \alpha,\beta>0,
\]
holds for $\ppi$-a.e. $\gamma$. By Proposition \ref{prop:dim} we deduce 
\[
\rho_{t}(\gamma_{t})^{-\frac1{N-1}}\geq (1-t)\rho_0(\gamma_0)^{-\frac1{N-1}}+t\rho_1(\gamma_1)^{-\frac1{N-1}},\qquad\ppi\ae \ \gamma,
\]
which integrated w.r.t. $\ppi$ yields $\u_{N-1}(\mu_t)\leq (1-t)\u_{N-1}(\mu_0)+t\u_{N-1}(\mu_1)$, as desired.

\noindent{$\mathbf{ (ii)}$}  It is clear that $X'$ is non empty. Assume by contradiction that it contains more than one point. Then, since $(X',\sfd')$ is geodesic - Proposition \ref{prop:basexp} - it contains an isometric copy $I\subset X'$ of some non-trivial interval in $\R$. Given that  $X'\times\R\supset I\times\R$, the Hausdorff dimension of $X'\times\R$ is at least 2. This contradicts Proposition \ref{prop:hausd} and the fact  (Theorem \ref{thm:pitagora}) that $(X'\times\R,\sfd'\times\sfd_{\rm Eucl})$ is isometric to $(\supp(\mm),\sfd)$.
\end{proof}

\appendix

\chapter{Infinitesimal Hilbertianity and behavior of gradient flows}\label{app:infhil}
In this section we collect some comments about the relations between the infinitesimal nature of a Finsler/Riemannian manifold and the behavior of gradient flows of $K$-convex functionals defined on them, the discussion being taken from the paper \cite{OhSt12} by Ohta-Sturm and author's works on gradient flows in collaboration with Ambrosio and Savar\'e (in particular \cite{AmbrosioGigliSavare08} and \cite{AmbrosioGigliSavare11-2}). What we want to show is the direct relation between the Riemannian nature of a manifold and the $K$-EVI (=Evolution Variational Inequality) formulation of gradient flows, which is at the basis of the relation between the infinitesimal Hilbertianity of a $CD(K,\infty)$ space and the existence of $K$-EVI gradient flows for the relative entropy. The establishment of such relation is the key result of \cite{AmbrosioGigliSavare11-2}, and the discussion we make here can be used by the interested reader as a guideline for understanding the key point of  such paper.

It is a well known fact of Riemannian geometry that a function is  $K$-convex if and only if its gradient flow $K$-exponentially decreases the distance. We now check how this works in a Finsler context. Let $(F,\|\cdot\|_x)$  be a $C^1$ Finsler manifold, i.e. a $C^1$ differentiable manifold endowed with a norm $\|\cdot\|_x$ on each tangent space and such that in coordinates the squared norms have $C^1$ dependence on the base point. 

Let ${\rm Dual}_x:T_xF\to T^*_xF$ be given by
\[
{\rm Dual}_x(v_1)(v_2):=\lim_{\eps\to 0}\frac{\|v_1+\eps v_2\|_x^2-\|v_1\|_x^2}{2\eps},\qquad\forall v_1,v_2\in T_xF,
\]
i.e. let  ${\rm Dual}_x$ be the differential of $\frac{\|\cdot\|^2_x}2$. Its inverse ${\rm Dual}^{-1}_x:T_x^*F \to T_xF$ is then given by
\[
{\rm Dual}^{-1}_x(\omega_1)(\omega_2):=\lim_{\eps\to 0}\frac{(\|\omega_1+\eps \omega_2\|_x^*)^2-(\|\omega_1\|_x^*)^2}{2\eps},\qquad\forall \omega_1,\omega_2\in T^*_xF,
\]
where $\|\cdot\|^*_x$ is the dual norm of $\|\cdot\|_x$. The fact that $\|\cdot\|_x$ is smooth and strictly convex ensures that both ${\rm Dual}_x$ and ${\rm Dual}^{-1}_x$ are well defined single valued maps. It is crucial for the foregoing discussion to remark that ${\rm Dual}_x$ is linear if and only if the norm $\|\cdot\|_x$ comes from a scalar product. This can be checked by direct computations.

Given a $C^1$ function $f:F \to \R$, its differential $Df$ is the cotangent vector field defined by
\[
Df(x)(v):=\lim_{t\to0}\frac{f(\gamma_t)-f(\gamma_0)}{t},\qquad\textrm{for any $C^1$ curve $\gamma$ such that $\gamma_0=x$ and $\gamma_0'=v$,}
\]
and the gradient $\nabla f$ is the tangent vector field given by
\[
\nabla f(x):={\rm Dual}_x^{-1}(Df(x)).
\]
Noticing that for every $x\in F$ and any $v\in T_xF$ we have the inequality
\begin{equation}
\label{eq:pergr}
Df(x)(v)\leq \frac12(\|Df(x)\|_x^*)^2+\frac12\|v\|_x^2,
\end{equation}
one sees that the gradient $\nabla f$ at the point $x$ can be equivalently characterized as the only vector $v$ for which equality holds in \eqref{eq:pergr}, so that indeed the gradient indicates the direction of maximal increase of $f$.

\medskip

Let $\sfd$ be the distance on $F$ induced by the Finsler structure and $f:F\to\R$ a $C^1$ function. Then $f$ is $K$-convex provided it holds
\begin{equation}
\label{eq:kconv}
f\big(\gamma_t\big)\leq (1-t)f(\gamma_0)+tf(\gamma_1)-\frac K2t(1-t)\sfd^2(\gamma_0,\gamma_1),\qquad\forall \gamma\textrm{ geodesic, } t\in[0,1].
\end{equation}
Comparing the derivatives of $t\mapsto f(\gamma_t)$ at $t=0$ and $t=1$, we see that \eqref{eq:kconv} is equivalent to
\begin{equation}
\label{eq:kconv2}
Df(y)(\gamma_1')-Df(x)(\gamma_0')\geq K\sfd^2(x,y),\qquad\forall x,y\in F,\  \gamma\textrm{ geodesic from $x$ to $y$ }.
\end{equation}
Now define $\mathcal N_x:F\to\R$ as $\mathcal N_x(y):=\tfrac12\sfd^2(x,y)$ and recall (see e.g. Chapter 6 in \cite{BCS00}) that under general assumptions in a neighborhood of $x$ the function $\mathcal N_x$ is $C^1$ and that its differential is given by the formula
\begin{equation}
\label{eq:derdistf}
D\mathcal N_x(y)={\rm Dual}_y(\gamma_1'),
\end{equation}
where $\gamma:[0,1]\to F$ is the unique (minimal) geodesic connecting $x$ to $y$.

Now let $(x_t)$ and $(y_t)$ be gradient flow trajectories of $f$, i.e. assume they solve $x_t'=-\nabla f(x_t)$ and $y_t'=-\nabla f(y_t)$ and starting respectively from $x$ and $y$. Assume that $x,y$ are close enough so that formula \eqref{eq:derdistf} holds, let $\gamma$ be the unique geodesic from $x$ to $y$ and compute  the derivative of the squared distance between the flows:
\begin{equation}
\label{eq:derdistgf}
\begin{split}
-\frac\d{\d t}\tfrac12\sfd^2(x_t,y_t)\restr{t=0}&=-\frac\d{\d t}\mathcal N_x(y_t)\restr{t=0}-\frac\d{\d t}\mathcal N_y(x_t)\restr{t=0}\\
&= -D\mathcal N_x(y)(y_0')-D\mathcal N_y(x)(x_0')\\
&=-{\rm Dual}_y(\gamma_1')(y_0')+{\rm Dual}_x(\gamma_0')(x_0')\\
&={\rm Dual}_y(\gamma_1')(\nabla f(y))-{\rm Dual}_x(\gamma_0')(\nabla f(x))\\
&={\rm Dual}_y(\gamma_1')({\rm Dual}^{-1}_y(Df(y)))-{\rm Dual}_x(\gamma_0')({\rm Dual}^{-1}_x(Df(x))).
\end{split}
\end{equation}
Comparing the last term in this expression with the left-hand side of \eqref{eq:kconv2} amounts to compare $\omega(v)$ and ${\rm Dual}_z(v)({\rm Dual}^{-1}_z(\omega))$ for arbitrary $z\in F$, $v\in T_zF$, $\omega\in T^*_zF$. These two are different in general, because the former is bilinear in $v,\omega$, while, as said, the duality map ${\rm Dual}_z$ is linear if and only if the norm $\|\cdot\|_z$ comes from a scalar product.  If the norm is Hilbertian, then indeed the two are the same, so that we have
\[
\omega(v)={\rm Dual}_z(v)({\rm Dual}^{-1}_z(\omega)),\qquad\forall v\in T_zF,\ \omega\in T^*_zF\qquad\Leftrightarrow\qquad \|\cdot\|_z\textrm{ is Hilbertian}.
\]
Using this fact it is now possible to see that the last term in \eqref{eq:derdistgf} is bounded above by the left-hand side of \eqref{eq:kconv2} for any $f$ and $x,y$ close enough if and only the manifold is Riemannian (in which case the two actually coincide) and in this case we obtain
\[
\frac\d{\d t}\tfrac12\sfd^2(x_t,y_t)\restr{t=0}\leq -K\sfd^2(\gamma_0,\gamma_1),
\]
which leads, after an application of Gronwall's lemma, to the desired contraction estimate $\sfd(x_t,y_t)\leq e^{-Kt}\sfd(x,y)$.

Following these lines of thought, in \cite{OhSt12} it has been shown that if $\|\cdot\|$ does not come from a scalar product, then there exists a convex and smooth function whose gradient flow does not contract distances. It \cite{OhSt12}  has been also shown that the heat flow on $(\R^d,\|\cdot\|,\mathcal L^d)$ contracts the $W_2$-distance if and only if $\|\cdot\|$ comes from the scalar product, in accordance with the above discussion and the following facts:
\begin{itemize}
\item[-] The heat flow is the gradient flow of the relative entropy functional on $(\probt{\R^d},W_2)$
\item[-] For any norm on $\R^d$, the relative entropy functional is geodesically convex on\linebreak $(\probt{\R^d},W_2)$, where $W_2$ is the quadratic transportation distance built over the distance induced by the given norm
\item[-] The space $(\probt{\R^d},W_2)$ inherits several geometric properties of the underlying space $(\R^d,\|\cdot\|)$, so that - heuristically said - if $\|\cdot \|$ comes from a scalar product then $(\probt{\R^d},W_2)$ looks like an infinite dimensional Riemannian manifold, while if $\|\cdot\|$ is a generic norm then $(\probt{\R^d},W_2)$ looks like an infinite dimensional Finslerian manifold.
\end{itemize} 

\medskip

There is another way to look at the  problem  which arises when looking for $K$-contractivity of the gradient flow of  a $K$-convex function on a Finsler setting:  rather then studying  the distance between two gradient flow trajectories, we study the distance between a gradient flow trajectory $(x_t)$ and a fixed point $y$. Assuming points are close enough so that we can use formula \eqref{eq:derdistf} and with the same computations as above we have
\[
\frac{\d}{\d t}\sfd^2(x_t,y)={\rm Dual}_{x_t}(\gamma_{t,0}')(\nabla f(x_t)),
\]
where the curve $s\mapsto \gamma_{t,s}$ is the geodesic connecting $x_t$ to $y$ and the derivation $\gamma_{t,0}'$ in the formula is taken in the $s$ variable. Computing the derivative of $f$ along $s\mapsto\gamma_{t,s}$ we have
\[
\frac{\d}{\d s}f(\gamma_{t,s})\restr{s=0}=Df(x_t)(\gamma_{t,0}').
\]
Recalling formula \eqref{eq:derdistf} we have $D\mathcal N_y(x_t)=-{\rm Dual}_{x_t}(\gamma_{t,0}')$ and thus also $\nabla\mathcal N_y(x_t)=-\gamma_{t,0}'$, hence the above identities can be written as
\begin{equation}
\label{eq:perrcd}
\begin{split}
\frac{\d}{\d t}\sfd^2(x_t,y)&=-D\mathcal N_y(\nabla f)(x_t),\\
\frac{\d}{\d s}f(\gamma_{t,s})\restr{s=0}&=-Df(\nabla\mathcal N_y)(x_t).
\end{split}
\end{equation}
As before, the right-hand sides of these equalities coincide if and only if we can `swap differentials with gradients', which we can do for every smooth $f$, every $y$ and every initial point $x_0$ if and only if the norm comes from a scalar product. In this case we get
\[
\frac{\d}{\d t}\sfd^2(x_t,y)=\frac{\d}{\d s}f(\gamma_{t,s})\restr{s=0}.
\] 
If $f$ is $K$-convex, the right-hand side can be estimated from above by $f(y)-f(x_t)-\frac K2\sfd^2(x_t,y)$ and for the gradient flow trajectory $(x_t)$ we deduce
\[
\frac{\d}{\d t}\frac12\sfd^2(x_t,y)+f(x_t)+\frac K2\sfd^2(x_t,y)\leq f(y),\qquad\forall y\in F,\ t\geq 0.
\]
This inequality is the so called $K$-EVI formulation of gradient flows introduced in \cite{AmbrosioGigliSavare08}, whose basic properties are:
\begin{itemize}
\item[i)] It can be formulated on general metric spaces.
\item[ii)] It has very general stability properties both w.r.t. convergence of the initial datum and w.r.t. $\Gamma$-convergence of functionals (see \cite{AmbrosioGigliSavare08} and \cite{AmbrosioGigliSavare11-2})
\item[iii)] The distance between  two gradient flow trajectories satisfying the $K$-EVI decreasaes $K$-exponentially, as shown by:
\[
\begin{split}
\frac{\d}{\d t}\frac12\sfd^2(x_t,y_t)\restr{t=t_0}&=\frac{\d}{\d t}\frac12\sfd^2(x_t,y_{t_0})\restr{t=t_0}+\frac{\d}{\d t}\frac12\sfd^2(x_{t_0},y_t)\restr{t=t_0}\\
&\leq f(y_{t_0})-f(x_{t_0})-\frac K2\sfd^2(x_{t_0},y_{t_0})+f(x_{t_0})-f(y_{t_0})-\frac K2\sfd^2(x_{t_0},y_{t_0})\\
&= -K\sfd^2(x_{t_0},y_{t_0}).
\end{split}
\]
(written this way, the computation is not rigorous in the metric setting, but the result can be justified in full generality, see Chapter 4 of \cite{AmbrosioGigliSavare08})
\item[iv)] The existence of gradient flow trajectories in the $K$-EVI sense encodes \emph{both} the information about $K$-convexity of the functional (see \cite{DaneriSavare08}) \emph{and} the fact that the local nature of the space resembles that of an Hilbert space (there is no known way to make this statement rigorous in general, but at least point $(iii)$ above grants the $K$-contractivity). 
\end{itemize}
Having in mind Remark \ref{rem:averaging}, we can informally infer that if $(X,\sfd,\mm)$ is infinitesimally Hilbertian, then  the space of probability measures with bounded densities endowed with the distance $W_2$ behaves at the first order like a Riemannian manifold. Thus if we are considering a gradient flow  on $\probt X$ for which the weak maximum principle holds (which is the case for the heat flow) and we start with an initial datum $\mu_0$ such that $\mu_0\leq C\mm$ for some $C>0$, the evolution will take place on the set of measures with bounded densities, and thus following the above computations  we might expect infinitesimal Hilbertianity to be linked to the existence of gradient flows in the $K$-EVI sense.

This is precisely the heuristic idea behind the definition of  $RCD(K,\infty)$ spaces given in \cite{AmbrosioGigliSavare11-2}: as shown by the proof of the main theorem there, the implication `infinitesimal Hilbertianity and $CD(K,\infty)$ yields existence of $K$-EVI gradient flows of the relative entropy' is based precisely on computing the derivative of the squared distance between a flow and a fixed measure and  the derivative of the entropy along geodesics, like in formula \eqref{eq:perrcd}, and then on using infinitesimal Hilbertianity to  `swap differentials and gradients'.

\chapter{Infinitesimal Hilbertianity and behavior of the distance}

A different way of looking at infinitesimal Hilbertianity is to look at the property of the differential of the squared distance only, without referring to any further functional defined on space.

Let $(F,\|\cdot\|_x)$ be a Finsler manifold, $x\in F$, $\Omega\ni x$ be so small that $\sfd^2:\Omega^2\to\R$ is smooth and $(x_t),(y_t)$ two geodesics emanating from $x$. Then by formula \eqref{eq:derdistf} above we see that
\[
\frac{\d}{\d t}\frac12\sfd^2(x_t,y_1)\restr{t=0}=-{\rm Dual}_x(x'_0)(y'_0)\qquad\textrm{ and }\qquad\frac{\d}{\d t}\frac12\sfd^2(x_1,y_t)\restr{t=0}=-{\rm Dual}_x(y'_0)(x'_0).
\]
Arguing as before we obtain  that
\[
{\rm Dual}_x(v)(w)={\rm Dual}_x(w)(v),\quad\forall v,w\in T_xF\qquad\Leftrightarrow\qquad \|\cdot\|_x\textrm{ comes from a scalar product},
\]
indeed $\Leftarrow$ is obvious, and for $\Rightarrow$ notice that the map $v\mapsto {\rm Dual}_x(w)(v)$ is linear for any $w\in T_xF$, while  $v\mapsto {\rm Dual}_x(v)(w)$  is linear for any $w\in T_xF$ if and only if ${\rm Dual}_x$ is linear.

Therefore we see that the norm $\|\cdot\|_x$ comes from a scalar product if and only if the equality
\begin{equation}
\label{eq:distr}
\frac{\d}{\d t}\frac12\sfd^2(x_t,y_1)\restr{t=0}=\frac{\d}{\d t}\frac12\sfd^2(x_1,y_t)\restr{t=0}
\end{equation}
holds for any couple of geodesics $(x_t)$, $(y_t)$ as above.  In other words, a Finsler manifold is Riemannian if and only if the squared distance satisfies \eqref{eq:distr} for all sufficiently short geodesics emanating from the same point. Observe that this provides a link between the infinitesimal information `the norm on the tangent space comes from a scalar product' and the behavior of the squared distance between close points, which is a local information.

It is then natural to ask whether on an infinitesimally Hilbertian space a property like \eqref{eq:distr} holds or not. The answer is positive, provided we reformulate \eqref{eq:distr} via a lift to the space of measures with bounded densities, as we did in Chapter \ref{se:quot} (see in particular Remark \ref{rem:averaging}). Indeed, let $(X,\sfd,\mm)$ be an infinitesimally Hilbertian space and $(\mu_t)$, $(\nu_t)$ two geodesics with time-continuous densities and such that $\supp(\mu_t),\supp(\nu_t)$ have uniformly bounded support (as in Proposition \ref{prop:intdist})   and starting from the same measure $\mu$.  Then we know  that the functions $t\mapsto\frac12 W_2^2(\mu_t,\nu_1)$ and $t\mapsto\frac12 W_2^2(\mu_1,\nu_t)$ are $C^1$ and that it holds
\[
\frac{\d}{\d t}\frac12W_2^2(\mu_t,\nu_1)\restr{t=0}=-\int \la\nabla \psi,\nabla\varphi\ra\d\mu= \frac{\d}{\d t}\frac12W_2^2(\mu_1,\nu_t)\restr{t=0}
\] 
where  $\varphi$, $\psi$ are two Lipschitz Kantorovich potentials inducing $(\mu_t)$, $(\nu_t)$ respectively. 

\medskip

In this direction, it is interesting to remark that identity \eqref{eq:distr} has a sort of self improving property, given that if it holds the manifold is Riemannian and therefore the joint limit
\begin{equation}
\label{eq:angles}
\lim_{t,s\downarrow0}\frac{\sfd^2(x_t,x)+\sfd^2(y_s,x)-\sfd^2(x_t,y_s)}{2ts},
\end{equation}
exists, it  being the scalar product $\la x_0',y_0'\ra$. Whenever one is working on a metric space where the limit $L$ in \eqref{eq:angles} exists (for instance: Alexandrov spaces) it is customary to use it to define the \emph{angle between the geodesics} as $\theta({(x_t),(y_t)}):=\cos^{-1}(L/(|x_0'||y_0'|))$. Notice that at least in the smooth case, \eqref{eq:distr} is a particular case of the existence of the limit in \eqref{eq:angles}, because the former is equivalent to the fact that in \eqref{eq:angles} we can take first the limit as $t\downarrow0$ and then the limit as $s\downarrow0$ and obtain the same result we would get by taking limits in reverse order.

Given that in the smooth world \eqref{eq:distr} directly implies that the limit in \eqref{eq:angles} exists, it is natural to ask whether the same is true in the non-smooth one. We don't know if anything like this holds. A potential `averaged' version of \eqref{eq:angles} could be 
\[
\lim_{t,s\downarrow0}\frac{W_2^2(\mu_t,\mu)+W_2^2(\nu_s,\mu)-W_2^2(\mu_t,\nu_s)}{2ts},
\]
the point being that one would like this joint limit to exists for appropriate geodesics with time-continuous densities $(\mu_t)$, $(\nu_t)$ starting from the same measure $\mu$. Currently, this is known to be true only on finite dimensional Alexandrov spaces (\cite{Gigli11}) and in this case there is no need for any requirement about the absolute continuity of the $\mu_t$'s and $\nu_t$'s. However, the proof makes heavy use of the lower bound on the sectional curvature and cannot be generalized to infinitesimally Hilbertian spaces with a lowed bound on the Ricci curvature.

The only related result we are aware of about non-smooth spaces with a lower bound on the Ricci is in  the recent paper \cite{Honda11} by Honda, where he proved a weakened $\mm$-a.e. version of \eqref{eq:angles} on spaces which are limits of Riemannian manifolds with Ricci curvature uniformly bounded from below. Specifically, he proved that on a such space $(X,\sfd,\mm)$, for $\mm$-a.e. $x$ the following holds: for any two unit speed geodesics $(x_t)$, $(y_t)$ emanating from $x$ there exists the limit of
\begin{equation}
\label{eq:honda}
\lim_{t\downarrow0}\frac{2t^2-\sfd^2(x_t,y_t)}{2t^2}.
\end{equation}
Unfortunately, this seems a bit weaker than \eqref{eq:angles}, becasue  the limit in \eqref{eq:honda} exists even on normed spaces. In this sense \eqref{eq:honda}  does not encode the information about the local Riemannian structure of the space (it is unclear to me if  the arguments in \cite{Honda11}  can also be used to get  existence of angles as in \eqref{eq:angles}).

\chapter[Eulerian and Lagrangian points of view on lower Ricci curvature bounds ]{Eulerian and Lagrangian points of view on lower Ricci curvature bounds }\label{app:bochner}

Here we collect some comments about the links between the Bochner inequality and the synthetic treatment of lower Ricci curvature bounds, the discussion being inspired by Chapter 14 of \cite{Villani09} and by some conversations I had with Sturm. Recall that the reduced curvature dimension condition $CD^*(K,N)$ introduced in \cite{BacherSturm10} is defined as
\begin{definition}[Reduced curvature dimension condition]
Let $(X,\sfd,\mm)$ be a metric measure space, $K\in \R$ and $N\in[1,\infty)$.  $(X,\sfd,\mm)$ is said to be a $CD^*(K,N)$ space provided for any $\mu_0,\mu_1\in\probt X$ with $\supp(\mu_0),\supp(\mu_1)\subset\supp(\mm)$ there exists $\ppi\in\gopt(\mu_0,\mu_1)$ such that
\begin{equation}
\label{eq:cdstar}
\u_{N'}((\e_t)_\sharp\ppi)\leq-\int\sigma^{(1-t)}_{K,N'}\big(\sfd(\gamma_0,\gamma_1)\big)\rho^{-\frac1{N'}}(\gamma_0)+\sigma^{(t)}_{K,N'}\big(\sfd(\gamma_0,\gamma_1)\big)\eta^{-\frac1{N'}}(\gamma_1)\,\d\ppi(\gamma),
\end{equation}
holds for every $t\in[0,1]$ and every $N'\geq N$, where $\mu=\rho\mm+\mu^s$, $\nu=\eta\mm+\nu^s$ with $\mu^s,\nu^s\perp\mm$. Here the distortion coefficients $\sigma^{(t)}_{K,N}$ are given by
\[
\sigma^{(t)}_{K,N}(\theta):=\left\{
\begin{array}{ll}
+\infty,&\qquad\textrm{ if }K\theta^2\geq N\pi^2,\\
\frac{\sin(t\theta\sqrt{K/N})}{\sin(\theta\sqrt{K/N})}&\qquad\textrm{ if }0<K\theta^2 <N\pi^2,\\
t&\qquad\textrm{ if }K\theta^2=0,\\
\frac{\sinh(t\theta\sqrt{-K/N})}{\sinh(\theta\sqrt{-K/N})}&\qquad\textrm{ if }K\theta^2 <0.
\end{array}
\right.
\]
In the case $N=\infty$ the definition is the same as the one of $CD(K,\infty)$ spaces (Definition \ref{def:cdinfty}).
\end{definition}
As explained in \cite{BacherSturm10}, the relations between the $CD^*(K,N)$ condition - which is stable w.r.t. mGH-convergence - and the standard $CD(K,N)$ one are the following: 
\begin{itemize}
\item[i)] A space satisfies $CD^*(K,N)$ locally if and only if it satisfies $CD(K,N)$ locally (at least in the non-branching case).
\item[ii)] For the $CD^*(K,N)$ condition it is possible to prove the local-to-global property (again, at least in the non-branching case). The same is not known for the $CD(K,N)$ condition.
\item[iii)] With the current knowledge, the $CD^*(K,N)$ condition produces the same kind of inequalities given by the $CD(K,N)$ one (like Bishop-Gromov, Bonnet-Myers ecc..) but with slightly suboptimal constants. 
\end{itemize}
Thus apart from the issue mentioned in point $(iii)$, one could use the $CD^*(K,N)$ condition as substitute for the standard $CD(K,N)$ one. By point $(i)$ the problem of showing that actually $CD^*(K,N)$ yields sharp estimates is equivalent to show that the local to global property holds for the $CD(K,N)$ condition. This crucial problem is currently open, but recently Cavalletti in \cite{Cavalletti12} made important progresses in this direction.

As pointed out to me by Sturm, given the above it is not surprising that the Bochner inequality is linked to the $CD^*(K,N)$ condition - as we shall soon see - rather than to the $CD(K,N)$ one: in both cases Ricci curvature `acts in every direction', while  the $CD(K,N)$ condition encodes the fact that `it does not act in the direction of motion' (see e.g. the introduction of \cite{Sturm06II} and Chapter 14 of \cite{Villani09}).

\bigskip

We shall perform some formal computation on  a smooth Finsler manifold $(F,\|\cdot\|_x,\mm)$. Denote by $\sfd$ the distance induced by the family of norms $\{\|\cdot\|_x\}_{x\in F}$ and recall that under general assumptions, for any $\mu_0,\mu_1\in\probt F$ with $\mu_0=\rho_0\mm$ there exists a unique geodesic $(\mu_t)$ from $\mu_0$ to $\mu_1$ and also a unique lifting $\ppi\in\prob{C([0,1],F)}$ of $(\mu_t)$. By  the Brenier-McCann theorem on Finlser setting (the metric Brenier theorem \ref{thm:brenmetr} is sufficient) we know that  for any smooth Kantorovich potential $\varphi$ inducing $(\mu_t)$ we have $\sfd(\gamma_0,\gamma_1)=\|\nabla\varphi(\gamma_0)\|_{\gamma_0}$ for every $\gamma\in\supp(\ppi)$.

Noticing that the distortion coefficients $\sigma^{(t)}_{K,N}(\theta)$ satisfy the differential equation
\[
\frac{\d^2}{\d t^2}\sigma^{(t)}_{K,N}(\theta)+\theta^2\frac KN\sigma^{(t)}_{K,N}(\theta)=0,
\]
taking two derivatives of $t\mapsto\u_N(\mu_t)$ and with a comparison argument we see that \eqref{eq:cdstar} holds if and only if
\begin{equation}
\label{eq:defcds}
\partial_{tt}\u_N(\mu_t)\restr{t=0}\geq\frac KN\int\rho_0^{1-\frac 1N}\|\nabla\varphi\|^2\,\d{\rm vol},
\end{equation}
holds for every geodesic $(\mu_t)$ as before.

Now pick $\varphi\in C^\infty_c(F)$ and observe that with arguments similar to those presented in Theorem 13.5 in \cite{Villani09} and Lemma 1.34 in \cite{AmbrosioGigli11} valid in a Riemannian context, for $\eps>0$ sufficiently small the function $\psi:=\eps\varphi$ is $\frac{\sfd^2}{2}$-concave. Let $\rho$ be a smooth probability density and notice that the $\frac{\sfd^2}{2}$-concavity of $\psi$ ensures that the curve $[0,1]\ni t\mapsto \exp(-t\nabla\psi)_\sharp(\rho\mm)=\rho_t\mm$ is a geodesic. The evolution of $(\rho_t)$ is driven by
\begin{equation}
\label{eq:cont}
\partial_t\rho_t+\nabla\cdot(\rho_t\nabla\psi_t)=0,
\end{equation}
where $[0,1]\ni t\mapsto\psi_t$ solves
\begin{equation}
\label{eq:HJ}
\partial_t\psi_t+\frac{\|\nabla\psi_t\|^2}{2}=0,
\end{equation}
with $\psi_0:=-\psi$ (see for instance Chapter 2 of \cite{AmbrosioGigli11} or Chapter 7 of \cite{Villani09}).

Using \eqref{eq:cont}, \eqref{eq:HJ} one easily gets, by explicit computation, that 
\begin{equation}
\label{eq:perboch}
\partial_{tt}\u_N(\mu_t)=\int {\rm p}_{2,N}(\rho_t)(\Delta\psi_t)^2-{\rm p}_N(\rho_t)D (\Delta\psi_t)(\nabla\psi_t)-{\rm p}_N(\rho_t)\partial_t\Delta\psi_t\,\d{\rm vol},
\end{equation}
where ${\rm p}_N,{\rm p}_{2,N}:[0,\infty)\to[0,\infty)$ are given by ${\rm p}_N(z):=zu_N'(z)-u_N(z)$, ${\rm p}_{2,N}(z):=z{\rm p}'_N(z)-{\rm p}_N(z)$. Hence if \eqref{eq:defcds} holds we must have
\begin{equation}
\label{eq:keyboc}
\int \rho^{1-\frac1N}\left(-\frac{(\Delta\psi)^2}{N^2}-\frac{D(\Delta\psi)(\nabla\psi)}N-\frac{\partial_t\Delta\psi_t\restr{t=0}}N\right)\,\d{\rm vol}\geq \frac KN\int\rho^{1-\frac1N}\|\nabla\psi\|^2\,\d{\rm vol}.
\end{equation}
Using now the fact that  $\rho$ is non negative and chosen independently on $\psi=\eps\varphi$, from \eqref{eq:keyboc} we deduce
\begin{equation}
\label{eq:bocf}
-\partial_t\Delta\varphi_t\restr{t=0}\geq \frac{(\Delta\varphi)^2}N+D(\Delta\varphi)(\nabla\varphi)+K\|\nabla\varphi\|^2,
\end{equation}
where $(\varphi_t)$ evolves according to \eqref{eq:HJ} with initial condition $\varphi_0:=-\varphi$. Similarly for $N=\infty$. This formal argument shows that if the Finsler manifold  is a $CD^*(K,N)$ space, then the Bochner inequality written as in \eqref{eq:bocf} holds for any smooth $\varphi$. The converse implication can also be achieved by integration and recalling the local-to-global properties of the $CD^*(K,N)$ condition. The argument as presented is only formal because we didn't pay attention to the smoothness of the object involved in computations, but at least on a Riemannian framework it is easy to check that there is indeed sufficient regularity.

Notice that inequality \eqref{eq:bocf} is different from the one rigorously proven in a Finser setting in \cite{Ohta-SturmBoch}: in this reference at the left-hand side there is the term $\Delta^{\nabla\varphi}\frac{|\nabla\varphi|^2}{2}$, where $\Delta^{\nabla\varphi}$ is an appropriate linearization of the Laplacian $\Delta$ along the direction $\nabla\varphi$. Notice also that the Finsler manifold is Riemannian if and only if the Laplacian $\Delta$ is a linear operator: in this case \eqref{eq:bocf} assumes the more familiar form
\[
\Delta\frac{|\nabla\varphi|^2}{2}\geq \frac{(\Delta\varphi)^2}N+\nabla\Delta\varphi\cdot\nabla\varphi+K|\nabla\varphi|^2.
\]

\bigskip

Now observe that inequality \eqref{eq:cdstar} is an inequality concerning the distribution of masses at different times along a $W_2$-geodesic. As such, we can think at it as a Lagrangian point of view on Ricci bounds. Opposed to this, there should be a Eulerian point of view which gives the same information read at the level of velocity vector fields. This is exactly the point of view adopted in the `proof' of Bochner inequality just provided: as we learned from Otto's interpretation of the space $(\probt F,W_2)$ as infinite dimensional manifold, for any $\varphi\in C^\infty_c(F)$ and any $\mu\in\probt F$, the vector field $-\nabla\varphi$ can be seen as the initial velocity of a Wasserstein geodesic starting from $\mu$ (this is made rigorous by \eqref{eq:cont} and \eqref{eq:HJ}). From this perspective, Bochner inequality should be regarded as an inequality for gradients of functions,  rather than for functions themselves. 

\medskip

It seems hard to use these ideas to prove the validity of the Bochner inequality in a non-smooth setting, a problem being justifying the second differentiation in \eqref{eq:perboch}. Beside this, there is another subtle issue that we want to emphasize. At least in the Riemannian case, all the objects appearing in Bochner inequality are quadratic forms, and writing it in an infinitesimally Hilbertian $CD(K,N)$ space we certainly want to keep this property. Now, quadratic forms are defined on vector spaces, but the procedure outlined here makes use of Kantorovich potentials and $c$-concavity is an highly non-linear property. Thus, in a sense, even if one is able to get the Bochner inequality for Kantorovich potentials he would still need to prove that there is a `large' vector space of functions whose multiples are $c$-concave in order to be sure that the derived Bochner inequality is made of quadratic forms. In the smooth case this is easy, because as we mentioned $C^\infty_c(M)$ does the job, but in the non-smooth one this seems an issue. We do not really know whether such vector space exists. Perhaps, if one wants to build it, a possibility could be to try with a regularization via the heat flow. This raises the following question concerning regularization of the heat flow in terms of $c$-concavity, which we believe of independent interest:
\begin{quote}
Is it true that there are constants $\mathcal C_{K,N}(t)$ such that the following holds?

Given an infinitesimally Hilbertian $CD(K,N)$ space $(X,\sfd,\mm)$ and $\rho\mm\in\probt X$ with $\rho\leq 1$ the function $\h_t(\rho)$ is $\mathcal C_{K,N}(t)\frac{\sfd^2}{2}$-concave.
\end{quote}
The problem is open also if `infinitesimally Hilbertian $CD(K,N)$ space' is replaced by `smooth Riemannian manifold with ${\rm Ric}\geq K$ and ${\rm dim}\leq N$'. In this direction, recall that in a smooth world the formula $f_{t,\eps}:=\eps\log(\rho_{\eps t}+1)$ brings solutions of the heat equation $\frac\d{\d t}\rho_t=\Delta\rho_t$ into solutions of the viscous approximation of the Hamilton-Jacobi equation:
\begin{equation}
\label{eq:visc}
\frac\d{\d t}f_{t,\eps}=|\nabla f_{t,\eps}|^2+\eps\Delta f_{t,\eps}.
\end{equation}
Notice that the map $[0,1]\ni z\mapsto\eps\log(z+1)$ has derivative bounded from above and below by positive constants, therefore if $\rho_{\eps t}$ is $C\frac{\sfd^2}2$-concave then $f_{t,\eps}$ is $c_1C\frac{\sfd^2}{2}$-concave and viceversa if  $f_{t,\eps}$ is $C\frac{\sfd^2}{2}$-concave then  $\rho_{\eps t}$ is $c_2C\frac{\sfd^2}2$-concave, for some $c_1,c_2>0$. Thus in the smooth case the above problem can also be formulated at the level of solutions of \eqref{eq:visc} rather than for the heat flow.

Now observe that  as $\eps\downarrow0$, the functions $f_{t,\eps}$ converge to the unique viscous solution $t\mapsto f_{t,0}$ of the Hamilton-Jacobi equation, and we know from the Hopf-Lax formula that in such limiting case $f_{t,0}$ is indeed $-\frac{\sfd^2}t$-concave independently on any curvature-dimension bound. By the Oleinik principle we know that on the Euclidean space $\R^d$, also solutions of the viscous approximation \eqref{eq:visc} are $-\frac{\sfd^2}t$-concave. Therefore in the smooth case the question can be reformulated as: 
\begin{quote}
is it true that for a given $\eps>0$ an Oleinik-type principle holds for  \eqref{eq:visc} uniformly under a   curvature-dimension bound?
\end{quote}

\bigskip

We conclude recalling that  the only non-smooth situation where Bochner inequality has been proved is  the case $N=\infty$ and for infinitesimally Hilbertian spaces. The strategy, proposed in \cite{Gigli-Kuwada-Ohta10} and generalized in \cite{AmbrosioGigliSavare11-2}, is different from the one outlined above and uses twice a duality argument based on infinitesimal Hilbertianity. The idea is the following: first one uses the $K$-convexity of the entropy and infinitesimal Hilbertianity to deduce that the gradient flow $\h_t$ of the entropy $K$-contracts the $W_2$-distance (see also Appendix \ref{app:infhil}) , i.e.
\[
W_2(\h_t(\mu),\h_t(\nu))\leq e^{-Kt}W_2(\mu,\nu),\qquad\forall t\geq 0,\ \mu,\nu\in\probt X.
\]
Then one uses once again infinitesimal Hilbertianity to get that the flow $\h_t$, which coincides with the gradient flow of the energy $f\mapsto\frac12\int\weakgrad f^2\,\d\mm$, is linear and, by a general duality principle due to  Kuwada (see \cite{Kuwada10}) to deduce that
\[
\lip(\h_t(f))^2\leq e^{-Kt}\h_t(\lip(f)^2),\qquad\forall f\in\Lip(X)\cap L^2(X,\mm),
\]
then with a relaxation procedure based on the lower semicontinuity of minimal weak upper gradients and the density result in Theorem \ref{thm:stronglip} one deduces
\[
|\nabla\h_t(f)|^2\leq e^{-Kt}\h_t(|\nabla f|^2),\qquad\mm\ae,,\qquad\forall f\in W^{1,2}(X,\sfd,\mm),
\]
which, taking the derivative at $t=0$, is equivalent to the Bochner inequality for $N=\infty$.

\backmatter

\begin{thebibliography}{CDNnZ{\etalchar{+}}21}

\bibitem[ABS19a]{ABS19}
Luigi Ambrosio, Elia Bru\`{e}, and Daniele Semola, \emph{Rigidity of the
  1-{B}akry-\'{E}mery inequality and sets of finite perimeter in {${\rm RCD}$}
  spaces}, Geom. Funct. Anal. \textbf{29} (2019), no.~4, 949--1001.
  \MR{3990192}

\bibitem[ABS19b]{AntBrSe19}
Gioacchino Antonelli, Elia Bru\`{e}, and Daniele Semola, \emph{Volume bounds
  for the quantitative singular strata of non collapsed {${\rm RCD}$} metric
  measure spaces}, Anal. Geom. Metr. Spaces \textbf{7} (2019), no.~1, 158--178.
  \MR{4015195}

\bibitem[AFP22]{AFP21}
Gioacchino Antonelli, Mattia Fogagnolo, and Marco Pozzetta, \emph{The
  isoperimetric problem on {R}iemannian manifolds via {G}romov--{H}ausdorff
  asymptotic analysis}, Communications in Contemporary Mathematics (2022).

\bibitem[AG13]{AmbrosioGigli11}
Luigi Ambrosio and Nicola Gigli, \emph{A user's guide to optimal transport},
  Modelling and Optimisation of Flows on Networks, Lecture Notes in
  Mathematics, Springer Berlin Heidelberg, 2013, pp.~1--155 (English).

\bibitem[AGMR12]{AmbrosioGigliMondinoRajala12}
Luigi Ambrosio, Nicola Gigli, Andrea Mondino, and Tapio Rajala,
  \emph{Riemannian {R}icci curvature lower bounds in metric measure spaces with
  $\sigma$-finite measure}, Trans. Amer. Math. Soc. \textbf{367} (2012), no.~7,
  4661--4701, arXiv:1207.4924. \MR{3335397}

\bibitem[AGS08]{AmbrosioGigliSavare08}
Luigi Ambrosio, Nicola Gigli, and Giuseppe Savar{\'e}, \emph{Gradient flows in
  metric spaces and in the space of probability measures}, second ed., Lectures
  in Mathematics ETH Z\"urich, Birkh\"auser Verlag, Basel, 2008. \MR{2401600
  (2009h:49002)}

\bibitem[AGS13]{AmbrosioGigliSavare11-3}
\bysame, \emph{Density of {L}ipschitz functions and equivalence of weak
  gradients in metric measure spaces}, Rev. Mat. Iberoam. \textbf{29} (2013),
  no.~3, 969--996, arXiv:1111.3730. \MR{3090143}

\bibitem[AGS14a]{AmbrosioGigliSavare11}
\bysame, \emph{Calculus and heat flow in metric measure spaces and applications
  to spaces with {R}icci bounds from below}, Invent. Math. \textbf{195} (2014),
  no.~2, 289--391, arXiv:1106.2090. \MR{3152751}

\bibitem[AGS14b]{AmbrosioGigliSavare11-2}
\bysame, \emph{Metric measure spaces with {R}iemannian {R}icci curvature
  bounded from below}, Duke Math. J. \textbf{163} (2014), no.~7, 1405--1490,
  arXiv:1109.0222. \MR{3205729}

\bibitem[AGS15]{AmbrosioGigliSavare12}
\bysame, \emph{Bakry-\'{E}mery curvature-dimension condition and {R}iemannian
  {R}icci curvature bounds}, The Annals of Probability \textbf{43} (2015),
  no.~1, 339--404, arXiv:1209.5786. \MR{3298475}

\bibitem[Amb04]{Ambrosio04}
Luigi Ambrosio, \emph{Transport equation and {C}auchy problem for {$BV$} vector
  fields}, Invent. Math. \textbf{158} (2004), no.~2, 227--260. \MR{2096794
  (2005f:35127)}

\bibitem[Amb18]{AmbrosioICM}
\bysame, \emph{Calculus, heat flow and curvature-dimension bounds in metric
  measure spaces}, Proceedings of the {I}nternational {C}ongress of
  {M}athematicians---{R}io de {J}aneiro 2018. {V}ol. {I}. {P}lenary lectures,
  World Sci. Publ., Hackensack, NJ, 2018, pp.~301--340. \MR{3966731}

\bibitem[APP22]{APP22}
Gioacchino Antonelli, Enrico Pasqualetto, and Marco Pozzetta,
  \emph{Isoperimetric sets in spaces with lower bounds on the {R}icci
  curvature}, Nonlinear Anal. \textbf{220} (2022), Paper No. 112839, 59.
  \MR{4390485}

\bibitem[BB07]{Bjorn-Bjorn07}
Anders Bj{\"o}rn and Jana Bj{\"o}rn, \emph{Approximations by regular sets and
  {W}iener solutions in metric spaces}, Comment. Math. Univ. Carolin.
  \textbf{48} (2007), no.~2, 343--355. \MR{2338101 (2008d:31017)}

\bibitem[BB11]{Bjorn-Bjorn11}
\bysame, \emph{Nonlinear potential theory on metric spaces}, EMS Tracts in
  Mathematics, vol.~17, European Mathematical Society (EMS), Z\"urich, 2011.
  \MR{2867756}

\bibitem[BCS00]{BCS00}
David Bao, Shiing-shen Chern, and Zhongmin Shen, \emph{An introduction to
  {R}iemann-{F}insler geometry}, Graduate Texts in Mathematics, vol. 200,
  Springer-Verlag, New York, 2000. \MR{1747675 (2001g:53130)}

\bibitem[B{\'E}85]{BakryEmery85}
Dominique Bakry and Michel {\'E}mery, \emph{Diffusions hypercontractives},
  S\'eminaire de probabilit\'es, {XIX}, 1983/84, Lecture Notes in Math., vol.
  1123, Springer, Berlin, 1985, pp.~177--206. \MR{889476 (88j:60131)}

\bibitem[BEMG85a]{BEMSG85}
John~K. Beem, Paul~E. Ehrlich, Steen Markvorsen, and Gregory~J. Galloway,
  \emph{Decomposition theorems for {L}orentzian manifolds with nonpositive
  curvature}, J. Differential Geom. \textbf{22} (1985), no.~1, 29--42.
  \MR{826422 (87i:53093b)}

\bibitem[BEMG85b]{BEMSG84}
\bysame, \emph{A {T}oponogov splitting theorem for {L}orentzian manifolds},
  Global differential geometry and global analysis 1984 ({B}erlin, 1984),
  Lecture Notes in Math., vol. 1156, Springer, Berlin, 1985, pp.~1--13.
  \MR{824057 (87i:53093a)}

\bibitem[BGHZ23]{BGHZ23}
Camillo Brena, Nicola Gigli, Shouhei Honda, and Xingyu Zhu, \emph{Weakly
  non-collapsed {$\sf{RCD}$} spaces are strongly non-collapsed}, J. Reine
  Angew. Math. \textbf{794} (2023), 215--252. \MR{4529413}

\bibitem[BNS22]{BNS22}
Elia Bru\`e, Aaron Naber, and Daniele Semola, \emph{Boundary regularity and
  stability for spaces with {R}icci bounded below}, Invent. Math. \textbf{228}
  (2022), no.~2, 777--891. \MR{4411732}

\bibitem[BPS21]{BPS21}
Elia Bru\`e, Enrico Pasqualetto, and Daniele Semola, \emph{Rectifiability of
  {${\rm RCD}(K,N)$}spaces via {$\delta$}-splitting maps}, Ann. Fenn. Math.
  \textbf{46} (2021), no.~1, 465--482. \MR{4277822}

\bibitem[BS10]{BacherSturm10}
Kathrin Bacher and Karl-Theodor Sturm, \emph{Localization and tensorization
  properties of the curvature-dimension condition for metric measure spaces},
  J. Funct. Anal. \textbf{259} (2010), no.~1, 28--56, arXiv:1003.2116.
  \MR{2610378 (2011i:53050)}

\bibitem[BS20]{BS18}
Elia Bru\'{e} and Daniele Semola, \emph{Constancy of the dimension for {${\rm
  RCD}(K,N)$} spaces via regularity of {L}agrangian flows}, Comm. Pure Appl.
  Math. \textbf{73} (2020), no.~6, 1141--1204. \MR{4156601}

\bibitem[BZ94]{BorzellinoZhu94}
Joseph~E. Borzellino and Shun-Hui Zhu, \emph{The splitting theorem for
  orbifolds}, Illinois J. Math. \textbf{38} (1994), no.~4, 679--691.
  \MR{1283015 (95c:53043)}

\bibitem[Cav14]{Cavalletti12}
Fabio Cavalletti, \emph{Decomposition of geodesics in the {W}asserstein space
  and the globalization problem}, Geom. Funct. Anal. \textbf{24} (2014), no.~2,
  493--551, arXiv:1209.5909. \MR{3192034}

\bibitem[CC96]{Cheeger-Colding96}
Jeff Cheeger and Tobias~Holck Colding, \emph{Lower bounds on {R}icci curvature
  and the almost rigidity of warped products}, Ann. of Math. (2) \textbf{144}
  (1996), no.~1, 189--237. \MR{1405949 (97h:53038)}

\bibitem[CDNnZ{\etalchar{+}}21]{CDPSW21}
Chris Connell, Xianzhe Dai, Jes\'{u}s N\'{u}\~{n}ez Zimbr\'{o}n, Raquel
  Perales, Pablo Su\'{a}rez-Serrato, and Guofang Wei, \emph{Maximal volume
  entropy rigidity for {${\rm RCD}^*(-(N-1),N)$} spaces}, J. Lond. Math. Soc.
  (2) \textbf{104} (2021), no.~4, 1615--1681. \MR{4339946}

\bibitem[CG72]{Cheeger-Gromoll-splitting}
Jeff Cheeger and Detlef Gromoll, \emph{The splitting theorem for manifolds of
  nonnegative {R}icci curvature}, J. Differential Geometry \textbf{6}
  (1971/72), 119--128. \MR{0303460 (46 \#2597)}

\bibitem[CGP21]{CGP21}
Emanuele Caputo, Nicola Gigli, and Enrico Pasqualetto, \emph{Parallel transport
  on non-collapsed {${\rm RCD}(K,N)$} spaces}, Preprint, arXiv:2108.07531,
  2021.

\bibitem[Che99]{Cheeger00}
Jeff Cheeger, \emph{Differentiability of {L}ipschitz functions on metric
  measure spaces}, Geom. Funct. Anal. \textbf{9} (1999), no.~3, 428--517.
  \MR{2000g:53043}

\bibitem[Che22a]{Chen22}
Lina Chen, \emph{Almost volume cone implies almost metric cone for annuluses
  centered at a compact set in {${\rm RCD}(K,N)$} -spaces}, Preprint, arXiv:
  2112.09353, 2022.

\bibitem[Che22b]{Chen22b}
Lina Chen, \emph{Quantitative maximal volume entropy rigidity on {A}lexandrov
  spaces}, Proc. Amer. Math. Soc. \textbf{150} (2022), no.~7, 3103--3123.
  \MR{4428892}

\bibitem[CJN21]{ChJiNa21}
Jeff Cheeger, Wenshuai Jiang, and Aaron Naber, \emph{Rectifiability of singular
  sets of noncollapsed limit spaces with {R}icci curvature bounded below}, Ann.
  of Math. (2) \textbf{193} (2021), no.~2, 407--538. \MR{4226910}

\bibitem[CM17]{CavMon15}
Fabio Cavalletti and Andrea Mondino, \emph{Sharp and rigid isoperimetric
  inequalities in metric-measure spaces with lower ricci curvature bounds},
  Invent. Math. \textbf{208} (2017), no.~3, 803--849. \MR{3648975}

\bibitem[Coh93]{Cohn80}
Donald~L. Cohn, \emph{Measure theory}, Birkh\"auser Boston Inc., Boston, MA,
  1993, Reprint of the 1980 original. \MR{1454121 (98b:28001)}

\bibitem[CS12]{Cavalletti-Sturm12}
Fabio Cavalletti and Karl-Theodor Sturm, \emph{Local curvature-dimension
  condition implies measure-contraction property}, J. Funct. Anal. \textbf{262}
  (2012), no.~12, 5110--5127, arXiv:1112.4991. \MR{2916062}

\bibitem[CV36]{CohnVossen36}
Stephan Cohn-Vossen, \emph{Totalkr\"ummung und geod\"atische {L}inien auf
  einfach zusammenh\"angenden offenen volst\"andigen {F}l\"achenst\"ucken},
  Recueil Math. Moscow \textbf{43} (1936), 139--163.

\bibitem[CZZ22]{CZZ22}
Chung-Kwong Chan, Hui-Chun Zhang, and Xi-Ping Zhu, \emph{One-phase free
  boundary problems on {RCD} metric measure spaces}, Preprint,
  arXiv:2112.06962, 2022.

\bibitem[Den20]{Deng20}
Qin Deng, \emph{H\"older continuity of tangent cones in {${\rm RCD}(K,N)$}
  spaces and applications to non-branching}, Preprint, arXiv:2009.07956, 2020.

\bibitem[DGGGM18]{DGGGM18}
Qintao Deng, Fernando Galaz-Garc\'{\i}a, Luis Guijarro, and Michael Munn,
  \emph{Three-dimensional {A}lexandrov spaces with positive or nonnegative
  {R}icci curvature}, Potential Anal. \textbf{48} (2018), no.~2, 223--238.
  \MR{3748392}

\bibitem[Die75]{Diestel75}
Joseph Diestel, \emph{Geometry of {B}anach spaces---selected topics}, Lecture
  Notes in Mathematics, Vol. 485, Springer-Verlag, Berlin, 1975. \MR{0461094
  (57 \#1079)}

\bibitem[DL89]{DiPerna-Lions89}
Ronald~J. DiPerna and Pierre-Louis Lions, \emph{Ordinary differential
  equations, transport theory and {S}obolev spaces}, Invent. Math. \textbf{98}
  (1989), no.~3, 511--547. \MR{1022305 (90j:34004)}

\bibitem[DPG16]{DPG16}
Guido De~Philippis and Nicola Gigli, \emph{From volume cone to metric cone in
  the nonsmooth setting}, Geom. Funct. Anal. \textbf{26} (2016), no.~6,
  1526--1587. \MR{3579705}

\bibitem[DS08]{DaneriSavare08}
Sara Daneri and Giuseppe Savar{\'e}, \emph{Eulerian calculus for the
  displacement convexity in the {W}asserstein distance}, SIAM J. Math. Anal.
  \textbf{40} (2008), no.~3, 1104--1122, arXiv:0801.2455. \MR{2452882
  (2009k:58072)}

\bibitem[DSS22]{DSS22}
Alix Deruelle, Felix Schulze, and Miles Simon, \emph{Initial stability
  estimates for {R}icci flow and three dimensional {R}icci-pinched manifolds},
  Preprint, arXiv:2203.15313v1, 2022.

\bibitem[EH84]{EschenburgHeintze84}
Jost Eschenburg and Ernst Heintze, \emph{An elementary proof of the
  {C}heeger-{G}romoll splitting theorem}, Ann. Global Anal. Geom. \textbf{2}
  (1984), no.~2, 141--151. \MR{777905 (86h:53042)}

\bibitem[ES21]{ES21}
Matthias Erbar and Karl-Theodor Sturm, \emph{Rigidity of cones with bounded
  {R}icci curvature}, J. Eur. Math. Soc. (JEMS) \textbf{23} (2021), no.~1,
  219--235. \MR{4186467}

\bibitem[Esc88]{Eschenburg85}
J.-H. Eschenburg, \emph{The splitting theorem for space-times with strong
  energy condition}, J. Differential Geom. \textbf{27} (1988), no.~3, 477--491.
  \MR{940115 (89f:53096)}

\bibitem[FG11]{FigalliGigli11}
Alessio Figalli and Nicola Gigli, \emph{Local semiconvexity of {K}antorovich
  potentials on non-compact manifolds}, ESAIM Control Optim. Calc. Var.
  \textbf{17} (2011), no.~3, 648--653. \MR{2826973 (2012f:49102)}

\bibitem[Gal84]{Galloway84}
Gregory~J. Galloway, \emph{Splitting theorems for spatially closed
  space-times}, Comm. Math. Phys. \textbf{96} (1984), no.~4, 423--429.
  \MR{775039 (86c:53042)}

\bibitem[Gal89]{Galloway89}
\bysame, \emph{The {L}orentzian splitting theorem without the completeness
  assumption}, J. Differential Geom. \textbf{29} (1989), no.~2, 373--387.
  \MR{982181 (90d:53077)}

\bibitem[GH13]{GigliHan13}
Nicola Gigli and Bangxian Han, \emph{The continuity equation on metric measure
  spaces}, Calc. Var. Partial Differential Equations \textbf{53} (2013),
  no.~1-2, 149--177, arXiv:1406.6350. \MR{3336316}

\bibitem[Gig10]{Gigli10}
Nicola Gigli, \emph{On the heat flow on metric measure spaces: existence,
  uniqueness and stability}, Calc. Var. PDE \textbf{39} (2010), no.~1-2,
  101--120. \MR{2659681}

\bibitem[Gig11]{Gigli11}
\bysame, \emph{On the inverse implication of {B}renier-{M}c{C}ann theorems and
  the structure of {$(\mathscr P_2(M),W_2)$}}, Methods Appl. Anal. \textbf{18}
  (2011), no.~2, 127--158. \MR{2847481 (2012h:49090)}

\bibitem[Gig12]{Gigli12a}
\bysame, \emph{Optimal maps in non branching spaces with {R}icci curvature
  bounded from below}, Geom. Funct. Anal. \textbf{22} (2012), no.~4, 990--999.
  \MR{2984123}

\bibitem[Gig15]{Gigli12}
\bysame, \emph{On the differential structure of metric measure spaces and
  applications}, Mem. Amer. Math. Soc. \textbf{236} (2015), no.~1113, vi+91,
  arXiv:1205.6622. \MR{3381131}

\bibitem[GKKO20]{GKKO20}
Nicola Gigli, Christian Ketterer, Kazumasa Kuwada, and Shin-ichi Ohta,
  \emph{Rigidity for the spectral gap on {${\rm Rcd}(K, \infty)$}-spaces},
  Amer. J. Math. \textbf{142} (2020), no.~5, 1559--1594. \MR{4150652}

\bibitem[GKO13]{Gigli-Kuwada-Ohta10}
Nicola Gigli, Kazumasa Kuwada, and Shin-ichi Ohta, \emph{Heat flow on
  {A}lexandrov spaces}, Communications on Pure and Applied Mathematics
  \textbf{66} (2013), no.~3, 307--331, arXiv:1008.1319.

\bibitem[GM13]{Gigli-Mondino12}
Nicola Gigli and Andrea Mondino, \emph{A {PDE} approach to nonlinear potential
  theory in metric measure spaces}, J. Math. Pures Appl. (9) \textbf{100}
  (2013), no.~4, 505--534, arXiv:1209.3796. \MR{3102164}

\bibitem[GM14]{Gigli-Mosconi12}
Nicola Gigli and Sunra Mosconi, \emph{The {A}bresch-{G}romoll inequality in a
  non-smooth setting}, Discrete Contin. Dyn. Syst. \textbf{34} (2014), no.~4,
  1481--1509, arXiv:1209.3813. \MR{3121629}

\bibitem[GMR15]{GMR15}
Nicola Gigli, Andrea Mondino, and Tapio Rajala, \emph{Euclidean spaces as weak
  tangents of infinitesimally {H}ilbertian metric measure spaces with {R}icci
  curvature bounded below}, J. Reine Angew. Math. \textbf{705} (2015),
  233--244. \MR{3377394}

\bibitem[GMS15]{GMS15}
Nicola Gigli, Andrea Mondino, and Giuseppe Savar\'e, \emph{Convergence of
  pointed non-compact metric measure spaces and stability of {R}icci curvature
  bounds and heat flows}, Proc. Lond. Math. Soc. (3) \textbf{111} (2015),
  no.~5, 1071--1129, arXiv:1311.4907. \MR{3477230}

\bibitem[GP20]{GP19}
Nicola Gigli and Enrico Pasqualetto, \emph{Lectures on nonsmooth differential
  geometry}, SISSA Springer Series, vol.~2, Springer, Cham, [2020] \copyright
  2020. \MR{4321459}

\bibitem[GR18]{GR17}
Nicola Gigli and Chiara Rigoni, \emph{Recognizing the flat torus among {${\rm
  RCD}^*(0,N)$} spaces via the study of the first cohomology group}, Calc. Var.
  Partial Differential Equations \textbf{57} (2018), no.~4, Art. 104, 39.
  \MR{3814057}

\bibitem[GSR19]{GSR19}
Luis Guijarro and Jaime Santos-Rodr\'{\i}guez, \emph{On the isometry group of
  {${\rm RCD}^*(K,N)$}-spaces}, Manuscripta Math. \textbf{158} (2019), no.~3-4,
  441--461. \MR{3914958}

\bibitem[GV23]{GV21}
Nicola Gigli and Ivan~Yuri Violo, \emph{Monotonicity formulas for harmonic
  functions in {${\rm RCD}(0,N)$} spaces}, The Journal of Geometric Analysis
  \textbf{33} (2023), no.~3, 100.

\bibitem[Han21]{Han21}
Bang-Xian Han, \emph{Rigidity of some functional inequalities on {RCD} spaces},
  J. Math. Pures Appl. (9) \textbf{145} (2021), 163--203. \MR{4188328}

\bibitem[Han23]{han2023}
Bang-Xian Han, \emph{Sharp and rigid isoperimetric inequality in metric measure
  spaces with non-negative ricci curvature}, Preprint, arXiv: 2212.11570, 2023.

\bibitem[HH23]{HH23}
Hongzhi Huang and Xian-Tao Huang, \emph{Almost splitting maps, transformation
  theorems and smooth fibration theorems}, Preprint, arXiv:2207.10029, 2023.

\bibitem[Hon14]{Honda11}
Shouhei Honda, \emph{A weakly second-order differential structure on
  rectifiable metric measure spaces}, Geom. Topol. \textbf{18} (2014), no.~2,
  633--668, arXiv:1112.0099. \MR{3180482}

\bibitem[Hon20a]{Honda16}
\bysame, \emph{Collapsed {R}icci limit spaces as non-collapsed {RCD} spaces},
  SIGMA Symmetry Integrability Geom. Methods Appl. \textbf{16} (2020), Paper
  No. 021, 10. \MR{4080800}

\bibitem[Hon20b]{H19}
\bysame, \emph{New differential operator and noncollapsed {RCD} spaces}, Geom.
  Topol. \textbf{24} (2020), no.~4, 2127--2148. \MR{4173928}

\bibitem[HP23]{HP23}
Shouhei Honda and Yuanlin Peng, \emph{A note on the topological stability
  theorem from {RCD} spaces to {R}iemannian manifolds}, Manuscripta Math.
  \textbf{172} (2023), no.~3-4, 971--1007. \MR{4651111}

\bibitem[HS23]{HS23}
Shouhei Honda and Yannick Sire, \emph{Sobolev mappings between {RCD} spaces and
  applications to harmonic maps: a heat kernel approach}, J. Geom. Anal.
  \textbf{33} (2023), no.~9, Paper No. 272, 87. \MR{4603300}

\bibitem[Hua18]{Huang18}
Xian-Tao Huang, \emph{Non-compact {$\text{RCD}(0,N)$} spaces with linear volume
  growth}, J. Geom. Anal. \textbf{28} (2018), no.~2, 1005--1051. \MR{3790490}

\bibitem[Hua20]{Huang20b}
\bysame, \emph{An almost rigidity theorem and its applications to noncompact
  {${\rm RCD}(0,N)$} spaces with linear volume growth}, Commun. Contemp. Math.
  \textbf{22} (2020), no.~4, 1850076, 47. \MR{4106816}

\bibitem[Hua23]{Huang23}
Zhangkai Huang, \emph{Isometric immersions of {${\rm RCD}(K,N)$} spaces via
  heat kernels}, Calc. Var. Partial Differential Equations \textbf{62} (2023),
  no.~4, Paper No. 121, 48. \MR{4565043}

\bibitem[Jia19]{Jiang19}
Yin Jiang, \emph{Maximal bottom of spectrum or volume entropy rigidity in
  {A}lexandrov geometry}, Math. Z. \textbf{291} (2019), no.~1-2, 55--84.
  \MR{3936060}

\bibitem[JKO98]{JKO98}
Richard Jordan, David Kinderlehrer, and Felix Otto, \emph{The variational
  formulation of the {F}okker-{P}lanck equation}, SIAM J. Math. Anal.
  \textbf{29} (1998), no.~1, 1--17. \MR{1617171 (2000b:35258)}

\bibitem[Kap21]{Kap21}
Vitali Kapovitch, \emph{Mixed curvature almost flat manifolds}, Geom. Topol.
  \textbf{25} (2021), no.~4, 2017--2059. \MR{4286368}

\bibitem[Ket15a]{Ketterer13}
Christian Ketterer, \emph{Cones over metric measure spaces and the maximal
  diameter theorem}, J. Math. Pures Appl. (9) \textbf{103} (2015), no.~5,
  1228--1275. \MR{3333056}

\bibitem[Ket15b]{K15}
\bysame, \emph{Obata's rigidity theorem for metric measure spaces}, Anal. Geom.
  Metr. Spaces \textbf{3} (2015), no.~1, 278--295. \MR{3403434}

\bibitem[KK20]{KK18}
Vitali Kapovitch and Christian Ketterer, \emph{C{D} meets {CAT}}, J. Reine
  Angew. Math. \textbf{766} (2020), 1--44. \MR{4145200}

\bibitem[KKL23]{KKL23}
Christian Ketterer, Yu~Kitabeppu, and Sajjad Lakzian, \emph{The rigidity of
  sharp spectral gap in non-negatively curved spaces}, Nonlinear Anal.
  \textbf{228} (2023), Paper No. 113202, 62. \MR{4526556}

\bibitem[KL16]{Kit16}
Yu~Kitabeppu and Sajjad Lakzian, \emph{Characterization of low dimensional
  {${\rm RCD}^*(K,N)$} spaces}, Anal. Geom. Metr. Spaces \textbf{4} (2016),
  no.~1, 187--215. \MR{3550295}

\bibitem[KM21]{KM19}
Vitali Kapovitch and Andrea Mondino, \emph{On the topology and the boundary of
  {$N$}-dimensional {$\rm{RCD}(K,N)$} spaces}, Geom. Topol. \textbf{25} (2021),
  no.~1, 445--495. \MR{4226234}

\bibitem[KRN65]{Kuratowski65}
K.~Kuratowski and C.~Ryll-Nardzewski, \emph{A general theorem on selectors},
  Bull. Acad. Polon. Sci. S\'er. Sci. Math. Astronom. Phys. \textbf{13} (1965),
  397--403. \MR{0188994 (32 \#6421)}

\bibitem[KS11]{KuwaeShioya11}
Kazuhiro Kuwae and Takashi Shioya, \emph{A topological splitting theorem for
  weighted {A}lexandrov spaces}, Tohoku Math. J. (2) \textbf{63} (2011), no.~1,
  59--76, arXiv:0903.5150. \MR{2788776 (2012e:53058)}

\bibitem[Kuw10]{Kuwada10}
Kazumasa Kuwada, \emph{Duality on gradient estimates and {W}asserstein
  controls}, J. Funct. Anal. \textbf{258} (2010), no.~11, 3758--3774,
  arXiv:0910.1741. \MR{2606871 (2011d:35109)}

\bibitem[KZ12]{Koskela-Zhou12}
Pekka Koskela and Yuan Zhou, \emph{Geometry and analysis of {D}irichlet forms},
  Adv. Math. \textbf{231} (2012), no.~5, 2755--2801, arXiv:1208.4955.
  \MR{2970465}

\bibitem[KZ23]{KZ23}
Vitali Kapovitch and Xingyu Zhu, \emph{On the intrinsic and extrinsic boundary
  for metric measure spaces with lower curvature bounds}, Ann. Global Anal.
  Geom. \textbf{64} (2023), no.~2, Paper No. 17, 18. \MR{4635779}

\bibitem[Law02]{Lawvere74}
F.~William Lawvere, \emph{Metric spaces, generalized logic, and closed
  categories [{R}end. {S}em. {M}at. {F}is. {M}ilano {\bf 43} (1973), 135--166
  (1974); {MR}0352214 (50 \#4701)]}, Repr. Theory Appl. Categ. (2002), no.~1,
  1--37, With an author commentary: Enriched categories in the logic of
  geometry and analysis. \MR{1925933 (2003i:18014)}

\bibitem[Lis07]{Lisini07}
Stefano Lisini, \emph{Characterization of absolutely continuous curves in
  {W}asserstein spaces}, Calc. Var. Partial Differential Equations \textbf{28}
  (2007), no.~1, 85--120. \MR{MR2267755}

\bibitem[LS23]{LS18}
Alexander Lytchak and Stephan Stadler, \emph{{R}icci curvature in dimension 2},
  J. Eur. Math. Soc. (JEMS) \textbf{25} (2023), no.~3, 845--867. \MR{4577954}

\bibitem[LV07]{Lott-Villani07}
John Lott and C{\'e}dric Villani, \emph{Weak curvature conditions and
  functional inequalities}, J. Funct. Anal. \textbf{245} (2007), no.~1,
  311--333, arXiv:math/0506481. \MR{2311627 (2008f:53039)}

\bibitem[LV09]{Lott-Villani09}
\bysame, \emph{Ricci curvature for metric-measure spaces via optimal
  transport}, Ann. of Math. (2) \textbf{169} (2009), no.~3, 903--991,
  arXiv:math/0412127. \MR{2480619 (2010i:53068)}

\bibitem[Mil67]{Milka67}
A.~D. Milka, \emph{Metric structure of a certain class of spaces that contain
  straight lines}, Ukrain. Geometr. Sb. Vyp. \textbf{4} (1967), 43--48.
  \MR{0256327 (41 \#983)}

\bibitem[MMP22]{MMP22}
Ilaria Mondello, Andrea Mondino, and Raquel Perales, \emph{An upper bound on
  the revised first {B}etti number and a torus stability result for
  {$\rm{RCD}$} spaces}, Comment. Math. Helv. \textbf{97} (2022), no.~3,
  555--609. \MR{4468994}

\bibitem[MN19]{Mondino-Naber14}
Andrea Mondino and Aaron Naber, \emph{Structure theory of metric measure spaces
  with lower {R}icci curvature bounds}, J. Eur. Math. Soc. (JEMS) \textbf{21}
  (2019), no.~6, 1809--1854. \MR{3945743}

\bibitem[MN22]{MN22}
Andrea Mondino and Dimitri Navarro, \emph{Moduli spaces of compact {${\rm
  RCD}(0,N)$}-structures}, Mathematische Annalen (2022).

\bibitem[MS21]{MS21}
Andrea Mondino and Daniele Semola, \emph{Weak laplacian bounds and minimal
  boundaries in non-smooth spaces with {R}icci curvature lower bounds}, To
  appear in Memoirs of the AMS, arXiv:2107.12344, 2021.

\bibitem[MW19]{MW19}
Andrea Mondino and Guofang Wei, \emph{On the universal cover and the
  fundamental group of an {${\rm RCD}^*(K,N)$}-space}, J. Reine Angew. Math.
  \textbf{753} (2019), 211--237. \MR{3987869}

\bibitem[New90]{Newman90}
Richard P. A.~C. Newman, \emph{A proof of the splitting conjecture of
  {S}.-{T}.\ {Y}au}, J. Differential Geom. \textbf{31} (1990), no.~1, 163--184.
  \MR{1030669 (91h:53062)}

\bibitem[Oht09]{Ohta09}
Shin-ichi Ohta, \emph{Finsler interpolation inequalities}, Calc. Var. Partial
  Differential Equations \textbf{36} (2009), no.~2, 211--249. \MR{2546027
  (2011m:58027)}

\bibitem[Oht15]{Ohta12}
\bysame, \emph{Splitting theorems for {F}insler manifolds of nonnegative
  {R}icci curvature}, J. Reine Angew. Math. \textbf{700} (2015), 155--174,
  arXiv:1203.0079. \MR{3318514}

\bibitem[OS12]{OhSt12}
Shin-ichi Ohta and Karl-Theodor Sturm, \emph{Non-contraction of heat flow on
  {M}inkowski spaces}, Arch. Ration. Mech. Anal. \textbf{204} (2012), no.~3,
  917--944, arXiv:1009.2312. \MR{2917125}

\bibitem[OS14]{Ohta-SturmBoch}
\bysame, \emph{Bochner-{W}eitzenb\"{o}ck formula and {L}i-{Y}au estimates on
  {F}insler manifolds}, Adv. Math. \textbf{252} (2014), 429--448,
  arXiv:1104.5276. \MR{3144236}

\bibitem[Ott01]{Otto01}
Felix Otto, \emph{The geometry of dissipative evolution equations: the porous
  medium equation}, Comm. Partial Differential Equations \textbf{26} (2001),
  no.~1-2, 101--174. \MR{1842429 (2002j:35180)}

\bibitem[Qia22]{Qian2022}
Xin Qian, \emph{{${\rm RCD}(0,N)$}-spaces with small linear diameter growth},
  Preprint, arXiv: 2212.08786, 2022.

\bibitem[Raj12]{Rajala12}
Tapio Rajala, \emph{Local {P}oincar\'e inequalities from stable curvature
  conditions on metric spaces}, Calc. Var. Partial Differential Equations
  \textbf{44} (2012), no.~3-4, 477--494, arXiv:1107.4842. \MR{2915330}

\bibitem[RS12]{RajalaSturm12}
Tapio Rajala and Karl-Theodor Sturm, \emph{Non-branching geodesics and optimal
  maps in strong ${CD(K,{\infty})}$-spaces}, Calc. Var. Partial Differential
  Equations \textbf{50} (2012), no.~3-4, 831--846, arXiv:1207.6754.
  \MR{3216835}

\bibitem[Sav14]{Savare13}
Giuseppe Savar{\'e}, \emph{Self-improvement of the {B}akry-\'{E}mery condition
  and {W}asserstein contraction of the heat flow in {${\rm RCD}(K,\infty)$}
  metric measure spaces}, Discrete Contin. Dyn. Syst. \textbf{34} (2014),
  no.~4, 1641--1661, arXiv:1304.0643. \MR{3121635}

\bibitem[Sha00]{Shanmugalingam00}
Nageswari Shanmugalingam, \emph{Newtonian spaces: an extension of {S}obolev
  spaces to metric measure spaces}, Rev. Mat. Iberoamericana \textbf{16}
  (2000), no.~2, 243--279. \MR{1809341 (2002b:46059)}

\bibitem[She98]{Shen98}
Zhongmin Shen, \emph{The non-linear {L}aplacian for {F}insler manifolds}, The
  theory of {F}inslerian {L}aplacians and applications, Math. Appl., vol. 459,
  Kluwer Acad. Publ., Dordrecht, 1998, pp.~187--198. \MR{1677366 (2000a:58060)}

\bibitem[SR20]{SR20}
Jaime Santos-Rodr\'{\i}guez, \emph{Invariant measures and lower {R}icci
  curvature bounds}, Potential Anal. \textbf{53} (2020), no.~3, 871--897.
  \MR{4140081}

\bibitem[SRZB23]{SRZB23}
Jaime Santos-Rodr\'{\i}guez and Sergio Zamora-Barrera, \emph{On fundamental
  groups of {RCD} spaces}, J. Reine Angew. Math. \textbf{799} (2023), 249--286.
  \MR{4595312}

\bibitem[Stu96]{Sturm96III}
Karl-Theodor Sturm, \emph{Analysis on local {D}irichlet spaces. {III}. {T}he
  parabolic {H}arnack inequality}, J. Math. Pures Appl. (9) \textbf{75} (1996),
  no.~3, 273--297. \MR{1387522 (97k:31010)}

\bibitem[Stu06a]{Sturm06I}
\bysame, \emph{On the geometry of metric measure spaces. {I}}, Acta Math.
  \textbf{196} (2006), no.~1, 65--131. \MR{MR2237206}

\bibitem[Stu06b]{Sturm06II}
\bysame, \emph{On the geometry of metric measure spaces. {II}}, Acta Math.
  \textbf{196} (2006), no.~1, 133--177. \MR{MR2237207 (2007k:53051b)}

\bibitem[Top59]{Toponogov59}
V.~A. Toponogov, \emph{Riemannian spaces containing straight lines}, Dokl.
  Akad. Nauk SSSR \textbf{127} (1959), 977--979. \MR{0108808 (21 \#7520)}

\bibitem[Vil09]{Villani09}
C{\'e}dric Villani, \emph{Optimal transport. old and new}, Grundlehren der
  Mathematischen Wissenschaften, vol. 338, Springer-Verlag, Berlin, 2009.
  \MR{MR2459454}

\bibitem[Wan23]{Wang23}
Jikang Wang, \emph{{$\sf{RCD}^*(K,N)$} spaces are semi-locally simply
  connected}, Journal f{\"u}r die reine und angewandte Mathematik (Crelles
  Journal) (2023).

\bibitem[Ye23]{Ye23}
Zhu Ye, \emph{Maximal first {B}etti number rigidity of noncompact
  {RCD}(0,{$N$}) spaces}, Proc. Amer. Math. Soc. \textbf{151} (2023), no.~12,
  5403--5412. \MR{4648935}

\bibitem[ZZ10]{ZhangZhu10}
Hui-Chun Zhang and Xi-Ping Zhu, \emph{Ricci curvature on {A}lexandrov spaces
  and rigidity theorems}, Comm. Anal. Geom. \textbf{18} (2010), no.~3,
  503--553, arXiv:0912.3190. \MR{2747437 (2012d:53128)}

\end{thebibliography}

\newcommand{\etalchar}[1]{$^{#1}$}
\def\cprime{$'$} \def\cprime{$'$}
\providecommand{\bysame}{\leavevmode\hbox to3em{\hrulefill}\thinspace}
\providecommand{\MR}{\relax\ifhmode\unskip\space\fi MR }
\providecommand{\MRhref}[2]{%
  \href{http://www.ams.org/mathscinet-getitem?mr=#1}{#2}
}
\providecommand{\href}[2]{#2}

\printindex

\end{document}